\newtheorem{theorem}{Theorem}[section]
\newtheorem{proposition}[theorem]{Proposition}
\newtheorem{lemma}[theorem]{Lemma}
\newtheorem{corollary}[theorem]{Corollary}
\theoremstyle{definition}
\newtheorem{definition}[theorem]{Definition}
\newtheorem{remark}[theorem]{Remark}
\newtheorem{notation}[theorem]{Notation}
\numberwithin{equation}{section}
\newcommand{\leftexp}[2]{{\vphantom{#2}}^{#1}{#2}}
\newcommand{\Pol}{\mathscr{P}}
\newcommand{\TBang}{T_{Bang}}
\newcommand{\TCrunch}{T_{Crunch}}
\newcommand{\Mfour}{\mathbf{M}}
\newcommand{\gfour}{\mathbf{g}}
\newcommand{\StMet}{\gamma}
\newcommand{\newg}{G}
\newcommand{\EnergyEstimatesMetric}{\mathbf{G}}
\newcommand{\Rot}[2]{\Omega_{(#1 #2)}}
\newcommand{\ScalarCur}{\mbox{\upshape R}}
\newcommand{\ScalarCurArg}[1]{\mbox{\upshape R}[#1]}
\newcommand{\Ric}{\mbox{\upshape Ric}}
\newcommand{\Riem}{\mbox{\upshape Riem}}
\newcommand{\scale}{\mathcal{A}}
\newcommand{\antiscale}{\Upsilon}
\newcommand{\inverseantiscale}{\Upsilon^{-1}}
\newcommand{\Hubble}{\mathcal{H}}
\newcommand{\mytr}{\mbox{\upshape tr}_{\newg}}
\newcommand{\newlapse}{N}
\newcommand{\SecondFund}{k}
\newcommand{\FreeSecondFund}{\hat{k}}
\newcommand{\FreeNewSec}{\hat{K}}
\newcommand{\newtimescalar}{\Psi}
\newcommand{\newspacescalar}{\pi}
\newcommand{\Lie}{\mathcal{L}}
\newcommand{\SigmatLie}{\underline{\mathcal{L}}}
\newcommand{\SigmatProject}{\underline{\Pi}}
\newcommand{\Dfour}{\mathbf{D}}
\newcommand{\GLap}{\Delta_{\newg}}
\newcommand{\Gdiv}{\mbox{\upshape div}_{\newg}}
\newcommand{\Divfour}{\mbox{\upshape \textbf{Div}}}
\newcommand{\dif}{\mathcal{D}}
\newcommand{\difarg}[1]{\mathcal{D}^{#1}}
\newcommand{\CommutedMomBorderInhomUp}[1]{\leftexp{(#1);(Border)}{\widetilde{\mathfrak{M}}}}
\newcommand{\CommutedMomBorderInhomDown}[1]{\leftexp{(#1);(Border)}{\mathfrak{M}}}
\newcommand{\CommutedMetBorderInhom}[1]{\leftexp{(#1);(Border)}{\mathfrak{G}}}
\newcommand{\CommutedInvMetBorderInhom}[1]{\leftexp{(#1);(Border)}{\widetilde{\mathfrak{G}}}}
\newcommand{\CommutedMetJunkInhom}[1]{\leftexp{(#1);(Junk)}{\mathfrak{G}}}
\newcommand{\CommutedInvMetJunkInhom}[1]{\leftexp{(#1);(Junk)}{\widetilde{\mathfrak{G}}}}
\newcommand{\CommutedGradMetBorderInhom}[1]{\leftexp{(#1);(Border)}{\mathfrak{H}}}
\newcommand{\TopCommutedGradMetBorderInhom}[1]{\leftexp{(#1);(Border-Top)}{\mathfrak{H}}}
\newcommand{\CommutedGradMetJunkInhom}[1]{\leftexp{(#1);(Junk)}{\mathfrak{H}}}
\newcommand{\CommutedSecFunBorderInhom}[1]{\leftexp{(#1);(Border)}{\mathfrak{K}}}
\newcommand{\CommutedSecFunJunkInhom}[1]{\leftexp{(#1);(Junk)}{\mathfrak{K}}}
\newcommand{\CommutedTimeSfBorderInhom}[1]{\leftexp{(#1);(Border)}{\mathfrak{P}}}
\newcommand{\CommutedTimeSfJunkInhom}[1]{\leftexp{(#1);(Junk)}{\mathfrak{P}}}
\newcommand{\CommutedSpaceSfBorderInhom}[1]{\leftexp{(#1);(Border)}{\mathfrak{Q}}}
\newcommand{\CommutedSpaceSfJunkInhom}[1]{\leftexp{(#1);(Junk)}{\mathfrak{Q}}}
\newcommand{\CommutedLapseHighBorderInhom}[1]{\leftexp{(#1);(Border)}{\mathfrak{N}}}
\newcommand{\CommutedLapseLowBorderInhom}[1]{\leftexp{(#1);(Border)}{\widetilde{\mathfrak{N}}}}
\newcommand{\CommutedLapseHighJunkInhom}[1]{\leftexp{(#1);(Junk)}{\mathfrak{N}}}
\newcommand{\CommutedLapseLowJunkInhom}[1]{\leftexp{(#1);(Junk)}{\widetilde{\mathfrak{N}}}}
\newcommand{\MetricCurrentBorder}[1]{\leftexp{(#1);(Border)}{\mathfrak{J}_{(Metric)}}}
\newcommand{\MetricCurrentJunk}[1]{\leftexp{(#1);(Junk)}{\mathfrak{J}_{(Metric)}}}
\newcommand{\SfCurrentBorder}[1]{\leftexp{(#1);(Border)}{\mathfrak{J}_{(Sf)}}}
\newcommand{\SfCurrentJunk}[1]{\leftexp{(#1);(Junk)}{\mathfrak{J}_{(Sf)}}}
\newcommand{\RicErrorInhom}[1]{\leftexp{(#1);(Low)}{\mathfrak{R}}}
\newcommand{\Jfour}{\mathbf{J}}
\newcommand{\Sttvol}{\varpi_{\StMet}}
\newcommand{\tvol}{\varpi_{\newg}}
\newcommand{\Metricenergy}[1]{\mathscr{E}_{(Metric);#1}}
\newcommand{\Sfenergy}[1]{\mathscr{E}_{(Sf);#1}}
\newcommand{\Totalenergy}[2]{\mathscr{E}_{(Total);#1;#2}}
\newcommand{\SupTotalenergy}[2]{\underline{\mathscr{E}}_{(Total);#1;#2}}
\newcommand{\Energyborder}[1]{\leftexp{(#1);(Border)}{\mathfrak{E}}}
\newcommand{\Energyjunk}[1]{\leftexp{(#1);(Junk)}{\mathfrak{E}}}
\newcommand{\kfour}{\mathbf{k}}
\newcommand{\Ricfour}{\mathbf{Ric}}
\newcommand{\Riemfour}{\mathbf{Riem}}
\newcommand{\Rfour}{\mathbf{R}}
\newcommand{\Tfour}{\mathbf{T}}
\newcommand{\Nml}{\mathbf{N}}
\newcommand{\highnorm}[1]{\mathscr{H}_{(Total);#1}}
\newcommand{\smallparameter}{\uptheta}
\newcommand{\Tboot}{T_{(Boot)}}
\newcommand{\ID}{\mathbb{I}}
\begin{document}

%\scalefont{1.10}

\title{The maximal development of near-FLRW data
for the Einstein-scalar field system 
with spatial topology $\mathbb{S}^3$
}
%The Nonlinear Past-Stability of the Isotropic FLRW Big Bang Solution to the Einstein-scalar field and Einstein-stiff fluid systems
\author{Jared Speck$^{*}$}
%\date{\today}
%\email{jspeck@math.mit.edu}

\thanks{$^{*}$Massachusetts Institute of Technology, Department of Mathematics, 77 Massachusetts Ave, Room 2-265, Cambridge, MA 02139-4307, USA. \texttt{jspeck@math.mit.edu}}

\thanks{$^{*}$JS gratefully acknowledges support from NSF grant \# DMS-1162211,
from NSF CAREER grant \# DMS-1454419,
from a Sloan Research Fellowship provided by the Alfred P. Sloan foundation,
and from a Solomon Buchsbaum grant administered by the Massachusetts Institute of Technology.
}

\begin{abstract}
The Friedmann--Lema\^{\i}tre--Robertson--Walker (FLRW) solution
to the Einstein-scalar field system with spatial topology $\mathbb{S}^3$
models a universe that emanates from a singular spacelike hypersurface (the Big Bang),
along which various spacetime curvature invariants blow up, only to re-collapse
in a symmetric fashion in the future (the Big Crunch). In this article,
we give a complete description of the maximal developments of perturbations
of the FLRW data at the chronological midpoint of its evolution. 
We show that the perturbed solutions also exhibit curvature blowup along 
a pair of spacelike hypersurfaces, signifying the stability of the Big Bang and the Big Crunch. 
Moreover, we provide a sharp description of the asymptotic behavior of the solution up to the singularities, 
showing in particular that various time-rescaled solution variables converge to regular tensorfields on 
the singular hypersurfaces that are close to the corresponding FLRW tensorfields. 
Our proof crucially relies on $L^2$-type approximate monotonicity identities
in the spirit of the ones we used in our joint works with Rodnianski, in which we proved
similar results for nearly spatially flat solutions with spatial topology $\mathbb{T}^3$.
In the present article, we rely on new ingredients to handle nearly round spatial metrics
on $\mathbb{S}^3$, whose curvatures are order-unity near the initial data hypersurface.
In particular, our proof relies on 
\textbf{i)} the construction of a globally defined spatial vectorfield frame
adapted to the symmetries of a round metric on $\mathbb{S}^3$;
\textbf{ii)} estimates for the Lie derivatives of
various geometric quantities with respect to the elements of the frame;
and \textbf{iii)} sharp estimates for the
asymptotic behavior of the FLRW solution's scale factor near the singular hypersurfaces.
%The scale factor estimates play a crucial role in our approach to controlling error terms
%up to the singularities.
\bigskip

\noindent \textbf{Keywords}: constant mean curvature, curvature blowup, energy currents, geodesic incompleteness, maximal development,
	stable blowup, transported spatial coordinates
\bigskip

\noindent \textbf{Mathematics Subject Classification (2010)} Primary: 83C75; Secondary: 35A20, 35Q76, 83C05, 83F05 

\end{abstract}

\maketitle

\centerline{\today}

\setcounter{tocdepth}{1}
%\pagenumbering{roman} 
\tableofcontents 
\newpage 
%\pagenumbering{arabic}

\section{Introduction} \label{S:INTRO}
The Einstein-scalar field system of general relativity
models the evolution of a dynamic spacetime
$(\Mfour,\gfour)$ that is coupled to a scalar field
$\phi$, where $\Mfour$ is the (four-dimensional)
spacetime manifold, $\gfour$ is 
a Lorentzian ``spacetime'' metric of signature $(-,+,+,+)$ on $\Mfour$,  
and $\phi$ is a function on $\Mfour$.
The scalar field is a simple but important matter model in mathematical general relativity,
and the Cauchy problem for the system has been 
well-studied in the regime of asymptotically flat initial data; 
see, for example, \cites{dC1991,dC1999b,dC1993,dC1987,dC1986b,dC1986c}.
Relative to arbitrary coordinates, the 
Einstein-scalar field equations take the following form:\footnote{Throughout we use Einstein's summation convention.}
\begin{subequations}
\begin{align}
	\Ricfour_{\mu \nu} - \frac{1}{2} \Rfour \gfour_{\mu \nu} & = \Tfour_{\mu \nu},
		\label{E:EINSTEINSF} \\
	(\gfour^{-1})^{\alpha \beta} \Dfour_{\alpha} \Dfour_{\beta} \phi & = 0,  \label{E:WAVEMODEL}
\end{align}
\end{subequations}
where 
%the unknowns are the spacetime metric $\gfour_{\mu \nu}$ (of signature $(-,+,+,+)$) and the scalar field $\phi$,
$\Ricfour$ denotes the Ricci tensor of $\gfour$, 
$\Rfour = (\gfour^{-1})^{\alpha \beta}\Ricfour_{\alpha \beta}$ 
denotes the scalar curvature of $\gfour$, 
$\Dfour$ denotes the Levi--Civita connection of $\gfour$, 
and $\Tfour$ denotes the energy-momentum tensor of the scalar-field:
\begin{align} \label{E:EMTSCALARFIELD}
	\Tfour_{\mu \nu} 
		& := \Dfour_{\mu}\phi \Dfour_{\nu} \phi 
			- 
			\frac{1}{2} \gfour_{\mu \nu} (\gfour^{-1})^{\alpha \beta} \Dfour_{\alpha} \phi \Dfour_{\beta} \phi.
\end{align}
Our main result in this article is a 
proof of stable blowup for an open set of solutions.
More precisely, our main theorem provides a detailed description of the maximal developments,
relative to constant mean curvature (CMC from now on)-transported spatial coordinates gauge,
of an open set of initial data that are close to the data of the well-known 
Friedmann--Lema\^{\i}tre--Robertson-Walker (FLRW from now on)
solution \underline{with spatial topology $\mathbb{S}^3$}.

In Subsect.\ \ref{SS:CONTEXTFORFLRW}, we will provide some background information
on the family of FLRW solutions. Here we only note that
for the scalar field matter model with spatial topology $\mathbb{S}^3$,
the FLRW solution models a cosmological spacetime
that emanates from a singular hypersurface
(the ``Big Bang''), along which various spacetime curvature invariants blow up,
only to later re-collapse in the reverse fashion 
(the ``Big Crunch''). The re-collapse, which is shown in Fig.~\ref{F:SCALEFACTOR} on pg.~\pageref{F:SCALEFACTOR}, 
is initiated by the positive scalar curvature\footnote{In the case
of the FLRW solution, the positive spatial scalar curvature influences the form of Friedmann's ODEs
(see Lemma~\ref{L:FRIEDMANN}). In particular, the form of equation
\eqref{E:FRIEDMANNSECONDORDER} is influenced by the positive scalar curvature of the FLRW spatial metric; 
note that equation \eqref{E:FRIEDMANNSECONDORDER} implies that 
the scale factor has negative second derivative at its maximum value $\scale(0)=1$,
which causes the scale factor to begin re-collapsing.}
of the Riemannian metric induced on the constant-time hypersurfaces (which are diffeomorphic to $\mathbb{S}^3$).
In studying perturbed solutions, we consider data that are near the state of the FLRW 
solution at the chronological midpoint\footnote{Relative to the coordinates of Subsect.\ \ref{SS:FLRWANDSCALEFACTOR}, the FLRW solution's chronological midpoint corresponds to $t = 0$.}
of its evolution.
We show that like the FLRW solution, the perturbed solutions exhibit
\emph{stable uniform curvature blowup}
along a pair of spacelike hypersurfaces,
signifying the stability of the Big Bang and the Big Crunch.
Our result complements our joint works
\cites{iRjS2014a,iRjS2014b} with Rodnianski,
in which we proved similar linear and nonlinear stability results
for solutions to the Einstein-scalar field and Einstein-stiff fluid\footnote{A stiff fluid is such that the speed of sound is equal to the speed of light.} 
systems
with spatial topology $\mathbb{T}^3$
that are close to members of a family of spatially flat background ``Kasner'' solutions
(see below for further discussion).
More precisely, the Kasner-like solutions in \cites{iRjS2014a,iRjS2014b}
did not undergo re-collapse, and we studied the problem only near the Big Bang singularity.
That is, we gave a sharp description of ``half'' of the maximal development of the data.
These works provided the first proofs of stable curvature blowup 
without symmetry assumptions
for solutions to Einstein's equations along a spacelike hypersurface, 
and it is of interest to determine to what extent they can be extended to other 
classes of initial data, spatial topologies, matter models, etc.
Below we will describe the results \cites{iRjS2014a,iRjS2014b} in more detail 
and highlight the new ideas that are needed to handle the case of spatial
topology $\mathbb{S}^3$ and the corresponding metrics with 
positive spatial curvature. 

We believe that our main results could be shown to hold in any number of spatial dimensions, 
more precisely in the case of spatial topology $\mathbb{S}^n$ for any positive integer $n$.
However, for most values of $n$,
some aspects of our analytical framework would need to be modified,
since here we rely on the parallelizability of $\mathbb{S}^3$ to construct the vectorfield frame
that we use to analyze solutions (see Lemma~\ref{L:BASISOFKILLINGFIELDS}).

\subsection{Rough statement of the main results}
\label{SS:EQNSANDMAINRESULTSROUGH}
The FLRW solution to \eqref{E:EINSTEINSF}-\eqref{E:WAVEMODEL}
(see Lemma~\ref{L:FRIEDMANN} for a proof that it is indeed a solution) is\footnote{Note that $\phi$ itself does
not appear in the Einstein-scalar field equations, but rather only its derivatives. For this reason, we typically do not bother
to refer to ``$\phi_{FLRW}$.''}
$\gfour_{FLRW} := - dt^2 + \scale^{2/3}(t) \StMet$,
$\partial_t \phi_{FLRW} := \sqrt{\frac{2}{3}} \scale^{-1}(t)$,
$\nabla \phi_{FLRW} := 0$,
where $\StMet$ is the round metric on $\mathbb{S}^3$ with 
scalar curvature equal to\footnote{Our choice that $\StMet$ has scalar curvature equal to $2/3$ 
is merely a convenient normalization condition.} 
$\frac{2}{3}$
and $\nabla$ denotes the connection of the Riemannian metric induced on $\Sigma_t$,
where here and throughout, $\Sigma_t$ denotes a hypersurface
of constant time $t$.
The scale factor\footnote{Note that our denoting of the 
FLRW spatial metric by
$\scale^{2/3}(t) \StMet$ breaks the usual convention,
in which the power exponent is $2$ rather than $2/3$.
That is, our definition of the scale factor $\scale(t)$ is different than the usual one
found in the literature.
This is mainly for mathematical convenience; by our 
convention, $\scale(t)$ will vanish \emph{linearly}
at the Big Bang and Big Crunch, which is convenient for tracking blowup-rates near the singularities; 
see Subsect.\ \ref{SS:SCALEFACTORANDHUBBLE}.\label{FN:SCALEFACTOR}} 
$\scale(t)$ is an even function
that vanishes at some time $t = \TBang < 0$, increases until $t=0$ with $\scale(0) = 1$, and then shrinks
again until re-collapsing at $t = \TCrunch = - \TBang > 0$;
see Fig.~\ref{F:SCALEFACTOR} for its graph and Sect.\ \ref{S:FLRW} for a rigorous analysis of its properties.
The Ricci invariant
$\Ricfour^{\alpha \beta} \Ricfour_{\alpha \beta}$
of the FLRW solution blows up along $\Sigma_{\TBang}$ 
and
$\Sigma_{\TCrunch}$
like $\scale^{-4}(t)$.

\begin{center}

\begin{overpic}[scale=.60]{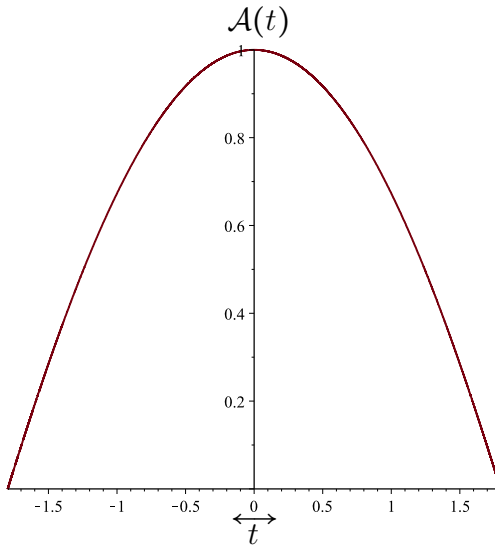} 
\put (46,80) {$\displaystyle \scale(t)$}
\put (46.6,-1.2) {$\displaystyle \longleftrightarrow$}
\put (49.3,-4) {$\displaystyle t$}
\end{overpic}
\captionof{figure}{Graph of the FLRW scale factor}
\label{F:SCALEFACTOR}

\end{center}

Our main result is the following theorem, which we state here in a rough form; see Theorem~\ref{T:MAINTHM} for precise statements.

\begin{theorem}[\textbf{Global nonlinear stability of the FLRW Big Bang-Big Crunch solution} (rough version)]
\label{T:VERYROUGH}
The FLRW solution is globally nonlinearly stable under perturbations of its
data at time $0$.
%
%Consider perturbations of the FLRW data at some intermediate time.
%If the perturbations are small
%(as measured by an appropriate Sobolev norm),
%then the perturbed solution also blows up
%to the past and to the future, an effect that is tied in part to the positive scalar curvature of the spatial
%part of the background FLRW metric. 
More precisely, the maximal developments corresponding to 
an open (in a suitable Sobolev topology)
set of near-FLRW data on $\Sigma_0 = \lbrace 0 \rbrace \times \mathbb{S}^3$
are geodesically incomplete to the future and to the past.
Moreover, there exists a time function $t \in (\TBang,\TCrunch)$ whose level sets $\Sigma_t$ 
have constant mean curvature $-(1/3) \scale'(t)/\scale(t)$ and foliate the spacetime manifold,
such that timelike geodesics terminate at $\Sigma_{\TBang}$ and $\Sigma_{\TCrunch}$, 
along which the spacetime Ricci curvature invariant
$\Ricfour^{\alpha \beta} \Ricfour_{\alpha \beta}$ blows up
like $\scale^{-4}(t)$
(recall that $\scale(\TBang) = \scale(\TCrunch) = 0$).
Finally, the solution exhibits asymptotically velocity term dominated (AVTD) behavior
in the limit as these singular hypersurfaces are approached. 
In particular, various time-rescaled solution variables converge
to regular tensorfields along 
$\Sigma_{\TBang}$ and $\Sigma_{\TCrunch}$.
Moreover, the solution converges to a solution of the 
velocity term dominated (VTD)
equations, which by definition are truncated versions of
the Einstein-scalar field equations
in CMC-transported spatial coordinates gauge
in which all spatial derivative terms have been discarded;
see Remark~\ref{R:AVTD} for further discussion of this point.
\end{theorem}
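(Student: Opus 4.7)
The plan is to work in the CMC-transported spatial coordinates gauge, writing $\gfour = -\newlapse^2 dt^2 + \newg$ with the mean curvature of $\Sigma_t$ prescribed to equal the FLRW value $-(1/3)\scale'(t)/\scale(t)$. This reduces the Einstein-scalar field system to an elliptic-hyperbolic PDE system: transport-type evolution equations for $\newg$ and the second fundamental form $\SecondFund$ (with the spatial Ricci tensor of $\newg$ providing the spatial derivative terms), a wave-type equation for $\phi$, and an elliptic equation for $\newlapse$ enforcing the CMC condition. To set up energies adapted to $\mathbb{S}^3$, I would use the globally defined vectorfield frame $\{\Rot{i}{j}\}$ built from Killing fields of the round metric $\StMet$ (which exists by the parallelizability of $\mathbb{S}^3$), and define high-order norms via iterated Lie derivatives $\SigmatLie$ with respect to these frame vectors; this avoids the use of local coordinate charts that would otherwise obstruct a clean global analysis on a nontrivially curved manifold.

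The core of the argument is a bootstrap on a time interval $(\TBang + \delta_1, \TCrunch - \delta_2)$, with the goal of strictly improving the a priori smallness assumptions and then sending $\delta_1,\delta_2 \to 0^+$. I would derive $L^2$-type approximate monotonicity identities by contracting the commuted evolution equations with multipliers built out of $\newlapse$ and $\scale(t)$, in the spirit of \cites{iRjS2014a,iRjS2014b}. These identities should show that the commuted energies grow at most like a mild power of $\scale(t)^{-1}$ near each singular hypersurface; the sharp asymptotics for the FLRW scale factor near $\TBang$ and $\TCrunch$ are crucial here, since they determine the permissible growth exponents in the Gronwall step and must be compatible with the polynomial weights appearing in the energy multipliers. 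Pointwise control follows from Sobolev embedding on $\mathbb{S}^3$, and closes the bootstrap provided the initial perturbation at $\Sigma_0$ is sufficiently small in the appropriate Sobolev norm. Both the curvature blowup $\Ricfour^{\alpha\beta}\Ricfour_{\alpha\beta} \sim \scale^{-4}(t)$ and the timelike geodesic incompleteness then follow from the pointwise asymptotics of the rescaled variables.

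To extract AVTD behavior, I would introduce time-rescaled solution variables (for instance, $\scale^{4/3}$ times the trace-free part of $\SecondFund$, and $\scale \cdot \partial_t \phi$) whose evolution equations are, up to remainder terms that the energy estimates show are integrable in $t$ near the singularities, the truncated VTD equations obtained by discarding all spatial derivatives. Standard Cauchy-sequence arguments then produce regular limit tensorfields along $\Sigma_{\TBang}$ and $\Sigma_{\TCrunch}$. The principal obstacle, distinguishing this setting from the $\mathbb{T}^3$ case of \cites{iRjS2014a,iRjS2014b}, is that the spatial Ricci tensor of a nearly round metric on $\mathbb{S}^3$ is order unity rather than small, so the monotonicity identities must be restructured to keep positive-definite ``border'' contributions coercive while isolating ``junk'' terms that must be absorbed at a strictly lower order in a layered hierarchy of estimates; this is why the paper distinguishes objects like $\CommutedMetBorderInhom{N}$ from $\CommutedMetJunkInhom{N}$. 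A secondary obstacle is that the elliptic lapse equation has coefficients that degenerate as $\scale(t) \to 0$, so one must prove uniform solvability and sharp quantitative bounds on $\newlapse$ all the way to the singular hypersurfaces, which is needed to confirm that the CMC foliation actually exhausts the maximal development in both time directions.
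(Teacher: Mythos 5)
Your plan is, in outline, the same as the paper's: CMC-transported spatial coordinates with mean curvature tied to $\scale'/\scale$, time-rescaled solution variables, a globally defined frame on $\mathbb{S}^3$ built from Killing fields of $\StMet$, $\SigmatLie$-commuted energies, a bootstrap together with a hierarchy that distinguishes borderline contributions from junk contributions, sharp quantitative asymptotics of $\scale(t)$ near $\TBang$ and $\TCrunch$, and Cauchy-sequence arguments to extract AVTD limit tensorfields. So the route is the same; the differences below are points where the sketch would need to be tightened to actually close.

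First, the six rotation fields $\Rot{k}{l}$ do not themselves form a frame on a three-manifold, so referring to ``the globally defined vectorfield frame $\{\Rot{i}{j}\}$'' conflates the Lie algebra with the frame. The paper's Lemma~\ref{L:BASISOFKILLINGFIELDS} picks out three specific combinations, $Z_{(1)}=\Rot{1}{2}+\Rot{3}{4}$, $Z_{(2)}=\Rot{2}{3}+\Rot{1}{4}$, $Z_{(3)}=\Rot{1}{3}-\Rot{2}{4}$, and verifies that they give a global $\StMet$-orthonormal frame; this is precisely where the parallelizability of $\mathbb{S}^3$ enters, and it also produces the $\mathfrak{su}(2)$-type commutation relations $[Z_{(A)},Z_{(B)}]=\frac{2}{3}\epsilon_{ABC}Z_{(C)}$ that are used throughout the connection-coefficient and curvature decompositions. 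Second, the proposal attributes the approximate monotonicity to ``contracting with multipliers built from $\newlapse$ and $\scale(t)$,'' but the actual mechanism is the specific algebraic cancellation in Lemma~\ref{L:SFCURRENTDIV}: one adds the divergence of $\scale'\scale^3(\nabla^{\#}\mathscr{Z}^{\vec{I}}\newlapse)\mathscr{Z}^{\vec{I}}\newlapse$ to the scalar-field current and substitutes the commuted elliptic lapse equation, whereupon the cross term $2\sqrt{2/3}\,\scale'\scale^{1/3}(\mathscr{Z}^{\vec{I}}\newtimescalar)(\mathscr{Z}^{\vec{I}}\newlapse)$ cancels exactly against the corresponding term from the wave equation, and the remaining lapse quadratics come out signed, yielding the coercive spacetime integrals that control $\newlapse$. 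Without naming this cancellation the Gronwall constants would not close, since a generic multiplier would leave behind borderline $\scale^{-1}$-weighted terms with coefficients independent of $\varepsilon$. Finally, a small but real slip: the paper rescales the trace-free second fundamental form by $\scale$ (so $\FreeNewSec=\scale\FreeSecondFund$), not by $\scale^{4/3}$; with the paper's normalization $\scale(t)\sim\TCrunch-t$, the linear power is what makes $\FreeNewSec$ converge to the regular limit tensorfield asserted in the AVTD statement.
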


\begin{remark}[\textbf{On the number of derivatives}]
	\label{R:NUMBEROFDERIVATIVES}
	The norm $\highnorm{16}(t)$ 
	that we use to control the solution
	(see \eqref{E:HIGHNORM})
	corresponds to commuting the equations up to $16$ times with appropriate differential operators.
	This is a somewhat wasteful number of derivatives, 
	but it allows for a simplified approach to some aspects of our analysis.
\end{remark}

Our proof of Theorem~\ref{T:VERYROUGH} crucially relies 
on $L^2$-type approximate monotonicity identities\footnote{By ``approximate monotonicity identity,''
we roughly mean that that the identities involve ``main terms,'' which have a sign (this is the monotonicity),
and unsigned error terms that have to be controlled. \label{FN:APPROXIMATEMONOTONICITY}}
and the corresponding
energy estimates that they afford,
whose availability relies on \underline{special cancellations}
that occur in well-chosen combinations of divergence identities; see especially Lemma~\ref{L:SFCURRENTDIV}.
These cancellations and other special structures are visible relative to 
the gauge we use: \emph{constant mean curvature-transported spatial coordinates gauge}. 
The energy estimates (which hold up to top order), 
though mildly singular near the Big Bang and 
Big Crunch, allow us to prove that the solution exists long enough to form curvature singularities.
By themselves, the mildly singular energy estimates are not sufficient to close the
proof. They must be complemented (in the context of a bootstrap argument) 
with less singular estimates at the lower derivative levels,
which yield sharper control over the solution
and in particular show that its spatial derivatives are less singular than its
time derivatives.
We derive these sharper estimates using arguments that exploit the 
special structure of the equations and the mildly singular nature of the
high-order energy estimates. In particular, our arguments are based on treating
the evolution equations like transport equations\footnote{To derive sharp estimates for the lapse,
we also derive maximum principle estimates for the elliptic PDE that it solves.} 
with principal part $\partial_t$ 
and with source terms that depend on the solution's higher
spatial derivatives. The source terms can be singular near the Big Bang and Big Crunch
in a manner that depends on the FLRW scale factor $\scale(t)$,
which appears in our formulation of the equations due to our gauge choices.
The key point is that by integrating in time 
and using sharp information about the behavior of $\scale(t)$ near the singularities, 
\emph{we can reduce the strength of the singularities}, which leads
to less singular estimates at the lower derivative levels. 
This can be caricatured by estimates such as
$\int_{s=0}^t (\TCrunch - s)^{-1/2}  \, ds \lesssim 1$ and 
$\int_{s=0}^t (\TCrunch - s)^{-(1+q)} \, ds \lesssim_q (\TCrunch - t)^{-q}$
for constants $q > 0$ and $0 < t < \TCrunch$; see Cor.~\ref{C:SCALEFACTORTIMEINTEGRALS} for precise statements.
Although this approach results in a loss of derivatives 
stemming from the source terms in the transport equations,
such a loss is permissible below top order.

The approach described above has its origin in our joint works
\cites{iRjS2014a,iRjS2014b},
in which we proved similar stable curvature blowup results
for solutions with spatial topology $\mathbb{T}^3$
such the background Kasner solutions were spatially flat.
Like our work here, the proofs in \cites{iRjS2014a,iRjS2014b} relied on some special properties 
verified by the scalar field and stiff fluid matter models;
see Subsect.\ \ref{SS:STABLESINGULARITYFORMATION} for further discussion on
this point and for further description of the results of \cites{iRjS2014a,iRjS2014b}. 
Our proof of Theorem~\ref{T:VERYROUGH} requires new ideas
to handle the positive spatial scalar curvature of solutions near the background FLRW
solution with $\mathbb{S}^3$ spatial topology.
In particular, different from the works \cites{iRjS2014a,iRjS2014b}, 
the coordinate partial derivative vectorfields are not suitable for differentiating the equations
and obtaining estimates for the perturbed solution's derivatives, the reason being that they
are not adapted to the approximate symmetries of nearly round metrics on $\mathbb{S}^3$. 
That is, differentiating the spatial metric with typical spatial coordinate partial derivatives 
would not lead to a small quantity,
and without smallness, there is no hope of closing our perturbative nonlinear analysis.
For this reason and other related ones, 
our proof relies on the following new ingredients:
\begin{itemize}
	\item A well-constructed globally defined spatial vectorfield frame tied to the symmetries
		of the FLRW solution's round spatial metric on $\mathbb{S}^3$; see Sect.\ \ref{S:LIETRANSPORTEDFRAME}.
	\item Estimates showing that the perturbed solution enjoys approximate symmetries all the way up to the singularities;
		see Prop.~\ref{P:STRONGSUPNORMESTIMATES}.
		To obtain these estimates and most of the other estimates in this article, 
		we take \emph{Lie derivatives} of various geometric quantities with respect to the vectorfield frame mentioned above.
	\item Sharp estimates for the FLRW scale factor, especially near the singular times; see Sect.\ \ref{S:FLRW}.
\end{itemize}
In particular, the FLRW scale factor
leads to the presence of favorable spacetime integrals in the energy identities.
This is precisely the ``$L^2$-type approximate monotonicity identity'' mentioned above
(see also Footnote~\ref{FN:APPROXIMATEMONOTONICITY}).
To exploit these favorable integrals,
we derive \emph{precise quantitative information about the scale factor $\scale(t)$,
which allows us to show that the good spacetime integrals are strong enough to 
completely absorb other ``dangerous'' spacetime error integrals};
if not for this, then the dangerous error integrals would have led 
to very singular energy estimates near the Big Bang and 
the Big Crunch, which in turn would have obstructed our proof of nonlinear stability.
We outline the proof of our main result in more detail in Subsect.\ \ref{SS:PROOFIDEAS}.
We first provide some further context for the problem under study and some background material.

\subsection{Prior work on perturbations of FLRW solutions}
\label{SS:CONTEXTFORFLRW}
FLRW solutions are a family of spatially homogeneous solutions to various Einstein-matter systems,
and their behavior forms the basis of many of the predictions of modern cosmology \cite{sW2008}.
The local and global properties of the FLRW solutions depend on several factors,
including the matter model, the value of the cosmological constant, 
and the topology/geometry of the initial Cauchy hypersurface.
In view of their distinguished role in cosmology, it is of fundamental 
mathematical and physical interest to determine whether or not the FLRW solutions
are globally nonlinearly stable under perturbations of their initial data. 
In all cases, the FLRW metric $\gfour_{FLRW}$ 
is a Lorentzian metric on a product manifold
$I \times \Sigma$
that is often written in the form $\gfour_{FLRW} = - dt^2 + a^2(t) d \Sigma$,
where $I$ is an interval of time, $\Sigma$ is the ``spatial manifold,
$d \Sigma$ is a Riemannian metric on $\Sigma$
and, by virtue of the Einstein equations, 
the scale factor $a(t) > 0$ solves ODEs (Friedmann's equations) that depends on the matter model
and the cosmological constant; see Lemma~\ref{L:FRIEDMANN}.
Note that here we are using the terminology
``scale factor'' in a slightly different manner compared to the
rest of the article (see Footnote~\ref{FN:SCALEFACTOR}).
The most prominent feature of the FLRW solutions
studied in cosmology is that it is possible for the scale factor to tend to either
$\infty$ or to $0$ as $t$ varies. These scenarios represent, respectively, 
the expansion and collapse of the universe and often coincide, respectively,
with geodesic completeness and curvature blowup.
For this reason, prior studies of the stability of FLRW solutions 
can be separated into the following two categories:
\begin{itemize}
	\item \textbf{\textbf{(Expansion)}} The study of the stability of FLRW solutions such that 
		$a(t) \uparrow \infty$ as $t \uparrow t_0$ for some value of $t_0$ (possibly infinite).
	\item \textbf{\textbf{(Collapse)}} The study of the stability of FLRW solutions such that 
		$a(t) \downarrow 0$ as $t \downarrow t_0$ for some value of $t_0$.
\end{itemize}
Aside from our joint works \cites{iRjS2014a,iRjS2014b}, 
all previous results in three spatial dimensions
concerning the nonlinear stability of an FLRW solution
(without symmetry assumptions) 
have been in the expanding case.
	In an expanding direction, FLRW solutions are typically geodesically complete and thus 
	the stability problem is essentially tantamount to the study of whether or not the associated Einstein-matter PDEs 
	(in an appropriate gauge)
	admit global, geodesically complete (in the expanding direction) 
	solutions for near-FLRW data.
	In contrast, collapsing FLRW solutions typically exhibit curvature blowup at times
	for which the scale factor vanishes and thus the stability problem essentially corresponds to the study
	of whether or not perturbed solutions to the
	corresponding Einstein-matter PDEs 
	also exhibit curvature blowup.
	That is, the question is one of whether or not
	the singularity formation is dynamically stable under perturbations of the FLRW data.
	
	The original result on global stability in the expanding case
	is due to Friedrich \cite{hF1986a},
	who used the conformal method
	to prove the nonlinear stability of
	the de Sitter solution to the Einstein-vacuum equations with a positive cosmological constant
	in three spatial dimensions. His result was extended to all odd spatial
	dimensions by Anderson \cite{mA2005}.
	In \cites{hR2008,hR2009},
	Ringstr\"{o}m developed an alternate approach, based on generalized wave coordinates,
	that allowed him to prove a global stability result in the expanding
	direction for (asymmetric) perturbations of a large class of spatially homogeneous solutions
	with various spatial topologies and with scalar field matter whose potential $V(\phi)$ has suitable properties 
	(where \cite{hR2008} effectively includes the Einstein-vacuum equations with a positive cosmological constant as a special case).
	In our joint work \cite{iRjS2013} with Rodnianski on the irrotational Euler--Einstein equations 
	with a positive cosmological constant
	under the equations of state
	$p = c_s^2 \rho$ for $0 < c_s^2 < 1/3$,
	we extended Ringstr\"{o}m's framework and proved the stability of FLRW solutions
	with spatial topology $\mathbb{T}^3$
	in the expanding direction.
	Here $p$ is the fluid pressure, $\rho$ is the proper energy density,
	and the constant $c_s > 0$ is the speed of sound.
	See also \cite{jS2012} for the same result without the irrotationality assumption.
	See also the work \cite{cLjK2013} for a proof
	in the conformally invariant case $c_s^2 = 1/3$
	and \cite{mHjS2015} for a proof in the dust case $c_s = 0$.
	Readers can also consult \cite{tO2016} for an alternate proof in the
	cases $0 < c_s^2 \leq 1/3$.
	See also \cites{hR2013,hAhR2016} for the proofs of similar results
	for the Einstein--Vlasov system with a positive cosmological constant.
	The above Euler--Einstein stability results show, in particular,
	that rapid exponential expansion,
	in those cases induced by the presence of a positive cosmological constant,
	suppresses the formation of fluid shocks, which typically occurs in Minkowski spacetime;
	see, for example, Riemann's famous proof \cite{bR1860} of shock formation
	in solutions to the non-relativistic Euler equations in one spatial dimension,
	Christodoulou's remarkably sharp description of shock formation \cite{dC2007}
	for solutions to the relativistic Euler equations in irrotational regions,
	or our recent joint extension \cites{jLjS2016a,jLjS2016b} of Christodoulou's result
	to handle the case in which the vorticity is non-zero at the location of the shock.
	
	In the collapsing case, 
	the only prior proofs
	of the nonlinear stability of an FLRW solution
(without symmetry assumptions)
are our aforementioned joint works \cites{iRjS2014a,iRjS2014b}, where we respectively considered
FLRW solutions to the Einstein-scalar field and Einstein-stiff fluid\footnote{A stiff fluid has speed of sound equal to the speed of light, that is, its equation of state is $p = \rho$.} 
systems with a vanishing cosmological constant and with spatial topology $\mathbb{T}^3$. 
We will describe these results in more detail in Subsect.\ \ref{SS:STABLESINGULARITYFORMATION}
in the context of singularity formation results for Einstein's equations.
We also note that Ringstr\"{o}m \cite{hR2017} has 
recently derived estimates for solutions to a large family of linear
wave equations whose corresponding metrics 
model the behavior that can occur in solutions
to Einstein's equations near cosmological singularities.
His work serves as a natural starting point for trying to prove
stable cosmological singularity formation results for Einstein's equations
in more general regimes than the ones treated in \cites{iRjS2014a,iRjS2014b}
and in the present work (for example, for far-from-spatially isotropic initial conditions).

\subsection{The initial value problem for the Einstein-scalar field equations}
\label{SS:IVP}
In this subsection, we recall some basic facts about the initial value problem for Einstein's equations.
The fundamental results \cite{CB1952} and \cite{cBgR1969},
which are due to Choquet--Bruhat and Choquet--Bruhat + Geroch respectively,
show that the system \eqref{E:EINSTEINSF}-\eqref{E:WAVEMODEL}
has an initial value problem formulation in which sufficiently regular data
give rise to a unique 
(up to diffeomorphism)
maximal globally hyperbolic development
$(\Mfour_{Max},\gfour_{Max},\phi_{Max})$.
Roughly, this is the largest possible solution to the Einstein-scalar field 
equations that is uniquely determined by the data.
Although, it is conceptually important to know that the
maximal globally hyperbolic development exists,
the results \cites{CB1952,cBgR1969} do not provide any information about
its nature. Our main result provides, for an open set of data, 
quantitative and qualitative information about its nature.
The ``geometric data'' consist of the following fields on the initial data hypersurface $\Sigma_0$,
which in this article is diffeomorphic to $\mathbb{S}^3$:
$(\mathring{g},\mathring{k},\mathring{\phi},\mathring{\phi}_0)$.
Here, $\mathring{g}$ is a Riemannian metric,
$\mathring{k}$ is a symmetric two-tensor,
and $\mathring{\phi}$ and $\mathring{\phi}_0$ are a pair of functions.
A solution launched by the data consists of a time-oriented spacetime
$(\mathbf{M},\gfour)$, 
a scalar field $\phi$ on $\mathbf{M}$, and  
an embedding $\Sigma_0 \overset{\iota}{\hookrightarrow}\mathbf{M}$ such that 
$\iota(\Sigma_0)$ is a Cauchy hypersurface in $(\mathbf{M},\gfour)$. 
The spacetime fields must verify the equations 
\eqref{E:EINSTEINSF}-\eqref{E:WAVEMODEL}
and be such that $\iota^* \gfour = \mathring{g}$, 
$\iota^* \kfour = \mathring{k}$, 
$\iota^* \phi = \mathring{\phi}$, 
$\iota^* \Nml \phi = \mathring{\phi}_0$, 
where $\kfour$ is the second fundamental form of $\iota(\Sigma_0)$
(our sign convention is given in \eqref{E:SECONDFUNDDEF}), 
$\Nml \phi$ is the derivative of $\phi$ in the direction of the future-directed normal $\Nml$ to $\iota(\Sigma_0)$,
and $\iota^*$ denotes pullback by $\iota$. 
Throughout the article, we will often suppress the embedding and identify $\Sigma_0$ with $\iota(\Sigma_0)$. 

It is well-known (see also Prop.~\ref{P:EINSTEININCMC}) 
that the data are constrained by the \emph{Gauss} and \emph{Codazzi} equations, 
which take the following form for the Einstein-scalar field system:
\begin{subequations}
\begin{align}
	\mathring{\mbox{\upshape R}} 
	- \mathring{k}_{\ b}^a \mathring{k}_{\ a}^b
	+ (\mathring{k}_{\ a}^a)^2 
	& = 2 \Tfour(\Nml,\Nml)|_{\Sigma_0} 
	= \mathring{\phi}_0^2 + \nabla^a \mathring{\phi} \nabla_a \mathring{\phi}, 
	\label{E:GAUSSINTRO} \\
	\nabla_a \mathring{k}_{\ j}^a 
		- \nabla_j \mathring{k}_{\ a}^a  
		& = - \Tfour(\Nml,\frac{\partial}{\partial x^j})|_{\Sigma_0}
		= - \mathring{\phi}_0 \nabla_j \mathring{\phi}. 
		\label{E:CODAZZIINTRO}
\end{align}
\end{subequations}
Above, $\Tfour(\Nml,\Nml) := \Tfour_{\alpha \beta} \Nml^{\alpha} \Nml^{\beta}$,
$\nabla$ denotes the Levi--Civita connection of $\mathring{g}$, $\mathring{\mbox{\upshape R}}$ denotes 
the scalar curvature of $\mathring{g}$, and indices are lowered and raised with
$\mathring{g}$ and its inverse. Equations \eqref{E:GAUSSINTRO}-\eqref{E:CODAZZIINTRO} 
are known as the \emph{Hamiltonian} and \emph{momentum} constraints.
In this article, we consider only initial data that verify the constant mean curvature condition
\begin{align} \label{E:INITIALCMC}
	\mathring{k}_{\ a}^a
	& \equiv 
	- 1,
\end{align}	
which is compatible with the CMC-transported spatial coordinates
gauge that we use in our analysis. 

\begin{remark}[\textbf{The CMC assumption is not a true restriction}]
\label{R:NONEEDFORCMC}
For the near-FLRW solutions under consideration,
the assumption \eqref{E:INITIALCMC} is not truly restrictive:
in \cite{iRjS2014b}, we showed that for near-FLRW data, 
not necessarily with constant mean curvature, 
the solution has a CMC hypersurface lying near the initial Cauchy hypersurface. 
That is, we could use the results of \cite{iRjS2014b} to generate a CMC hypersurface,
and then apply the methods of this paper starting from the 
state of the solution along it.
More precisely, in \cite{iRjS2014b}, we proved the existence of a CMC
hypersurface for solutions near the FLRW solution 
with Cauchy hypersurfaces diffeomorphic to $\mathbb{T}^3$.
However, it is straightforward to extend the result to the
$\mathbb{S}^3$ case considered here.
\end{remark}

\subsection{Prior results on singularity formation in solutions to Einstein's equations}
\label{SS:STABLESINGULARITYFORMATION}
Our main results are connected not only to the FLRW stability results
described in Subsect.\ \ref{SS:CONTEXTFORFLRW}, 
but also to a large body of work on the study of singularity formation
in solutions to various Einstein-matter systems.
The study of singularity formation in general relativity 
was jump-started by the famous ``singularity'' formation results
of Hawking and Penrose \cites{sH1967,rP1965}, which
showed that for matter models satisfying the \emph{strong energy condition},\footnote{This condition
is that $(\Tfour_{\mu \nu} - \frac{1}{2} (\gfour^{-1})^{\alpha \beta} \Tfour_{\alpha \beta} \gfour_{\mu \nu}) \bf{X}^{\mu} \bf{X}^{\nu} \geq 0$
for all timelike vectors $\bf{X}$. The scalar field matter model satisfies the condition.}
a large set of initial data lead to maximal globally hyperbolic developments that are geodesically
incomplete. Though compelling in their broad applicability,\footnote{In particular, ``Hawking's theorem'' \cite{rW1984}*{Theorem 9.5.1}
shows that the initial data that we consider in our main theorem lead to a spacetime
with incomplete timelike geodesics.}
the Hawking--Penrose theorems are soft in that they do not provide any information 
about the nature of the incompleteness.
In particular, the theorems leave open 
the possibilities that the incompleteness is tied to the blowup of an invariant quantity,
such as curvature, or alternatively that it is tied to the development of a Cauchy horizon,
beyond which the solution cannot be uniquely continued due to lack of information for
how to continue. Since both scenarios can be realized\footnote{For example, 
members of the famous Kerr black hole family of solutions to the Einstein-vacuum equations develop Cauchy horizons.} 
by explicit solutions, it is of interest
to prove theorems that reveal the nature of the incompleteness
based on knowledge of the initial data.

We now describe some prior breakdown results for solutions to Einstein's equations 
that provide information beyond that of the Hawking--Penrose theorems; our main result falls into this category.
There are many constructive results that provide a detailed description of 
stable singularity formation in solutions to various Einstein-matter systems 
under the assumption of spatial homogeneity, in which case the
equations reduce to a system of ODEs; see, for example,
\cites{aR2005b,jWgE1997} for overviews of these results.
There are also constructive stable singularity formation results 
in various symmetry reduced cases such that
the equations reduce to a system of PDEs in $1+1$-dimensions; see
\cites{pCjIvM1990,jIvM1990,hR2009b,hR2010}.
We now highlight the foundational works \cites{dC1991,dC1999b} of Christodoulou
on the Einstein-scalar field system in \emph{spherical symmetry} for $1$- or $2$-ended
asymptotically flat data,
in which he showed that the maximal globally hyperbolic
future developments of generic data are future-inextendible
as time-oriented Lorentzian manifolds with a $C^0$ metric.
This can be viewed as a ``severe'' form of breakdown, at the level of the metric.
See also the recent works of \cites{jLsO2017a,jLsO2017b}
on the spherically symmetric Einstein-Maxwell-(real) scalar field system
with asymptotically flat $2$-ended initial data, in which the authors proved that
the maximal globally hyperbolic
future developments of generic data are future-inextendible
as time-oriented Lorentzian manifolds with a $C^2$ metric, which is especially interesting
since Dafermos--Rodnianski \cites{mD2005,mDiR2005a} showed that the statement
is \emph{false} for this system if one replaces $C^2$ with $C^0$. That is,  
Dafermos--Rodnianski showed that for appropriate spherically symmetric data with non-zero charge,
the metric is \emph{extendible} as $C^0$ Lorentzian metric past the boundary of the maximal development.
There are also results that yield the construction of (but not the stability of)
singularity containing solutions under symmetry assumptions
\cites{sKjI1999,gR1996,sKaR1998,kA2000a,yCBjIvM2004,fS2002,fBpL2010c,eAfBjIpL2013b} 
and/or that rely on spatial analyticity assumptions \cites{lAaR2001,tDmHrAmW2002}.
There is yet another body of work in which the authors
constructed singular solutions by using a formulation of
Einstein's equations that allows one to solve a Cauchy problem
with initial data given on the singular hypersurface itself. This
is essentially equivalent to prescribing the singular data; 
see, for example \cites{kApT1999a,cCpN1998,rN1993a,rN1993b,pT1990,pT1991,pT2002}.
Many of the above results have been described and compared in detail by Rendall in \cite{aR2000b}, 
a work in which his main result was a proof of the existence of singular solutions to 
the Einstein-vacuum equations with Gowdy symmetry.

In three spatial dimensions without any symmetry assumptions, 
the only stable singularity formation
results for Einstein-matter systems
are our aforementioned works \cites{iRjS2014a,iRjS2014b}
and Luk's work \cite{jL2013},
in which he exhibited a class of solutions to the Einstein-vacuum equations
without symmetry assumptions arising from characteristic initial data 
such that the boundary of the maximal development
contains a null portion along which the metric
remains $C^0$ but its Christoffel symbols blow-up in $L^2$.

We now describe the results of \cites{iRjS2014a,iRjS2014b} in more detail.
The results proved there are similar to the ones we have obtained here,
but with different spatial topology and geometry. Specifically,  
in \cites{iRjS2014a,iRjS2014b}, we studied nearly spatially flat solutions with spatial topology $\mathbb{T}^3$.
These two aspects of the solutions allowed for a simplified approach to the analysis
compared to the present article, due in part to the fact that 
we were able to treat all spatial curvature terms as small error terms.
Specifically, in \cite{iRjS2014a}, we primarily studied 
\emph{linearized} versions of the Einstein-scalar field equations, 
where we linearized the equations about generalized Kasner solutions 
(which we simply refer to as ``Kasner solutions'' from now on, even though
traditionally only vacuum solutions are referred to as such).
These are spatially flat solutions to the 
Einstein-scalar field system that can be expressed as
\begin{align} \label{E:KASNER}
	\gfour & = - dt^2 + g,
	\qquad g = \sum_{i=1}^3 t^{2q_i} (dx^i)^2,
	\qquad \phi = A \ln t,
		\qquad (t,x) \in (0,\infty) \times \mathbb{T}^3,
\end{align}
where, the constants $q_i$ are known as the \emph{Kasner exponents}
and $A \geq 0$ is a constant.
The exponents $q_i$ and $A$ are constrained by the equations
 \begin{subequations} 
\begin{align} 
	\sum_{i = 1}^3 q_i & = 1, 
	\label{E:KASNERTRACECONDITION} 
	\\
	\sum_{i=1}^3 q_i^2 & = 1 - A^2,
	\label{E:KASNERHAMILTONIANCONSTRAINT}
\end{align}
\end{subequations}
where \eqref{E:KASNERTRACECONDITION} is a
normalization condition and \eqref{E:KASNERHAMILTONIANCONSTRAINT} is a consequence of 
the Hamiltonian constraint. 
The FLRW solution is the unique spatially
isotropic member of the Kasner family and corresponds to the case in which
all Kasner exponents are equal to $1/3$. 
In most cases, the Kasner solution has a Big Bang
singularity at $t=0$ where its Kretschmann scalar
$\Riemfour^{\alpha \beta \gamma \delta} \Riemfour_{\alpha \beta \gamma \delta}$
blows up like $t^{-4}$.
Our main result in \cite{iRjS2014a}
was a proof of linear stability for near spatially isotropic Kasner backgrounds.
More precisely, we linearized the Einstein-scalar field equations 
in CMC-transported coordinates gauge\footnote{In \cite{iRjS2014a}, we also exhibited a new family of parabolic
lapse gauges in which the stability results hold.}
around a Kasner solution and studied the asymptotic behavior of the linear solution as $t \downarrow 0$, that is,
as the Kasner solution's Big Bang singularity is approached. We
showed that if all Kasner exponents are near $1/3$, then the linearized solution
converges towards a \emph{Kasner footprint state} as $t \downarrow 0$.
Roughly, a Kasner footprint state is a solution to a VTD version of the linearized equations
in which all spatial derivative terms are discarded. If $x \in \mathbb{T}^3$ denotes
a ``spatial point'', then Kasner footprint states
can be thought of as an $x$-dependent family of Kasner solutions
whose Kasner exponents are allowed to vary with $x$ and whose diagonal form
has possibly been destroyed due to the influence of an $x$-dependent 
change of spatial basis. The results of \cite{iRjS2014a} 
can be viewed as linear analogs of Theorem~\ref{T:VERYROUGH} in the simpler case of 
$\mathbb{T}^3$ spatial topology, but for a larger class of background solutions. In \cite{iRjS2014a}, we also sketched a proof of the nonlinear
stability of the FLRW solution to the Einstein-scalar field system with spatial topology $\mathbb{T}^3$
in a neighborhood of its Big Bang singularity. In \cite{iRjS2014b}, 
we gave the complete proof of nonlinear stability of the FLRW solution near its Big Bang singularity, 
not for the Einstein-scalar field system, but instead for the Einstein-stiff fluid system.
The stiff fluid is a generalization of the scalar field in that it reduces to the scalar field
matter model when the fluid's vorticity is $0$.

Like the results of \cites{iRjS2014a,iRjS2014b}, the results of Theorem~\ref{T:VERYROUGH}
show that the singularity formation occurs in a monotonic, controlled fashion.
This kind of monotonic singularity formation had previously been predicted via heuristic arguments
for ``general'' singular solutions
in the scalar field
model case \cite{vBiK1972} and in the stiff fluid model case \cite{jB1978}.
Although these works are provocative, we
emphasize that our main results and those of \cites{iRjS2014a,iRjS2014b}
confirm the heuristic picture of
\cites{vBiK1972,jB1978} only for a small set of initial data.
We next emphasize that 
it might be that the scalar field and stiff fluid matter models
are special in that similar monotonic, controlled-type blowup 
results do not hold for typical matter models.
In any case, our proof of Theorem~\ref{T:VERYROUGH} 
certainly exploits various special properties of the scalar
field model. For example, the evolution equation \eqref{E:PARTIALTKCMC} for the
second fundamental form does not depend on the time derivatives of the
scalar field, which is a matter model-dependent property in part tied to the fact that the characteristics
of the scalar field wave equation are the same as those of the Einstein field equations.
The absence of $\partial_t \phi$-dependent terms in equation \eqref{E:PARTIALTKCMC}  
is important because our approach is fundamentally based on showing that
spatial derivatives are less singular than time derivatives; i.e.,
the absence of $\partial_t \phi$-dependent terms
signifies the absence of the most singular terms in the evolution equation
for the second fundamental form.
Similar structural results hold for the stiff fluid; see \cite{iRjS2014a}.
As a second example, we note that the special cancellations
in the divergence identity of Lemma~\ref{L:SFCURRENTDIV},
which are crucial for the energy estimates,
might not generalize to typical matter models.

The works \cites{vBiK1972,jB1978} painted
a very different heuristic picture of singularity formation 
than the picture painted in the work \cite{vBiKeL1970} on the Einstein-vacuum equations.
The latter work suggested that in three spatial dimensions,
vacuum solutions typically exhibit highly oscillatory behavior
near singularities, which are
 ``generically'' spacelike. 
The picture painted in \cite{vBiKeL1970},
though vague, is often referred to as the ``BKL conjecture.''
Although \cite{vBiKeL1970} stimulated a great deal of research on singularities in general relativity,
being not rigorous, it also generated a lot of controversy.
For example, Luk's aforementioned work \cite{jL2013} yields 
a non-trivial set of characteristic Einstein-vacuum initial data 
such that the maximal development's boundary has a null portion along which the Christoffel symbols blow up,
contradicting the picture of spacelike singularities.
We now note that our approach in \cites{iRjS2014a,iRjS2014b} and the present article to proving
the existence of stable, monotonic spacelike singularities
does not seem to directly extend to the vacuum case in three spatial dimensions;
this is at least compatible with the oscillatory picture suggested by \cite{vBiKeL1970}.
The obstruction to implementing our approach seems to be tied, at least in part, 
to the fact that in three spatial dimensions, the Hamiltonian constraint \eqref{E:GAUSSINTRO}
precludes the existence of singularity-forming
spatially homogeneous Einstein-vacuum solutions with second fundamental forms
having a small trace-free part.
In contrast, for the Einstein-scalar field system,
the FLRW solution itself has a second fundamental form with \emph{vanishing trace-free part}.
Hence, perturbed solutions have (at least initially) a second fundamental form with a small trace-free part.
In \cites{iRjS2014a,iRjS2014b} and in the present work, 
we crucially exploit this smallness in our perturbative nonlinear analysis.
We now highlight that the situation might be different in very high spatial dimensions.
Specifically, as one increases the number of spatial dimensions, it
is possible to write down explicit singularity-forming
spatially homogeneous Einstein-vacuum solutions 
with appropriately time-rescaled second fundamental forms that have 
small eigenvalues. If one were to study perturbations of these solutions,
then this smallness would allow one to show that many error terms
have small amplitudes (at least initially), 
which would restrain the effect of these terms on the dynamics (at least for short times). 
For this reason, we speculate that the smallness of the eigenvalues might be sufficient for proving
the nonlinear stability of these singular solutions using our approach.
Some evidence in favor of this was provided in \cite{tDmHrAmW2002} and
in \cite{jDmHpS1985}; in \cite{jDmHpS1985}, the authors provided
heuristic arguments for the existence of non-oscillatory spatially dependent solutions 
to the Einstein-vacuum equations in $10$ or more spatial dimensions
while in \cite{tDmHrAmW2002}, the authors rigorously constructed
a family of spatially analytic non-oscillatory singular solutions to
various Einstein-matter systems, including the Einstein-vacuum equations in $10$ or more spatial dimensions.
It is of interest to understand whether or not these singular solutions are dynamically stable.

We close this subsection by noting that in three spatial dimensions, 
the oscillatory picture\footnote{The work \cite{hR2001} also treated the stiff fluid case $c_s=1$
in Bianchi IX symmetry and yielded monotonic-type singularity formation results
similar to the ones we obtained in \cite{iRjS2014b}.} 
of solutions near singularities was
in fact confirmed by
Ringstr\"{o}m \cite{hR2001}
for solutions with Bianchi IX symmetry (a symmetry class in which the solutions are spatially homogeneous)
to the Euler-Einstein equations
under the equations of state
$p = c_s^2 \rho$, with $0 < c_s < 1$
and for the Einstein-vacuum equations.
However, outside of the class of spatially homogeneous solutions,
there are currently no examples of Einstein-vacuum solutions 
that are rigorously known to exhibit the kind of oscillatory behavior
near a singularity conjectured in \cite{vBiKeL1970}.
In total, as of the present, it is not clear to what extent 
the heuristic picture painted in \cite{vBiKeL1970} holds true.

\subsection{Ideas behind the proof}
\label{SS:PROOFIDEAS}
In this subsection, we summarize the main ideas behind the proof of our main result, 
namely Theorem~\ref{T:MAINTHM} (which we roughly summarized as Theorem~\ref{T:VERYROUGH}).

\begin{enumerate}
		\item  (\textbf{Analysis of the scale factor})
		It suffices to prove the blowup result as $t \uparrow \TCrunch$ since the same
		approach can be used to prove blowup as $t \downarrow \TBang$.
		In Sect.\ \ref{S:FLRW}, we exhibit some qualitative and quantitative properties of the FLRW scale factor $\scale(t)$.
		Obtaining a sharp picture of the asymptotic behavior of $\scale(t)$ as $t \uparrow \TCrunch$ 
		is of critical importance for our analysis
		since the strength of the singularities is tied to its behavior.
	\item (\textbf{Gauge choices and rescaled variables})
		We introduce time-rescaled variables (see Def.\ \ref{D:RESCALEDVARIABLES})
		and derive the Einstein-scalar field equations relative to constant-mean-curvature
		transported spatial coordinates gauge for the rescaled variables, where the mean curvature of
		$\Sigma_t$ is tied to the FLRW scale factor (see Prop.~\ref{P:RESCALEDVARIABLES}).
		This gauge features an elliptic PDE for the lapse, which introduces an infinite propagation speed into the PDE system.
		The infinite speed is in fact essential for synchronizing the singularities.
		The advantage of the rescaled variables is the following:
		we will show that at the low derivative levels, the rescaled variables either remain bounded\footnote{In fact, as we stated in 
		Theorem~\ref{T:VERYROUGH}, some variables converge; see Step (9).}
		or blow up at most at a very mild rate as the singularities are approached.
		For this reason, it is easy to roughly assess the strength of nonlinear products
		that are expressed in terms of the rescaled variables.
	\item (\textbf{A good vectorfield frame})
		To derive estimates, we construct 
		(see Sect.\ \ref{S:LIETRANSPORTEDFRAME})
		a globally defined vectorfield frame $\mathscr{Z} := \lbrace Z_{(1)}, Z_{(2)}, Z_{(3)}  \rbrace$
		on $\Sigma_t = \lbrace t \rbrace \times \mathbb{S}^3$ that is orthonormal with respect to the round background FLRW spatial metric $\StMet$.
		Clearly this step relies on the parallelizability of $\mathbb{S}^3$.
		We use this frame for differentiating the equations and also for contracting against various tensorfields
		to generate tensorfield frame components that we estimate.
	\item (\textbf{Solution norm and bootstrap assumptions})
		We introduce a total solution norm $\highnorm{16}(t)$
		(see Def.\ \ref{D:HIGHNORM})
		that measures the deviation of the perturbed time-rescaled solution variables
		from the analogous FLRW solution variables.
		The subscript $16$ indicates that the norm controls the derivatives of the solution
		variables in a manner that corresponds to commuting the equations up to $16$ times with
		the elements of $\mathscr{Z}$. We also note that
		the norm controls the \emph{components} of the solution with respect to the frame $\mathscr{Z}$ and the 
		corresponding co-frame.
		On any slab $[0,\Tboot) \times \mathbb{S}^3$ of classical existence for the perturbed solution,
		with $0 < \Tboot < \TCrunch$,
		we make the bootstrap assumption 
		$\highnorm{16}(t) \leq \varepsilon \scale^{-\upsigma}(t)$,
		where $\varepsilon$ and $\upsigma$
		are two small bootstrap parameters verifying $0 < \varepsilon^{1/2} \leq \upsigma$.
		We adjust their smallness throughout the course of the analysis.
		Note that $\highnorm{16}(t)$ is allowed to blow up as $t \uparrow \TCrunch$ since $\scale(\TCrunch)=0$.
		The main step in the proof is to derive improvements of the bootstrap assumption
		via a priori estimates.
	\item (\textbf{Improved estimates at the lower derivative levels})
		Using the bootstrap assumptions and a small-data assumption, 
		we derive improved sup-norm estimates at the lower derivative levels,
		where here and throughout, ``derivatives'' means the Lie derivatives of the time-rescaled solution variables
		with respect to the elements of $\mathscr{Z}$.
		The improved estimates show that at the lower derivative levels,
		the blowup-rate of the solution is less severe than
		the rate that is directly implied by the bootstrap assumptions for $\highnorm{16}(t)$.
		In fact, our estimates show that some of the time-rescaled solution variables remain
		$\mathcal{O}(\varepsilon)$, which turns out to be of crucial importance for the energy estimates.
		The proofs of the improved estimates are based on treating the 
		evolution equations as transport equations that are allowed to lose
		derivatives. The sharp information for the FLRW scale factor that we derived
		in Step (1) is important for these estimates.
	\item  (\textbf{Approximate monotonicity identities and the energy integral inequality})
			The starting point for our energy estimates is a family of divergence identities
			whose proofs rely on the observation of special cancellations. This is perhaps the most
			important aspect of the proof. Specifically,
			in Lemma~\ref{L:SFCURRENTDIV}, we combine divergence identities for the scalar field and the lapse
			in a manner that leads to the cancellation of some singular error terms and the emergence of
			some favorable ones. In Lemma~\ref{L:METRICCURRENTDIV}, we provide a similar, but less less delicate,
			divergence identity for the metric.
			Upon integrating these two divergence identities over $[0,t] \times \mathbb{S}^3$
			and combining them in a suitable proportion,
			we obtain energy identities
			showing that the perturbation of the solution away FLRW will not grow
			towards the singularities, modulo error integrals that have to be controlled.
			This is what we mean by ``approximate monotonicity identity.''
			The $M^{th}$-order energies control the derivatives of the time-rescaled solution variables
			from order $1$ up to order $M$,
			and the error integrals depend on the error terms that arise when we
			commute the equations with Lie derivatives
			with respect to the elements of $\mathscr{Z}$
			(see Sect.\ \ref{S:COMMUTEDEQUATIONS} for the structure of the error terms).
			Putting all of the error integrals in absolute values
			and using the crucial properties of the scale factor from Step (1),
			we obtain integral inequalities for a family of
			energies; see Prop.~\ref{P:FUNDAMENTALENERGYINTEGRALINEQUALITY}.
			After estimating the error integrals, we will be able to obtain a priori
			estimates for the energies, which is the main step in improving the bootstrap assumption.
			To control the non-differentiated solution,
			we use a separate argument that loses derivatives; 
			see Lemma~\ref{L:L2ESTIMATESFORNONDIFFERNTIATED}.
			The reason that we use a separate argument is that the non-differentiated
			equations involve a large source term, namely the  
			Ricci curvature of the time-rescaled metric (see the term $\Ric^{\#}$ on RHS~\eqref{E:EVOLUTIONSECONDFUNDRENORMALIZED}),
			which is somewhat inconvenient to treat.
			In particular, this aspect of the proof is more difficult compared to our work
			\cite{iRjS2014b} concerning nearly spatially flat metrics on $\mathbb{T}^3$.
	\item  (\textbf{Pointwise and $L^2$ estimates for the error terms})
		Using the improved estimates of Step (5), we derive pointwise
		estimates for the error terms in the $\mathscr{Z}$-commuted equations. Based on these
		pointwise estimates, we bound the $L^2$ norms of the error terms 
		in terms of the energies. Some of the error terms are borderline in
		a sense explained in the next point.
	\item (\textbf{A priori energy estimates and improvement of the bootstrap assumptions})
		Using the energy integral inequalities from Step (6), the $L^2$ estimates for error
		terms from the previous step, 
		and Gronwall's inequality, we inductively 
		derive a priori estimates for the energies,
		which, under a near-FLRW assumption, can easily be used to derive an improvement
		of the norm bootstrap assumptions; see Cor.~\ref{C:MAINAPRIORIENERGYESTIMATES}. 
		The FLRW scale factor appears in the Gronwall estimates and thus the sharp scale factor estimates
		that we derived in Step (1) are also important for this step. We stress that some of the terms
		appearing in the Gronwall estimates for the energies are borderline in the sense
		that they allow for the possibility of mild energy blowup. These borderline terms
		are sensitive in that to control their effect on the Gronwall estimates, 
		we crucially rely on the
		improved estimates of Step (5); without the improved estimates, 
		we would have obtained much more singular Gronwall estimates, which in turn would have prevented
		us from deriving an improvement of the norm bootstrap assumption.
	\item (\textbf{Proof of stable blowup and convergence}) Thanks to the previous steps, 
		it is straightforward to prove the main theorem (Theorem~\ref{S:MAINTHM}).
		More precisely, based on the a priori estimates from the previous step,
		it is a standard result that the perturbed solution exists on $[0,\TCrunch) \times \mathbb{S}^3$.
		Moreover, it is straightforward to prove, using arguments from \cite{iRjS2014b},
		that various time-rescaled solution variables converge 
		as $t \uparrow \TCrunch$ 
		and that
		$\Ricfour^{\alpha \beta} \Ricfour_{\alpha \beta}$
		blows up like as $\scale^{-4}(t)$ as $t \uparrow \TCrunch$; the proof of these facts
		essentially relies only on the improved estimates of Step (5).
\end{enumerate}

\subsection{Paper outline}
\label{SS:PAPEROUTLINE}
The remainder of the paper is organized as follows.

\begin{itemize}
	\item In Subsect.\ \ref{SS:NOTATIONANDCONVENTIONS}, we summarize some of our notation and conventions.
	\item In Sect.\ \ref{S:GEOMETRYBACKGROUND}, we construct some basic geometric objects on $\mathbb{S}^3$
		that we use throughout our analysis.
	\item In Sect.\ \ref{S:FLRW}, we formally introduce the FLRW solution
		and derive some properties of its scale factor,
		including information about its asymptotic behavior
		near the Big Bang and Big Crunch.
	\item In Sect.\ \ref{S:EQUATIONSINCMCTRANSPORTEDGAUGE}, we provide the Einstein-scalar field equations
		relative to CMC-transported spatial coordinates gauge.
	\item In Sect.\ \ref{S:RESCALEDVARSANDEQNS}, we introduce
		time-rescaled solution variables and derive the constraint/evolution/elliptic
		equations that they satisfy (relative to CMC-transported spatial coordinates gauge).
	\item In Sect.\ \ref{S:LIETRANSPORTEDFRAME}, we define $\Sigma_t$-projected Lie derivatives
		and explain how we extend various tensorfields defined on $\Sigma_0 = \lbrace 0 \rbrace \times \mathbb{S}^3$ 
		to the whole spacetime.
	\item In Sect.\ \ref{S:FIRSTVARANDCOMMUTATION}, we provide some geometric 
		variation and commutation identities.
	\item In Sect.\ \ref{S:COMMUTEDEQUATIONS}, we commute the evolution and constraint equations
		verified by the time-rescaled solution variables with 
		$\Sigma_t$-projected Lie derivatives and characterize the error terms.
	\item In Sect.\ \ref{S:ENERGYCURRENTSANDDIVIDENTITIES}, 
		we construct energy currents and use them to derive the fundamental
		divergence identities that hold for solutions.
		These can be viewed as ``approximate monotonicity identities'' in divergence form,
		and they form the starting point for our $L^2$ analysis.
	\item In Sect.\ \ref{S:INTEGRALSNORMSANDENERGIES}, we define the norms and energies
		that we use in our analysis.
	\item In Sect.\ \ref{S:DATAASSUMPTIONSANDBOOTSTRAPASSUMPTIONS},
		we state our smallness assumptions on the initial data and
		introduce norm bootstrap assumptions on a slab 
		of the form $[0,\Tboot) \times \mathbb{S}^3$.
		The norm is allowed to blow up near the Big Crunch.
	\item In Sect.\ \ref{S:PRELIMINEQUALITIES}, we use the 
		bootstrap assumptions to derive some preliminary comparison and sup-norm estimates.
	\item In Sect.\ \ref{S:STRONGC0ESTIMATES}, we
		prove ``strong'' sup-norm estimates, 
		which are less singular than the estimates directly implied by
		the bootstrap assumptions. These estimates are essential
		for closing the energy estimates and for proving convergence
		results near the singularities.
	\item In Sect.\ \ref{S:FUNDAMENTALENERGY},
		we integrate the divergence identities
		of Sect.\ \ref{S:ENERGYCURRENTSANDDIVIDENTITIES} 
		to derive integral inequalities for
		the energies. The integral inequalities
		feature the inhomogeneous terms from the commuted
		equations, which we control in the next three sections.
	\item In Sect.\ \ref{S:POINTWISE}, we derive pointwise estimates
		for the error terms in the commuted equations.
	\item In Sect.\ \ref{S:ELLIPTICLAPSEESTIMATES}, 
		we use the pointwise estimates from Sect.\ \ref{S:POINTWISE}
		to derive some preliminary $L^2$ estimates for the below-top-order derivatives of various solution variables
		as well as elliptic estimates that yield control of the lapse
		in terms of the energies.
	\item In Sect.\ \ref{S:L2BOUNDSFORTHEERRORTERMS}, we use the
		pointwise estimates from Sect.\ \ref{S:POINTWISE} 
		and the elliptic estimates of Sect.\ \ref{S:ELLIPTICLAPSEESTIMATES}
		to control the $L^2$ norms of all error terms in the commuted equations
		in terms of the energies.
	\item In Sect.\ \ref{S:ENERGYESTIMATES}, we derive our main priori energy estimates
		and derive improvements of the bootstrap assumptions.
	\item In Sect.\ \ref{S:MAINTHM}, we use the results from the previous
		sections to prove our main stable blowup theorem.
\end{itemize}

\subsection{Notation and conventions}
\label{SS:NOTATIONANDCONVENTIONS}
For the reader's convenience, we now summarize some notation and conventions that we use throughout the article.
Some of the concepts and objects referred to here are not formally defined until later in the article.

\subsubsection{Foliations}
The spacetime manifolds $\mathbf{M}$ (with boundary) that arise in our analysis are equipped with a time function $t$
that partitions certain regions $\mathbf{V} \subset \mathbf{M}$ 
into spacelike hypersurfaces of constant time: 
$\mathbf{V} = [0,T) \times \mathbb{T}^3 = \cup_{t \in [0,T)} \Sigma_t$. The
$\Sigma_t$ are CMC hypersurfaces. The level sets of $t$ 
are denoted by $\Sigma_t$:
\begin{align}
	\Sigma_t:= \lbrace (s,x) \in \mathbf{V} \ | \ s = t \rbrace.
\end{align}

\subsubsection{Metrics and connections}
\label{SSS:METRICSANDCONNECTIONS}
The spacetime metrics $\gfour$ under study are of the form $\gfour = - n^2 dt^2 + g_{ab} dx^a dx^b$.
$n(t,x)$ is the lapse function, and $g_{ij}(t,x)$ is a Riemannian metric on $\Sigma_t$.

Throughout, $\Dfour$ denotes the Levi--Civita connection of the spacetime metric $\gfour$ 
and $\nabla$ the Levi--Civita of the Riemannian metric $g$. $\nabla$ agrees with the Levi--Civita connection of 
the time-rescaled metric $\newg$ defined in Def.\ \ref{D:RESCALEDVARIABLES}.

\subsubsection{Indices} \label{SSS:INDICESANDFRAME}
Greek ``spacetime'' indices $\alpha, \beta, \cdots$ take on the values $0,1,2,3$ 
and are used to denote components of spacetime tensorfields relative to the 
spacetime coordinates $\lbrace x^{\alpha} \rbrace_{\alpha=0,1,2,3}$, where $x^0 = t$.
Lowercase Latin ``spatial'' indices $a,b,\cdots$ take on the values $1,2,3$
and are used to denote components of tensorfields
relative to the transported spatial coordinates $\lbrace x^a \rbrace_{a=1,2,3}$. 
We use capital Latin indices $A,B,\cdots$ to denote the components of tensorfields 
relative to the $\Sigma_t$-tangent frame 
$\mathscr{Z}
	:= 
\left\lbrace
			Z_{(1)},
			Z_{(2)},
			Z_{(3)}
		\right\rbrace
$
and co-frame
$
\Theta
:= 
		\left\lbrace
			\theta^{(1)},
			\theta^{(2)},
			\theta^{(3)}
		\right\rbrace$,
	which we construct in Subsect.\ \ref{SS:EXTENDINGSIGMA0TANGENTTENSORFIELDSTOSIGMATTANGENTTENSORFIELDS}.
Primed indices as $\alpha'$ are used in the same way as their non-primed counterparts,
and the same remarks hold for tilded indices such as $\widetilde{\alpha}$.
Repeated indices are summed over (from $0$ to $3$ if they are Greek, and from $1$ to $3$ if they are Latin). 
Lowercase spatial indices are lowered and raised with the Riemannian $3-$metric $g_{ij}$ and its inverse $g^{ij}$. 
We never implicitly lower and raise indices with the time-rescaled metric $\newg$ defined in Def.\ \ref{D:RESCALEDVARIABLES}; 
we always explicitly indicate the factors of $\newg$ and $\newg^{-1}$ whenever the rescaled metric is involved in lowering or raising,
or we use the sharp notation ``$\#$'' from Def.\ \ref{D:MUSICAL}.

\subsubsection{Natural contractions}
\label{SSS:NATURALCONTRACTIONS}
We use the notation ``$\cdot$'' to denote a natural contraction (without the need to raise or lower indices) of two tensorfields.
For example, if $\xi_{\alpha}$ is a one-form and $X^{\alpha}$ is a vectorfield,
then $\xi \cdot X := \xi_{\alpha} X^{\alpha}$.

\subsubsection{Spacetime tensorfields and $\Sigma_t-$tangent tensorfields}
\label{SSS:SPACETIMEANDSIGMATTANGENT}
We denote spacetime tensorfields $\Tfour_{\nu_1 \cdots \nu_n}^{\ \ \ \ \ \ \mu_1 \cdots \mu_m}$ in bold font. 
We denote the $\gfour$-orthogonal projection of $\Tfour_{\nu_1 \cdots \nu_n}^{\ \ \ \ \ \ \mu_1 \cdots \mu_m}$ 
onto the constant-time hypersurfaces $\Sigma_t$ in non-bold font: 
$T_{b_1 \cdots b_n}^{\ \ \ \ \ a_1 \cdots a_m}$,
or by using the $\Sigma_t$-projection notation defined in
Subsect.\ \ref{SS:PROJECTIONTENSORFIELD}.
We also denote general $\Sigma_t-$tangent tensorfields in non-bold font.

\subsubsection{Frame components and differential operators} \label{SSS:COORDINATES}
Many of our estimates are for the components of tensorfields relative to the  
frame $\mathscr{Z}$ and co-frame $\Theta$ described in Subsubsect.\ \ref{SSS:INDICESANDFRAME}.

If $\vec{I} = (i_1,i_2,\cdots,i_N)$ is an array with $i_a \in \lbrace 1,2,3 \rbrace$ for $1 \leq a \leq N$, then
$\SigmatLie_{\mathscr{Z}}^{\vec{I}} := \SigmatLie_{Z_{(i_1)}} \SigmatLie_{Z_{(i_2)}} \cdots \SigmatLie_{Z_{(i_N)}}$
denotes the corresponding $N^{th}$-order $\Sigma_t$-projected Lie derivative operator (see definition \eqref{E:SIGMATPROJECT}),
where the $Z_{(A)}$ are elements of the frame
$
\mathscr{Z}
	:= 
\left\lbrace
			Z_{(1)},
			Z_{(2)},
			Z_{(3)}
		\right\rbrace
$.
$|\vec{I}| := N$ denotes the order of $\vec{I}$.
If $\SigmatLie_{\mathscr{Z}}^{\vec{I}}$ acts on a scalar function $f$, then we sometimes
write $\mathscr{Z} ^{\vec{I}} f$ instead of $\SigmatLie_{\mathscr{Z}}^{\vec{I}} f$.
If $\vec{I} = (\iota_1, \iota_2, \cdots, \iota_N)$,
		then 
		$\vec{I}_1 + \vec{I}_2 = \vec{I}$ 
		means that
		$\vec{I}_1 = (\iota_{k_1}, \iota_{k_2}, \cdots, \iota_{k_m})$
		and
		$\vec{I}_2 = (\iota_{k_{m+1}}, \iota_{k_{m+2}}, \cdots, \iota_{k_N})$,
		where $1 \leq m \leq N$ and
		$k_1, k_2, \cdots, k_N$ is a permutation of 
		$1,2,\cdots,N$. 
		Sums such as $\vec{I}_1 + \vec{I}_2 + \cdots + \vec{I}_M = \vec{I}$
		have an analogous meaning.

\subsubsection{Commutators and Lie brackets} \label{SSS:COMMUTATORS}
Given two operators $A$ and $B$,
\begin{align} 
	[A,B]
\end{align}
denotes the operator commutator $A B - B A$.

If $\mathbf{X}$ and $\mathbf{Y}$ are two vectorfields, then
\begin{align}
	\mathcal{L}_{\mathbf{X}}\mathbf{Y} = [\mathbf{X}, \mathbf{Y}] 
\end{align}
denotes the Lie derivative of $\mathbf{Y}$ with respect to $\mathbf{X}$.
Relative to an arbitrary coordinate system,
\begin{align} \label{E:LIEBRACKETCOORDINATES}
	[\mathbf{X}, \mathbf{Y}]^{\mu} = \mathbf{X}^{\alpha} \partial_{\alpha} \mathbf{Y}^{\mu} 
		- \mathbf{Y}^{\alpha} \partial_{\alpha} \mathbf{X}^{\mu}.
\end{align}

\subsubsection{Constants}
\label{SSS:CONSTANTS}
We use the symbols $C$ and $c$ to denote positive constants that are free to 
vary from line to line. These constants can be chosen to be independent of the
bootstrap parameters $\varepsilon$ and $\upsigma$ from Subsect.\ \ref{SS:BOOTSTRAPASSUMPTIONS},
as long as $\varepsilon$ and $\upsigma$ are sufficiently small.
If we want to emphasize that a constant depends on a quantity ``$q$,'' then we use the notation $C_q$.
We write $A \lesssim B$ to mean that there exists a constant $C > 0$ such that
$A \leq C B$. We write $A = \mathcal{O}(B)$ to mean that $|A| \lesssim B$.

\section{Geometry of $\mathbb{S}^3$ and the round metric}
\label{S:GEOMETRYBACKGROUND}
In this section, we construct some basic geometric objects on $\mathbb{S}^3$.
Although these objects are adapted to the background
FLRW spatial geometry,
some of our key analysis for perturbed solutions
relies on this geometry.

\begin{definition}[\textbf{A round metric on} $\mathbb{S}^3$]
	\label{D:ROUNDMETRIC}
	We let $\StMet$ denote the round metric on $\mathbb{S}^3$
	with scalar curvature $\ScalarCurArg{\StMet} = \frac{2}{3}$.
\end{definition}

\begin{remark}
	\label{R:BACKGROUNDMETRICINRECTANGULARCOORDINATES}
	If we view $\mathbb{S}^3 \subset \mathbb{R}^4$ as the submanifold 
	$\lbrace (y^1,y^2,y^3,y^4) \in \mathbb{R}^4 \ | \ \sum_{i=1}^4 (y^i)^2 = 9 \rbrace$,
	then $\StMet = E|_{\mathbb{S}^3}$, where $E$ is the standard Euclidean metric on $\mathbb{R}^4$, that is,
	$E_{ij} := \mbox{diag}(1,1,1,1)$ relative to standard ``rectangular'' coordinates $\lbrace y^i \rbrace_{i=1,\cdots,4}$
	on $\mathbb{R}^4$.
\end{remark}

\begin{definition}[\textbf{Rotations on} $\mathbb{S}^3$]
	\label{D:KILLINGOFROUND}
	For $1 \leq k < l \leq 4$,
	we define the (six) rotation vectorfields $\Rot{k}{l}$
	on $\mathbb{S}^3 := \lbrace (y^1,y^2,y^3,y^4) \in \mathbb{R}^4 \ | \ \sum_{i=1}^4 (y^i)^2 = 9 \rbrace$
	as follows:
	\begin{align} \label{E:ROTATIONS}
	\Rot{k}{l} 
	& :=
	\frac{1}{3}
	\left\lbrace
		y^k \frac{\partial}{\partial y^l} - y^l \frac{\partial}{\partial y^k}
	\right\rbrace|_{\mathbb{S}^3}
	\end{align}
	relative to the rectangular coordinates on $\mathbb{R}^4$ mentioned 
	in Remark~\ref{R:BACKGROUNDMETRICINRECTANGULARCOORDINATES}.
\end{definition}

It is a standard fact that the 
$\Rot{k}{l}$ form a basis for the six-dimensional Lie algebra of Killing fields\footnote{Recall
that $Z$ is a $\StMet-$Killing field if and only if $\Lie_Z \StMet = 0$.} of $\StMet$.

In the next lemma, we use the rotation vectorfields
to construct a global frame of $\StMet-$Killing vectorfields
and a corresponding global co-frame. This construction relies on
the parallelizability of $\mathbb{S}^3$.

\begin{lemma}[\textbf{An} $\mathbb{S}^3$-\textbf{basis of Killing vectorfields for} $\StMet$ \textbf{and the corresponding co-frame}]
	\label{L:BASISOFKILLINGFIELDS}
		Consider the following $\StMet-$Killing vectorfields on $\mathbb{S}^3$:
		\begin{align} \label{E:ROUNDMETRICKILLING}
			Z_{(1)} 
			& := \Rot{1}{2} + \Rot{3}{4},
			\qquad
			Z_{(2)}
			:= \Rot{2}{3} + \Rot{1}{4},
			\qquad
			Z_{(3)}
			:= \Rot{1}{3} - \Rot{2}{4}.
		\end{align}
		
		Then
		$
		\left\lbrace
			Z_{(1)},
			Z_{(2)},
			Z_{(3)}
		\right\rbrace
		$
		are linearly independent.
		Thus, we can define corresponding basis-dual one-forms $\theta^{(A)}$ 
		by the following formula:\footnote{In this article, we do not need precise expressions for 
		the $\theta^{(A)}$ relative to coordinates.}
		\begin{align} \label{E:DUALTOROUNDMETRICKILLING}
			\theta^{(A)}(Z_{(B)}) 
			& = \delta_B^A,
			&&
			(A,B = 1,2,3),
		\end{align}
		where $\delta_B^A$ is the standard Kronecker delta function
		and $\theta^{(A)}(Z_{(B)}) := \theta_a^{(A)} Z_{(B)}^a$
		relative to arbitrary local coordinates on $\mathbb{S}^3$.
\end{lemma}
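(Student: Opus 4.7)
The plan is to reduce the two claims to a pointwise linear algebra computation at each point of $\mathbb{S}^3$. Since the linear independence of a global frame is equivalent to linear independence of its values at each point, and once pointwise linear independence of three vectors in a three-dimensional tangent space is established, the basis-dual one-forms $\theta^{(A)}$ are defined by the usual formula from linear algebra applied fiberwise (and inherit smoothness from the smoothness of the $Z_{(A)}$). So the entire lemma reduces to the first claim.

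To prove pointwise linear independence at a point $p = (y^1, y^2, y^3, y^4) \in \mathbb{S}^3 \subset \mathbb{R}^4$, I would substitute the definition \eqref{E:ROTATIONS} into \eqref{E:ROUNDMETRICKILLING} to read off the rectangular components. A direct calculation in the ambient coordinates on $\mathbb{R}^4$ yields
\begin{align*}
	3 Z_{(1)}|_p & = \bigl(-y^2,\, y^1,\, -y^4,\, y^3\bigr), \\
	3 Z_{(2)}|_p & = \bigl(-y^4,\, -y^3,\, y^2,\, y^1\bigr), \\
	3 Z_{(3)}|_p & = \bigl(-y^3,\, y^4,\, y^1,\, -y^2\bigr).
\end{align*}
From Remark~\ref{R:BACKGROUNDMETRICINRECTANGULARCOORDINATES}, $\StMet$ is induced by the Euclidean metric $E$ on $\mathbb{R}^4$, so I can test linear independence using $E$-inner products.

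Next I would verify three algebraic identities at every $p \in \mathbb{S}^3$, i.e., subject to $\sum_{i=1}^4 (y^i)^2 = 9$:
\begin{itemize}
	\item $E(Z_{(A)}|_p, Z_{(B)}|_p) = \delta_{AB}$ for $A,B \in \{1,2,3\}$. For $A = B$ this gives $\tfrac{1}{9} \sum_i (y^i)^2 = 1$; for $A \neq B$ the terms cancel in pairs (for instance $E(3Z_{(1)},3Z_{(2)}) = y^2 y^4 - y^1 y^3 - y^2 y^4 + y^1 y^3 = 0$, and similarly for the other two pairs).
	\item $E(Z_{(A)}|_p, p) = 0$, which confirms that each $Z_{(A)}|_p$ lies in $T_p \mathbb{S}^3$ (e.g., $E(3Z_{(1)}, p) = -y^1 y^2 + y^1 y^2 - y^3 y^4 + y^3 y^4 = 0$).
\end{itemize}
Together, $\{Z_{(1)}|_p, Z_{(2)}|_p, Z_{(3)}|_p\}$ is an $E$-orthonormal triple of vectors in the three-dimensional subspace $T_p \mathbb{S}^3 \subset T_p \mathbb{R}^4$, hence linearly independent and in fact a $\StMet$-orthonormal basis of $T_p \mathbb{S}^3$.

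With pointwise linear independence in hand, the dual one-forms $\theta^{(A)}$ are defined at each $p$ by the linear-algebraic requirement \eqref{E:DUALTOROUNDMETRICKILLING}; smoothness of $\theta^{(A)}$ follows from Cramer's rule applied to the smoothly varying matrix $(Z_{(B)}^a(p))$, which is invertible at each point. No single step is a serious obstacle: the only thing to watch is bookkeeping of signs and indices in the six cross-term cancellations that establish $E$-orthogonality, which is why I would organize the computation around the explicit rectangular components displayed above rather than working intrinsically on $\mathbb{S}^3$.
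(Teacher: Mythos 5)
Your proof is correct and takes essentially the same route the paper does: the paper's proof of this lemma defers to Lemma~\ref{L:FRAMEVECTORFIELDLIEBRACKET}, whose proof likewise establishes linear independence by observing that the $Z_{(A)}$ are tangent to $\mathbb{S}^3$ and $E$-orthonormal, hence a $\StMet$-orthonormal frame. Your write-up just makes explicit the rectangular-coordinate computations that the paper calls ``straightforward.''
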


\begin{proof}
	The linear independence of the
	$
		\left\lbrace
			Z_{(1)},
			Z_{(2)},
			Z_{(3)}
		\right\rbrace
		$
	is easy to check by direct calculation.
	Alternatively, the linear independence follows from 
	Lemma~\ref{L:FRAMEVECTORFIELDLIEBRACKET} below,
	where it is shown that the $\lbrace Z_{(A)} \rbrace_{A=1,2,3}$
	form a global $\StMet-$orthonormal frame on $\mathbb{S}^3$.
\end{proof}

\begin{definition}[\textbf{The $\StMet-$orthonormal frame and co-frame}]
	\label{D:ZANDTHETASETS}
	We define the vectorfield frame $\mathscr{Z}$ on $\mathbb{S}^3$ and the co-frame $\Theta$ as follows:
	\begin{subequations}
	\begin{align}
		\mathscr{Z}
		& := 
		\left\lbrace
			Z_{(1)},
			Z_{(2)},
			Z_{(3)}
		\right\rbrace,
			\label{E:ZSET} \\
		\Theta
		& := 
		\left\lbrace
			\theta^{(1)},
			\theta^{(2)},
			\theta^{(3)}
		\right\rbrace.
		\label{E:THETASET}
	\end{align}
	\end{subequations}
\end{definition}

In the next lemma, we exhibit some basic properties of the frame $\mathscr{Z}$.

\begin{lemma}[\textbf{Basic properties of the frame} $\mathscr{Z}$]
\label{L:FRAMEVECTORFIELDLIEBRACKET}
	The elements of the set $\mathscr{Z}$ from Def.\ \ref{D:ZANDTHETASETS}
	form a global $\StMet-$orthonormal frame on $\mathbb{S}^3$.
	Moreover, the following vectorfield commutation relations hold:
	\begin{align} \label{E:FRAMEVECTORFIELDLIEBRACKET}
		[Z_{(A)},Z_{(B)}] = \frac{2}{3} \epsilon_{ABC} Z_{(C)},
	\end{align}
	where $\epsilon_{ABC}$ is the fully antisymmetric symbol normalized by $\epsilon_{123} = 1$.
\end{lemma}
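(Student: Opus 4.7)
The plan is to prove both assertions by reducing everything to direct coordinate calculations in the ambient $\mathbb{R}^4$, using Remark~\ref{R:BACKGROUNDMETRICINRECTANGULARCOORDINATES} (so that $\StMet$ is the restriction of the Euclidean metric $E$ to $\mathbb{S}^3$) and the coordinate formula \eqref{E:LIEBRACKETCOORDINATES} for Lie brackets. The computations are tedious but routine; the only ``idea'' is to first establish the universal formulas for $[\Rot{i}{j},\Rot{k}{l}]$ and $E(\Rot{i}{j},\Rot{k}{l})$, from which all six required identities follow by substitution.

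First I would derive the master commutator identity. Starting from \eqref{E:ROTATIONS}, a direct application of \eqref{E:LIEBRACKETCOORDINATES} (using $[y^a \partial_b, y^c \partial_d] = y^a \delta_b^c \partial_d - y^c \delta_d^a \partial_b$) gives
\begin{align*}
[\Rot{i}{j},\Rot{k}{l}]
= \frac{1}{3}\bigl(\delta_{jk}\Rot{i}{l} - \delta_{ik}\Rot{j}{l} - \delta_{jl}\Rot{i}{k} + \delta_{il}\Rot{j}{k}\bigr),
\end{align*}
with the antisymmetry convention $\Rot{k}{l}=-\Rot{l}{k}$. Substituting the definitions \eqref{E:ROUNDMETRICKILLING} of $Z_{(1)},Z_{(2)},Z_{(3)}$, each bracket $[Z_{(A)},Z_{(B)}]$ expands into four terms, and I would check by inspection that the mixed indices pair up to give $\frac{2}{3}Z_{(C)}$ in the cyclic case and, for instance, to give $\frac{2}{3}(\Rot{1}{3} - \Rot{2}{4}) = \frac{2}{3}Z_{(3)}$ for $[Z_{(1)},Z_{(2)}]$. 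This establishes \eqref{E:FRAMEVECTORFIELDLIEBRACKET}.

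Next I would prove the orthonormality statement. Writing $\Rot{i}{j}^{\,a} = \frac{1}{3}(y^i \delta_j^a - y^j \delta_i^a)$ in ambient coordinates, a direct contraction with $E_{ab}=\delta_{ab}$ yields the master inner-product formula
\begin{align*}
E(\Rot{i}{j},\Rot{k}{l})
= \frac{1}{9}\bigl(y^i y^k \delta_{jl} - y^i y^l \delta_{jk} - y^j y^k \delta_{il} + y^j y^l \delta_{ik}\bigr).
\end{align*}
Since $\Rot{i}{j}$ is tangent to $\mathbb{S}^3$ and $\StMet = E|_{\mathbb{S}^3}$, I can evaluate $\StMet(Z_{(A)},Z_{(B)})$ for each pair $A,B$ by summing the four corresponding inner products. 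For the diagonal entries one sees that all cross terms vanish and the surviving contribution collapses to $\frac{1}{9}\sum_{i=1}^4 (y^i)^2 = 1$ using the defining relation of $\mathbb{S}^3$. For the off-diagonal entries (e.g.\ $\StMet(Z_{(1)},Z_{(2)})$) the four monomials $\pm\frac{1}{9}y^iy^j$ cancel pairwise.

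Finally, orthonormality at every point of $\mathbb{S}^3$ immediately implies that $\mathscr{Z}$ is pointwise linearly independent, hence a global frame; this also supplies the missing linear-independence step in Lemma~\ref{L:BASISOFKILLINGFIELDS}. The main obstacle is nothing more than patient index book-keeping in the two expansions above; once the master formulas are in hand, the lemma is essentially mechanical.
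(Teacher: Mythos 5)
Your proposal is correct and takes essentially the same approach as the paper, which likewise reduces both claims to direct calculation in the ambient $\mathbb{R}^4$ using $\StMet = E|_{\mathbb{S}^3}$ and the Lie-bracket coordinate formula. The paper leaves the index computations as "straightforward," and your master formulas for $[\Rot{i}{j},\Rot{k}{l}]$ and $E(\Rot{i}{j},\Rot{k}{l})$ simply carry them out explicitly.
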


\begin{proof}
	By construction, the elements of $\lbrace Z_{(A)} \rbrace_{A=1,2,3}$ are tangent to 
	$\mathbb{S}^3 = \lbrace (y^1,y^2,y^3,y^4) \in \mathbb{R}^4 \ | \ \sum_{i=1}^4 (y^i)^2 = 9 \rbrace \subset \mathbb{R}^4$.
	Moreover, it is straightforward to compute that the $\lbrace Z_{(A)} \rbrace_{A=1,2,3}$,
	viewed as vectorfields on $\mathbb{R}^4$ defined by \eqref{E:ROUNDMETRICKILLING},
	are orthonormal with respect to the Euclidean metric $E$ on $\mathbb{R}^4$.
	Since $\StMet = E|_{\mathbb{S}^3}$, it follows that
	the $\lbrace Z_{(A)} \rbrace_{A=1,2,3}$
	form a global $\StMet-$orthonormal frame on $\mathbb{S}^3$.
	\eqref{E:FRAMEVECTORFIELDLIEBRACKET} follows from 
	definition \eqref{E:ROUNDMETRICKILLING}
	and straightforward computations.
\end{proof}

\begin{corollary}[\textbf{Frame decomposition of $\StMet^{-1}$ and $\StMet$}]
	\label{C:STMETRICFRAMEEXPANDED}
	With $\delta^{AB}$ and $\delta_{AB}$ denoting standard Kronecker deltas, 
	we have
	\begin{align} \label{E:STINVERSEMETRICFRAMEEXPANDED}
		\StMet^{-1}
		& = \delta^{AB} Z_{(A)} \otimes Z_{(B)},
	&&
	\StMet
		= \delta_{AB} \theta^{(A)} \otimes \theta^{(B)}.
	\end{align}
\end{corollary}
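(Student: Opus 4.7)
The approach is essentially a component computation: since $\mathscr{Z}$ has been shown in Lemma~\ref{L:FRAMEVECTORFIELDLIEBRACKET} to be a global $\StMet$-orthonormal frame, and $\Theta$ is its algebraic dual by \eqref{E:DUALTOROUNDMETRICKILLING}, both identities will follow by expanding the relevant tensorfields in the frame/co-frame bases and identifying the components.

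For the expression for $\StMet$, I would first note that since $\Theta = \{\theta^{(1)},\theta^{(2)},\theta^{(3)}\}$ is a (global) co-frame on $\mathbb{S}^3$, any type $(0,2)$ tensorfield admits a unique expansion $\StMet = c_{AB}\, \theta^{(A)} \otimes \theta^{(B)}$, with coefficients recovered by evaluation on the frame: $c_{AB} = \StMet(Z_{(A)}, Z_{(B)})$. The $\StMet$-orthonormality established in Lemma~\ref{L:FRAMEVECTORFIELDLIEBRACKET} then yields $c_{AB} = \delta_{AB}$, which gives the second identity in \eqref{E:STINVERSEMETRICFRAMEEXPANDED}.

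For the expression for $\StMet^{-1}$, I would use the analogous expansion $\StMet^{-1} = c^{AB}\, Z_{(A)} \otimes Z_{(B)}$ with coefficients $c^{AB} = \StMet^{-1}(\theta^{(A)}, \theta^{(B)})$. To compute these, observe that the matrix $[\StMet^{-1}(\theta^{(A)}, \theta^{(B)})]$ is the inverse of the matrix $[\StMet(Z_{(A)}, Z_{(B)})] = [\delta_{AB}]$, by the standard duality between $\StMet$ and $\StMet^{-1}$ combined with the fact that, under the identification of tangent and cotangent spaces induced by $\StMet$, the co-frame $\theta^{(A)}$ is the $\StMet$-musical image of $Z_{(A)}$ precisely because $\{Z_{(A)}\}$ is orthonormal. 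Thus $c^{AB} = \delta^{AB}$, which gives the first identity.

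There is no real obstacle here: the content is packaged entirely into Lemma~\ref{L:FRAMEVECTORFIELDLIEBRACKET} (orthonormality) and \eqref{E:DUALTOROUNDMETRICKILLING} (duality), and the remaining step is the elementary linear-algebra fact that an orthonormal basis decomposes the metric and its inverse with Kronecker-delta components. The only (very minor) point of care is to distinguish raised- from lowered-index Kronecker deltas and to verify that the dual-basis construction is compatible with the musical isomorphism, as sketched above.
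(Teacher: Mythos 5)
Your argument is correct and matches the paper's: both the paper and your proposal obtain $\StMet = \delta_{AB}\,\theta^{(A)}\otimes\theta^{(B)}$ directly from the $\StMet$-orthonormality established in Lemma~\ref{L:FRAMEVECTORFIELDLIEBRACKET} together with the duality relation \eqref{E:DUALTOROUNDMETRICKILLING}, and then derive the companion identity for $\StMet^{-1}$ from these same ingredients. The only cosmetic difference is that you package the inverse step via the matrix-inverse/musical-isomorphism observation, whereas the paper simply contracts the $\StMet$ identity against the candidate $\delta^{AB} Z_{(A)}\otimes Z_{(B)}$ using \eqref{E:DUALTOROUNDMETRICKILLING}; these are the same computation.
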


\begin{proof}
	The identity
	$\StMet = \delta_{AB} \theta^{(A)} \otimes \theta^{(B)}$
	is easy to verify using that
	the vectorfields $\lbrace Z_{(A)} \rbrace_{A=1,2,3}$
	form a global $\StMet-$orthonormal frame on $\mathbb{S}^3$
	and using the defining property \eqref{E:DUALTOROUNDMETRICKILLING}.
	The identity 
	$\StMet^{-1} = \delta^{AB} Z_{(A)} \otimes Z_{(B)}$
	then follows from the identity 
	$\StMet = \delta_{AB} \theta^{(A)} \otimes \theta^{(B)}$
	and \eqref{E:DUALTOROUNDMETRICKILLING}.
\end{proof}

\section{FLRW solution and properties of its scale factor}
\label{S:FLRW}
In this section, we exhibit some basic properties of the FLRW solution
to the Einstein-scalar field system. In particular, we derive some 
quantitative properties of its scale factor, which are important
for all of the ensuing analysis.

\subsection{The FLRW solution and its scale factor}
\label{SS:FLRWANDSCALEFACTOR}
As we mentioned at the beginning, the well-known FLRW solution 
to \eqref{E:EINSTEINSF}-\eqref{E:WAVEMODEL}
with spatial topology $\mathbb{S}^3$
can be expressed as follows:
\begin{align} \label{E:FLRW}
	\gfour_{FLRW}
	& := - dt^2 + \scale^{2/3}(t) \StMet,
	\qquad
	\partial_t \phi_{FLRW}
		:= \sqrt{\frac{2}{3}} \scale^{-1}(t),
		\qquad
	\nabla \phi_{FLRW}
		:= 0,
\end{align}
where $\StMet$ is the round metric on $\mathbb{S}^3$ from Def.\ \ref{D:ROUNDMETRIC}
and by our conventions, the \emph{scale factor} $\scale = \scale(t)$
satisfies the following normalization conditions:
\begin{align} \label{E:SCALEFACTORINITIALCONDITONS}
	\scale(0)
	& = 1,
	&&
	\scale'(0)
	= 0,
\end{align}
where throughout the paper, 
\begin{align} \label{E:SCALEPRIME}
	\scale'(t) := \frac{d}{dt} \scale(t).
\end{align}

\begin{remark}[\textbf{Notation involving powers of} $\scale$]
	Throughout the paper, we often write $\scale^p(t)$ instead of $(\scale(t))^p$.
	Moreover, we often write $\scale^p$ instead of $\scale^p(t)$ when the time $t$ is clear
	from context.
\end{remark}

In order for the fields in \eqref{E:FLRW} to actually be a solution to the Einstein-scalar field system,
the scale factor must verify the well-known Friedmann ODEs. We postpone further discussion
of this until Subsect.\ \ref{SS:SCALEFACTORANDHUBBLE} (see Lemma~\ref{L:FRIEDMANN}).

Later in the article, in stating some equations, we 
will find it convenient to refer to the \emph{Hubble factor} $\Hubble$, 
defined by
\begin{align} \label{E:HUBBLEFACTOR}
	\Hubble
	= \Hubble(t)
	& := \frac{\scale'(t)}{\scale(t)}.
\end{align}

\subsection{Analysis of the scale factor and the Hubble factor}
\label{SS:SCALEFACTORANDHUBBLE}
In this section, we derive some properties of the scale factor and the Hubble factor.

\begin{lemma}[\textbf{Friedmann's\footnote{The ODEs of Lemma~\ref{L:FRIEDMANN} are equivalent to
(but not the same as)
what are usually referred to as ``Friedmann's equations'' in the literature.} equations}]
	\label{L:FRIEDMANN}
	Let $\scale = \scale(t)$ be the scale factor appearing in the expression 
	\eqref{E:FLRW} for the FLRW metric and scalar field,
	subject to the initial conditions \eqref{E:SCALEFACTORINITIALCONDITONS}.
	Then $(\gfour_{FLRW}, \phi_{FLRW})$
	verify the Einstein-scalar field equations \eqref{E:EINSTEINSF}-\eqref{E:WAVEMODEL}
	if and only if $\scale$ verifies the following ODEs:
	\begin{subequations}
	\begin{align}
		(\scale')^2
		& = 1 - \scale^{4/3},
			\label{E:FRIEDMANNFIRSTORDER} \\
		\scale''
		& = - \frac{2}{3} \scale^{1/3}.
		\label{E:FRIEDMANNSECONDORDER}
	\end{align}
	\end{subequations}
	Moreover, any solution to the ODE \eqref{E:FRIEDMANNSECONDORDER} 
	that verifies \eqref{E:FRIEDMANNFIRSTORDER} at time $0$
	must in fact verify \eqref{E:FRIEDMANNFIRSTORDER} during
	its maximal interval of classical existence.
	
	Finally, equation \eqref{E:FRIEDMANNSECONDORDER} implies the following ODE for
	the Hubble factor defined in \eqref{E:HUBBLEFACTOR}:
	\begin{align} \label{E:HUBBLEODE}
		\Hubble'
		& = - \frac{2}{3} \scale^{-2/3} - \Hubble^2.
	\end{align}
\end{lemma}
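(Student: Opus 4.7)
The plan is to insert the FLRW ansatz into the Einstein-scalar field equations and read off the resulting ODEs for $\scale(t)$ using the standard warped-product curvature formulas. It is convenient to set $a(t) := \scale^{1/3}(t)$, so that the spacetime metric takes the familiar warped-product form $\gfour_{FLRW} = -dt^2 + a^2(t)\StMet$. Since $\mathbb{S}^3$ has constant sectional curvature and $\ScalarCurArg{\StMet} = 2/3$, the spatial metric is Einstein with $\RicArg{\StMet}_{ij} = (2/9)\StMet_{ij}$. Standard warped-product formulas then give
\begin{align*}
\Ricfour_{00} = -3\ddot{a}/a,
\qquad
\Ricfour_{ij} = \bigl(a\ddot{a} + 2\dot{a}^2\bigr)\StMet_{ij} + \RicArg{\StMet}_{ij} = \bigl(a\ddot{a} + 2\dot{a}^2 + 2/9\bigr)\StMet_{ij},
\end{align*}
and $\Ricfour_{0i} = 0$. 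For the scalar field, using $\nabla \phi_{FLRW} = 0$, the energy-momentum tensor \eqref{E:EMTSCALARFIELD} reduces to $\Tfour_{00} = (1/2)(\partial_t\phi_{FLRW})^2$, $\Tfour_{0i}=0$, $\Tfour_{ij} = (1/2)\scale^{2/3}(\partial_t\phi_{FLRW})^2 \StMet_{ij}$, with trace $(\gfour^{-1})^{\alpha\beta}\Tfour_{\alpha\beta} = (\partial_t\phi_{FLRW})^2 = (2/3)\scale^{-2}$.

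Rewriting Einstein's equation \eqref{E:EINSTEINSF} in trace-reversed form $\Ricfour_{\mu\nu} = \Tfour_{\mu\nu} - (1/2)(\gfour^{-1})^{\alpha\beta}\Tfour_{\alpha\beta}\,\gfour_{\mu\nu}$, the spatial component reduces to $a\ddot{a} + 2\dot{a}^2 + 2/9 = 0$ while the time-time component reduces to $-3\ddot{a}/a = (2/3)\scale^{-2}$. Substituting $a = \scale^{1/3}$ (so that $\dot{a} = (1/3)\scale^{-2/3}\scale'$ and $\ddot{a} = (1/3)\scale^{-2/3}\scale'' - (2/9)\scale^{-5/3}(\scale')^2$), the terms involving $(\scale')^2$ in the spatial equation cancel cleanly, leaving exactly \eqref{E:FRIEDMANNSECONDORDER}. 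The time-time equation, once \eqref{E:FRIEDMANNSECONDORDER} is used to eliminate $\scale''$, is then equivalent to \eqref{E:FRIEDMANNFIRSTORDER}. Finally, one verifies directly that the scalar field wave equation \eqref{E:WAVEMODEL}, which in the FLRW setting reduces to $\partial_t(\scale \,\partial_t\phi_{FLRW}) = 0$, is automatically satisfied by $\partial_t\phi_{FLRW} = \sqrt{2/3}\,\scale^{-1}$; this completes the equivalence in the first assertion of the lemma.

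For the constraint-preservation claim, one defines $F(t) := (\scale')^2 - 1 + \scale^{4/3}$ and differentiates, using \eqref{E:FRIEDMANNSECONDORDER}, to obtain $F'(t) = 2\scale' \bigl[\scale'' + (2/3)\scale^{1/3}\bigr] \equiv 0$. Thus $F$ is constant on the maximal interval of existence, and the hypothesis that \eqref{E:FRIEDMANNFIRSTORDER} holds at $t=0$ forces $F(0)=0$, whence $F \equiv 0$. The ODE \eqref{E:HUBBLEODE} for the Hubble factor is then immediate: differentiating $\Hubble = \scale'/\scale$ yields $\Hubble' = \scale''/\scale - (\scale'/\scale)^2 = \scale''/\scale - \Hubble^2$, and substituting \eqref{E:FRIEDMANNSECONDORDER} gives the desired identity. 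The only genuine bookkeeping challenge is verifying the clean cancellation of the $(\scale')^2$-terms in the spatial Einstein equation; everything else is a sequence of direct algebraic manipulations.
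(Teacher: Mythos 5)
Your argument is correct, and it takes a mildly but genuinely different route from the paper's. The paper works directly with the Einstein tensor $\Ricfour_{\mu\nu} - \tfrac{1}{2}\Rfour\gfour_{\mu\nu}$ in terms of $\scale$: it computes $\Ricfour$, then $\Rfour$, then the Einstein tensor and $\Tfour$, reads off \eqref{E:FRIEDMANNFIRSTORDER} from the $00$-component, reads off an intermediate relation $\scale'' = \tfrac{1}{2}(\scale')^2/\scale - \tfrac{1}{6}\scale^{1/3} - \tfrac{1}{2}\scale^{-1}$ from the $ij$-components, and finally eliminates $(\scale')^2$ via \eqref{E:FRIEDMANNFIRSTORDER} to land on \eqref{E:FRIEDMANNSECONDORDER}. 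You instead pass to the trace-reversed form $\Ricfour_{\mu\nu} = \Tfour_{\mu\nu} - \tfrac{1}{2}(\gfour^{-1})^{\alpha\beta}\Tfour_{\alpha\beta}\,\gfour_{\mu\nu}$ and substitute $a := \scale^{1/3}$ so as to invoke the standard warped-product Ricci formulas. This has the pleasant side effect that $\Tfour_{ij} - \tfrac{1}{2}\Tfour\gfour_{ij} = 0$ exactly for a time-dependent scalar field, so the spatial equation is literally $\Ricfour_{ij}=0$, and the $(\scale')^2$ terms cancel in $a\ddot a + 2\dot a^2$, giving \eqref{E:FRIEDMANNSECONDORDER} at once; you then recover \eqref{E:FRIEDMANNFIRSTORDER} by substituting it into the $00$-component. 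The two derivations traverse equivalent algebra in opposite orders; yours isolates the second-order equation more cleanly, while the paper's extracts the first-order (constraint) equation first, which is the more natural move in light of the Hamiltonian constraint. Your explicit check that the wave equation $\partial_t(\scale\,\partial_t\phi_{FLRW})=0$ holds identically (so that it imposes no further constraint) is a helpful clarification that the paper leaves implicit. The constraint-preservation and Hubble-factor computations are essentially identical to the paper's.
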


\begin{proof}
	Standard computations yield that the metric
	$\gfour_{FLRW}$ defined in \eqref{E:FLRW} has the following Ricci curvature components,
	where $x^0 := t$ is the time coordinate and
	$\lbrace x^1, x^2, x^3 \rbrace$ are arbitrary local coordinates on $\mathbb{S}^3$:
	\begin{align} \label{E:FLRWRIC00}
		\Ricfour_{00}
		& = - \frac{\scale''}{\scale}
				+ 
				\frac{2}{3} \frac{(\scale')^2}{\scale^2},
				 \\
		\Ricfour_{i0}
		& = 0,
			\label{E:FLRWRICI0} \\
		\Ricfour_{ij}
		& = \left\lbrace
					\frac{1}{3} \frac{\scale''}{\scale^{1/3}}
					+ 
					\frac{2}{9}
			\right\rbrace
			\StMet_{ij}.
		\label{E:FLRWRICIJ}
	\end{align}
	Next, using \eqref{E:FLRW}
	and \eqref{E:FLRWRIC00}-\eqref{E:FLRWRICIJ},
	we compute that the scalar curvature of $\gfour_{FLRW}$
	is
	\begin{align} \label{E:SCALARCURVATUREFLRW}
		\Rfour
		& = 
			2 \frac{\scale''}{\scale}
			-
			\frac{2}{3} \frac{(\scale')^2}{\scale^2}
			+
			\frac{2}{3} \frac{1}{\scale^{2/3}}.
	\end{align}
	From \eqref{E:FLRW} and \eqref{E:FLRWRIC00}-\eqref{E:SCALARCURVATUREFLRW},
	we compute that the components of the Einstein tensor
	$\Ricfour_{\mu \nu} - \frac{1}{2} \Rfour \gfour_{\mu \nu}$
	of the FLRW metric are
	\begin{align} \label{E:EINSTEINTENSORFLRW00}
		\Ricfour_{00} - \frac{1}{2} \Rfour \gfour_{00}
		& =  
				\frac{1}{3} \frac{(\scale')^2}{\scale^2}
				+
				\frac{1}{3} \frac{1}{\scale^{2/3}},
			\\
		\Ricfour_{0i} 
		- 
		\frac{1}{2} \Rfour \gfour_{0i}
		& = 0,
			\label{E:EINSTEINTENSORFLRW0I} \\
		\Ricfour_{ij} 
		- 
		\frac{1}{2} \Rfour \gfour_{ij}
		& = \left\lbrace
					- 
					\frac{2}{3} \frac{\scale''}{\scale^{1/3}}
					+
					\frac{1}{3} \frac{(\scale')^2}{\scale^{4/3}}
					- 
					\frac{1}{9}
			\right\rbrace
			\StMet_{ij}.
		\label{E:EINSTEINTENSORFLRWIJ}
	\end{align}
	Next, from \eqref{E:EMTSCALARFIELD} and \eqref{E:FLRW},
	we compute that the components of the energy momentum tensor 
	of the FLRW metric/scalar field are
	\begin{align}
		\Tfour_{00}
		& = \frac{1}{3} \frac{1}{\scale^2},
			\label{E:FLRWENMOMENTUM00} \\
		\Tfour_{0i}
		& = 0,
			\label{E:FLRWENMOMENTUM0I} \\
		\Tfour_{ij}
		& = \frac{1}{3} \frac{1}{\scale^{4/3}} \StMet_{ij}.
		\label{E:FLRWENMOMENTUMIJ}
	\end{align}
	From \eqref{E:EINSTEINTENSORFLRW00}, \eqref{E:FLRWENMOMENTUM00},
	and Einstein's equation \eqref{E:EINSTEINSF},
	we conclude \eqref{E:FRIEDMANNFIRSTORDER}.
	Similarly, from \eqref{E:EINSTEINTENSORFLRWIJ}, \eqref{E:FLRWENMOMENTUMIJ},
	and Einstein's equation \eqref{E:EINSTEINSF},
	we compute that
	\begin{align} \label{E:ALMOSTFRIEDMANNSECONDORDER}
		\scale''
		& = \frac{1}{2} \frac{(\scale')^2}{\scale}
		- 
		\frac{1}{6} \scale^{1/3}
		-
		\frac{1}{2} \frac{1}{\scale}.
		\end{align}
		From \eqref{E:FRIEDMANNFIRSTORDER} and \eqref{E:ALMOSTFRIEDMANNSECONDORDER},
		we conclude \eqref{E:FRIEDMANNSECONDORDER}.
		We have thus shown that Einstein's equations imply
		\eqref{E:FRIEDMANNFIRSTORDER}-\eqref{E:FRIEDMANNSECONDORDER}.
		Moreover, it is easy to extend the above argument
		to conclude that if $\scale(t)$ verifies the ODEs
		\eqref{E:FRIEDMANNFIRSTORDER}-\eqref{E:FRIEDMANNSECONDORDER}, 
		then the FLRW metric/scalar field defined in \eqref{E:FLRW}
		are solutions to the Einstein-scalar field equations \eqref{E:EINSTEINSF}-\eqref{E:WAVEMODEL}.
	
	Next, we show that
	solutions to \eqref{E:FRIEDMANNSECONDORDER} also
	verify \eqref{E:FRIEDMANNFIRSTORDER} if
	\eqref{E:FRIEDMANNFIRSTORDER} holds at time $0$.
	To see this, we note that
	\eqref{E:FRIEDMANNFIRSTORDER}
	is equivalent to $f(t) := (\scale'(t))^2 + \scale^{4/3}(t) - 1 = 0$
	and that $f'(t) = 2 \scale'(t) \left\lbrace \scale''(t) + (2/3) \scale^{1/3}(t) \right\rbrace$.
	It follows that \eqref{E:FRIEDMANNSECONDORDER} implies that $f'(t) \equiv 0$ 
	and thus $f(t) \equiv 0$ if $f(0) = 0$,
	which is the desired result.
		
	To complete the proof of the lemma, we need only to show that	
	equation \eqref{E:FRIEDMANNSECONDORDER}
	implies equation \eqref{E:HUBBLEODE}.
	The desired result follows from a simple calculation based on
	definition \eqref{E:HUBBLEFACTOR}.
	
\end{proof}

\begin{remark}
	In the remainder of the paper,
	$\scale(t)$ denotes the FLRW scale factor, that is, the
	solution to the ODEs \eqref{E:FRIEDMANNFIRSTORDER}-\eqref{E:FRIEDMANNSECONDORDER}
	subject to the initial conditions \eqref{E:SCALEFACTORINITIALCONDITONS}.
	We stress that by Lemma~\ref{L:FRIEDMANN}, 
	under \eqref{E:SCALEFACTORINITIALCONDITONS},
	\eqref{E:FRIEDMANNFIRSTORDER} follows from \eqref{E:FRIEDMANNSECONDORDER}.
\end{remark}

Our next goal is to reveal the basic quantitative and qualitative properties of $\scale$,
especially near times where $\scale$ vanishes.
We provide these properties in Lemma~\ref{L:ANALYSISOFFRIEDMANN}.
We start by providing the following preliminary lemma.

\begin{lemma}[\textbf{Preliminary analysis connected to the scale factor}]
	\label{L:SCALEFACTORODEANTIDERIVATIVE}
	Let $\antiscale:[0,1) \rightarrow \mathbb{R}$ be the solution to 
	the ODE initial value problem
	\begin{align} \label{E:SCALEFACTORODEANTIDERIVATIVE}
		\antiscale'(a)
		& = \frac{1}{\sqrt{1 - a^{4/3}}},
		&&
		\antiscale(0)
		= 0.
	\end{align}
	Then $\antiscale$ is strictly increasing on $[0,1)$
	and there exists a real number $M$ verifying
	\begin{align}	\label{E:MDEF}
		0 
		<
		M 
		&:= 
		\int_{a=0}^1
			\frac{1}{\sqrt{1 - a^{4/3}}}
		\, da		
		< \infty
	\end{align}
	such that $\antiscale$ extends to a function (also denoted by $\antiscale$)
	of class $C([0,1]) \cap C^2([0,1)) \cap C^{\infty}((0,1))$
	with $\antiscale([0,1]) = [0,M]$.
	Moreover, there exists a function $F$ such that 
	$F(0) = 0$, $F'(0) = 0$, and
	$\antiscale(a) 
		= a 
		+ F(a)
	$.
	
	Let $\inverseantiscale:[0,M] \rightarrow [0,1]$
	denote the inverse function of $\antiscale$.
	Then there is a unique extension of $\inverseantiscale$ 
	(also denoted by $\inverseantiscale$)
	to an element of
	$C^2([0,2M]) \cap C^{\infty}((0,2M))$ 
	such that for $y \in [0,M]$, we have the 
	symmetry property 
	$\inverseantiscale(M+y) = \inverseantiscale(M-y)$.
	Moreover, there exists a function $\widetilde{F}$ such that
	$\widetilde{F}(0) = 0$, 
	$\widetilde{F}'(0) = 0$,
	and
	$\inverseantiscale(b) 
	= b + \widetilde{F}(b)$.
	%and  
	%$\inverseantiscale(b) = 1 - \frac{2}{3} (M-b)^2 + \mathscr{O}((M-b)^4)$.
\end{lemma}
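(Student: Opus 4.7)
The plan is to establish the claims for $\antiscale$ on $[0,1]$ first, then transfer them to $\inverseantiscale$ on $[0,M]$, and finally handle the symmetric extension to $[0,2M]$ by an ODE-uniqueness argument at $b = M$. The main (and only genuine) obstacle is the $C^\infty$ smoothness of the extension at $b = M$; all other points reduce to direct computation.

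First I would establish the basic properties of $\antiscale$. The RHS of \eqref{E:SCALEFACTORODEANTIDERIVATIVE} is positive and $C^\infty$ on $[0,1)$, so $\antiscale$ exists, is strictly increasing, and belongs to $C^\infty([0,1))$. Near $a = 1$, the factorization $1 - a^{4/3} = (1-a)(1 + a^{1/3} + a^{2/3} + a)$ shows the integrand is comparable to $(1-a)^{-1/2}$, which is integrable, yielding $0 < M < \infty$ and a continuous extension to $[0,1]$ with $\antiscale(1) = M$ and $\antiscale([0,1]) = [0,M]$. For $C^2$ at $a = 0$, I compute $\antiscale''(a) = \tfrac{2}{3} a^{1/3}(1 - a^{4/3})^{-3/2}$, which extends continuously to $\antiscale''(0) = 0$. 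Higher derivatives diverge at $a = 0$ due to the $a^{1/3}$ factor, which is why the lemma claims only $C^2$ at that endpoint. Setting $F(a) := \antiscale(a) - a$ gives $F(0) = 0$ and $F'(0) = \antiscale'(0) - 1 = 0$.

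Next I would handle the inverse $\inverseantiscale : [0,M] \to [0,1]$, which is a continuous bijection by strict monotonicity and is $C^\infty((0,M))$ by the inverse function theorem. Differentiating the identity $\antiscale(\inverseantiscale(b)) = b$ yields
\begin{align*}
(\inverseantiscale)'(b) = \sqrt{1 - \inverseantiscale(b)^{4/3}},
\qquad
(\inverseantiscale)''(b) = -\tfrac{2}{3}\inverseantiscale(b)^{1/3}
\end{align*}
on $(0,M)$, and both extend continuously to $[0,M]$ with $(\inverseantiscale)'(M) = 0$ and $(\inverseantiscale)''(M) = -\tfrac{2}{3}$; hence $\inverseantiscale \in C^2([0,M])$. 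Setting $\widetilde{F}(b) := \inverseantiscale(b) - b$ yields $\widetilde{F}(0) = 0$ and $\widetilde{F}'(0) = 1/\antiscale'(0) - 1 = 0$.

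Finally I would address the symmetric extension to $[0,2M]$. Uniqueness is built into the requirement $\inverseantiscale(M+y) = \inverseantiscale(M-y)$. Existence with regularity $C^2([0,2M])$ follows from the regularity established above together with the fact that the reflection preserves even derivatives while the first derivative vanishes at $b = M$ on both sides, so there is no jump at $M$. For the $C^\infty$ claim on $(0, 2M)$, only smoothness at $b = M$ requires argument. Here I would use that $\inverseantiscale$ solves the autonomous ODE $y'' = -\tfrac{2}{3} y^{1/3}$ on $(0,M)$, whose RHS is $C^\infty$ in a neighborhood of $y = 1$. Standard ODE theory produces a unique $C^\infty$ solution $\tilde y$ of this ODE on an open interval about $M$ with Cauchy data $\tilde y(M) = 1$, $\tilde y'(M) = 0$. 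The reflected function $b \mapsto \inverseantiscale(2M - b)$ satisfies the same ODE (which is invariant under $b \mapsto 2M - b$) with the same Cauchy data at $b = M$, so by uniqueness it agrees with $\tilde y$ in a neighborhood of $M$, and thus the symmetric extension is $C^\infty$ there. This sidesteps the need to verify matching of infinitely many odd derivatives by hand and completes the proof.
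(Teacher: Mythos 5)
Your proposal is correct, and the treatment of $\antiscale$ and of the basic regularity of $\inverseantiscale$ on $[0,M]$ matches what the paper does (the paper dismisses these parts as easy, and you supply the same computations, including the formulas $(\inverseantiscale)' = \sqrt{1 - \inverseantiscale^{4/3}}$ and $(\inverseantiscale)'' = -\tfrac{2}{3}\inverseantiscale^{1/3}$). Where you genuinely diverge is the crux: $C^\infty$ smoothness of the symmetric extension at $b = M$. The paper's argument is a direct computation: starting from the formulas for $(\inverseantiscale)'$ and $(\inverseantiscale)''$, it asserts that a ``straightforward induction argument'' shows the left derivatives of $\inverseantiscale$ of all orders exist at $M$ and that the odd-order ones vanish there; since the even reflection preserves even derivatives and flips the sign of odd ones, the vanishing of odd derivatives guarantees matching at $M$ and hence $C^\infty$. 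Your argument instead observes that $\inverseantiscale$ solves the autonomous second-order ODE $y'' = -\tfrac{2}{3}y^{1/3}$, whose right-hand side is $C^\infty$ near $y=1$, and that this ODE is invariant under $b \mapsto 2M - b$; thus both $\inverseantiscale(b)$ and $\inverseantiscale(2M-b)$ solve the same ODE with identical Cauchy data $(1,0)$ at $M$, so both coincide with the unique $C^\infty$ local solution through that data. The upshot is that your route avoids verifying the infinite family of derivative-matching conditions by hand: one application of ODE existence-uniqueness replaces the paper's unwritten induction. The paper's approach is slightly more elementary in that it needs no appeal to smooth dependence of ODE solutions, at the cost of leaving the induction step to the reader. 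Both are complete proofs; yours is arguably cleaner to write out in full.
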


\begin{proof}
	All aspects of the lemma regarding $\antiscale$
	are easy to derive from the ODE \eqref{E:SCALEFACTORODEANTIDERIVATIVE}
	and by using the fact that RHS~\eqref{E:SCALEFACTORODEANTIDERIVATIVE}
	is integrable over the domain $a \in [0,1]$.
	
	All aspects of the lemma regarding $\inverseantiscale$
	are also easy to derive, except for the fact that
	$\inverseantiscale \in C^2([0,2M]) \cap C^{\infty}((0,2M))$,
	which we now derive. The aspect that requires careful checking is that 
	$\inverseantiscale$ is $C^{\infty}$ at the midpoint $M$.
	To prove this fact, we repeatedly differentiate the equation
	$\inverseantiscale \circ \antiscale(a) = a$ with respect to $a$
	and use the ODE \eqref{E:SCALEFACTORODEANTIDERIVATIVE}
	to compute that for $p \in [0,1)$, we have
	\begin{align} \label{E:INVERSEANTISCALEFIRSTTWODERIVATIVES}
		(\inverseantiscale)'(p)
		& = \sqrt{1 - ((\inverseantiscale)(p))^{4/3}},
			\qquad
		(\inverseantiscale)''(p)
		= - \frac{2}{3} ((\inverseantiscale)(p))^{1/3}.
	\end{align}
	From \eqref{E:INVERSEANTISCALEFIRSTTWODERIVATIVES},
	the fact that $\lim_{p \uparrow M} \inverseantiscale(p) = 1$,
	and a straightforward induction argument,
	we deduce that 
	the left derivatives of $\inverseantiscale$ at all orders exist at $M$
	and that $(\inverseantiscale)^{(k)}(M) = 0$ for $k$ odd, where
	$(\inverseantiscale)^{(k)}$ is the $k^{th}$ derivative of $\inverseantiscale$.
	Since $\inverseantiscale(M+y) = \inverseantiscale(M-y)$ by construction,
	it follows that $\inverseantiscale$ is $C^{\infty}$ at $M$. 
	
\end{proof}

With the help of Lemmas~\ref{L:FRIEDMANN}-\ref{L:SCALEFACTORODEANTIDERIVATIVE}, we now derive
the properties of the scale factor that are relevant for our proof of stable blowup.
See Fig.~\ref{F:SCALEFACTOR} on pg.~\pageref{F:SCALEFACTOR} for the graph of $\scale(t)$.

\begin{lemma}[\textbf{Analysis of the FLRW scale factor via Friedmann's equations}]
	\label{L:ANALYSISOFFRIEDMANN}
	Let $\scale(t)$ be the solution to the ODE \eqref{E:FRIEDMANNSECONDORDER}
	under the initial conditions \eqref{E:SCALEFACTORINITIALCONDITONS}
	and note that equation \eqref{E:FRIEDMANNFIRSTORDER} also holds
	by Lemma~\ref{L:FRIEDMANN}. 
	Then there exist times $0 < \TCrunch = - \TBang$
	such that $\scale(t)$ is a classical solution for $t \in [\TBang,\TCrunch]$
	and such that $\scale:[\TBang,\TCrunch] \rightarrow [0,1]$. In fact,
	$\TCrunch$ is equal to the number $M$ from Lemma~\ref{L:SCALEFACTORODEANTIDERIVATIVE}.
	Moreover, $\scale \in C^{\infty}((\TBang,\TCrunch)) \cap C^2([\TBang,\TCrunch])$,
	$\scale$ is even, $\scale$ strictly increases from $0$ to $1$
	on $[\TBang,0]$, and $\scale$ strictly decreases from $1$ to $0$ on $[0,\TCrunch]$.
	In addition, with $\mathcal{O}(f(t))$ denoting a continuous function that is bounded in magnitude by 
	$\leq C |f(t)|$ for $t \in [\TBang,\TCrunch]$, we have
	\begin{subequations}
	\begin{align} \label{E:SCALEFACTORASYMPTOTICSNEARBANG}
		\scale(t)
		& = (t - \TBang) 
			+ \mathcal{O}((t - \TBang)^2),
		&&
		t \in [\TBang,0],
			\\
		\scale(t)
		& = (\TCrunch - t)
			+ \mathcal{O}((\TCrunch - t)^2),
		&&
		t \in [0,\TCrunch].
		\label{E:SCALEFACTORASYMPTOTICSNEARCRUNCH}
	\end{align}
	\end{subequations}
	
	Moreover, $\scale'$ is an odd function with
	\begin{subequations}
	\begin{align} \label{E:TIMEDERIVATIVESCALEFACTORASYMPTOTICSNEARBANG}
		\scale'(t)
		& = 1 
				+ \mathcal{O}((t - \TBang)),
		&&
		t \in [\TBang,0],
			\\
		\scale'(t)
		& = - 1 
				+ \mathcal{O}((\TCrunch - t)),
		&&
		t \in [0,\TCrunch].
		\label{E:TIMEDERIVATIVESCALEFACTORASYMPTOTICSNEARCRUNCH}
	\end{align}
	\end{subequations}
	
	%In addition, if $\Hubble$ is an odd function with
	%\begin{subequations}
	%\begin{align} \label{E:HUBBLEASYMPTOTICSNEARBANG}
	%	\Hubble(t)
	%	& = \frac{1}{t - \TBang} 
	%		+ \mathcal{O}((t - \TBang)^{1/3}),
	%		\qquad
	%	t \in (\TBang,\TBang + \upalpha],
	%		\\
	%	\Hubble(t)
	%	& = - \frac{1}{t - \TCrunch} 
	%			+ \mathcal{O}((t - \TCrunch)^{1/3}),
	%		\qquad
	%	t \in [\TCrunch - \upalpha,\TCrunch).
	%	\label{E:HUBBLEASYMPTOTICSNEARCRUNCH}
	%\end{align}
	%\end{subequations}
	
	Finally, if $q > 0$ is a real number, then
	\begin{align} \label{E:SCALEFACTORLOGPOWERBOUND}
		|\ln \scale(t)|
		\leq \frac{1}{q e}
		\scale^{-q}(t).
	\end{align}

\end{lemma}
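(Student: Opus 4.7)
\bigskip

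\noindent \textbf{Proof proposal.}

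The plan is to construct $\scale$ explicitly in closed form from the function $\inverseantiscale$ provided by Lemma~\ref{L:SCALEFACTORODEANTIDERIVATIVE} and then to verify each conclusion essentially by direct computation, transferring regularity and symmetry properties from $\inverseantiscale$ to $\scale$. Specifically, I would set $\TCrunch := M$, $\TBang := -M$ and define
\begin{align*}
	\scale(t) := \inverseantiscale(M - t), \qquad t \in [\TBang, \TCrunch],
\end{align*}
where $\inverseantiscale$ is the extended inverse from Lemma~\ref{L:SCALEFACTORODEANTIDERIVATIVE}, defined on $[0, 2M]$ and satisfying the symmetry $\inverseantiscale(M+y) = \inverseantiscale(M-y)$.

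First I would check that this $\scale$ satisfies \eqref{E:FRIEDMANNSECONDORDER} with the initial conditions \eqref{E:SCALEFACTORINITIALCONDITONS}: differentiating twice and applying \eqref{E:INVERSEANTISCALEFIRSTTWODERIVATIVES} gives $\scale''(t) = (\inverseantiscale)''(M - t) = -\tfrac{2}{3} \scale^{1/3}(t)$, while $\scale(0) = \inverseantiscale(M) = 1$ (since $\antiscale(1) = M$), and $\scale'(0) = -(\inverseantiscale)'(M) = 0$ because all odd-order derivatives of $\inverseantiscale$ vanish at $M$ by Lemma~\ref{L:SCALEFACTORODEANTIDERIVATIVE}. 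Then \eqref{E:FRIEDMANNFIRSTORDER} is automatic by the last assertion of Lemma~\ref{L:FRIEDMANN}; equivalently, it follows from $(\scale'(t))^2 = ((\inverseantiscale)'(M-t))^2 = 1 - \scale^{4/3}(t)$. The regularity claims $\scale \in C^{\infty}((\TBang, \TCrunch)) \cap C^2([\TBang, \TCrunch])$ and the range $\scale([\TBang,\TCrunch]) = [0,1]$ follow immediately from the corresponding properties of $\inverseantiscale$. The evenness of $\scale$ (and thus oddness of $\scale'$) follows from the symmetry $\inverseantiscale(M+y) = \inverseantiscale(M-y)$: indeed $\scale(-t) = \inverseantiscale(M + t) = \inverseantiscale(M - t) = \scale(t)$. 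Strict monotonicity on each half of the interval then follows from $\scale'(t) = -(\inverseantiscale)'(M - t) = -\sqrt{1 - \scale^{4/3}(t)}$, which is strictly negative for $t \in (0, \TCrunch)$.

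Next, for the asymptotic expansions near the Big Crunch, I would use the expansion $\inverseantiscale(b) = b + \widetilde{F}(b)$ from Lemma~\ref{L:SCALEFACTORODEANTIDERIVATIVE} with $\widetilde{F}(0) = \widetilde{F}'(0) = 0$, which yields $\widetilde{F}(b) = \mathcal{O}(b^2)$ by Taylor's theorem and $C^2$-regularity. Substituting $b = M - t$ immediately gives \eqref{E:SCALEFACTORASYMPTOTICSNEARCRUNCH}, and \eqref{E:SCALEFACTORASYMPTOTICSNEARBANG} follows from the evenness of $\scale$. The expansion \eqref{E:TIMEDERIVATIVESCALEFACTORASYMPTOTICSNEARCRUNCH} for $\scale'$ then follows from the identity $\scale'(t) = -\sqrt{1 - \scale^{4/3}(t)}$ (for $t \in [0, \TCrunch]$) together with $\scale^{4/3}(t) = \mathcal{O}((\TCrunch - t)^{4/3}) = \mathcal{O}(\TCrunch - t)$ on the compact interval; \eqref{E:TIMEDERIVATIVESCALEFACTORASYMPTOTICSNEARBANG} follows from the oddness of $\scale'$.

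Finally, the logarithmic-power bound \eqref{E:SCALEFACTORLOGPOWERBOUND} is an elementary one-variable calculus fact independent of the ODE: since $\scale(t) \in [0,1]$, it suffices to show $-\ln x \leq \tfrac{1}{qe} x^{-q}$ for $x \in (0,1]$. Define $h(x) := \tfrac{1}{qe} x^{-q} + \ln x$; then $h'(x) = -\tfrac{1}{e} x^{-q-1} + x^{-1}$ vanishes exactly at $x_\ast = e^{-1/q}$, where $h(x_\ast) = \tfrac{1}{qe} \cdot e - \tfrac{1}{q} = 0$, and a quick second-derivative check shows this is the minimum, so $h \geq 0$ as desired. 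I do not expect any of the above steps to present a real obstacle; the only subtle point is ensuring that the regularity and symmetry of the extended $\inverseantiscale$ transfer cleanly across the midpoint $t = 0$ (equivalently across $b = M$), but this is already handled by Lemma~\ref{L:SCALEFACTORODEANTIDERIVATIVE}.
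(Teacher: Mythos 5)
Your proof is essentially correct and uses the same core tool as the paper (the change of variables through $\antiscale$/$\inverseantiscale$ from Lemma~\ref{L:SCALEFACTORODEANTIDERIVATIVE}), but you run the argument in the opposite direction. The paper treats $\scale$ as the solution produced by local ODE existence theory, observes that $\frac{d}{dt}(\antiscale \circ \scale) = 1$, integrates to obtain $\antiscale \circ \scale(t) = t - \TBang$, and only then deduces the formula $\scale(t) = \inverseantiscale(t - \TBang)$. You instead posit the formula $\scale(t) := \inverseantiscale(M - t)$ up front and verify by differentiation that it satisfies the IVP; everything else (regularity, evenness, monotonicity, the Taylor expansions, the log bound) then transfers mechanically from the properties of $\inverseantiscale$. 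Your version is arguably cleaner because it avoids the ``least negative time $\TBang$'' discussion, and your closed-form expression for $\scale$ (valid on the full interval $[\TBang,\TCrunch]$, not just the negative half) makes the asymptotic expansions near both singular times immediate without invoking evenness as an intermediate step.

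One small point you should make explicit: the lemma's hypotheses single out \emph{the} solution to the second-order IVP, so after verifying that your $\inverseantiscale(M-t)$ satisfies \eqref{E:FRIEDMANNSECONDORDER} and \eqref{E:SCALEFACTORINITIALCONDITONS}, you need to invoke uniqueness of solutions for the ODE $\scale'' = -\tfrac{2}{3}\scale^{1/3}$, $\scale(0)=1$, $\scale'(0)=0$ (which holds by Picard--Lindel\"of, since the right-hand side is locally Lipschitz in $\scale$ wherever $\scale > 0$, and $\scale$ stays positive on $(\TBang,\TCrunch)$) to conclude that your candidate coincides with $\scale$ on its entire interval of positivity. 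The paper's forward construction sidesteps an explicit appeal to uniqueness, whereas your verify-a-candidate strategy genuinely requires it. Also, minor: the fact that odd derivatives of $\inverseantiscale$ vanish at $M$ appears in the \emph{proof} of Lemma~\ref{L:SCALEFACTORODEANTIDERIVATIVE} but not its statement; to check $\scale'(0)=0$ it is cleaner to use the stated identity \eqref{E:INVERSEANTISCALEFIRSTTWODERIVATIVES}, which gives $(\inverseantiscale)'(M) = \sqrt{1 - 1} = 0$ directly. All the remaining computations (the Taylor bounds for $\widetilde{F}$, the expansion of $\scale'$ via $-\sqrt{1-\scale^{4/3}}$, and the one-variable minimization for \eqref{E:SCALEFACTORLOGPOWERBOUND}) are sound.
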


\begin{remark}[\textbf{The role of the estimate} \eqref{E:SCALEFACTORLOGPOWERBOUND}]
	Later in the article, we will use the estimate \eqref{E:SCALEFACTORLOGPOWERBOUND}
	to deduce that $|\ln \scale(t)| \lesssim \frac{1}{\sqrt{\varepsilon}} \scale^{-\sqrt{\varepsilon}}(t)$,
	where $\varepsilon > 0$ is a small bootstrap parameter. We employ this simple estimate mainly out of convenience, 
	as it will allow for a unified presentation of many of our estimates.
\end{remark}

\begin{proof}[Proof of Lemma~\ref{L:ANALYSISOFFRIEDMANN}]
The initial conditions \eqref{E:SCALEFACTORINITIALCONDITONS} and 
the ODEs \eqref{E:FRIEDMANNFIRSTORDER}-\eqref{E:FRIEDMANNSECONDORDER}
are invariant under the reflection $t \rightarrow - t$. It follows that
$\scale$ is even. Thus, in the rest of the proof, we derive the properties of
$\scale$ only for non-positive times.

Next, we note that it is straightforward to deduce from the 
initial conditions \eqref{E:SCALEFACTORINITIALCONDITONS}, 
the ODEs \eqref{E:FRIEDMANNFIRSTORDER}-\eqref{E:FRIEDMANNSECONDORDER},
and basic existence theory for ODEs 
that $\scale$ is strictly increasing in a half-interval of the form
$[-\updelta,0]$ for some $\updelta > 0$.
From this fact and 
the ODE \eqref{E:FRIEDMANNFIRSTORDER},
we deduce that
\begin{align} \label{E:SCALEFACTORMORECONVENIENTODE}
	\frac{d}{dt} (\antiscale \circ \scale)
	& = 1
\end{align}
on any interval of the form $[T,0)$ 
such that $T < 0$ and such that $\scale([T,0)) \subset [0,1)$,
since $\antiscale \in C^2([0,1))$.
From \eqref{E:SCALEFACTORMORECONVENIENTODE},
the above remarks, 
and the properties of $\antiscale$ shown in Lemma~\ref{L:SCALEFACTORODEANTIDERIVATIVE},
it follows that there exists a ``least negative'' time $\TBang < 0$ 
%such that $\scale(t)$ is a classical solution for $t \in [\TBang,0]$,
such that
\begin{align} \label{E:COMPOSEDFUNCTIONVANISHESATBANG}
	\antiscale \circ \scale(\TBang) = 0
\end{align}
and such that $\antiscale \circ \scale(t) > 0$ for $\TBang < t \leq 0$.
Moreover, from \eqref{E:COMPOSEDFUNCTIONVANISHESATBANG} and 
Lemma~\ref{L:SCALEFACTORODEANTIDERIVATIVE}, it follows that $\scale(\TBang) = 0$.
In addition, from \eqref{E:SCALEFACTORMORECONVENIENTODE} and \eqref{E:COMPOSEDFUNCTIONVANISHESATBANG},
we see that for $t \in [\TBang,0)$, we have
$\antiscale \circ \scale(t) = t - \TBang$
and thus
\begin{align} \label{E:SCALESEMIEXPLICIT}
	\scale(t) 
	& = \inverseantiscale(t - \TBang).
\end{align}
From \eqref{E:SCALESEMIEXPLICIT},
the initial condition $\scale(0) = 1$,
and Lemma~\ref{L:SCALEFACTORODEANTIDERIVATIVE},
it follows that $- \TBang = M$, where $M > 0$ is as in \eqref{E:MDEF}.
From this fact,
\eqref{E:SCALESEMIEXPLICIT},
and the properties of $\inverseantiscale$ shown in Lemma~\ref{L:SCALEFACTORODEANTIDERIVATIVE},
we conclude \eqref{E:SCALEFACTORASYMPTOTICSNEARBANG} and \eqref{E:TIMEDERIVATIVESCALEFACTORASYMPTOTICSNEARBANG}.
%\eqref{E:HUBBLEASYMPTOTICSNEARBANG}
%then follows from definition \eqref{E:HUBBLEODE},
%\eqref{E:SCALEFACTORASYMPTOTICSNEARBANG}, and \eqref{E:TIMEDERIVATIVESCALEFACTORASYMPTOTICSNEARBANG}

\eqref{E:SCALEFACTORLOGPOWERBOUND} is a simple consequence of the fact that
$\scale(t) \in (0,1]$ for $t \in (\TBang,\TCrunch)$
and the fact that the function 
$\displaystyle
f(y) := 
\frac{\ln y}{y^q}$
is non-negative and
bounded from above by $\displaystyle \frac{1}{e q}$ on the domain $y \in [1,\infty)$.
\end{proof}

We will use the following two simple corollaries of Lemma~\ref{L:ANALYSISOFFRIEDMANN}
when we derive a priori estimates.

\begin{corollary}[\textbf{Estimates for time integrals involving the scale factor}]
\label{C:SCALEFACTORTIMEINTEGRALS}
Let $p \in \mathbb{R}$. There exist constants $C > 0$
(which can vary from line to line) 
that do not depend on $p$
and constants $C_p > 0$ that can depend on $|p|$ in 
a continuous increasing fashion 
such that the following
integral estimates hold for $0 \leq t_1 \leq t_2 \leq \TCrunch$:
\begin{align} \label{E:SCALEFACTORTIMEINTEGRALS}
	\int_{s=t_1}^{t_2}
		\scale^p(s)
	\, ds	
	& \leq
	\begin{cases}
	\frac{C_p}{|1 + p|} \scale^{p + 1}(t_2),
	& \mbox{if } p < - 1, \\
		\left|\ln \scale(t_2) \right| + C, & \mbox{if } p = - 1, \\
		\frac{C_p}{1 + p} \scale^{p + 1}(t_1),
		& \mbox{if } p > - 1.
	\end{cases}
\end{align}

Moreover, we have
\begin{align} \label{E:EXPONENTIATEDSCALEFACTORTIMEINTEGRALS}
	\exp
	\left(
		\int_{s=0}^t
			\scale^p(s)
		\, ds	
	\right)
	& \leq
	\begin{cases}
		\frac{C}{\scale(t)} & \mbox{if } p = - 1, \\
		\exp
		\left(
			\frac{C_p}{1 + p}
		\right) & \mbox{if } p > - 1.
	\end{cases}
\end{align}

\end{corollary}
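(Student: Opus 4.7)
The plan is to reduce all cases of \eqref{E:SCALEFACTORTIMEINTEGRALS} to an explicit one-variable integral via the change of variable $u = \scale(s)$. By Lemma~\ref{L:ANALYSISOFFRIEDMANN}, $\scale$ is strictly decreasing on $(0, \TCrunch]$, and equation \eqref{E:FRIEDMANNFIRSTORDER} gives $\scale'(s) = -\sqrt{1 - \scale^{4/3}(s)}$ on this interval. Setting $u_i := \scale(t_i)$ with $0 \leq u_2 \leq u_1 \leq 1$, the substitution yields
\begin{align*}
	\int_{t_1}^{t_2} \scale^p(s) \, ds = \int_{u_2}^{u_1} \frac{u^p}{\sqrt{1 - u^{4/3}}} \, du.
\end{align*}
The integrand has an integrable square-root singularity at $u = 1$, and a singularity at $u = 0$ that is integrable precisely when $p > -1$.

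For $p > -1$, I would rescale by $u = u_1 v$ and split $[0, 1]$ at $v = 1/2$. On $[0, 1/2]$ the factor $1 - u_1^{4/3} v^{4/3}$ admits a universal positive lower bound, producing a contribution of size $O(1/(p+1))$; on $[1/2, 1]$ the factor $v^p$ is bounded by $2^{|p|}$ and $1/\sqrt{1 - u_1^{4/3} v^{4/3}} \leq 1/\sqrt{1 - v^{4/3}}$ remains integrable. Combined with the prefactor $u_1^{p+1}$, this produces the bound $C_p \scale^{p+1}(t_1)/(1+p)$. For $p < -1$, the singularity is instead at $u_2$; splitting $[u_2, u_1]$ at $\min(u_1, 1/2)$ bounds the lower piece by $\int u^p \, du = (u_2^{p+1} - u_1^{p+1})/|p+1| \leq u_2^{p+1}/|p+1|$ (using $u_1 \leq 1$ and $p+1 < 0$), while the contribution from $[1/2, u_1]$ is a $p$-dependent constant, absorbed into $C_p \scale^{p+1}(t_2)/|p+1|$ via the lower bound $\scale^{p+1}(t_2) \geq 1$.

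The delicate case is $p = -1$, where the coefficient in front of $|\ln \scale(t_2)|$ must be exactly $1$; this precludes the naive approach of bounding $\sqrt{1 - \scale^{4/3}(s)}$ below by a constant, which would introduce a multiplicative factor strictly greater than $1$. Instead I would use the substitution $w = u^{2/3}$, which converts the integral into $\frac{3}{2} \int_{w_2}^{w_1} dw / (w \sqrt{1 - w^2})$, whose classical antiderivative is $\ln(w/(1 + \sqrt{1 - w^2}))$. Since this function is strictly increasing on $[0, 1]$ with values in $[0, 1]$, the definite integral is bounded by $\ln(2 w_1 / w_2) \leq \ln 2 + |\ln w_2|$; multiplying by $3/2$ and using $|\ln w_2| = (2/3)|\ln u_2|$ produces exactly $|\ln \scale(t_2)| + (3/2) \ln 2$, which is the desired form.

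Finally, \eqref{E:EXPONENTIATEDSCALEFACTORTIMEINTEGRALS} follows by specializing \eqref{E:SCALEFACTORTIMEINTEGRALS} to $t_1 = 0$, using $\scale(0) = 1$, and exponentiating; for $p = -1$ the identity $\exp(|\ln \scale(t)|) = 1/\scale(t)$ (valid since $\scale(t) \leq 1$) gives the claimed $C/\scale(t)$ bound. The one step requiring genuine care is extracting the sharp coefficient $1$ in the $p = -1$ estimate; once the substitution $u = \scale(s)$ has been made, the remaining work in the other two cases reduces to routine majorizations of the resulting integrand.
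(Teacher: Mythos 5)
Your proposal is correct, and it takes a genuinely different route from the paper's. The paper proves \eqref{E:SCALEFACTORTIMEINTEGRALS} by invoking the asymptotic expansion $\scale(t) = (\TCrunch - t) + \mathcal{O}((\TCrunch - t)^2)$ from Lemma~\ref{L:ANALYSISOFFRIEDMANN} (inequalities \eqref{E:SCALEFACTORUPPERBOUND}--\eqref{E:SCALEFACTORRECIPROCALESTIMATE}), replacing $\scale^p(s)$ by a constant times $(\TCrunch - s)^p$, and then integrating this explicit power; the cases $p = -1$ and $p > -1$ are stated to "follow similarly" with details omitted. Your approach bypasses the asymptotic expansion entirely and instead changes variables $u = \scale(s)$ via the first Friedmann ODE \eqref{E:FRIEDMANNFIRSTORDER}, reducing everything to the explicit integral $\int_{u_2}^{u_1} u^p (1 - u^{4/3})^{-1/2} \, du$. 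This is more elementary in that it needs only the ODE and monotonicity of $\scale$, not the refined asymptotics, and it has the advantage of making the sharp coefficient $1$ in the $p = -1$ case fully explicit via the second substitution $w = u^{2/3}$ (for which the antiderivative $\ln(w/(1 + \sqrt{1 - w^2}))$ is correct, though its range is $(-\infty, 0]$ rather than the ``$[0,1]$'' you wrote --- a harmless slip that does not affect the majorization). That sharp coefficient is precisely what is needed for the $p = -1$ case of \eqref{E:EXPONENTIATEDSCALEFACTORTIMEINTEGRALS}, and you are right to flag it; a loss of even a multiplicative constant in front of $|\ln \scale|$ would degrade the exponentiated bound to $\scale^{-K}$ with $K > 1$.
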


\begin{proof}
	Throughout this proof, it is understood that the constants $C$ and $C_p$ have the properties
	stated in the corollary and are allowed to vary from line to line.
	We first prove \eqref{E:SCALEFACTORTIMEINTEGRALS} in the case $p < - 1$.
	First, we use Lemma~\ref{L:ANALYSISOFFRIEDMANN}
	(in particular \eqref{E:SCALEFACTORASYMPTOTICSNEARCRUNCH})
	to deduce that the following estimates hold for $t \in [0,\TCrunch)$:
	\begin{align} \label{E:SCALEFACTORUPPERBOUND}
		\scale(t) 
		& \leq \TCrunch - t
		+ 
		C (\TCrunch - t)^2
		\leq C (\TCrunch - t),
			\\
		\frac{1}{\scale(t)}
		& \leq 
		\frac{1 + C (\TCrunch - t)}{\TCrunch - t}
		\leq \frac{C}{\TCrunch - t}.
		\label{E:SCALEFACTORRECIPROCALESTIMATE}
	\end{align}
	From Lemma~\ref{L:ANALYSISOFFRIEDMANN} and \eqref{E:SCALEFACTORUPPERBOUND}-\eqref{E:SCALEFACTORRECIPROCALESTIMATE},
	we deduce that
	\begin{align}
	\int_{s=t_1}^{t_2}
		\scale^p(s)
	\, ds	
	\leq
	C_p
	\int_{s=t_1}^{t_2}
		\frac{1}{(\TCrunch - s)^{|p|}}
	\, ds	
	& \leq
		\frac{C_p}{|1 + p|}
		(\TCrunch - t_2)^{p+1}
			\\
& \ \
	\leq
	\frac{C_p}{|1 + p|}
	\scale^{p + 1}(t_2),
	\notag
	\end{align}	
	which yields \eqref{E:SCALEFACTORTIMEINTEGRALS} in the case $p < - 1$.
	
	In the remaining cases $p=-1$ and $p > -1$, the estimate \eqref{E:SCALEFACTORTIMEINTEGRALS}  
	follows similarly with the help of \eqref{E:SCALEFACTORUPPERBOUND}-\eqref{E:SCALEFACTORRECIPROCALESTIMATE}, 
	and we omit the details.
	
	\eqref{E:EXPONENTIATEDSCALEFACTORTIMEINTEGRALS} then immediately follows from \eqref{E:SCALEFACTORTIMEINTEGRALS}
	and the fact that $|\ln \scale(t)| = \ln \frac{1}{\scale(t)}$ (since $0 < \scale(t) \leq 1$ for $t \in [0,\TCrunch)$).
	
\end{proof}

\begin{corollary}[\textbf{The monotonicity becomes strong near the singularities}]
\label{C:MONOTONICITYTIME}
	Consider the function 
	\[
	\mbox{\upshape sign}(t)
	=
	\begin{cases}
		1 & \mbox{if } t \geq 0, \\
		-1 & \mbox{if } t < 0.
	\end{cases}
	\]
	There exists a constant $\uptau$ with
	$0 < \uptau < \TCrunch$ such that
	\begin{subequations}
	\begin{align} \label{E:QUANTITATIVEMONOTONICITYATLATETIMES}
		- 1 
		& \leq \mbox{\upshape sign}(t) \scale'(t)
		\leq - \frac{6}{7},
		&& 
		t \in [\TBang, \TBang + \uptau] \cup [\TCrunch - \uptau, \TCrunch],
			\\
		- 1 
		& \leq \mbox{\upshape sign}(t) (\scale'(t))^3 \leq - \frac{6}{7},
		&& t \in [\TBang, \TBang + \uptau] \cup [\TCrunch - \uptau, \TCrunch].
		\label{E:CUBICQUANTITATIVEMONOTONICITYATLATETIMES}
	\end{align}
	\end{subequations}
	Moreover, there exists a constant $C > 1$ such that
	\begin{align} \label{E:SCALEFACTORBOUNDEDFROMBELOWFORSHORTIMES}
		\frac{1}{C} 
		& \leq \scale(t)
		\leq 1,
		&& t \in [\TBang + \uptau, \TCrunch - \uptau].
	\end{align}
\end{corollary}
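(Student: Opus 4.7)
The plan is to extract both inequalities as quantitative consequences of the asymptotic expansions for $\scale$ and $\scale'$ already provided by Lemma~\ref{L:ANALYSISOFFRIEDMANN}, supplemented by the uniform a priori bound $|\scale'(t)| \leq 1$ coming from Friedmann's first-order equation \eqref{E:FRIEDMANNFIRSTORDER}. The cubic estimate \eqref{E:CUBICQUANTITATIVEMONOTONICITYATLATETIMES} turns out to be strictly stronger than the linear estimate \eqref{E:QUANTITATIVEMONOTONICITYATLATETIMES} because $(6/7)^{1/3} > 6/7$; consequently it suffices to choose $\uptau$ small enough that the cubic version holds, and the linear version will then follow automatically.

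\textbf{Upper bounds near the singularities.} By \eqref{E:TIMEDERIVATIVESCALEFACTORASYMPTOTICSNEARCRUNCH}, there is a constant $C_* > 0$ with $|\scale'(t) + 1| \leq C_*(\TCrunch - t)$ for $t \in [0,\TCrunch]$. Pick $\uptau$ so small that $C_* \uptau \leq 1 - (6/7)^{1/3}$ and $\uptau < \TCrunch$. Then for $t \in [\TCrunch - \uptau, \TCrunch]$ we have $\mbox{\upshape sign}(t) = 1$ and $\scale'(t) \leq -1 + C_* \uptau \leq -(6/7)^{1/3}$, whence $\mbox{\upshape sign}(t)\scale'(t) \leq -(6/7)^{1/3} \leq -6/7$ and $\mbox{\upshape sign}(t)(\scale'(t))^3 = (\scale'(t))^3 \leq -6/7$. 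On the left half-interval $[\TBang, \TBang + \uptau]$ one has $\mbox{\upshape sign}(t) = -1$, and since $\scale'$ is odd (this is part of Lemma~\ref{L:ANALYSISOFFRIEDMANN}) and $\TCrunch = -\TBang$, the analogous estimate \eqref{E:TIMEDERIVATIVESCALEFACTORASYMPTOTICSNEARBANG} yields $\scale'(t) \geq (6/7)^{1/3}$, hence $\mbox{\upshape sign}(t)\scale'(t) \leq -(6/7)^{1/3}$ and the cubic version holds by the same cubing argument.

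\textbf{Lower bounds.} Friedmann's first-order equation \eqref{E:FRIEDMANNFIRSTORDER} yields $(\scale'(t))^2 = 1 - \scale^{4/3}(t) \leq 1$, hence $|\scale'(t)| \leq 1$ throughout $[\TBang,\TCrunch]$. Combined with the sign information from the previous step, this gives $-1 \leq \mbox{\upshape sign}(t)\scale'(t)$ and $-1 \leq \mbox{\upshape sign}(t)(\scale'(t))^3$, completing \eqref{E:QUANTITATIVEMONOTONICITYATLATETIMES} and \eqref{E:CUBICQUANTITATIVEMONOTONICITYATLATETIMES}.

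\textbf{The bound on the middle interval.} By Lemma~\ref{L:ANALYSISOFFRIEDMANN}, $\scale$ is continuous on the compact set $[\TBang + \uptau, \TCrunch - \uptau]$ and its only zeros in $[\TBang,\TCrunch]$ occur at the endpoints $\TBang$ and $\TCrunch$. Hence $\scale$ achieves a strictly positive minimum $m > 0$ on the middle interval; setting $C := \max\{1/m, 2\}$ gives \eqref{E:SCALEFACTORBOUNDEDFROMBELOWFORSHORTIMES}, the upper bound $\scale(t) \leq 1$ being already part of Lemma~\ref{L:ANALYSISOFFRIEDMANN}. No step presents a genuine obstacle; the only mildly subtle point is remembering that the cubic inequality is the binding constraint when selecting $\uptau$.
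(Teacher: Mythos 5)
Your proof is correct and follows the same route as the paper, which itself simply asserts that the corollary is an easy consequence of the asymptotic expansions in Lemma~\ref{L:ANALYSISOFFRIEDMANN}; you have merely supplied the quantitative details (choosing $\uptau$ via the Lipschitz constant in \eqref{E:TIMEDERIVATIVESCALEFACTORASYMPTOTICSNEARCRUNCH}, invoking the oddness of $\scale'$ for the $\TBang$ side, using \eqref{E:FRIEDMANNFIRSTORDER} for the lower bound $|\scale'|\leq 1$, and compactness for the middle interval).
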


\begin{proof}
	The existence of constants $\uptau$ and $C$ such that the estimates stated in 
	\eqref{E:QUANTITATIVEMONOTONICITYATLATETIMES}-\eqref{E:CUBICQUANTITATIVEMONOTONICITYATLATETIMES}
	and \eqref{E:SCALEFACTORBOUNDEDFROMBELOWFORSHORTIMES} hold
	is an easy consequence of Lemma~\ref{L:ANALYSISOFFRIEDMANN}, in particular
	the estimates
	\eqref{E:SCALEFACTORASYMPTOTICSNEARBANG}-\eqref{E:SCALEFACTORASYMPTOTICSNEARCRUNCH}
	and \eqref{E:TIMEDERIVATIVESCALEFACTORASYMPTOTICSNEARBANG}-\eqref{E:TIMEDERIVATIVESCALEFACTORASYMPTOTICSNEARCRUNCH}.
\end{proof}

\section{The Einstein-scalar field equations relative to CMC-transported spatial coordinates gauge}
\label{S:EQUATIONSINCMCTRANSPORTEDGAUGE}
In this section, we provide the 
Einstein-scalar field equations relative to CMC-transported spatial coordinates gauge.
We provide the equations in Prop.~\ref{P:EINSTEININCMC}. We first introduce
some basic geometric concepts.

\subsection{Basic geometric concepts}
\label{SS:BASICGEOMETRIC}
In CMC-transported spatial coordinates gauge,
we decompose the spacetime metric $\gfour$ and its inverse $\gfour^{-1}$ 
relative to the coordinates $(t,x^1,x^2,x^3)$
as follows:
\begin{align}
	\gfour
	& = 
		- n^2 dt \otimes dt 
		+ g_{ab} dx^a \otimes dx^b,
			\label{E:LITTLEGCMCTRANSPORTEDCOORDINTES} \\
	\gfour^{-1}
	& = 
		- n^{-2} \partial_t \otimes \partial_t
		+ g^{ab} \partial_a \otimes \partial_b.
		\label{E:LITTLEGINVERSECMCTRANSPORTEDCOORDINTES}
\end{align}
The scalar function $n > 0$ is known as the lapse.
Note that
\begin{align} \label{E:UNITNORMAL}
	\Nml
	& : = n^{-1} \partial_t
\end{align}
is the future-directed unit normal to $\Sigma_t$.

The second fundamental form $\SecondFund$ is the
$\Sigma_t$-tangent type $\binom{0}{2}$ tensorfield 
defined by requiring that following relation 
holds for all vectorfields $X,Y$ tangent to $\Sigma_t$:
\begin{align} \label{E:SECONDFUNDDEF}
	\gfour(\Dfour_X \Nml, Y) = - \SecondFund(X,Y),
\end{align}
where $\Dfour$ is the Levi--Civita connection of $\gfour$.
It is a standard fact that $\SecondFund$ is symmetric:
\begin{align}
	 \SecondFund(X,Y) =  \SecondFund(Y,X).
\end{align}

Let $\nabla$ denote the Levi--Civita connection of $g$.
The action of the spacetime connection $\Dfour$ can be decomposed into
the action of $\nabla$ and $\SecondFund$ as follows:
\begin{align} \label{E:DDECOMP}
	\Dfour_X Y = \nabla_X Y - \SecondFund(X,Y)\Nml.
\end{align}

When deriving equations and estimates,
we always write $\SecondFund$ in mixed form
(that is, in the form $\SecondFund_{\ j}^i$ with one index up); this leads to equations
with good structures and corresponding good estimates.
Throughout, 
\begin{align} \label{E:TRACESECONDFUND}
	\mbox{\upshape tr} \SecondFund
	& := \SecondFund_{\ a}^a
\end{align}
denotes the pure trace of the type $\binom{1}{1}$ tensorfield $\SecondFund$
and
\begin{align} \label{E:TRFREESECONDFUND}
	\hat{\SecondFund}_{\ j}^i
	& := \SecondFund_{\ j}^i
		- 
		\frac{1}{3} \SecondFund_{\ a}^a
		\ID_{\ j}^i
\end{align}
denotes its trace-free part, where $\ID$ is the identity transformation.

\subsection{The Einstein-scalar field equations relative to CMC-transported spatial coordinates}
We now provide the Einstein-scalar field equations relative to CMC-transported spatial coordinates gauge.

\begin{proposition} [\textbf{The Einstein-scalar field equations relative 
	to CMC-transported spatial coordinates}]
\label{P:EINSTEININCMC}
Let $\scale(t)$ denote the scale factor of the FLRW metric
and let
$\displaystyle \Hubble(t) = \frac{\scale'(t)}{\scale(t)}$
denote the FLRW Hubble factor, as defined in \eqref{E:HUBBLEFACTOR}.
In CMC-transported spatial coordinates normalized by 
\begin{align}\label{E:CMCNORMALIZATIONCHOICE}
	 \SecondFund_{\ a}^a(t,x)
	& \equiv - \Hubble(t),
\end{align}
which implies that
$\SecondFund_{\ j}^i 
	= \FreeSecondFund_{\ j}^i
		-
		\frac{1}{3} \Hubble \ID_{\ j}^i
$,
the Einstein-scalar field 
equations \eqref{E:EINSTEINSF}-\eqref{E:WAVEMODEL}
are equivalent to the following system of equations.

The \textbf{Hamiltonian and momentum constraint equations} 
are respectively:
\begin{subequations}
\begin{align}
		\ScalarCurArg{g} 
		- 
		\SecondFund_{\ b}^a \SecondFund_{\ a}^b 
		+ 
		\underbrace{\Hubble^2}_{(\SecondFund_{\ a}^a)^2} 
		& = \overbrace{(n^{-1} \partial_t \phi)^2 + \nabla^a \phi \nabla_a \phi}^{2 \Tfour(\Nml,\Nml)}, 
			\label{E:HAMILTONIAN} \\
		\nabla_a \FreeSecondFund_{\ i}^a 
		& = 
		\underbrace{- n^{-1} \partial_t \phi \nabla_i \phi}_{- \Tfour(\Nml,\partial_i)}, \label{E:MOMENTUM}
\end{align}
\end{subequations}
where $\ScalarCurArg{g}$ denotes the scalar curvature of $g$.

The \textbf{metric evolution equations} are:
\begin{subequations}
\begin{align}
	\partial_t g_{ij} 
	& = - 2 n g_{ia} \SecondFund_{\ j}^a, 
	\label{E:PARTIALTGCMC} \\
	\partial_t \SecondFund_{\ j}^i 
	& = - \nabla^i \nabla_j n
		+ n 
			\left\lbrace 
				\Ric[g]_{\ j}^i 
				+ \underbrace{\SecondFund_{\ a}^a}_{- \Hubble} \SecondFund_{\ j}^i 
				\underbrace{- \nabla^i \phi \nabla_j \phi}_{- T_{\ j}^i + (1/2)\ID_{\ j}^i (\gfour^{-1})^{\alpha \beta} \Tfour_{\alpha \beta}} 
			\right\rbrace,  \label{E:PARTIALTKCMC}
\end{align}
\end{subequations}
where $\Ric[g]$ denotes the Ricci curvature of $g$.

The \textbf{scalar field wave equation} is:
\begin{align} \label{E:SCALARFIELDCMC}
	\overbrace{- n^{-1} \partial_t(n^{-1} \partial_t \phi)}^{- \Dfour_{\Nml} \Dfour_{\Nml} \phi} 
		+ \nabla^a \nabla_a \phi 
		& = \overbrace{\Hubble n^{-1} \partial_t \phi}^{- \SecondFund_{\ a}^a \Dfour_{\Nml} \phi} 
		- n^{-1} \nabla^a n \nabla_a \phi.	
\end{align}

The \textbf{elliptic lapse equation} is
\begin{align} \label{E:ORIGINALLAPSE}
	\nabla^a \nabla_a n
	& = \Hubble'
			+ n
			\left\lbrace
				\overbrace{\hat{\SecondFund}_{\ b}^a \hat{\SecondFund}_{\ a}^b
				+
				\frac{1}{3} \Hubble^2}^{\SecondFund_{\ b}^a \SecondFund_{\ a}^b}
				+
				\overbrace{
				(n^{-1} \partial_t \phi)^2}^{\Tfour(\Nml,\Nml) + \frac{1}{2} (\gfour^{-1})^{\alpha \beta}\Tfour_{\alpha \beta}}
			\right\rbrace,
\end{align}
which by virtue of \eqref{E:HAMILTONIAN}
is equivalent to
\begin{align} \label{E:LAPSE}
	\nabla^a \nabla_a n
	& = n \ScalarCurArg{g}
		+ \Hubble'
		+ n \Hubble^2
		- n \nabla^a \phi \nabla_a \phi.
\end{align}

Moreover, the gauge condition \eqref{E:CMCNORMALIZATIONCHOICE} 
and the constraint equations \eqref{E:HAMILTONIAN}-\eqref{E:MOMENTUM}
are preserved by the flow of the remaining equations 
if they are verified by the data.

\end{proposition}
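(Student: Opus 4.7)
The proof is a standard $3+1$ ADM-type decomposition of the Einstein-scalar field system, specialized to the CMC transported spatial coordinates gauge (in which the shift vanishes, so $\partial_t = n \Nml$), with the elliptic lapse equation being the distinguishing computation. I would proceed as follows. First, equation \eqref{E:PARTIALTGCMC} is immediate from the definition \eqref{E:SECONDFUNDDEF}: since $\partial_t = n \Nml$ is normal to $\Sigma_t$, one computes $\partial_t g_{ij} = 2 \gfour(\Dfour_{\partial_i}(n\Nml),\partial_j)_{\mathrm{sym}}$ and the lapse gradient terms cancel after symmetrization, leaving $-2n g_{ia}\SecondFund_{\ j}^a$. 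Next, the constraints \eqref{E:HAMILTONIAN}-\eqref{E:MOMENTUM} follow from the twice-contracted Gauss and the once-contracted Codazzi equations: substitute Einstein's equation \eqref{E:EINSTEINSF} to rewrite $\Ricfour(\Nml,\Nml)$ and $\Ricfour(\Nml,\partial_i)$ in terms of $\Tfour(\Nml,\cdot)$ computed from \eqref{E:EMTSCALARFIELD}, and use the CMC condition \eqref{E:CMCNORMALIZATIONCHOICE} (which makes $\nabla_j \SecondFund_{\ a}^a$ vanish since $\Hubble$ is spatially constant) to produce the stated $\Hubble^2$ term in \eqref{E:HAMILTONIAN} and the pure divergence of $\FreeSecondFund$ in \eqref{E:MOMENTUM}.

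For the evolution equation \eqref{E:PARTIALTKCMC}, I would start from the well-known identity (a consequence of the contracted Gauss equation)
\[
\partial_t \SecondFund_{\ j}^i = -\nabla^i \nabla_j n + n\bigl\{\Ric[g]_{\ j}^i + \SecondFund_{\ a}^a \SecondFund_{\ j}^i - \Ricfour_{\ j}^i \big|_{\Sigma_t}\bigr\},
\]
valid in any zero-shift gauge, then substitute \eqref{E:EINSTEINSF} to replace $\Ricfour_{\ j}^i$ by the scalar field tensor from \eqref{E:EMTSCALARFIELD}. The time-derivative pieces of $\Tfour$ and its trace combine in such a way that only $-\nabla^i \phi \nabla_j \phi$ survives, and \eqref{E:CMCNORMALIZATIONCHOICE} replaces $\SecondFund_{\ a}^a$ by $-\Hubble$, yielding \eqref{E:PARTIALTKCMC}. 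The wave equation \eqref{E:SCALARFIELDCMC} is obtained by expanding $(\gfour^{-1})^{\alpha\beta}\Dfour_\alpha \Dfour_\beta \phi = 0$ in the $(\Nml,\Sigma_t)$ split via \eqref{E:DDECOMP}: the normal-normal part gives $-n^{-1}\partial_t(n^{-1}\partial_t \phi)$, the tangential part gives $\nabla^a\nabla_a \phi$, and the mixed terms produce $-\SecondFund_{\ a}^a (\Nml \phi) = \Hubble n^{-1}\partial_t \phi$ together with $-n^{-1}\nabla^a n \nabla_a \phi$, the latter arising from the ``acceleration'' $\Dfour_\Nml \Nml = n^{-1}\nabla n$ of the CMC observer congruence.

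The elliptic lapse equation is where the CMC choice actively enters and deserves the most care. I would take the pure trace of \eqref{E:PARTIALTKCMC}. On the left side, differentiating \eqref{E:CMCNORMALIZATIONCHOICE} in $t$ gives $\partial_t \SecondFund_{\ a}^a = -\Hubble'(t)$ since $\Hubble$ is spatially constant. On the right side, one obtains $-\nabla^a\nabla_a n + n\{\ScalarCurArg{g} - \Hubble \cdot (-\Hubble) - \nabla^a\phi \nabla_a \phi\}$ once the Ricci trace $g_{\ i}^j \Ric[g]_{\ j}^i = \ScalarCurArg{g}$ is used. The decomposition $\SecondFund = \FreeSecondFund - (1/3)\Hubble \ID$ then yields $\SecondFund_{\ b}^a \SecondFund_{\ a}^b = \FreeSecondFund_{\ b}^a \FreeSecondFund_{\ a}^b + (1/3)\Hubble^2$, which rearranged gives \eqref{E:ORIGINALLAPSE}; substituting the Hamiltonian constraint \eqref{E:HAMILTONIAN} to trade $\FreeSecondFund_{\ b}^a \FreeSecondFund_{\ a}^b + (n^{-1}\partial_t\phi)^2$ for $\ScalarCurArg{g} + (2/3)\Hubble^2 - \nabla^a\phi\nabla_a\phi$ produces \eqref{E:LAPSE}. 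Finally, gauge and constraint propagation is the standard Bianchi-identity argument: the identity $\Dfour^\alpha(\Ricfour_{\alpha\beta} - \frac{1}{2}\Rfour \gfour_{\alpha\beta}) = 0$, combined with $\Dfour^\alpha \Tfour_{\alpha\beta} = 0$ (which holds whenever \eqref{E:WAVEMODEL} does, by \eqref{E:EMTSCALARFIELD}), projects onto $\Nml$ and onto $\Sigma_t$ to give a closed symmetric hyperbolic system for the constraint quantities with zero data, while the CMC condition is preserved because \eqref{E:LAPSE} was engineered precisely to enforce $\partial_t \SecondFund_{\ a}^a = -\Hubble'(t)$.

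The principal obstacle is purely algebraic: organizing the trace of \eqref{E:PARTIALTKCMC} and the substitution of the Hamiltonian constraint so that the passage from \eqref{E:ORIGINALLAPSE} to the simpler form \eqref{E:LAPSE} is transparent. Beyond this, every step is bookkeeping of the standard ADM decomposition with the scalar field source.
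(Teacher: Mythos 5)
Your proposal is correct and follows exactly the standard $3{+}1$ ADM route (Gauss--Codazzi for the constraints, Lie differentiation of $g$ and the contracted Gauss/Mainardi identity for the evolution equations, taking the trace of the $\partial_t \SecondFund$ equation to produce the elliptic lapse equation, and the twice-contracted Bianchi identity plus $\Dfour^\alpha \Tfour_{\alpha\beta}=0$ for constraint propagation) that the paper itself invokes by pointing the reader to \cite[Section~6.2]{aS2010} and calling the result standard. Two small presentational slips that do not affect correctness: in your derivation of \eqref{E:PARTIALTGCMC} the lapse-gradient terms disappear because $\Nml$ is $\gfour$-orthogonal to the spatial $\partial_j$, not by symmetrization; and in the lapse computation, tracing \eqref{E:PARTIALTKCMC} directly yields the $\ScalarCurArg{g}$ form \eqref{E:LAPSE}, after which the Hamiltonian constraint converts it to \eqref{E:ORIGINALLAPSE}, rather than the other order you describe.
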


\begin{proof}
	The proposition is standard and can be obtained by making
	straightforward modifications to the analysis carried out in
	(for example) \cite[Section 6.2]{aS2010}.
\end{proof}

\section{The rescaled variables, differential operators, and the Einstein-scalar field equations for the rescaled variables}
\label{S:RESCALEDVARSANDEQNS}
In the rest of the article, we will find it convenient to work with solution variables
that are rescaled by various powers of the FLRW scale factor $\scale$. In this section, we define the rescaled
variables and some differential operators connected to them. 
In particular, we define $\Sigma_t$-projected Lie derivative operators,
which we will later use when differentiating the equations.
Finally, we provide the Einstein-scalar field equations
in CMC-transported spatial coordinates gauge in terms of the rescaled variables.

\subsection{The time-rescaled variables}
\label{SS:RESCALEDVARS}

\begin{definition}[\textbf{Time-rescaled variables}] 
		\label{D:RESCALEDVARIABLES}
		In terms of the solution variables
		$(g,\SecondFund,n,\partial_t \phi, \nabla \phi)$
		appearing in the equations of Prop.~\ref{P:EINSTEININCMC},
		we define the following time-rescaled variables:
		\begin{subequations}
		\begin{align}
			\newg
			& := \scale^{-2/3} g,
			\qquad 
			\newg^{-1} 
			= \scale^{2/3} g^{-1},
			%\qquad
			%\sqrt{\mbox{\upshape det} \newg}
			%= \scale^{-1} \sqrt{\mbox{\upshape det} g},
				\label{E:RESCALEDMETRIC} \\
		\FreeNewSec
		& := \scale \FreeSecondFund,
			\label{E:RESCALEDSECONDFUND} \\
		\newlapse
		& := \scale^{-4/3}(n-1),
			\label{E:RESCALEDLAPSE} \\
		\newtimescalar
		& := n^{-1} \scale \partial_t \phi - \sqrt{\frac{2}{3}},
			\qquad
		\newspacescalar
		:= \nabla \phi.
		\label{E:RESCALEDSCALARFIELD}
		\end{align}
		\end{subequations}
\end{definition}	
Note that for the FLRW solution,
we have $\newg = \StMet$ 
(where $\StMet$ is the round metric on $\mathbb{S}^3$ from Def.\ \ref{D:ROUNDMETRIC}),
$\newg^{-1} = \StMet^{-1}$, $\FreeNewSec = 0$, $\newlapse = 0$, and $\newspacescalar = 0$.

\begin{remark}[\textbf{We do not estimate} $\phi$ \textbf{itself}]
	\label{R:WEDONOTESTIMATEPHI}
	Note that we have not introduced a rescaled variable corresponding to $\phi$ itself.
	This is because we never need to estimate $\phi$ itself, the reason being that it does not
	appear in equations \eqref{E:EINSTEINSF}-\eqref{E:WAVEMODEL};
	only its first derivatives appear.
\end{remark}

\subsection{The $\Sigma_t$-projection tensorfield and $\Sigma_t$-projected Lie derivatives}
\label{SS:PROJECTIONTENSORFIELD}
Some of our constructions will
rely on the type $\binom{1}{1}$ tensorfield $\SigmatProject$
that $\gfour$-orthogonally projects onto $\Sigma_t$.
Relative to arbitrary coordinates, 
the components of $\SigmatProject$ are as follows:
\begin{align} \label{E:SIGMATPROJECT}
	\SigmatProject_{\ \beta}^{\alpha}
	& = 
		\Nml^{\alpha} \Nml_{\beta}	
		+
		\delta_{\ \beta}^{\alpha},
\end{align}
where $\Nml$ is the future-directed unit normal to $\Sigma_t$ (see \eqref{E:UNITNORMAL})
and $\delta_{\ \beta}^{\alpha}$ is the standard Kronecker delta.

If $\xi$
is a type $\binom{l}{m}$ spacetime tensorfield,
then we define $\SigmatProject \xi$ to be the type $\binom{l}{m}$
tensorfield obtained by projecting
all components of $\xi$ onto $\Sigma_t$ using $\SigmatProject$.
We say that $\xi$ is $\Sigma_t$-tangent if 
$\SigmatProject \xi = \xi$. In coordinates, 
this can be expressed as
$\SigmatProject_{\ \widetilde{\mu}_1}^{\mu_1} \cdots \SigmatProject_{\ \widetilde{\mu}_l}^{\mu_l}
	\SigmatProject_{\ \nu_1}^{\widetilde{\nu}_1} \cdots \SigmatProject_{\ \nu_m}^{\widetilde{\nu}_m} 
	\xi_{\widetilde{\nu}_1 \cdots \widetilde{\nu}_m}^{\widetilde{\mu}_1 \cdots \widetilde{\mu}_l}
	= \xi_{\nu_1 \cdots \nu_m}^{\mu_1 \cdots \mu_l}
$.
It is easy to see that relative to CMC-transported spatial coordinates,
$\xi$ is $\Sigma_t$-tangent if and only 
\begin{align} \label{E:XIISSIGMATTANGENT}
	\xi_{\nu_1 \cdots \nu_m}^{\mu_1 \cdots \mu_l} 
	& = 0,
	&&	\mbox{\upshape if any of $\mu_1,\cdots,\mu_l,\nu_1,\cdots,\nu_m$ are $0$}.
\end{align}
For this reason, when referring to a $\Sigma_t$-tangent tensor, we typically display only
its spatial indices. That is, we identify $\xi$ with
$\xi_{j_1 \cdots j_m}^{i_1 \cdots i_l}$.

If $\mathbf{V}$ is a spacetime vectorfield,
then we define the $\Sigma_t$-projected Lie derivative 
of $\xi$ as follows:
\begin{align} \label{E:SIGMATPROJECT}
	(\SigmatLie_{\mathbf{V}} \xi)_{\nu_1 \cdots \nu_m}^{\mu_1 \cdots \mu_l}
	& :=
	(\SigmatProject \Lie_{\mathbf{V}} \xi)_{\nu_1 \cdots \nu_m}^{\mu_1 \cdots \mu_l}
	=
	\SigmatProject_{\ \widetilde{\mu}_1}^{\mu_1} \cdots \SigmatProject_{\ \widetilde{\mu}_l}^{\mu_l}
	\SigmatProject_{\ \nu_1}^{\widetilde{\nu}_1} \cdots \SigmatProject_{\ \nu_m}^{\widetilde{\nu}_m}
	(\Lie_{\mathbf{V}} \xi)_{\widetilde{\nu}_1 \cdots \widetilde{\nu}_m}^{\widetilde{\mu}_1 \cdots \widetilde{\mu}_l}.
\end{align}

\subsection{Notation and differential operators involving the rescaled variables}
\label{SS:RECALEDVARSNOTATION}
Since $\scale$ is a function of $t$, it follows from 
definition \eqref{E:RESCALEDMETRIC} that the Levi--Civita connection of
$\newg$ is the same as that of $g$. Hence, we can denote the connection
by $\nabla$ without any danger of confusion.

We use the following notation to simply the presentation of some
of our formulas.
	
\begin{definition}[\textbf{Inner products with respect to} $\newg$]
\label{D:INNERPRODUCTS}
If 
$\xi$
and
$\eta$
are type $\binom{l}{m}$ $\Sigma_t$-tangent tensorfields,
then
\begin{align} \label{E:INNERPRODUCTS}
	\langle \xi,\eta \rangle_{\newg}
	& := 
		\newg_{a_1 a_1'} \cdots \newg_{a_l a_l'}
		(\newg^{-1})^{b_1 b_1'} \cdots (\newg^{-1})^{b_m b_m'}
		\xi_{b_1 \cdots b_m}^{a_1 \cdots a_l}
		\eta_{b_1' \cdots b_m'}^{a_1' \cdots a_l'}
\end{align}
denotes the inner product of $\xi$ and $\eta$ with respect to $\newg$.
\end{definition}

\begin{definition}[\textbf{Traces}]
If $\xi$ is a type $\binom{0}{2}$ $\Sigma_t$-tangent tensorfield, 
then
\begin{align} \label{E:GTRACE}
	\mytr \xi
	& := (\newg^{-1})^{ab} \xi_{ab}
\end{align}
denotes its trace with respect to $\newg$.

If $\xi$ is a type $\binom{1}{1}$ $\Sigma_t$-tangent tensorfield, 
then
\begin{align} \label{E:PURETRACE}
	\mbox{\upshape tr} \xi
	& := \xi_{\ a}^a
\end{align}
denotes its pure trace.
\end{definition}

\begin{definition}[\textbf{Musical notation}]
	\label{D:MUSICAL}
	If $\xi$ is a $\Sigma_t$-tangent one-form, then
	$\xi^{\#}$ denotes its vectorfield dual, obtained by
	raising the index with the inverse time-rescaled metric $\newg^{-1}$.
	If $\xi$ is a symmetric type $\binom{0}{2}$ $\Sigma_t$-tangent one-form, then
	$\xi^{\#}$ denotes the type $\binom{1}{1}$ tensorfield obtained by
	raising one index with the inverse time-rescaled metric $\newg^{-1}$. 
\end{definition}

\begin{definition}[\textbf{Spatial Laplacian and divergence operators}]
	\label{D:CONNECTIONDIFFOPS}
	If $\xi$ is a $\Sigma_t$-tangent tensorfield, 
	then we define its $\newg$-Laplacian as follows:
	\begin{align} \label{E:RESCALEDMETRICLAPLACIAN}
		\Delta_{\newg} \xi
		& := (\newg^{-1})^{ab} \nabla_a \nabla_b \xi.
	\end{align}	
	
	If $\xi$ is a type $\binom{1}{1}$ $\Sigma_t$-tangent tensorfield, 
	then $\Gdiv \xi$ is the $\Sigma_t$-tangent one-form 
	with the following components relative to arbitrary coordinates on $\Sigma_t$:
	\begin{align} \label{E:DIVERGENCEOFTYPE11}
			(\Gdiv \xi)_j
			& := \nabla_a \xi_{\ j}^a.
	\end{align}
	Similarly, if $\xi$ is a type $\binom{1}{1}$ $\Sigma_t$-tangent tensorfield, then
	$\Gdiv^{\#} \xi$ is the $\Sigma_t$-tangent vectorfield
	with the following components:
	\begin{align} \label{E:SHARPDIVERGENCEOFTYPE11}
			(\Gdiv^{\#} \xi)^i
			& := (\newg^{-1})^{ab}\nabla_a \xi_{\ b}^i.
	\end{align}
	
	If $\xi$ is a symmetric type $\binom{0}{2}$ $\Sigma_t$-tangent tensorfield, 
	then $\Gdiv \xi$ is the $\Sigma_t$-tangent one-form 
	with the following components relative to arbitrary coordinates on $\Sigma_t$:
	\begin{align} \label{E:DIVERGENCEOFTYPE02}
			(\Gdiv \xi)_j
			& := (\newg^{-1})^{ab} \nabla_a \xi_{b j}.
	\end{align}
	Similarly, if $\xi$ is a symmetric type $\binom{0}{2}$ $\Sigma_t$-tangent tensorfield, 
	$\Gdiv^{\#} \xi$ is the $\Sigma_t$-tangent vectorfield
	with the following components:
	\begin{align} \label{E:SHARPDIVERGENCEOFTYPE02}
			(\Gdiv^{\#} \xi)^i
			& := (\newg^{-1})^{ab} (\newg^{-1})^{ic} \nabla_a \xi_{bc}.
	\end{align}
\end{definition}

\begin{definition}[\textbf{Curvatures of } $\newg$]
	\label{D:CURVATURES}
	The Riemann curvature $\Riem_{i j k l}$ of $\newg$
	is determined by the following formula, 
	which holds for all $\Sigma_t$-tangent vectors $X$
	relative to arbitrary coordinates on $\Sigma_t$:
	\begin{align} \label{E:RIEMANNBIGG}
	\nabla_i \nabla_j X_k 
	- 
	\nabla_j \nabla_i X_k 	
	& = (\newg^{-1})^{lm} \Riem_{i j k l} X_m.
\end{align}

We define $\Ric_{ij} := (\newg^{-1})^{ab}\Riem_{i a j b}$
to be the Ricci curvature of $\newg$
\textbf{in type $\binom{0}{2}$ form}.
We define $\ScalarCur := (\newg^{-1})^{ab}\Ric_{ab}$
to be the scalar curvature of $\newg$.
\end{definition}

\begin{remark} 
	\label{R:CURVATURERESCALING}
	Recall that $\ScalarCurArg{g}$ denotes the scalar curvature of the \emph{non-rescaled} Riemannian metric $g$
	and that $\Ric[g]$ denotes the type $\binom{0}{2}$ Ricci curvature of $g$.
	It is straightforward to check that
	$\Ric_{ij} = (\Ric[g])_{ij}$,
	$(\Ric^{\# })_{\ j}^i = \scale^{2/3} g^{ia} (\Ric[g])_{aj}$,
	and
	$\ScalarCur = \scale^{2/3} \ScalarCurArg{g}$.
\end{remark}

In our analysis, we will use two kinds of pointwise norms:
one with respect to the metric $\newg$ and the other with
respect to the round metric $\StMet$ on $\Sigma_t$.
Here we provide the definition of the pointwise norm with respect to $\newg$.

\begin{definition}[\textbf{Pointwise norm relative to} $\newg$]
	\label{D:POINTWISENORM}
	Let $\newg$ denote the rescaled spatial metric from Def.\ \ref{D:RESCALEDVARIABLES}.
	If $\xi$ 
	is a type $\binom{l}{m}$ $\Sigma_t$-tangent tensor,
	then we define the norm 
	$|\xi|_{\newg} \geq 0$ by
	\begin{align} 
		|\xi|_{\newg}^2
		& := 
		\newg_{a_1 a_1'} \cdots \newg_{a_l a_l'}
		(\newg^{-1})^{b_1 b_1'} \cdots (\newg^{-1})^{b_m b_m'}
		\xi_{b_1 \cdots b_m}^{a_1 \cdots a_l}
		\xi_{b_1' \cdots b_m'}^{a_1' \cdots a_l'}.
		\label{E:POINTWISENORM}
	\end{align}
\end{definition}

\begin{remark}
	The norm $|\xi|_{\StMet}$ is defined by replacing the metric
	$\newg$ on RHS~\eqref{E:POINTWISENORM} with the round metric $\StMet$ on $\Sigma_t$,
	which we formally construct in Subsect.\ \ref{SS:EXTENDINGSIGMA0TANGENTTENSORFIELDSTOSIGMATTANGENTTENSORFIELDS}.
\end{remark}

Both of the following elliptic operators will play a role in our analysis of 
the rescaled lapse $\newlapse$.
	
\begin{definition}[\textbf{The elliptic operators $\mathscr{L}$ and $\widetilde{\mathscr{L}}$}]
	\label{D:ELLIPTICOPS}
	We define the operator $\mathscr{L}$ by
	\begin{subequations}
	\begin{align}
		\mathscr{L}
		& := 
			\scale^{8/3} \GLap
			- \scale^{4/3} f,
				\label{E:HIGHORDERELLIPTICOPERATOR} \\
		f 
		& := (\scale')^2
				+ 
				\frac{2}{3} \scale^{4/3}
				+ |\FreeNewSec|_{\newg}^2
				+ 2 \sqrt{\frac{2}{3}} \newtimescalar
				+ \newtimescalar^2,
			\label{E:ERRORTERMHIGHORDERELLIPTICOPERATOR}
	\end{align}
	\end{subequations}
	and the operator $\widetilde{\mathscr{L}}$ by
	\begin{subequations}
	\begin{align}
		\widetilde{\mathscr{L}}
		& := 
			\scale^{4/3} \GLap 
			- \widetilde{f},
				\label{E:LOWORDERELLIPTICOPERATOR} \\
		\widetilde{f} 
		& := 
			(\scale')^2
			+ 
			\frac{2}{3} \scale^{4/3}
			+ 
			\scale^{4/3} 
			\left\lbrace
				\ScalarCur
				- 
				\frac{2}{3}
			\right\rbrace
			-
			\scale^{4/3} |\newspacescalar|_{\newg}^2.
			\label{E:ERRORTERMLOWORDERELLIPTICOPERATOR}
	\end{align}
	\end{subequations}
\end{definition}

\subsection{The Einstein-scalar field equations in terms of the time-rescaled variables}
\label{SS:EQNSFORRESCALEDVARS}
In the next proposition, we derive the Einstein-scalar field equations
relative to CMC-transported spatial coordinates gauge for the time-rescaled variables.

\begin{proposition}[\textbf{The Einstein-scalar field equations for the time-rescaled variables}]
	\label{P:RESCALEDVARIABLES}
	Let $g_{ij}$, $\SecondFund_{\ j}^i$, $n$, $\partial_t \phi$, $\nabla \phi$ be
	a solution to the equations of Prop.~\ref{P:EINSTEININCMC}.
	Then the corresponding time-rescaled variables of 
	Def.\ \ref{D:RESCALEDVARIABLES} verify the following equations.
	
	The \textbf{rescaled constraint equations} are:
	\begin{subequations}
	\begin{align}
		\scale^{4/3} 
		\left\lbrace
			\ScalarCur
			- 
			\frac{2}{3}
		\right\rbrace
		- |\FreeNewSec|_{\newg}^2 
		& = 
			2 \sqrt{\frac{2}{3}} \newtimescalar
			+
			\newtimescalar^2 
			+ 
			\scale^{4/3} |\newspacescalar|_{\newg}^2, 
			\label{E:RENORMALIZEDHAMILTONIAN} \\
		\Gdiv \FreeNewSec 
		& = 
			- \sqrt{\frac{2}{3}} \newspacescalar
			- \newtimescalar \newspacescalar,
			\label{E:RENORMALIZEDMOMENTUM}
				\\
		\Gdiv^{\#} \FreeNewSec 
		& = 
			- 
			\sqrt{\frac{2}{3}} \newspacescalar^{\#}
			- 
			\newtimescalar \newspacescalar^{\#}.
			\label{E:ALTERNATERENORMALIZEDMOMENTUM}
\end{align}
\end{subequations}

	The \textbf{rescaled metric evolution equations} are:
	\begin{subequations}
	\begin{align}
		\SigmatLie_{\partial_t} \newg
		& = 
			- 2 \scale^{-1} \newg \cdot \FreeNewSec
			- 2 \scale^{1/3} \newlapse \newg \cdot \FreeNewSec
			+
			\frac{2}{3} \scale^{1/3} \scale' \newlapse \newg,
			\label{E:EVOLUTIONMETRICRENORMALIZED} \\
	\SigmatLie_{\partial_t} \newg^{-1}
		& = 2 \scale^{-1} \newg^{-1} \cdot \FreeNewSec
		+
		2 \scale^{1/3} \newlapse \newg^{-1} \cdot \FreeNewSec
		- 
		\frac{2}{3} \scale^{1/3} \scale' \newlapse \newg^{-1},
			\label{E:EVOLUTIONINVERSEMETRICRENORMALIZED} \\
	\SigmatLie_{\partial_t} \FreeNewSec
	& = 
		- 
		\scale^{5/3} \nabla^{\#} \nabla \newlapse
		+
		\scale^{1/3}
		(1 + \scale^{4/3} \newlapse) 
		\left\lbrace
			\Ric^{\# } - \frac{2}{9} \ID
		\right\rbrace
		\label{E:EVOLUTIONSECONDFUNDRENORMALIZED} \\
	& \ \
		+
		\frac{1}{3} (\scale')^2 \scale^{1/3} \newlapse \ID
		+ 
		\frac{2}{9} \scale^{5/3} \newlapse \ID
		- \scale^{1/3} \scale' \newlapse \FreeNewSec 
		- \scale^{1/3} \newspacescalar^{\#} \otimes \newspacescalar
		- \scale^{5/3} \newlapse \newspacescalar^{\#} \otimes \newspacescalar,
		\notag 
\end{align}
\end{subequations}
where $\ID$ is the identity transformation.

The \textbf{rescaled scalar field evolution equations} are:
\begin{subequations}
\begin{align}  \label{E:WAVEEQUATIONRENORMALIZED}
		\partial_t \newtimescalar
		  & =
			(1 + \scale^{4/3} \newlapse) \scale^{1/3} \Gdiv \newspacescalar
			-
			\sqrt{\frac{2}{3}} \scale' \scale^{1/3} \newlapse
			- \scale^{1/3} \scale' \newlapse \newtimescalar
			- \scale^{5/3} \nabla^{\#} \newlapse \cdot \newspacescalar,
			\\
		\SigmatLie_{\partial_t} \newspacescalar
		& = \scale^{-1} (1 + \scale^{4/3} \newlapse) \nabla \newtimescalar
			+ \sqrt{\frac{2}{3}} \scale^{1/3} \nabla \newlapse
			+ \scale^{1/3} \newtimescalar \nabla \newlapse.
			\label{E:EVOLUTIONSPACESCALARRENORMALIZED}
\end{align}
\end{subequations}
With $\mathscr{L}$ as in \eqref{E:HIGHORDERELLIPTICOPERATOR},
the \textbf{rescaled elliptic lapse equation} is:
\begin{align} \label{E:LAPSEPDERENORMALIZEDHIGHERDERIVATIVES}
	\mathscr{L} \newlapse
	& = 2 \sqrt{\frac{2}{3}} \newtimescalar
			+
			|\FreeNewSec|_{\newg}^2
			+
			\newtimescalar^2.
\end{align}

In addition, 
with $\widetilde{\mathscr{L}}$ as in \eqref{E:LOWORDERELLIPTICOPERATOR},
$\newlapse$ verifies the elliptic equation
\begin{align}	\label{E:LAPSEPDERENORMALIZEDLOWERDERIVATIVES}
	\widetilde{\mathscr{L}} \newlapse
	& = \left\lbrace
				\ScalarCur - \frac{2}{3}
			\right\rbrace
			-
			|\newspacescalar|_{\newg}^2.
\end{align}

\end{proposition}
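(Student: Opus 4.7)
The proof is a direct computation in which we insert the rescaled variables of Def.~\ref{D:RESCALEDVARIABLES} into the equations of Prop.~\ref{P:EINSTEININCMC} and apply Friedmann's equations \eqref{E:FRIEDMANNFIRSTORDER}, \eqref{E:FRIEDMANNSECONDORDER}, and their Hubble consequence \eqref{E:HUBBLEODE} to reorganize the scale-factor-dependent terms. The key dictionary is $g_{ij} = \scale^{2/3}\newg_{ij}$, $n = 1 + \scale^{4/3}\newlapse$, $n^{-1}\partial_t\phi = \scale^{-1}(\newtimescalar + \sqrt{2/3})$, $\nabla\phi = \newspacescalar$, $\FreeSecondFund = \scale^{-1}\FreeNewSec$, together with the CMC splitting $\SecondFund_{\ j}^i = \FreeSecondFund_{\ j}^i - \frac{1}{3}\Hubble\ID_{\ j}^i$, the conformal identities from Remark~\ref{R:CURVATURERESCALING}, and the observation that the Levi--Civita connections of $g$ and $\newg$ coincide (so that, since $\scale$ is spatially constant, $\nabla^a\nabla_a n = \scale^{2/3}\GLap\newlapse$).

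The constraint equations and their index-raised variants \eqref{E:RENORMALIZEDHAMILTONIAN}--\eqref{E:ALTERNATERENORMALIZEDMOMENTUM} come out quickly: one substitutes, multiplies by the appropriate power of $\scale$, splits $\SecondFund_{\ b}^a\SecondFund_{\ a}^b = \scale^{-2}|\FreeNewSec|_{\newg}^2 + \frac{1}{3}\Hubble^2$, and uses $(\scale')^2 = 1 - \scale^{4/3}$ to cancel the constant and the $\scale^{4/3}$ pieces generated by $\frac{2}{3}\Hubble^2$. For the metric evolution \eqref{E:EVOLUTIONMETRICRENORMALIZED} one uses that $(\SigmatLie_{\partial_t}\newg)_{ij} = \partial_t\newg_{ij}$ in transported spatial coordinates (since $\partial_t$ is a coordinate vectorfield), differentiates $\newg = \scale^{-2/3}g$ in $t$, inserts \eqref{E:PARTIALTGCMC} and the CMC splitting, and regroups; \eqref{E:EVOLUTIONINVERSEMETRICRENORMALIZED} then follows from $0 = \partial_t(\newg\cdot\newg^{-1})$. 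Equations \eqref{E:WAVEEQUATIONRENORMALIZED}--\eqref{E:EVOLUTIONSPACESCALARRENORMALIZED} come from differentiating the definitions of $\newtimescalar$ and $\newspacescalar$ in $t$, invoking \eqref{E:SCALARFIELDCMC} and the commutation $\SigmatLie_{\partial_t}\nabla\phi = \nabla\partial_t\phi$, and collecting terms by powers of $\scale$.

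The main obstacle is the evolution equation \eqref{E:EVOLUTIONSECONDFUNDRENORMALIZED} for $\FreeNewSec$, which has the largest number of terms and requires the most careful bookkeeping. Starting from $\FreeNewSec = \scale\SecondFund + \frac{1}{3}\scale\Hubble\ID$ and differentiating in $t$, one substitutes \eqref{E:PARTIALTKCMC}; the Hessian $-\nabla^i\nabla_j n$ yields $-\scale^{5/3}\nabla^{\#}\nabla\newlapse$, the Ricci term yields $\scale^{1/3}(1+\scale^{4/3}\newlapse)\Ric^{\#}$, and the scalar-field quadratic yields $-\scale^{1/3}(1+\scale^{4/3}\newlapse)\newspacescalar^{\#}\otimes\newspacescalar$. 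The coefficient $-\scale^{1/3}\scale'\newlapse$ of $\FreeNewSec$ appears by combining the $\scale'\FreeSecondFund = \Hubble\FreeNewSec$ contribution with the $-n\Hubble\FreeNewSec$ contribution arising from $\scale\cdot n\SecondFund_{\ a}^a\SecondFund_{\ j}^i$, giving $(1-n)\Hubble\FreeNewSec = -\scale^{1/3}\scale'\newlapse\FreeNewSec$. The crucial algebraic step is to combine the pure-trace pieces $\frac{1}{3}\scale n\Hubble^2\ID + \frac{1}{3}\scale\Hubble'\ID$ with the $\frac{2}{9}\scale^{1/3}(1+\scale^{4/3}\newlapse)\ID$ that is produced by rewriting $\Ric^{\#} = (\Ric^{\#} - \frac{2}{9}\ID) + \frac{2}{9}\ID$; invoking $\Hubble' + \Hubble^2 = -\frac{2}{3}\scale^{-2/3}$ (from \eqref{E:HUBBLEODE}) and $(\scale')^2 = 1 - \scale^{4/3}$ (from \eqref{E:FRIEDMANNFIRSTORDER}), all of the $\newlapse$-free contributions cancel and one is left with precisely $\frac{1}{3}(\scale')^2\scale^{1/3}\newlapse\ID + \frac{2}{9}\scale^{5/3}\newlapse\ID$, as desired.

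The two elliptic lapse equations follow by the same sort of substitution. For \eqref{E:LAPSEPDERENORMALIZEDHIGHERDERIVATIVES} I insert the rescalings into \eqref{E:ORIGINALLAPSE}, multiply by $\scale^2$, and use both Friedmann equations to repackage the $(\scale')^2$, $\Hubble'$, and $\scale^{4/3}$ contributions into the $f$ of \eqref{E:ERRORTERMHIGHORDERELLIPTICOPERATOR}, producing the claimed form with $\mathscr{L}$. For \eqref{E:LAPSEPDERENORMALIZEDLOWERDERIVATIVES} I start instead from \eqref{E:LAPSE}, multiply by $\scale^{2/3}$, and use $\scale^{2/3}(\Hubble' + \Hubble^2) = -\frac{2}{3}$ together with $\ScalarCurArg{g} = \scale^{-2/3}\ScalarCur$ to match the $\widetilde{f}$ of \eqref{E:ERRORTERMLOWORDERELLIPTICOPERATOR}. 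Preservation of the CMC normalization and of the constraints under the rescaled evolution equations is inherited directly from Prop.~\ref{P:EINSTEININCMC}, since the passage between the two systems is invertible.
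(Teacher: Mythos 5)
Your proposal is correct and follows essentially the same approach as the paper: direct substitution of the rescaled variables of Def.\ \ref{D:RESCALEDVARIABLES} into the equations of Prop.\ \ref{P:EINSTEININCMC}, the CMC splitting $\SecondFund_{\ j}^i = \FreeSecondFund_{\ j}^i - \frac{1}{3}\Hubble\ID_{\ j}^i$, the conformal-curvature identities of Remark~\ref{R:CURVATURERESCALING}, and the Friedmann ODEs. You correctly identify \eqref{E:EVOLUTIONSECONDFUNDRENORMALIZED} as the one equation requiring careful bookkeeping (it is also the only one the paper derives in detail), and your pure-trace cancellation via $\Hubble' + \Hubble^2 = -\frac{2}{3}\scale^{-2/3}$ is just a rephrasing of the paper's cancellation via $\scale'' = -\frac{2}{3}\scale^{1/3}$.
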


\begin{remark}
	Note that we have rewritten the scalar field wave equation
	as the first-order system
	\eqref{E:WAVEEQUATIONRENORMALIZED}-\eqref{E:EVOLUTIONSPACESCALARRENORMALIZED}.
	This formulation is convenient for our subsequent analysis.
\end{remark}

\begin{remark}
Equation \eqref{E:LAPSEPDERENORMALIZEDHIGHERDERIVATIVES}
is a consequence of the lapse equation \eqref{E:ORIGINALLAPSE}
while 
equation \eqref{E:LAPSEPDERENORMALIZEDLOWERDERIVATIVES}
is a consequence of the lapse equation \eqref{E:LAPSE}.
Both \eqref{E:LAPSEPDERENORMALIZEDHIGHERDERIVATIVES} and 
\eqref{E:LAPSEPDERENORMALIZEDLOWERDERIVATIVES} are important for our analysis.
\end{remark}

\begin{proof}[Proof of Prop.~\ref{P:RESCALEDVARIABLES}]
	The equations are straightforward consequences of
	Prop.~\ref{P:EINSTEININCMC}
	and Def.\ \ref{D:RESCALEDVARIABLES},
	once one takes into account
	Lemma~\ref{L:FRIEDMANN}
	and
	Remark~\ref{R:CURVATURERESCALING}.
	As an example, we derive equation \eqref{E:EVOLUTIONSECONDFUNDRENORMALIZED}
	in detail. The remaining equations can be derived using similar but simpler
	arguments and we omit those details.
	To proceed, we first note the identity
	$\SecondFund_{\ j}^i 
	= \FreeSecondFund_{\ j}^i
		-
		\frac{1}{3} \Hubble \ID_{\ j}^i
	$.
	Multiplying 
	\eqref{E:PARTIALTKCMC} 
	by $\scale$, appealing to definition \eqref{E:HUBBLEFACTOR}, 
	and using this identity,
	we obtain
	\begin{align} \label{E:FIRSTSTEPEVOLUTIONSECONDFUNDRENORMALIZED}
	\partial_t (\scale \FreeSecondFund_{\ j}^i)
	& = - \scale g^{ia} \nabla_a \nabla_j n
		+ n 
			\left\lbrace 
				\scale g^{ia} \Ric[g]_{a j} 
				- 
				\scale' \FreeSecondFund_{\ j}^i
				+
				\frac{1}{3} \scale^{-1} (\scale')^2 \ID_{\ j}^i 
				- 
				\scale g^{ia} \nabla_a \phi \nabla_j \phi
			\right\rbrace
				\\
	& \ \
		+
		\scale' \FreeSecondFund_{\ j}^i
		+
		\frac{1}{3} \scale'' \ID_{\ j}^i
		-
		\frac{1}{3} \scale^{-1} (\scale')^2 \ID_{\ j}^i.
		\notag
\end{align}
From \eqref{E:FIRSTSTEPEVOLUTIONSECONDFUNDRENORMALIZED},
\eqref{E:FRIEDMANNSECONDORDER},
Def.\ \ref{D:RESCALEDVARIABLES},
and Remark~\ref{R:CURVATURERESCALING},
we deduce
	\begin{align} \label{E:SECONDSTEPEVOLUTIONSECONDFUNDRENORMALIZED}
	\partial_t \FreeNewSec_{\ j}^i
	& = - \scale^{5/3} (\newg^{-1})^{ia} \nabla_a \nabla_j \newlapse
		\\
	& \ \
			+ 
			(1 + \scale^{4/3} \newlapse) 
			\left\lbrace 
				\scale^{1/3} (\Ric^{\# })_{\ j}^i
				- 
				\scale^{-1} \scale' \FreeNewSec_{\ j}^i
				+
				\frac{1}{3} \scale^{-1} (\scale')^2 \ID_{\ j}^i
				- 
				\scale^{1/3} (\newg^{-1})^{ia} \newspacescalar_a \newspacescalar_j 
			\right\rbrace
			\notag	\\
	& \ \
		+
		\scale^{-1} \scale' \FreeNewSec_{\ j}^i
		-
		\frac{2}{9} \scale^{1/3} \ID_{\ j}^i
		-
		\frac{1}{3} \scale^{-1} (\scale')^2 \ID_{\ j}^i.
		\notag
\end{align}
From \eqref{E:SECONDSTEPEVOLUTIONSECONDFUNDRENORMALIZED}
and simple computations, we arrive at the desired equation
\eqref{E:EVOLUTIONSECONDFUNDRENORMALIZED}.
	
\end{proof}

\section{The Lie-transported frame and related constructions}
\label{S:LIETRANSPORTEDFRAME}
In our analysis, we do not derive estimates for the components of tensorfields relative to the 
transported spatial coordinate frame. Instead, we derive estimates for the components of tensorfields
relative to a frame constructed by transporting the frame
$\mathscr{Z}$ from Sect.\ \ref{S:GEOMETRYBACKGROUND} 
along the flow lines of the unit normal $\Nml$
in a manner such that the frame remains $\Sigma_t$-tangent.
In this section, we construct the transported frame and the corresponding co-frame.
We also construct a transported version of the round metric $\StMet$.

\begin{remark}
	\label{R:S3ISSIGMA0}
	Throughout this section, we identify
	the manifold $\mathbb{S}^3$ from Sect.\ \ref{S:GEOMETRYBACKGROUND} with
	the initial data hypersurface $\Sigma_0$.
\end{remark}

\subsection{Lie-transported tensorfields}
\label{SS:EXTENDINGSIGMA0TANGENTTENSORFIELDSTOSIGMATTANGENTTENSORFIELDS}
Let $\xi$ be a $\Sigma_0$-tangent tensorfield defined only along $\Sigma_0$
and let $\SigmatProject$ be the $\Sigma_t$-projection tensorfield from Subsect.\ \ref{SS:PROJECTIONTENSORFIELD}.
We can extend $\xi$ to a $\Sigma_t$-tangent tensorfield defined on spacetime
by solving the following equations (where the initial conditions $\xi|_{\Sigma_0}$ are known):
\begin{align} 
	\SigmatLie_{\Nml} \xi
	& = 0,
	\label{E:TRANSPORTXI}
		\\
	\SigmatProject \xi
	& = \xi.	
\label{E:XIREMAINSSIGAMTTANGENT}
\end{align}

It is easy to see 
that relative to CMC-transported coordinates
$(t,x^1,x^2,x^3)$ (with $x^0 = t$),
\eqref{E:TRANSPORTXI}-\eqref{E:XIREMAINSSIGAMTTANGENT}
are equivalent to
\begin{align} 
	\partial_t \xi_{j_1 \cdots j_m}^{i_1 \cdots i_l} 
	& = 0,
	&& \label{E:COMPONENTSOFXIDONOTCHANGE}
		\\
		\xi_{\nu_1 \cdots \nu_m}^{\mu_1 \cdots \mu_l} 
	& = 0,
	&&	\mbox{\upshape if any of $\mu_1,\cdots,\mu_l,\nu_1,\cdots,\nu_m$ are $0$}.
\label{E:XIREMAINSSIGMATTANGENT}
\end{align}

\begin{definition}[\textbf{Lie-transported spatial frame, co-frame, and round metric}]
\label{D:LIETRANSPORTEDSPATIALFRAME}
We define
\begin{align} \label{E:AGAINROUNDMETRICKILLING}
	\mathscr{Z} := \lbrace Z_{(1)}, Z_{(2)}, Z_{(3)} \rbrace
\end{align}
to be the vectorfields
obtained by extending (in the above fashion)
the $\Sigma_0$-tangent
vectorfields defined in \eqref{E:ROUNDMETRICKILLING}
(see Remark~\ref{R:S3ISSIGMA0}).

Similarly, 
\begin{align}
	\Theta := \lbrace \theta^{(1)}, \theta^{(2)}, \theta^{(3)} \rbrace
\end{align}
denotes the one-forms obtained by extending the one-forms
from \eqref{E:DUALTOROUNDMETRICKILLING}.

Finally, $\StMet$ denotes the metric obtained by
extending the round metric $\StMet$ on $\Sigma_0$
from Def.\ \ref{D:ROUNDMETRIC}.
\end{definition}

\begin{remark}
Technically, it is an abuse of notation to use the symbol
$\mathscr{Z}$ to denote both the set \eqref{E:ZSET}
and the set
\eqref{E:AGAINROUNDMETRICKILLING}. 
However, this should not cause any confusion.
Similar remarks apply to the set $\Theta$ and the metric $\StMet$.
\end{remark}

From \eqref{E:XIREMAINSSIGMATTANGENT}-\eqref{E:COMPONENTSOFXIDONOTCHANGE},
it is easy to see that the basic properties verified by
$\lbrace Z_{(A)} \rbrace_{A=1,2,3}$,
$\lbrace \theta^{(A)} \rbrace_{A=1,2,3}$,
and
$\StMet$ along $\Sigma_0$
in fact hold in all of spacetime.
For example, the identity
$\SigmatLie_{Z_{(A)}} \StMet = 0$
holds in all of spacetime, not just along
$\Sigma_0$. 
Similarly, the Lie bracket relation
\eqref{E:FRAMEVECTORFIELDLIEBRACKET}
holds in all of spacetime.
We will use these basic facts in the rest of the paper
without further comment.

\subsection{Metrics and connection coefficients relative to the Lie-transported spatial frame}
\label{SS:METRICSANDCONNECTIONRELATIVETOLIETRANSPORTEDFRAME}
Relative to the time coordinate $t$ and the spatial frame of Def.\ \ref{D:LIETRANSPORTEDSPATIALFRAME},
we can decompose the spacetime metric 
and its inverse 
(see \eqref{E:LITTLEGCMCTRANSPORTEDCOORDINTES}-\eqref{E:LITTLEGINVERSECMCTRANSPORTEDCOORDINTES})
as follows:
\begin{align}
	\gfour
	& = 
		- n^2 dt \otimes dt 
		+ g_{AB} \theta^{(A)} \otimes \theta^{(B)},
			\label{E:LITTLEGCMCFRAME} \\
	\gfour^{-1}
	& = 
		- n^{-2} \partial_t \otimes \partial_t
		+ g^{AB} Z_{(A)} \otimes Z_{(B)},
		\label{E:LITTLEGINVERSECMCFRAME}
\end{align}
where $g_{AB} = g(Z_{(A)},Z_{(B)})$
and $g^{AB} = g^{-1}(\theta^{(A)},\theta^{(B)})$.
Note that from \eqref{E:DUALTOROUNDMETRICKILLING}, 
it follows that $g^{AB}$, viewed as a $3 \times 3$ matrix, is the inverse of the $3 \times 3$ matrix $g_{AB}$.

Similarly, we can decompose the time-rescaled spatial metric $\newg$ and its inverse $\newg^{-1}$ (see Def.\ \ref{D:RESCALEDVARIABLES})
as follows:
\begin{align}
	\newg 
	& = \newg_{AB} \theta^{(A)} \otimes \theta^{(B)},
		\label{E:RESCALEDSPATIALMETRICCOFRAMEDECOMPOSITION} \\
	\newg^{-1}
	& = (\newg^{-1})^{AB} Z_{(A)} \otimes Z_{(B)},
	\label{E:RESCALEDINVERSESPATIALMETRICFRAMEDECOMPOSITION}
\end{align}
where  
$\newg_{AB} = \newg(Z_{(A)},Z_{(B)})$
and $(\newg^{-1})^{AB} = \newg^{-1}(\theta^{(A)},\theta^{(B)})$.
Note that $(\newg^{-1})^{AB}$, viewed as a $3 \times 3$ matrix, is the inverse of the $3 \times 3$ matrix $\newg_{AB}$.

More generally, if
$\xi_{j_1 \cdots j_m}^{i_1 \cdots i_l}$
is a type $\binom{l}{m}$ $\Sigma_t$-tangent tensorfield,
$\theta^1, \cdots, \theta^l \in \Theta$,
and $Z_1, \cdots, Z_m \in \mathscr{Z}$
(see Def.\ \ref{D:LIETRANSPORTEDSPATIALFRAME}),
then we use the notation
\begin{align} \label{E:FRAMECONTRACTIONNOTATION}
\xi_{Z_1 \cdots Z_m}^{\theta^1 \cdots \theta^l}
	:= 
	\xi_{b_1 \cdots b_m}^{a_1 \cdots a_l}
	\theta_{a_1}^1 \cdots \theta_{a_l}^l
	Z_1^{b_1} \cdots Z_m^{b_m}
\end{align}	
to denote the contraction of $\xi$ against
	$\theta^1, \cdots, \theta^l$ 
	and
	$Z_1, \cdots, Z_m$.

In our analysis, we will encounter the connection coefficients of the Lie-transported
frame relative to the metric $\newg$. We now define them.

\begin{definition} [\textbf{Connection coefficients of the Lie-transported frame}]
	\label{D:CONNECTIONCOEFFICIENTS}
	Let $\nabla$ denote the Levi--Civita connection of $\newg$
	and let $\lbrace Z_{(A)} \rbrace_{A=1,2,3}$
	denote the $\Sigma_t$-tangent frame from Def.\ \ref{D:LIETRANSPORTEDSPATIALFRAME}.
	We define the connection coefficients
	$\Gamma_{A \ B}^{\ C}$ of the frame by demanding that
	the following identity holds:
	\begin{align} \label{E:FRAMECOVARIANTDERIVATIVES}
		\nabla_{Z_{(A)}} Z_{(B)} 
		& = \Gamma_{A \ B}^{\ C} Z_{(C)}.
	\end{align}
\end{definition}

In the next lemma, we compute the
$\Gamma_{A \ B}^{\ C}$ in terms of the 
derivatives of the frame components of $\newg$
with respect to the frame vectorfields.

\begin{remark}
	In the remainder of the article, if $X$ is a vectorfield and $f$
	is a scalar function, then $X f := X^{\alpha} \partial_{\alpha} f$ 
	denotes the derivative of $f$ in the direction of $X$. 
	In particular, relative to arbitrary local coordinates on $\Sigma_t$,
	we have 
	$Z_{(A)} \newg_{BC} := Z_{(A)}^a \partial_a (\newg_{bc}Z_{(B)}^b Z_{(C)}^c)$.
\end{remark}

\begin{lemma}[\textbf{Connection coefficients of the frame}]
	\label{L:CONNECTIONCOEFFICIENTS}
	We have the identity
	\begin{align} \label{E:CONNECTIONCOEFFICIENTSFIRSTFORMULA}
		\Gamma_{A \ B}^{\ D} = (\newg^{-1})^{CD} \Gamma_{A C B},
	\end{align}
	where
	\begin{align} \label{E:FRAMECONNECTIONEXPRESSION}
		\Gamma_{A C B}
		& = 
			\frac{1}{2}
			\left\lbrace
				Z_{(A)} \newg_{BC}
				+ Z_{(B)} \newg_{AC}
				- Z_{(C)} \newg_{AB}
			\right\rbrace
			+ 
			\frac{1}{3}
			\left\lbrace
				 \epsilon_{ABD} \newg_{CD}
				- \epsilon_{BCD} \newg_{AD}
				- \epsilon_{ACD} \newg_{BD}
			\right\rbrace
				\\
		& = \frac{1}{3} \epsilon_{ABC}
				+  \Gamma_{A C B}^{\triangle},
				\label{E:ALTERNATEFRAMECONNECTIONEXPRESSION}
				\\
		\Gamma_{A C B}^{\triangle} 
			& = 
			\frac{1}{2}
			\left\lbrace
					\SigmatLie_{Z_{(A)}} [\newg - \StMet]_{BC}
				+ \SigmatLie_{Z_{(B)}} [\newg - \StMet]_{AC}
				- \SigmatLie_{Z_{(C)}} [\newg - \StMet]_{AB}
			\right\rbrace
				\label{E:ERRORTERMALTERNATEFRAMECONNECTIONEXPRESSION} 
				\\
		& \ \
			+ 
			\frac{1}{3}
			\left\lbrace
					\epsilon_{ACD} [\newg - \StMet]_{BD}
					+ 
					\epsilon_{BCD} [\newg - \StMet]_{AD}
					+
					\epsilon_{ABD} [\newg - \StMet]_{CD}
			\right\rbrace,
				\notag
		\end{align}
		$\StMet$ is the round metric of Def.\ \ref{D:LIETRANSPORTEDSPATIALFRAME},
		and $\epsilon_{ABC}$ is the fully antisymmetric symbol normalized by $\epsilon_{123} = 1$.
\end{lemma}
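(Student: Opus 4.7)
The plan is to apply the Koszul formula to the frame $\mathscr{Z}$, compute the commutator terms via the bracket identity \eqref{E:FRAMEVECTORFIELDLIEBRACKET}, and then isolate the contribution coming from the background round metric $\StMet$ by repackaging the remainder via $\Sigma_t$-projected Lie derivatives. The starting point is the standard identity
\begin{align*}
2 \Gamma_{A C B}
& = Z_{(A)} \newg_{BC} + Z_{(B)} \newg_{AC} - Z_{(C)} \newg_{AB}  \\
& \quad + \newg([Z_{(A)}, Z_{(B)}], Z_{(C)}) - \newg([Z_{(A)}, Z_{(C)}], Z_{(B)}) - \newg([Z_{(B)}, Z_{(C)}], Z_{(A)}),
\end{align*}
where $\Gamma_{A C B} := \langle \nabla_{Z_{(A)}} Z_{(B)}, Z_{(C)}\rangle_{\newg}$. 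Substituting the bracket relation \eqref{E:FRAMEVECTORFIELDLIEBRACKET} into the last three terms and dividing by $2$ yields \eqref{E:FRAMECONNECTIONEXPRESSION}. Identity \eqref{E:CONNECTIONCOEFFICIENTSFIRSTFORMULA} then follows by contracting $\nabla_{Z_{(A)}}Z_{(B)} = \Gamma_{A\ B}^{\ D} Z_{(D)}$ with $Z_{(C)}$ in $\newg$ and raising the lower index with $(\newg^{-1})^{CD}$.

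To obtain \eqref{E:ALTERNATEFRAMECONNECTIONEXPRESSION}--\eqref{E:ERRORTERMALTERNATEFRAMECONNECTIONEXPRESSION}, I would write $\newg = \StMet + h$ with $h := \newg - \StMet$ and split \eqref{E:FRAMECONNECTIONEXPRESSION} accordingly. A crucial preliminary observation is that the frame components $\StMet_{AB} := \StMet(Z_{(A)}, Z_{(B)})$ are constant on all of spacetime and equal to $\delta_{AB}$: on $\Sigma_0$ this follows from Cor.~\ref{C:STMETRICFRAMEEXPANDED}, while in the transported spatial coordinates we have $\partial_t \StMet_{ab} = 0$ and $\partial_t Z_{(A)}^a = 0$ by \eqref{E:COMPONENTSOFXIDONOTCHANGE}, so the scalars $\StMet_{AB}$ are $t$-independent. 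Hence $Z_{(A)} \StMet_{BC} = 0$, and the $\StMet$-contribution to \eqref{E:FRAMECONNECTIONEXPRESSION} reduces to
\begin{align*}
\frac{1}{3}\bigl(\epsilon_{ABD}\delta_{CD} - \epsilon_{ACD}\delta_{BD} - \epsilon_{BCD}\delta_{AD}\bigr)
= \frac{1}{3}\bigl(\epsilon_{ABC} - \epsilon_{ACB} - \epsilon_{BCA}\bigr)
= \frac{1}{3}\epsilon_{ABC},
\end{align*}
which produces the leading term in \eqref{E:ALTERNATEFRAMECONNECTIONEXPRESSION}.

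For the $h$-part, I would convert frame derivatives into frame components of projected Lie derivatives via
\begin{align*}
Z_{(A)} h_{BC}
= (\SigmatLie_{Z_{(A)}} h)_{BC}
+ \frac{2}{3}\epsilon_{ABD} h_{DC}
+ \frac{2}{3}\epsilon_{ACD} h_{BD},
\end{align*}
which follows from the standard Lie derivative formula for symmetric type $\binom{0}{2}$ tensors together with \eqref{E:FRAMEVECTORFIELDLIEBRACKET}. Substituting this identity into each of the three derivative terms in the $h$-piece of \eqref{E:FRAMECONNECTIONEXPRESSION} generates new $\epsilon h$ contributions that must be combined with the pre-existing ones. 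Using the antisymmetries $\epsilon_{BAD} = -\epsilon_{ABD}$, $\epsilon_{CBD} = -\epsilon_{BCD}$, $\epsilon_{CAD} = -\epsilon_{ACD}$, and the symmetry $h_{DC} = h_{CD}$, the $\epsilon_{ABD} h_{DC}$ pieces cancel in pairs while the $\epsilon_{ACD} h_{BD}$, $\epsilon_{BCD} h_{AD}$, and $\epsilon_{ABD} h_{CD}$ pieces accumulate (each with coefficient $\frac{1}{3}$) into exactly the three symmetric terms displayed in \eqref{E:ERRORTERMALTERNATEFRAMECONNECTIONEXPRESSION}. The only genuine difficulty is the combinatorial bookkeeping of signs and cyclic permutations of $\epsilon$, which is tedious but routine.
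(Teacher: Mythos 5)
Your proof is correct and follows essentially the same route as the paper's: derive \eqref{E:FRAMECONNECTIONEXPRESSION} from the Koszul formula and the bracket relation \eqref{E:FRAMEVECTORFIELDLIEBRACKET}, observe that $\StMet_{AB} = \delta_{AB}$ is constant so $Z_{(A)}\StMet_{BC} = 0$, and then rewrite $Z_{(A)}(\newg-\StMet)_{BC}$ as a projected Lie derivative plus $\epsilon h$ correction terms (the paper's identity \eqref{E:ZAGBCID}), with the algebraic cancellations you describe matching the paper's "straightforward calculations." The only cosmetic difference is that you invoke the Koszul formula as standard whereas the paper rederives it from the torsion-free and metric-compatibility properties.
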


\begin{proof}
	We first prove \eqref{E:FRAMECONNECTIONEXPRESSION}.
	From \eqref{E:FRAMECOVARIANTDERIVATIVES}, we deduce that
	$\newg(\nabla_{Z_{(A)}}Z_{(B)},Z_{(C)})
	= \Gamma_{A C B} 
	$,
	where
	$\Gamma_{A C B} := \newg_{CD} \Gamma_{A \ B}^{\ D}$.
	Using also $\nabla'$s torsion-free property
	$
	\SigmatLie_{Z_{(A)}} Z_{(B)}
	=
	[Z_{(A)},Z_{(B)}] 
	= 
	\nabla_{Z_{(A)}} Z_{(B)}
	-
	\nabla_{Z_{(B)}} Z_{(A)}
	$,
	we obtain
	\begin{align} \label{E:GAMMAALLLOWERFIRSTCOMPUTATION}
		\Gamma_{A C B}
		& = \frac{1}{2} \newg(\nabla_{Z_{(A)}}Z_{(B)} + \nabla_{Z_{(B)}}Z_{(A)},Z_{(C)})
			+
			\frac{1}{2} \newg([Z_{(A)},Z_{(B)}], Z_{(C)}).
	\end{align}
	Next, using the Leibniz rule for $\nabla$ 
	and the fact that $\nabla \newg = 0$,
	we deduce
	$Z_{(A)} \newg_{BC}
	= 
	\nabla_{Z_{(A)}} 
	\left\lbrace
		\newg(Z_{(B)},Z_{(C)}) 
	\right\rbrace
	= 
	\newg(\nabla_{Z_{(A)}} Z_{(B)},Z_{(C)})
	+
	\newg(Z_{(B)},\nabla_{Z_{(A)}} Z_{(C)})
	$.
	It follows that
	\begin{align} \label{E:METRICFRAMECOMPONENTSDERIVATIVEIDENTITY}
	Z_{(A)} \newg_{BC}
	+ Z_{(B)} \newg_{AC}
	- Z_{(C)} \newg_{AB}
	& = 
	\newg(\nabla_{Z_{(A)}} Z_{(B)},Z_{(C)})
	+
	\newg(\nabla_{Z_{(B)}} Z_{(A)},Z_{(C)})
		\\
	& \ \
	+
	\newg(\nabla_{Z_{(A)}} Z_{(C)} - \nabla_{Z_{(C)}} Z_{(A)},Z_{(B)})
	+
	\newg(\nabla_{Z_{(B)}} Z_{(C)} - \nabla_{Z_{(C)}} Z_{(B)},Z_{(A)}).
	\notag
	\end{align}
	From \eqref{E:METRICFRAMECOMPONENTSDERIVATIVEIDENTITY}, 
	\eqref{E:GAMMAALLLOWERFIRSTCOMPUTATION},
	and the torsion-free property of $\nabla$,
	we deduce
	\begin{align} \label{E:GAMMAALLLOWERSECONDCOMPUTATION}
		\Gamma_{A C B}
		& = \frac{1}{2} 
				\left\lbrace
					Z_{(A)} \newg_{BC}
					+ Z_{(B)} \newg_{AC}
					- Z_{(C)} \newg_{AB}
				\right\rbrace
					\\
		& \ \
			+
			\frac{1}{2} \newg([Z_{(A)},Z_{(B)}], Z_{(C)})
			-
			\frac{1}{2} \newg([Z_{(A)},Z_{(C)}],Z_{(B)})
			-
			\frac{1}{2} \newg([Z_{(B)},Z_{(C)}],Z_{(A)}).
			\notag
	\end{align}
	Using the identity \eqref{E:FRAMEVECTORFIELDLIEBRACKET}
	to evaluate the vectorfield commutators on the second line of RHS~\eqref{E:GAMMAALLLOWERSECONDCOMPUTATION},
	we arrive at the desired identity \eqref{E:FRAMECONNECTIONEXPRESSION} (the first line only).
	
	Our proof of \eqref{E:ALTERNATEFRAMECONNECTIONEXPRESSION}-\eqref{E:ERRORTERMALTERNATEFRAMECONNECTIONEXPRESSION}
	relies on the identity
	\begin{align} \label{E:ZAGBCID}
	Z_{(A)} \newg_{BC}
	& = \SigmatLie_{Z_{(A)}}\left\lbrace [\newg - \StMet](Z_{(B)},Z_{(C)}) \right\rbrace 
		\\
	& = [\SigmatLie_{Z_{(A)}} (\newg - \StMet)](Z_{(B)},Z_{(C)})
		+ \frac{2}{3} \epsilon_{ABD} [\newg - \StMet](Z_{(D)},Z_{(C)})
		+ \frac{2}{3} \epsilon_{ACD} [\newg - \StMet](Z_{(B)},Z_{(D)}),
		\notag
	\end{align}
	which follows from 
	the identities $\StMet(Z_{(B)},Z_{(C)}) = \delta_{BC}$,
	$\SigmatLie_{Z_{(A)}} \StMet = 0$,
	the Leibniz rule for Lie derivatives, and \eqref{E:FRAMEVECTORFIELDLIEBRACKET}.
	Specifically, we use \eqref{E:ZAGBCID} to substitute for the first three terms in braces on RHS \eqref{E:FRAMECONNECTIONEXPRESSION}.
	We then expand $\newg$ as $\newg = \StMet + (\newg - \StMet)$ in all terms.
	The desired identities 
	\eqref{E:ALTERNATEFRAMECONNECTIONEXPRESSION}-\eqref{E:ERRORTERMALTERNATEFRAMECONNECTIONEXPRESSION}
	then follow from straightforward calculations.
	
\end{proof}

\section{First variation formulas and commutation identities}
\label{S:FIRSTVARANDCOMMUTATION}
In this section, we provide some standard variation and commutation identities
that we later use when differentiating the equations.

\subsection{First variation formulas}
\label{SS:FIRSTVARIATION}

\begin{lemma}[\textbf{First variation formulas}]
	Let $\Ric$ and $\ScalarCur$ be the curvatures of $\newg$ from Def.\ \ref{D:CURVATURES}.
	Relative to arbitrary local coordinates on $\Sigma_t$,
	the following identities hold for
	$\mathbf{V} \in \lbrace \partial_t \rbrace \cup \mathscr{Z}$,
	where $\mathscr{Z}$ is the Lie-transported spatial 
	frame from Def.\ \ref{D:LIETRANSPORTEDSPATIALFRAME}:
	\begin{subequations}
	\begin{align}
		\SigmatLie_{\mathbf{V}} \Ric_{ij}
		& =
			- \frac{1}{2} \GLap \SigmatLie_{\mathbf{V}} \newg_{ij}
			- \frac{1}{2} (\newg^{-1})^{ab} \nabla_i \nabla_j \SigmatLie_{\mathbf{V}} \newg_{ab}
			+ \frac{1}{2} (\newg^{-1})^{ab} \nabla_i \nabla_a \SigmatLie_{\mathbf{V}} \newg_{bj}
			+ \frac{1}{2} (\newg^{-1})^{ab} \nabla_j \nabla_a \SigmatLie_{\mathbf{V}} \newg_{ib}	
			\label{E:RICCIFIRSTVARIATION} 	\\
		& \ \
			- \Ric_{ij} (\newg^{-1})^{ab} \SigmatLie_{\mathbf{V}} \newg_{ab}
			- \newg_{ij} (\newg^{-1})^{ab} (\newg^{-1})^{cd} \Ric_{ac} \SigmatLie_{\mathbf{V}} \newg_{bd} 
				\notag \\
		& \ \
			+ \frac{3}{2} (\newg^{-1})^{ab} \Ric_{ia} \SigmatLie_{\mathbf{V}} \newg_{bj}
			+ \frac{3}{2} (\newg^{-1})^{ab} \Ric_{aj} \SigmatLie_{\mathbf{V}} \newg_{ib}
			\notag
			\\
		& \ \
			+ \frac{1}{2} \newg_{ij} \ScalarCur (\newg^{-1})^{ab} \SigmatLie_{\mathbf{V}} \newg_{ab}
			- \frac{1}{2} \ScalarCur \SigmatLie_{\mathbf{V}} \newg_{ij},
			\notag \\
		\SigmatLie_{\mathbf{V}} (\Ric^{\#})_{\ j}^i
			& =
			- \frac{1}{2} (\newg^{-1})^{ia} \GLap \SigmatLie_{\mathbf{V}} \newg_{aj}
			- \frac{1}{2} (\newg^{-1})^{ab} (\newg^{-1})^{ic} \nabla_c \nabla_j \SigmatLie_{\mathbf{V}} \newg_{ab}
				\label{E:RICSHARPFIRSTVARIATION}	\\
		& \ \
			+ \frac{1}{2} (\newg^{-1})^{ic} (\newg^{-1})^{ab} \nabla_c \nabla_a \SigmatLie_{\mathbf{V}} \newg_{bj}
			+ \frac{1}{2} (\newg^{-1})^{ic} (\newg^{-1})^{ab} \nabla_j \nabla_a \SigmatLie_{\mathbf{V}} \newg_{cb}
			\notag \\
		& \ \
			- (\newg^{-1})^{ic}  (\newg^{-1})^{ab} \Ric_{cj} \SigmatLie_{\mathbf{V}} \newg_{ab}
			- \ID_{\ j}^i (\newg^{-1})^{ab} (\newg^{-1})^{cd} \Ric_{ac} \SigmatLie_{\mathbf{V}} \newg_{bd} 
				\notag \\
	& \ \
			+ \frac{3}{2} (\newg^{-1})^{ic} (\newg^{-1})^{ab} \Ric_{ca} \SigmatLie_{\mathbf{V}} \newg_{bj}
			+ \frac{1}{2} (\newg^{-1})^{ic} (\newg^{-1})^{ab} \Ric_{aj} \SigmatLie_{\mathbf{V}} \newg_{cb}
			\notag
			\\
		& \ \
			+ \frac{1}{2} \ID_{\ j}^i \ScalarCur (\newg^{-1})^{ab} \SigmatLie_{\mathbf{V}} \newg_{ab}
			- \frac{1}{2} (\newg^{-1})^{ia} \ScalarCur \SigmatLie_{\mathbf{V}} \newg_{aj},
			\notag 
	\end{align}
	\end{subequations}
	where $\ID$ denotes the identity transformation.
\end{lemma}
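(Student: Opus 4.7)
The plan is to prove the two formulas in sequence: first the formula \eqref{E:RICCIFIRSTVARIATION} for $\SigmatLie_{\mathbf{V}} \Ric_{ij}$, and then to deduce \eqref{E:RICSHARPFIRSTVARIATION} from it via the Leibniz rule. The key observation is that the Ricci tensor is a diffeomorphism-equivariant functional of the metric, so $\SigmatLie_{\mathbf{V}} \Ric[\newg]$ coincides with the classical pointwise first variation of the Ricci operator at $\newg$, evaluated on the perturbation $h := \SigmatLie_{\mathbf{V}} \newg$. For $\mathbf{V} = \partial_t$ this is immediate (it is just time-differentiating the family $\newg(t)$), and for $\mathbf{V} = Z_{(A)}$ it follows from naturality of $\Ric$ under the flow of $Z_{(A)}$. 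Thus I need only compute the classical pointwise variation and rewrite it in the displayed form.

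Via Palatini's identity, the variation of the Christoffel symbols is
\begin{align*}
	\delta \Gamma_{ij}^{k}
	& = \frac{1}{2} (\newg^{-1})^{kl}
	\left\lbrace
		\nabla_i h_{jl} + \nabla_j h_{il} - \nabla_l h_{ij}
	\right\rbrace,
\end{align*}
and $\delta \Ric_{ij} = \nabla_a \delta \Gamma_{ij}^{a} - \nabla_j \delta \Gamma_{ai}^{a}$. The trace collapses to $\delta \Gamma_{ai}^{a} = \frac{1}{2}(\newg^{-1})^{ab}\nabla_i h_{ab}$, since the remaining two terms cancel by the symmetry of $(\newg^{-1})^{ab}$. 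Substitution yields the classical expression
\begin{align*}
	\delta \Ric_{ij}
	& = -\frac{1}{2}\GLap h_{ij}
	-\frac{1}{2}(\newg^{-1})^{ab}\nabla_j \nabla_i h_{ab}
	+\frac{1}{2}(\newg^{-1})^{ab}\left\lbrace \nabla_a \nabla_i h_{bj} + \nabla_a \nabla_j h_{bi} \right\rbrace,
\end{align*}
which is a priori free of $\Ric$- and $\ScalarCur$-terms.

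The crucial step is to reorder the last three covariant derivatives so as to match the form of \eqref{E:RICCIFIRSTVARIATION}, namely $\nabla_i \nabla_j$ and $\nabla_i \nabla_a$, $\nabla_j \nabla_a$ in place of $\nabla_j \nabla_i$ and $\nabla_a \nabla_i$, $\nabla_a \nabla_j$. Each such reordering produces a commutator $[\nabla,\nabla]$ acting on the symmetric two-tensor $h$, which in turn produces contractions of $\Riem$ against $h$. Since the analysis takes place on a three-manifold, $\Riem$ is algebraically determined by $\Ric$ and $\ScalarCur$ via the Kulkarni--Nomizu-type identity
\begin{align*}
	\Riem_{ijkl}
	& = \newg_{ik}\Ric_{jl} + \newg_{jl}\Ric_{ik}
	- \newg_{il}\Ric_{jk} - \newg_{jk}\Ric_{il}
	- \frac{1}{2}\ScalarCur\bigl(\newg_{ik}\newg_{jl} - \newg_{il}\newg_{jk}\bigr).
\end{align*}
Inserting this into the three commutators, contracting with $(\newg^{-1})^{ab}$, and exploiting the symmetry of $h = \SigmatLie_{\mathbf{V}}\newg$ produces exactly the $\Ric$- and $\ScalarCur$-quadratic correction terms on lines two through four of \eqref{E:RICCIFIRSTVARIATION}. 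I expect this index-bookkeeping step — an eight-term expansion in which several pairs of contractions must be combined via the symmetries of $h$ and of $(\newg^{-1})^{ab}$ — to be the main computational obstacle.

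Finally, the formula \eqref{E:RICSHARPFIRSTVARIATION} for $\SigmatLie_{\mathbf{V}}(\Ric^{\#})_{\ j}^i$ follows from the Leibniz rule
\begin{align*}
	\SigmatLie_{\mathbf{V}}(\Ric^{\#})_{\ j}^i
	& = (\newg^{-1})^{ia}\SigmatLie_{\mathbf{V}}\Ric_{aj}
		+ \Ric_{aj}\SigmatLie_{\mathbf{V}}(\newg^{-1})^{ia},
\end{align*}
combined with the identity $\SigmatLie_{\mathbf{V}}(\newg^{-1})^{ia} = -(\newg^{-1})^{ib}(\newg^{-1})^{ac}\SigmatLie_{\mathbf{V}}\newg_{bc}$, which is obtained by Lie-differentiating the contraction $(\newg^{-1})^{ia}\newg_{ab} = \delta^i_b$. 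Substituting \eqref{E:RICCIFIRSTVARIATION} for the first term and matching indices against the right-hand side of \eqref{E:RICSHARPFIRSTVARIATION} completes the proof.
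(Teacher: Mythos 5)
Your proposal is correct in outline and rests on the same two key ingredients as the paper's proof: the three-dimensional algebraic identity expressing $\Riem$ in terms of $\Ric$ and $\ScalarCur$ (your Kulkarni--Nomizu-type formula is identical, after a sign rearrangement, to the paper's \eqref{E:RIEMANNINTERMSOFRICCI}), and the Leibniz rule plus $\SigmatLie_{\mathbf{V}}(\newg^{-1})^{ij} = -(\newg^{-1})^{ia}(\newg^{-1})^{jb}\SigmatLie_{\mathbf{V}}\newg_{ab}$ to pass from \eqref{E:RICCIFIRSTVARIATION} to \eqref{E:RICSHARPFIRSTVARIATION}. Where you diverge is at the starting point: the paper simply cites a ``standard'' first-variation formula (attributed to Chow--Lu--Ni) that already has the covariant derivatives in the order $\nabla_i\nabla_j$, $\nabla_i\nabla_a$, $\nabla_j\nabla_a$ together with one explicit $\Riem$ contraction and two $\Ric$ contractions as the built-in commutator corrections; the 3D Riemann identity is then applied once, to that single $\Riem$ term. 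You instead propose to start from the raw Palatini/Lichnerowicz form, which has the derivatives in the order $\nabla_a\nabla_i$, $\nabla_a\nabla_j$, and to generate the $\Riem$ and $\Ric$ corrections yourself by explicitly commuting. This buys self-containedness at the cost of the eight-term commutator expansion you flag as the computational obstacle; the paper buys brevity by outsourcing exactly that computation to a textbook. Both are legitimate, and your Palatini calculation (including the cancellation in $\delta\Gamma^a_{\ ai} = \tfrac{1}{2}(\newg^{-1})^{ab}\nabla_i h_{ab}$) is correct under the paper's conventions.

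One genuine improvement in your writeup is the explicit naturality argument: you justify that $\SigmatLie_{\mathbf{V}}\Ric[\newg]$ equals the pointwise first variation $\delta\Ric[\newg]$ evaluated on $h = \SigmatLie_{\mathbf{V}}\newg$, separately for $\mathbf{V}=\partial_t$ (time-differentiation of the one-parameter family of spatial metrics in transported coordinates, where the $\SigmatProject$ projection is vacuous on $\Sigma_t$-tangent tensors) and for $\mathbf{V}=Z_{(A)}$ (diffeomorphism-equivariance of $\Ric$ under the flow). The paper uses this silently. The one thing you have not actually done is verify that the commutator expansion, after contraction against $(\newg^{-1})^{ab}$ and insertion of the 3D Riemann identity, reproduces lines two through four of \eqref{E:RICCIFIRSTVARIATION} with the stated coefficients $-1,\,-1,\,\tfrac{3}{2},\,\tfrac{3}{2},\,\tfrac{1}{2},\,-\tfrac{1}{2}$; you acknowledge this. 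Since the paper also does not display that computation, this is not a gap relative to the reference proof, but it is the step where sign and index conventions (in particular the paper's $\Riem$ convention \eqref{E:RIEMANNBIGG} and $\Ric_{ij}=(\newg^{-1})^{ab}\Riem_{iajb}$) must be tracked with care before the argument can be called complete.
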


\begin{proof}
	The following identity is standard
	(see, for example, \cite{bCpLnL2006}*{Ch.~2}, and note the different curvature index conventions used there):
	\begin{align}
		\SigmatLie_{\mathbf{V}} \Ric_{ij}
		& = 
			- \frac{1}{2} \GLap \SigmatLie_{\mathbf{V}} \newg_{ij}
			- \frac{1}{2} (\newg^{-1})^{ab} \nabla_i \nabla_j \SigmatLie_{\mathbf{V}} \newg_{ab}
			+ \frac{1}{2} (\newg^{-1})^{ab} \nabla_i \nabla_a \SigmatLie_{\mathbf{V}} \newg_{bj}
			+ \frac{1}{2} (\newg^{-1})^{ab} \nabla_j \nabla_a \SigmatLie_{\mathbf{V}} \newg_{ib}	
				\label{E:FIRSTRICCIFIRSTVARIATION} 
					\\
		& \ \
			- (\newg^{-1})^{ab} (\newg^{-1})^{cd} \Riem_{aicj} \SigmatLie_{\mathbf{V}} \newg_{bd}
			+ \frac{1}{2} (\newg^{-1})^{ab} \Ric_{ia} \SigmatLie_{\mathbf{V}} \newg_{bj}
			+ \frac{1}{2} (\newg^{-1})^{ab} \Ric_{aj} \SigmatLie_{\mathbf{V}} \newg_{ib},
			\notag
	\end{align}
	where $\Riem$ and $\Ric$ are as in Def.\ \ref{D:CURVATURES}.
	Next, we note the following identity, valid in three spatial dimensions
	(see, for example, \cite{bCpLnL2006}*{Ch.~1}, 
	and note the different curvature index conventions used there):
	\begin{align} \label{E:RIEMANNINTERMSOFRICCI}
		\Riem_{ijkl}
		& = \frac{\ScalarCur}{2}(\newg_{il} \newg_{jk} - \newg_{ik} \newg_{jl})
			+ \Ric_{ik} \newg_{jl}
			+ \Ric_{jl} \newg_{ik}
			- \Ric_{il} \newg_{jk}
			- \Ric_{jk} \newg_{il}.
	\end{align}
	Using \eqref{E:RIEMANNINTERMSOFRICCI} to substitute for the term
	$\Riem_{aicj}$ in \eqref{E:FIRSTRICCIFIRSTVARIATION},
	we conclude \eqref{E:RICCIFIRSTVARIATION}.
	
	\eqref{E:RICSHARPFIRSTVARIATION} then follows from
	\eqref{E:RICCIFIRSTVARIATION}, the Leibniz rule for Lie derivatives,
	and the standard identity 
	$\SigmatLie_{\mathbf{V}} (\newg^{-1})^{ij} = - (\newg^{-1})^{ia} (\newg^{-1})^{jb} \SigmatLie_{\mathbf{V}} \newg_{ab}$.
\end{proof}

\subsection{Commutation identities}
\label{SS:COMMUTATIONIDENTITIES}

\begin{lemma}[\textbf{Commutation identities}]
	\label{L:COMMUTATIONIDENTITIES}
	Let $\vec{I}$ be a $\mathscr{Z}$-multi-index (see Subsubsect.\ \ref{SSS:COORDINATES}).
	The following (schematically depicted) commutation identities 
	(involving the differential operators of Def.\ \ref{D:CONNECTIONDIFFOPS})
	hold for scalar functions $f$ and type $\binom{l}{m}$ $\Sigma_t-$tangent tensorfields $\xi$ (with $l + m \geq 1$),
	where on the RHSs, we have omitted all tensorial contractions and numerical coefficients
	(which are not important for our analysis)
	in order to condense the presentation:
	\begin{subequations}
	\begin{align}
		[\nabla, \SigmatLie_{\partial_t}] f 
		= [\nabla, \SigmatLie_{\mathscr{Z}}^{\vec{I}}] f
		& = 0,
			\label{E:FUNCTIONNABLALIEZCOMMUTATOR} \\
		[\nabla^2, \SigmatLie_{\mathscr{Z}}^{\vec{I}}] f
		& = \sum_{L=1}^{|\vec{I}|}
				\mathop{\sum_{\vec{I}_1 + \cdots + \vec{I}_{L+1} = \vec{I}}}_{|\vec{I}_a| \geq 1 \mbox{\upshape \ for } 1 \leq a \leq L} 
				(\newg^{-1})^L
				\underbrace{
				(\SigmatLie_{\mathscr{Z}}^{\vec{I}_1} \newg)
				\cdots
				(\SigmatLie_{\mathscr{Z}}^{\vec{I}_{L-1}} \newg)
				}_{\mbox{absent if $L=1$}}
				(\nabla \SigmatLie_{\mathscr{Z}}^{\vec{I}_L} \newg)
				(\nabla \SigmatLie_{\mathscr{Z}}^{\vec{I}_{L+1}} f),
					\label{E:NABLASQUAREDLIEZCOMMUTATOR} \\
		[\GLap, \SigmatLie_{\mathscr{Z}}^{\vec{I}}] f
		& = \sum_{L=1}^{|\vec{I}|}
				\sum_{i_1 + i_2 = 1}
				\mathop{\sum_{\vec{I}_1 + \cdots + \vec{I}_{L+1} = \vec{I}}}_{|\vec{I}_a| \geq 1 \mbox{\upshape \ for } 1 \leq a \leq L} 
				(\newg^{-1})^{L+1}
				\underbrace{
				(\SigmatLie_{\mathscr{Z}}^{\vec{I}_1} \newg)
				\cdots
				(\SigmatLie_{\mathscr{Z}}^{\vec{I}_{L-1}} \newg)
				}_{\mbox{absent if $i_1=L=1$}}
				(\nabla^{i_1} \SigmatLie_{\mathscr{Z}}^{\vec{I}_L} \newg)
				(\nabla^{i_2 + 1} \SigmatLie_{\mathscr{Z}}^{\vec{I}_{L+1}} f),
				\label{E:LAPLACIANLIEZCOMMUTATOR}
	\end{align}
	\end{subequations}
	
	\begin{subequations}
	\begin{align}
	[\nabla, \SigmatLie_{\mathscr{Z}}^{\vec{I}}] \xi
		& = \sum_{L=1}^{|\vec{I}|}
				\mathop{\sum_{\vec{I}_1 + \cdots + \vec{I}_{L+1} = \vec{I}}}_{|\vec{I}_a| \geq 1 \mbox{\upshape \ for } 1 \leq a \leq L} 
				(\newg^{-1})^L
				\underbrace{
				(\SigmatLie_{\mathscr{Z}}^{\vec{I}_1} \newg)
				\cdots
				(\SigmatLie_{\mathscr{Z}}^{\vec{I}_{L-1}} \newg)
				}_{\mbox{absent if $L=1$}}
				(\nabla \SigmatLie_{\mathscr{Z}}^{\vec{I}_L} \newg)
				(\SigmatLie_{\mathscr{Z}}^{\vec{I}_{L+1}} \xi),
					\label{E:TENSORNABLALIEZCOMMUTATOR} 
					\\
		[\nabla^2, \SigmatLie_{\mathscr{Z}}^{\vec{I}}] \xi
		& = \sum_{L=1}^{|\vec{I}|}
				\mathop{\sum_{i_1 + i_2 + i_3 = 2}}_{i_1, i_3 \leq 1}
				\mathop{\sum_{\vec{I}_1 + \cdots + \vec{I}_{L+1} = \vec{I}}}_{|\vec{I}_a| \geq 1 \mbox{\upshape \ for } 1 \leq a \leq L} 
				(\newg^{-1})^L
				\underbrace{
				(\SigmatLie_{\mathscr{Z}}^{\vec{I}_1} \newg)
				\cdots
				(\nabla^{i_1}\SigmatLie_{\mathscr{Z}}^{\vec{I}_{L-1}} \newg)
				}_{\mbox{absent if $L=1$}}
				(\nabla^{i_2} \SigmatLie_{\mathscr{Z}}^{\vec{I}_L} \newg)
				(\nabla^{i_3} \SigmatLie_{\mathscr{Z}}^{\vec{I}_{L+1}} \xi),
					\label{E:TENSORNABLALSQUAREIEZCOMMUTATOR} 
					\\
		[\GLap, \SigmatLie_{\mathscr{Z}}^{\vec{I}}] \xi
		& = \sum_{L=1}^{|\vec{I}|}
				\mathop{\sum_{i_1 + i_2 + i_3 = 2}}_{i_1 \leq 1}
				\mathop{\sum_{\vec{I}_1 + \cdots + \vec{I}_{L+1} = \vec{I}}}_{|\vec{I}_a| \geq 1 \mbox{\upshape \ for } 1 \leq a \leq L} 
				(\newg^{-1})^{L+1}
				\underbrace{
				(\SigmatLie_{\mathscr{Z}}^{\vec{I}_1} \newg)
				\cdots
				(\nabla^{i_1} \SigmatLie_{\mathscr{Z}}^{\vec{I}_{L-1}} \newg)
				}_{\mbox{absent if $L=1$}}
				(\nabla^{i_2} \SigmatLie_{\mathscr{Z}}^{\vec{I}_L} \newg)
				(\nabla^{i_3} \SigmatLie_{\mathscr{Z}}^{\vec{I}_{L+1}} \xi).
					\label{E:TENSORNABLALSQUAREIEZCOMMUTATOR} 
	\end{align}
	\end{subequations}
	
	Moreover, if $\xi$ is a type $\binom{1}{1}$ $\Sigma_t-$tangent tensorfield, then
	\begin{subequations}
	\begin{align}
		[\Gdiv,\SigmatLie_{\mathscr{Z}}^{\vec{I}}] \xi
		& =	\sum_{L=1}^{|\vec{I}|}
				\mathop{\sum_{\vec{I}_1 + \cdots + \vec{I}_{L+1} = \vec{I}}}_{|\vec{I}_a| \geq 1 \mbox{\upshape \ for } 1 \leq a \leq L} 
				(\newg^{-1})^L
				\underbrace{
				(\SigmatLie_{\mathscr{Z}}^{\vec{I}_1} \newg)
				\cdots
				(\SigmatLie_{\mathscr{Z}}^{\vec{I}_{L-1}} \newg)
				}_{\mbox{absent if $L=1$}}
				(\nabla \SigmatLie_{\mathscr{Z}}^{\vec{I}_L} \newg)
				(\SigmatLie_{\mathscr{Z}}^{\vec{I}_{L+1}} \xi),
					\label{E:GIDIVLIEZCOMMUTATOR} \\
		[\Gdiv^{\#},\SigmatLie_{\mathscr{Z}}^{\vec{I}}] \xi
		& = 
			\sum_{L=1}^{|\vec{I}|}
				\sum_{i_1 + i_2 = 1}
				\mathop{\sum_{\vec{I}_1 + \cdots + \vec{I}_{L+1} = \vec{I}}}_{|\vec{I}_a| \geq 1 \mbox{\upshape \ for } 1 \leq a \leq L} 
				(\newg^{-1})^{L+1}
				\underbrace{
				(\SigmatLie_{\mathscr{Z}}^{\vec{I}_1} \newg)
				\cdots
				(\SigmatLie_{\mathscr{Z}}^{\vec{I}_{L-1}} \newg)
				}_{\mbox{absent if $L=1$}}
				(\nabla^{i_1} \SigmatLie_{\mathscr{Z}}^{\vec{I}_L} \newg)
				(\nabla^{i_2} \SigmatLie_{\mathscr{Z}}^{\vec{I}_{L+1}} \xi).
			\label{E:GIDIVSHARPLIEZCOMMUTATOR}
	\end{align}
	\end{subequations}
	
	Finally, if $\xi$ is a symmetric type $\binom{0}{2}$ tensorfield,
	then the following commutation identity holds:
		\begin{align} \label{E:NABLALIEDETAILEDCOMMUTATOR}
		[\nabla, \SigmatLie_{\partial_t}] \xi
		& = \newg^{-1} (\nabla \SigmatLie_{\partial_t} \newg) \xi.
	\end{align}
	
	%\begin{align} \label{E:PRECISENABLALIEDETAILEDCOMMUTATOR}
	%	([\nabla_i, \SigmatLie_{\mathbf{V}}] \xi)_{jk}
	%	& = 
	%	\frac{1}{2}
	%			\left\lbrace
	%				\nabla_i \SigmatLie_{\mathbf{V}} \newg_{jc}
	%				+
	%				\nabla_j \SigmatLie_{\mathbf{V}} \newg_{ic}
	%				-
	%				\nabla_c \SigmatLie_{\mathbf{V}} \newg_{ij}
	%			\right\rbrace
	%			(\newg^{-1})^{cd}
	%			\xi_{dk}
	%				\\
	%	& \ \
	%			+
	%			\frac{1}{2}
	%			\left\lbrace
	%				\nabla_i \SigmatLie_{\mathbf{V}} \newg_{kc}
	%				+
	%				\nabla_k \SigmatLie_{\mathbf{V}} \newg_{ic}
	%				-
	%				\nabla_c \SigmatLie_{\mathbf{V}} \newg_{ik}
	%			\right\rbrace
	%			(\newg^{-1})^{cd}
	%			\xi_{jd}.
	%			\notag
	%\end{align}
\end{lemma}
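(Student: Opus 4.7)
\emph{Proof plan.} The identities are standard commutator formulas whose proofs proceed by induction on $|\vec{I}|$, starting from the single-Lie-derivative case and repeatedly applying the Leibniz/product-type rule $[\SigmatLie_{Z_{(A)}},\nabla]=\nabla\SigmatLie_{Z_{(A)}}-\SigmatLie_{Z_{(A)}}\nabla$. Throughout, since the identities are only stated schematically (with numerical constants and contractions suppressed), the task is bookkeeping rather than sharp computation.

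\textbf{Base case} ($|\vec{I}|=1$). For a scalar $f$, equation \eqref{E:FUNCTIONNABLALIEZCOMMUTATOR} is just the fact that $\nabla f=df$ is an intrinsic object, so $\SigmatLie_{\partial_t} df=d(\partial_t f)$ and $\SigmatLie_{Z_{(A)}} df=d(Z_{(A)} f)$, both of which equal $\nabla$ applied to the derivative of $f$. For a tensor $\xi$, I will use the classical identity
\begin{align*}
(\SigmatLie_Z\nabla-\nabla\SigmatLie_Z)\xi = \mathcal{A}(Z)\cdot\xi,
\end{align*}
where, for the Levi--Civita connection of $\newg$, the tensor $\mathcal{A}(Z)$ has the schematic form $\mathcal{A}(Z)\sim\newg^{-1}\nabla(\SigmatLie_Z\newg)$; this follows by a direct coordinate computation from
\begin{align*}
\SigmatLie_Z\Gamma^c_{\ ab}=\tfrac12(\newg^{-1})^{cd}\bigl(\nabla_a\SigmatLie_Z\newg_{bd}+\nabla_b\SigmatLie_Z\newg_{ad}-\nabla_d\SigmatLie_Z\newg_{ab}\bigr),
\end{align*}
which is itself derived by expanding $\SigmatLie_Z$ of the Christoffel symbol formula and using $\nabla\newg=0$. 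Specializing $Z=\partial_t$ on a symmetric type $\binom{0}{2}$ tensor, and keeping only the schematic structure, yields \eqref{E:NABLALIEDETAILEDCOMMUTATOR}.

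\textbf{Inductive step.} Assuming \eqref{E:TENSORNABLALIEZCOMMUTATOR} at order $|\vec{I}|-1$, I decompose $\SigmatLie_{\mathscr{Z}}^{\vec{I}}=\SigmatLie_{Z_{(A)}}\SigmatLie_{\mathscr{Z}}^{\vec{J}}$, apply the base-case identity to commute $\nabla$ past the outer $\SigmatLie_{Z_{(A)}}$, and then commute $\nabla$ past $\SigmatLie_{\mathscr{Z}}^{\vec{J}}$ by the inductive hypothesis. The new commutator contributes a factor $\newg^{-1}\nabla(\SigmatLie_{Z_{(A)}}\newg)$ times $\SigmatLie_{\mathscr{Z}}^{\vec{J}}\xi$, while Lie-differentiating the inductive expansion spreads the $\SigmatLie_{\mathscr{Z}}^{\vec{J}}$ derivatives across the $\newg^{-1}$ factors, the $\SigmatLie_{\mathscr{Z}}^{\vec{I}_a}\newg$ factors, and the $\nabla\SigmatLie_{\mathscr{Z}}^{\vec{I}_L}\newg$ factor (using $\SigmatLie_Z(\newg^{-1})=-\newg^{-1}(\SigmatLie_Z\newg)\newg^{-1}$ and the base-case commutator to move $\nabla$ past the outermost Lie derivative). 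All terms that arise fit the schematic pattern on the right-hand side of \eqref{E:TENSORNABLALIEZCOMMUTATOR}. The analogous argument for scalars gives \eqref{E:NABLASQUAREDLIEZCOMMUTATOR} by iterating twice and for tensors gives the second-order formula once I account for the extra $\nabla$ on $\xi$ (which is why the inner index $i_3\le 1$ appears).

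\textbf{Laplacian, divergence, and sharp divergence.} I write $\GLap=(\newg^{-1})^{ab}\nabla_a\nabla_b$, $\Gdiv\xi=\nabla_a\xi^a_{\ \bullet}$, $\Gdiv^{\#}\xi=(\newg^{-1})^{ab}\nabla_a\xi_{b\bullet}^{\ \ \#}$, and commute $\SigmatLie_{\mathscr{Z}}^{\vec{I}}$ through each piece. Commuting past each explicit $\newg^{-1}$ produces, via the Leibniz rule, an extra $\newg^{-1}(\SigmatLie_{\mathscr{Z}}^{\vec{J}}\newg)\newg^{-1}$ factor — this accounts for the extra power of $\newg^{-1}$ present in \eqref{E:LAPLACIANLIEZCOMMUTATOR} and \eqref{E:GIDIVSHARPLIEZCOMMUTATOR} relative to \eqref{E:NABLASQUAREDLIEZCOMMUTATOR} and \eqref{E:GIDIVLIEZCOMMUTATOR}. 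Commuting past each $\nabla$ is handled by the already-proved tensor identity \eqref{E:TENSORNABLALIEZCOMMUTATOR}. Regrouping the resulting sum and relabeling the multi-indices $\vec{I}_1+\cdots+\vec{I}_{L+1}=\vec{I}$ produces the stated schematic structure.

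\textbf{Expected difficulty.} None of the steps is analytically deep: the only real work is the combinatorial bookkeeping, making sure that, after iterating the single-derivative commutator $|\vec{I}|$ times and distributing all interior Lie derivatives by Leibniz, every term that appears has the advertised form — the correct number of $\newg^{-1}$ factors, the correct total derivative count $i_1+i_2+i_3$, and the correct partition of $\vec{I}$ into sub-multi-indices with $|\vec{I}_a|\ge 1$ for $1\le a\le L$. Since the lemma only records the schematic structure (and not the precise numerical coefficients), I expect this bookkeeping to close cleanly by induction without any sharp estimates.
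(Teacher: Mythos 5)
Your proposal is correct and is precisely the standard induction-plus-Leibniz argument the paper has in mind: the paper's own "proof" is just a citation to Chapter 8 of the author's earlier notes with the calculations omitted, and your outline (base case via the first-variation formula for the Christoffel symbols, inductive step distributing one outer Lie derivative at a time, then treating the Laplacian and divergences by commuting through the explicit $\newg^{-1}$ factors) is exactly what those standard arguments consist of. No gap.
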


\begin{proof}[Discussion of proof]
	The identities stated in the lemma are standard
	results from differential geometry that can be proved 
	by using arguments similar to the ones given in
	\cite{jS2016b}*{Ch.~8}; we omit the lengthy but
	standard calculations.
\end{proof}

\begin{lemma}[\textbf{Identity for} $\SigmatLie_{\mathscr{Z}}^{\vec{I}} (\Ric^{\#})$ \textbf{and} 
	$\SigmatLie_{\mathscr{Z}}^{\vec{I}} \ScalarCur$]
	\label{L:RICCILIEDERIVATIVESCHEMATIC}
	Let $\Ric$ and $\ScalarCur$ be the curvatures of $\newg$ from Def.\ \ref{D:CURVATURES}
	and let $\vec{I}$ be a $\mathscr{Z}$-multi-index (see Subsubsect.\ \ref{SSS:COORDINATES}) with 
	$1 \leq |\vec{I}|$.
	The following identities hold:
	\begin{align} \label{E:RICCILIEDERIVATIVESCHEMATIC}
		\SigmatLie_{\mathscr{Z}}^{\vec{I}} (\Ric^{\#})
		& = 
			- \frac{1}{2} (\GLap \SigmatLie_{\mathscr{Z}}^{\vec{I}} \newg)^{\#}
			- \frac{1}{2} \nabla^{\#} \nabla \mytr \SigmatLie_{\mathscr{Z}}^{\vec{I}} \newg
			+ \frac{1}{2} \nabla^{\#} \Gdiv \SigmatLie_{\mathscr{Z}}^{\vec{I}} \newg
			+ \frac{1}{2} \nabla (\Gdiv \SigmatLie_{\mathscr{Z}}^{\vec{I}} \newg)^{\#}
				\\
		& \ \
			+ \RicErrorInhom{\vec{I}},
			\notag
			\\
	\SigmatLie_{\mathscr{Z}}^{\vec{I}} \ScalarCur
		& = 
			- \mbox{\upshape tr} (\GLap \SigmatLie_{\mathscr{Z}}^{\vec{I}} \newg)^{\#}
			+ \mbox{\upshape tr} \nabla^{\#} \Gdiv \SigmatLie_{\mathscr{Z}}^{\vec{I}} \newg
			+ \mbox{\upshape tr} \RicErrorInhom{\vec{I}},
				\label{E:SCALARCURLIEDERIVATIVESCHEMATIC}
\end{align}
where $\RicErrorInhom{\vec{I}}$ is a type $\binom{1}{1}$ $\Sigma_t-$tangent tensorfield,
$\mbox{\upshape tr}$ denotes a pure trace (not involving a metric),
and $\RicErrorInhom{\vec{I}}$
has the following schematic form 
(which is accurate up to constant coefficients and the specification of tensorial contractions):
\begin{align}
	\RicErrorInhom{\vec{I}}
	& = \sum_{L=1}^{|\vec{I}|-1}
				\mathop{\sum_{i_1 + i_2 + i_3 = 2}}_{i_1, i_3 \leq 1}
				\mathop{\sum_{\vec{I}_1 + \cdots + \vec{I}_{L+1} = \vec{I}}}_{|\vec{I}_a| \geq 1 \mbox{\upshape \ for } 1 \leq a \leq L+1} 
				(\newg^{-1})^{L+2}
				\underbrace{
				(\SigmatLie_{\mathscr{Z}}^{\vec{I}_1} \newg)
				\cdots
				(\nabla^{i_1}\SigmatLie_{\mathscr{Z}}^{\vec{I}_{L-1}} \newg)
				}_{\mbox{absent if $L=1$}}
				(\nabla^{i_2} \SigmatLie_{\mathscr{Z}}^{\vec{I}_L} \newg)
				(\nabla^{i_3} \SigmatLie_{\mathscr{Z}}^{\vec{I}_{L+1}} \newg)
			\label{E:RICINHOMERROR} \\
	& \ \ 
			+
			\sum_{L=1}^{|\vec{I}|}
			\mathop{\sum_{\vec{I}_1 + \cdots + \vec{I}_L = \vec{I}}}_{|\vec{I}_a| \geq 1 \mbox{\upshape \ for } 1 \leq a \leq L} 
			\sum_{p = 0}^1
			(\newg^{-1})^L
			(\ID)^p
			\Ric^{\#} 
			(\Lie_{\mathscr{Z}}^{\vec{I}_1} \newg)
			\cdots
			(\Lie_{\mathscr{Z}}^{\vec{I}_L} \newg).
				\notag
	\end{align}
	In the formula \eqref{E:RICINHOMERROR},
	the first sum on the RHS is absent if $|\vec{I}| = 1$
	and $\ID$ denotes the identity transformation.
\end{lemma}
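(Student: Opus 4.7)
The plan is to prove \eqref{E:RICCILIEDERIVATIVESCHEMATIC} by induction on $N := |\vec{I}|$, using the first variation formula \eqref{E:RICSHARPFIRSTVARIATION} as the base case, and then to deduce \eqref{E:SCALARCURLIEDERIVATIVESCHEMATIC} by taking the pure trace of \eqref{E:RICCILIEDERIVATIVESCHEMATIC}. For the base case $N = 1$ with $\vec{I} = (A)$, the four terms on RHS~\eqref{E:RICSHARPFIRSTVARIATION} that contain $\nabla^2 \SigmatLie_{Z_{(A)}}\newg$ match, after rewriting via Def.~\ref{D:MUSICAL} and Def.~\ref{D:CONNECTIONDIFFOPS}, the four displayed principal terms of \eqref{E:RICCILIEDERIVATIVESCHEMATIC}. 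The remaining terms on RHS~\eqref{E:RICSHARPFIRSTVARIATION}, each a product of $\Ric$ (or its trace $\ScalarCur = \mbox{tr}(\Ric^{\#})$) with a single copy of $\SigmatLie_{Z_{(A)}}\newg$, fit exactly into the second sum of \eqref{E:RICINHOMERROR} with $L = 1$; the first sum of \eqref{E:RICINHOMERROR} is vacuous when $|\vec{I}| = 1$.

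For the inductive step, I split $\vec{I} = (A) + \vec{I}'$ with $|\vec{I}'| = N - 1$, apply the inductive hypothesis to express $\SigmatLie_{\mathscr{Z}}^{\vec{I}'}(\Ric^{\#})$, and then act on the resulting expression by $\SigmatLie_{Z_{(A)}}$, using the Leibniz rule together with the commutator identities \eqref{E:LAPLACIANLIEZCOMMUTATOR}, \eqref{E:TENSORNABLALSQUAREIEZCOMMUTATOR}, \eqref{E:GIDIVLIEZCOMMUTATOR}, and \eqref{E:GIDIVSHARPLIEZCOMMUTATOR} of Lemma~\ref{L:COMMUTATIONIDENTITIES}. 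Each principal term has the schematic form $D(\SigmatLie_{\mathscr{Z}}^{\vec{I}'}\newg)$ for a second-order operator $D$ with coefficients built from $\nabla$ and $\newg^{-1}$. Commuting $\SigmatLie_{Z_{(A)}}$ past $D$ produces $D(\SigmatLie_{\mathscr{Z}}^{\vec{I}}\newg)$, which is the desired principal term of \eqref{E:RICCILIEDERIVATIVESCHEMATIC}, plus commutator remainders that are precisely of the schematic shape recorded in the first sum of \eqref{E:RICINHOMERROR}. For the lower-order terms in $\RicErrorInhom{\vec{I}'}$, the Leibniz rule distributes one extra derivative across the existing factors, and whenever $\SigmatLie_{Z_{(A)}}$ lands on a copy of $\Ric^{\#}$ I re-expand using \eqref{E:RICSHARPFIRSTVARIATION}, producing further contributions of both schematic types in \eqref{E:RICINHOMERROR}.

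The main obstacle is the combinatorial bookkeeping: one must verify that every term generated by the successive commutations and Leibniz expansions fits the precise form of \eqref{E:RICINHOMERROR}, with the correct ranges $1 \leq L \leq |\vec{I}|$, the correct partition constraint $\vec{I}_1 + \cdots + \vec{I}_{L+1} = \vec{I}$ with each $|\vec{I}_a| \geq 1$, and the correct counts of $\newg^{-1}$ and $\ID$ factors. The key structural observation making this work is that each commutator application transfers a single Lie derivative onto a \emph{new} copy of $\newg$ rather than allowing derivatives to accumulate on a single factor, so the unique ``all-on-one-factor'' distribution of $N$ derivatives is always the one extracted as a principal term and the remainders genuinely satisfy $|\vec{I}_a| \geq 1$ for every $a$.

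Finally, to deduce \eqref{E:SCALARCURLIEDERIVATIVESCHEMATIC}, I use that $\ScalarCur = \mbox{tr}(\Ric^{\#})$ and that the pure trace of \eqref{E:PURETRACE} commutes with $\SigmatLie_{\mathscr{Z}}^{\vec{I}}$ since it involves no factors of $\newg$; thus it suffices to take the pure trace of \eqref{E:RICCILIEDERIVATIVESCHEMATIC}. Because $\nabla \newg^{-1} = 0$, the Laplacian commutes with $\newg^{-1}$-contractions, yielding $\mbox{tr}(\nabla^{\#} \nabla \mytr \SigmatLie_{\mathscr{Z}}^{\vec{I}}\newg) = \GLap \mytr \SigmatLie_{\mathscr{Z}}^{\vec{I}}\newg = \mbox{tr}(\GLap \SigmatLie_{\mathscr{Z}}^{\vec{I}}\newg)^{\#}$, so the two Laplacian-type principal terms combine into the single term $- \mbox{tr}(\GLap \SigmatLie_{\mathscr{Z}}^{\vec{I}}\newg)^{\#}$ in \eqref{E:SCALARCURLIEDERIVATIVESCHEMATIC}. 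The two double-divergence principal terms coincide modulo a commutator of covariant derivatives; by the three-dimensional identity \eqref{E:RIEMANNINTERMSOFRICCI}, that commutator produces Riemann-curvature terms that reduce to $\Ric^{\#}$- and $\ID$-coefficient contributions multiplied by $\SigmatLie_{\mathscr{Z}}^{\vec{I}}\newg$, and these are absorbed into $\mbox{tr}\,\RicErrorInhom{\vec{I}}$, matching the second sum in \eqref{E:RICINHOMERROR}.
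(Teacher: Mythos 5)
Your proposal follows the same essential approach as the paper's (brief) proof: induction on $|\vec{I}|$ starting from the first variation formula \eqref{E:RICSHARPFIRSTVARIATION}, propagating the principal terms via Leibniz and the commutator identities of Lemma~\ref{L:COMMUTATIONIDENTITIES}, absorbing Lie derivatives of $\newg^{-1}$ via $\SigmatLie_Z \newg^{-1} = -\newg^{-1}\cdot(\SigmatLie_Z\newg)\cdot\newg^{-1}$, and then deducing \eqref{E:SCALARCURLIEDERIVATIVESCHEMATIC} by tracing. The base case identification of the four principal terms with the four displayed operators in \eqref{E:RICCILIEDERIVATIVESCHEMATIC} is correct, and the combinatorial bookkeeping observation about why $|\vec{I}_a|\geq 1$ persists is exactly the point one needs to verify.

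One small simplification is available in your final paragraph: the pure traces of the two double-divergence terms $\frac{1}{2}\nabla^{\#}\Gdiv\SigmatLie_{\mathscr{Z}}^{\vec{I}}\newg$ and $\frac{1}{2}\nabla(\Gdiv\SigmatLie_{\mathscr{Z}}^{\vec{I}}\newg)^{\#}$ are \emph{identically} equal (not just equal modulo a commutator of covariant derivatives): writing both in components and relabeling summed indices, using only $\nabla\newg^{-1}=0$ and the symmetry of $\newg^{-1}$, one finds they give the same scalar $(\newg^{-1})^{ia}(\newg^{-1})^{bc}\nabla_a\nabla_b(\SigmatLie_{\mathscr{Z}}^{\vec{I}}\newg)_{ci}$. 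No appeal to \eqref{E:RIEMANNINTERMSOFRICCI} is needed at that point, so the trace computation for \eqref{E:SCALARCURLIEDERIVATIVESCHEMATIC} is cleaner than you indicate.
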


\begin{proof}
	\eqref{E:RICCILIEDERIVATIVESCHEMATIC} follows from a straightforward argument involving 
	induction in $|\vec{I}|$, the first variation formula \eqref{E:RICSHARPFIRSTVARIATION},
	the commutation identity \eqref{E:TENSORNABLALSQUAREIEZCOMMUTATOR} with $\xi := \Lie_Z \newg$,
	and the schematic identity $\SigmatLie_Z \newg^{-1} = \newg^{-2} \SigmatLie_Z \newg$;
	we omit the details.
	\eqref{E:SCALARCURLIEDERIVATIVESCHEMATIC}
	then follows from taking the trace of \eqref{E:RICCILIEDERIVATIVESCHEMATIC}.
\end{proof}

\section{The commuted equations}
\label{S:COMMUTEDEQUATIONS}
In this section, we commute the equations of Prop.~\ref{P:RESCALEDVARIABLES}
with the operators
$\SigmatLie_{\mathscr{Z}}^{\vec{I}}$
and sort the error terms into two classes:
``borderline terms,'' whose behavior as $t \downarrow 0$
is borderline with respect to the energy estimates we will prove,
and ``junk terms,'' whose behavior as $t \downarrow 0$
is sub-critical with respect to the energy estimates we will prove.

\begin{notation}[\textbf{$*$ notation}]
	\label{N:STARNOTATION}
	In a few of the formulas below, 
	we use the schematic notation $\xi*\eta$
	to denote a tensor formed out of some
	constant-coefficient linear combination of natural contractions of
	the tensor $\xi$ against the tensor $\eta$.
	For such terms, neither the precise nature of the contractions
	nor the value of the numerical constants will be important for our
	subsequent analysis.
\end{notation}

\begin{proposition}[\textbf{The $\Lie_{\mathscr{Z}}^{\vec{I}}$-commuted equations}]
	\label{P:ICOMMUTEDEQNS}
For each $\mathscr{Z}$-multi-index $\vec{I}$ with $1 \leq |\vec{I}|$,
solutions to the equations of Prop.~\ref{P:RESCALEDVARIABLES}
verify the following equations
(see Subsubsect.\ \ref{SSS:COORDINATES} regarding the vectorfield multi-index notation).

The \textbf{commuted rescaled momentum constraint equations} are:
	\begin{subequations}
	\begin{align}
		\Gdiv \SigmatLie_{\mathscr{Z}}^{\vec{I}} \FreeNewSec 
		& = - \sqrt{\frac{2}{3}} \SigmatLie_{\mathscr{Z}}^{\vec{I}} \newspacescalar
				+ \CommutedMomBorderInhomDown{\vec{I}}
		%+ \scale^{4/3} \CommutedMomJunkInhomDown{\vec{I}}
			\label{E:COMMUTEDRENORMALIZEDMOMENTUM},
			\\
		\Gdiv^{\#} \SigmatLie_{\mathscr{Z}}^{\vec{I}} \FreeNewSec 
		& = - \sqrt{\frac{2}{3}} (\SigmatLie_{\mathscr{Z}}^{\vec{I}} \newspacescalar)^{\#}
			+ \CommutedMomBorderInhomUp{\vec{I}}
			%+ \TopCommutedMomBorderInhomUp{\vec{I}}
			%+ \LowCommutedMomBorderInhomUp{\vec{I}}
			%+ \scale^{4/3} \CommutedMomJunkInhomUp{\vec{I}}
			\label{E:ALTERNATECOMMUTEDRENORMALIZEDMOMENTUM},	
\end{align}
\end{subequations}
where
\begin{subequations}
\begin{align}
	\CommutedMomBorderInhomDown{\vec{I}}
	& := - [\SigmatLie_{\mathscr{Z}}^{\vec{I}},\Gdiv] \FreeNewSec
			- \sum_{\vec{I}_1 + \vec{I}_2 = \vec{I}}
				(\mathscr{Z}^{\vec{I}_1} \newtimescalar) 
				\SigmatLie_{\mathscr{Z}}^{\vec{I}_2} \newspacescalar,
			\label{E:BORDERLINECOMMUTEDRENORMALIZEDMOMENTUM}	\\
	\CommutedMomBorderInhomUp{\vec{I}}
	& := - [\SigmatLie_{\mathscr{Z}}^{\vec{I}},\Gdiv^{\#}] \FreeNewSec
			- \sqrt{\frac{2}{3}} 
				\mathop{\sum_{\vec{I}_1 + \vec{I}_2 = \vec{I}}}_{|\vec{I}_2| \leq |\vec{I}|-1}
				(\SigmatLie_{\mathscr{Z}}^{\vec{I}_1} \newg^{-1})
				\SigmatLie_{\mathscr{Z}}^{\vec{I}_2} \newspacescalar
				- 
				\sum_{\vec{I}_1 + \vec{I}_2 + \vec{I}_3 = \vec{I}}
				(\mathscr{Z}^{\vec{I}_1} \newtimescalar) 
				(\SigmatLie_{\mathscr{Z}}^{\vec{I}_2} \newg^{-1})
				\cdot
				\SigmatLie_{\mathscr{Z}}^{\vec{I}_3} \newspacescalar.
				\label{E:BORDERLINEALTERNATECOMMUTEDRENORMALIZEDMOMENTUM}
\end{align}
\end{subequations}
	
	The \textbf{commuted rescaled metric evolution equations} are:
	\begin{subequations}
	\begin{align}
		\SigmatLie_{\partial_t}  \SigmatLie_{\mathscr{Z}}^{\vec{I}} \newg
		& =
			\scale^{-1} \CommutedMetBorderInhom{\vec{I}}
			+ 
			\scale^{1/3} \CommutedMetJunkInhom{\vec{I}},
			\label{E:COMMUTEDEVOLUTIONMETRICRENORMALIZED} \\
	\SigmatLie_{\partial_t} \SigmatLie_{\mathscr{Z}}^{\vec{I}} \newg^{-1}
		& =
			\scale^{-1} \CommutedInvMetBorderInhom{\vec{I}}
			+
			\scale^{1/3} \CommutedInvMetJunkInhom{\vec{I}},
			\label{E:COMMUTEDEVOLUTIONINVERSEMETRICRENORMALIZED} 
	\end{align}
	\end{subequations}
	
	\begin{subequations}
	\begin{align}
	\SigmatLie_{\partial_t} \nabla \SigmatLie_{\mathscr{Z}}^{\vec{I}} \newg
		& =
			- 2 \scale^{-1}(1 + \scale^{4/3} \newlapse) \newg \cdot \nabla \SigmatLie_{\mathscr{Z}}^{\vec{I}} \FreeNewSec
			\label{E:GRADCOMMUTEDEVOLUTIONMETRICRENORMALIZED} \\
		& \ \
			+ \frac{2}{3} \scale' \scale^{1/3} (\nabla \mathscr{Z}^{\vec{I}} \newlapse) \otimes \newg
			+ \scale^{-1} \CommutedGradMetBorderInhom{\vec{I}}
			+ \scale^{1/3} \CommutedGradMetJunkInhom{\vec{I}},
			\notag
				\\
	\SigmatLie_{\partial_t} \SigmatLie_{\mathscr{Z}}^{\vec{I}} \FreeNewSec
	& = 
		\frac{1}{2} 
		\scale^{1/3}(1 + \scale^{4/3} \newlapse)
		\left\lbrace
			- \GLap (\SigmatLie_{\mathscr{Z}}^{\vec{I}} \newg)^{\#}
			-  (\nabla^2 \mytr \SigmatLie_{\mathscr{Z}}^{\vec{I}} \newg)^{\#}
			+ \nabla^{\#} \Gdiv \SigmatLie_{\mathscr{Z}}^{\vec{I}} \newg
			+ \nabla (\Gdiv \SigmatLie_{\mathscr{Z}}^{\vec{I}} \newg)^{\#}
		\right\rbrace
			\label{E:COMMUTEDEVOLUTIONSECONDFUNDRENORMALIZED} \\
	& \ \
		- \scale^{5/3} \nabla^{\#} \nabla \mathscr{Z}^{\vec{I}} \newlapse
		+ \frac{1}{3} (\scale')^2 \scale^{1/3} (\mathscr{Z}^{\vec{I}} \newlapse) \ID
		+ \scale^{1/3} \RicErrorInhom{\vec{I}}
		\notag \\
	& \ \
		+ \scale^{1/3} \CommutedSecFunBorderInhom{\vec{I}}
		+ \scale^{1/3} \CommutedSecFunJunkInhom{\vec{I}},
		\notag
\end{align}
\end{subequations} 
where $\ID$ denotes the identity transformation, 
$\RicErrorInhom{\vec{I}}$ is defined by \eqref{E:RICINHOMERROR},
and
\begin{subequations}
\begin{align}
	\CommutedMetBorderInhom{\vec{I}}
	& =	- 2 \sum_{\vec{I}_1 + \vec{I}_2 = \vec{I}}
					(\SigmatLie_{\mathscr{Z}}^{\vec{I}_1} \newg) 
					\cdot 
					\SigmatLie_{\mathscr{Z}}^{\vec{I}_2} \FreeNewSec
			- 2 \scale^{4/3}
					(\mathscr{Z}^{\vec{I}} \newlapse) 
					\newg \cdot \FreeNewSec
			\label{E:ICOMMUTEDMETRICBORDERTERMS} \\
	& \ \
				+ 	
					\frac{2}{3} 
					\scale'
					\scale^{4/3}
					(\mathscr{Z}^{\vec{I}} \newlapse) 
					\newg,
				\notag \\
	\CommutedInvMetBorderInhom{\vec{I}}
	& =	 2 \sum_{\vec{I}_1 + \vec{I}_2 = \vec{I}}
					(\SigmatLie_{\mathscr{Z}}^{\vec{I}_1} \newg^{-1}) 
					\cdot 
					\SigmatLie_{\mathscr{Z}}^{\vec{I}_2} \FreeNewSec
			+ 
		 		2 \scale^{4/3}
		 			(\mathscr{Z}^{\vec{I}} \newlapse) 
					\newg^{-1} 
					\cdot \FreeNewSec
				\label{E:ICOMMUTEDIVERSEMETRICBORDERTERMS} \\
	& \ \
		 - 	\frac{2}{3} 
					\scale'
					\scale^{4/3}
					(\mathscr{Z}^{\vec{I}} \newlapse) 
					\newg^{-1},
					\notag 
			\\
	\CommutedGradMetBorderInhom{\vec{I}}
	& = - 2 \scale^{4/3} (\nabla \mathscr{Z}^{\vec{I}} \newlapse) \otimes (\newg \cdot \FreeNewSec)
			- 2  \mathop{\mathop{\sum_{\vec{I}_1 + \vec{I}_2 = \vec{I}}}_{i_1 + i_2 = 1}}_{|\vec{I}_2| + i_2 \leq |\vec{I}|}
					(\nabla^{i_1} \SigmatLie_{\mathscr{Z}}^{\vec{I}_1} \newg) * \nabla^{i_2} 
					\SigmatLie_{\mathscr{Z}}^{\vec{I}_2} \FreeNewSec
			\label{E:ICOMMUTEDGRADMETRICBORDERTERMS} \\
	& \ \
			+ 
			\newg^{-1} * \scale \nabla \SigmatLie_{\partial_t} \newg * \SigmatLie_{\mathscr{Z}}^{\vec{I}} \newg,
			\notag \\
	\CommutedGradMetJunkInhom{\vec{I}}
	& = \frac{2}{3} \scale' 
				\mathop{\mathop{\sum_{\vec{I}_1 + \vec{I}_2 = \vec{I}}}_{i_1 + i_2 = 1}}_{i_1 + |\vec{I_1}| \leq |\vec{I}|} 
				(\nabla^{i_1} \mathscr{Z}^{\vec{I}_1} \newlapse) 
				\otimes
				\nabla^{i_2} \SigmatLie_{\mathscr{Z}}^{\vec{I}_2} \newg
					\label{E:ICOMMUTEDGRADMETRICJUNKTERMS}		\\
	& \  \
				- 2 \mathop{\mathop{\sum_{\vec{I}_1 + \vec{I}_2 = \vec{I}}}_{i_1 + i_2 = 1}}_{i_1 + |\vec{I_1}|, i_3 + |\vec{I_3}| \leq |\vec{I}|} 
							(\nabla^{i_1} \mathscr{Z}^{\vec{I}_1} \newlapse) 
							\otimes
							(\nabla^{i_2} \SigmatLie_{\mathscr{Z}}^{\vec{I}_2} \newg)
							*
							\nabla^{i_3} \SigmatLie_{\mathscr{Z}}^{\vec{I}_3} \FreeNewSec,
							\notag	
	%& \ \ 
	%	+ \newlapse \nabla \FreeNewSec * \SigmatLie_{\mathscr{Z}}^{\vec{I}} \newg
	%	+ \newg * \newg^{-1} * \nabla \newlapse * \FreeNewSec * \SigmatLie_{\mathscr{Z}}^{\vec{I}} \newg
	%	+ \scale' \newg * \newg^{-1} * \nabla \newlapse * \SigmatLie_{\mathscr{Z}}^{\vec{I}} \newg,
	%	\notag 
\end{align}
\end{subequations}

\begin{subequations}
\begin{align}
	\CommutedMetJunkInhom{\vec{I}}
	& = -2  \mathop{\sum_{\vec{I}_1 + \vec{I}_2 + \vec{I}_3 = \vec{I}}}_{|\vec{I}_1| \leq |\vec{I}|-1}
					(\mathscr{Z}^{\vec{I}_1} \newlapse) 
					(\SigmatLie_{\mathscr{Z}}^{\vec{I}_2} \newg) 
					\cdot \SigmatLie_{\mathscr{Z}}^{\vec{I}_3} \FreeNewSec
			+ 	\frac{2}{3} 
					\scale'
					\mathop{\sum_{\vec{I}_1 + \vec{I}_2 = \vec{I}}}_{|\vec{I}_1| \leq |\vec{I}|-1}
					(\mathscr{Z}^{\vec{I}_1} \newlapse) 
					\SigmatLie_{\mathscr{Z}}^{\vec{I}_2} \newg,
			\label{E:ICOMMUTEDMETRICJUNKTERMS} \\
	\CommutedInvMetJunkInhom{\vec{I}}
	& = 2  \mathop{\sum_{\vec{I}_1 + \vec{I}_2 + \vec{I}_3 = \vec{I}}}_{|\vec{I}_1| \leq |\vec{I}|-1}
					(\mathscr{Z}^{\vec{I}_1} \newlapse) 
					(\SigmatLie_{\mathscr{Z}}^{\vec{I}_2} \newg^{-1}) 
					\cdot \SigmatLie_{\mathscr{Z}}^{\vec{I}_3} \FreeNewSec
			- 	\frac{2}{3} 
					\scale'
					\mathop{\sum_{\vec{I}_1 + \vec{I}_2 = \vec{I}}}_{|\vec{I}_1| \leq |\vec{I}|-1}
					(\mathscr{Z}^{\vec{I}_1} \newlapse) 
					\SigmatLie_{\mathscr{Z}}^{\vec{I}_2} \newg^{-1},
			\label{E:ICOMMUTEDINVERSEMETRICJUNKTERMS} \\
	\CommutedSecFunBorderInhom{\vec{I}}
	& = - \scale' 
				(\mathscr{Z}^{\vec{I}} \newlapse) \FreeNewSec, 
		 \label{E:ICOMMUTEDSECONDFUNDBORDERTERMS} \\
	\CommutedSecFunJunkInhom{\vec{I}}
	& = 
		- \scale' 
				\mathop{\sum_{\vec{I}_1 + \vec{I}_2 = \vec{I}}}_{|\vec{I}_1| \leq |\vec{I}|-1}
				(\mathscr{Z}^{\vec{I}_1} \newlapse) \SigmatLie_{\mathscr{Z}}^{\vec{I}_2} \FreeNewSec
		- \scale^{4/3} \newg^{-1} [\nabla^2, \SigmatLie_{\mathscr{Z}}^{\vec{I}}] \newlapse
		- \scale^{4/3} [\newg^{-1}, \SigmatLie_{\mathscr{Z}}^{\vec{I}}] \nabla^2 \newlapse
			\label{E:ICOMMUTEDSECONDFUNDJUNKTERMS} \\
	& \ \
		+ \scale^{4/3} 
			[\SigmatLie_{\mathscr{Z}}^{\vec{I}}, \newlapse] 
			\Ric^{\# }
		- 
				\sum_{\vec{I}_1 + \vec{I}_2 + \vec{I}_3 = \vec{I}} 
					(\SigmatLie_{\mathscr{Z}}^{\vec{I}_1} \newg^{-1}) 
					*
					(\SigmatLie_{\mathscr{Z}}^{\vec{I}_2} \newspacescalar)
					*
					\SigmatLie_{\mathscr{Z}}^{\vec{I}_3} \newspacescalar
			\notag
				\\
		& \ \	
				-  
					\scale^{4/3}
					\sum_{\vec{I}_1 + \vec{I}_2 + \vec{I}_3 + \vec{I}_4 = \vec{I}} 
					(\SigmatLie_{\mathscr{Z}}^{\vec{I}_1} \newg^{-1}) 
					*
					(\mathscr{Z}^{\vec{I}_2} \newlapse)
					*
					(\SigmatLie_{\mathscr{Z}}^{\vec{I}_3} \newspacescalar)
					*
					\SigmatLie_{\mathscr{Z}}^{\vec{I}_4} \newspacescalar.
					\notag
\end{align}
\end{subequations}

The \textbf{commuted rescaled scalar field evolution equations are}:
\begin{subequations}
\begin{align}  \label{E:COMMUTEDWAVEEQUATIONRENORMALIZED}
		\partial_t \mathscr{Z}^{\vec{I}} \newtimescalar
		& =
			(1 + \scale^{4/3} \newlapse) \scale^{1/3} \Gdiv \SigmatLie_{\mathscr{Z}}^{\vec{I}} \newspacescalar 
			- \sqrt{\frac{2}{3}} \scale' \scale^{1/3} \mathscr{Z}^{\vec{I}} \newlapse
			+ \scale^{1/3} \CommutedTimeSfBorderInhom{\vec{I}}
			+ \scale^{1/3} \CommutedTimeSfJunkInhom{\vec{I}},
				\\
		\SigmatLie_{\partial_t} \SigmatLie_{\mathscr{Z}}^{\vec{I}} \newspacescalar
		& = \scale^{-1} (1 + \scale^{4/3} \newlapse) \nabla \mathscr{Z}^{\vec{I}} \newtimescalar
			+ \sqrt{\frac{2}{3}} \scale^{1/3} \nabla \mathscr{Z}^{\vec{I}} \newlapse
			+ \scale^{1/3} \CommutedSpaceSfBorderInhom{\vec{I}}
			+ \scale^{1/3} \CommutedSpaceSfJunkInhom{\vec{I}},
			\label{E:COMMUTEDSPACEDERIVATIVESWAVEEQUATIONRENORMALIZED}
\end{align}
\end{subequations}
where
\begin{subequations}
\begin{align} 
	\CommutedTimeSfBorderInhom{\vec{I}}
	& = 
	- \scale' 
		(\mathscr{Z}^{\vec{I}} \newlapse) \newtimescalar,
		\label{E:ICOMMUTEDTIMESFBORDERLINETERMS}  
		\\
	\CommutedTimeSfJunkInhom{\vec{I}}
	& = 
		- \scale' 
		\mathop{\sum_{\vec{I}_1 + \vec{I}_2 = \vec{I}}}_{|\vec{I}_1| \leq |\vec{I}|-1}
		(\mathscr{Z}^{\vec{I}_1} \newlapse) \mathscr{Z}^{\vec{I}_2} \newtimescalar
		+	
		[\SigmatLie_{\mathscr{Z}}^{\vec{I}}, \Gdiv] \newspacescalar
		+ 
		\scale^{4/3} [\SigmatLie_{\mathscr{Z}}^{\vec{I}}, \newlapse \Gdiv] \newspacescalar  
	 \label{E:ICOMMUTEDTIMESFJUNKTERMS} 
			\\
	 & \ \
	 		- \scale^{4/3} 
	 	 	\sum_{\vec{I}_1 + \vec{I}_2 + \vec{I}_3 = \vec{I}}
	 		(\nabla \mathscr{Z}^{\vec{I}_1} \newlapse)
	 		*
			(\SigmatLie_{\mathscr{Z}}^{\vec{I}_2} \newg^{-1}) 
	 		*
			\SigmatLie_{\mathscr{Z}}^{\vec{I}_3} \newspacescalar,
	 		\notag \\
	\CommutedSpaceSfBorderInhom{\vec{I}}
	& = \newtimescalar \nabla \mathscr{Z}^{\vec{I}} \newlapse,
		\label{E:ICOMMUTEDSPACESFBORDERLINETERMS} \\
	\CommutedSpaceSfJunkInhom{\vec{I}}
	& = 
		[\SigmatLie_{\mathscr{Z}}^{\vec{I}},\newlapse] \nabla \newtimescalar
		+ 
		[\SigmatLie_{\mathscr{Z}}^{\vec{I}}, \newtimescalar] \nabla \newlapse.
		\label{E:ICOMMUTEDSPACESFJUNKTERMS}
\end{align}
\end{subequations}

Finally, the \textbf{commuted rescaled elliptic lapse equations} are:
\begin{subequations}
\begin{align} 
	\mathscr{L} \mathscr{Z}^{\vec{I}} \newlapse 
	& = 	
			2 \sqrt{\frac{2}{3}} \mathscr{Z}^{\vec{I}} \newtimescalar
			+ \CommutedLapseHighBorderInhom{\vec{I}}
			+ \scale^{4/3} \CommutedLapseHighJunkInhom{\vec{I}},
			\label{E:COMMUTEDLAPSEPDERENORMALIZEDHIGHERDERIVATIVES}	\\
	\widetilde{\mathscr{L}} \mathscr{Z}^{\vec{I}} \newlapse
	& = \CommutedLapseLowBorderInhom{\vec{I}}
			%+
			%\LowCommutedLapseLowBorderInhom{\vec{I}}
			+ \scale^{4/3} \CommutedLapseLowJunkInhom{\vec{I}},
		\label{E:COMMUTEDLAPSEPDERENORMALIZEDLOWERDERIVATIVES}
\end{align}
\end{subequations}
where
\begin{subequations}
\begin{align} 
	\CommutedLapseHighBorderInhom{\vec{I}}
	& := \sum_{\vec{I}_1 + \vec{I}_2 = \vec{I}}
				(\SigmatLie_{\mathscr{Z}}^{\vec{I}_1} \FreeNewSec) 
		  	\cdot 
		  	\SigmatLie_{\mathscr{Z}}^{\vec{I}_2} \FreeNewSec
		  +
		  \sum_{\vec{I}_1 + \vec{I}_2 = \vec{I}}
				(\mathscr{Z}^{\vec{I}_1} \newtimescalar) 
		  	\mathscr{Z}^{\vec{I}_2}\newtimescalar,
			\label{E:ICOMMUTEDLAPSEHIGHBORDERTERMS}	 \\
	\CommutedLapseLowBorderInhom{\vec{I}}
 & := 
		- \mbox{\upshape tr} (\GLap \SigmatLie_{\mathscr{Z}}^{\vec{I}} \newg)^{\#}
			+ \mbox{\upshape tr} \nabla^{\#} \Gdiv \SigmatLie_{\mathscr{Z}}^{\vec{I}} \newg
			+ \mbox{\upshape tr} \RicErrorInhom{\vec{I}}
				\label{E:ICOMMUTEDLAPSELOWBORDERTERMS} \\
	& \ \
		- 
		\sum_{\vec{I}_1 + \vec{I}_2 + \vec{I}_3 = \vec{I}}
 			(\SigmatLie_{\mathscr{Z}}^{\vec{I}_1} \newg^{-1})
			*
 			(\SigmatLie_{\mathscr{Z}}^{\vec{I}_2} \newspacescalar)
			*
 			\SigmatLie_{\mathscr{Z}}^{\vec{I}_3} \newspacescalar,
			\notag \\
 \CommutedLapseHighJunkInhom{\vec{I}}
 & := \scale^{4/3} [\GLap, \mathscr{Z}^{\vec{I}}] \newlapse
 		+ [\mathscr{Z}^{\vec{I}}, |\FreeNewSec|_{\newg}^2] \newlapse
 			%\mathop{\sum_{\vec{I}_1 + \vec{I}_2 + \vec{I}_3 + \vec{I}_4 = \vec{I}}}_{|\vec{I}_4| \leq |\vec{I}|-1}
 			%(\SigmatLie_{\mathscr{Z}}^{\vec{I}_1} \newg^{-1})
 			%(\SigmatLie_{\mathscr{Z}}^{\vec{I}_2} \FreeNewSec)
 			%(\SigmatLie_{\mathscr{Z}}^{\vec{I}_3} \FreeNewSec)
 			%\mathscr{Z}^{\vec{I}_4} \newlapse
 		+ 2 \sqrt{\frac{2}{3}} 
				[\mathscr{Z}^{\vec{I}}, \newtimescalar] \newlapse
 				%\mathop{\sum_{\vec{I}_1 + \vec{I}_2 = \vec{I}}}_{|\vec{I}_2| \leq |\vec{I}|-1}
 				%(\mathscr{Z}^{\vec{I}_1} \newtimescalar)
 				%\mathscr{Z}^{\vec{I}_2} \newlapse
 			\label{E:ICOMMUTEDLAPSEHIGHJUNKTERMS}	\\
 	& \ \ 
 		+   [\mathscr{Z}^{\vec{I}}, \newtimescalar^2] \newlapse,
 				%\mathop{\sum_{\vec{I}_1 + \vec{I}_2 + \vec{I}_3 = \vec{I}}}_{|\vec{I}_3| \leq |\vec{I}|-1}
 				%(\mathscr{Z}^{\vec{I}_1} \newtimescalar)
 				%(\mathscr{Z}^{\vec{I}_2} \newtimescalar)
 				%\mathscr{Z}^{\vec{I}_3} \newlapse
 		%+ \scale^{4/3}
 		%	[\mathscr{Z}^{\vec{I}}, |\newspacescalar|_{\newg}^2] \newlapse,
 			%\mathop{\sum_{\vec{I}_1 + \vec{I}_2 + \vec{I}_3 + \vec{I}_4 = \vec{I}}}_{|\vec{I}_4| \leq |\vec{I}|-1}
 			%(\SigmatLie_{\mathscr{Z}}^{\vec{I}_1} \newg^{-1})
 			%(\SigmatLie_{\mathscr{Z}}^{\vec{I}_2} \newspacescalar)
 			%(\SigmatLie_{\mathscr{Z}}^{\vec{I}_3} \newspacescalar)
 			%\mathscr{Z}^{\vec{I}_4} \newlapse
 			\notag \\
 \CommutedLapseLowJunkInhom{\vec{I}}
 & := [\GLap, \mathscr{Z}^{\vec{I}}] \newlapse
 			+
			[\mathscr{Z}^{\vec{I}},\ScalarCur] \newlapse
			-
			[\mathscr{Z}^{\vec{I}},|\newspacescalar|_{\newg}^2] \newlapse,
 			\label{E:ICOMMUTEDLAPSELOWJUNKTERMS}
 \end{align}
\end{subequations}
and $\RicErrorInhom{\vec{I}}$ is defined by \eqref{E:RICINHOMERROR}.
\end{proposition}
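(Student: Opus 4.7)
The plan is to derive each commuted equation by applying $\SigmatLie_{\mathscr{Z}}^{\vec{I}}$ directly to the corresponding equation from Proposition~\ref{P:RESCALEDVARIABLES}, using the Leibniz rule together with the commutation identities from Lemma~\ref{L:COMMUTATIONIDENTITIES} and, where the Ricci or scalar curvature appears, the identities from Lemma~\ref{L:RICCILIEDERIVATIVESCHEMATIC}. Since $\SigmatLie_{\mathscr{Z}}^{\vec{I}}$ acts trivially on the scalar factors $\scale(t)$, $\scale'(t)$, and $\sqrt{2/3}$, the only real work is distributing $\SigmatLie_{\mathscr{Z}}^{\vec{I}}$ across products and moving it past spatial differential operators ($\nabla$, $\GLap$, $\Gdiv$, $\Gdiv^{\#}$) and past $\SigmatLie_{\partial_t}$. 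The principal terms, which one wants to isolate on the left or displayed on the right with an explicit coefficient, come from the pattern where $\SigmatLie_{\mathscr{Z}}^{\vec{I}}$ lands on the single top-order factor; all remaining contributions from the Leibniz rule or from commutators are gathered into the inhomogeneous terms.

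For each equation, the strategy is then a classification step. First, I would single out the unique top-order contribution (the one that becomes the principal term) and commute $\SigmatLie_{\mathscr{Z}}^{\vec{I}}$ past the differential operator sitting on it, picking up a commutator that contributes to $\CommutedMomBorderInhomDown{\vec{I}}$, $\CommutedSecFunJunkInhom{\vec{I}}$, $\CommutedLapseHighJunkInhom{\vec{I}}$, etc., depending on the equation. Second, I would apply the Leibniz rule to the remaining nonlinear products; terms in which $\SigmatLie_{\mathscr{Z}}^{\vec{I}}$ falls entirely on one factor (so that another factor is $\newlapse$, $\FreeNewSec$, or $\newg$ itself, undifferentiated) are the "border" terms — they retain top order but multiply a background-small quantity — while all splittings with $|\vec{I}_1|\leq |\vec{I}|-1$ on the factor that was originally top-order become "junk" terms. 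For the evolution equation \eqref{E:COMMUTEDEVOLUTIONSECONDFUNDRENORMALIZED}, the Ricci term $\Ric^{\#}$ is handled by plugging in the schematic identity \eqref{E:RICCILIEDERIVATIVESCHEMATIC}, which already produces the displayed Laplacian-plus-divergence structure plus the remainder $\RicErrorInhom{\vec{I}}$. The lapse equation \eqref{E:COMMUTEDLAPSEPDERENORMALIZEDLOWERDERIVATIVES} similarly uses \eqref{E:SCALARCURLIEDERIVATIVESCHEMATIC} to produce $\mbox{\upshape tr}(\GLap \SigmatLie_{\mathscr{Z}}^{\vec{I}}\newg)^{\#}$ and $\mbox{\upshape tr}\nabla^{\#}\Gdiv\SigmatLie_{\mathscr{Z}}^{\vec{I}}\newg$ inside $\CommutedLapseLowBorderInhom{\vec{I}}$.

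The evolution equations for $\newg$, $\newg^{-1}$, $\FreeNewSec$, $\newtimescalar$, and $\newspacescalar$ do not commute cleanly with $\SigmatLie_{\mathscr{Z}}^{\vec{I}}$ because $[\SigmatLie_{\partial_t},\SigmatLie_{\mathscr{Z}}^{\vec{I}}]=0$ only if the $Z_{(A)}$ were Killing for $\gfour$, which they are not. I would resolve this by noting that the frame $\mathscr{Z}$ is Lie-transported (Def.~\ref{D:LIETRANSPORTEDSPATIALFRAME}), so $[\partial_t,Z_{(A)}]=0$ as spacetime vectorfields, and combining this with the $\Sigma_t$-tangency projection built into $\SigmatLie$ gives $[\SigmatLie_{\partial_t},\SigmatLie_{Z_{(A)}}]=0$ on $\Sigma_t$-tangent tensorfields. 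Hence $\SigmatLie_{\partial_t}$ passes through $\SigmatLie_{\mathscr{Z}}^{\vec{I}}$ at no cost, which is what allows the LHS of each evolution equation to take the clean form $\SigmatLie_{\partial_t}\SigmatLie_{\mathscr{Z}}^{\vec{I}}(\cdot)$. For the constraint equations the argument is purely spatial and uses only \eqref{E:GIDIVLIEZCOMMUTATOR}–\eqref{E:GIDIVSHARPLIEZCOMMUTATOR}, while for \eqref{E:GRADCOMMUTEDEVOLUTIONMETRICRENORMALIZED} one additionally uses \eqref{E:NABLALIEDETAILEDCOMMUTATOR} to turn the composition $\SigmatLie_{\partial_t}\nabla\SigmatLie_{\mathscr{Z}}^{\vec{I}}$ into $\nabla\SigmatLie_{\partial_t}\SigmatLie_{\mathscr{Z}}^{\vec{I}}$ modulo the term $\newg^{-1}\ast\nabla\SigmatLie_{\partial_t}\newg\ast\SigmatLie_{\mathscr{Z}}^{\vec{I}}\newg$ absorbed into $\CommutedGradMetBorderInhom{\vec{I}}$.

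The main obstacle is not conceptual but purely organizational: matching the schematic placement of $\scale^{-1}$ versus $\scale^{1/3}$ (and $\scale^{4/3}$) across every term so that each inhomogeneity falls into the "Border" or "Junk" bucket claimed in the proposition, and verifying that every commutator produced by Lemma~\ref{L:COMMUTATIONIDENTITIES} is reducible — via the identity $\SigmatLie_Z \newg^{-1}=-\newg^{-1}\ast\SigmatLie_Z\newg\ast\newg^{-1}$ and relabelling of multi-index partitions — to one of the schematic forms listed in \eqref{E:BORDERLINECOMMUTEDRENORMALIZEDMOMENTUM}–\eqref{E:ICOMMUTEDLAPSELOWJUNKTERMS}. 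Because the proposition presents the error terms schematically (up to contractions and numerical constants, per Notation~\ref{N:STARNOTATION}), no genuine cancellations need to be exhibited at this stage; it suffices to check that every term produced has the claimed multi-index structure and the claimed power of $\scale$ extracted as an overall prefactor, which is a direct bookkeeping exercise.
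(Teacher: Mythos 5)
Your proposal is correct and takes essentially the same approach as the paper: apply $\SigmatLie_{\mathscr{Z}}^{\vec{I}}$ (or $\nabla\SigmatLie_{\mathscr{Z}}^{\vec{I}}$ for the gradient metric equation) to the corresponding equation of Prop.~\ref{P:RESCALEDVARIABLES}, use the Leibniz rule, move the commutation past the spatial differential operators via Lemma~\ref{L:COMMUTATIONIDENTITIES}, commute $\SigmatLie_{\partial_t}$ past $\SigmatLie_{\mathscr{Z}}^{\vec{I}}$ using that the frame is Lie-transported, handle $\Ric^{\#}$ and $\ScalarCur$ with Lemma~\ref{L:RICCILIEDERIVATIVESCHEMATIC}, and use \eqref{E:NABLALIEDETAILEDCOMMUTATOR} for the $[\SigmatLie_{\partial_t},\nabla]$ commutator in \eqref{E:GRADCOMMUTEDEVOLUTIONMETRICRENORMALIZED}. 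One small caveat: the ``border'' versus ``junk'' split is governed by the extracted prefactor ($\scale^{-1}$ versus $\scale^{1/3}$, with the top-order-in-$\newlapse$ pieces of the $\scale^{1/3}$-weighted products promoted to border), not simply by whether $\SigmatLie_{\mathscr{Z}}^{\vec{I}}$ lands entirely on one factor -- e.g., in $\CommutedMetBorderInhom{\vec{I}}$ the \emph{entire} Leibniz sum of the $\scale^{-1}$-weighted product $\newg\cdot\FreeNewSec$ appears, not just the top-order splitting -- but since you explicitly flag that the classification reduces to checking scale-power prefactors, this does not affect the validity of your outline.
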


\begin{proof}
	\eqref{E:COMMUTEDRENORMALIZEDMOMENTUM} follows 
	in a straightforward fashion from applying 
	$\SigmatLie_{\mathscr{Z}}^{\vec{I}}$
	to equation \eqref{E:RENORMALIZEDMOMENTUM}.
	Similarly, \eqref{E:ALTERNATECOMMUTEDRENORMALIZEDMOMENTUM}
	follows from \eqref{E:ALTERNATERENORMALIZEDMOMENTUM},
	\eqref{E:COMMUTEDEVOLUTIONMETRICRENORMALIZED}
	follows from \eqref{E:EVOLUTIONMETRICRENORMALIZED},
	\eqref{E:COMMUTEDEVOLUTIONINVERSEMETRICRENORMALIZED}
	follows from \eqref{E:EVOLUTIONINVERSEMETRICRENORMALIZED},
	\eqref{E:COMMUTEDWAVEEQUATIONRENORMALIZED} 
	follows from \eqref{E:WAVEEQUATIONRENORMALIZED},
	\eqref{E:COMMUTEDSPACEDERIVATIVESWAVEEQUATIONRENORMALIZED}
	follows from \eqref{E:EVOLUTIONSPACESCALARRENORMALIZED},
	and \eqref{E:COMMUTEDLAPSEPDERENORMALIZEDHIGHERDERIVATIVES} 
	follows from \eqref{E:LAPSEPDERENORMALIZEDHIGHERDERIVATIVES}.
	
	To derive \eqref{E:GRADCOMMUTEDEVOLUTIONMETRICRENORMALIZED}, 
	we apply the operator
	$\nabla \SigmatLie_{\mathscr{Z}}^{\vec{I}}$
	to equation \eqref{E:EVOLUTIONMETRICRENORMALIZED}.
	All terms appearing on RHS~\eqref{E:GRADCOMMUTEDEVOLUTIONMETRICRENORMALIZED}
	are straightforward to deduce except for the ones generated by the commutator term 
	$[\SigmatLie_{\partial_t}, \nabla] \SigmatLie_{\mathscr{Z}}^{\vec{I}} \newg$.
	To decompose this commutator term, 
	we use \eqref{E:NABLALIEDETAILEDCOMMUTATOR}
	with $\xi := \SigmatLie_{\mathscr{Z}}^{\vec{I}} \newg$,
	%and equation \eqref{E:EVOLUTIONMETRICRENORMALIZED} to substitute for 
	%$\SigmatLie_{\partial_t} \newg$ in \eqref{E:NABLALIEDETAILEDCOMMUTATOR},
	thereby obtaining
	the schematic identity
	$
	[\SigmatLie_{\partial_t}, \nabla] \SigmatLie_{\mathscr{Z}}^{\vec{I}} \newg
	= \newg^{-1} *\nabla \SigmatLie_{\partial_t} \newg * \SigmatLie_{\mathscr{Z}}^{\vec{I}} \newg 
	$,
	which leads to the presence of the last
	product on RHS~\eqref{E:ICOMMUTEDGRADMETRICBORDERTERMS} 
	(which is schematically depicted).

To derive \eqref{E:COMMUTEDEVOLUTIONSECONDFUNDRENORMALIZED},
we apply the operator
$\nabla \SigmatLie_{\mathscr{Z}}^{\vec{I}}$
to equation \eqref{E:EVOLUTIONSECONDFUNDRENORMALIZED}.
All terms appearing on RHS~\eqref{E:COMMUTEDEVOLUTIONSECONDFUNDRENORMALIZED}
are straightforward to deduce except for the ones generated
when derivatives fall on 
$\Ric^{\# }$, for which we use 
Lemma~\ref{L:RICCILIEDERIVATIVESCHEMATIC}.

To derive \eqref{E:COMMUTEDLAPSEPDERENORMALIZEDLOWERDERIVATIVES},
we apply the operator $\SigmatLie_{\mathscr{Z}}^{\vec{I}}$
to equation \eqref{E:LAPSEPDERENORMALIZEDLOWERDERIVATIVES}
and use \eqref{E:SCALARCURLIEDERIVATIVESCHEMATIC}
to decompose the derivatives of the scalar curvature source term.
\end{proof}

\section{Energy currents and approximate monotonicity in divergence form via divergence identities}
\label{S:ENERGYCURRENTSANDDIVIDENTITIES}
In this section, we define various energy current vectorfields
and use them to derive divergence identities verified by solutions.
These identities, when integrated over spacetime domains of the form $[0,t] \times \mathbb{S}^3$,
form the starting point for our $L^2$-type analysis of solutions.

\subsection{An auxiliary spacetime metric and a divergence identity connected to it}
In deriving various identities, we will find it convenient to 
rely on the following auxiliary spacetime metric.

\begin{definition}[\textbf{Auxiliary spacetime metric}]
\label{D:ENERGYESTIMATESPACETIMEMETRIC}
We define the Lorentzian metric $\EnergyEstimatesMetric$ 
relative to CMC-transported spatial coordinates
as follows:
\begin{align} \label{E:ENERGYESTIMATESPACETIMEMETRIC}
	\EnergyEstimatesMetric
	& := -dt^2 + \newg,
\end{align}
where $\newg$ is the time-rescaled spatial metric defined by \eqref{E:RESCALEDMETRIC}.
\end{definition}

\begin{definition}[\textbf{Covariant divergence relative to} $\EnergyEstimatesMetric$]
	\label{D:DIVERGENCERELATIVETOAUXSPACETIMEMETRIC}
	If $\Jfour$ is a spacetime vectorfield, then
	$\Divfour_{\EnergyEstimatesMetric} \Jfour$
	denotes its covariant divergence relative to 
	the Levi--Civita connection of $\EnergyEstimatesMetric$.
\end{definition}

\begin{lemma}[\textbf{Divergence formula involving the auxiliary spacetime metric}]
	\label{L:SIMPLEDIVFORMULA}
	Let $\Jfour$ be any spacetime vectorfield 
	and consider the decomposition
	$\Jfour = J^0 \partial_t + J$, where $J$ is $\Sigma_t-$tangent.  
	Then
	\begin{align} \label{E:SIMPLEDIVFORMULA}
		\Divfour_{\EnergyEstimatesMetric} \Jfour
		& = \partial_t J^0
			+ \Gdiv J
			+  \scale' \scale^{1/3} \newlapse J^0.
	\end{align}
\end{lemma}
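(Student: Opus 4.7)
The plan is to compute $\Divfour_{\EnergyEstimatesMetric} \Jfour$ directly from the standard coordinate formula for the Lorentzian divergence and then reduce the resulting trace to the stated expression using the evolution equation \eqref{E:EVOLUTIONMETRICRENORMALIZED} and the trace-free property of $\FreeNewSec$.

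First, I would observe that in CMC-transported spatial coordinates $(t,x^1,x^2,x^3)$, the components of $\EnergyEstimatesMetric$ are block-diagonal: $\EnergyEstimatesMetric_{00} = -1$, $\EnergyEstimatesMetric_{0a} = 0$, $\EnergyEstimatesMetric_{ab} = \newg_{ab}$. Hence $\det \EnergyEstimatesMetric = -\det \newg$, so $\sqrt{|\det \EnergyEstimatesMetric|} = \sqrt{\det \newg}$. Applying the well-known formula for the covariant divergence of a vectorfield with respect to a Lorentzian metric yields
\begin{align*}
\Divfour_{\EnergyEstimatesMetric} \Jfour
& = \frac{1}{\sqrt{\det \newg}} \partial_t \left( \sqrt{\det \newg}\, J^0 \right)
	+ \frac{1}{\sqrt{\det \newg}} \partial_a \left( \sqrt{\det \newg}\, J^a \right).
\end{align*}
The second term on the right-hand side is precisely $\Gdiv J$ (the divergence of the $\Sigma_t$-tangent part $J$ with respect to $\newg$), by the standard coordinate expression for the divergence of a vectorfield on a Riemannian manifold.

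Next, I would expand the temporal term using the product rule, obtaining
\[
\frac{1}{\sqrt{\det \newg}} \partial_t \left( \sqrt{\det \newg}\, J^0 \right)
= \partial_t J^0 + \frac{1}{2} (\newg^{-1})^{ab} (\partial_t \newg_{ab}) J^0,
\]
where I have used Jacobi's formula $\partial_t \ln \det \newg = (\newg^{-1})^{ab} \partial_t \newg_{ab}$. Since $\partial_t \newg_{ab}$ agrees with the components of $\SigmatLie_{\partial_t} \newg$ in these coordinates, I can substitute the evolution equation \eqref{E:EVOLUTIONMETRICRENORMALIZED}. Contracting with $(\newg^{-1})^{ab}$ and using that $\FreeNewSec$ is trace-free (so $\mytr (\newg \cdot \FreeNewSec) = \FreeNewSec_{\ a}^a = 0$) and that $\mytr \newg = 3$, the first two terms on the right-hand side of \eqref{E:EVOLUTIONMETRICRENORMALIZED} contribute nothing while the last term contributes $2 \scale^{1/3} \scale' \newlapse$. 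Thus
\[
\frac{1}{2} (\newg^{-1})^{ab} (\partial_t \newg_{ab}) J^0 = \scale' \scale^{1/3} \newlapse\, J^0,
\]
which combined with the spatial divergence term yields the claimed identity \eqref{E:SIMPLEDIVFORMULA}.

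There is essentially no obstacle here; the argument is a routine unpacking of the divergence formula, with the only substantive input being the trace-free property of $\FreeNewSec$, which kills the $\scale^{-1}$ and $\scale^{1/3}\newlapse$ contractions arising from \eqref{E:EVOLUTIONMETRICRENORMALIZED} and leaves only the mean-curvature-type term $\scale' \scale^{1/3} \newlapse J^0$. This cancellation is in fact natural from the geometric viewpoint: the coefficient of $J^0$ in \eqref{E:SIMPLEDIVFORMULA} is (up to a sign) the mean curvature of $\Sigma_t$ with respect to $\EnergyEstimatesMetric$, and our CMC-type gauge together with the definition \eqref{E:RESCALEDSECONDFUND} of the rescaled second fundamental form ensures that only the trace part of the extrinsic geometry survives.
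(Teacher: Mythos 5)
Your proof is correct and takes essentially the same approach as the paper: compute $\Divfour_{\EnergyEstimatesMetric}\Jfour$ via the coordinate divergence formula, observe $|\det\EnergyEstimatesMetric|=\det\newg$, apply Jacobi's formula for $\partial_t\ln\det\newg$, and substitute \eqref{E:EVOLUTIONMETRICRENORMALIZED} together with $\mbox{tr}\,\FreeNewSec=0$ and $\mbox{tr}_{\newg}\newg=3$. The only difference is cosmetic — you make the role of the trace-free property explicit, whereas the paper leaves it implicit in the contraction.
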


\begin{proof}
From \eqref{E:ENERGYESTIMATESPACETIMEMETRIC},
we deduce that 
$|\mbox{\upshape det} \EnergyEstimatesMetric| = \mbox{\upshape det} \newg$
(relative to CMC-transported spatial coordinates).
Since 
$\Divfour_{\EnergyEstimatesMetric} \Jfour =
\frac{1}{|\mbox{\upshape det} \EnergyEstimatesMetric|^{1/2}}
\partial_{\alpha}
\left(
	|\mbox{\upshape det} \EnergyEstimatesMetric|^{1/2} \Jfour^{\alpha}
\right)
$,
since
$\Gdiv J =
\frac{1}{(\mbox{\upshape det} \newg)^{1/2}}
\partial_a
\left(
	(\mbox{\upshape det} \newg)^{1/2} J^a
\right)
$,
and since $|\mbox{\upshape det} \EnergyEstimatesMetric| = \mbox{\upshape det} \newg$,
it follows that
$\Divfour_{\EnergyEstimatesMetric} \Jfour =
\frac{1}{(\mbox{\upshape det} \newg)^{1/2}}
\partial_{\alpha}
\left(
	(\mbox{\upshape det} \newg)^{1/2} \Jfour^{\alpha}
\right)
=
\frac{1}{2} 
\left(
	\partial_t \ln (\mbox{\upshape det} \newg)
\right)
J^0
+
\partial_t J^0
+
\Gdiv J
$.
To complete the proof of \eqref{E:SIMPLEDIVFORMULA},
we need only to note the standard determinant differentiation formula
$
\partial_t \ln (\mbox{\upshape det} \newg)
= \mbox{\upshape tr}_{\newg} \SigmatLie_{\partial_t} \newg
$
and to use equation \eqref{E:EVOLUTIONMETRICRENORMALIZED}
to substitute for $\SigmatLie_{\partial_t} \newg$,
thereby obtaining
$
\partial_t \ln (\mbox{\upshape det} \newg)
= 2 \scale' \scale^{1/3} \newlapse
$.
In obtaining this formula, we have also used
the identity $\mbox{\upshape tr}_{\newg} \newg = 3$.
\end{proof}

\subsection{Volume forms}
\label{SS:VOLUMEFORM}
In this subsection, we exhibit some basic properties of the
volume forms of the round metric $\StMet$ and
the rescaled spatial metric $\newg$.

\begin{definition}[\textbf{Volume forms}] 
	\label{D:VOLUMEFORM}
	Let $\StMet$ be the round metric on $\Sigma_t$ from Subsect.\ \ref{SS:EXTENDINGSIGMA0TANGENTTENSORFIELDSTOSIGMATTANGENTTENSORFIELDS}
	and let $\newg$ be the rescaled spatial metric defined by \eqref{E:RESCALEDMETRIC}.
	We denote the volume forms\footnote{Relative to arbitrary local coordinates
	$\lbrace x^a \rbrace_{a=1,2,3}$ on $\Sigma_t$,
	we have $d \Sttvol = \sqrt{\mbox{\upshape det} \StMet} dx^1 \wedge dx^2 \wedge dx^3$ 
	and
	$d \tvol = \sqrt{\mbox{\upshape det} \newg} dx^1 \wedge dx^2 \wedge dx^3$.} 
	associated to these metrics as follows:
	\begin{subequations}
	\begin{align} 
		& d \Sttvol,	
		\label{E:STMETVOLUMEFORM} \\
		& d \tvol. \label{E:VOLUMEFORM}
	\end{align}
	\end{subequations}
\end{definition}

\begin{lemma}[\textbf{Evolution equations for the volume forms}]
	\label{L:VOLUMEFORMTIMEDERIVATIVE}
	The volume forms
	$d \Sttvol$
	and $d \tvol$ from Def.\ \ref{D:VOLUMEFORM}
	verify the following evolution equations:
	\begin{align} \label{E:STMETVOLUMEFORMTIMEDERIVATIVE}
		\SigmatLie_{\partial_t} d \Sttvol
		& = 0,
			\\
		\SigmatLie_{\partial_t} d \tvol
		& = \scale' \scale^{1/3} \newlapse d \tvol.
		 \label{E:VOLUMEFORMTIMEDERIVATIVE}
	\end{align}
\end{lemma}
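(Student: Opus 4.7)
The plan for identity \eqref{E:STMETVOLUMEFORMTIMEDERIVATIVE} is to invoke the Lie-transport construction of $\StMet$ from Def.~\ref{D:LIETRANSPORTEDSPATIALFRAME}. By the general principle recorded in \eqref{E:COMPONENTSOFXIDONOTCHANGE}, the components of any Lie-transported $\Sigma_t$-tangent tensorfield are $t$-independent relative to the CMC-transported spatial coordinates; in particular, $\partial_t \StMet_{ij} = 0$, which (since $\StMet$ is $\Sigma_t$-tangent) is equivalent to $\SigmatLie_{\partial_t} \StMet = 0$. The desired conclusion then follows from the standard ``determinant'' identity $\SigmatLie_{\partial_t} d\Sttvol = \tfrac{1}{2} (\StMet^{-1})^{ab} (\SigmatLie_{\partial_t} \StMet)_{ab}\, d\Sttvol$.

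For \eqref{E:VOLUMEFORMTIMEDERIVATIVE}, I would use the analogous identity $\SigmatLie_{\partial_t} d\tvol = \tfrac{1}{2}\mytr(\SigmatLie_{\partial_t}\newg)\,d\tvol$ together with the metric evolution equation \eqref{E:EVOLUTIONMETRICRENORMALIZED}. Taking the $\newg$-trace of the RHS of \eqref{E:EVOLUTIONMETRICRENORMALIZED} term by term, the first two contributions (schematically $\newg\cdot\FreeNewSec$) each collapse to a multiple of $\mytrnoG\FreeNewSec$, which vanishes since $\FreeNewSec = \scale\FreeSecondFund$ is trace-free by \eqref{E:TRFREESECONDFUND}--\eqref{E:RESCALEDSECONDFUND}. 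The only surviving contribution comes from the pure-trace term, giving $\mytr(\tfrac{2}{3}\scale^{1/3}\scale'\newlapse\,\newg) = 2\scale^{1/3}\scale'\newlapse$ upon using $\mytr\newg = 3$. Dividing by $2$ and inserting into the determinant identity reproduces \eqref{E:VOLUMEFORMTIMEDERIVATIVE} exactly.

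There is no substantive obstacle: both claims are bookkeeping once one has the evolution (or transport) of the underlying metric and the determinant-differentiation formula. The only point worth a moment's care is that on a $\Sigma_t$-tangent object, the projected Lie derivative $\SigmatLie_{\partial_t}$ coincides with the coordinate derivative $\partial_t$ of components in CMC-transported coordinates; this is immediate from the characterization \eqref{E:XIISSIGMATTANGENT} combined with the definition \eqref{E:SIGMATPROJECT} of $\SigmatLie$, which ensures that the projector factors $\SigmatProject$ produce no cross-terms when all free indices are spatial. With this in hand, the derivation reduces to the two short computations sketched above.
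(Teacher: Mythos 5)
Your proof is correct and follows essentially the same route as the paper: the determinant-differentiation identity $\SigmatLie_{\partial_t} d\tvol = \tfrac{1}{2}\mytr(\SigmatLie_{\partial_t}\newg)\,d\tvol$, the metric evolution equation \eqref{E:EVOLUTIONMETRICRENORMALIZED}, $\mytr\newg = 3$, the trace-free property of $\FreeNewSec$, and $\SigmatLie_{\partial_t}\StMet = 0$ from the Lie-transport construction. The paper's proof is more terse but uses exactly these ingredients.
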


\begin{proof}
	To prove \eqref{E:VOLUMEFORMTIMEDERIVATIVE}, we use
	the following standard identity:
	$\SigmatLie_{\partial_t} d \tvol
	= \frac{1}{2} (\mbox{\upshape tr}_{\newg} \SigmatLie_{\partial_t} \newg) d \tvol
	$. 
	Also using \eqref{E:EVOLUTIONMETRICRENORMALIZED} and the fact that $\mbox{\upshape tr}_{\newg} \newg = 3$,
	we conclude \eqref{E:VOLUMEFORMTIMEDERIVATIVE}.
	\eqref{E:STMETVOLUMEFORMTIMEDERIVATIVE} follows similarly since
	$\SigmatLie_{\partial_t} \StMet = 0$ by construction.
\end{proof}

\subsection{Energy currents}
\label{SS:ENERGYCURRENTS}
In this subsection, we define energy currents,
which are vectorfields that we will use for bookkeeping
when integrating by parts.

\begin{definition}[\textbf{Energy Currents}]
\label{D:ENERGYCURRENTS}
Let $\vec{I}$ be a $\mathscr{Z}$-multi-index.
To each triplet 
$(\SigmatLie_{\mathscr{Z}}^{\vec{I}} \FreeNewSec,\SigmatLie_{\mathscr{Z}}^{\vec{I}} \newg,\mathscr{Z}^{\vec{I}} \newlapse)$,
we associate the metric energy current
$\Jfour_{(Metric)}
[\SigmatLie_{\mathscr{Z}}^{\vec{I}} \FreeNewSec,\SigmatLie_{\mathscr{Z}}^{\vec{I}} \newg,\nabla \mathscr{Z}^{\vec{I}} \newlapse]$,
which is the spacetime vectorfield with the following components:
\begin{subequations}
\begin{align}
	J_{(Metric)}^0[\SigmatLie_{\mathscr{Z}}^{\vec{I}} \FreeNewSec,\SigmatLie_{\mathscr{Z}}^{\vec{I}} \newg]
	& := |\SigmatLie_{\mathscr{Z}}^{\vec{I}} \FreeNewSec|_{\newg}^2
			+ \frac{1}{4} \scale^{4/3} |\nabla \SigmatLie_{\mathscr{Z}}^{\vec{I}} \newg|_{\newg}^2,
			\label{E:METRICCURRENT0} \\
	J_{(Metric)}
	[\SigmatLie_{\mathscr{Z}}^{\vec{I}} \FreeNewSec,\SigmatLie_{\mathscr{Z}}^{\vec{I}} \newg,\nabla \mathscr{Z}^{\vec{I}} \newlapse]
	& := 
		2 \scale^{5/3} \SigmatLie_{\mathscr{Z}}^{\vec{I}} \FreeNewSec \cdot \nabla^{\#} \mathscr{Z}^{\vec{I}} \newlapse
			\label{E:METRICCURRENTSPACE} \\
	& \ \
		+ \scale^{1/3} (1 + \scale^{4/3} \newlapse) 
			(\SigmatLie_{\mathscr{Z}}^{\vec{I}} \FreeNewSec)^{\#}
			\cdot 
			\nabla^{\#} \SigmatLie_{\mathscr{Z}}^{\vec{I}} \newg
				\notag \\
	& \ \
			+ 
			\scale^{1/3} (1 + \scale^{4/3} \newlapse) 
			\SigmatLie_{\mathscr{Z}}^{\vec{I}} \FreeNewSec 
			\cdot 
			\nabla^{\#} \mytr \SigmatLie_{\mathscr{Z}}^{\vec{I}} \newg
			\notag \\
	& \ \ 
		- \scale^{1/3} (1 + \scale^{4/3} \newlapse) 
			\SigmatLie_{\mathscr{Z}}^{\vec{I}} \FreeNewSec
			\cdot
			(\Gdiv \SigmatLie_{\mathscr{Z}}^{\vec{I}} \newg)^{\#} 
			\notag \\
	& \ \
			- \scale^{1/3} (1 + \scale^{4/3} \newlapse) 
			(\SigmatLie_{\mathscr{Z}}^{\vec{I}} \FreeNewSec
			\cdot\Gdiv \SigmatLie_{\mathscr{Z}}^{\vec{I}} \newg)^{\#}.
		\notag
\end{align}
\end{subequations}

Similarly, to each pair
$(\mathscr{Z}^{\vec{I}} \newtimescalar,\SigmatLie_{\mathscr{Z}}^{\vec{I}} \newspacescalar)$,
we associate the scalar field energy current
$\Jfour_{(Sf)}[\mathscr{Z}^{\vec{I}} \newtimescalar,\SigmatLie_{\mathscr{Z}}^{\vec{I}} \newspacescalar]$,
which is the spacetime vectorfield with the following components:
\begin{subequations}
\begin{align}
	J_{(Sf)}^0[\mathscr{Z}^{\vec{I}} \newtimescalar,\SigmatLie_{\mathscr{Z}}^{\vec{I}} \newspacescalar]
	& := (\mathscr{Z}^{\vec{I}} \newtimescalar)^2 
		+ \scale^{4/3} |\SigmatLie_{\mathscr{Z}}^{\vec{I}} \newspacescalar|_{\newg}^2,
			\label{E:SFCURRENT0} 
			\\
	J_{(Sf)}[\mathscr{Z}^{\vec{I}} \newtimescalar,\SigmatLie_{\mathscr{Z}}^{\vec{I}} \newspacescalar]
	& := 
		- 2 (1 + \scale^{4/3} \newlapse) \scale^{1/3} 
		(\mathscr{Z}^{\vec{I}} \newtimescalar) 
		(\SigmatLie_{\mathscr{Z}}^{\vec{I}} \newspacescalar)^{\#}.
		\label{E:SFCURRENTSPACE}
\end{align}
\end{subequations}

\end{definition}

\subsection{Divergence of the currents}
\label{SS:DIVOFCURRENTS}
In this subsection, we use the $\Lie_{\mathscr{Z}}^{\vec{I}}$-commuted Einstein-scalar field equations
to compute the divergence of the energy currents
defined in Def.\ \ref{D:ENERGYCURRENTS}.

\begin{lemma}[\textbf{Divergence of the metric energy current}]
\label{L:METRICCURRENTDIV}	
	Let $\Jfour_{(Metric)}[\cdots]$ be the energy current from Def.\ \ref{D:ENERGYCURRENTS}
	and let $\vec{I}$ be any $\mathscr{Z}$-multi-index. 
	For solutions to the $\Lie_{\mathscr{Z}}^{\vec{I}}$-commuted equations 
	of Prop.~\ref{P:ICOMMUTEDEQNS}, 
	the following identity holds:
	\begin{align} \label{E:DIVMETRICCURRENT}
		\Divfour_{\EnergyEstimatesMetric} 
		\Jfour_{(Metric)}[\SigmatLie_{\mathscr{Z}}^{\vec{I}} \FreeNewSec,\SigmatLie_{\mathscr{Z}}^{\vec{I}} \newg,\nabla \mathscr{Z}^{\vec{I}} 		
			\newlapse]
		& = \frac{1}{3} \scale' \scale^{1/3} |\nabla \SigmatLie_{\mathscr{Z}}^{\vec{I}} \newg|_{\newg}^2 
			\\
		& \ \
			+ \frac{1}{3} \scale' \scale^{5/3} 
				\langle
					\nabla \mathscr{Z}^{\vec{I}} \newlapse,
					\nabla \mytr \SigmatLie_{\mathscr{Z}}^{\vec{I}} \newg
				\rangle_{\newg}
			\notag \\
		& \ \
			- 2 \sqrt{\frac{2}{3}} \scale^{5/3}
				\SigmatLie_{\mathscr{Z}}^{\vec{I}} \newspacescalar
				\cdot 
				\nabla^{\#} \mathscr{Z}^{\vec{I}} \newlapse
			\notag \\
		& \ \
			- \sqrt{\frac{2}{3}} \scale^{1/3} (1 + \scale^{4/3} \newlapse) 
			  \nabla^{\#} \mytr \SigmatLie_{\mathscr{Z}}^{\vec{I}} \newg
			  \cdot \SigmatLie_{\mathscr{Z}}^{\vec{I}} \newspacescalar
			\notag \\
		& \ \ 
		+ 2 \sqrt{\frac{2}{3}} \scale^{1/3} (1 + \scale^{4/3} \newlapse) (\Gdiv \SigmatLie_{\mathscr{Z}}^{\vec{I}} \newg)^{\#} 
			\cdot \SigmatLie_{\mathscr{Z}}^{\vec{I}} \newspacescalar 
			\notag 
			\\
		& \ \ 
			+ \MetricCurrentBorder{\vec{I}}
			+ \MetricCurrentJunk{\vec{I}},
			\notag 
\end{align}
\begin{subequations}
\begin{align}
	\MetricCurrentBorder{\vec{I}}
		& := 
 				\left[
					(\SigmatLie_{\partial_t} \newg) \otimes \newg^{-1} 
				\right]
				\cdot 
				\SigmatLie_{\mathscr{Z}}^{\vec{I}} \FreeNewSec 
				\otimes
				\SigmatLie_{\mathscr{Z}}^{\vec{I}} \FreeNewSec
				+
				\left[
					\newg \otimes (\SigmatLie_{\partial_t} \newg^{-1})
				\right]
				\cdot 
				\SigmatLie_{\mathscr{Z}}^{\vec{I}} \FreeNewSec 
				\otimes
				\SigmatLie_{\mathscr{Z}}^{\vec{I}} \FreeNewSec
			\label{E:METRICCURRENTBORDER}  \\
		& \ \
			+ \frac{1}{4} \scale^{4/3} 
			\left[
				\SigmatLie_{\partial_t} \newg^{-1} \otimes \newg^{-1} \otimes \newg^{-1}
			\right]
			\cdot
			(\nabla 
			\otimes 
			\nabla)
		  \left\lbrace
		  \SigmatLie_{\mathscr{Z}}^{\vec{I}} \newg 
			\otimes
			\SigmatLie_{\mathscr{Z}}^{\vec{I}} \newg
			\right\rbrace
			\notag \\
		& \ \
			+ 
			\frac{1}{2} \scale^{4/3} 
			\left[
				\newg^{-1} \otimes \SigmatLie_{\partial_t} \newg^{-1} \otimes \newg^{-1}
			\right]
			\cdot
			(\nabla 
			\otimes 
			\nabla)
			\left\lbrace
			\SigmatLie_{\mathscr{Z}}^{\vec{I}} \newg 
			\otimes
			\SigmatLie_{\mathscr{Z}}^{\vec{I}} \newg
			\right\rbrace
			\notag \\
 		& \ \
				+
				2 \scale^{1/3} 
				\langle 
					\SigmatLie_{\mathscr{Z}}^{\vec{I}} \FreeNewSec, 
					\CommutedSecFunBorderInhom{\vec{I}} 
				\rangle_{\newg}
			\notag \\
		& \ \
 			+ \frac{1}{2} 
			\scale^{1/3} 
			\langle 
				\nabla \SigmatLie_{\mathscr{Z}}^{\vec{I}} \newg, 
				\CommutedGradMetBorderInhom{\vec{I}} 
			\rangle_{\newg}
			\notag \\
	& \ \
			+ \scale^{1/3} (1 + \scale^{4/3} \newlapse) 
				\nabla^{\#} \mytr \SigmatLie_{\mathscr{Z}}^{\vec{I}} \newg
				\cdot 
				\CommutedMomBorderInhomDown{\vec{I}}
			\notag \\
		& \ \ 
				- \scale^{1/3} (1 + \scale^{4/3} \newlapse) 
			  (\Gdiv \SigmatLie_{\mathscr{Z}}^{\vec{I}} \newg)^{\#}
			  \cdot 
			  \CommutedMomBorderInhomDown{\vec{I}}
			  \notag \\
		& \ \
				- 
				\scale^{1/3} (1 + \scale^{4/3} \newlapse) 
			  \Gdiv \SigmatLie_{\mathscr{Z}}^{\vec{I}} \newg
			  \cdot 
			  \CommutedMomBorderInhomUp{\vec{I}}
			  \notag \\
		&  \ \
			+ 2 \scale^{5/3} 
				\nabla^{\#} \mathscr{Z}^{\vec{I}} \newlapse
				\cdot
				\CommutedMomBorderInhomDown{\vec{I}},
			\notag \\
	\MetricCurrentJunk{\vec{I}}
		& := 
			\scale' \scale^{1/3}  
			\newlapse
			\left|
				\SigmatLie_{\mathscr{Z}}^{\vec{I}} \FreeNewSec
			\right|_{\newg}^2
			+ \frac{1}{4} \scale' \scale^{5/3}
			\newlapse 
			\left|
				\nabla \SigmatLie_{\mathscr{Z}}^{\vec{I}} \newg
			\right|_{\newg}^2
			\label{E:METRICCURRENTJUNK} \\
		& \ \
				+
				2 \scale^{1/3} 
				\langle 
					\SigmatLie_{\mathscr{Z}}^{\vec{I}} \FreeNewSec, 
					\CommutedSecFunJunkInhom{\vec{I}} 
				\rangle_{\newg}
				+
				2 \scale^{1/3} 
				\langle 
					\SigmatLie_{\mathscr{Z}}^{\vec{I}} \FreeNewSec, 
					\RicErrorInhom{\vec{I}}
				\rangle_{\newg}
					\notag \\
		&  \ \	
				+ \frac{1}{2} 
					\scale^{5/3} 
					\langle 
						\nabla \SigmatLie_{\mathscr{Z}}^{\vec{I}} \newg, \CommutedGradMetJunkInhom{\vec{I}} 
					\rangle_{\newg}
		\notag \\
		%& \ \
		%	+ 
		%	\frac{1}{2} 
		%	\scale^{5/3} 
		%	\langle 
		%		\nabla \SigmatLie_{\mathscr{Z}}^{\vec{I}} \newg, 
		%		\CommutedGradMetJunkInhom{\vec{I}} 
		%	\rangle_{\newg}
		%	\notag \\
		%& \ \
		%	+ \scale^{5/3} (1 + \scale^{4/3} \newlapse) 
		%		\nabla^{\#} \mytr \SigmatLie_{\mathscr{Z}}^{\vec{I}} \newg
		%		\cdot 
		%		\CommutedMomJunkInhomDown{\vec{I}}
		%	\notag \\
		%& \ \ 
		%		- \scale^{5/3} (1 + \scale^{4/3} \newlapse) 
		%	  (\Gdiv \SigmatLie_{\mathscr{Z}}^{\vec{I}} \newg)^{\#}
		%	  \cdot 
		%	  \CommutedMomJunkInhomDown{\vec{I}}
		%	  \notag \\
		% & \ \ - 
		%		\scale^{5/3} (1 + \scale^{4/3} \newlapse) 
		%	  \Gdiv \SigmatLie_{\mathscr{Z}}^{\vec{I}} \newg
		%	  \cdot 
		%	  \CommutedMomJunkInhomUp{\vec{I}}
		%	  \notag \\
		& \ \
		+ \scale^{5/3} \nabla \newlapse
			\cdot
			\left\lbrace
				(\SigmatLie_{\mathscr{Z}}^{\vec{I}} \FreeNewSec)^{\#}
				\cdot 
				\nabla^{\#} \SigmatLie_{\mathscr{Z}}^{\vec{I}} \newg
			\right\rbrace
		+ \scale^{5/3} \nabla \newlapse \cdot \SigmatLie_{\mathscr{Z}}^{\vec{I}} \FreeNewSec 
			\cdot \nabla^{\#} \mytr \SigmatLie_{\mathscr{Z}}^{\vec{I}} \newg
			\notag \\
		& \ \ 
			- \scale^{5/3} \nabla \newlapse 
				\cdot \SigmatLie_{\mathscr{Z}}^{\vec{I}} \FreeNewSec
				\cdot (\Gdiv \SigmatLie_{\mathscr{Z}}^{\vec{I}} \newg)^{\#} 
			- \scale^{5/3} 
				\nabla^{\#} \newlapse 
				\cdot 
				\SigmatLie_{\mathscr{Z}}^{\vec{I}} \FreeNewSec
				\cdot
				\Gdiv \SigmatLie_{\mathscr{Z}}^{\vec{I}} \newg.
				\notag
	\end{align}
	\end{subequations}
\end{lemma}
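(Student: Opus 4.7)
The plan is to apply Lemma~\ref{L:SIMPLEDIVFORMULA} to rewrite
\begin{align*}
\Divfour_{\EnergyEstimatesMetric} \Jfour_{(Metric)}
 = \partial_t J^0_{(Metric)} + \Gdiv J_{(Metric)} + \scale' \scale^{1/3} \newlapse J^0_{(Metric)},
\end{align*}
and then to compute each of the three pieces by direct calculation using the $\Lie_{\mathscr{Z}}^{\vec{I}}$-commuted equations of Prop.~\ref{P:ICOMMUTEDEQNS}. The role of the spatial current \eqref{E:METRICCURRENTSPACE} is to be exactly the boundary term for an integration-by-parts that converts $\partial_t J^0_{(Metric)}$ into a manageable expression; the role of the commuted momentum constraints \eqref{E:COMMUTEDRENORMALIZEDMOMENTUM} and \eqref{E:ALTERNATECOMMUTEDRENORMALIZEDMOMENTUM} is to absorb the resulting divergences of $\SigmatLie_{\mathscr{Z}}^{\vec{I}} \FreeNewSec$ into the cross-terms with $\newspacescalar$ that appear on the right-hand side of \eqref{E:DIVMETRICCURRENT}.

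For $\partial_t J^0_{(Metric)}$, I would apply the Leibniz rule to \eqref{E:METRICCURRENT0} and split the output into three groups. First, the time derivative of the scale-factor coefficient $\frac{1}{4}\scale^{4/3}$ contributes $\frac{1}{3}\scale'\scale^{1/3}|\nabla \SigmatLie_{\mathscr{Z}}^{\vec{I}} \newg|_{\newg}^2$, i.e.\ the main (signed) monotonicity term on RHS~\eqref{E:DIVMETRICCURRENT}. Second, the time derivatives of $\newg$ and $\newg^{-1}$ inside the two $\langle\cdot,\cdot\rangle_\newg$ inner products produce precisely the first four lines of $\MetricCurrentBorder{\vec{I}}$ in \eqref{E:METRICCURRENTBORDER}. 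Third, the time derivatives of $\SigmatLie_{\mathscr{Z}}^{\vec{I}} \FreeNewSec$ and $\nabla \SigmatLie_{\mathscr{Z}}^{\vec{I}} \newg$ themselves, handled with \eqref{E:COMMUTEDEVOLUTIONSECONDFUNDRENORMALIZED} and \eqref{E:GRADCOMMUTEDEVOLUTIONMETRICRENORMALIZED}, produce leading terms schematically of the form $\FreeNewSec \cdot (\GLap\newg, \nabla^2\mytr\newg, \nabla^{\#}\Gdiv\newg, \nabla^2\newlapse)$ together with the cross-term $\nabla\newg \cdot \nabla\FreeNewSec$; the remaining contributions here feed directly into $\CommutedSecFunBorderInhom{\vec{I}}, \CommutedSecFunJunkInhom{\vec{I}}, \CommutedGradMetBorderInhom{\vec{I}}, \CommutedGradMetJunkInhom{\vec{I}}$ and $\RicErrorInhom{\vec{I}}$, which is why those terms appear inside $\MetricCurrentBorder{\vec{I}}$ and $\MetricCurrentJunk{\vec{I}}$ with their stated coefficients. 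A key point, which explains why there is no monotonicity term of the form $\scale'\scale^{1/3}|\SigmatLie_{\mathscr{Z}}^{\vec{I}}\FreeNewSec|_{\newg}^2$ on the RHS, is that the pure-trace source terms $\frac{1}{3}(\scale')^2\scale^{1/3}(\mathscr{Z}^{\vec{I}}\newlapse)\ID$ (and similar pieces in the uncommuted evolution equation) contract to zero against $\SigmatLie_{\mathscr{Z}}^{\vec{I}}\FreeNewSec$, which remains trace-free because the pure trace commutes with $\SigmatLie_{\mathscr{Z}}$.

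For $\Gdiv J_{(Metric)}$, I would expand each term of \eqref{E:METRICCURRENTSPACE} using the Leibniz rule for $\nabla$. The resulting second-derivative pieces $\FreeNewSec \cdot (\GLap\newg,\nabla^2\mytr\newg,\nabla^{\#}\Gdiv\newg,\nabla^2\newlapse)$ are designed to cancel exactly against the same terms produced in the third group above, leaving behind only (i) mixed products $\nabla\FreeNewSec \cdot \nabla\newg$ and $\nabla\FreeNewSec \cdot \nabla\mytr\newg$, which after another discrete integration-by-parts inside the frame-component expansion produce the cross-term $\frac{1}{3}\scale'\scale^{5/3}\langle\nabla\mathscr{Z}^{\vec{I}}\newlapse,\nabla\mytr\SigmatLie_{\mathscr{Z}}^{\vec{I}}\newg\rangle_{\newg}$ (coming from the derivative of the scale-factor coefficient that is forced by the $\EnergyEstimatesMetric$-divergence); (ii) terms proportional to $\Gdiv \SigmatLie_{\mathscr{Z}}^{\vec{I}}\FreeNewSec$ and $\Gdiv^{\#}\SigmatLie_{\mathscr{Z}}^{\vec{I}}\FreeNewSec$; and (iii) manifestly lower-order pieces involving $\nabla\newlapse \cdot \FreeNewSec \cdot \nabla\newg$, which are precisely the last four lines of $\MetricCurrentJunk{\vec{I}}$ in \eqref{E:METRICCURRENTJUNK}. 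I would then apply the commuted momentum constraints \eqref{E:COMMUTEDRENORMALIZEDMOMENTUM} and \eqref{E:ALTERNATECOMMUTEDRENORMALIZEDMOMENTUM} to the terms in (ii), substituting $\Gdiv\SigmatLie_{\mathscr{Z}}^{\vec{I}}\FreeNewSec = -\sqrt{2/3}\,\SigmatLie_{\mathscr{Z}}^{\vec{I}}\newspacescalar + \CommutedMomBorderInhomDown{\vec{I}}$ and the analogous identity for the sharp divergence; this produces the three $\newspacescalar$-cross-terms on RHS~\eqref{E:DIVMETRICCURRENT} while depositing $\CommutedMomBorderInhomDown{\vec{I}}$ and $\CommutedMomBorderInhomUp{\vec{I}}$ into the last four lines of $\MetricCurrentBorder{\vec{I}}$. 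Finally, the contribution $\scale'\scale^{1/3}\newlapse J^0_{(Metric)}$ from Lemma~\ref{L:SIMPLEDIVFORMULA} accounts for the first line of $\MetricCurrentJunk{\vec{I}}$.

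The main obstacle is simply the bookkeeping: the computation involves a large number of tensorial products, and verifying that the integration-by-parts cancellations really do eliminate every principal $\FreeNewSec\cdot\nabla^2$-term and that the surviving coefficients match the stated ones in \eqref{E:DIVMETRICCURRENT}-\eqref{E:METRICCURRENTJUNK} requires scrupulous attention to the placement of index contractions and to the distinction between $\Gdiv$ and $\Gdiv^{\#}$. A secondary concern is that one must use \eqref{E:NABLALIEDETAILEDCOMMUTATOR} to handle the commutator $[\SigmatLie_{\partial_t},\nabla]\SigmatLie_{\mathscr{Z}}^{\vec{I}}\newg$ that arises when rewriting $\SigmatLie_{\partial_t}\nabla\SigmatLie_{\mathscr{Z}}^{\vec{I}}\newg$ via \eqref{E:GRADCOMMUTEDEVOLUTIONMETRICRENORMALIZED}; this is precisely the origin of the last product on RHS~\eqref{E:ICOMMUTEDGRADMETRICBORDERTERMS} and must be tracked so that it lands inside $\MetricCurrentBorder{\vec{I}}$ rather than corrupting one of the main terms.
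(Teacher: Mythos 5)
Your proposal follows essentially the same route as the paper's proof: Lemma~\ref{L:SIMPLEDIVFORMULA} to decompose $\Divfour_{\EnergyEstimatesMetric}\Jfour_{(Metric)}$, the Leibniz rule on $\partial_t J^0_{(Metric)}$ and $\Gdiv J_{(Metric)}$, substitution via the commuted evolution equations \eqref{E:COMMUTEDEVOLUTIONSECONDFUNDRENORMALIZED} and \eqref{E:GRADCOMMUTEDEVOLUTIONMETRICRENORMALIZED}, and conversion of the resulting $\Gdiv \SigmatLie_{\mathscr{Z}}^{\vec{I}}\FreeNewSec$ terms via the commuted momentum constraints \eqref{E:COMMUTEDRENORMALIZEDMOMENTUM}--\eqref{E:ALTERNATECOMMUTEDRENORMALIZEDMOMENTUM}, with the key observation that $\ID\cdot\SigmatLie_{\mathscr{Z}}^{\vec{I}}\FreeNewSec=0$ by trace-freeness. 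The plan is sound, but two of your attributions are off and would puzzle you mid-computation. First, the cross-term $\frac{1}{3} \scale' \scale^{5/3} \langle \nabla \mathscr{Z}^{\vec{I}} \newlapse, \nabla \mytr \SigmatLie_{\mathscr{Z}}^{\vec{I}} \newg \rangle_{\newg}$ does \emph{not} come from an integration-by-parts of the mixed products $\nabla\FreeNewSec\cdot\nabla\newg$ (those cancel exactly between $\partial_t J^0_{(Metric)}$ and $\Gdiv J_{(Metric)}$---that is precisely the ``complete elimination'' of first-derivative products), nor from a scale-factor derivative forced by the $\EnergyEstimatesMetric$-divergence; it arises directly by contracting $\frac{1}{2}\scale^{4/3}\nabla \SigmatLie_{\mathscr{Z}}^{\vec{I}} \newg$ against the term $\frac{2}{3}\scale'\scale^{1/3}(\nabla\mathscr{Z}^{\vec{I}}\newlapse)\otimes\newg$ on RHS~\eqref{E:GRADCOMMUTEDEVOLUTIONMETRICRENORMALIZED}, the trace $\mytr$ coming from the $\otimes\newg$ tensor structure. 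Second, the trace-free cancellation you cite kills the bilinear $\frac{1}{3}(\scale')^2\scale^{1/3}(\mathscr{Z}^{\vec{I}}\newlapse)\,\ID\cdot\SigmatLie_{\mathscr{Z}}^{\vec{I}}\FreeNewSec$, which is a $\newlapse\cdot\FreeNewSec$ term; it does not explain the absence of a putative $\scale'\scale^{1/3}|\SigmatLie_{\mathscr{Z}}^{\vec{I}}\FreeNewSec|_{\newg}^2$ monotonicity term---that term simply never appears because $|\SigmatLie_{\mathscr{Z}}^{\vec{I}}\FreeNewSec|_{\newg}^2$ carries no scale-factor coefficient in \eqref{E:METRICCURRENT0} and RHS~\eqref{E:COMMUTEDEVOLUTIONSECONDFUNDRENORMALIZED} has no leading-order $\scale'\scale^{1/3}\SigmatLie_{\mathscr{Z}}^{\vec{I}}\FreeNewSec$ source. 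Neither slip breaks the approach, but both would need to be corrected to carry out the bookkeeping you rightly identify as the main burden.
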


\begin{proof}
	We start by using the identity \eqref{E:SIMPLEDIVFORMULA},
	where $J_{(Metric)}^0[\cdots]$ and $J_{(Metric)}[\cdots]$ 
	are defined by
	\eqref{E:METRICCURRENT0}
	and \eqref{E:METRICCURRENTSPACE}.
	The product $\scale' \scale^{1/3} \newlapse J^0$
	on RHS~\eqref{E:SIMPLEDIVFORMULA}
	leads to the presence of
	the first two products on RHS~\eqref{E:METRICCURRENTJUNK}.
	We now consider the terms in
	$\Gdiv J_{(Metric)}$ when the operator $\nabla$ 
	falls on the factors of $\newlapse$
	on RHS~\eqref{E:METRICCURRENTSPACE}
	(these factors are multiplied by $\scale^{5/3}$).
	We place all four of these error term products on RHS~\eqref{E:METRICCURRENTJUNK}.
	We now consider the products 
	in $\partial_t J_{(Metric)}^0$
	when the
	operator $\partial_t = \SigmatLie_{\partial_t}$
	falls on the
	factors of $\newg$ and $\newg^{-1}$
	inherent in the definition of the norms $|\cdot|_{\newg}$
	on RHS~\eqref{E:METRICCURRENT0}.
	We place these products on RHS~\eqref{E:METRICCURRENTBORDER}
	(they are the first four).
	Moreover, when the
	operator $\partial_t$
	falls on the factor $\scale^{4/3}$
	in the second product on RHS~\eqref{E:METRICCURRENT0},
	we place the resulting product as the first one on RHS~\eqref{E:DIVMETRICCURRENT}.
	
	We now consider the terms in
	$\partial_t J_{(Metric)}^0$
	and $\Gdiv J$
	when $\partial_t = \SigmatLie_{\partial_t}$
	and $\nabla$ fall on the top-order factors
	$\SigmatLie_{\mathscr{Z}}^{\vec{I}} \FreeNewSec$,
	$\SigmatLie_{\mathscr{Z}}^{\vec{I}} \newg$,
	and
	$\nabla \mathscr{Z}^{\vec{I}} \newlapse$.
	When
	$\SigmatLie_{\partial_t}$
	falls on
	$\SigmatLie_{\mathscr{Z}}^{\vec{I}} \FreeNewSec$
	(respectively $\SigmatLie_{\mathscr{Z}}^{\vec{I}} \newg$),
	we use
	\eqref{E:COMMUTEDEVOLUTIONSECONDFUNDRENORMALIZED} 
	(respectively \eqref{E:GRADCOMMUTEDEVOLUTIONMETRICRENORMALIZED})
	for substitution.
	We note the following critically important cancellation:
	since $\mbox{\upshape tr} \SigmatLie_{\mathscr{Z}}^{\vec{I}} \FreeNewSec = 0$, 
	we have 
	$\frac{1}{3} (\scale')^2 \scale^{1/3} (\mathscr{Z}^{\vec{I}} \newlapse) \ID \cdot \SigmatLie_{\mathscr{Z}}^{\vec{I}} \FreeNewSec = 0$;
	this product is generated by the second term on the second line of RHS~\eqref{E:COMMUTEDEVOLUTIONSECONDFUNDRENORMALIZED}.
	Similarly, when $\Gdiv$ falls on any of the factors
	$\SigmatLie_{\mathscr{Z}}^{\vec{I}} \FreeNewSec$
	on RHS~\eqref{E:METRICCURRENTSPACE},
	we use
	\eqref{E:COMMUTEDRENORMALIZEDMOMENTUM}-\eqref{E:ALTERNATECOMMUTEDRENORMALIZEDMOMENTUM}
	for substitution.
	These substitutions lead to the 
	presence of the terms on lines two through five of RHS~\eqref{E:DIVMETRICCURRENT}
	as well as the products 
	on RHS~\eqref{E:METRICCURRENTBORDER} and RHS~\eqref{E:METRICCURRENTJUNK}
	involving inhomogeneous terms 
	such as $\TopCommutedGradMetBorderInhom{\vec{I}}$.
	Moreover, we stress that this procedure allows us 
	to \emph{completely eliminate} all products
	involving the first derivatives of
	$\SigmatLie_{\mathscr{Z}}^{\vec{I}} \FreeNewSec$,
	$\nabla \SigmatLie_{\mathscr{Z}}^{\vec{I}} \newg$,
	or
	$\nabla \mathscr{Z}^{\vec{I}} \newlapse$.
\end{proof}

The next lemma lies at the heart of our energy estimates.
Its proof relies on the observation of \emph{special cancellations}
that allow us to replace damaging error terms with favorable ones.
In particular, the products
$
\frac{4}{3} \scale' \scale^{1/3} |\SigmatLie_{\mathscr{Z}}^{\vec{I}} \newspacescalar|_{\newg}^2 
+ 
\scale' \scale^3  |\nabla \mathscr{Z}^{\vec{I}} \newlapse|_{\newg}^2
$
on RHS~\eqref{E:DIVSFCURRENT} will yield, in our energy estimates,
spacetime integrals that provide control over the lapse and its derivatives
near the singularity; 
see the third and fourth terms on RHS~\eqref{E:FUNDAMENTALENERGYINTEGRALINEQUALITY}.
For this, it is important that
$\scale'(t) \approx - 1$ for $t$ near $\TCrunch$
(see Lemma~\ref{L:ANALYSISOFFRIEDMANN}).

\begin{lemma}[\textbf{Approximate monotonicity in divergence form: The divergence of the scalar field-lapse energy current}]
\label{L:SFCURRENTDIV}
	Let $\Jfour_{(Sf)}[\cdots]$ be the energy current from Def.\ \ref{D:ENERGYCURRENTS}
	and let $\vec{I}$ be any $\mathscr{Z}$-multi-index. 
	For solutions to the $\Lie_{\mathscr{Z}}^{\vec{I}}$-commuted equations 
	of Prop.~\ref{P:ICOMMUTEDEQNS}, 
	the following identity holds:
	\begin{align} \label{E:DIVSFCURRENT}
		\Divfour_{\EnergyEstimatesMetric} 
		& \Jfour_{(Sf)}[\mathscr{Z}^{\vec{I}} \newtimescalar,\SigmatLie_{\mathscr{Z}}^{\vec{I}} \newspacescalar]
		+
		\scale' \scale^3 \Gdiv 
			\left\lbrace 
				(\nabla^{\#} \mathscr{Z}^{\vec{I}} \newlapse) \mathscr{Z}^{\vec{I}} \newlapse
			\right\rbrace
				\\
		& = \frac{4}{3} \scale' \scale^{1/3} |\SigmatLie_{\mathscr{Z}}^{\vec{I}} \newspacescalar|_{\newg}^2 
				+ 
				\scale' \scale^3  |\nabla \mathscr{Z}^{\vec{I}} \newlapse|_{\newg}^2
				\notag \\
		& \ \
			+
	\left\lbrace
				(\scale')^3 \scale^{5/3}
				+ \frac{2}{3} \scale' \scale^3
				+ \scale' \scale^{5/3} |\FreeNewSec|_{\newg}^2
				+ 2 \sqrt{\frac{2}{3}} \scale' \scale^{5/3} \newtimescalar
				+ \scale' \scale^{5/3} \newtimescalar^2 
	\right\rbrace
	\left|
		\mathscr{Z}^{\vec{I}} \newlapse
	\right|^2
			 \notag \\
		& \ \	
			%	- 2 \sqrt{\frac{2}{3}} \scale' \scale^{1/3} (\mathscr{Z}^{\vec{I}} \newtimescalar) 
			%		\mathscr{Z}^{\vec{I}} \newlapse
				+ 2 \sqrt{\frac{2}{3}} \scale^{5/3} (\SigmatLie_{\mathscr{Z}}^{\vec{I}} \newspacescalar)^{\#} 
				\cdot \nabla \mathscr{Z}^{\vec{I}} \newlapse
				\notag  \\
		& \ \ 
			+ \SfCurrentBorder{\vec{I}}
			+ \SfCurrentJunk{\vec{I}},
			\notag 
	\end{align}
	\begin{subequations}
	\begin{align}
	\SfCurrentBorder{\vec{I}}
		& := 
				2 \scale^{1/3} (\mathscr{Z}^{\vec{I}} \newtimescalar) \CommutedTimeSfBorderInhom{\vec{I}}
				+ 2 \scale^{5/3} (\SigmatLie_{\mathscr{Z}}^{\vec{I}} \newspacescalar)^{\#} \cdot \CommutedSpaceSfBorderInhom{\vec{I}}
				\label{E:SFCURRENTBORDER}  \\
		& \ \
			+ \scale' \scale^{1/3} (\mathscr{Z}^{\vec{I}} \newlapse) \CommutedLapseHighBorderInhom{\vec{I}}
			 \notag \\
		& \ \
				+
				\scale^{4/3} \SigmatLie_{\partial_t} \newg^{-1} 
				\cdot 
				\SigmatLie_{\mathscr{Z}}^{\vec{I}} \newspacescalar 
				\otimes
				\SigmatLie_{\mathscr{Z}}^{\vec{I}} \newspacescalar,
				\notag \\
	\SfCurrentJunk{\vec{I}}
		& := 
			\scale' \scale^{1/3} \newlapse (\mathscr{Z}^{\vec{I}} \newtimescalar)^2 
			+ \scale' \scale^{5/3} \newlapse |\SigmatLie_{\mathscr{Z}}^{\vec{I}} \newspacescalar|_{\newg}^2
			\label{E:SFCURRENTJUNK} \\
		& \ \
			- 2 \scale^{5/3} 
				(\mathscr{Z}^{\vec{I}} \newtimescalar) 
				(\SigmatLie_{\mathscr{Z}}^{\vec{I}} \newspacescalar)^{\#}	
				\cdot \nabla \newlapse
				\notag \\
		& \ \
			+ 2 \scale^{1/3} (\mathscr{Z}^{\vec{I}} \newtimescalar) \CommutedTimeSfJunkInhom{\vec{I}}
			+ 2 \scale^{5/3} (\SigmatLie_{\mathscr{Z}}^{\vec{I}} \newspacescalar)^{\#} \cdot \CommutedSpaceSfJunkInhom{\vec{I}}
			+ \scale' \scale^{5/3} (\mathscr{Z}^{\vec{I}} \newlapse) \CommutedLapseHighJunkInhom{\vec{I}}.
			\notag 
		%& \ \
		%	+ \scale' \scale^{1/3} \newlapse 
		%		\left\lbrace
		%			(\mathscr{Z}^{\vec{I}} \newtimescalar)^2 
		%			+ 
		%			\scale^{4/3} |\SigmatLie_{\mathscr{Z}}^{\vec{I}} \newspacescalar|_{\newg}^2
		%		\right\rbrace.
		%		\notag
%		& \ \
%			+ \scale' (\mathscr{Z}^{\vec{I}} \newlapse) \TopCommutedLapseHighJunkInhom{\vec{I}}
%			+ \scale' (\mathscr{Z}^{\vec{I}} \newlapse) \LowCommutedLapseHighJunkInhom{\vec{I}}
%			\notag
	\end{align}
	\end{subequations}
\end{lemma}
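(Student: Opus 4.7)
The plan is to start from Lemma~\ref{L:SIMPLEDIVFORMULA}, which gives
$\Divfour_{\EnergyEstimatesMetric} \Jfour_{(Sf)} = \partial_t J_{(Sf)}^0 + \Gdiv J_{(Sf)} + \scale' \scale^{1/3} \newlapse J_{(Sf)}^0$.
The last term immediately contributes the first two summands of $\SfCurrentJunk{\vec{I}}$. Next, I would differentiate $J_{(Sf)}^0$ using the Leibniz rule. When $\partial_t$ lands on $\scale^{4/3}$, we pick up $\frac{4}{3}\scale'\scale^{1/3}|\SigmatLie_{\mathscr{Z}}^{\vec{I}} \newspacescalar|_{\newg}^2$, which is the first main ``monotone'' term on RHS~\eqref{E:DIVSFCURRENT}. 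When $\partial_t$ lands on the inverse metric inside the norm $|\cdot|_{\newg}^2$, we obtain the last contribution to $\SfCurrentBorder{\vec{I}}$. When $\partial_t$ lands on $\mathscr{Z}^{\vec{I}}\newtimescalar$ or $\SigmatLie_{\mathscr{Z}}^{\vec{I}}\newspacescalar$, I would substitute via the commuted evolution equations \eqref{E:COMMUTEDWAVEEQUATIONRENORMALIZED} and \eqref{E:COMMUTEDSPACEDERIVATIVESWAVEEQUATIONRENORMALIZED}.

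For $\Gdiv J_{(Sf)}$, I would distribute $\nabla$ by Leibniz on the product
$-2(1+\scale^{4/3}\newlapse)\scale^{1/3}(\mathscr{Z}^{\vec{I}}\newtimescalar)(\SigmatLie_{\mathscr{Z}}^{\vec{I}}\newspacescalar)^{\#}$. When $\nabla$ hits $\scale^{4/3}\newlapse$, I get the third junk summand $-2\scale^{5/3}(\mathscr{Z}^{\vec{I}}\newtimescalar)(\SigmatLie_{\mathscr{Z}}^{\vec{I}}\newspacescalar)^{\#}\cdot\nabla\newlapse$. The first critical cancellation now occurs: the piece in which $\nabla$ hits $(\SigmatLie_{\mathscr{Z}}^{\vec{I}}\newspacescalar)^{\#}$ produces $-2(1+\scale^{4/3}\newlapse)\scale^{1/3}(\mathscr{Z}^{\vec{I}}\newtimescalar)\Gdiv \SigmatLie_{\mathscr{Z}}^{\vec{I}}\newspacescalar$, which exactly cancels the contribution that comes from $\partial_t$-differentiating $(\mathscr{Z}^{\vec{I}}\newtimescalar)^2$ after one uses \eqref{E:COMMUTEDWAVEEQUATIONRENORMALIZED}. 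Similarly, the piece of $\Gdiv J_{(Sf)}$ in which $\nabla$ hits $\mathscr{Z}^{\vec{I}}\newtimescalar$ cancels the top-order piece coming from the scalar field gradient evolution equation \eqref{E:COMMUTEDSPACEDERIVATIVESWAVEEQUATIONRENORMALIZED} after contraction with $2\scale^{4/3}(\SigmatLie_{\mathscr{Z}}^{\vec{I}}\newspacescalar)^{\#}$. What survives from these two substitutions are the $\sqrt{2/3}\scale'\scale^{1/3}\mathscr{Z}^{\vec{I}}\newlapse$ and $\sqrt{2/3}\scale^{1/3}\nabla\mathscr{Z}^{\vec{I}}\newlapse$ contributions, together with the border/junk inhomogeneities; the latter assemble into the first two lines of $\SfCurrentBorder{\vec{I}}$ and the last two summands of $\SfCurrentJunk{\vec{I}}$, while the $\nabla\mathscr{Z}^{\vec{I}}\newlapse$-piece contributes the fourth main term $2\sqrt{2/3}\scale^{5/3}(\SigmatLie_{\mathscr{Z}}^{\vec{I}}\newspacescalar)^{\#}\cdot\nabla\mathscr{Z}^{\vec{I}}\newlapse$ on RHS~\eqref{E:DIVSFCURRENT}. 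All that remains uncancelled at this point is the troublesome cross term $-2\sqrt{2/3}\scale'\scale^{1/3}(\mathscr{Z}^{\vec{I}}\newtimescalar)(\mathscr{Z}^{\vec{I}}\newlapse)$.

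This is where the extra divergence $\scale'\scale^3\Gdiv\{(\nabla^{\#}\mathscr{Z}^{\vec{I}}\newlapse)\mathscr{Z}^{\vec{I}}\newlapse\}$ on the LHS of \eqref{E:DIVSFCURRENT} does its job. Expanding by Leibniz gives $\scale'\scale^3|\nabla\mathscr{Z}^{\vec{I}}\newlapse|_{\newg}^2 + \scale'\scale^3(\mathscr{Z}^{\vec{I}}\newlapse)\GLap\mathscr{Z}^{\vec{I}}\newlapse$; the first summand is the second main monotone term on the RHS. For the Laplacian term, I would multiply the commuted elliptic lapse equation \eqref{E:COMMUTEDLAPSEPDERENORMALIZEDHIGHERDERIVATIVES} through by $\scale'\scale^{1/3}\mathscr{Z}^{\vec{I}}\newlapse$, so that
\begin{align*}
\scale'\scale^3(\mathscr{Z}^{\vec{I}}\newlapse)\GLap\mathscr{Z}^{\vec{I}}\newlapse
= \scale'\scale^{5/3} f (\mathscr{Z}^{\vec{I}}\newlapse)^2
+ 2\sqrt{\tfrac{2}{3}}\scale'\scale^{1/3}(\mathscr{Z}^{\vec{I}}\newlapse)(\mathscr{Z}^{\vec{I}}\newtimescalar)
+ \scale'\scale^{1/3}(\mathscr{Z}^{\vec{I}}\newlapse)\CommutedLapseHighBorderInhom{\vec{I}}
+ \scale'\scale^{5/3}(\mathscr{Z}^{\vec{I}}\newlapse)\CommutedLapseHighJunkInhom{\vec{I}},
\end{align*}
where I have used $\scale^{8/3}\GLap = \mathscr{L}+\scale^{4/3}f$ from \eqref{E:HIGHORDERELLIPTICOPERATOR}. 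By the definition \eqref{E:ERRORTERMHIGHORDERELLIPTICOPERATOR} of $f$, the first summand is exactly the bracketed third-line expression on the RHS of \eqref{E:DIVSFCURRENT}; the second summand cancels the leftover cross term from the previous paragraph; the third and fourth match the third line of $\SfCurrentBorder{\vec{I}}$ and the final contribution to $\SfCurrentJunk{\vec{I}}$, respectively.

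The main obstacle is the bookkeeping surrounding the twofold cancellation: first the integration-by-parts cancellation between $\Gdiv J_{(Sf)}$ and the time-derivative terms produced by the two pieces of the scalar field wave equation (which demands that the coefficient $-2(1+\scale^{4/3}\newlapse)\scale^{1/3}$ in $J_{(Sf)}$ precisely match the propagation coefficients in \eqref{E:COMMUTEDWAVEEQUATIONRENORMALIZED}-\eqref{E:COMMUTEDSPACEDERIVATIVESWAVEEQUATIONRENORMALIZED}), and second the absorption of the residual cross term $(\mathscr{Z}^{\vec{I}}\newtimescalar)(\mathscr{Z}^{\vec{I}}\newlapse)$ via the elliptic equation \eqref{E:COMMUTEDLAPSEPDERENORMALIZEDHIGHERDERIVATIVES}, which forces one to add the auxiliary pure-divergence $\scale'\scale^3\Gdiv\{(\nabla^{\#}\mathscr{Z}^{\vec{I}}\newlapse)\mathscr{Z}^{\vec{I}}\newlapse\}$ to the LHS with the correct weight. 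Verifying that every numerical coefficient lines up — in particular the matching of the $2\sqrt{2/3}$ factors coming from the scalar field evolution against the inhomogeneity $2\sqrt{2/3}\newtimescalar$ in the lapse equation — is the delicate step that makes the $L^2$-type approximate monotonicity on RHS~\eqref{E:DIVSFCURRENT} manifest.
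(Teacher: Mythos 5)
Your proposal is correct and follows the same strategy as the paper's own proof, just with the bookkeeping spelled out in greater detail than the paper's sketch. The core mechanism is identical in both: the leading spatial-divergence pieces of $\Gdiv J_{(Sf)}$ cancel against the top-order terms produced when the time derivative of $J^0_{(Sf)}$ is substituted via the commuted wave equations \eqref{E:COMMUTEDWAVEEQUATIONRENORMALIZED}--\eqref{E:COMMUTEDSPACEDERIVATIVESWAVEEQUATIONRENORMALIZED}; the remaining cross term $-2\sqrt{2/3}\,\scale'\scale^{1/3}(\mathscr{Z}^{\vec{I}}\newtimescalar)(\mathscr{Z}^{\vec{I}}\newlapse)$ is then killed by the auxiliary divergence $\scale'\scale^3\Gdiv\{(\nabla^{\#}\mathscr{Z}^{\vec{I}}\newlapse)\mathscr{Z}^{\vec{I}}\newlapse\}$ together with the commuted elliptic lapse equation \eqref{E:COMMUTEDLAPSEPDERENORMALIZEDHIGHERDERIVATIVES}, and the coefficient $f$ from \eqref{E:ERRORTERMHIGHORDERELLIPTICOPERATOR} supplies exactly the bracketed $|\mathscr{Z}^{\vec{I}}\newlapse|^2$-multiplier appearing on the third line of RHS~\eqref{E:DIVSFCURRENT} --- this is precisely the content of the paper's key display \eqref{E:KEYDIVERGENCESUBSTUTION}.
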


\begin{proof}
	Most steps are similar to the proof of \eqref{E:DIVMETRICCURRENT}, so we only sketch the proof.
	For example, when time derivatives fall on the
	factors
	$
	\mathscr{Z}^{\vec{I}} \newtimescalar
	$
	or
	$
		\SigmatLie_{\mathscr{Z}}^{\vec{I}} \newspacescalar
	$
	on RHS~\eqref{E:SFCURRENT0},
	we use equations
	\eqref{E:COMMUTEDWAVEEQUATIONRENORMALIZED}-\eqref{E:COMMUTEDSPACEDERIVATIVESWAVEEQUATIONRENORMALIZED}
	to substitute, in analogy with the way that we used
	equations 
	\eqref{E:COMMUTEDEVOLUTIONSECONDFUNDRENORMALIZED}-\eqref{E:GRADCOMMUTEDEVOLUTIONMETRICRENORMALIZED}
	to substitute in our proof of \eqref{E:DIVMETRICCURRENT}.
	The new feature of the present proof is that we 
	use equation
	\eqref{E:COMMUTEDLAPSEPDERENORMALIZEDHIGHERDERIVATIVES}
	to express the term
	$
	\scale' \scale^3 \Gdiv 
			\left\lbrace 
				(\nabla^{\#} \mathscr{Z}^{\vec{I}} \newlapse) \mathscr{Z}^{\vec{I}} \newlapse
			\right\rbrace
	$
	on LHS~\eqref{E:DIVSFCURRENT} as follows:
	\begin{align}
	& \scale' \scale^3  |\nabla \mathscr{Z}^{\vec{I}} \newlapse|_{\newg}^2
		\label{E:KEYDIVERGENCESUBSTUTION} \\
	&
	+
	\left\lbrace
				(\scale')^3 \scale^{5/3}
				+ \frac{2}{3} \scale' \scale^3
				+ \scale' \scale^{5/3} |\FreeNewSec|_{\newg}^2
				+ 2 \sqrt{\frac{2}{3}} \scale' \scale^{5/3} \newtimescalar
				+ \scale' \scale^{5/3} \newtimescalar^2 
	\right\rbrace
	\left|
		\mathscr{Z}^{\vec{I}} \newlapse
	\right|^2
		\notag \\
	& \ \
	+
	\left\lbrace
			2 \sqrt{\frac{2}{3}} \scale' \scale^{1/3} \mathscr{Z}^{\vec{I}} \newtimescalar
			+ 
			\scale' \scale^{1/3} \CommutedLapseHighBorderInhom{\vec{I}}
			+ 
			\scale' \scale^{5/3} \CommutedLapseHighJunkInhom{\vec{I}}
	\right\rbrace
	\mathscr{Z}^{\vec{I}} \newlapse.
		\notag
\end{align}
We now highlight the three most important aspects of the expression 
\eqref{E:KEYDIVERGENCESUBSTUTION}. 
First, the first product in \eqref{E:KEYDIVERGENCESUBSTUTION}
is precisely the negative-semi-definite 
second product on RHS~\eqref{E:DIVSFCURRENT}
(recall that by Lemma~\ref{L:ANALYSISOFFRIEDMANN}, $\scale'(t) \leq 0$ for $t \in [0,\TCrunch]$).
Next, the second line of
\eqref{E:KEYDIVERGENCESUBSTUTION} yields the 
negative-semi-definite products on the third line of RHS~\eqref{E:DIVSFCURRENT}.
Finally, there is a critically important cancellation that occurs.
Specifically, the first product
$2 \sqrt{\frac{2}{3}} \scale' \scale^{1/3} 
(\mathscr{Z}^{\vec{I}} \newtimescalar)
\mathscr{Z}^{\vec{I}} \newlapse
$
on the last line of \eqref{E:KEYDIVERGENCESUBSTUTION}
\emph{exactly cancels} the product
$
-
2 \sqrt{\frac{2}{3}} \scale' \scale^{1/3} 
(\mathscr{Z}^{\vec{I}} \newtimescalar)
\mathscr{Z}^{\vec{I}} \newlapse
$
that arises when
$\partial_t$ falls on the factor 
$(\mathscr{Z}^{\vec{I}} \newtimescalar)^2$
on RHS~\eqref{E:SFCURRENT0}
and we use equation \eqref{E:COMMUTEDWAVEEQUATIONRENORMALIZED}
to substitute for
$\partial_t \mathscr{Z}^{\vec{I}} \newtimescalar$.
We place the second product
$\scale' \scale^{1/3} (\mathscr{Z}^{\vec{I}} \newlapse) \CommutedLapseHighBorderInhom{\vec{I}}$
from the last line of \eqref{E:KEYDIVERGENCESUBSTUTION}
on RHS~\eqref{E:SFCURRENTBORDER}.
The remaining product 
$\scale' \scale^{5/3} (\mathscr{Z}^{\vec{I}} \newlapse) \CommutedLapseHighJunkInhom{\vec{I}}$
from \eqref{E:KEYDIVERGENCESUBSTUTION}
is a negligible error term that we place on
RHS~\eqref{E:SFCURRENTJUNK}.

\end{proof}

\section{Integrals, norms, and energies}
\label{S:INTEGRALSNORMSANDENERGIES}
In this section, 
we define the norms and energies that we use to control the solution.

\subsection{Integrals and norms}
\label{SS:INTEGRALSANDNORMS}

\begin{definition}[$L^2$ norms \textbf{with respect to two volume forms}]
\label{D:L2NORMS}
Let $d \Sttvol$ and $d \tvol$ be the volume forms on $\Sigma_t$ from
Def.\ \ref{D:VOLUMEFORM}. If $f$ is a scalar function defined along $\Sigma_t$, then we define
\begin{align} \label{E:L2NORMS}
		\| 
			f
		\|_{L_{\StMet}^2(\Sigma_t)}^2
		:=
		\int_{\Sigma_t}
			f^2 
		\, d \Sttvol,
	&&
		\| 
			f
		\|_{L_{\newg}^2(\Sigma_t)}^2
		:=
		\int_{\Sigma_t}
			f^2 
		\, d \tvol.
\end{align}
\end{definition}

\begin{definition} [\textbf{$H^M$ and $C^M$ norms}]  \label{D:NORMS}
	Let $\StMet$ be the round metric on $\Sigma_t$ from Subsect.\ \ref{SS:EXTENDINGSIGMA0TANGENTTENSORFIELDSTOSIGMATTANGENTTENSORFIELDS}
	and let $\newg$ be the rescaled spatial metric of Def.\ \ref{D:RESCALEDVARIABLES}.
	We define the following norms for $\Sigma_t-$tangent tensorfields $\xi$.
\begin{subequations}
\begin{align}
	\| \xi \|_{H_{\StMet}^M(\Sigma_t)}^2
	& := 
		\sum_{|\vec{I}| \leq M} 
		\left\| \left|\SigmatLie_{\mathscr{Z}}^{\vec{I}} \xi \right|_{\StMet} \right \|_{L_{\StMet}^2(\Sigma_t)}^2, 
	&& 
	\| \xi \|_{H_{\newg}^M(\Sigma_t)}^2 
		:= \sum_{|\vec{I}| \leq M} 
		\left\| \left|\SigmatLie_{\mathscr{Z}}^{\vec{I}} \xi \right|_{\newg} \right \|_{L_{\newg}^2(\Sigma_t)}^2, 
			\label{E:SOBOLEVNORMDEFS} \\
	\| \xi \|_{\dot{H}_{\StMet}^M(\Sigma_t)}^2
	& := 
		\sum_{|\vec{I}| = M} 
		\left\| \left|\SigmatLie_{\mathscr{Z}}^{\vec{I}} \xi \right|_{\StMet} \right \|_{L_{\StMet}^2(\Sigma_t)}^2, 
	&& 
	\| \xi \|_{\dot{H}_{\newg}^M(\Sigma_t)}^2 
		:= \sum_{|\vec{I}| = M} 
		\left\| \left|\SigmatLie_{\mathscr{Z}}^{\vec{I}} \xi \right|_{\newg} \right \|_{L_{\newg}^2(\Sigma_t)}^2, 
			\label{E:HOMSOBOLEVNORMDEFS} \\
	\| \xi \|_{C_{\StMet}^M(\Sigma_t)}^2
	& := \sum_{|\vec{I}| \leq M} \sup_{p \in \Sigma_t} \left|\SigmatLie_{\mathscr{Z}}^{\vec{I}} \xi(p) \right|_{\StMet}^2, 
	&& \| \xi \|_{C_{\newg}^M(\Sigma_t)}^2
		:= \sum_{|\vec{I}| \leq M} \sup_{p \in \Sigma_t} 
		\left| \SigmatLie_{\mathscr{Z}}^{\vec{I}} 
		\xi(p) \right|_{\newg}^2,
		\label{E:SUPNORMDEFS}
			\\
	\| \xi \|_{\dot{C}_{\StMet}^M(\Sigma_t)}^2
	& := \sum_{|\vec{I}| = M} \sup_{p \in \Sigma_t} \left|\SigmatLie_{\mathscr{Z}}^{\vec{I}} \xi(p) \right|_{\StMet}^2, 
	&& \| \xi \|_{\dot{C}_{\newg}^M(\Sigma_t)}^2
		:= \sum_{|\vec{I}| = M} \sup_{p \in \Sigma_t} 
		\left| \SigmatLie_{\mathscr{Z}}^{\vec{I}} 
		\xi(p) \right|_{\newg}^2.
		\label{E:HOMSUPNORMDEFS}
\end{align}
\end{subequations}
\end{definition}

Our bootstrap assumption for the solution (see Subsect.\ \ref{SS:BOOTSTRAPASSUMPTIONS}) 
is for the following solution norm;
see Remark~\ref{R:NUMBEROFDERIVATIVES} regarding the number of derivatives that it controls.

\begin{definition}[\textbf{High-norm for the solution}]
	\label{D:HIGHNORM}
	We define
	\begin{align}
		\highnorm{16}(t)
		& :=
			\left\|
				\FreeNewSec
			\right\|_{H_{\StMet}^{16}(\Sigma_t)}
			+
			\scale^{2/3}(t)
			\left\|
				\newg
			\right\|_{\dot{H}_{\StMet}^{17}(\Sigma_t)}
			+
			\left\|
				\newg - \StMet
			\right\|_{H_{\StMet}^{16}(\Sigma_t)}
			+
			\left\|
				\newg^{-1} - \StMet^{-1}
			\right\|_{H_{\StMet}^{16}(\Sigma_t)}
			\label{E:HIGHNORM}	\\
			& \ \
			+
			\sum_{L=1}^4
			\scale^{(2/3)L}(t)
			\left\|
				\newlapse
			\right\|_{\dot{H}_{\StMet}^{14 + L}(\Sigma_t)}
			+
			\left\|
				\newlapse
			\right\|_{H_{\StMet}^{14}(\Sigma_t)}
			+
			\left\|
				\newtimescalar
			\right\|_{H_{\StMet}^{16}(\Sigma_t)}
			+
			\scale^{2/3}(t)
			\left\|
				\newspacescalar
			\right\|_{H_{\StMet}^{16}(\Sigma_t)}
			+
			\left\|
				\newspacescalar
			\right\|_{H_{\StMet}^{15}(\Sigma_t)}
				\notag 
					\\
		& \ \
			+
			\left\|
				\FreeNewSec
			\right\|_{C_{\StMet}^{14}(\Sigma_t)}
			+
			\left\|
				\newg - \StMet
			\right\|_{C_{\StMet}^{14}(\Sigma_t)}
			+
			\left\|
				\newg^{-1} - \StMet^{-1}
			\right\|_{C_{\StMet}^{14}(\Sigma_t)}
				\notag \\
		& \ \
		+
			\left\|
				\newlapse
			\right\|_{C_{\StMet}^{12}(\Sigma_t)}
			+
			\left\|
				\newtimescalar
			\right\|_{C_{\StMet}^{14}(\Sigma_t)}
			+
			\left\|
				\newspacescalar
			\right\|_{C_{\StMet}^{13}(\Sigma_t)}.
			\notag
	\end{align}
\end{definition}

\begin{remark}
	Note that $\highnorm{16}(t) \equiv 0$ for the FLRW solution.
	Thus, $\highnorm{16}(t)$ is a measure of the perturbed solution's
	deviation from the FLRW solution.
\end{remark}

\begin{remark}[\textbf{On the number of derivatives}]
	\label{R:NUMBEROFDERIVATIVES}
	In the present article, our solution norm \eqref{E:HIGHNORM}
	involves more derivatives than the corresponding solution norms in
	\cites{iRjS2014a,iRjS2014b}. We have chosen to increase the number of 
	derivatives in the present article because it allows us to simplify some of
	the analysis.
\end{remark}

\begin{definition}[\textbf{Summed pointwise norms and seminorms}]
	\label{D:SUMMEDPOINTWISESEMINORMS}
	If $M \geq 0$ is an integer and $\mathfrak{D}$ is a differential operator 
	(typically $\mathfrak{D} = \SigmatLie_{\partial_t}$ or $\mathfrak{D} = \nabla^L$
	for some exponent $L$), 
	then
	\begin{align}
			\left|
				\SigmatLie_{\mathscr{Z}}^{\leq M} \xi
			\right|_{\newg}
			& 
			:=
			\sum_{|\vec{I}| \leq M}
			\left|
				\SigmatLie_{\mathscr{Z}}^{\vec{I}} \xi
			\right|_{\newg},
			&&
			\left|
				\mathfrak{D} \SigmatLie_{\mathscr{Z}}^{\leq M} \xi
			\right|_{\newg}
			:=
			\sum_{|\vec{I}| \leq M}
			\left|
				\mathfrak{D} \SigmatLie_{\mathscr{Z}}^{\vec{I}} \xi
			\right|_{\newg}.
	\end{align}
	Similarly, if $M \geq 1$, then
	\begin{align}
			\left|
				\SigmatLie_{\mathscr{Z}}^{[1,M]} \xi
			\right|_{\newg}
			& 
			:=
			\sum_{1 \leq |\vec{I}| \leq M}
			\left|
				\SigmatLie_{\mathscr{Z}}^{\vec{I}} \xi
			\right|_{\newg},
			&&
			\left|
				\mathfrak{D} \SigmatLie_{\mathscr{Z}}^{[1,M]} \xi
			\right|_{\newg}
			:=
			\sum_{1 \leq |\vec{I}| \leq M}
			\left|
				\mathfrak{D} \SigmatLie_{\mathscr{Z}}^{\vec{I}} \xi
			\right|_{\newg}.
	\end{align}
	We typically write
	$
	\left|
		\mathscr{Z}^{[1,M]} f
	\right|
	$
	instead of
	$\left|
				\SigmatLie_{\mathscr{Z}}^{[1,M]} f
			\right|_{\newg}
	$
	when $f$ is a scalar function.
	Moreover, we sometimes write 
	$
	\left|
		\mathscr{Z} f
	\right|
	$
	instead of
	$
	\left|
		\mathscr{Z}^{[1,1]} f
	\right|
	$.
	We also define similar seminorms in which the metric $\newg$
	is replaced by the metric $\StMet$.
\end{definition}

\begin{definition}[\textbf{Summed sup-seminorms}]
	\label{D:SUMMEDSUPNORMS}
	If $M \geq 0$ is an integer and $\mathfrak{D}$ is a differential operator 
	(typically $\mathfrak{D} = \SigmatLie_{\partial_t}$ or $\mathfrak{D} = \nabla^L$
	for some exponent $L$),
	then
	\begin{align}
			\left\|
				\SigmatLie_{\mathscr{Z}}^{\leq M} \xi
			\right\|_{C_{\newg}^0(\Sigma_t)}
			& 
			:=
			\sum_{|\vec{I}| \leq M}
			\sup_{p \in \Sigma_t}
			\left|
				\SigmatLie_{\mathscr{Z}}^{\vec{I}} \xi(p)
			\right|_{\newg},
			&&
			\left\|
				\mathfrak{D} \SigmatLie_{\mathscr{Z}}^{\leq M} \xi
			\right\|_{C_{\newg}^0(\Sigma_t)}
			:=
			\sum_{|\vec{I}| \leq M}
			\sup_{p \in \Sigma_t}
			\left|
				\mathfrak{D} \SigmatLie_{\mathscr{Z}}^{\vec{I}} \xi(p)
			\right|_{\newg}.
	\end{align}
	Similarly, if $M \geq 1$, then
	\begin{align}
			\left\|
				\SigmatLie_{\mathscr{Z}}^{[1,M]} \xi
			\right\|_{C_{\newg}^0(\Sigma_t)}
			& 
			:=
			\sum_{1 \leq |\vec{I}| \leq M}
			\sup_{p \in \Sigma_t}
			\left\|
				\SigmatLie_{\mathscr{Z}}^{\vec{I}} \xi(p)
			\right\|_{C_{\newg}^0(\Sigma_t)},
			&&
			\left\|
				\mathfrak{D} \SigmatLie_{\mathscr{Z}}^{[1,M]} \xi
			\right\|_{C_{\newg}^0(\Sigma_t)}
			:=
			\sum_{1 \leq |\vec{I}| \leq M}
			\sup_{p \in \Sigma_t}
			\left|
				\mathfrak{D} \SigmatLie_{\mathscr{Z}}^{\vec{I}} \xi(p)
			\right|_{\newg}.
	\end{align}
	We analogously define similar seminorms in which the metric $\newg$
	is replaced by the metric $\StMet$
	or in which the sup norm $\| \cdot \|_{C_{\newg}^0(\Sigma_t)}$
	is replaced with the Lebesgue norm
	$\| \cdot \|_{L_{\newg}^2(\Sigma_t)}$
	or
	$\| \cdot \|_{L_{\StMet}^2(\Sigma_t)}$.
\end{definition}

The norms and seminorms in the next definition complement those of
Def.\ \ref{D:SUMMEDSUPNORMS}.

\begin{definition}[\textbf{Spatial derivative norms and seminorms}]
	\label{D:SPATIALDERIVATIVENORMSANDSEMINORMS}
\label{D:SCHEMATICDIFF}
		$\difarg{M}$ denotes an arbitrary $M^{th}$-order differential operator corresponding to repeated differentiation
		with respect to the connection $\nabla$ and/or the $\Sigma_t$-projected Lie derivative operators
		$\SigmatLie_Z$ with $Z \in \mathscr{Z}$, where no more than two differentiations
		with respect to $\nabla$ are taken.\footnote{We only need to consider at most two $\nabla$ differentiations since
		this is the maximum number of $\nabla$ differentiations that occur in the Einstein-scalar field equations.}
		That is, $\difarg{M}$ is any operator of the form
		\begin{align}
			\difarg{M} 
			= 
		\prod_{i=1}^M O_i,
		\end{align}
		where $O_i \in \lbrace \nabla, \SigmatLie_{Z_{(1)}}, \SigmatLie_{Z_{(2)}}, \SigmatLie_{Z_{(3)}} \rbrace$
		and no more than two of the $O_i$ are equal to $\nabla$.
		
		If $M \geq 0$ is an integer and $\xi$ is a $\Sigma_t$-tangent tensorfield, then
		\begin{align} \label{E:DTOMSUMMED}
			\left\|
				\dif^{\leq M} \xi
			\right\|_{C_{\newg}^0(\Sigma_t)}
			:=
			\sum_{M' \leq M}
				\sup_{p \in \Sigma_t}
				\left|
					\dif^{M'} \xi(p)
				\right|_{\newg},
		\end{align}
		where $\sum_{M' \leq M}$ means that the sum is over all operators of type $\difarg{M'}$ with $M' \leq M$.
		Similarly,
		if $M \geq 1$, then
		\begin{align} \label{E:DFROMONETOMSUMMED}
			\left\|
				\dif^{[1,M]} \xi
			\right\|_{C_{\newg}^0(\Sigma_t)}
			:=
			\sum_{1 \leq M' \leq M}
				\sup_{p \in \Sigma_t}
				\left|
					\dif^{M'} \xi(p)
				\right|_{\newg},
		\end{align}
		where $\sum_{1 \leq M' \leq M}$ means that the sum is over all operators of type $\difarg{M'}$ with $1 \leq M' \leq M$.
		
		We similarly define 
		$
		\left\|
				\dif^{\leq M} \xi
			\right\|_{C_{\StMet}^0(\Sigma_t)}
		$
		and
		$
		\left\|
				\dif^{[1,M]} \xi
			\right\|_{C_{\StMet}^0(\Sigma_t)}
		$	
		by replacing the metric $\newg$ in \eqref{E:DTOMSUMMED}-\eqref{E:DFROMONETOMSUMMED}
		with $\StMet$.
		
\end{definition}

\subsection{Energies}
\label{SS:ENERGIES}
To control the solution and to derive a priori estimates for the norm 
$\highnorm{16}$, we will primarily rely on the following energies.

\begin{definition}[\textbf{Energies}]
\label{D:ENERGIES}
	Let $\Jfour_{(Metric)}[\cdots]$ and $\Jfour_{(Sf)}[\cdots]$ be the energy currents from
	Def.\ \ref{D:ENERGYCURRENTS} and let $\smallparameter > 0$ be a real parameter. We define
	\begin{subequations}
	\begin{align}
		\Metricenergy{M}(t)
		& :=
			\sum_{1 \leq |\vec{I}| \leq M}
			\int_{\Sigma_t}	
				J_{(Metric)}^0[\SigmatLie_{\mathscr{Z}}^{\vec{I}} \FreeNewSec,\SigmatLie_{\mathscr{Z}}^{\vec{I}} \newg]
			\, d \tvol,
				\label{E:METRICENERGY}
				\\
		\Sfenergy{M}(t)
		& :=
			\sum_{1 \leq |\vec{I}| \leq M}
			\int_{\Sigma_t}	
				J_{(Sf)}^0[\mathscr{Z}^{\vec{I}} \newtimescalar, \SigmatLie_{\mathscr{Z}}^{\vec{I}}\newspacescalar]
			\, d \tvol,
			\label{E:SFENERGY}
				\\
		\Totalenergy{M}{\smallparameter}(t)
		& := 
			\smallparameter \Metricenergy{M}(t)
			+
			\Sfenergy{M}(t),
			\label{E:TOTALENERGY}
				\\
	\SupTotalenergy{M}{\smallparameter}(t)
	&
	:= 
	\sup_{s \in [0,t]}
	\Totalenergy{M}{\smallparameter}(s).
	\label{E:SUPTOTALENERGY}
	\end{align}
	\end{subequations}
\end{definition}

We now quantify the coerciveness of the energies.

\begin{lemma}[\textbf{Coerciveness of the energies}]
\label{L:ENERGYCOERCIVENESS}
The energies 
$\Metricenergy{M}(t)$
and $\Sfenergy{M}(t)$
from Def.\ \ref{D:ENERGIES}
are coercive in the following sense:
\begin{subequations}
\begin{align}
	\Metricenergy{M}(t)
	& = 
			\sum_{1 \leq |\vec{I}| \leq M}
			\left\|
				\SigmatLie_{\mathscr{Z}}^{\vec{I}} \FreeNewSec
			\right\|_{L_{\newg}^2(\Sigma_t)}^2
			+ \frac{1}{4} 
				\sum_{1 \leq |\vec{I}| \leq M}
				\scale^{4/3} 
				\left\|
					\nabla \SigmatLie_{\mathscr{Z}}^{\vec{I}} \newg
				\right\|_{L_{\newg}^2(\Sigma_t)}^2,
				\label{E:METRICENERGYCOERCIVENESS}	\\
	\Sfenergy{M}(t)
	& = 
			\sum_{1 \leq |\vec{I}| \leq M}
			\left\|
				\mathscr{Z}^{\vec{I}} \newtimescalar
			\right\|_{L_{\newg}^2(\Sigma_t)}^2
			+ 
			\scale^{4/3}(t)
			\sum_{1 \leq |\vec{I}| \leq M}
			\left\|
				\SigmatLie_{\mathscr{Z}}^{\vec{I}} \newspacescalar
			\right\|_{L_{\newg}^2(\Sigma_t)}^2.
			\label{E:SFENERGYCOERCIVENESS}
\end{align}
\end{subequations}

\end{lemma}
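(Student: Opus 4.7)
The plan is to prove Lemma~\ref{L:ENERGYCOERCIVENESS} by direct unfolding of the definitions. Indeed, the claimed identities \eqref{E:METRICENERGYCOERCIVENESS}--\eqref{E:SFENERGYCOERCIVENESS} are not really inequalities at all but \emph{exact} equalities, so no estimation is required; the content of the lemma is simply that the $J^0$ components of the energy currents defined in \eqref{E:METRICCURRENT0} and \eqref{E:SFCURRENT0} already have the manifestly coercive form of a sum of squared pointwise $|\cdot|_{\newg}$-norms of solution derivatives.

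First, I would recall from Def.~\ref{D:ENERGIES} that
\[
\Metricenergy{M}(t) = \sum_{1 \leq |\vec{I}| \leq M} \int_{\Sigma_t} J_{(Metric)}^0[\SigmatLie_{\mathscr{Z}}^{\vec{I}} \FreeNewSec,\SigmatLie_{\mathscr{Z}}^{\vec{I}} \newg] \, d \tvol,
\]
and similarly for $\Sfenergy{M}(t)$. Substituting the explicit formula \eqref{E:METRICCURRENT0}, namely
\[
J_{(Metric)}^0 = |\SigmatLie_{\mathscr{Z}}^{\vec{I}} \FreeNewSec|_{\newg}^2 + \tfrac{1}{4} \scale^{4/3}(t) |\nabla \SigmatLie_{\mathscr{Z}}^{\vec{I}} \newg|_{\newg}^2,
\]
under the integral and invoking the definition \eqref{E:L2NORMS} of $\|\cdot\|_{L_{\newg}^2(\Sigma_t)}$, one obtains \eqref{E:METRICENERGYCOERCIVENESS} immediately. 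The only minor point is that $\scale^{4/3}(t)$ depends only on $t$ (being a function of the time coordinate alone, by the construction in Sect.~\ref{S:FLRW}) and hence can be pulled outside the integral over $\Sigma_t$; this is why the factor $\scale^{4/3}$ appears outside $\|\cdot\|_{L_{\newg}^2(\Sigma_t)}^2$ in the statement.

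Exactly the same argument, applied to \eqref{E:SFCURRENT0}, yields \eqref{E:SFENERGYCOERCIVENESS}: substituting
\[
J_{(Sf)}^0 = (\mathscr{Z}^{\vec{I}} \newtimescalar)^2 + \scale^{4/3}(t) |\SigmatLie_{\mathscr{Z}}^{\vec{I}} \newspacescalar|_{\newg}^2
\]
into \eqref{E:SFENERGY}, pulling the spatially constant factor $\scale^{4/3}(t)$ out of the integral, and using \eqref{E:L2NORMS} gives the desired identity. There is no substantive obstacle here; the lemma is purely a bookkeeping statement recording that the $J^0$ components were chosen precisely to coincide with the sum-of-squares quantities that will appear on the left-hand sides of the energy inequalities derived later. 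The only reason to state it explicitly is that these coercive expressions will be used in every subsequent application of Gronwall's inequality (in particular in Sect.~\ref{S:ENERGYESTIMATES}) to convert bounds on $\Totalenergy{M}{\smallparameter}(t)$ into bounds on the $L^2$ norms of $\SigmatLie_{\mathscr{Z}}^{\vec{I}} \FreeNewSec$, $\nabla \SigmatLie_{\mathscr{Z}}^{\vec{I}} \newg$, $\mathscr{Z}^{\vec{I}} \newtimescalar$, and $\SigmatLie_{\mathscr{Z}}^{\vec{I}} \newspacescalar$ that enter the norm $\highnorm{16}(t)$.
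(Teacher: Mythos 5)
Your proof is correct and follows the same route as the paper: the paper likewise dismisses the lemma as a trivial consequence of Def.~\ref{D:ENERGIES} together with the formulas \eqref{E:METRICCURRENT0} and \eqref{E:SFCURRENT0}, and your unfolding (substitute, pull the $t$-only factor $\scale^{4/3}(t)$ outside the spatial integral, invoke the definition of $\|\cdot\|_{L^2_{\newg}(\Sigma_t)}$) is exactly what is implicit there.
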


\begin{proof}
	The lemma is a trivial consequence of
	Def.\ \ref{D:ENERGIES} and the definitions
	\eqref{E:METRICCURRENT0} and \eqref{E:SFCURRENT0}.
\end{proof}

\section{Assumptions on the data and bootstrap assumptions}
\label{S:DATAASSUMPTIONSANDBOOTSTRAPASSUMPTIONS}
In this short section, we provide our smallness assumptions on the initial
data and state the bootstrap assumptions that we will use
when deriving a priori estimates.

\subsection{Smallness assumptions on the data}
\label{SS:DATAASSUMPTIONS}
%\begin{center}
%	\underline{\textbf{Smallness assumptions on the data}}
%\end{center}
We assume that the data verify the following smallness assumptions:
\begin{align} \label{E:SMALLDATA}
	\highnorm{16}(0) := \varepsilon^2,
\end{align}
where $\highnorm{16}$ is defined in \eqref{E:HIGHNORM}
and $\varepsilon < 1$ is a small positive number. 
We will shrink the allowable size of $\varepsilon$ 
as the paper progresses.

\begin{remark}
	The smallness of 
	$
	\left\|
		\SigmatLie_{\mathscr{Z}}^{\leq 16} (\newg^{-1} - \StMet^{-1})
	\right\|_{L_{\StMet}^2(\Sigma_0)}
	$
	implied by \eqref{E:SMALLDATA}
	is redundant in the sense that it could be derived
	as an easy consequence of the smallness
	for 
	$
	\left\|
		\SigmatLie_{\mathscr{Z}}^{\leq 16} (\newg - \StMet)
	\right\|_{L_{\StMet}^2(\Sigma_0)}
	$
	implied by \eqref{E:SMALLDATA}.
	In addition, the smallness of
	$
	\left\|
		\mathscr{Z}^{\leq 18} \newlapse
	\right\|_{L_{\StMet}^2(\Sigma_0)}
	$
	implied by \eqref{E:SMALLDATA}
	is redundant in the sense that it could be
	derived from the smallness of the remaining variables
	by virtue of elliptic estimates of the type 
	that we derive in Prop.~\ref{P:BOUNDFORLAPSEANDBELOWTOPMETRICINTERMSOFENERGIES}.
\end{remark}

\subsection{Bootstrap assumptions}
\label{SS:BOOTSTRAPASSUMPTIONS}
We assume that the following estimates hold for $t \in [0,\Tboot)$:
\begin{align} 
		\highnorm{16}(t)
		& \leq \varepsilon \scale^{-\upsigma}(t),
		\label{E:HIGHNORMBOOTSTRAP}
\end{align}
where $\highnorm{16}(t)$ is defined in \eqref{E:HIGHNORM},
$\varepsilon$ is as in \eqref{E:SMALLDATA},
and $\upsigma > 0$ is a small bootstrap parameter 
that is constrained in particular by
\begin{align} \label{E:PARAMETERCONSTRAINT}
	0 < \varepsilon^{1/2} \leq \upsigma < 1.
\end{align}
We will adjust the allowable size of $\varepsilon$
and $\upsigma$ throughout our analysis.

\begin{remark}[\textbf{We tacitly assume the bootstrap assumptions and sufficient smallness of the bootstrap parameters}]
	\label{R:TACITBOOTSTRAP}
	In the remainder of the article, 
	we will assume that the above bootstrap assumptions hold
	and that $\varepsilon$ and $\upsigma$ are sufficiently small.
	In particular, in statements of propositions and lemmas in which we derive
	estimates, we do not explicitly state that we are assuming the
	bootstrap assumptions. Moreover, it is understood that all of the estimates
	that we derive hold on the domain $[0,\Tboot) \times \mathbb{S}^3$.
\end{remark}

\begin{remark}[\textbf{Quadratically small data out of convenience}]
	\label{R:QUADRATICSMALLNESSOFDATA}
	Note that our bootstrap assumption \eqref{E:HIGHNORMBOOTSTRAP}
	states that $\highnorm{16}(t)$ is \emph{linearly} small in
	$\varepsilon$ while our data assumption \eqref{E:SMALLDATA}
	for $\highnorm{16}(0)$ posits \emph{quadratic} smallness.
	This is mainly for technical convenience.
\end{remark}

\section{Preliminary inequalities and weak sup-norm estimates}
\label{S:PRELIMINEQUALITIES}
In this section, we derive some preliminary estimates.
The estimates form the starting point for our
pointwise analysis of solutions and the error terms
in the $\SigmatLie_{\mathscr{Z}}^{\vec{I}}$-commuted equations.

\subsection{Preliminary comparison estimates}
In this subsection, we prove some comparison estimates.
The main result is Lemma~\ref{L:PRELIMINARYCOMPARISON}.
We start by providing a simple identity.

\begin{lemma}[\textbf{Covariant derivatives in terms of Lie derivatives}]
	\label{L:VECTORFIELDCOVARIANTDERIVATIVEID}
	For any three vectorfields $X,Y,Z \in \mathscr{Z}$ 
	(see \eqref{E:AGAINROUNDMETRICKILLING}),
	we have the following identity:
	\begin{align} \label{E:VECTORFIELDCOVARIANTDERIVATIVEID}
	\newg(\nabla_X Y,Z)
	 & = \frac{1}{2}  
	 	\left\lbrace
	 		Y \otimes Z \cdot \SigmatLie_X \newg
	 		+
	 		Z \otimes X \cdot \SigmatLie_Y \newg
	 		- 
	 		X \otimes Y \cdot \SigmatLie_Z \newg
	 	\right\rbrace
	 	 \\
	 	& \ \
	 	+
	 	\frac{1}{2}  
		\left\lbrace
			\newg(\SigmatLie_X Y,Z)
			+
			\newg(\SigmatLie_Y Z,X)
			-
			\newg(\SigmatLie_Z X,Y)
		\right\rbrace.
		\notag
	\end{align}
\end{lemma}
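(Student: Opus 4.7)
The identity is a restatement of the Koszul formula with the ``metric derivative'' terms $X\newg(Y,Z)$ rewritten in terms of Lie derivatives of $\newg$. The plan is to start from the standard Koszul formula for the Levi--Civita connection $\nabla$ of $\newg$, apply the Leibniz rule for Lie derivatives to each of the three scalar terms $X\newg(Y,Z)$, $Y\newg(Z,X)$, $Z\newg(X,Y)$, and then use antisymmetry of the Lie bracket together with symmetry of $\newg$ to combine the resulting commutator terms.

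First I would recall that since $X, Y, Z \in \mathscr{Z}$ are $\Sigma_t$-tangent, the $\Sigma_t$-projected Lie derivatives $\SigmatLie_X Y$ coincide with ordinary Lie brackets $[X,Y]$ (which are themselves $\Sigma_t$-tangent), so there is no subtlety coming from the projection. Koszul's formula for $\nabla$ then reads
\begin{align*}
2 \newg(\nabla_X Y, Z)
& = X \newg(Y,Z) + Y \newg(Z,X) - Z \newg(X,Y) \\
& \quad + \newg([X,Y],Z) - \newg([X,Z],Y) - \newg([Y,Z],X).
\end{align*}
Next I would apply the Leibniz rule $X \newg(Y,Z) = (\SigmatLie_X \newg)(Y,Z) + \newg([X,Y],Z) + \newg(Y,[X,Z])$ to each of the three metric-derivative terms and insert the results into the right-hand side above.

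The final step is bookkeeping: using symmetry of $\newg$ and antisymmetry of the bracket, the combination of all six commutator contributions collapses to
$\newg([X,Y],Z) + \newg([Y,Z],X) - \newg([Z,X],Y)$,
which, upon rewriting $[\cdot,\cdot] = \SigmatLie_{\cdot} \cdot$ in each slot, produces precisely the second line of \eqref{E:VECTORFIELDCOVARIANTDERIVATIVEID}. Meanwhile the three Lie-derivative-of-metric terms match the first line after identifying $(\SigmatLie_X \newg)(Y,Z)$ with the contraction $Y \otimes Z \cdot \SigmatLie_X \newg$. Dividing by $2$ yields the claim. There is no genuine obstacle here; the only thing to watch is getting the signs right when passing between $[X,Z]$ and $\SigmatLie_Z X = -[X,Z]$, which is what produces the minus sign in front of the last term of the second line.
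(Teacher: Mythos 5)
Your proof is correct, and the underlying ingredients are the same as the paper's: the Leibniz rule for $\Sigma_t$-projected Lie derivatives applied to $\newg(Y,Z)$, torsion-freeness to identify $\SigmatLie_X Y$ with $[X,Y]$, and antisymmetry of the bracket to collapse the commutator terms. The only organizational difference is that you begin from the explicit Koszul formula and convert the three directional-derivative terms, whereas the paper re-derives the relevant combination from scratch—first isolating $\newg(\nabla_X Y,Z) = X \otimes Z \cdot \SigmatLie_Y \newg - \newg(X,\nabla_Z Y)$ from the Leibniz rule and then doubling it to set up the cancellation; either route is fine and the computation is equally short. One cosmetic slip: you say "all six commutator contributions collapse," but after applying Leibniz to the three metric-derivative terms you actually have nine bracket terms in play (six from Leibniz plus the three already present in Koszul); the count doesn't affect the conclusion, and the stated final form $\newg([X,Y],Z) + \newg([Y,Z],X) - \newg([Z,X],Y)$ is correct.
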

\begin{proof}
	First, using the Leibniz rule and the torsion-free property of $\nabla$, we
	compute that
	\begin{align} \label{E:LEIBNIZTORSIONFREECALCULATION}
	\newg(\nabla_Y X,Z) + \newg(X,\nabla_Y Z)
	& 
	= \nabla_Y [\newg(X,Z)] 
	= \SigmatLie_Y [\newg(X,Z)]
		\\
	& =
	X \otimes Z \cdot \SigmatLie_Y \newg
	+
	\newg(\SigmatLie_Y X,Z)
	+
	\newg(X,\SigmatLie_Y Z)
		\notag \\
	& =
	X \otimes Z \cdot \SigmatLie_Y \newg
	+
	\newg(\nabla_Y X,Z)
	-
	\newg(\nabla_X Y,Z)
	+
	\newg(X,\nabla_Y Z)
	-
	\newg(X,\nabla_Z Y).
	\notag
	\end{align}
	From \eqref{E:LEIBNIZTORSIONFREECALCULATION},
	we deduce that
	$
	\newg(\nabla_X Y,Z)
	= 	X \otimes Z \cdot \SigmatLie_Y \newg
	-
	\newg(X,\nabla_Z Y)
	$
	and hence
	\begin{align} \label{E:EASYLEIBNIZTORSIONFREECALCULATION}
	2 \newg(\nabla_X Y,Z) 
	=
	X \otimes Z \cdot \SigmatLie_Y \newg
	+ 
	\newg(\nabla_X Y,Z) 
	- 
	\newg(\nabla_Z Y,X).
	\end{align}
	Using similar arguments, we decompose the last two terms on RHS~\eqref{E:EASYLEIBNIZTORSIONFREECALCULATION} as follows:
	\begin{align} \label{E:SECONDLEIBNIZTORSIONFREECALCULATION}
		\newg(\nabla_X Y,Z) 
		- 
		\newg(\nabla_Z Y,X)
		&  
		= \nabla_X [\newg(Y,Z)]
		- \nabla_Z [\newg(Y,X)]
		- \newg(\SigmatLie_X Z,Y)
			\\
	& = Y \otimes Z \cdot \SigmatLie_X \newg
		+ \newg(\SigmatLie_X Y,Z)
		+ \newg(\SigmatLie_X Z,Y)
		- X \otimes Y \cdot \SigmatLie_Z \newg
			\notag \\
	& \ \
		+ \newg(\SigmatLie_Y Z, X)
		- \newg(\SigmatLie_Z X,Y) 
		- \newg(\SigmatLie_X Z,Y).
		\notag
	\end{align}
	Substituting RHS~\eqref{E:SECONDLEIBNIZTORSIONFREECALCULATION}
	for the last two terms on RHS~\eqref{E:EASYLEIBNIZTORSIONFREECALCULATION}
	and noting that 
	$- \newg(\SigmatLie_Z X,Y) 
		- \newg(\SigmatLie_X Z,Y)
	=0
	$,
	we arrive at \eqref{E:VECTORFIELDCOVARIANTDERIVATIVEID}.
\end{proof}

We now compare the strength of the norms
$|\cdot|_{\StMet}$ and $|\cdot|_{\newg}$.

\begin{lemma}[\textbf{Preliminary comparison estimates}]
	\label{L:PRELIMINARYCOMPARISON}
	There exist constants $C >1$ and $c > 1$ such that 
	the following estimates hold for $\Sigma_t$-tangent tensors $\xi$:
	\begin{align} \label{E:GNORMSTMETRICCOMPARISON}
		\frac{1}{C} \scale^{c \upsigma}
			\left|
				\xi	
			\right|_{\StMet}
		& \leq
		\left|
			\xi
		\right|_{\newg}	
		\leq C \scale^{- c \upsigma}
			\left|
				\xi	
			\right|_{\StMet}.
	\end{align}
	
	Moreover, let $\xi$ be a type $\binom{l}{m}$ $\Sigma_t-$tangent tensor,
	let $\theta^1, \cdots, \theta^l \in \Theta$
	and
	$Z_1, \cdots, Z_m \in \mathscr{Z}$
	(see Def.\ \ref{D:LIETRANSPORTEDSPATIALFRAME}),
	and let
	$\xi_{Z_1 \cdots Z_m}^{\theta^1 \cdots \theta^l}
	:= 
	\xi_{b_1 \cdots b_m}^{a_1 \cdots a_l}
	\theta_{a_1}^1 \cdots \theta_{a_l}^l
	Z_1^{b_1} \cdots Z_m^{b_m}
	$
	denote the contraction of $\xi$ against
	$\theta^1, \cdots, \theta^l$ 
	and
	$Z_1, \cdots, Z_m$.
	The following pointwise estimate holds:
	\begin{align} \label{E:GNORMCONTRACTIONCOMPARISON}
		\frac{1}{C} \scale^{c \upsigma}
			\mathop{\sum_{\theta^1,\cdots,\theta^l \in \Theta}}_{Z_1,\cdots,Z_m \in \mathscr{Z}}
				\left|
					\xi_{Z_1 \cdots Z_m}^{\theta^1 \cdots \theta^l}	
				\right|
		& \leq
		\left|
			\xi
		\right|_{\newg}	
		\leq C \scale^{- c \upsigma}
			\mathop{\sum_{\theta^1,\cdots,\theta^l \in \Theta}}_{Z_1,\cdots,Z_m \in \mathscr{Z}}
				\left|
					\xi_{Z_1 \cdots Z_m}^{\theta^1 \cdots \theta^l}	
				\right|.
	\end{align}
	
	Finally, if $\nabla^L$ is the $L^{th}$ order covariant derivative operator,
	then for $L = 1,2$, we have
	\begin{align} \label{E:OPERATORCOMPARISON}
		\left|
			\nabla^L \xi
		\right|_{\newg}
		& \leq C
			\scale^{- c \upsigma} 
			\left|
				\SigmatLie_{\mathscr{Z}}^{\leq L} \xi
			\right|_{\newg},
		&&
		\left|
			\SigmatLie_{\mathscr{Z}}^L \xi
		\right|_{\newg}
		\leq 
		C \scale^{- c \upsigma} 	
		\left|
			\nabla^{\leq L} \xi
		\right|_{\newg}.
	\end{align}
\end{lemma}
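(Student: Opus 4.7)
The plan is to convert the sup-norm control of $\newg - \StMet$ and $\newg^{-1} - \StMet^{-1}$ (supplied directly by the $\|\cdot\|_{C_\StMet^{14}(\Sigma_t)}$ portion of the bootstrap assumption \eqref{E:HIGHNORMBOOTSTRAP}) into pointwise eigenvalue control of $\newg$ relative to $\StMet$, and then to leverage the frame/connection identities of Lemmas~\ref{L:CONNECTIONCOEFFICIENTS} and \ref{L:VECTORFIELDCOVARIANTDERIVATIVEID} to transfer between the Lie-derivative and covariant-derivative formalisms. Throughout, $\scale(t) \leq 1$ ensures that $\varepsilon \scale^{-\upsigma} \leq \scale^{-\upsigma}$, so all bounds absorb cleanly into $\scale^{-c\upsigma}$ factors.

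For \eqref{E:GNORMSTMETRICCOMPARISON}, I would fix a point $p \in \Sigma_t$ and simultaneously diagonalize $\newg$ against $\StMet$, obtaining eigenvalues $\lambda_1, \lambda_2, \lambda_3 > 0$. The bootstrap gives $\max_i (\lambda_i - 1)^2 \leq |\newg - \StMet|_\StMet^2 \leq \varepsilon^2 \scale^{-2\upsigma}$ and $\max_i (1/\lambda_i - 1)^2 \leq |\newg^{-1} - \StMet^{-1}|_\StMet^2 \leq \varepsilon^2 \scale^{-2\upsigma}$. Combined, these yield $\lambda_i \in [\scale^{\upsigma}/C, C\scale^{-\upsigma}]$; crucially, the $\newg^{-1}$ bound is the only source of the lower bound, since the direct $\newg$ bound degenerates when $\scale^{-\upsigma}$ is large. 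Because $|\xi|_\newg^2$ is, in the diagonalizing frame, a positive linear combination of the squared components with weights given by products of the $\lambda_i$ and $1/\lambda_i$ (totalling $l+m$ factors for a type $\binom{l}{m}$ tensor), one obtains \eqref{E:GNORMSTMETRICCOMPARISON} with $c$ depending only on $l + m$. Estimate \eqref{E:GNORMCONTRACTIONCOMPARISON} is then immediate: because $\mathscr{Z}$ is $\StMet$-orthonormal with $\StMet$-dual coframe $\Theta$, the quantity $|\xi|_\StMet^2$ is exactly the sum of the squared frame components $(\xi_{Z_1 \cdots Z_m}^{\theta^1 \cdots \theta^l})^2$, and \eqref{E:GNORMSTMETRICCOMPARISON} provides the two-sided comparison to $|\xi|_\newg$.

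For the operator comparison \eqref{E:OPERATORCOMPARISON} with $L = 1$, I would work in the frame $\mathscr{Z}$ and use Lemma~\ref{L:VECTORFIELDCOVARIANTDERIVATIVEID} (and the natural analogues for higher-rank tensors, generated by $\nabla \newg = 0$ and the Leibniz rule) to write schematically $\nabla_{Z_{(A)}} \xi = \SigmatLie_{Z_{(A)}} \xi + \Phi \cdot \xi$, where $\Phi$ is built from the connection coefficients $\Gamma_{A\ B}^{\ C}$. By Lemma~\ref{L:CONNECTIONCOEFFICIENTS} these split as the background piece $\frac{1}{3} \epsilon_{ABC}$ plus $\Gamma^{\triangle}$, which is a linear combination of $\SigmatLie_{\mathscr{Z}}(\newg - \StMet)$ and $(\newg - \StMet)$ contracted with $\newg^{-1}$. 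The bootstrap directly controls all relevant $\StMet$-norms of $\SigmatLie_{\mathscr{Z}}^{\leq 1}(\newg - \StMet)$ and $\newg^{-1}$ in sup norm, and converting these to $\newg$-norms via \eqref{E:GNORMSTMETRICCOMPARISON} introduces only finitely many factors of $\scale^{-c\upsigma}$. For $L = 2$, I would iterate the $L = 1$ bound, absorbing the commutator $[\nabla, \SigmatLie_{\mathscr{Z}}]$ via \eqref{E:NABLALIEDETAILEDCOMMUTATOR} and the commutation identities of Lemma~\ref{L:COMMUTATIONIDENTITIES}. The reverse inequalities in both \eqref{E:GNORMSTMETRICCOMPARISON} and \eqref{E:OPERATORCOMPARISON} follow by the same procedure applied to the reciprocal eigenvalues and to the algebraic inversion of the $\nabla$-versus-$\SigmatLie$ identity.

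The chief obstacle is purely bookkeeping: each invocation of \eqref{E:GNORMSTMETRICCOMPARISON} or of the bootstrap on a factor of $\newg^{-1}$ costs an additional power of $\scale^{-\upsigma}$, and one must verify that the total number of such conversions remains bounded by a constant depending only on the fixed tensor rank and on $L \leq 2$, independent of the bootstrap exponent $\upsigma$ itself. The estimates are nowhere delicate, but the exponent $c$ in the statement must be chosen large enough (and $\varepsilon, \upsigma$ small enough) to accommodate all these accumulated $\scale^{-\upsigma}$ factors.
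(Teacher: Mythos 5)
Your proof is correct, and the only genuine divergence from the paper's argument is in the treatment of \eqref{E:GNORMSTMETRICCOMPARISON}. The paper avoids the simultaneous diagonalization of $\newg$ against $\StMet$: for the model case of a one-form $\xi$, it instead writes $|\xi|_{\newg}^2 = |\xi|_{\StMet}^2 + (\newg^{-1} - \StMet^{-1})^{ab}\xi_a\xi_b$ and applies Cauchy--Schwarz relative to $\StMet$ for the upper bound, while for the lower bound it uses Cauchy--Schwarz relative to $\newg$ to get $|\xi|_{\StMet}^2 \lesssim |\StMet^{-1}|_{\newg}|\xi|_{\newg}^2$ and then estimates $|\StMet^{-1}|_{\newg} \lesssim |\newg|_{\StMet}$. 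Your eigenvalue route delivers the same dichotomy you correctly flag---control on $\newg^{-1} - \StMet^{-1}$ supplies the lower bound on the $\lambda_i$, control on $\newg - \StMet$ the upper bound---and it has the marginal advantage of making the constant's dependence on the tensor rank $l + m$ fully transparent from the start, at the cost of the small bookkeeping point (which you handle) that $\varepsilon \leq 1$ is what makes $1/(1+\varepsilon\scale^{-\upsigma}) \geq \scale^{\upsigma}/2$ with a constant independent of $\varepsilon$. For \eqref{E:GNORMCONTRACTIONCOMPARISON} and the operator comparisons \eqref{E:OPERATORCOMPARISON}, you follow the same path as the paper: frame components via Cor.~\ref{C:STMETRICFRAMEEXPANDED}, then $\nabla$--$\SigmatLie$ interchange via the connection identities of Lemma~\ref{L:CONNECTIONCOEFFICIENTS} (i.e.\ boundedness of $|\nabla Z|_{\newg}$), with the $L = 2$ case obtained by iteration and the commutator estimate \eqref{E:TENSORNABLALIEZCOMMUTATOR}. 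One small remark: it is worth being explicit, as the paper is, that the proof uses only the sup-norm bootstrap bounds on $|\newg^{-1} - \StMet^{-1}|_{\StMet}$ and $|\SigmatLie_{\mathscr{Z}}^{\leq 2}(\newg - \StMet)|_{\StMet}$; this observation is what later permits the sharper Cor.~\ref{C:IMPROVEMENTLEMMAOPERATORCOMPARISON} to be derived by simply replacing those inputs with the strong sup-norm estimates of Prop.~\ref{P:STRONGSUPNORMESTIMATES}.
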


\begin{proof}
	We start by making a basic remark (which is justified by our proof) 
	that will later be important for our proof of Cor.~\ref{C:IMPROVEMENTLEMMAOPERATORCOMPARISON}:
	our proof of the lemma relies only on the bounds
	$|\newg^{-1} - \StMet^{-1}|_{\StMet} \lesssim \scale^{- c \upsigma}$
	and
	$\left|
			\SigmatLie_{\mathscr{Z}}^{\leq 2}  (\newg - \StMet)
	\right|_{\StMet}
	\lesssim \scale^{- c \upsigma}
	$,
	which are a simple consequence 
	of \eqref{E:HIGHNORMBOOTSTRAP}.
	Throughout the proof, we refer to these bounds as ``the bootstrap assumptions.''
	
	We first prove \eqref{E:GNORMSTMETRICCOMPARISON}.
	We give the proof only for type $\binom{0}{1}$
	tensors since the case of general $\binom{l}{m}$
	tensors can be handled similarly.
	From Cauchy--Schwarz relative to $\StMet$, we deduce
	$\left| \xi \right|_{\newg}^2 
		= 
		(\StMet^{-1})^{ab} \xi_a \xi_b
		+
		\left\lbrace
			(\newg^{-1})^{ab} 
			- 
			(\StMet^{-1})^{ab}
		\right\rbrace
		\xi_a \xi_b
	\leq 
	\left\lbrace
		1 
		+
		C
		|\newg^{-1} - \StMet^{-1}|_{\StMet}
	\right\rbrace
	|\xi|_{\StMet}^2
$.
	The second inequality in \eqref{E:GNORMSTMETRICCOMPARISON} then follows from
	the bootstrap assumptions.
	To derive the first inequality in 
	\eqref{E:GNORMSTMETRICCOMPARISON},
	we first use Cauchy--Schwarz relative to $\newg$
	to deduce
	$\left| \xi \right|_{\StMet}^2 
		= (\StMet^{-1})^{ab} \xi_a \xi_b
		\lesssim |\StMet^{-1}|_{\newg} |\xi|_{\newg}^2$.
	We then use Cauchy--Schwarz relative to $\StMet$ 
	and the bootstrap assumptions
	to obtain
	$|\StMet^{-1}|_{\newg}
	= \left\lbrace
			\newg_{ab} \newg_{cd} (\StMet^{-1})^{ab} (\StMet^{-1})^{cd}
	\right\rbrace^{1/2}
	\lesssim |\newg|_{\StMet}
	\lesssim 1 + |\newg - \StMet|_{\StMet}
	\lesssim \scale^{- c \upsigma}
	$.
	Combining this estimate with the previous one,
	we conclude the desired lower bound \eqref{E:GNORMSTMETRICCOMPARISON}.
	
	Inequality \eqref{E:GNORMCONTRACTIONCOMPARISON}
	then follows from
	\eqref{E:GNORMSTMETRICCOMPARISON}
	and the following
	simple consequence of Cor.~\ref{C:STMETRICFRAMEEXPANDED}:
	$
	|\xi|_{\StMet}
	\approx
	\mathop{\sum_{\theta^1,\cdots,\theta^l \in \Theta}}_{Z_1,\cdots,Z_m \in \mathscr{Z}}
	\left|
	\xi_{Z_1 \cdots Z_m}^{\theta^1 \cdots \theta^l}	
	\right|
	$.
	
	We now prove \eqref{E:OPERATORCOMPARISON} in the case $L = 1$.
	Again, we give the proof only for type $\binom{0}{1}$
	tensorfields since the case of general $\binom{l}{m}$
	tensorfields can be handled similarly.
	We first show that
	$
	\left|
		\nabla Z
	\right|_{\newg}
	\lesssim 
	\scale^{- c \upsigma} 
	$.
	To this end,
	we consider the type $\binom{0}{2}$ tensorfield
	$\nabla Z_{\flat}$ that is $\newg$-dual to $\nabla Z$.
	By \eqref{E:GNORMCONTRACTIONCOMPARISON},
	the identities \eqref{E:FRAMEVECTORFIELDLIEBRACKET} and \eqref{E:VECTORFIELDCOVARIANTDERIVATIVEID},
	Cauchy--Schwarz relative to $\StMet$, 
	the fact that $|Z|_{\StMet} = 1$ for $Z \in \mathscr{Z}$,
	and the bootstrap assumptions,
	we obtain the desired bound as follows:
	$
	\left|
		\nabla Z
	\right|_{\newg}
	=
	\left|
		\nabla Z_{\flat}
	\right|_{\newg}
	\lesssim
	\scale^{- c \upsigma}
	\sum_{Z_1, Z_2 \in \mathscr{Z}}
	\left|
		Z_2 \cdot \nabla_{Z_1} Z_{\flat}
	\right|
	=
	\scale^{- c \upsigma}
	\sum_{Z_1, Z_2 \in \mathscr{Z}}
	\left|
		\newg(\nabla_{Z_1} Z, Z_2)
	\right|
	\lesssim 
	\scale^{- c \upsigma}
	\left|
		\SigmatLie_{\mathscr{Z}}^{\leq 1} \newg
	\right|_{\StMet}
	$
	$
	\lesssim 
	\scale^{- c \upsigma}
	$.
	Next, we note the following identity, written in schematic form (without regard for constant coefficients)
	and valid for $Z \in \mathscr{Z}$ and any $\Sigma_t$-tangent tensor $\xi$:
	$Z \cdot \nabla \xi  = \SigmatLie_Z \xi + \xi \cdot \nabla Z$.
	Thus, using Cauchy--Schwarz relative to $\newg$, 
	\eqref{E:GNORMSTMETRICCOMPARISON}, 
	the fact that $|Z|_{\StMet} = 1$ when $Z \in \mathscr{Z}$,
	and the bound
	$
	\left|
		\nabla Z
	\right|_{\newg}
	\lesssim 
	\scale^{- c \upsigma} 
	$,
	we deduce
	$
	\left|
		\SigmatLie_{\mathscr{Z}}^{\leq 1} \xi
	\right|_{\newg}
	\lesssim
	|\xi|_{\newg}
	+
	\sum_{Z \in \mathscr{Z}}
	\left|
		Z
	\right|_{\newg}
	\left|
		\nabla \xi
	\right|_{\newg}
	+
	\sum_{Z \in \mathscr{Z}}
	\left|
		\nabla Z
	\right|_{\newg}
	\left|
		\xi
	\right|_{\newg}
	\lesssim
	\scale^{- c \upsigma}
	\left|
		\nabla^{\leq 1} \xi
	\right|_{\newg}
	$,
	which is the second inequality stated in \eqref{E:OPERATORCOMPARISON} in the case $L=1$.
	To obtain the first inequality stated in \eqref{E:OPERATORCOMPARISON} in the case $L=1$,
	we use the schematic identity
	$Z \cdot \nabla \xi  = \SigmatLie_Z \xi + \xi \cdot \nabla Z$,
	\eqref{E:GNORMSTMETRICCOMPARISON},
	\eqref{E:GNORMCONTRACTIONCOMPARISON}, 
	the identity $|Z|_{\StMet} = 1$,
	and the bound 
	$
	\left|
		\nabla Z
	\right|_{\newg}
	\lesssim 
	\scale^{- c \upsigma} 
	$
	to deduce that
	$
	\left|
		\nabla \xi
	\right|_{\StMet}
	\lesssim
	\sum_{Z_1,Z_2 \in \mathscr{Z}}
		\left|Z_1 \otimes Z_2 \cdot \nabla \xi \right|
	$	
	$
	\lesssim
	\sum_{Z \in \mathscr{Z}}
		\left|
			\SigmatLie_Z \xi 
		\right|_{\StMet}
		+
		\sum_{Z  \in \mathscr{Z}}
		\left|
			\xi
		\right|_{\StMet}
		\left|
			\nabla Z
		\right|_{\StMet}
	$
	$
	\lesssim
	\left\lbrace
		1 
		+ 
		C \scale^{- c\upsigma} 
		\left|
			\nabla Z
		\right|_{\newg}
	\right\rbrace
	\left|
			\SigmatLie_{\mathscr{Z}}^{\leq 1} \xi
	\right|_{\StMet}
	$
	$
	\lesssim 
	\scale^{- c\upsigma}
	\left|
		\SigmatLie_{\mathscr{Z}}^{\leq 1} \xi
	\right|_{\StMet}
	$.
	The desired first inequality in \eqref{E:OPERATORCOMPARISON}
	in the case $L=1$ now follows from the previous estimate
	and \eqref{E:GNORMSTMETRICCOMPARISON}.
	
	To prove the first estimate stated in \eqref{E:OPERATORCOMPARISON}
	in the case $L=2$, we use the first estimate in \eqref{E:OPERATORCOMPARISON}
	in the already proven case $L=1$
	with $\nabla \xi$ in the role of $\xi$ to deduce that
	$
	\left|
			\nabla^2 \xi
	\right|_{\newg}
		\lesssim
			\scale^{- c \upsigma} 
			\left|
				\SigmatLie_{\mathscr{Z}}^{\leq 1} \nabla \xi
			\right|_{\newg}
			= 
			\scale^{- c \upsigma} 
			\left|
				\nabla \SigmatLie_{\mathscr{Z}}^{\leq 1}  \xi
			\right|_{\newg}
			+
		 \scale^{- c \upsigma} 
			\sum_{Z \in \mathscr{Z}}
			\left|
				[\nabla,\SigmatLie_Z]  \xi
			\right|_{\newg}
		\lesssim 
		\scale^{- c \upsigma} 
			\left|
				\SigmatLie_{\mathscr{Z}}^{\leq 2}  \xi
			\right|_{\newg}
			+
		 \scale^{- c \upsigma} 
			\sum_{Z \in \mathscr{Z}}
			\left|
				[\nabla, \SigmatLie_Z]  \xi
			\right|_{\newg}
	$.
	To obtain the desired bound,
	we need only to show that
	$
	\left|
		[\nabla, \SigmatLie_Z]  \xi
	\right|_{\newg}
	\lesssim 
	\scale^{- c \upsigma} |\xi|_{\newg}
	$.
	To this end, we use
	the formula \eqref{E:TENSORNABLALIEZCOMMUTATOR} (with $\vec{I}=1$),
	the first estimate in \eqref{E:OPERATORCOMPARISON}
	in the case $L=1$,
	\eqref{E:GNORMSTMETRICCOMPARISON},
	and the bootstrap assumptions
	to deduce the desired bound as follows:
	$
	\left|
		[\nabla, \SigmatLie_Z]  \xi
	\right|_{\newg}
	\lesssim 
	\sum_{Z \in \mathscr{Z}}
	\left|
		\nabla \SigmatLie_Z \newg
	\right|_{\newg}
	\left|
		\xi
	\right|_{\newg}
	\lesssim 
	\scale^{- c \upsigma} 
	\left|
		\SigmatLie_{\mathscr{Z}}^{\leq 2} (\newg - \StMet)
	\right|_{\newg}
	\left|
		\xi
	\right|_{\newg}
	\lesssim 
	\scale^{- c \upsigma} 
	\left|
			\SigmatLie_{\mathscr{Z}}^{\leq 2} (\newg - \StMet)
	\right|_{\StMet}
	\left|
		\xi
	\right|_{\newg}
	\lesssim 
	\scale^{- c \upsigma} 
	\left|
		\xi
	\right|_{\newg}
	$.
	To prove the second estimate stated in \eqref{E:OPERATORCOMPARISON}
	in the case $L=2$,
	we first use 
	the second estimate in \eqref{E:OPERATORCOMPARISON}
	in the case $L=1$ to deduce
	$
	\sum_{Z_1, Z_2 \in \mathscr{Z}}
	\left|
		\SigmatLie_{Z_1} \SigmatLie_{Z_2} \xi
	\right|_{\newg}
	$
	$
	\lesssim
	\scale^{- c \upsigma} 
	\left|
		\SigmatLie_{\mathscr{Z}}^{\leq 1} \xi
	\right|_{\newg}
	+
	\scale^{- c \upsigma} 
	\sum_{Z \in \mathscr{Z}}
	\left|
		\nabla \SigmatLie_Z \xi
	\right|_{\newg}
	$
	$
	\lesssim
	\scale^{- c \upsigma} 
	\left|
		\nabla^{\leq 1} \xi
	\right|_{\newg}
	+
	\scale^{- c \upsigma} 
	\sum_{Z \in \mathscr{Z}}
	\left|
		\SigmatLie_Z \nabla \xi
	\right|_{\newg}
	+
	\scale^{- c \upsigma} 
	\sum_{Z \in \mathscr{Z}}
	\left|
		[\nabla, \SigmatLie_Z] \xi
	\right|_{\newg}
	$
	$
	\lesssim
	\scale^{- c \upsigma}
	\left|
		\nabla^{\leq 2} \xi
	\right|_{\newg}
	+
	\scale^{- c \upsigma} 
	\sum_{Z \in \mathscr{Z}}
	\left|
		[\nabla, \SigmatLie_Z] \xi
	\right|_{\newg}
	$.
	To finish the proof,
	we need only to use the bound
	$
	\left|
		[\nabla, \SigmatLie_Z] \xi
	\right|_{\newg}
	\lesssim 
	\scale^{- c \upsigma} |\xi|_{\newg}
	$
	proved just above.
\end{proof}

\subsection{Preliminary curvature estimates}
\label{SS:PRELIMINARYCURVATURE}
In this subsection, we derive preliminary estimates for the curvatures of
the rescaled metric $\newg$. We recall that the type $\binom{0}{4}$ Riemann curvature of $\newg$ is denoted by $\Riem$
and that its type $\binom{1}{1}$ Ricci tensor is denoted by $\Ric^{\#}$.

\begin{lemma}[\textbf{Preliminary curvature estimates}]
	\label{L:RIEMANNCURVATUREAPPROXALLINDICESDOWN}
	There exist polynomials in two real variables 
	with strictly positive coefficients,
	all of which are schematically denoted by $\Pol$,
	%smooth uniformly positive increasing function
	%$\smoothfunction:[0,\infty) \times [0,\infty) \rightarrow \mathbb{R}_+$
	such that the following estimates hold:
\begin{subequations}	
\begin{align} \label{E:RIEMANNCURVATUREAPPROXALLINDICESDOWN}
		\Riem
		& = 
				\frac{1}{18} \newg \owedge \newg
				+ 
				\Riem_{\triangle},
				\\
	\Ric^{\#}
		& = 
				\frac{2}{9} \ID
				+ 
				\Ric_{\triangle}^{\#},
			\label{E:RICCIESTONEUPONEDOWN} 
\end{align}
\end{subequations}
where
\begin{subequations}
\begin{align}
	\left|
		\Riem_{\triangle}
	\right|_{\StMet}
	& \leq
		\Pol
		\left(
				\left|
				\SigmatLie_{\mathscr{Z}}^{\leq 1}
				\left\lbrace
					\newg - \StMet
				\right\rbrace
			\right|_{\StMet},
			|\newg^{-1} - \StMet^{-1}|_{\StMet}
				%\sum_{A=1}^3
				%\left|
				%	\SigmatLie_{Z_{(A)}} \newg 
				%\right|_{\StMet}
		\right)
		\left\lbrace
			\left|
				\SigmatLie_{\mathscr{Z}}^{\leq 2}
				\left\lbrace
					\newg - \StMet
				\right\rbrace
			\right|_{\StMet}
			+
			\left|
				\newg^{-1} - \StMet^{-1}
			\right|_{\StMet}
		\right\rbrace,
		\label{E:ERRORRIEMANNCURVATUREAPPROXALLINDICESDOWN}
			\\
	\left|
		\Ric_{\triangle}^{\#}
	\right|_{\StMet}
	& \leq
		\Pol
		\left(
				\left|
				\SigmatLie_{\mathscr{Z}}^{\leq 1}
				\left\lbrace
					\newg - \StMet
				\right\rbrace
			\right|_{\StMet},
			|\newg^{-1} - \StMet^{-1}|_{\StMet}
				%\sum_{A=1}^3
				%\left|
				%	\SigmatLie_{Z_{(A)}} \newg 
				%\right|_{\StMet}
		\right)
		\left\lbrace
			\left|
				\SigmatLie_{\mathscr{Z}}^{\leq 2}
				\left\lbrace
					\newg - \StMet
				\right\rbrace
			\right|_{\StMet}
			+
			\left|
				\newg^{-1} - \StMet^{-1}
			\right|_{\StMet}
		\right\rbrace.
		\label{E:ERRORRICONEUPONEDOWN}
	\end{align}
	\end{subequations}
	In \eqref{E:RICCIESTONEUPONEDOWN},
	$\ID$ denotes the identity transformation,
	while the tensor $\newg \owedge \newg$ on RHS \eqref{E:RIEMANNCURVATUREAPPROXALLINDICESDOWN}
	has the following components relative to arbitrary coordinates on $\Sigma_t$:
	$
	(\newg \owedge \newg)_{ijkl} 
	:= 
	2 \newg_{ik} \newg_{jl}
	-
	2 \newg_{il} \newg_{jk}
	$.
\end{lemma}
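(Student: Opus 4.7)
The plan is to derive both identities by computing $\Riem[\newg]$ and $\Ric^{\#}[\newg]$ relative to the frame $\mathscr{Z}$ from Lemma~\ref{L:BASISOFKILLINGFIELDS} and isolating the round-metric contribution. First, I would record the explicit form of the round curvature: since $\StMet$ is a left-invariant metric on $\mathbb{S}^3$ of constant sectional curvature $1/9$ (equivalent to the normalization $\ScalarCurArg{\StMet} = 2/3$ from Def.\ \ref{D:ROUNDMETRIC}), its Riemann tensor satisfies $\Riem[\StMet] = \tfrac{1}{18} \StMet \owedge \StMet$ and its type $\binom{1}{1}$ Ricci tensor is $(\Ric[\StMet])^{\#_{\StMet}} = \tfrac{2}{9} \ID$. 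Relative to the frame $\mathscr{Z}$, this corresponds (via the Koszul formula together with the bracket identity \eqref{E:FRAMEVECTORFIELDLIEBRACKET}) to the connection coefficients $\Gamma[\StMet]_{A\ B}^{\ C} = \tfrac{1}{3}\epsilon_{ABD}\delta^{DC}$, which is precisely the leading piece in Lemma~\ref{L:CONNECTIONCOEFFICIENTS}.

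Next, I would use Lemma~\ref{L:CONNECTIONCOEFFICIENTS} to write the connection coefficients of $\newg$ in the frame as $\Gamma_{A\ B}^{\ D}[\newg] = \Gamma[\StMet]_{A\ B}^{\ D} + \Gamma_{A\ B}^{\ D,\triangle}$, where $\Gamma^{\triangle}$ depends linearly on $\SigmatLie_{\mathscr{Z}}^{\leq 1}(\newg - \StMet)$ modulo the factor $(\newg^{-1})^{CD}$ needed to raise the last index, which contributes an additional $(\newg^{-1} - \StMet^{-1})$-dependence. Then, I would use the standard frame formula
\begin{align*}
\Riem[\newg]_{AB\ C}^{\ \ D}
= Z_A(\Gamma_{B\ C}^{\ D}) - Z_B(\Gamma_{A\ C}^{\ D})
 + \Gamma_{B\ C}^{\ E}\Gamma_{A\ E}^{\ D} - \Gamma_{A\ C}^{\ E}\Gamma_{B\ E}^{\ D}
 - \tfrac{2}{3}\epsilon_{ABF}\Gamma_{F\ C}^{\ D},
\end{align*}
substitute the splitting $\Gamma = \Gamma[\StMet] + \Gamma^{\triangle}$, and observe that the ``pure $\Gamma[\StMet]$'' part reproduces $\Riem[\StMet]_{AB\ C}^{\ \ D}$, while the remaining (cross and quadratic) terms involve either $Z_{A}\Gamma^{\triangle}$ or products of $\Gamma^{\triangle}$ with $\Gamma[\StMet]$ or itself. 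Each frame derivative $Z_A \Gamma^{\triangle}$ produces at most a second-order Lie derivative of $\newg - \StMet$ (using \eqref{E:FRAMEVECTORFIELDLIEBRACKET} to convert frame derivatives to $\SigmatLie_{\mathscr{Z}}$), and each $\Gamma^{\triangle}\Gamma$ product is polynomial in the first-order quantities.

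Third, to obtain the announced form of the main term, I would rewrite $\Riem[\StMet] = \tfrac{1}{18}\StMet \owedge \StMet$ as $\tfrac{1}{18}\newg \owedge \newg$ plus bilinear terms involving $(\newg - \StMet)$, which are then absorbed into $\Riem_{\triangle}$. The resulting error tensor collects every contribution containing at least one factor of $\SigmatLie_{\mathscr{Z}}^{\leq 2}(\newg - \StMet)$ or $(\newg^{-1} - \StMet^{-1})$ with a polynomial coefficient in $\SigmatLie_{\mathscr{Z}}^{\leq 1}(\newg - \StMet)$ and $(\newg^{-1} - \StMet^{-1})$, which yields \eqref{E:ERRORRIEMANNCURVATUREAPPROXALLINDICESDOWN} after invoking the pointwise comparison \eqref{E:GNORMCONTRACTIONCOMPARISON} to convert frame-contraction estimates into $|\cdot|_{\StMet}$-bounds. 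For the Ricci identity \eqref{E:RICCIESTONEUPONEDOWN}, I would simply contract the Riemann decomposition with $(\newg^{-1})^{ab}$: the contraction of $\tfrac{1}{18}\newg\owedge\newg$ produces the main term $\tfrac{2}{9}\ID$, and the contraction of $\Riem_{\triangle}$ (plus additional errors from replacing $(\newg^{-1})^{ab}$ by $(\StMet^{-1})^{ab} + ((\newg^{-1})^{ab} - (\StMet^{-1})^{ab})$) forms $\Ric_{\triangle}^{\#}$.

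The main obstacle is essentially bookkeeping: one must check that every error term that appears in the frame expansion of the Riemann and Ricci curvatures admits a factorization into a polynomial coefficient in $(|\SigmatLie_{\mathscr{Z}}^{\leq 1}(\newg-\StMet)|_{\StMet}, |\newg^{-1}-\StMet^{-1}|_{\StMet})$ times a factor that is \emph{linear} in $\SigmatLie_{\mathscr{Z}}^{\leq 2}(\newg-\StMet)$ or $(\newg^{-1}-\StMet^{-1})$. This is automatic from the schematic structure of the frame curvature formula and from the explicit form of $\Gamma^{\triangle}$ in \eqref{E:ERRORTERMALTERNATEFRAMECONNECTIONEXPRESSION}, but it requires careful tracking of how many derivatives fall on any single factor of $(\newg - \StMet)$, and of the placement of $\newg^{-1}$ factors used for raising indices in the $\Gamma\Gamma$ products.
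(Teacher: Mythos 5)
Your proof takes essentially the same route as the paper: pass to the $\StMet$-orthonormal Lie-transported frame, split the connection coefficients as the round piece $\tfrac{1}{3}\epsilon_{ABC}$ plus the perturbation $\Gamma^{\triangle}_{ACB}$ from \eqref{E:ALTERNATEFRAMECONNECTIONEXPRESSION}--\eqref{E:ERRORTERMALTERNATEFRAMECONNECTIONEXPRESSION}, substitute into the frame curvature formula, isolate the constant-curvature contribution $\tfrac{1}{18}\StMet\owedge\StMet$ and replace it by $\tfrac{1}{18}\newg\owedge\newg$ up to errors, and contract for Ricci. One small correction: the step converting the frame-component bounds into $|\cdot|_{\StMet}$-bounds should cite Cor.~\ref{C:STMETRICFRAMEEXPANDED} (i.e., the fact that $\mathscr{Z}$ and $\Theta$ are $\StMet$-orthonormal/dual, so $|\xi|_{\StMet}$ is comparable to the sum of absolute values of frame components) rather than \eqref{E:GNORMCONTRACTIONCOMPARISON}, which compares frame components to $|\cdot|_{\newg}$ at the cost of a $\scale^{-c\upsigma}$ factor and is not the right tool here.
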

\begin{proof}
	We give the proof of \eqref{E:RIEMANNCURVATUREAPPROXALLINDICESDOWN} and 
	\eqref{E:ERRORRIEMANNCURVATUREAPPROXALLINDICESDOWN}. The proof of
	\eqref{E:RICCIESTONEUPONEDOWN} and \eqref{E:ERRORRICONEUPONEDOWN}
	is similar and we omit those details.
	Throughout the proof, we use the notation $\Riem(Z_{(A)},Z_{(B)},Z_{(C)},Z_{(D)})$
	to denote the components of $\Riem$ relative to the 
	Lie-transported frame of Def.\ \ref{D:LIETRANSPORTEDSPATIALFRAME}.
	We now note the following standard formula:
	\begin{align} \label{E:RIEMANNCURVATUREOFTHEFRAME}
		\Riem(Z_{(A)},Z_{(B)},Z_{(C)},Z_{(D)})
		& 
		:=
		\newg(\nabla_{Z_{(B)}} (\nabla_{Z_{(A)}} Z_{(C)}), Z_{(D)})
		-
		\newg(\nabla_{Z_{(A)}} (\nabla_{Z_{(B)}} Z_{(C)}), Z_{(D)})
		\\
		& \ \ 
		+ \newg(\nabla_{[Z_{(A)},Z_{(B)}]} Z_{(C)}, Z_{(D)}).
			\notag
	\end{align}
	To proceed, we first note that by
	Cor.~\ref{C:STMETRICFRAMEEXPANDED},
	the desired bounds follow once we
	show that for $A,B,C,D = 1,2,3$, we have
	\[
	\Riem(Z_{(A)},Z_{(B)},Z_{(C)},Z_{(D)})
	= \frac{1}{18} \newg \owedge \newg(Z_{(A)},Z_{(B)},Z_{(C)},Z_{(D)})
	+
	\Riem_{\triangle}(Z_{(A)},Z_{(B)},Z_{(C)},Z_{(D)}),
	\]
	where 
	$|\Riem_{\triangle}(Z_{(A)},Z_{(B)},Z_{(C)},Z_{(D)})| \leq$ 
	RHS \eqref{E:ERRORRIEMANNCURVATUREAPPROXALLINDICESDOWN}.
	To this end, we first compute RHS \eqref{E:RIEMANNCURVATUREOFTHEFRAME}
	using 
	\eqref{E:FRAMEVECTORFIELDLIEBRACKET},
	\eqref{E:FRAMECOVARIANTDERIVATIVES},
	\eqref{E:ALTERNATEFRAMECONNECTIONEXPRESSION}-\eqref{E:ERRORTERMALTERNATEFRAMECONNECTIONEXPRESSION},
	and the identity
	$Z_{(C)} (\newg^{-1})^{AB} 
	= - (\newg^{-1})^{AA'} (\newg^{-1})^{BB'} Z_{(C)} \newg_{A'B'}
	$.
	We then use \eqref{E:ZAGBCID} to substitute for $Z_{(C)} \newg_{A'B'}$
	in the previous formula.
	Next, we expand $\newg = \StMet + (\newg - \StMet)$
	and substitute this expansion for $\newg$ everywhere in
	our computations for RHS \eqref{E:RIEMANNCURVATUREOFTHEFRAME}.
	Similarly, we expand $\newg^{-1} = \StMet^{-1} + (\newg^{-1} - \StMet^{-1})$
	and substitute. 
	The only non-small term that arises is
	$\frac{1}{18} \StMet \owedge \StMet (Z_{(A)},Z_{(B)},Z_{(C)},Z_{(D)})$,
	which is generated by tensor products of the first term on RHS \eqref{E:ALTERNATEFRAMECONNECTIONEXPRESSION}
	with itself. We replace
	$\frac{1}{18} \StMet \owedge \StMet (Z_{(A)},Z_{(B)},Z_{(C)},Z_{(D)})$
	with $\frac{1}{18} \newg \owedge \newg (Z_{(A)},Z_{(B)},Z_{(C)},Z_{(D)})$
	(which leads to the first term on RHS~\eqref{E:RIEMANNCURVATUREAPPROXALLINDICESDOWN})
	plus an error term that we incorporate into 
	$\Riem_{\triangle}(Z_{(A)},Z_{(B)},Z_{(C)},Z_{(D)})$.
	From these steps, we obtain that 
	$\Riem_{\triangle}(Z_{(A)},Z_{(B)},Z_{(C)},Z_{(D)})$
	is a sum of products involving
	$\StMet$,
	$\StMet^{-1}$,
	$\SigmatLie_{\mathscr{Z}}^{\leq 2}(\newg - \StMet)$,
	and
	$\newg^{-1} - \StMet^{-1}$,
	where each product contains at least one factor of either
	$\SigmatLie_{\mathscr{Z}}^{\leq 2}(\newg - \StMet)$
	or
	$\newg^{-1} - \StMet^{-1}$,
	and the factor $\SigmatLie_{\mathscr{Z}}^2(\newg - \StMet)$
	appears either linearly or not at all.
	The desired bound for 
	$\Riem_{\triangle}(Z_{(A)},Z_{(B)},Z_{(C)},Z_{(D)})$
	now follows from these computations,
	Cauchy--Schwarz relative to $\StMet$,
	the identities $|\StMet|_{\StMet} = |\StMet^{-1}|_{\StMet} = \sqrt{3}$,
	and the identity $|Z_{(A)}|_{\StMet} = 1$ for $A=1,2,3$.
\end{proof}

\subsection{Weak sup-norm estimates}
\label{SS:WEAKSUPNORM}
The following lemma provides ``weak'' sup-norm estimates,
which follow as a direct consequence of the bootstrap assumption \eqref{E:HIGHNORMBOOTSTRAP}. 
That is, the proof of the lemma does not rely
on the Einstein-scalar field equations.
In Sect.\ \ref{S:STRONGC0ESTIMATES}, we will use these weak sup norm estimates 
and the Einstein-scalar field equation
to prove stronger (i.e., less singular with respect to $t$) estimates.

\begin{lemma} [\textbf{Weak sup-norm estimates}] 
\label{L:WEAKSUPNORM}
The following estimates hold (see Def.\ \ref{D:SPATIALDERIVATIVENORMSANDSEMINORMS} regarding the notation):
\begin{subequations}
\begin{align} \label{E:WEAKSUPNORM}
	&
	\left\|
		\dif^{\leq 14} \FreeNewSec
	\right\|_{C_{\newg}^0(\Sigma_t)}
	+
	\left\|
		\dif^{\leq 14} (\newg - \StMet)
	\right\|_{C_{\newg}^0(\Sigma_t)}
	+
	\left\|
		\dif^{\leq 14} 
		(\newg^{-1} - \StMet^{-1})
	\right\|_{C_{\newg}^0(\Sigma_t)}
		\\
	& \ \ 
	+
	\left\|
		\dif^{\leq 12} \newlapse
	\right\|_{C^0(\Sigma_t)}
	+
	\left\|
		\dif^{\leq 14} \newtimescalar
	\right\|_{C^0(\Sigma_t)}
	+
	\left\|
		\dif^{\leq 13} \newspacescalar
	\right\|_{C_{\newg}^0(\Sigma_t)}
	\lesssim \varepsilon \scale^{- c \upsigma}(t),
		\notag \\
 &	
	\left\|
		\dif^{\leq 14} \FreeNewSec
	\right\|_{C_{\StMet}^0(\Sigma_t)}
	+
	\left\|
		\dif^{\leq 14} (\newg - \StMet)
	\right\|_{C_{\StMet}^0(\Sigma_t)}
	+
	\left\|
		\dif^{\leq 14} 
		(\newg^{-1} - \StMet^{-1})
	\right\|_{C_{\StMet}^0(\Sigma_t)}
		\label{E:WEAKSUPNORMSTMET} \\
& \ \
	+
	\left\|
		\dif^{\leq 12} \newlapse
	\right\|_{C^0(\Sigma_t)}
	+
	\left\|
		\dif^{\leq 14} \newtimescalar
	\right\|_{C^0(\Sigma_t)}
		+
	\left\|
		\dif^{\leq 13} \newspacescalar
	\right\|_{C_{\StMet}^0(\Sigma_t)}
	\lesssim \varepsilon \scale^{- c \upsigma}(t).
	\notag
\end{align}
\end{subequations}

Moreover, we have
\begin{subequations}
\begin{align} \label{E:WEAKRICCISUPNORM}
	\left\|
		\Ric^{\# } - \frac{2}{9} \ID 
	\right\|_{C_{\newg}^0(\Sigma_t)},
		\,
	\left\|
		\dif^{[1,12]} \Ric^{\# } 
	\right\|_{C_{\newg}^0(\Sigma_t)}
	& \lesssim \varepsilon \scale^{- c \upsigma}(t),
		\\
	\left\|
		\Ric^{\# } - \frac{2}{9} \ID 
	\right\|_{C_{\StMet}^0(\Sigma_t)},
		\,
	\left\|
		\dif^{[1,12]} \Ric^{\# } 
	\right\|_{C_{\StMet}^0(\Sigma_t)}
	& \lesssim \varepsilon \scale^{- c \upsigma}(t).
		\label{E:WEAKRICCISUPNORMSTMET}
\end{align} 
\end{subequations}
\end{lemma}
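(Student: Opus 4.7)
The plan is to reduce the pointwise estimates to the Sobolev bounds encoded in $\highnorm{16}$ via Sobolev embedding on the compact three-manifold $\mathbb{S}^3$, combined with the frame comparison estimates of Lemma~\ref{L:PRELIMINARYCOMPARISON}. The key analytic fact is that for any scalar function $f$ on $\mathbb{S}^3$, the standard embedding gives $\|f\|_{C^0(\Sigma_t)} \lesssim \|f\|_{H_{\StMet}^2(\Sigma_t)}$, where the Sobolev norm is built using the round background metric $\StMet$, with constants independent of $t$. Applying this with $f = |\SigmatLie_{\mathscr{Z}}^{\vec{J}} \xi|_{\StMet}$ for $|\vec{J}| \leq k$ yields
\begin{align*}
	\sum_{|\vec{J}| \leq k} \sup_{p \in \Sigma_t} |\SigmatLie_{\mathscr{Z}}^{\vec{J}} \xi(p)|_{\StMet}
	& \lesssim \| \xi \|_{H_{\StMet}^{k+2}(\Sigma_t)}.
\end{align*}
Since $\highnorm{16}$ controls at least $16$ Lie derivatives of $\FreeNewSec$, $\newg - \StMet$, $\newg^{-1} - \StMet^{-1}$, and $\newtimescalar$, at least $15$ of $\newspacescalar$, and at least $14$ of $\newlapse$, the embedding yields $C_{\StMet}^{14}$-control of the first four, $C_{\StMet}^{13}$-control of $\newspacescalar$, and $C^{12}$-control of $\newlapse$, with a loss of at most $\varepsilon \scale^{-\upsigma}(t)$ by the bootstrap assumption \eqref{E:HIGHNORMBOOTSTRAP}.

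Next, I will convert from the $\SigmatLie_{\mathscr{Z}}$-based seminorms to the $\dif$-based seminorms of Def.\ \ref{D:SPATIALDERIVATIVENORMSANDSEMINORMS}. Since an operator of type $\dif^k$ consists of at most two $\nabla$ factors composed with Lie derivatives along elements of $\mathscr{Z}$, an iterated application of \eqref{E:OPERATORCOMPARISON}, together with the commutator estimates embedded in the proof of Lemma~\ref{L:PRELIMINARYCOMPARISON}, yields
\begin{align*}
	|\dif^{\leq k} \xi|_{\newg}
	\lesssim \scale^{-c\upsigma}(t) \, |\SigmatLie_{\mathscr{Z}}^{\leq k} \xi|_{\newg}
	\lesssim \scale^{-c\upsigma}(t) \, |\SigmatLie_{\mathscr{Z}}^{\leq k} \xi|_{\StMet},
\end{align*}
where the second step uses the comparison \eqref{E:GNORMSTMETRICCOMPARISON}. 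Combining this with the Sobolev bound just obtained produces the first block of estimates \eqref{E:WEAKSUPNORM}, and in particular \eqref{E:WEAKSUPNORMSTMET} follows by the same argument but without invoking \eqref{E:GNORMSTMETRICCOMPARISON} on the final step. Each such conversion introduces at most a finite power of $\scale^{-\upsigma}$, which is absorbed into the constant $c$ in $\scale^{-c\upsigma}$.

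For the curvature bounds \eqref{E:WEAKRICCISUPNORM}-\eqref{E:WEAKRICCISUPNORMSTMET}, I will apply the expansion \eqref{E:RICCIESTONEUPONEDOWN} together with the bound \eqref{E:ERRORRICONEUPONEDOWN} for $\Ric_{\triangle}^{\#}$, which expresses the error in terms of up to two Lie derivatives of $\newg - \StMet$ together with $\newg^{-1} - \StMet^{-1}$. Since these are already controlled in $C_{\StMet}^{14}$ by $\varepsilon \scale^{-c\upsigma}(t)$, the product structure and smallness of $\varepsilon$ immediately yield the desired bound on $\Ric^{\#} - (2/9)\ID$. For the derivatives $\dif^{[1,12]} \Ric^{\#}$, I will commute up to $12$ operators of $\dif$-type through the expansion in Lemma~\ref{L:RIEMANNCURVATUREAPPROXALLINDICESDOWN}; this produces at most $14$ total derivatives of $\newg - \StMet$ or $\newg^{-1} - \StMet^{-1}$, which are still controlled in $C_{\StMet}^0$ by the first part of the lemma (the total count $12 + 2 = 14$ matches the available sup-norm regularity exactly). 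The only mild subtlety is bookkeeping the powers of $\scale^{-\upsigma}$ generated each time one converts between $\newg$- and $\StMet$-based norms or between $\nabla$ and $\SigmatLie_{\mathscr{Z}}$; since only finitely many such conversions occur and the constants can be absorbed, this step is routine. There is no genuinely hard step — the only place one must be careful is ensuring that the derivative counts in $\highnorm{16}$ (in particular the non-uniformity in how many derivatives of $\newlapse$ and $\newspacescalar$ are controlled at the top) are compatible with the claimed counts in the conclusion, but inspection of Def.\ \ref{D:HIGHNORM} shows they match exactly.
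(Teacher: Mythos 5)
Your proof is correct in outcome but takes a detour the paper avoids, and there is a point worth flagging in the curvature estimates.

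For \eqref{E:WEAKSUPNORM}--\eqref{E:WEAKSUPNORMSTMET}, you go through Sobolev embedding: you bound the $C_{\StMet}^{k}$ sup-norms by the $H_{\StMet}^{k+2}$ norms, then appeal to the bootstrap on $\highnorm{16}$. This works and the derivative counts match. But it is unnecessary: look again at the last two lines of the definition \eqref{E:HIGHNORM} of $\highnorm{16}$. The quantities $\left\| \FreeNewSec \right\|_{C_{\StMet}^{14}(\Sigma_t)}$, $\left\| \newg - \StMet \right\|_{C_{\StMet}^{14}(\Sigma_t)}$, $\left\| \newg^{-1} - \StMet^{-1} \right\|_{C_{\StMet}^{14}(\Sigma_t)}$, $\left\| \newlapse \right\|_{C_{\StMet}^{12}(\Sigma_t)}$, $\left\| \newtimescalar \right\|_{C_{\StMet}^{14}(\Sigma_t)}$, and $\left\| \newspacescalar \right\|_{C_{\StMet}^{13}(\Sigma_t)}$ are already \emph{part of the norm}. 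The bootstrap \eqref{E:HIGHNORMBOOTSTRAP} therefore gives the $C_{\StMet}^M$ bounds directly, without any embedding. The paper's proof is then just: read off the $C_{\StMet}^M$ bound from the bootstrap, then use the commutation identities of Lemma~\ref{L:COMMUTATIONIDENTITIES} to push all $\nabla$'s to act last in $\dif^{\leq k}$, and finally convert via Lemma~\ref{L:PRELIMINARYCOMPARISON} (paying the factor $\scale^{-c\upsigma}$), also using $\SigmatLie_Z \StMet = 0$ to replace $\SigmatLie_Z \newg$ by $\SigmatLie_Z(\newg - \StMet)$. Your Sobolev step is harmless redundancy; your handling of the $\nabla\leftrightarrow\SigmatLie_{\mathscr{Z}}$ conversion and absorption of $\scale^{-\upsigma}$ losses is the same as the paper's.

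For \eqref{E:WEAKRICCISUPNORM}--\eqref{E:WEAKRICCISUPNORMSTMET}, the undifferentiated bound via \eqref{E:RICCIESTONEUPONEDOWN}--\eqref{E:ERRORRICONEUPONEDOWN} is exactly the paper's argument. For $\dif^{[1,12]} \Ric^{\#}$, however, you propose to ``commute up to $12$ operators of $\dif$-type through the expansion in Lemma~\ref{L:RIEMANNCURVATUREAPPROXALLINDICESDOWN}.'' This is not immediately available: Lemma~\ref{L:RIEMANNCURVATUREAPPROXALLINDICESDOWN} gives a \emph{pointwise bound} on the error tensor $\Ric_{\triangle}^{\#}$, not an explicit algebraic formula whose derivatives you can control without re-deriving it. What the paper actually uses here is Lemma~\ref{L:RICCILIEDERIVATIVESCHEMATIC} (equations \eqref{E:RICCILIEDERIVATIVESCHEMATIC}--\eqref{E:RICINHOMERROR}), which provides a schematic formula for $\SigmatLie_{\mathscr{Z}}^{\vec{I}} \Ric^{\#}$ in terms of the undifferentiated $\Ric^{\#}$ and the covariant/Lie derivatives of $\newg$; this is then combined with the already-proven bound on $\Ric^{\#} - \frac{2}{9}\ID$ and \eqref{E:WEAKSUPNORMSTMET}. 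Your derivative count ($12 + 2 = 14$) is consistent with this, and the idea is not wrong, but if you want to avoid Lemma~\ref{L:RICCILIEDERIVATIVESCHEMATIC} you would need to go inside the proof of Lemma~\ref{L:RIEMANNCURVATUREAPPROXALLINDICESDOWN} and differentiate the explicit expression for $\Ric_{\triangle}^{\#}$, which amounts to re-deriving the tool the paper already built. Using Lemma~\ref{L:RICCILIEDERIVATIVESCHEMATIC} is cleaner.
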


\begin{proof}
	To obtain \eqref{E:WEAKSUPNORM}-\eqref{E:WEAKSUPNORMSTMET},
	we first repeatedly use the commutation identities of Lemma~\ref{L:COMMUTATIONIDENTITIES}
	to commute any factors of $\nabla$ that might appear in an operator
	$\dif^M$ to the front (so that the $\nabla$ act last).
	The desired inequalities \eqref{E:WEAKSUPNORM}-\eqref{E:WEAKSUPNORMSTMET}
	now follow from these commutation identities,
	the bootstrap assumption \eqref{E:HIGHNORMBOOTSTRAP},
	Lemma~\ref{L:PRELIMINARYCOMPARISON},
	and the fact that in all estimates,
	we can replace $\SigmatLie_Z \newg$
	with $\SigmatLie_Z (\newg - \StMet)$ 
	whenever $Z \in \mathscr{Z}$ 
	(since $\SigmatLie_Z \StMet = 0$).
	
	To prove \eqref{E:WEAKRICCISUPNORMSTMET},
	we first use
	\eqref{E:RICCIESTONEUPONEDOWN},
	\eqref{E:ERRORRICONEUPONEDOWN},
	and \eqref{E:WEAKSUPNORMSTMET}
	to deduce that
	$\left\| 
		\Ric^{\# } - \frac{2}{9} \ID
	\right\|_{C^0_{\StMet}(\Sigma_t)}
	\lesssim
	\left\|
		\Riem_{\triangle \newg}
	\right\|_{C^0_{\StMet}(\Sigma_t)}
	\lesssim \varepsilon \scale^{-c \upsigma}(t)
	$
	as desired.
	To obtain the bound
	$\left\| 
		\dif^{[1,12]} \Ric^{\# } 
	\right\|_{C^0_{\StMet}(\Sigma_t)} 
	\lesssim 
	\varepsilon \scale^{-c \upsigma}$,
	we first note the basic fact that
	$
	\left\| 
		\dif^{[1,12]} \Ric^{\# } 
	\right\|_{C^0_{\StMet}(\Sigma_t)}
	=
	\left\| 
		\dif^{[1,12]} \left(\Ric^{\# } - \frac{2}{9} \ID \right)
	\right\|_{C^0_{\StMet}(\Sigma_t)}
	$.
	We then use the commutation identities of Lemma~\ref{L:COMMUTATIONIDENTITIES}
	to commute any factors of $\nabla$ that might appear in an operator
	$\dif^M$ to the front (so that the $\nabla$ act last).
	Using these identities,
	\eqref{E:OPERATORCOMPARISON},
	and the already proven bounds \eqref{E:WEAKSUPNORMSTMET} and
	$\left\| 
		\Ric^{\# } - \frac{2}{9} \ID
	\right\|_{C^0_{\StMet}(\Sigma_t)}
	\lesssim \varepsilon \scale^{-c \upsigma}(t)
	$,
	we deduce that
	$
	\left\| 
		\dif^{[1,12]} \left(\Ric^{\# } - \frac{2}{9} \ID \right)
	\right\|_{C^0_{\StMet}(\Sigma_t)}
	\lesssim 
	\scale^{- c \upsigma}(t)
	\left\| 
		\SigmatLie_{\mathscr{Z}}^{\leq 12} 
		\left(\Ric^{\# } - \frac{2}{9} \ID \right)
	\right\|_{C^0_{\StMet}(\Sigma_t)}
	\lesssim
	\varepsilon
	\scale^{- c \upsigma}(t)
	+
	\scale^{- c \upsigma}(t)
	\left\| 
		\SigmatLie_{\mathscr{Z}}^{[1,12]} \Ric^{\#} 
	\right\|_{C^0_{\StMet}(\Sigma_t)}
	$.
	Next, we use the formulas
	\eqref{E:RICCILIEDERIVATIVESCHEMATIC}-\eqref{E:RICINHOMERROR}
	to algebraically express
	$\SigmatLie_{\mathscr{Z}}^{[1,12]} \Ric^{\#}$
	in terms of the undifferentiated quantity $\Ric^{\# }$
	and the covariant and Lie derivatives of $\newg$,
	where the covariant derivatives act last, and we replace all instances 
	of $\SigmatLie_Z \newg$
	with $\SigmatLie_Z (\newg - \StMet)$ 
	(which is possible since $\SigmatLie_Z \StMet = 0$).
	Again using \eqref{E:OPERATORCOMPARISON}
	and the already proven bounds \eqref{E:WEAKSUPNORMSTMET}
	and
	$\left\| 
		\Ric^{\# } - \frac{2}{9} \ID
	\right\|_{C^0_{\StMet}(\Sigma_t)}
	\lesssim \varepsilon \scale^{-c \upsigma}(t)
	$,
	we conclude that
	$
	\left\| 
		\dif^{[1,12]} \left(\Ric^{\# } - \frac{2}{9} \ID \right)
	\right\|_{C^0_{\StMet}(\Sigma_t)}
	\lesssim 
	\varepsilon
	\scale^{- c \upsigma}(t)
	+
	\scale^{-c \upsigma}(t) 
	\left\| 
		\SigmatLie_{\mathscr{Z}}^{\leq 14} (\newg - \StMet)
	\right\|_{C^0_{\StMet}(\Sigma_t)}
	\lesssim
	\varepsilon
	\scale^{- c \upsigma}(t)
	$
	as desired.
	We have thus proved \eqref{E:WEAKRICCISUPNORMSTMET}.
	
	\eqref{E:WEAKRICCISUPNORM} then follows from \eqref{E:WEAKRICCISUPNORMSTMET} and \eqref{E:GNORMSTMETRICCOMPARISON}.
	
\end{proof}

The following corollary is an immediate consequence of Lemma~\ref{L:WEAKSUPNORM}.

\begin{corollary}[\textbf{Weak sup-norm bounds along $\Sigma_0$}]
	\label{C:WEAKSMALLDATA}
	The following bounds hold on $\Sigma_0$
	(see Def.\ \ref{D:SPATIALDERIVATIVENORMSANDSEMINORMS} regarding the notation):
	\begin{align} \label{E:WEAKSMALLDATA}
	\left\|
		\dif^{\leq 14} \FreeNewSec
	\right\|_{C_{\newg}^0(\Sigma_0)}
	+
	\left\|
		\dif^{\leq 14} (\newg - \StMet)
	\right\|_{C_{\newg}^0(\Sigma_0)}
	+
	\left\|
		\dif^{\leq 14} (\newg^{-1} - \StMet^{-1})
	\right\|_{C_{\newg}^0(\Sigma_0)}
	&
		\\	
 \ \
		+
	\left\|
		\dif^{\leq 12} \newlapse
	\right\|_{C^0(\Sigma_0)}
	+
	\left\|
		\dif^{\leq 14} \newtimescalar
	\right\|_{C^0(\Sigma_0)}
		+
	\left\|
		\dif^{\leq 13} \newspacescalar
	\right\|_{C_{\newg}^0(\Sigma_0)}
	& \lesssim \varepsilon,
		\notag \\
	\left\|
		\dif^{\leq 14} \FreeNewSec
	\right\|_{C_{\StMet}^0(\Sigma_0)}
	+
	\left\|
		\dif^{\leq 14} (\newg - \StMet)
	\right\|_{C_{\StMet}^0(\Sigma_0)}
	+
	\left\|
		\dif^{\leq 14} (\newg^{-1} - \StMet^{-1})
	\right\|_{C_{\StMet}^0(\Sigma_0)}
	&
		\label{E:WEAKSMALLDATASTMETR} \\	
 \ \
		+
	\left\|
		\dif^{\leq 12} \newlapse
	\right\|_{C^0(\Sigma_0)}
	+
	\left\|
		\dif^{\leq 14} \newtimescalar
	\right\|_{C^0(\Sigma_0)}
		+
	\left\|
		\dif^{\leq 13} \newspacescalar
	\right\|_{C_{\StMet}^0(\Sigma_0)}
	& \lesssim \varepsilon.
	\notag
\end{align}
\end{corollary}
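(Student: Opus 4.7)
\textbf{Proof plan for Corollary~\ref{C:WEAKSMALLDATA}.} The plan is to simply specialize the weak sup-norm estimates of Lemma~\ref{L:WEAKSUPNORM} to the initial hypersurface $t = 0$. The only thing one needs to observe is that the factor $\scale^{-c\upsigma}(t)$ appearing on the right-hand sides of \eqref{E:WEAKSUPNORM} and \eqref{E:WEAKSUPNORMSTMET} is equal to $1$ at $t = 0$, thanks to the normalization $\scale(0) = 1$ recorded in \eqref{E:SCALEFACTORINITIALCONDITONS}. Applying Lemma~\ref{L:WEAKSUPNORM} at $t=0$ therefore yields exactly the bounds \eqref{E:WEAKSMALLDATA} and \eqref{E:WEAKSMALLDATASTMETR}.

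The only point requiring care is that Lemma~\ref{L:WEAKSUPNORM} is derived under the bootstrap assumption \eqref{E:HIGHNORMBOOTSTRAP}, so one must verify that this assumption is available at $t = 0$. This is immediate from the data assumption \eqref{E:SMALLDATA}, which states $\highnorm{16}(0) = \varepsilon^2$, and the parameter constraint \eqref{E:PARAMETERCONSTRAINT}, which forces $\varepsilon^{1/2} \leq \upsigma$; in particular, $\highnorm{16}(0) = \varepsilon^2 \leq \varepsilon = \varepsilon \scale^{-\upsigma}(0)$, so \eqref{E:HIGHNORMBOOTSTRAP} holds at $t = 0$. (In fact one obtains the stronger bound $\lesssim \varepsilon^2$ in \eqref{E:WEAKSMALLDATA}--\eqref{E:WEAKSMALLDATASTMETR}, but the weaker bound $\lesssim \varepsilon$ suffices for our purposes.)

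There is no genuine obstacle; the corollary is a direct restriction of a previously established estimate to the initial time slice, made possible by the normalization of the scale factor at $t = 0$.
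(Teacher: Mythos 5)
Your proof is correct and takes the same approach as the paper: the corollary is simply the evaluation of Lemma~\ref{L:WEAKSUPNORM} at $t=0$, using the normalization $\scale(0)=1$. The paper records this in one line; your additional check that the bootstrap assumption is consistent with the data at $t=0$ is sound but not strictly necessary, since the bootstrap is posited on all of $[0,\Tboot)$, including $t=0$ (see Remark~\ref{R:TACITBOOTSTRAP}). One small imprecision: the parenthetical claim that "this approach" yields $\lesssim\varepsilon^2$ is not quite right — applying Lemma~\ref{L:WEAKSUPNORM} at $t=0$ only gives $\lesssim\varepsilon$, because that lemma's proof runs through the bootstrap bound $\varepsilon\scale^{-\upsigma}(t)$ rather than the data bound; the $\varepsilon^2$ would instead come from using \eqref{E:SMALLDATA} directly together with the comparison estimates at $t=0$. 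This does not affect the validity of the corollary, whose stated bound is $\lesssim\varepsilon$.
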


\subsection{Volume form estimates}
\label{SS:VOLUMEFORMESTIMATES}
In this subsection, we derive some simple bounds for the volume forms of Def.\ \ref{D:VOLUMEFORM}.

\begin{lemma}[\textbf{Volume form estimates}]
	\label{L:VOLUMEFORMCOMPARISON}
	The following pointwise estimates hold for the volume forms of Def.\ \ref{D:VOLUMEFORM}:
	\begin{align} \label{E:VOLUMEFORMCOMPARISON}
		d \varpi_{\StMet}
		 & =  
		 \left\lbrace 1 + \mathcal{O}(\varepsilon) \right\rbrace d \tvol,
		&
		\SigmatLie_{\partial_t} d \tvol 
		& = \mathcal{O}(\varepsilon) \scale^{1/3 - c \upsigma} d \tvol.
	\end{align}
\end{lemma}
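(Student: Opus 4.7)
The plan is to handle the two estimates of \eqref{E:VOLUMEFORMCOMPARISON} in reverse order, since the evolution estimate is essentially immediate and will serve as the engine for the pointwise comparison. For the second estimate, I would simply insert the bound $\|\newlapse\|_{C^0(\Sigma_t)} \lesssim \varepsilon \scale^{-c\upsigma}(t)$ from the weak sup-norm estimate \eqref{E:WEAKSUPNORM} into the evolution identity \eqref{E:VOLUMEFORMTIMEDERIVATIVE}, and use $|\scale'(t)| \leq 1$ (which follows from \eqref{E:FRIEDMANNFIRSTORDER}) to conclude $\SigmatLie_{\partial_t} d\tvol = \mathcal{O}(\varepsilon) \scale^{1/3 - c\upsigma} d\tvol$, as desired.

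For the comparison estimate $d \Sttvol = \{1 + \mathcal{O}(\varepsilon)\} d\tvol$, the natural approach is to treat \eqref{E:VOLUMEFORMTIMEDERIVATIVE} as a scalar ODE along the flow of $\partial_t$ for the ratio of volume forms. Since $\SigmatLie_{\partial_t} d \Sttvol = 0$ by \eqref{E:STMETVOLUMEFORMTIMEDERIVATIVE}, the ratio $\rho(t,x) := d\tvol/d\Sttvol$ (which is well-defined pointwise because both forms are nowhere vanishing and have the same orientation) satisfies $\partial_t \rho = \scale'(t) \scale^{1/3}(t) \newlapse(t,x) \rho$, which integrates to
\begin{align*}
\rho(t,x) = \rho(0,x) \exp\left(\int_{s=0}^{t} \scale'(s) \scale^{1/3}(s) \newlapse(s,x) \, ds \right).
\end{align*}
Then I would bound the exponent in absolute value by $\int_0^t |\scale'(s)| \scale^{1/3}(s) |\newlapse(s,x)| \, ds \lesssim \varepsilon \int_0^t \scale^{1/3 - c\upsigma}(s) \, ds$, and since $1/3 - c\upsigma > -1$ (for $\upsigma$ sufficiently small), Corollary~\ref{C:SCALEFACTORTIMEINTEGRALS} bounds this time integral by a constant uniformly in $t \in [0,\Tboot)$. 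Hence the exponential factor equals $1 + \mathcal{O}(\varepsilon)$.

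It remains to show $\rho(0,x) = 1 + \mathcal{O}(\varepsilon)$, which follows from the data smallness assumption \eqref{E:SMALLDATA}: the bound $\highnorm{16}(0) = \varepsilon^2$ controls $\|\newg - \StMet\|_{C_{\StMet}^{14}(\Sigma_0)} \lesssim \varepsilon^2$, and $\rho(0,x) = \sqrt{\det \newg / \det \StMet}|_{t=0}$ is a smooth function of the frame components of $\newg - \StMet$ (using Cor.~\ref{C:STMETRICFRAMEEXPANDED} to read off the $\StMet$-orthonormal frame components) vanishing at the origin, so $\rho(0,x) = 1 + \mathcal{O}(\varepsilon^2)$. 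Combining, $\rho(t,x) = (1 + \mathcal{O}(\varepsilon^2))(1 + \mathcal{O}(\varepsilon)) = 1 + \mathcal{O}(\varepsilon)$, which gives the first estimate.

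The argument has no genuine obstacle: the only point requiring care is ensuring the time integral of $\scale^{1/3 - c\upsigma}$ is uniformly bounded up to $\TCrunch$, which is precisely why the integrability range in Corollary~\ref{C:SCALEFACTORTIMEINTEGRALS} is relevant and why one must know in advance that $c\upsigma < 4/3$ (guaranteed by the constraint \eqref{E:PARAMETERCONSTRAINT} after shrinking $\upsigma$). Everything else is immediate from the weak sup-norm estimates of Lemma~\ref{L:WEAKSUPNORM} and the evolution identities of Lemma~\ref{L:VOLUMEFORMTIMEDERIVATIVE}.
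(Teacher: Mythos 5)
Your proposal is correct and follows essentially the same route as the paper: integrate the evolution identity \eqref{E:VOLUMEFORMTIMEDERIVATIVE} in time using the weak sup-norm bound for $\newlapse$ (the paper phrases this via Gronwall, you phrase it as an exponential, which is the same thing), then handle the $t=0$ comparison between $d\tvol$ and $d\varpi_{\StMet}$ via the frame components of $\newg$ and the small-data assumption \eqref{E:SMALLDATA}. The only minor point worth noting is that the ratio at $t=0$ is $\sqrt{\det M}$ rather than $\det M$ (with $M_{AB}=\newg(Z_{(A)},Z_{(B)})$), a detail you get right and the paper elides.
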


\begin{proof}
From \eqref{E:VOLUMEFORMTIMEDERIVATIVE} and
\eqref{E:WEAKSUPNORMSTMET},
we deduce that
$\SigmatLie_{\partial_t} d \tvol(t,x) 
= \mathcal{O}(\varepsilon) \scale^{1/3 - c \upsigma}(t) d \tvol(t,x)$,
which yields the second estimate in \eqref{E:VOLUMEFORMCOMPARISON}.
Integrating this estimate with respect to time and using Gronwall's inequality,
we deduce the pointwise bound 
$d \tvol(t,x) = \left\lbrace 1 + \mathcal{O}(\varepsilon) \right\rbrace d \tvol(0,x)$. 
Thus, the first estimate in
\eqref{E:VOLUMEFORMCOMPARISON} will follow once we show that
$d \tvol(0,x) = \left\lbrace 1 + \mathcal{O}(\varepsilon) \right\rbrace d \varpi_{\StMet}(x)$.
The desired bound is an easy consequence of the facts that
$d \tvol(0,x)$ and $d \varpi_{\StMet}(0,x)$ are proportional,
that
$d \tvol(0,x)(Z_{(1)},Z_{(2)},Z_{(3)})$
is equal to
$\mbox{\upshape det} M$,
	where
	$M$ is the $3 \times 3$ matrix with entries $M_{AB}$ defined by
	$M_{AB}
	:= \left( 
		\newg(0,x)(Z_{(A)},Z_{(B)})
	\right)_{1 \leq A,B \leq 3}
	$,
	the identity
	$d \varpi_{\StMet}(Z_{(1)},Z_{(2)},Z_{(3)}) = 1$,
	and the small-data assumption \eqref{E:SMALLDATA}.
\end{proof}

\section{Strong sup-norm estimates and Sobolev embedding}
\label{S:STRONGC0ESTIMATES}
		In this section, we use the Einstein-scalar field equations
		and the bootstrap assumptions to prove
		prove ``strong'' sup-norm estimates at the lower derivative levels.
		We also provide Sobolev embedding estimates that allow us to
		obtain sup-norm bounds for tensorfields $\xi$ in terms of the $L^2$ norms
		of the $\Sigma_t$-projected Lie derivatives of $\xi$ with respect to the
		elements of $\mathscr{Z}$.
		The estimates of this section are less singular than the 
		``weak'' sup-norm estimates that we derived in Subsect.\ \ref{SS:WEAKSUPNORM}.
		They are essential
		for closing the energy estimates, 
		for exhibiting the AVTD nature of the solution (as stated in Theorem~\ref{T:VERYROUGH}),
		and for proving convergence results near the singularities.

\subsection{Maximum principle estimates}
In deriving sup-norm estimates for the lapse, we will 
use the maximum principle-type estimates provided by the following lemma.

\begin{lemma}[\textbf{Maximum principle}]
	\label{L:MAXIMUMPRINCIPLEESTIMATE}
	Let $\widetilde{\mathscr{L}}$ be the elliptic operator defined by
	\eqref{E:LOWORDERELLIPTICOPERATOR}.
	Classical solutions $u$ to the PDE
	\begin{align} \label{E:LOWORDERELLIPTICPDE}
		\widetilde{\mathscr{L}} u
		& = U
	\end{align}
	verify the estimate
	\begin{align} \label{E:MAXIMUMPRINCIPLEESTIMATE}
		\left\|
			u
		\right\|_{C^0(\Sigma_t)}
		& \leq
			C
			\left\|
				U
			\right\|_{C^0(\Sigma_t)}.
	\end{align}
\end{lemma}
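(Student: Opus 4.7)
The plan is to reduce the estimate to a classical weak maximum principle on the compact manifold $\Sigma_t \cong \mathbb{S}^3$, with the key preliminary step being a uniform positive lower bound on the zeroth-order coefficient $\widetilde{f}$ defined in \eqref{E:ERRORTERMLOWORDERELLIPTICOPERATOR}. The miracle making this possible is Friedmann's first-order equation \eqref{E:FRIEDMANNFIRSTORDER}: substituting $(\scale')^2 = 1 - \scale^{4/3}$ gives $(\scale')^2 + \tfrac{2}{3}\scale^{4/3} = 1 - \tfrac{1}{3}\scale^{4/3}$, which by Lemma~\ref{L:ANALYSISOFFRIEDMANN} (where $\scale(t) \in [0,1]$) is bounded below by $2/3$. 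Hence the ``deterministic'' part of $\widetilde{f}$ already provides a favorable sign.

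Next I would absorb the two remaining, perturbative contributions to $\widetilde{f}$, namely $\scale^{4/3}\{\ScalarCur - 2/3\}$ and $-\scale^{4/3}|\newspacescalar|_{\newg}^2$. By the weak sup-norm estimates of Lemma~\ref{L:WEAKSUPNORM}, together with the Ricci expansion in Lemma~\ref{L:RIEMANNCURVATUREAPPROXALLINDICESDOWN} (which shows $\ScalarCur - 2/3$ is controlled in sup-norm by the $C^0_{\StMet}$ norms of $\newg - \StMet$ and $\newg^{-1} - \StMet^{-1}$), both of these terms are $\mathcal{O}(\varepsilon \scale^{-c\upsigma})$ in sup-norm; multiplied by $\scale^{4/3} \leq 1$, they remain $\mathcal{O}(\varepsilon)$ provided $\upsigma$ is small enough that $4/3 - c\upsigma > 0$, which is guaranteed by \eqref{E:PARAMETERCONSTRAINT}. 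Choosing $\varepsilon$ sufficiently small, I conclude $\widetilde{f}(t,x) \geq 1/2$ uniformly on $[0,\Tboot) \times \mathbb{S}^3$.

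With this positive lower bound in hand, I would invoke the standard weak maximum principle. Because $\Sigma_t$ is compact and $u$ is classical, $u$ attains its maximum at some point $p_+ \in \Sigma_t$ and its minimum at some $p_- \in \Sigma_t$. At $p_+$, the Hessian $\nabla^2 u(p_+)$ is negative semi-definite relative to $\newg$, so $\GLap u(p_+) \leq 0$. Evaluating equation \eqref{E:LOWORDERELLIPTICPDE} at $p_+$ yields
\begin{align*}
\widetilde{f}(p_+) u(p_+)
= \scale^{4/3}(t) \GLap u(p_+) - U(p_+)
\leq \|U\|_{C^0(\Sigma_t)},
\end{align*}
so $u(p_+) \leq 2 \|U\|_{C^0(\Sigma_t)}$. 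A symmetric argument at $p_-$, using $\GLap u(p_-) \geq 0$, gives $u(p_-) \geq - 2 \|U\|_{C^0(\Sigma_t)}$. Combining the two bounds yields \eqref{E:MAXIMUMPRINCIPLEESTIMATE} with $C=2$.

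The entire proof is short, and there is no serious obstacle: the only nontrivial input is the coercivity of $\widetilde{f}$, which hinges on the algebraic identity coming from the Friedmann equation combined with the bootstrap smallness. The rest is textbook maximum principle on a compact manifold.
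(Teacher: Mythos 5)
Your proof is correct and follows essentially the same route as the paper: use Friedmann's equation \eqref{E:FRIEDMANNFIRSTORDER} plus the weak sup-norm estimates to show $\widetilde{f} \geq 2/3 - C\varepsilon$, then run the standard compact-manifold maximum/minimum argument at the extremal points. One small imprecision: the bound $4/3 - c\upsigma > 0$ is not a consequence of the constraint \eqref{E:PARAMETERCONSTRAINT} alone but of the tacit standing assumption (Remark~\ref{R:TACITBOOTSTRAP}) that $\upsigma$ is chosen sufficiently small; this does not affect the argument.
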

\begin{proof}
	We first note that by \eqref{E:FRIEDMANNFIRSTORDER},
	\eqref{E:WEAKSUPNORM},
	and
	\eqref{E:WEAKRICCISUPNORM}, the term $\widetilde{f}$
	defined in \eqref{E:ERRORTERMLOWORDERELLIPTICOPERATOR}
	verifies the bounds 
	$
	\frac{2}{3} - C \varepsilon
	\leq
	\widetilde{f} \leq \frac{2}{3} + C \varepsilon$.
	When $u$ achieves its max along $\Sigma_t$, we have $\Delta_{\newg} u \leq 0$.
	At such points, we have $\widetilde{f} u + U \leq 0$ and, in view of the estimates for $\widetilde{f}$,
	$u \leq C |U| 
	\leq C 
	\left\|
		U
	\right\|_{C^0(\Sigma_t)}
	$.
	Similarly, when
	$u$ achieves its min, we have $\Delta_{\newg} u \geq 0$
	and thus 
	$u \geq - C |U| 
	\geq 
	-
	C 
	\left\|
		U
	\right\|_{C^0(\Sigma_t)}
	$.
	We have therefore proved the lemma.
\end{proof}

\subsection{Base-level strong sup-norm estimates}
\label{SS:BASELEVELSTRONG}
In the next lemma, we initiate our proof of the strong sup-norm
estimates by deriving them at the lowest derivative level
for the time-rescaled metric and second fundamental form.

\begin{lemma}[\textbf{Key base-level strong sup-norm estimates}]
	\label{L:NODERIVATIVESPARTIALTGANDKSTRONGSUPNORM}
	The following estimates hold:
	\begin{align} 
		\left\|
			\newg - \StMet
		\right\|_{C_{\newg}^0(\Sigma_t)},
			\,
		\left\|
			\newg^{-1} - \StMet^{-1}
		\right\|_{C_{\newg}^0(\Sigma_t)}
		& \lesssim 
			\sqrt{\varepsilon}
			\scale^{-c \sqrt{\varepsilon}}(t),
			 \label{E:NODERIVATIVESGMINUSSTANDARDSUPNORM} \\
		\left\|
			\newg - \StMet
		\right\|_{C_{\StMet}^0(\Sigma_t)},
			\,
		\left\|
			\newg^{-1} - \StMet^{-1}
		\right\|_{C_{\StMet}^0(\Sigma_t)}
		& \lesssim 
			\sqrt{\varepsilon} \scale^{-c \sqrt{\varepsilon}}(t),
			 \label{E:NODERIVATIVESGMINUSSTANDARDSUPNORMSTMET} \\
		\left\|
			\SigmatLie_{\partial_t} \newg
		\right\|_{C_{\newg}^0(\Sigma_t)},
			\,
		\left\|
			\SigmatLie_{\partial_t} \newg^{-1}
		\right\|_{C_{\newg}^0(\Sigma_t)}
		& \lesssim \varepsilon \scale^{-1}(t),
			\label{E:NODERIVATIVESPARTIALTGSTRONGSUPNORM} \\
		\left\|
			\SigmatLie_{\partial_t} \newg
		\right\|_{C_{\StMet}^0(\Sigma_t)},
			\,
		\left\|
			\SigmatLie_{\partial_t} \newg^{-1}
		\right\|_{C_{\StMet}^0(\Sigma_t)}
		& \lesssim \varepsilon \scale^{-1}(t),
			\label{E:NODERIVATIVESPARTIALTGSTRONGSUPNORMSTMET} 
				\\
		\left\|
			\FreeNewSec
		\right\|_{C_{\newg}^0(\Sigma_t)}
		& \lesssim \varepsilon,
		\label{E:NODERIVATIVESKSTRONGPOINTWISESUPNORM}
			\\
		\left\|
			\FreeNewSec
		\right\|_{C_{\StMet}^0(\Sigma_t)}
		& \lesssim \varepsilon.
		\label{E:NODERIVATIVESKSTRONGPOINTWISESUPNORMSTMET}
\end{align}
\end{lemma}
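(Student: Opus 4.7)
The plan is to treat the evolution equations \eqref{E:EVOLUTIONMETRICRENORMALIZED}, \eqref{E:EVOLUTIONINVERSEMETRICRENORMALIZED}, and \eqref{E:EVOLUTIONSECONDFUNDRENORMALIZED} of Prop \ref{P:RESCALEDVARIABLES} as ODEs in $t$ for the components of $\newg$, $\newg^{-1}$, and $\FreeNewSec$ relative to the Lie-transported frame $\mathscr{Z},\Theta$. The crucial point is that $\SigmatLie_{\partial_t} Z_{(A)} = 0$ and $\SigmatLie_{\partial_t} \theta^{(A)} = 0$ by construction of the frame in Subsect.\ \ref{SS:EXTENDINGSIGMA0TANGENTTENSORFIELDSTOSIGMATTANGENTTENSORFIELDS}, so for any $\Sigma_t$-tangent tensorfield $\xi$, one has $\partial_t \xi_{Z_1\cdots}^{\theta^1\cdots} = (\SigmatLie_{\partial_t}\xi)_{Z_1\cdots}^{\theta^1\cdots}$. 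The component-wise pointwise bounds on the RHSs come from the weak sup-norm estimates of Lemma \ref{L:WEAKSUPNORM} and the Ricci bound \eqref{E:WEAKRICCISUPNORMSTMET}, and the time-integrals are handled by Cor \ref{C:SCALEFACTORTIMEINTEGRALS} together with the logarithm-to-power conversion \eqref{E:SCALEFACTORLOGPOWERBOUND}. Throughout, I use the identity $|\xi|_{\StMet}^2 = \sum_{A,B,\ldots}|\xi_{Z_1\cdots}^{\theta^1\cdots}|^2$ (immediate from Cor \ref{C:STMETRICFRAMEEXPANDED}) to pass between frame-component estimates and $\StMet$-norm estimates without incurring any $\scale^{-c\upsigma}$ factors.

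I would first prove \eqref{E:NODERIVATIVESKSTRONGPOINTWISESUPNORMSTMET}. Reading \eqref{E:EVOLUTIONSECONDFUNDRENORMALIZED} in frame components, every term on the RHS carries an overall factor of at least $\scale^{1/3}$, while the remaining factors (involving $\nabla^2 \newlapse$, $\Ric^{\#} - \tfrac{2}{9}\ID$, $\newlapse$, $\newlapse\FreeNewSec$, $\newspacescalar\otimes\newspacescalar$) are bounded in $\StMet$-norm by $\lesssim \varepsilon \scale^{-c\upsigma}$ using Lemma \ref{L:WEAKSUPNORM}. Hence each component of the RHS is bounded by $\varepsilon \scale^{1/3-c\upsigma}$, which is integrable via Cor \ref{C:SCALEFACTORTIMEINTEGRALS} with $p = 1/3 - c\upsigma > -1$, yielding a total contribution $\lesssim \varepsilon$. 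Combining with the initial-data bound from Cor \ref{C:WEAKSMALLDATA} produces $|\FreeNewSec_{\ B}^A(t,x)| \lesssim \varepsilon$, which by the frame identity gives \eqref{E:NODERIVATIVESKSTRONGPOINTWISESUPNORMSTMET}. Next, in \eqref{E:EVOLUTIONMETRICRENORMALIZED} the dominant term in frame components is $-2\scale^{-1} \newg_{AC}\FreeNewSec_{\ B}^C$, whose magnitude is $\lesssim \varepsilon \scale^{-1}$ since $|\newg_{AB}| \lesssim 1$ (by the weak sup bound on $\newg - \StMet$) and Step~1; the remaining terms are subdominant (order $\varepsilon \scale^{1/3-c\upsigma}$). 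This yields \eqref{E:NODERIVATIVESPARTIALTGSTRONGSUPNORMSTMET} for $\newg$, and the analogous analysis of \eqref{E:EVOLUTIONINVERSEMETRICRENORMALIZED} handles $\newg^{-1}$. Integrating component-wise in time and applying Cor \ref{C:SCALEFACTORTIMEINTEGRALS} with $p = -1$, then \eqref{E:SCALEFACTORLOGPOWERBOUND} with $q = \sqrt{\varepsilon}$ to bound $\varepsilon|\ln\scale(t)| \lesssim \sqrt{\varepsilon}\,\scale^{-\sqrt{\varepsilon}}(t)$, I obtain \eqref{E:NODERIVATIVESGMINUSSTANDARDSUPNORMSTMET} for $\newg - \StMet$; the bound for $\newg^{-1} - \StMet^{-1}$ then follows via $\newg^{-1} - \StMet^{-1} = -\newg^{-1}(\newg - \StMet)\StMet^{-1}$ plus the already-available bound $|\newg^{-1}|_{\StMet}\lesssim 1$. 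Finally, the $\newg$-norm versions \eqref{E:NODERIVATIVESGMINUSSTANDARDSUPNORM}, \eqref{E:NODERIVATIVESPARTIALTGSTRONGSUPNORM}, \eqref{E:NODERIVATIVESKSTRONGPOINTWISESUPNORM} follow from \eqref{E:GNORMSTMETRICCOMPARISON}: since the just-derived $\StMet$-norm bounds on $\newg - \StMet$ and $\newg^{-1} - \StMet^{-1}$ are much better than the weak ones, re-running the proof of \eqref{E:GNORMSTMETRICCOMPARISON} shows that the $\scale^{-c\upsigma}$ factor there can be replaced by a harmless $O(1)$ constant.

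The main obstacle is structural rather than computational: the comparison \eqref{E:GNORMSTMETRICCOMPARISON} from Lemma \ref{L:PRELIMINARYCOMPARISON} initially carries a $\scale^{-c\upsigma}$ factor, which is far too weak to establish the sharp $\varepsilon$-bound in \eqref{E:NODERIVATIVESKSTRONGPOINTWISESUPNORM} if one naively tried to work in $\newg$-norm from the start. The circularity is broken by performing the entire transport argument in frame components (equivalent to the $\StMet$-norm, which is \emph{independent} of the solution and hence carries no $\scale^{-c\upsigma}$ factor), establishing \eqref{E:NODERIVATIVESKSTRONGPOINTWISESUPNORMSTMET} and \eqref{E:NODERIVATIVESGMINUSSTANDARDSUPNORMSTMET} first, and only then appealing to \eqref{E:GNORMSTMETRICCOMPARISON}. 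A secondary technical point is the logarithmic singularity from the time integral of $\scale^{-1}$, which is the reason the rate for $\newg - \StMet$ is $\sqrt{\varepsilon}\,\scale^{-c\sqrt{\varepsilon}}$ rather than the uniform $\varepsilon$ obtained for $\FreeNewSec$; this is absorbed through the identity \eqref{E:SCALEFACTORLOGPOWERBOUND} tuned with $q = \sqrt{\varepsilon}$, matching the parameter constraint \eqref{E:PARAMETERCONSTRAINT}.
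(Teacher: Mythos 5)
Your frame-component reformulation correctly recovers the four $\StMet$-norm estimates, and the observation that the frame components satisfy pure $\partial_t$-ODEs (because $\SigmatLie_{\partial_t} Z_{(A)} = 0$ and $\SigmatLie_{\partial_t} \theta^{(A)} = 0$, cf.\ \eqref{E:COMPONENTSOFXIDONOTCHANGE}) is an entirely legitimate substitute for the identities $\SigmatLie_{\partial_t}\StMet = \SigmatLie_{\partial_t}\StMet^{-1} = 0$ that the paper uses implicitly. However, the final step of your argument contains a genuine gap that it cannot repair.

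You claim that, after plugging the improved bounds $\|\newg - \StMet\|_{C^0_\StMet(\Sigma_t)}$, $\|\newg^{-1} - \StMet^{-1}\|_{C^0_\StMet(\Sigma_t)} \lesssim \sqrt{\varepsilon}\,\scale^{-c\sqrt{\varepsilon}}(t)$ into the proof of \eqref{E:GNORMSTMETRICCOMPARISON}, the factor $\scale^{-c\upsigma}$ ``can be replaced by a harmless $O(1)$ constant.'' That is false. The quantity $\sqrt{\varepsilon}\,\scale^{-c\sqrt{\varepsilon}}(t)$ still tends to $\infty$ as $t\uparrow\TCrunch$, so re-running the proof of Lemma~\ref{L:PRELIMINARYCOMPARISON} only upgrades the exponent from $c\upsigma$ to $c\sqrt{\varepsilon}$ --- precisely the content of Cor.~\ref{C:IMPROVEMENTLEMMAOPERATORCOMPARISON} --- not to $0$. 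Consequently, conversion from the $\StMet$-norm to the $\newg$-norm still loses a factor $\scale^{-c\sqrt{\varepsilon}}$, and your argument can only produce $\|\FreeNewSec\|_{C^0_\newg(\Sigma_t)} \lesssim \varepsilon\,\scale^{-c\sqrt{\varepsilon}}(t)$ and $\|\SigmatLie_{\partial_t}\newg\|_{C^0_\newg(\Sigma_t)} \lesssim \varepsilon\,\scale^{-1 - c\sqrt{\varepsilon}}(t)$. Those are strictly weaker than the sharp, loss-free bounds $\lesssim \varepsilon$ and $\lesssim \varepsilon\,\scale^{-1}(t)$ that \eqref{E:NODERIVATIVESKSTRONGPOINTWISESUPNORM} and \eqref{E:NODERIVATIVESPARTIALTGSTRONGSUPNORM} assert, and the loss-free form is exactly what is exploited downstream (e.g.\ in Cor.~\ref{C:SUPNORMALMOSTREADYTOBEGRONWALLED} and Lemma~\ref{L:PRELIMINARYL2ESTIMATES}, where the prefactor of the $\scale^{-1}$-Gronwall term must be $c\varepsilon$, not $c\varepsilon\,\scale^{-c\sqrt{\varepsilon}}$).

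The paper avoids this obstruction by a different route that you missed. For a $\binom{1}{1}$ tensor $K$ whose $\newg$-lowered version is symmetric, the $\newg$-norm satisfies $|K|^2_\newg = K^a_{\ b} K^b_{\ a}$, which is \emph{metric-free}. Hence $\partial_t\bigl(|\FreeNewSec|^2_\newg\bigr) = 2\,\newg_{ab}(\newg^{-1})^{cd}\FreeNewSec^a_{\ c}\,\SigmatLie_{\partial_t}\FreeNewSec^b_{\ d}$ with no contribution whatsoever from $\SigmatLie_{\partial_t}\newg$ or $\SigmatLie_{\partial_t}\newg^{-1}$. The $\newg$-Cauchy--Schwarz inequality then gives $|\partial_t|\FreeNewSec|_\newg| \leq |\SigmatLie_{\partial_t}\FreeNewSec|_\newg \lesssim \varepsilon\,\scale^{1/3 - c\upsigma}$, and integrating directly in the $\newg$-norm yields $\|\FreeNewSec\|_{C^0_\newg(\Sigma_t)} \lesssim \varepsilon$ with no degenerate factor, completely bypassing the $\StMet$-to-$\newg$ comparison. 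The bound \eqref{E:NODERIVATIVESPARTIALTGSTRONGSUPNORM} then follows from \eqref{E:EVOLUTIONMETRICRENORMALIZED}, using that lowering an index with $\newg$ is a $\newg$-norm isometry, so $|\newg\cdot\FreeNewSec|_\newg = |\FreeNewSec|_\newg \lesssim \varepsilon$. In contrast, the remaining estimates \eqref{E:NODERIVATIVESGMINUSSTANDARDSUPNORM} and \eqref{E:NODERIVATIVESGMINUSSTANDARDSUPNORMSTMET} already carry a $\scale^{-c\sqrt{\varepsilon}}$ loss, so for those your conversion argument would be fine --- the deficiency is confined to the two estimates where the asserted rate is sharper than what the frame-component-then-convert strategy can deliver.
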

\begin{proof}
We first prove \eqref{E:NODERIVATIVESKSTRONGPOINTWISESUPNORM}.
From the symmetry property $\newg_{ia} \FreeNewSec_{\ j}^a= \newg_{ja} \FreeNewSec_{\ i}^a$,
we derive
	$
\partial_t
	\left(
	\left|
		\FreeNewSec
	\right|_{\newg}^2
	\right)
= 2 \newg_{ab} (\newg^{-1})^{cd} \FreeNewSec_{\ c}^a \SigmatLie_{\partial_t} \FreeNewSec_{\ d}^b
$. It follows that
$
\left|
	\partial_t
	\left|
		\FreeNewSec
	\right|_{\newg}
\right|
\lesssim 
\left|
	\SigmatLie_{\partial_t} \FreeNewSec
\right|_{\newg}
$.
To bound
$
\left|
	\SigmatLie_{\partial_t} \FreeNewSec
\right|_{\newg}
$,
we observe that 
RHS~\eqref{E:EVOLUTIONSECONDFUNDRENORMALIZED}
is the product of $\scale^{1/3}$
times tensors that, by Lemma~\ref{L:WEAKSUPNORM},
are bounded in the norm
$|\cdot|_{\newg}$ by 
$\lesssim \varepsilon \scale^{- c \upsigma}$.
Thus, we have
$
\left|
	\SigmatLie_{\partial_t} \FreeNewSec
\right|_{\newg}
\lesssim
\varepsilon 
\scale^{1/3 - c \upsigma}
$.
Since $0 \leq \scale \leq 1$,
we deduce that the following bound holds for $\upsigma$ sufficiently small:
$
\left|
	\partial_t
	\left|
		\FreeNewSec
	\right|_{\newg}
\right|
\lesssim
\varepsilon 
$.
Integrating in time from $0$ to $t$, using the above estimates, 
and using the small-data bound
$
\left\|
	\FreeNewSec
\right\|_{C_{\newg}^0(\Sigma_0)}
\leq \varepsilon 
$ yielded by Cor.~\ref{C:WEAKSMALLDATA},
we conclude \eqref{E:NODERIVATIVESKSTRONGPOINTWISESUPNORM}.

The proof of \eqref{E:NODERIVATIVESKSTRONGPOINTWISESUPNORMSTMET}
is based on the identity
$
\partial_t
	\left(
	\left|
		\FreeNewSec
	\right|_{\StMet}^2
	\right)
= 2 \StMet_{ab} (\StMet^{-1})^{cd} \FreeNewSec_{\ c}^a \SigmatLie_{\partial_t} \FreeNewSec_{\ d}^b
$
and is similar to the proof of \eqref{E:NODERIVATIVESKSTRONGPOINTWISESUPNORM};
we omit the details.

To prove \eqref{E:NODERIVATIVESPARTIALTGSTRONGSUPNORM}
for $\SigmatLie_{\partial_t} \newg$,
we first use
\eqref{E:EVOLUTIONMETRICRENORMALIZED}
to deduce that
$\SigmatLie_{\partial_t} \newg
= - 2 \scale^{-1} \newg \cdot \FreeNewSec
$
plus error terms that are
the product of $\scale^{1/3}$
and tensors that, by Lemma~\ref{L:WEAKSUPNORM},
are bounded in the norm
$|\cdot|_{\newg}$ by 
$\lesssim \varepsilon \scale^{- c \upsigma}$.
Also using \eqref{E:NODERIVATIVESKSTRONGPOINTWISESUPNORM}, 
we find that
$
	\left|
		\SigmatLie_{\partial_t} \newg
	\right|_{\newg}
	\lesssim 
	\varepsilon \scale^{-1}
$,
which yields
the desired bound \eqref{E:NODERIVATIVESPARTIALTGSTRONGSUPNORM}
for $\SigmatLie_{\partial_t} \newg$.
The proof of \eqref{E:NODERIVATIVESPARTIALTGSTRONGSUPNORM}
for $\SigmatLie_{\partial_t} \newg^{-1}$
follows similarly from equation \eqref{E:EVOLUTIONINVERSEMETRICRENORMALIZED}
and we omit the details.

The proof of \eqref{E:NODERIVATIVESPARTIALTGSTRONGSUPNORMSTMET}
is similar to the proof of \eqref{E:NODERIVATIVESPARTIALTGSTRONGSUPNORM}
but relies on the bound \eqref{E:NODERIVATIVESKSTRONGPOINTWISESUPNORMSTMET}
in place of \eqref{E:NODERIVATIVESKSTRONGPOINTWISESUPNORM}; we omit the details.

To derive \eqref{E:NODERIVATIVESGMINUSSTANDARDSUPNORMSTMET}
for $\newg - \StMet$,
we first use \eqref{E:EVOLUTIONMETRICRENORMALIZED} to deduce that
$\SigmatLie_{\partial_t} (\newg - \StMet)
= 
- 
2 \scale^{-1} (\newg - \StMet) \cdot \FreeNewSec
- 
2 \scale^{-1} \StMet \cdot \FreeNewSec
$
plus error terms that, 
by Lemma~\ref{L:WEAKSUPNORM},
are bounded in the norm
$|\cdot|_{\StMet}$ by
$
\lesssim
\varepsilon 
\scale^{1/3 - c \upsigma}
$.
Also using \eqref{E:NODERIVATIVESKSTRONGPOINTWISESUPNORMSTMET}, we
find that
$
	\left|
		\partial_t
		\left|
			\newg - \StMet
		\right|_{\StMet}
	\right|
	\leq
	c \varepsilon \scale^{-1} 
	\left|
		\newg - \StMet
	\right|_{\StMet}
	+ 
	C \varepsilon \scale^{-1}
$.
From the previous inequality, 
Cor.~\ref{C:SCALEFACTORTIMEINTEGRALS},
Cor.~\ref{C:WEAKSMALLDATA},
and Gronwall's inequality, 
we obtain the pointwise bound
$\left|
		\newg - \StMet
	\right|_{\StMet}
\lesssim \varepsilon (1 + |\ln \scale|) \scale^{-c \varepsilon}
$.
From this bound and \eqref{E:SCALEFACTORLOGPOWERBOUND},
we find that
$\left|
		\newg - \StMet
	\right|_{\StMet}
\lesssim \sqrt{\varepsilon} \scale^{-c \sqrt{\varepsilon}}
$,
which yields
\eqref{E:NODERIVATIVESGMINUSSTANDARDSUPNORMSTMET}
for $\newg - \StMet$.
The proof of \eqref{E:NODERIVATIVESGMINUSSTANDARDSUPNORMSTMET}
for $\newg^{-1} - \StMet^{-1}$
is based on equation
\eqref{E:EVOLUTIONINVERSEMETRICRENORMALIZED}
and is similar; we omit the details.

To derive \eqref{E:NODERIVATIVESGMINUSSTANDARDSUPNORM}
for $\newg - \StMet$,
we use Cauchy--Schwarz relative to $\StMet$
and \eqref{E:NODERIVATIVESGMINUSSTANDARDSUPNORMSTMET}
to deduce that
$
	\left|
		\newg - \StMet
	\right|_{\newg}
\lesssim
\left|
	\newg - \StMet
\right|_{\StMet}
\left|
	\newg^{-1} 
\right|_{\StMet}
\lesssim 
\sqrt{\varepsilon} \scale^{-c \sqrt{\varepsilon}}
$
as desired.
The estimate \eqref{E:NODERIVATIVESGMINUSSTANDARDSUPNORM}
for $\newg^{-1} - \StMet^{-1}$ can be proved in a similar fashion
and we omit the details.

\end{proof}

The following corollary is an easy consequence of Lemma~\ref{L:NODERIVATIVESPARTIALTGANDKSTRONGSUPNORM}.

\begin{corollary}[\textbf{Preliminary sup-norm estimates}]
	\label{C:SUPNORMALMOSTREADYTOBEGRONWALLED}
	The following estimates hold for $\Sigma_t$-tangent tensorfields $\xi$:
	\begin{align}
	\left\|
		\xi
	\right\|_{C^0_{\newg}(\Sigma_t)}
	& 
	\leq
		\left\|
			\xi
		\right\|_{C^0_{\newg}(\Sigma_0)}
		+
		c \varepsilon
		\int_{s=0}^t
				\scale^{-1}(s)
				\left\|
					\xi
				\right\|_{C^0_{\newg}(\Sigma_s)}
			\, ds
		+
		\int_{s=0}^t
			\left\|
				\SigmatLie_{\partial_t} \xi
			\right\|_{C^0_{\newg}(\Sigma_s)}
		\, ds,
			\label{E:SUPNORMALMOSTREADYTOBEGRONWALLED} \\
		\left\|
			\xi
		\right\|_{C^0_{\newg}(\Sigma_t)}
		& 
		\leq
		C
		\scale^{-c \varepsilon}(t)
		\left\lbrace
			\left\|
				\xi
			\right\|_{C^0_{\newg}(\Sigma_0)}
			+
			\int_{s=0}^t
				\left\|
					\SigmatLie_{\partial_t} \xi
				\right\|_{C^0_{\newg}(\Sigma_s)}
			\, ds
		\right\rbrace,
		 \label{E:SUPNORMGRONWALLED}
		\\
		\left\|
			\xi
		\right\|_{C^0_{\StMet}(\Sigma_t)}
		& 
		\leq
		\left\|
			\xi
		\right\|_{C^0_{\StMet}(\Sigma_0)}
		+
		\int_{s=0}^t
			\left\|
				\SigmatLie_{\partial_t} \xi
			\right\|_{C^0_{\StMet}(\Sigma_s)}
			\, ds.
		 \label{E:STMETSUPNORMGRONWALLED}
	\end{align}
\end{corollary}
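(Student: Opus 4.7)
\textbf{Proof proposal for Corollary~\ref{C:SUPNORMALMOSTREADYTOBEGRONWALLED}.}
The plan is to exploit the evolution identity for the pointwise norms by differentiating $|\xi|_{\newg}^2$ in time, controlling the metric factors via Lemma~\ref{L:NODERIVATIVESPARTIALTGANDKSTRONGSUPNORM}, integrating, and then applying Gronwall together with the scale-factor integral estimates of Cor.~\ref{C:SCALEFACTORTIMEINTEGRALS}. The $\StMet$-version is immediate since $\StMet$ is Lie-transported.

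First, for a type $\binom{l}{m}$ $\Sigma_t$-tangent tensorfield $\xi$, apply $\partial_t$ to the defining formula \eqref{E:POINTWISENORM} and use the Leibniz rule. The time derivative either falls on one of the $l+m$ metric factors (yielding a schematic term $\SigmatLie_{\partial_t}\newg * \xi * \xi$ or $\SigmatLie_{\partial_t}\newg^{-1} * \xi * \xi$) or on one of the two factors of $\xi$ (yielding $\SigmatLie_{\partial_t}\xi * \xi$). Applying Cauchy--Schwarz with respect to $\newg$ to each term, I obtain the pointwise bound
\[
	\left| \partial_t |\xi|_{\newg}^2 \right|
	\leq
	C\bigl( |\SigmatLie_{\partial_t}\newg|_{\newg} + |\SigmatLie_{\partial_t}\newg^{-1}|_{\newg} \bigr) |\xi|_{\newg}^2
	+ 2 |\xi|_{\newg} \, |\SigmatLie_{\partial_t}\xi|_{\newg},
\]
which, after dividing by $2|\xi|_{\newg}$ where nonzero, gives $\left|\partial_t |\xi|_{\newg}\right| \leq C\bigl(|\SigmatLie_{\partial_t}\newg|_{\newg} + |\SigmatLie_{\partial_t}\newg^{-1}|_{\newg}\bigr) |\xi|_{\newg} + |\SigmatLie_{\partial_t}\xi|_{\newg}$. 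Substituting the bound \eqref{E:NODERIVATIVESPARTIALTGSTRONGSUPNORM} from Lemma~\ref{L:NODERIVATIVESPARTIALTGANDKSTRONGSUPNORM} yields $\left|\partial_t |\xi|_{\newg}\right| \leq c\varepsilon \scale^{-1}(t) |\xi|_{\newg} + |\SigmatLie_{\partial_t}\xi|_{\newg}$ pointwise, and taking suprema over $\Sigma_t$ followed by integration in time from $0$ to $t$ produces \eqref{E:SUPNORMALMOSTREADYTOBEGRONWALLED}.

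For \eqref{E:SUPNORMGRONWALLED}, I apply Gronwall's inequality to \eqref{E:SUPNORMALMOSTREADYTOBEGRONWALLED}. The Gronwall factor is
\[
	\exp\!\left( c\varepsilon \int_{s=0}^t \scale^{-1}(s)\, ds \right).
\]
By the $p = -1$ case of \eqref{E:SCALEFACTORTIMEINTEGRALS}, the inner integral is bounded by $|\ln \scale(t)| + C$, and then by \eqref{E:SCALEFACTORLOGPOWERBOUND} with $q = \varepsilon$ we have $\varepsilon |\ln \scale(t)| \lesssim \scale^{-\varepsilon}(t)$, so the exponential is bounded by $C\scale^{-c\varepsilon}(t)$. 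This yields \eqref{E:SUPNORMGRONWALLED}.

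Finally, \eqref{E:STMETSUPNORMGRONWALLED} is immediate: since the Lie-transported metric satisfies $\SigmatLie_{\partial_t}\StMet = 0$ and $\SigmatLie_{\partial_t}\StMet^{-1}=0$ by construction (Def.~\ref{D:LIETRANSPORTEDSPATIALFRAME} together with \eqref{E:COMPONENTSOFXIDONOTCHANGE}), the analogous calculation gives $\left|\partial_t |\xi|_{\StMet}\right| \leq |\SigmatLie_{\partial_t}\xi|_{\StMet}$ pointwise, and direct time integration yields the claim with no Gronwall factor. There is no real obstacle here; the only mild subtlety is book-keeping the Cauchy--Schwarz step so that the coefficient of $\scale^{-1}$ is genuinely an $\mathcal{O}(\varepsilon)$ constant (inherited from Lemma~\ref{L:NODERIVATIVESPARTIALTGANDKSTRONGSUPNORM}), which is exactly what permits the Gronwall factor to be as mild as $\scale^{-c\varepsilon}$ rather than a stronger power of $\scale^{-1}$.
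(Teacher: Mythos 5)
Your derivation of \eqref{E:SUPNORMALMOSTREADYTOBEGRONWALLED} and \eqref{E:STMETSUPNORMGRONWALLED} matches the paper's: differentiate $|\xi|_{\newg}^2$ in time, split into metric-derivative and $\SigmatLie_{\partial_t}\xi$ contributions via Cauchy--Schwarz, feed in \eqref{E:NODERIVATIVESPARTIALTGSTRONGSUPNORM}, and integrate; for the $\StMet$-case the metric terms vanish outright. That part is fine.

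However, the reasoning for \eqref{E:SUPNORMGRONWALLED} contains a genuine logical gap. You correctly identify the Gronwall factor as $\exp\!\left(c\varepsilon\int_0^t \scale^{-1}(s)\,ds\right)$ and bound the inner integral by $|\ln\scale(t)| + C$. But then you invoke \eqref{E:SCALEFACTORLOGPOWERBOUND} with $q=\varepsilon$ to get $\varepsilon|\ln\scale(t)| \lesssim \scale^{-\varepsilon}(t)$ and conclude from this that the exponential is $\lesssim \scale^{-c\varepsilon}(t)$. This inference does not hold: the estimate $\varepsilon|\ln\scale| \lesssim \scale^{-\varepsilon}$ only gives $\exp(c\varepsilon|\ln\scale|) \lesssim \exp(C\scale^{-\varepsilon})$, which blows up far faster than any power of $\scale^{-1}$ as $t\uparrow\TCrunch$. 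The quantity \eqref{E:SCALEFACTORLOGPOWERBOUND} is the wrong tool here; it is designed for situations where $|\ln\scale|$ appears as a prefactor to be absorbed into a power of $\scale$, not as an argument of the exponential. The correct step is elementary: since $0 < \scale(t) \leq 1$, one has the identity $|\ln\scale(t)| = -\ln\scale(t)$, hence $\exp\!\bigl(c\varepsilon|\ln\scale(t)|\bigr) = \scale(t)^{-c\varepsilon}$ exactly, giving the stated bound once the constant $e^{c\varepsilon C}$ is absorbed. Equivalently, the paper cites \eqref{E:EXPONENTIATEDSCALEFACTORTIMEINTEGRALS} with $p=-1$, which gives $\exp\!\left(\int_0^t \scale^{-1}\,ds\right) \leq C/\scale(t)$, and raising both sides to the power $c\varepsilon$ yields the result. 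The claimed inequality \eqref{E:SUPNORMGRONWALLED} is correct, but your justification for the key Gronwall-exponential step is not.
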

\begin{proof}
	The estimate \eqref{E:STMETSUPNORMGRONWALLED} is a
	simple consequence of the fundamental theorem of calculus
	and the fact that $\SigmatLie_{\partial_t} \StMet = \SigmatLie_{\partial_t} \StMet^{-1} = 0$.
	
	To derive \eqref{E:SUPNORMALMOSTREADYTOBEGRONWALLED},
	we first use \eqref{E:NODERIVATIVESPARTIALTGSTRONGSUPNORM}
	and the Cauchy--Schwarz inequality relative to $\newg$
	to deduce the pointwise bound
	$
	\left|
	\partial_t
	\left|
		\xi
	\right|_{\newg}
\right|
\leq
c \varepsilon
\scale^{-1}
\left|
	\xi
\right|_{\newg}
+
\left|
	\SigmatLie_{\partial_t} \xi
\right|_{\newg}
$.
From this bound and the fundamental theorem of calculus, we conclude \eqref{E:SUPNORMALMOSTREADYTOBEGRONWALLED}.

\eqref{E:SUPNORMGRONWALLED} then follows from \eqref{E:SUPNORMALMOSTREADYTOBEGRONWALLED},
Gronwall's inequality, and \eqref{E:EXPONENTIATEDSCALEFACTORTIMEINTEGRALS} with $p=1$.
\end{proof}

\subsection{Strong sup-norm estimates and Sobolev embedding}
In the next proposition, we provide the main results of this section.

\begin{proposition}[\textbf{Strong sup-norm estimates}]
	\label{P:STRONGSUPNORMESTIMATES}
	The following estimates hold:
	\begin{subequations}
	\begin{align}
		\left\|
			\SigmatLie_{\mathscr{Z}}^{\leq 10} (\newg - \StMet)
		\right\|_{C_{\StMet}^0(\Sigma_t)}
		& \lesssim \sqrt{\varepsilon} \scale^{-c \sqrt{\varepsilon}}(t),
			\label{E:METRICSTRONGSUPNROMSTMET} \\
		\left\|
			\SigmatLie_{\mathscr{Z}}^{\leq 10} (\newg^{-1} - \StMet^{-1})
		\right\|_{C_{\StMet}^0(\Sigma_t)}
		& \lesssim \sqrt{\varepsilon} \scale^{-c \sqrt{\varepsilon}}(t),
			\label{E:INVERSEMETRICSTRONGSUPNROMSTMET} \\
		\left\|
			\SigmatLie_{\mathscr{Z}}^{\leq 10} (\newg - \StMet)
		\right\|_{C_{\newg}^0(\Sigma_t)}
		& \lesssim \sqrt{\varepsilon} \scale^{-c \sqrt{\varepsilon}}(t),
			\label{E:METRICSTRONGSUPNROM} \\
		\left\|
			\SigmatLie_{\mathscr{Z}}^{\leq 10} (\newg^{-1} - \StMet^{-1})
		\right\|_{C_{\newg}^0(\Sigma_t)}
		& \lesssim \sqrt{\varepsilon} \scale^{-c \sqrt{\varepsilon}}(t),
			\label{E:INVERSEMETRICSTRONGSUPNROM} \\
		\left\|
			\SigmatLie_{\partial_t} \newg
		\right\|_{C_{\newg}^0(\Sigma_t)}
		& \lesssim \varepsilon\scale^{-1}(t),
		\label{E:AGAINNODERIVATIVESPARTIALTGSTRONGSUPNORM}
			\\
		\left\|
			\SigmatLie_{\partial_t} \newg^{-1}
		\right\|_{C_{\newg}^0(\Sigma_t)}
		& \lesssim \varepsilon \scale^{-1}(t),
				\label{E:AGAINNODERIVATIVESPARTIALTGINVERSESTRONGSUPNORM} \\
		\left\|
			\SigmatLie_{\mathscr{Z}}^{\leq 10} \FreeNewSec
		\right\|_{C_{\StMet}^0(\Sigma_t)}
		& \lesssim \varepsilon,
			\label{E:KSTRONGSUPNROMSTMETRIC} \\
		\left\|
			\SigmatLie_{\partial_t} \SigmatLie_{\mathscr{Z}}^{\leq 10} \FreeNewSec
		\right\|_{C_{\StMet}^0(\Sigma_t)}
		& \lesssim \varepsilon \scale^{1/3 -c \upsigma}(t),
			\label{E:NOLOSSTIMEDERIVATIVESKSTRONGSUPNROMSTMETRIC}
				\\
		\left\|
			\FreeNewSec
		\right\|_{C_{\newg}^0(\Sigma_t)}
		& \lesssim \varepsilon,
			\label{E:NOLOSSKSTRONGSUPNROM} \\
		\left\|
			\SigmatLie_{\mathscr{Z}}^{[1,10]} \FreeNewSec
		\right\|_{C_{\newg}^0(\Sigma_t)}
		& \lesssim \varepsilon \scale^{-c \sqrt{\varepsilon}}(t),
			\label{E:KSTRONGSUPNROM} 
			\\
		\left\|
			\mathscr{Z}^{\leq 10} \newtimescalar
		\right\|_{C^0(\Sigma_t)}
		& \lesssim \varepsilon,
			\label{E:SCALARFIELDTIMESTRONGSUPNROM} \\
		\left\|
			\partial_t \mathscr{Z}^{\leq 10} \newtimescalar
		\right\|_{C^0(\Sigma_t)}
		& \lesssim \varepsilon \scale^{1/3 - c \upsigma}(t),
			\label{E:TIMEDERIVATIVEOFSCALARFIELDTIMESTRONGSUPNROM} \\
		\left\|
			\SigmatLie_{\mathscr{Z}}^{\leq 9} \newspacescalar
		\right\|_{C_{\newg}^0(\Sigma_t)}
		& \lesssim \sqrt{\varepsilon} \scale^{-c \sqrt{\varepsilon}}(t),
			\label{E:SCALARFIELDSPACESTRONGSUPNROM} \\
		\left\|
			\mathscr{Z}^{\leq 8} \newlapse
		\right\|_{C^0(\Sigma_t)}
		& \lesssim \sqrt{\varepsilon} \scale^{-c \sqrt{\varepsilon}}(t),
			\label{E:LAPSESTRONGSUPNROM}
				\\
		\left\|
				\SigmatLie_{\mathscr{Z}}^{\leq 8}
				\left\lbrace
					\Ric^{\# } - \frac{2}{9} \ID
				\right\rbrace
		\right\|_{C_{\newg}^0(\Sigma_t)}
		& \lesssim \sqrt{\varepsilon} \scale^{-c \sqrt{\varepsilon}}(t).
			\label{E:RICCISTRONGSUPNROM}
	\end{align}	
	\end{subequations}
\end{proposition}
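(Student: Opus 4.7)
\medskip

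\noindent\textbf{Proof plan.} The plan is to treat each commuted evolution equation from Prop.~\ref{P:ICOMMUTEDEQNS} as a transport equation in $t$, bound the source terms pointwise using the bootstrap assumption \eqref{E:HIGHNORMBOOTSTRAP} together with the weak sup-norm estimates of Lemma~\ref{L:WEAKSUPNORM}, integrate in time with the help of the sharp scale-factor integrability estimates from Cor.~\ref{C:SCALEFACTORTIMEINTEGRALS} (including the logarithm conversion \eqref{E:SCALEFACTORLOGPOWERBOUND}), and apply the maximum principle \eqref{E:MAXIMUMPRINCIPLEESTIMATE} to handle the lapse. The estimates must be derived in a prescribed sequence to respect the interdependencies between the geometric quantities. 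Throughout, the number of commutations is capped at $10$ so that every commutator term (involving Lie derivatives of $\newg$ up to order $\approx |\vec{I}|+2 \leq 12 < 16$) is controlled by the weak sup-norm bounds of Lemma~\ref{L:WEAKSUPNORM}.

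\medskip

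First I would establish \eqref{E:KSTRONGSUPNROMSTMETRIC} and \eqref{E:NOLOSSTIMEDERIVATIVESKSTRONGSUPNROMSTMETRIC} by applying $\SigmatLie_{\mathscr{Z}}^{\vec{I}}$ with $|\vec{I}|\leq 10$ to \eqref{E:EVOLUTIONSECONDFUNDRENORMALIZED}, i.e., using \eqref{E:COMMUTEDEVOLUTIONSECONDFUNDRENORMALIZED}. \emph{The key observation is that every term on the right-hand side carries a factor of at least $\scale^{1/3}$.} Bounding the remaining quantities (Ricci terms via \eqref{E:WEAKRICCISUPNORMSTMET}, higher metric derivatives via the bootstrap, lapse Hessian via the $\scale^{5/3}$ coefficient plus \eqref{E:OPERATORCOMPARISON}) yields
\[
\bigl| \SigmatLie_{\partial_t} \SigmatLie_{\mathscr{Z}}^{\vec{I}} \FreeNewSec \bigr|_{\StMet}
\lesssim \varepsilon \scale^{1/3 - c\upsigma}.
\]
Since $\scale^{1/3-c\upsigma}$ is time-integrable (Cor.~\ref{C:SCALEFACTORTIMEINTEGRALS} with $p=1/3-c\upsigma > -1$), integrating via \eqref{E:STMETSUPNORMGRONWALLED} with the small-data bound from Cor.~\ref{C:WEAKSMALLDATA} gives the no-loss $\varepsilon$ estimate. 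Next, I would apply $\SigmatLie_{\mathscr{Z}}^{\vec{I}}$ with $|\vec{I}|\leq 10$ to the metric evolution equations \eqref{E:EVOLUTIONMETRICRENORMALIZED} and \eqref{E:EVOLUTIONINVERSEMETRICRENORMALIZED}, i.e., use \eqref{E:COMMUTEDEVOLUTIONMETRICRENORMALIZED}, \eqref{E:COMMUTEDEVOLUTIONINVERSEMETRICRENORMALIZED}. The principal singular term $\scale^{-1}\CommutedMetBorderInhom{\vec{I}}$ (see \eqref{E:ICOMMUTEDMETRICBORDERTERMS}) contains $\SigmatLie_{\mathscr{Z}}^{\vec{I}} \FreeNewSec$, just controlled to be $O(\varepsilon)$, plus a self-coupling through lower-order $\SigmatLie_{\mathscr{Z}}^{\leq |\vec{I}|-1}(\newg-\StMet)$. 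Applying Gronwall and the exponentiated integral bound \eqref{E:EXPONENTIATEDSCALEFACTORTIMEINTEGRALS} with $p=-1$ yields a bound $\lesssim \varepsilon \scale^{-c\varepsilon}(1 + |\ln \scale|)$, which through \eqref{E:SCALEFACTORLOGPOWERBOUND} reduces to the claimed $\sqrt{\varepsilon}\scale^{-c\sqrt{\varepsilon}}$. This gives \eqref{E:METRICSTRONGSUPNROMSTMET} and \eqref{E:INVERSEMETRICSTRONGSUPNROMSTMET}.

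\medskip

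The remaining estimates follow in sequence. Estimates \eqref{E:METRICSTRONGSUPNROM}, \eqref{E:INVERSEMETRICSTRONGSUPNROM}, \eqref{E:NOLOSSKSTRONGSUPNROM}, \eqref{E:KSTRONGSUPNROM} are obtained from their $\StMet$-norm counterparts via the comparison inequality \eqref{E:GNORMSTMETRICCOMPARISON}. Bounds \eqref{E:AGAINNODERIVATIVESPARTIALTGSTRONGSUPNORM}, \eqref{E:AGAINNODERIVATIVESPARTIALTGINVERSESTRONGSUPNORM} are already contained in Lemma~\ref{L:NODERIVATIVESPARTIALTGANDKSTRONGSUPNORM}. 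For \eqref{E:SCALARFIELDTIMESTRONGSUPNROM}, \eqref{E:TIMEDERIVATIVEOFSCALARFIELDTIMESTRONGSUPNROM}, I commute \eqref{E:WAVEEQUATIONRENORMALIZED} using \eqref{E:COMMUTEDWAVEEQUATIONRENORMALIZED}; every term on the right is $O(\varepsilon \scale^{1/3-c\upsigma})$ (again time-integrable), delivering the no-loss $\varepsilon$ bound. For \eqref{E:SCALARFIELDSPACESTRONGSUPNROM}, the equation \eqref{E:EVOLUTIONSPACESCALARRENORMALIZED} commuted with $\SigmatLie_{\mathscr{Z}}^{\vec{I}}$ ($|\vec{I}|\leq 9$) has the principal term $\scale^{-1}(1+\scale^{4/3}\newlapse)\nabla \mathscr{Z}^{\vec{I}}\newtimescalar$, whose $\scale^{-1}$ coefficient forces the usual logarithmic loss absorbed into $\sqrt{\varepsilon}\scale^{-c\sqrt{\varepsilon}}$ via \eqref{E:EXPONENTIATEDSCALEFACTORTIMEINTEGRALS} and \eqref{E:SCALEFACTORLOGPOWERBOUND}; here \eqref{E:OPERATORCOMPARISON} converts $|\nabla \mathscr{Z}^{\vec{I}}\newtimescalar|_{\newg}$ into $|\SigmatLie_{\mathscr{Z}}^{\leq |\vec{I}|+1}\newtimescalar|_{\newg}$ at the cost of one derivative, which is why the statement drops from $\leq 10$ to $\leq 9$. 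Estimate \eqref{E:RICCISTRONGSUPNROM} follows by expanding $\SigmatLie_{\mathscr{Z}}^{\vec{I}}\{\Ric^{\#} - \tfrac{2}{9}\ID\}$ via \eqref{E:RIEMANNCURVATUREAPPROXALLINDICESDOWN}--\eqref{E:ERRORRICONEUPONEDOWN} and \eqref{E:RICCILIEDERIVATIVESCHEMATIC}, then inserting the already-proven metric estimates. Finally, \eqref{E:LAPSESTRONGSUPNROM} is obtained by applying the maximum principle \eqref{E:MAXIMUMPRINCIPLEESTIMATE} to the commuted elliptic equation \eqref{E:COMMUTEDLAPSEPDERENORMALIZEDLOWERDERIVATIVES} for $|\vec{I}|\leq 8$: the right-hand side $\CommutedLapseLowBorderInhom{\vec{I}} + \scale^{4/3}\CommutedLapseLowJunkInhom{\vec{I}}$ is controlled in sup-norm using the Ricci estimate \eqref{E:RICCISTRONGSUPNROM}, the metric estimates, and the already-proven $\newspacescalar$ bound.

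\medskip

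The main obstacle is the careful tracking of scale-factor weights and derivative budgets: at every step one must verify that the source terms appearing in the commuted equations are controlled by (i) the weak bootstrap bound $\varepsilon \scale^{-c\upsigma}$ from Lemma~\ref{L:WEAKSUPNORM} for the highest-order factors and (ii) the already-improved bounds for the lower-order factors, so that Gronwall never reintroduces the $\scale^{-\upsigma}$ loss. The interplay is most delicate for the metric estimate, where one must verify that the coupling of top-order $\SigmatLie_{\mathscr{Z}}^{\vec{I}}(\newg-\StMet)$ to itself through the lower-order commutator structure is benign enough that \eqref{E:EXPONENTIATEDSCALEFACTORTIMEINTEGRALS} combined with \eqref{E:SCALEFACTORLOGPOWERBOUND} converts the $O(\varepsilon \scale^{-c\varepsilon}|\ln\scale|)$ behavior into $O(\sqrt{\varepsilon}\scale^{-c\sqrt{\varepsilon}})$. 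A secondary obstacle is the use of \eqref{E:OPERATORCOMPARISON} to pass between $\SigmatLie_{\mathscr{Z}}$ and $\nabla$ (costing one derivative and a $\scale^{-c\upsigma}$ factor), which must be accounted for in the derivative budget at several points in the argument.
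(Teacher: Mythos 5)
Your overall strategy — treating the commuted evolution equations as transport equations, integrating in time with the sharp scale-factor estimates, and closing via Gronwall and the log-to-power conversion \eqref{E:SCALEFACTORLOGPOWERBOUND}, then handling the lapse via the maximum principle — matches the paper's proof. The proof order ($\FreeNewSec$ first, then $\newg-\StMet$, then the scalar field, Ricci, and lapse) is also right, and your key observation that the no-loss $\FreeNewSec$ bound must feed the borderline $\scale^{-1}$ term in the metric transport equation is exactly the paper's mechanism. However, there are two concrete gaps.

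First, you attribute the passage from the $\StMet$-norm estimates to the $\newg$-norm estimates (\eqref{E:METRICSTRONGSUPNROM}, \eqref{E:INVERSEMETRICSTRONGSUPNROM}, \eqref{E:KSTRONGSUPNROM}) to the \emph{weak} comparison inequality \eqref{E:GNORMSTMETRICCOMPARISON}, and likewise you invoke \eqref{E:OPERATORCOMPARISON} in the $\newspacescalar$ step. Those comparisons lose a factor $\scale^{-c\upsigma}$, and by the constraint \eqref{E:PARAMETERCONSTRAINT} the parameter $\upsigma$ is only bounded \emph{below} by $\sqrt{\varepsilon}$, so $\scale^{-c\upsigma}$ is in general much worse than the target loss $\scale^{-c\sqrt{\varepsilon}}$. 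The missing ingredient is that once \eqref{E:METRICSTRONGSUPNROMSTMET} and \eqref{E:INVERSEMETRICSTRONGSUPNROMSTMET} are in hand, one must re-run the proof of Lemma~\ref{L:PRELIMINARYCOMPARISON} with these improved inputs replacing the raw bootstrap bounds, thereby establishing the \emph{improved} comparison Cor.~\ref{C:IMPROVEMENTLEMMAOPERATORCOMPARISON}, whose loss is $\scale^{-c\sqrt{\varepsilon}}$. That intermediate upgrade is what makes the passage to $\newg$-norms, the $\newspacescalar$ argument, the Ricci estimate \eqref{E:RICCISTRONGSUPNROM}, and the lapse estimate \eqref{E:LAPSESTRONGSUPNROM} close with the stated exponents; without it the argument does not produce the claimed bounds.

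Second, you list \eqref{E:NOLOSSKSTRONGSUPNROM}, i.e., $\|\FreeNewSec\|_{C_{\newg}^0(\Sigma_t)}\lesssim\varepsilon$, among the estimates obtained from their $\StMet$-norm counterparts by comparison. This cannot work: the target bound has \emph{no} scale-factor loss, while any version of the comparison (\eqref{E:GNORMSTMETRICCOMPARISON} or \eqref{E:IMPROVEDGNORMSTMETRICCOMPARISON}) introduces one. In the paper this estimate is one of the base-level bounds already proved in Lemma~\ref{L:NODERIVATIVESPARTIALTGANDKSTRONGSUPNORM} (namely \eqref{E:NODERIVATIVESKSTRONGPOINTWISESUPNORM}), where the key structural fact is that $\partial_t\bigl(|\FreeNewSec|_{\newg}^2\bigr) = 2\,\newg_{ab}(\newg^{-1})^{cd}\FreeNewSec^{\ a}_{\ c}\,\SigmatLie_{\partial_t}\FreeNewSec^{\ b}_{\ d}$ — the $\SigmatLie_{\partial_t}\newg$ contributions cancel thanks to the contraction structure — so one can integrate in the $\newg$-norm directly with no comparison at all. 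You should cite Lemma~\ref{L:NODERIVATIVESPARTIALTGANDKSTRONGSUPNORM} for \eqref{E:NOLOSSKSTRONGSUPNROM}, exactly as you already do for \eqref{E:AGAINNODERIVATIVESPARTIALTGSTRONGSUPNORM} and \eqref{E:AGAINNODERIVATIVESPARTIALTGINVERSESTRONGSUPNORM}.
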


In the middle of our proof of Prop.~\ref{P:STRONGSUPNORMESTIMATES}, we will also 
prove the following corollary.

\begin{corollary}[\textbf{Improvement of Lemma~\ref{L:PRELIMINARYCOMPARISON}}]
	\label{C:IMPROVEMENTLEMMAOPERATORCOMPARISON}
	There exist constants $C >1$ and $c > 1$ such that 
	the following estimates hold for $\Sigma_t$-tangent tensors $\xi$:
	\begin{align} \label{E:IMPROVEDGNORMSTMETRICCOMPARISON}
		\frac{1}{C} \scale^{c \sqrt{\varepsilon}}
			\left|
				\xi	
			\right|_{\StMet}
		& \leq
		\left|
			\xi
		\right|_{\newg}	
		\leq C \scale^{- c \sqrt{\varepsilon}}
			\left|
				\xi	
			\right|_{\StMet}.
	\end{align}
	
	Moreover, let $\xi$ be a type $\binom{l}{m}$ $\Sigma_t-$tangent tensor.
	Let $\theta^1, \cdots, \theta^l \in \Theta$
	and
	$Z_1, \cdots, Z_m \in \mathscr{Z}$
	(see Def.\ \ref{D:LIETRANSPORTEDSPATIALFRAME})
	and let
	$\xi_{Z_1 \cdots Z_m}^{\theta^1 \cdots \theta^l}
	:= 
	\xi_{b_1 \cdots b_m}^{a_1 \cdots a_l}
	\theta_{a_1}^1 \cdots \theta_{a_l}^l
	Z_1^{b_1} \cdots Z_m^{b_m}
	$
	denote the contraction of $\xi$ against
	$\theta^1, \cdots, \theta^l$ 
	and
	$Z_1, \cdots, Z_m$.
	The following pointwise estimate holds:
	\begin{align} \label{E:IMPROVEDGNORMCONTRACTIONCOMPARISON}
		\frac{1}{C} \scale^{c \sqrt{\varepsilon}}
			\mathop{\sum_{\theta^1,\cdots,\theta^l \in \Theta}}_{Z_1,\cdots,Z_m \in \mathscr{Z}}
				\left|
					\xi_{Z_1 \cdots Z_m}^{\theta^1 \cdots \theta^l}	
				\right|
		& \leq
		\left|
			\xi
		\right|_{\newg}	
		\leq C \scale^{- c \sqrt{\varepsilon}}
			\mathop{\sum_{\theta^1,\cdots,\theta^l \in \Theta}}_{Z_1,\cdots,Z_m \in \mathscr{Z}}
				\left|
					\xi_{Z_1 \cdots Z_m}^{\theta^1 \cdots \theta^l}	
				\right|.
	\end{align}
	
	Finally, let $\nabla^L$ be the $L^{th}$ order covariant derivative operator.
	Then for $L = 1,2$, we have
	\begin{align} \label{E:IMPROVEDOPERATORCOMPARISON}
		\left|
			\nabla^L \xi
		\right|_{\newg}
		& \leq C
			\scale^{- c \sqrt{\varepsilon}} 
			\left|
				\SigmatLie_{\mathscr{Z}}^{\leq L} \xi
			\right|_{\newg},
		&&
		\left|
			\SigmatLie_{\mathscr{Z}}^L \xi
		\right|_{\newg}
		\leq 
		C \scale^{- c \sqrt{\varepsilon}} 	
		\left|
			\nabla^{\leq L} \xi
		\right|_{\newg}.
	\end{align}
\end{corollary}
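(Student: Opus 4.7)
The plan is to rerun the proof of Lemma~\ref{L:PRELIMINARYCOMPARISON} verbatim, but with the bootstrap-derived inputs replaced by the sharper sup-norm estimates supplied by Prop.~\ref{P:STRONGSUPNORMESTIMATES}. Recall the ``basic remark'' made at the start of the proof of Lemma~\ref{L:PRELIMINARYCOMPARISON}: the entire argument depends on only two pointwise bounds, namely
\[
|\newg^{-1} - \StMet^{-1}|_{\StMet} \lesssim \scale^{-c\upsigma},
\qquad
|\SigmatLie_{\mathscr{Z}}^{\leq 2}(\newg - \StMet)|_{\StMet} \lesssim \scale^{-c\upsigma}.
\]
By \eqref{E:METRICSTRONGSUPNROMSTMET}--\eqref{E:INVERSEMETRICSTRONGSUPNROMSTMET} (which control up to $10$ Lie derivatives, a comfortable margin over the $2$ needed), these inputs are now upgraded with exponent $c\sqrt{\varepsilon}$ in place of $c\upsigma$, and with an extra $\sqrt{\varepsilon}$ smallness factor out front. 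Substituting these two improved bounds at every location where the original proof invoked ``the bootstrap assumptions'' automatically replaces every $\scale^{\pm c\upsigma}$ by $\scale^{\pm c\sqrt{\varepsilon}}$.

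Concretely, for \eqref{E:IMPROVEDGNORMSTMETRICCOMPARISON} I would expand $|\xi|_{\newg}^2 = (\StMet^{-1})^{ab}\xi_a \xi_b + \{(\newg^{-1})^{ab} - (\StMet^{-1})^{ab}\}\xi_a\xi_b$, apply Cauchy--Schwarz with respect to $\StMet$, and invoke \eqref{E:INVERSEMETRICSTRONGSUPNROMSTMET} for the upper bound; for the lower bound I would note $|\StMet^{-1}|_{\newg} \lesssim |\newg|_{\StMet} \lesssim 1 + \sqrt{\varepsilon}\scale^{-c\sqrt{\varepsilon}}$ using \eqref{E:METRICSTRONGSUPNROMSTMET}. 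Inequality \eqref{E:IMPROVEDGNORMCONTRACTIONCOMPARISON} is then immediate from \eqref{E:IMPROVEDGNORMSTMETRICCOMPARISON} together with the frame expansion of Cor.~\ref{C:STMETRICFRAMEEXPANDED}, exactly as in the original proof.

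For \eqref{E:IMPROVEDOPERATORCOMPARISON} the first step is to upgrade the pointwise bound on $|\nabla Z|_{\newg}$ for $Z \in \mathscr{Z}$. The identity \eqref{E:VECTORFIELDCOVARIANTDERIVATIVEID} and the commutator relation \eqref{E:FRAMEVECTORFIELDLIEBRACKET} express $\newg(\nabla_X Y, Z)$ for $X,Y,Z \in \mathscr{Z}$ in terms of $\SigmatLie_{\mathscr{Z}} \newg = \SigmatLie_{\mathscr{Z}}(\newg - \StMet)$ (since $\StMet$ is Killing for the frame) and undifferentiated $\newg$; applying \eqref{E:METRICSTRONGSUPNROMSTMET} and \eqref{E:IMPROVEDGNORMCONTRACTIONCOMPARISON} gives $|\nabla Z|_{\newg} \lesssim \sqrt{\varepsilon}\scale^{-c\sqrt{\varepsilon}}$. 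The schematic identity $Z \cdot \nabla \xi = \SigmatLie_Z \xi + \xi \cdot \nabla Z$, combined with \eqref{E:IMPROVEDGNORMSTMETRICCOMPARISON} and this $\nabla Z$ bound, yields \eqref{E:IMPROVEDOPERATORCOMPARISON} when $L=1$.

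The one step requiring a bit of care is $L=2$, where the commutator $[\nabla,\SigmatLie_Z]\xi$ enters. Using \eqref{E:TENSORNABLALIEZCOMMUTATOR} with $|\vec I|=1$ gives $|[\nabla,\SigmatLie_Z]\xi|_{\newg} \lesssim |\nabla\SigmatLie_Z \newg|_{\newg}|\xi|_{\newg}$, and via the $L=1$ case of \eqref{E:IMPROVEDOPERATORCOMPARISON} together with \eqref{E:IMPROVEDGNORMSTMETRICCOMPARISON} and the improved bound \eqref{E:METRICSTRONGSUPNROMSTMET} on $|\SigmatLie_{\mathscr{Z}}^{\leq 2}(\newg - \StMet)|_{\StMet}$, this is bounded by $\lesssim \sqrt{\varepsilon}\scale^{-c\sqrt{\varepsilon}}|\xi|_{\newg}$. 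Absorbing the small factor into the constant and iterating the $L=1$ comparison then closes both inequalities in \eqref{E:IMPROVEDOPERATORCOMPARISON} for $L=2$. I do not anticipate any genuine obstacle: the work is purely bookkeeping, the delicate point being only to check that all Lie-derivative counts appearing in the proof stay within the range ($\leq 2$) controlled by Prop.~\ref{P:STRONGSUPNORMESTIMATES}, which they do.
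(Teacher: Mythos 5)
Your proposal is correct and coincides with the paper's own proof, which is the single sentence: repeat the proof of Lemma~\ref{L:PRELIMINARYCOMPARISON} with \eqref{E:METRICSTRONGSUPNROMSTMET}--\eqref{E:INVERSEMETRICSTRONGSUPNROMSTMET} replacing the original bootstrap-based inputs. One small overstatement worth noting: $|\nabla Z|_{\newg}$ is not $\lesssim \sqrt{\varepsilon}\scale^{-c\sqrt{\varepsilon}}$ but rather $\lesssim \scale^{-c\sqrt{\varepsilon}}$ with an $\mathcal{O}(1)$ constant, since the frame structure constants $\frac{1}{3}\epsilon_{ABC}$ appearing in \eqref{E:ALTERNATEFRAMECONNECTIONEXPRESSION} (equivalently the $\newg(\SigmatLie_X Y,Z)$ terms in \eqref{E:VECTORFIELDCOVARIANTDERIVATIVEID}) contribute at order unity even when $\newg=\StMet$; however, the corollary's conclusion \eqref{E:IMPROVEDOPERATORCOMPARISON} carries no $\sqrt{\varepsilon}$ factor and so needs only the weaker bound, hence the argument closes unchanged.
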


\begin{proof}[Proof of Prop.~\ref{P:STRONGSUPNORMESTIMATES} and Cor.~\ref{C:IMPROVEMENTLEMMAOPERATORCOMPARISON}]
	We start by noting that the order in which we prove the estimates is important.
	
	We now recall that
	\eqref{E:AGAINNODERIVATIVESPARTIALTGSTRONGSUPNORM},
	\eqref{E:AGAINNODERIVATIVESPARTIALTGINVERSESTRONGSUPNORM},
	and \eqref{E:NOLOSSKSTRONGSUPNROM}
	were proved in Lemma~\ref{L:NODERIVATIVESPARTIALTGANDKSTRONGSUPNORM}.

	To prove 
	\eqref{E:KSTRONGSUPNROMSTMETRIC}
	and
	\eqref{E:NOLOSSTIMEDERIVATIVESKSTRONGSUPNROMSTMETRIC},
	we observe that
	for $1 \leq |\vec{I}| \leq 10$,
	all products on RHS~\eqref{E:COMMUTEDEVOLUTIONSECONDFUNDRENORMALIZED}
	are equal to the product of $\scale^{1/3}$ 
	and factors that, by virtue of \eqref{E:WEAKSUPNORMSTMET} and \eqref{E:WEAKRICCISUPNORMSTMET},
	are $\lesssim \varepsilon \scale^{- c \upsigma}(t)$
	in the norm $\| \cdot \|_{C_{\StMet}^0(\Sigma_t)}$.
	The same remarks hold for RHS~\eqref{E:EVOLUTIONSECONDFUNDRENORMALIZED},
	which is relevant for the case $\vec{I} = 0$.
	This yields \eqref{E:NOLOSSTIMEDERIVATIVESKSTRONGSUPNROMSTMETRIC}.
	\eqref{E:KSTRONGSUPNROMSTMETRIC} then follows from
	\eqref{E:NOLOSSTIMEDERIVATIVESKSTRONGSUPNROMSTMETRIC},
	\eqref{E:STMETSUPNORMGRONWALLED}, 
	the small-data bound \eqref{E:WEAKSMALLDATASTMETR},
	and the fact that $0 \leq \scale \leq 1$.
	
	The estimates
	\eqref{E:SCALARFIELDTIMESTRONGSUPNROM}-\eqref{E:TIMEDERIVATIVEOFSCALARFIELDTIMESTRONGSUPNROM}
	can be proved using similar arguments based on 
	equations 
	\eqref{E:WAVEEQUATIONRENORMALIZED} and \eqref{E:COMMUTEDWAVEEQUATIONRENORMALIZED}
	and we omit the details.
	
	We now prove \eqref{E:METRICSTRONGSUPNROMSTMET}.
	The case $|\vec{I}| = 0$ has already been treated in Lemma~\ref{L:NODERIVATIVESPARTIALTGANDKSTRONGSUPNORM}.
	We now show that for $1 \leq |\vec{I}| \leq 10$, the terms
	$
	\CommutedMetBorderInhom{\vec{I}}
	$
	and
	$
	\CommutedMetJunkInhom{\vec{I}}
	$
	on RHS~\eqref{E:COMMUTEDEVOLUTIONMETRICRENORMALIZED}
	verify
	\begin{align} \label{E:BORDERINHOMTERMESTIMATEPARTIALTGSTRONGSUPNROM}
		\left| 
			\CommutedMetBorderInhom{\vec{I}}
		\right|_{\StMet}
		& \lesssim
			\varepsilon
			\scale^{-1}
			\left|
				\SigmatLie_{\mathscr{Z}}^{[1,|\vec{I}|]} \newg
			\right|_{\StMet}
			+ 
			\varepsilon,
				\\
		\left| 
			\CommutedMetJunkInhom{\vec{I}}
		\right|_{\StMet}
		& \lesssim \varepsilon \scale^{- c \upsigma}.
		\label{E:JUNKINHOMTERMESTIMATEPARTIALTGSTRONGSUPNROM}
	\end{align}
	To prove~\eqref{E:BORDERINHOMTERMESTIMATEPARTIALTGSTRONGSUPNROM},
	we use \eqref{E:KSTRONGSUPNROMSTMETRIC}
	to bound the first sum on RHS~\eqref{E:ICOMMUTEDMETRICBORDERTERMS}
	in the norm $|\cdot|_{\StMet}$
	by $\lesssim$ the first term on RHS~\eqref{E:BORDERINHOMTERMESTIMATEPARTIALTGSTRONGSUPNROM}.
	We then use the estimate \eqref{E:WEAKSUPNORMSTMET}
	to bound the remaining two sums on RHS~\eqref{E:ICOMMUTEDMETRICBORDERTERMS}
	in the norm $|\cdot|_{\StMet}$
	by $\lesssim \varepsilon$.
	We have thus proved \eqref{E:BORDERINHOMTERMESTIMATEPARTIALTGSTRONGSUPNROM}.
	Similarly, the estimate \eqref{E:WEAKSUPNORMSTMET}
	yields that RHS~\eqref{E:ICOMMUTEDMETRICJUNKTERMS}
	is bounded in the norm $|\cdot|_{\StMet}$
	by $\lesssim \varepsilon \scale^{- c \upsigma}$,
	which yields \eqref{E:JUNKINHOMTERMESTIMATEPARTIALTGSTRONGSUPNROM}.
	Next, we use equation \eqref{E:COMMUTEDEVOLUTIONMETRICRENORMALIZED},
	the estimates
	\eqref{E:BORDERINHOMTERMESTIMATEPARTIALTGSTRONGSUPNROM}-\eqref{E:JUNKINHOMTERMESTIMATEPARTIALTGSTRONGSUPNROM},
	the small-data bound \eqref{E:WEAKSMALLDATASTMETR},
	and Cor.~\ref{C:MONOTONICITYTIME} to deduce,
	by integrating in time, the following estimate:
	\begin{align} \label{E:LIEPARTIALTCOMMUTEDMETRICSUPNORMALMOSTREADYTOBEGRONWALLED}
	\left\|
		\SigmatLie_{\mathscr{Z}}^{[1,|\vec{I}|]} \newg
	\right\|_{C^0_{\StMet}(\Sigma_t)}
	& \leq
		C (1 + |\ln \scale(t)|) \varepsilon
		+
		c \varepsilon
		\int_{s=0}^t
				\scale^{-1}(s)
				\left\|
					\SigmatLie_{\mathscr{Z}}^{[1,|\vec{I}|]} \newg
				\right\|_{C^0_{\StMet}(\Sigma_s)}
			\, ds.
	\end{align}
	From \eqref{E:LIEPARTIALTCOMMUTEDMETRICSUPNORMALMOSTREADYTOBEGRONWALLED},
	Gronwall's inequality,
	and \eqref{E:EXPONENTIATEDSCALEFACTORTIMEINTEGRALS},
	we obtain
	$
	\left\|
		\SigmatLie_{\mathscr{Z}}^{[1,|\vec{I}|]} \newg
	\right\|_{C^0_{\StMet}(\Sigma_t)}
	\lesssim
	\varepsilon (1 + |\ln \scale(t)|) \scale^{-c \varepsilon}(t)
	$.
	From this bound and \eqref{E:SCALEFACTORLOGPOWERBOUND},
	we conclude the desired estimate \eqref{E:METRICSTRONGSUPNROMSTMET}.
	The estimate \eqref{E:INVERSEMETRICSTRONGSUPNROMSTMET} 
	can be proved in the same way 
	based on the evolution equation \eqref{E:COMMUTEDEVOLUTIONINVERSEMETRICRENORMALIZED}
	(the case $\vec{I} = 0$ having been treated in Lemma~\ref{L:NODERIVATIVESPARTIALTGANDKSTRONGSUPNORM})
	and we omit the details.
	
	To deduce Cor.~\ref{C:IMPROVEMENTLEMMAOPERATORCOMPARISON},
	we simply repeat the proof of Lemma~\ref{L:PRELIMINARYCOMPARISON}, 
	but now using the estimates 
	\eqref{E:METRICSTRONGSUPNROMSTMET} and \eqref{E:INVERSEMETRICSTRONGSUPNROMSTMET}
	in place of the bootstrap assumptions/estimates 
	that we used in the original proof of the lemma
	(see the first sentence of the proof of Lemma~\ref{L:PRELIMINARYCOMPARISON}).
	
	The estimates \eqref{E:METRICSTRONGSUPNROM} and \eqref{E:INVERSEMETRICSTRONGSUPNROM}
	now follow from \eqref{E:METRICSTRONGSUPNROMSTMET}-\eqref{E:INVERSEMETRICSTRONGSUPNROMSTMET}
	and \eqref{E:IMPROVEDGNORMSTMETRICCOMPARISON}.
	Similarly, \eqref{E:KSTRONGSUPNROM} follows from
	\eqref{E:KSTRONGSUPNROMSTMETRIC} and \eqref{E:IMPROVEDGNORMSTMETRICCOMPARISON}.
	
	We now prove \eqref{E:SCALARFIELDSPACESTRONGSUPNROM}.
	Using \eqref{E:WEAKSUPNORM}, we see that for $1 \leq |\vec{I}| \leq 9$,
	all terms on RHS~\eqref{E:COMMUTEDSPACEDERIVATIVESWAVEEQUATIONRENORMALIZED}
	except for $\scale^{-1} \nabla \mathscr{Z}^{\vec{I}} \newtimescalar$
	are bounded in the norm 
	$\| \cdot \|_{C_{\newg}^0(\Sigma_t)}$
	by $\lesssim \varepsilon$.
	Moreover, 
	from \eqref{E:SCALARFIELDTIMESTRONGSUPNROM}
	and Cor.~\ref{C:IMPROVEMENTLEMMAOPERATORCOMPARISON},
	we deduce
	$\left\|
		\scale^{-1} \nabla \mathscr{Z}^{\vec{I}} \newtimescalar
	\right\|_{C_{\newg}^0(\Sigma_t)}
	\lesssim 
	\varepsilon \scale^{-1- c \sqrt{\varepsilon}}(t)
	$.
	Combining these estimates, we 
	deduce
	$
		\left\|
			\SigmatLie_{\partial_t} \SigmatLie_{\mathscr{Z}}^{[1,|\vec{I}|]} \newspacescalar
		\right\|_{C_{\newg}^0(\Sigma_t)}
		\lesssim
		\varepsilon \scale^{-1- c \sqrt{\varepsilon}}(t)
	$.
	Using this estimate,
	inequality \eqref{E:SUPNORMGRONWALLED}
	with $\xi := \SigmatLie_{\mathscr{Z}}^{\vec{I}} \newspacescalar$,
	and the small-data bound \eqref{E:WEAKSMALLDATA},
	we deduce
	$
	\left\|
		\SigmatLie_{\mathscr{Z}}^{[1,|\vec{I}|]} \newspacescalar
	\right\|_{C^0_{\newg}(\Sigma_t)}
	\lesssim
	\scale^{-c \varepsilon}(t)
		\left\lbrace
			\varepsilon
				+
			\varepsilon
			\int_{s=0}^t
				\scale^{-1 - c \sqrt{\varepsilon}}(s)
			\, ds
		\right\rbrace
	$.
	The desired estimate \eqref{E:SCALARFIELDSPACESTRONGSUPNROM}
	for 
	$\left\|
		\SigmatLie_{\mathscr{Z}}^{[1,9]} \newspacescalar
	\right\|_{C^0_{\newg}(\Sigma_t)}$
	now follows from the previous estimate
	and \eqref{E:SCALEFACTORTIMEINTEGRALS} with $p = -1 - c \sqrt{\varepsilon}$.

	We now prove \eqref{E:RICCISTRONGSUPNROM}.
	From \eqref{E:RICCIESTONEUPONEDOWN},
	\eqref{E:ERRORRICONEUPONEDOWN},
	\eqref{E:METRICSTRONGSUPNROMSTMET},
	\eqref{E:INVERSEMETRICSTRONGSUPNROMSTMET},
	and \eqref{E:IMPROVEDGNORMSTMETRICCOMPARISON},
	we deduce that
	$\left\| 
		\Ric^{\# } - \frac{2}{9} \ID
	\right\|_{C^0_{\newg}(\Sigma_t)}
	\lesssim
	\scale^{-c \sqrt{\varepsilon}}(t)
	\left\|
		\Ric_{\triangle}
	\right\|_{C^0_{\StMet}(\Sigma_t)}
	\lesssim \sqrt{\varepsilon} \scale^{-c \sqrt{\varepsilon}}(t)
	$
	as desired.
	To obtain the bound
	$\left\| 
		\SigmatLie_{\mathscr{Z}}^{\vec{I}} \Ric^{\# } 
	\right\|_{C^0_{\newg}(\Sigma_t)} 
	\lesssim 
	\sqrt{\varepsilon} \scale^{-c \sqrt{\varepsilon}}$
	for $1 \leq |\vec{I}| \leq 8$, we use the identities
	\eqref{E:RICCILIEDERIVATIVESCHEMATIC} and \eqref{E:RICINHOMERROR},
	the estimate \eqref{E:METRICSTRONGSUPNROM},
	the estimate
	$
	\left\| 
		\Ric^{\# } 
	\right\|_{C^0_{\newg}(\Sigma_t)}
	\lesssim 
	1 + \sqrt{\varepsilon} \scale^{-c \sqrt{\varepsilon}}(t)
	$
	(which follows from the bound for $\Ric^{\# } - \frac{2}{9} \ID$ proved above),
	and Cor.~\ref{C:IMPROVEMENTLEMMAOPERATORCOMPARISON}.
	We have thus proved \eqref{E:RICCISTRONGSUPNROM}.
	
	We now prove \eqref{E:LAPSESTRONGSUPNROM}.
	By \eqref{E:MAXIMUMPRINCIPLEESTIMATE},
	it suffices to show that for $1 \leq |\vec{I}| \leq 8$, the norms
	$
	\left\|
		\cdot
	\right\|_{C_{\newg}^0(\Sigma_t)}
	$
	of the inhomogeneous terms on RHS~\eqref{E:COMMUTEDLAPSEPDERENORMALIZEDLOWERDERIVATIVES} are
	$\lesssim \sqrt{\varepsilon} \scale^{-c \sqrt{\varepsilon}}(t)$
	and that the inhomogeneous terms on RHS~\eqref{E:LAPSEPDERENORMALIZEDLOWERDERIVATIVES} verify the same bound.
	Using 
	\eqref{E:RICINHOMERROR},
	\eqref{E:METRICSTRONGSUPNROM},
	\eqref{E:INVERSEMETRICSTRONGSUPNROM},
	\eqref{E:SCALARFIELDSPACESTRONGSUPNROM},
	\eqref{E:RICCISTRONGSUPNROM},
	and Cor.~\ref{C:IMPROVEMENTLEMMAOPERATORCOMPARISON},
	we see that the inhomogeneous term
	$\CommutedLapseLowBorderInhom{\vec{I}}$
	on RHS~\eqref{E:COMMUTEDLAPSEPDERENORMALIZEDLOWERDERIVATIVES}
	verifies the desired bound
	$
	\left\|
		\CommutedLapseLowBorderInhom{\vec{I}}
	\right\|_{C^0(\Sigma_t)}
	\lesssim \varepsilon \scale^{-c \sqrt{\varepsilon}}(t)
	$
	and that the inhomogeneous terms on RHS~\eqref{E:LAPSEPDERENORMALIZEDLOWERDERIVATIVES} verify the same bound.
	Finally, from 
	the commutation formula \eqref{E:LAPLACIANLIEZCOMMUTATOR},
	\eqref{E:WEAKSUPNORM},
	\eqref{E:WEAKRICCISUPNORM},
	the fact that $\ScalarCur$ is the pure trace of $\Ric^{\#}$,
	and Cor.~\ref{C:IMPROVEMENTLEMMAOPERATORCOMPARISON},
	we deduce that the inhomogeneous term $\scale^{4/3} \CommutedLapseLowJunkInhom{\vec{I}}$ 
	on RHS~\eqref{E:COMMUTEDLAPSEPDERENORMALIZEDLOWERDERIVATIVES} 
	verifies
	$
	\scale^{4/3}(t)
	\left\|
		\CommutedLapseLowJunkInhom{\vec{I}}
	\right\|_{C^0(\Sigma_t)}
	\lesssim
	\varepsilon \scale^{4/3 - c \upsigma}(t)
	\lesssim
	\varepsilon
	$
	as desired.
	We have therefore proved \eqref{E:LAPSESTRONGSUPNROM},
	which completes the proof of the proposition and corollary.
	
\end{proof}

\subsection{Sobolev embedding}
\label{SS:SOBOLEVEMBEDDING}
In Lemma~\ref{L:SOBOLEV} below, we prove our main Sobolev embedding result.
We first provide a simple comparison lemma.

\begin{lemma}[\textbf{Background $L^2$ norm and geometric $L^2$ norm comparison estimates}]
	\label{L:L2NORMSCOMPARISONESTIMATES}
	The following estimates 
	hold for $\Sigma_t-$tangent tensorfields $\xi$:
	\begin{align} \label{E:L2NORMSCOMPARISONESTIMATES}
		\| \xi \|_{L^2_{\StMet}(\Sigma_t)}
		& \lesssim \scale^{- c \sqrt{\varepsilon}}(t) \| \xi \|_{L^2_{\newg}(\Sigma_t)},
		&
		\| \xi \|_{L^2_{\newg}(\Sigma_t)}
		& \lesssim C \scale^{- c \sqrt{\varepsilon}}(t) \| \xi \|_{L^2_{\StMet}(\Sigma_t)}.
	\end{align}
\end{lemma}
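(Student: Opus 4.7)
The plan is to reduce the $L^2$ comparison to the already-established pointwise and volume-form comparisons. The two ingredients are: the improved pointwise norm equivalence \eqref{E:IMPROVEDGNORMSTMETRICCOMPARISON} from Cor.~\ref{C:IMPROVEMENTLEMMAOPERATORCOMPARISON}, which provides
\[
|\xi|_{\StMet} \leq C \scale^{-c\sqrt{\varepsilon}}(t)\, |\xi|_{\newg}, \qquad |\xi|_{\newg} \leq C \scale^{-c\sqrt{\varepsilon}}(t)\, |\xi|_{\StMet},
\]
and the volume form comparison \eqref{E:VOLUMEFORMCOMPARISON} from Lemma~\ref{L:VOLUMEFORMCOMPARISON}, which gives $d\Sttvol = (1 + \mathcal{O}(\varepsilon)) d\tvol$, so that the two volume forms are equivalent up to a multiplicative constant.

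To prove the first inequality, I would square the bound $|\xi|_{\StMet}^2 \leq C\scale^{-2c\sqrt{\varepsilon}}(t) |\xi|_{\newg}^2$, integrate against $d\Sttvol$ over $\Sigma_t$, pull the $t$-dependent scalar out of the integral, and then use the volume-form comparison to replace $d\Sttvol$ by $d\tvol$ at the cost of a harmless multiplicative constant. Taking square roots and absorbing the constant $2$ in the exponent into the constant $c$ yields
\[
\| \xi \|_{L_{\StMet}^2(\Sigma_t)} \lesssim \scale^{-c\sqrt{\varepsilon}}(t)\, \| \xi \|_{L_{\newg}^2(\Sigma_t)}.
\]
The reverse inequality is proved identically, with the roles of the two norms and volume forms interchanged, using the other half of \eqref{E:IMPROVEDGNORMSTMETRICCOMPARISON} and the (equivalent) bound $d\tvol \lesssim d\Sttvol$ that follows from \eqref{E:VOLUMEFORMCOMPARISON}.

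There is no real obstacle here: both comparison estimates have already been established, and the argument is just a matter of combining them inside the integral. The only point requiring a moment's attention is to verify that the constants $c$ appearing in the pointwise comparison and the new constant in the statement of the lemma can be chosen consistently (they can, after absorbing the factor $2$ from squaring). Thus the lemma follows as a direct corollary of Cor.~\ref{C:IMPROVEMENTLEMMAOPERATORCOMPARISON} and Lemma~\ref{L:VOLUMEFORMCOMPARISON}.
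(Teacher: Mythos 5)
Your proof is correct and follows the same route as the paper, which simply cites Lemma~\ref{L:VOLUMEFORMCOMPARISON} and Cor.~\ref{C:IMPROVEMENTLEMMAOPERATORCOMPARISON} as the ingredients. You have merely spelled out the squaring, integration, and constant-absorption steps that the paper leaves to the reader.
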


\begin{proof}
	The estimates in \eqref{E:L2NORMSCOMPARISONESTIMATES} are
	a simple consequence of
	Lemma~\ref{L:VOLUMEFORMCOMPARISON} and Cor.~\ref{C:IMPROVEMENTLEMMAOPERATORCOMPARISON}.
\end{proof}

\begin{lemma}[\textbf{Sobolev embedding}]
	\label{L:SOBOLEV}
	The following estimates hold for any $\Sigma_t$-tangent tensorfield $\xi$:
	\begin{subequations}
	\begin{align}
		\left\|
			\xi
		\right\|_{C_{\StMet}^0(\Sigma_t)}
		& 
		\lesssim 
		\left\|
			\SigmatLie_{\mathscr{Z}}^{\leq 2} \xi
		\right\|_{L_{\StMet}^2(\Sigma_t)},
			\label{E:STSOBOLEV} \\
		\left\|
			\xi
		\right\|_{C_{\newg}^0(\Sigma_t)}
		& \lesssim 
		\scale^{- c \sqrt{\varepsilon}}(t)
		\left\|
			\SigmatLie_{\mathscr{Z}}^{\leq 2} \xi
		\right\|_{L_{\newg}^2(\Sigma_t)}.
		 \label{E:SOBOLEV}
	\end{align}
	\end{subequations}
\end{lemma}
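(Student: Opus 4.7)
The plan is to prove \eqref{E:STSOBOLEV} first by reducing to standard Sobolev embedding on the compact three-manifold $(\mathbb{S}^3, \StMet)$, and then to derive \eqref{E:SOBOLEV} from \eqref{E:STSOBOLEV} via the norm comparison estimates already established.

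For \eqref{E:STSOBOLEV}, the key observation is that the frame $\mathscr{Z} = \lbrace Z_{(1)}, Z_{(2)}, Z_{(3)} \rbrace$ consists of $\StMet$-Killing fields (by Lemma~\ref{L:BASISOFKILLINGFIELDS} and the Lie-transport construction), so $\SigmatLie_{Z_{(A)}} \StMet = 0$. Moreover, by Lemma~\ref{L:FRAMEVECTORFIELDLIEBRACKET} the $\lbrace Z_{(A)} \rbrace$ form a global $\StMet$-orthonormal frame, in particular a smooth global frame on $\mathbb{S}^3$. Denoting by $\overline{\nabla}$ the Levi--Civita connection of $\StMet$, the $\StMet$-version of the identity \eqref{E:VECTORFIELDCOVARIANTDERIVATIVEID} together with the commutator relation \eqref{E:FRAMEVECTORFIELDLIEBRACKET} shows that $\overline{\nabla}_{Z_{(A)}} Z_{(B)} = \frac{1}{3} \epsilon_{ABC} Z_{(C)}$, i.e.\ the connection coefficients of the frame relative to $\StMet$ are bounded (in fact constant). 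Hence for any $\Sigma_t$-tangent tensor $\xi$, one can express $\overline{\nabla} \xi$ componentwise in terms of the $\SigmatLie_{Z_{(A)}}$-derivatives of $\xi$ plus lower-order contributions from the bounded connection coefficients, and iterating once more gives an estimate of the schematic form
\begin{align*}
  |\overline{\nabla}^{\leq 2} \xi|_{\StMet} \lesssim |\SigmatLie_{\mathscr{Z}}^{\leq 2} \xi|_{\StMet}.
\end{align*}
Squaring, integrating against $d\Sttvol$, and invoking the standard Sobolev embedding $H^2(\mathbb{S}^3; \StMet) \hookrightarrow C^0(\mathbb{S}^3)$ (valid in three spatial dimensions on a smooth compact manifold, with the implicit constant depending only on $(\mathbb{S}^3,\StMet)$, which is a fixed background) yields \eqref{E:STSOBOLEV}.

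For \eqref{E:SOBOLEV}, I apply \eqref{E:STSOBOLEV} and then convert both sides. The pointwise comparison \eqref{E:IMPROVEDGNORMSTMETRICCOMPARISON} gives $\left\| \xi \right\|_{C_{\newg}^0(\Sigma_t)} \lesssim \scale^{- c \sqrt{\varepsilon}}(t) \left\| \xi \right\|_{C_{\StMet}^0(\Sigma_t)}$, while the $L^2$-comparison \eqref{E:L2NORMSCOMPARISONESTIMATES} gives $\left\| \SigmatLie_{\mathscr{Z}}^{\leq 2} \xi \right\|_{L_{\StMet}^2(\Sigma_t)} \lesssim \scale^{- c \sqrt{\varepsilon}}(t) \left\| \SigmatLie_{\mathscr{Z}}^{\leq 2} \xi \right\|_{L_{\newg}^2(\Sigma_t)}$. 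Chaining these bounds through \eqref{E:STSOBOLEV} and absorbing the two $\scale^{- c \sqrt{\varepsilon}}$ factors into a single one (with a possibly enlarged constant $c$) produces \eqref{E:SOBOLEV}.

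The only substantive point — and the step I view as the main issue to be careful about — is the reduction of $\overline{\nabla}$-derivatives to $\SigmatLie_{\mathscr{Z}}$-derivatives on tensors of arbitrary type. Scalar functions are trivial since $Z_{(A)} f = \overline{\nabla}_{Z_{(A)}} f$. For a general tensor $\xi$, one writes $\SigmatLie_{Z_{(A)}} \xi = \overline{\nabla}_{Z_{(A)}} \xi + (\text{bounded tensor}) \ast \xi$ (the correction term being built from $\overline{\nabla} Z_{(A)}$, which has bounded $\StMet$-norm by the constant-connection-coefficient observation above), and one then solves algebraically to express $\overline{\nabla}_{Z_{(A)}} \xi$ componentwise via $\SigmatLie_{Z_{(A)}} \xi$ and $\xi$ itself; since the $\lbrace Z_{(A)} \rbrace$ form an $\StMet$-orthonormal frame, the full covariant derivative $\overline{\nabla} \xi$ is recovered from its contractions against the $Z_{(A)}$. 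Iterating yields the desired bound for $|\overline{\nabla}^{\leq 2} \xi|_{\StMet}$, and no further subtleties arise.
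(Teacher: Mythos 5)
Your proof is correct and follows essentially the same route as the paper: \eqref{E:STSOBOLEV} is reduced to standard Sobolev embedding on the fixed background $(\mathbb{S}^3,\StMet)$, and \eqref{E:SOBOLEV} is obtained by chaining \eqref{E:STSOBOLEV} through the pointwise comparison of Cor.~\ref{C:IMPROVEMENTLEMMAOPERATORCOMPARISON} and the $L^2$-norm comparison \eqref{E:L2NORMSCOMPARISONESTIMATES}. The only difference is one of detail level: the paper dispatches \eqref{E:STSOBOLEV} with ``follows from standard Sobolev embedding on $(\mathbb{S}^3,\StMet)$,'' whereas you correctly spell out the implicit step of converting between the covariant-derivative $H^2_{\StMet}$ norm used in the classical embedding and the Lie-derivative norm appearing on RHS~\eqref{E:STSOBOLEV}, via the constant connection coefficients $\overline{\nabla}_{Z_{(A)}} Z_{(B)} = \frac{1}{3}\epsilon_{ABC} Z_{(C)}$ of the $\StMet$-orthonormal Killing frame; your computation of those coefficients from \eqref{E:VECTORFIELDCOVARIANTDERIVATIVEID} and \eqref{E:FRAMEVECTORFIELDLIEBRACKET} is correct.
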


\begin{proof}
	\eqref{E:STSOBOLEV} follows from standard Sobolev embedding on $(\mathbb{S}^3,\StMet)$.
	
	To obtain \eqref{E:SOBOLEV}, we first use
	\eqref{E:STSOBOLEV} and Cor.~\ref{C:IMPROVEMENTLEMMAOPERATORCOMPARISON}
	to deduce 
	$\left\|
		\xi
	\right\|_{C_{\newg}^0(\Sigma_t)}
	:=
	\left\|
		|\xi|_{\newg}
	\right\|_{C^0(\Sigma_t)}
	\lesssim
	\scale^{- c \sqrt{\varepsilon}}(t)
	\left\|
		|\xi|_{\StMet}
	\right\|_{C^0(\Sigma_t)}
	=
	\scale^{- c \sqrt{\varepsilon}}(t)
	\left\|
		\xi
	\right\|_{C_{\StMet}^0(\Sigma_t)}
	\lesssim
	\scale^{- c \sqrt{\varepsilon}}(t) 
	\left\|
		\SigmatLie_{\mathscr{Z}}^{\leq 2} \xi
	\right\|_{L_{\StMet}^2(\Sigma_t)}
	$.
	Using this estimate and the first estimate in \eqref{E:L2NORMSCOMPARISONESTIMATES}, we 
	arrive at the desired bound \eqref{E:SOBOLEV}.
	
\end{proof}

\subsection{Preliminary $L^2$ estimates for tensorfields}
In the next lemma, we provide some preliminary $L^2$ estimates that we will use later in the paper.

\begin{lemma}[\textbf{Preliminary} $L^2$ \textbf{estimates for tensorfields}]
	\label{L:PRELIMINARYL2ESTIMATES}
	The following estimates hold for 
	$\Sigma_t-$tangent tensorfields $\xi$:
	\begin{subequations}
	\begin{align} 
	\left\|
		\xi
	\right\|_{L_{\newg}^2(\Sigma_t)}
	& \leq 
		\left\|
			\xi
		\right\|_{L_{\newg}^2(\Sigma_0)}
	+
	c \varepsilon
	\int_{s=0}^t
		\scale^{-1}(s)
		\left\|
			\xi
		\right\|_{L_{\newg}^2(\Sigma_s)}
	\, ds
	+
	\int_{s=0}^t
		\left\|
			\SigmatLie_{\partial_t} \xi
		\right\|_{L_{\newg}^2(\Sigma_s)}
	\, ds,
			\label{E:BASICGRONWALLREADYESTIMATEFORATENSORFIELD} \\
		\left\|
			\xi
		\right\|_{L^2_{\newg}(\Sigma_t)}
		& 
		\lesssim
		\scale^{-c \varepsilon}(t)
		\left\lbrace
			\left\|
				\xi
			\right\|_{L^2_{\newg}(\Sigma_0)}
			+
			\int_{s=0}^t
				\left\|
					\SigmatLie_{\partial_t} \xi
				\right\|_{L^2_{\newg}(\Sigma_s)}
			\, ds
		\right\rbrace.
		\label{E:BASICGRONWALLESTIMATEFORATENSORFIELD}
	\end{align}
	\end{subequations}
\end{lemma}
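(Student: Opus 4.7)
The plan is to prove \eqref{E:BASICGRONWALLREADYESTIMATEFORATENSORFIELD} by differentiating $\| \xi \|_{L_{\newg}^2(\Sigma_t)}^2$ under the integral sign, bounding the ``error'' pieces that arise from the $t$-dependence of $\newg$ and of the volume form, and then taking a square root and integrating in $t$. The estimate \eqref{E:BASICGRONWALLESTIMATEFORATENSORFIELD} then follows by Gronwall's inequality together with the exponentiated scale-factor estimate of Cor.~\ref{C:SCALEFACTORTIMEINTEGRALS}.

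Concretely, the first step is to compute, using the Leibniz rule and Lemma~\ref{L:VOLUMEFORMTIMEDERIVATIVE}, the identity
\begin{align*}
\frac{d}{dt} \| \xi \|_{L_{\newg}^2(\Sigma_t)}^2
& = \int_{\Sigma_t} 2 \langle \xi, \SigmatLie_{\partial_t} \xi \rangle_{\newg} \, d \tvol
+ \int_{\Sigma_t} \mathfrak{M}(\SigmatLie_{\partial_t} \newg, \SigmatLie_{\partial_t} \newg^{-1}; \xi) \, d \tvol
+ \int_{\Sigma_t} \scale' \scale^{1/3} \newlapse |\xi|_{\newg}^2 \, d \tvol,
\end{align*}
where $\mathfrak{M}$ denotes the pointwise quadratic-in-$\xi$ expression obtained when $\partial_t$ lands on the factors of $\newg$ and $\newg^{-1}$ inherent in the norm $|\cdot|_{\newg}$. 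The second step is to control these error integrals pointwise: by \eqref{E:AGAINNODERIVATIVESPARTIALTGSTRONGSUPNORM}--\eqref{E:AGAINNODERIVATIVESPARTIALTGINVERSESTRONGSUPNORM} we have $|\SigmatLie_{\partial_t} \newg|_{\newg}, |\SigmatLie_{\partial_t} \newg^{-1}|_{\newg} \lesssim \varepsilon \scale^{-1}$, and by \eqref{E:LAPSESTRONGSUPNROM} together with $|\scale'(t)| \leq 1$ we have $|\scale' \scale^{1/3} \newlapse| \lesssim \sqrt{\varepsilon} \scale^{1/3 - c \sqrt{\varepsilon}}$, which is absorbed into the $\varepsilon \scale^{-1}$ bound. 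This yields the pointwise inequality $|\mathfrak{M}| + |\scale' \scale^{1/3} \newlapse| |\xi|_{\newg}^2 \lesssim \varepsilon \scale^{-1} |\xi|_{\newg}^2$, and Cauchy--Schwarz on the main term gives
\begin{align*}
\left| \frac{d}{dt} \| \xi \|_{L_{\newg}^2(\Sigma_t)}^2 \right|
\leq c \varepsilon \scale^{-1}(t) \| \xi \|_{L_{\newg}^2(\Sigma_t)}^2
+ 2 \| \xi \|_{L_{\newg}^2(\Sigma_t)} \| \SigmatLie_{\partial_t} \xi \|_{L_{\newg}^2(\Sigma_t)}.
\end{align*}

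Dividing by $2 \| \xi \|_{L_{\newg}^2(\Sigma_t)}$ (using a standard regularization to handle vanishing of the norm) produces the differential inequality $\frac{d}{dt} \| \xi \|_{L_{\newg}^2(\Sigma_t)} \leq \tfrac{c}{2} \varepsilon \scale^{-1}(t) \| \xi \|_{L_{\newg}^2(\Sigma_t)} + \| \SigmatLie_{\partial_t} \xi \|_{L_{\newg}^2(\Sigma_t)}$, and integrating from $0$ to $t$ gives \eqref{E:BASICGRONWALLREADYESTIMATEFORATENSORFIELD} (after relabeling the constant). Finally, for \eqref{E:BASICGRONWALLESTIMATEFORATENSORFIELD} we apply Gronwall's inequality to \eqref{E:BASICGRONWALLREADYESTIMATEFORATENSORFIELD} with weight $c \varepsilon \scale^{-1}(s)$ and use \eqref{E:EXPONENTIATEDSCALEFACTORTIMEINTEGRALS} with $p = -1$ (applied to the exponent $c\varepsilon$) to convert the exponentiated time integral into the factor $\scale^{-c \varepsilon}(t)$.

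No single step is genuinely hard: the principal subtlety is simply keeping track of the fact that $\SigmatLie_{\partial_t} \newg$ is $\mathcal{O}(\varepsilon \scale^{-1})$ rather than merely bounded, so that the ``metric-in-the-norm'' error terms produce the crucial $c \varepsilon \scale^{-1}$ weight that later combines cleanly with the scale-factor estimates of Cor.~\ref{C:SCALEFACTORTIMEINTEGRALS} when Gronwall is invoked.
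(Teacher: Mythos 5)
Your proposal is correct and follows essentially the same route as the paper: differentiate $\|\xi\|_{L^2_{\newg}(\Sigma_t)}^2$ in time, bound the errors arising from $\SigmatLie_{\partial_t}\newg$, $\SigmatLie_{\partial_t}\newg^{-1}$, and $\SigmatLie_{\partial_t}d\tvol$ by $c\varepsilon\scale^{-1}$ using Prop.~\ref{P:STRONGSUPNORMESTIMATES} and Lemma~\ref{L:VOLUMEFORMTIMEDERIVATIVE}, then integrate and apply Gronwall together with \eqref{E:EXPONENTIATEDSCALEFACTORTIMEINTEGRALS}. You have merely unpacked the step the paper compresses by citing the earlier argument for \eqref{E:SUPNORMALMOSTREADYTOBEGRONWALLED}, and you correctly flag the standard regularization needed when dividing by $\|\xi\|_{L^2_{\newg}}$.
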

\begin{proof}
	First, arguing as in the proof of \eqref{E:SUPNORMALMOSTREADYTOBEGRONWALLED},
	we deduce
	\begin{align} \label{E:PARTIALTXISQUAREDFORMULA}
		\left|
			\partial_t (|\xi|_{\newg}^2)
		\right|
		& 
		\leq 
		c \varepsilon \scale^{-1}
		\left|
			\SigmatLie_{\partial_t} \xi
		\right|_{\newg}^2
		+
		2 \left|
				\langle \xi, \SigmatLie_{\partial_t} \xi \rangle_{\newg}
			\right|.
\end{align}
From \eqref{E:PARTIALTXISQUAREDFORMULA} 
and \eqref{E:VOLUMEFORMCOMPARISON},
we deduce
\begin{align} \label{E:PARTIALTXISQUAQUAREDINEQUALITY}
	\left|
	\frac{d}{dt}
	\int_{\Sigma_t}
		|\xi|_{\newg}^2
	\, d \tvol
	\right|
	& \leq 
	c \varepsilon \scale^{-1}(t)
	\int_{\Sigma_t} 
		|\xi|_{\newg}^2 
	\, d \tvol
		+ 
	2	
	\int_{\Sigma_t}
		\left|
			\xi
		\right|_{\newg}
		\left|
			\SigmatLie_{\partial_t} \xi
		\right|_{\newg}
	\, d \tvol,
\end{align}
from which \eqref{E:BASICGRONWALLREADYESTIMATEFORATENSORFIELD}
follows as a simple consequence.
\eqref{E:BASICGRONWALLESTIMATEFORATENSORFIELD} then follows from
Gronwall's inequality and \eqref{E:EXPONENTIATEDSCALEFACTORTIMEINTEGRALS}
with $p=-1$.

\end{proof}

\section{The fundamental energy integral inequalities}
\label{S:FUNDAMENTALENERGY}
Our main goal in this section is to prove Prop.~\ref{P:FUNDAMENTALENERGYINTEGRALINEQUALITY},
which provides integral inequalities for the combined metric + scalar field energies.
Its proof is located in Subsect.\ \ref{SS:PROOFOFUNDAMENTALENERGY}.
Before proving the proposition, we will establish some preliminary
lemmas in which we separately derive integral inequalities
for the metric and scalar field energies.

\begin{proposition}[\textbf{Fundamental total energy integral inequalities}]
	\label{P:FUNDAMENTALENERGYINTEGRALINEQUALITY}
	Let $1 \leq M \leq 16$.
	There exist a small constant $\smallparameter_* > 0$
	and a large constant $C > 0$ 
	such that the energy $\Totalenergy{M}{\smallparameter_*}(t)$
	defined by \eqref{E:TOTALENERGY}
	obeys the following inequality for $t \in [0,\Tboot)$:
	\begin{align}
		\Totalenergy{M}{\smallparameter_*}(t)
		& +
		\sum_{1 \leq |\vec{I}| \leq M}
		\int_{s=0}^t
				|\scale'(s)| 
				\scale^{1/3}(s)
				\left\|
					\nabla \SigmatLie_{\mathscr{Z}}^{\vec{I}} \newg
				\right\|_{L_{\newg}^2(\Sigma_s)}^2
		\, ds
			\label{E:FUNDAMENTALENERGYINTEGRALINEQUALITY} \\
		& \ \
		+
		\sum_{1 \leq |\vec{I}| \leq M}
		\int_{s=0}^t
			|\scale'(s)| 
			\scale^{1/3}(s)
			\left\|
				\SigmatLie_{\mathscr{Z}}^{\vec{I}} \newspacescalar
			\right\|_{L_{\newg}^2(\Sigma_s)}^2
		\, ds
			\notag \\
		& \ \
		+
		\sum_{1 \leq |\vec{I}| \leq M}
		\int_{s=0}^t
			|\scale'(s)| 
			\scale^3(s)
			\left\|
				\nabla \mathscr{Z}^{\vec{I}} \newlapse
			\right\|_{L_{\newg}^2(\Sigma_s)}^2
		\, ds
			\notag \\
	& \ \
		+
		\sum_{1 \leq |\vec{I}| \leq M}
		\int_{s=0}^t
			|\scale'(s)|^3 
			\scale^{5/3}(s)
			\left\|
				\mathscr{Z}^{\vec{I}} \newlapse
			\right\|_{L_{\newg}^2(\Sigma_s)}^2
			\, ds
			\notag \\
		& \leq
			C \Totalenergy{M}{\smallparameter_*}(0)
			\notag \\
		& \ \
		+
		C \varepsilon
		\int_{s=0}^t
			\scale^{-1}(s)
			\Totalenergy{M}{\smallparameter_*}(s)
		\, ds
		+
		C 
		\int_{s=0}^t
			\scale^{-1/3}(s)
			\Totalenergy{M}{\smallparameter_*}(s)
		\, ds	
			\notag \\
		& \ \
		+
		C
		\sum_{1 \leq |\vec{I}| \leq M}
		\int_{s=0}^t
			\Energyborder{\vec{I}}(s)
		\, ds
		+
		C
		\sum_{1 \leq |\vec{I}| \leq M}
		\int_{s=0}^t
			\Energyjunk{\vec{I}}(s)
		\, ds,
		\notag
	\end{align}
	where
	\begin{subequations}
	\begin{align} \label{E:ENERGYBORDER}
		\Energyborder{\vec{I}}(s)
		& :=
				\scale^{1/3}(s)
				\left\|
					\CommutedMomBorderInhomDown{\vec{I}}
				\right\|_{L_{\newg}^2(\Sigma_s)}^2
				+
				\scale^{1/3}(s)
				\left\|
					\CommutedMomBorderInhomUp{\vec{I}}
				\right\|_{L_{\newg}^2(\Sigma_s)}^2
					\\
			& \ \
				+
				\scale^{1/3}(s)
				\left\|
					\CommutedGradMetBorderInhom{\vec{I}}
				\right\|_{L_{\newg}^2(\Sigma_s)}^2
				+
				\scale^3(s)
				\left\|
					\CommutedSpaceSfBorderInhom{\vec{I}}
				\right\|_{L_{\newg}^2(\Sigma_s)}^2
				\notag \\
			& \ \
				+
				\scale^{-1}(s)
				\left\|
					\CommutedLapseHighBorderInhom{\vec{I}}
				\right\|_{L_{\newg}^2(\Sigma_s)}^2	
				\notag \\
			& \ \
				+
				\frac{1}{\varepsilon}
				\scale^{5/3}(s)
				\left\|
					\CommutedSecFunBorderInhom{\vec{I}}
				\right\|_{L_{\newg}^2(\Sigma_s)}^2
				+
				\frac{1}{\varepsilon}
				\scale^{5/3}(s)
				\left\|
					\CommutedTimeSfBorderInhom{\vec{I}}
				\right\|_{L_{\newg}^2(\Sigma_s)}^2,
				\notag 
	\end{align}
	\begin{align}
		\Energyjunk{\vec{I}}(s)
		& :=
				\scale^{7/3}(s)
				\left\|
					\CommutedGradMetJunkInhom{\vec{I}}
				\right\|_{L_{\newg}^2(\Sigma_s)}^2
				+
				\scale^{7/3}(s)
				\left\|
					\CommutedSpaceSfJunkInhom{\vec{I}}
				\right\|_{L_{\newg}^2(\Sigma_s)}^2
				\label{E:ENERGYJUNK} \\
			& \ \
				+
				\scale(s)
				\left\|
					\CommutedLapseHighJunkInhom{\vec{I}}
				\right\|_{L_{\newg}^2(\Sigma_s)}^2	
				+
				\scale(s)
				\left\|
					\RicErrorInhom{\vec{I}}
				\right\|_{L_{\newg}^2(\Sigma_s)}^2
					\notag \\
			& \ \
				+
				\scale(s)
				\left\|
					\CommutedSecFunJunkInhom{\vec{I}}
				\right\|_{L_{\newg}^2(\Sigma_s)}^2
				+
				\scale(s)
				\left\|
					\CommutedTimeSfJunkInhom{\vec{I}}
				\right\|_{L_{\newg}^2(\Sigma_s)}^2
				\notag
					\\
			& \ \
				+ \scale^4(s)
					\left\|
						\nabla \mathscr{Z}^{\vec{I}} \newlapse
					\right\|_{L_{\newg}^2(\Sigma_s)}^2
				+ 
					\scale^{7/3}(s)
					\left\|
						\mathscr{Z}^{\vec{I}} \newlapse
					\right\|_{L_{\newg}^2(\Sigma_s)}^2,
					\notag
		\end{align}
	\end{subequations}
	and the terms $\CommutedMomBorderInhomDown{\vec{I}},\cdots$ on
	RHSs~\eqref{E:ENERGYBORDER}-\eqref{E:ENERGYJUNK}
	are the error terms from the $\SigmatLie_{\mathscr{Z}}^{\vec{I}}$-commuted equations of
	Prop.~\ref{P:ICOMMUTEDEQNS}.
\end{proposition}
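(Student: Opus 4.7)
The plan is to integrate the pointwise divergence identities of Lemmas~\ref{L:METRICCURRENTDIV} and~\ref{L:SFCURRENTDIV} over each $\Sigma_t$, then over time, and then to form the weighted combination $\smallparameter_* \cdot (\text{metric})+(\text{sf})$ summed over $1\le |\vec I|\le M$. The first key observation is that by Lemma~\ref{L:SIMPLEDIVFORMULA} together with \eqref{E:VOLUMEFORMTIMEDERIVATIVE}, one has the clean identity
\begin{align*}
\frac{d}{dt}\int_{\Sigma_t} J^0\,d\tvol
\;=\; \int_{\Sigma_t} \Divfour_{\EnergyEstimatesMetric}\Jfour\,d\tvol
\end{align*}
for any spacetime vectorfield $\Jfour=J^0\partial_t+J$, since the $\Gdiv J$ contribution in \eqref{E:SIMPLEDIVFORMULA} integrates to zero on the closed manifold $\mathbb S^3$. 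The same argument disposes of the extra $\scale'\scale^3\Gdiv\{\cdots\}$ term on LHS~\eqref{E:DIVSFCURRENT}. Thus applying the divergence theorem to each of the two identities and then integrating from $0$ to $t$ converts them into identities for $\Metricenergy{M}(t)-\Metricenergy{M}(0)$ and $\Sfenergy{M}(t)-\Sfenergy{M}(0)$ in terms of spacetime integrals of the RHSs of \eqref{E:DIVMETRICCURRENT} and \eqref{E:DIVSFCURRENT}.

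The second step is to recognize which terms on those RHSs have a favorable sign and therefore should be moved to the LHS. Since $\scale'(s)\le 0$ for $s\in[0,\TCrunch]$ by Lemma~\ref{L:ANALYSISOFFRIEDMANN}, the term $\frac13\scale'\scale^{1/3}|\nabla\SigmatLie_{\mathscr{Z}}^{\vec I}\newg|_{\newg}^2$ in \eqref{E:DIVMETRICCURRENT} and the three negative-semi-definite terms on the second and third lines of \eqref{E:DIVSFCURRENT} (the $\tfrac43\scale'\scale^{1/3}|\SigmatLie_{\mathscr{Z}}^{\vec I}\newspacescalar|_{\newg}^2$ term, the $\scale'\scale^3|\nabla\mathscr{Z}^{\vec I}\newlapse|_{\newg}^2$ term, and, after using Friedmann's identity $(\scale')^2=1-\scale^{4/3}$ to extract the leading $(\scale')^3\scale^{5/3}|\mathscr{Z}^{\vec I}\newlapse|^2$ contribution from the bracketed expression) yield, after integration, exactly the four positive spacetime integrals on LHS~\eqref{E:FUNDAMENTALENERGYINTEGRALINEQUALITY}, up to factors of $\smallparameter_*$ in the first and lower-order corrections in the last that can be absorbed via the strong sup-norm bounds of Prop.~\ref{P:STRONGSUPNORMESTIMATES} for $\FreeNewSec$ and $\newtimescalar$ provided $\varepsilon$ is small.

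The main obstacle, and the reason for the careful choice of the small parameter $\smallparameter_*$, is the treatment of the mixed cross terms of the schematic form $\scale^{5/3}\SigmatLie_{\mathscr{Z}}^{\vec I}\newspacescalar\cdot\nabla\mathscr{Z}^{\vec I}\newlapse$ and $\scale^{1/3}\SigmatLie_{\mathscr{Z}}^{\vec I}\newspacescalar\cdot\nabla\SigmatLie_{\mathscr{Z}}^{\vec I}\newg$ appearing on lines 2--5 of RHS~\eqref{E:DIVMETRICCURRENT} (weighted by $\smallparameter_*$) and on the fourth line of RHS~\eqref{E:DIVSFCURRENT} (weighted by $1$). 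Although these couplings occur with opposite signs in the two identities, the asymmetric weighting prevents exact cancellation, so we use AM--GM with carefully chosen parameters: near the singularities, where $|\scale'|\ge 6/7$ by Cor.~\ref{C:MONOTONICITYTIME}, bounds of the form $\scale^{5/3}|\SigmatLie_{\mathscr{Z}}^{\vec I}\newspacescalar||\nabla\mathscr{Z}^{\vec I}\newlapse|\lesssim \delta\scale^3|\nabla\mathscr{Z}^{\vec I}\newlapse|^2+\delta^{-1}\scale^{7/3}|\SigmatLie_{\mathscr{Z}}^{\vec I}\newspacescalar|^2$ with $\delta$ small and $\smallparameter_*\ll|\scale'|\delta$ allow both sides to be swallowed by the good spacetime integrals $|\scale'|\scale^3\|\nabla\mathscr{Z}^{\vec I}\newlapse\|_{L^2}^2$ and $|\scale'|\scale^{1/3}\|\SigmatLie_{\mathscr{Z}}^{\vec I}\newspacescalar\|_{L^2}^2$; on the bulk region $[\TBang+\uptau,\TCrunch-\uptau]$ where $\scale\ge 1/C$ by \eqref{E:SCALEFACTORBOUNDEDFROMBELOWFORSHORTIMES}, all weights are comparable to constants and the cross terms contribute to the harmless $C\int_0^t\scale^{-1/3}(s)\Totalenergy{M}{\smallparameter_*}(s)\,ds$ Gronwall integral on RHS~\eqref{E:FUNDAMENTALENERGYINTEGRALINEQUALITY}.

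Finally, the $C\varepsilon\int_0^t\scale^{-1}(s)\Totalenergy{M}{\smallparameter_*}(s)\,ds$ Gronwall term arises from the first four products on RHS~\eqref{E:METRICCURRENTBORDER}, each of which contains a factor $\SigmatLie_{\partial_t}\newg$ or $\SigmatLie_{\partial_t}\newg^{-1}$ controlled pointwise by $\varepsilon\scale^{-1}$ via \eqref{E:AGAINNODERIVATIVESPARTIALTGSTRONGSUPNORM}--\eqref{E:AGAINNODERIVATIVESPARTIALTGINVERSESTRONGSUPNORM}, paired with quadratic top-order quantities $|\SigmatLie_{\mathscr{Z}}^{\vec I}\FreeNewSec|_{\newg}^2$ or $\scale^{4/3}|\nabla\SigmatLie_{\mathscr{Z}}^{\vec I}\newg|_{\newg}^2$ already present in $\Metricenergy{M}$; the analogous term from $\SfCurrentBorder{\vec I}$ involving $\SigmatLie_{\partial_t}\newg^{-1}$ paired with $|\SigmatLie_{\mathscr{Z}}^{\vec I}\newspacescalar|^2$ is handled identically. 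All remaining products on the RHSs of \eqref{E:DIVMETRICCURRENT}--\eqref{E:DIVSFCURRENT} involving the inhomogeneous error terms $\CommutedMomBorderInhomDown{\vec I},\CommutedMomBorderInhomUp{\vec I},\CommutedGradMetBorderInhom{\vec I},\CommutedSecFunBorderInhom{\vec I},\CommutedTimeSfBorderInhom{\vec I},\CommutedSpaceSfBorderInhom{\vec I},\CommutedLapseHighBorderInhom{\vec I},\CommutedGradMetJunkInhom{\vec I},\RicErrorInhom{\vec I},\ldots$ are paired via Cauchy--Schwarz against the top-order factors they multiply, and the factor of $\scale$ that appears in the resulting $L^2$ bound is precisely the weight that appears in the definition \eqref{E:ENERGYBORDER}--\eqref{E:ENERGYJUNK} of $\Energyborder{\vec I}$ and $\Energyjunk{\vec I}$ (for example the $1/\varepsilon$ factor in front of $\CommutedSecFunBorderInhom{\vec I}$ and $\CommutedTimeSfBorderInhom{\vec I}$ in \eqref{E:ENERGYBORDER} reflects an AM--GM step in which the complementary $\varepsilon$ contribution is absorbed into a piece of $\Totalenergy{M}{\smallparameter_*}$ via the sup-norm estimate \eqref{E:NOLOSSKSTRONGSUPNROM}). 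Collecting all contributions yields the asserted inequality, with $\smallparameter_*$ chosen small enough to make all the absorption estimates in the third paragraph valid simultaneously.
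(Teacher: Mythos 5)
Your proposal follows essentially the same route as the paper: integrate the divergence identities of Lemmas~\ref{L:METRICCURRENTDIV} and~\ref{L:SFCURRENTDIV} over spacetime slabs, move the $\scale'$-weighted definite terms to the left, handle the cross terms by Young's inequality with attention to the late-time estimate $\mathbf{1}_{(\TCrunch-\uptau,\TCrunch)}\lesssim|\scale'|$ from Cor.~\ref{C:MONOTONICITYTIME}, and finally choose $\smallparameter_*$ small so the metric-current contributions can be absorbed into the scalar-field good integrals. The paper structures this as two intermediate lemmas (Lemmas~\ref{L:SFINTEGRALINEQUALITY} and~\ref{L:METRICINTEGRALINEQUALITY}) that are then added in the proportion $\smallparameter_*$ to $1$, but the content is the same as what you describe. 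One arithmetic slip worth flagging: your AM--GM inequality for the cross term should read $\scale^{5/3}|\SigmatLie_{\mathscr{Z}}^{\vec I}\newspacescalar||\nabla\mathscr{Z}^{\vec I}\newlapse|\lesssim \delta\scale^3|\nabla\mathscr{Z}^{\vec I}\newlapse|^2+\delta^{-1}\scale^{1/3}|\SigmatLie_{\mathscr{Z}}^{\vec I}\newspacescalar|^2$, with weight $\scale^{1/3}$ (not $\scale^{7/3}$) on the $\newspacescalar$ factor, since the exponents must average to $5/3$; compare the exact version in~\eqref{E:KEYYOUNG}. Also, the paper does not invoke Friedmann's identity to isolate the leading $(\scale')^3\scale^{5/3}|\mathscr{Z}^{\vec I}\newlapse|^2$ term from the bracketed coefficient on the second line of~\eqref{E:DIVSFCURRENT}; it simply observes that (modulo the $\mathcal{O}(\varepsilon)$ smallness of $\newtimescalar$) the remaining summands there are non-positive and discards them. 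Neither slip affects the logical structure of your argument.
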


\subsection{Energy integral inequalities for the scalar field and lapse}
\label{SS:SCALARFIELDENERGYINEQ}
In this subsection, we derive integral inequalities for the scalar field
energies. A key point is that the proof is based on 
energy identities exhibiting remarkable structures,
which show in particular that a seemingly damaging term can be re-expressed, up to error terms, 
as a spacetime integral that controls the lapse with a favorable sign.
These structures are effectively already found in the divergence identity of Lemma~\ref{L:SFCURRENTDIV}.
A less careful proof of inequality \eqref{E:SFINTEGRALINEQUALITY} would have led to the presence of
unsigned borderline error
terms with large coefficients of size $C$. In turn, this would have resulted in
singular high-order energy estimates featuring a severe blowup-rate of $\scale^{-C}(t)$ as $t \uparrow \TCrunch$, 
which would have prevented us from closing our bootstrap argument.
Moreover, these arguments also give rise to the spacetime integrals on LHS~\eqref{E:SFINTEGRALINEQUALITY}
that control the time-rescaled lapse variable $\newlapse$.

\begin{lemma}[\textbf{Fundamental combined scalar field and lapse energy integral inequalities}]
	\label{L:SFINTEGRALINEQUALITY}
	Let $1 \leq M \leq 16$.
	There exists a constant $C > 0$ such that
	the energy $\Sfenergy{M}(t)$
	defined by \eqref{E:SFENERGY}
	obeys the following inequality for $t \in [0,\Tboot)$:
	\begin{align} \label{E:SFINTEGRALINEQUALITY}
		\Sfenergy{M}(t)
		& +
		\frac{1}{12}
		\sum_{1 \leq |\vec{I}| \leq M}
		\int_{s=0}^t
			\int_{\Sigma_s}
				|\scale'(s)| 
				\scale^{1/3}(s)
				\left|
					\SigmatLie_{\mathscr{Z}}^{\vec{I}} \newspacescalar
				\right|_{\newg}^2
			\, d \tvol
		\, ds
			\\
		& \ \
		+
		\frac{1}{12}
		\sum_{1 \leq |\vec{I}| \leq M}
		\int_{s=0}^t
			\int_{\Sigma_s}
				|\scale'(s)| 
				\scale^3(s)
				\left|
					\nabla \mathscr{Z}^{\vec{I}} \newlapse
				\right|_{\newg}^2
			\, d \tvol
		\, ds
			\notag \\
		& \ \
		+
		\frac{1}{2}
		\sum_{1 \leq |\vec{I}| \leq M}
		\int_{s=0}^t
			\int_{\Sigma_s}
				|\scale'(s)|^3 
				\scale^{5/3}(s)
				\left|
					\mathscr{Z}^{\vec{I}} \newlapse
				\right|^2
			\, d \tvol
		\, ds
			\notag \\
		& \leq
			\Sfenergy{M}(0)
				\notag \\
		& 
		\ \
		+ 
		\frac{1}{\varepsilon}
		\sum_{1 \leq |\vec{I}| \leq M}
		\int_{s=0}^t
			\int_{\Sigma_s}
				\scale^{5/3}(s)
				\left|
					\CommutedTimeSfBorderInhom{\vec{I}}
				\right|_{\newg}^2
			\, d \tvol
		\, ds
			\notag \\
		& \ \
		+ 
		C
		\sum_{1 \leq |\vec{I}| \leq M}
		\int_{s=0}^t
			\int_{\Sigma_s}
				\scale^3(s)
				\left|
					\CommutedSpaceSfBorderInhom{\vec{I}}
				\right|_{\newg}^2
			\, d \tvol
		\, ds
			\notag \\
	& \ \
		+ 
		C
		\sum_{1 \leq |\vec{I}| \leq M}
		\int_{s=0}^t
			\int_{\Sigma_s}
				\scale^{-1}(s)
				\left|
					\CommutedLapseHighBorderInhom{\vec{I}}
				\right|^2
			\, d \tvol
		\, ds
		\notag \\
	&  \ \
		+ 
		C
		\sum_{1 \leq |\vec{I}| \leq M}
		\int_{s=0}^t
			\int_{\Sigma_s}
				\scale(s)
				\left|
					\CommutedTimeSfJunkInhom{\vec{I}} 
				\right|^2
			\, d \tvol
		\, ds
		\notag
			\\
	&  \ \
		+ 
		C
		\sum_{1 \leq |\vec{I}| \leq M}
		\int_{s=0}^t
			\int_{\Sigma_s}
				\scale^2(s) 
				\left|
					\CommutedSpaceSfJunkInhom{\vec{I}} 
				\right|^2
			\, d \tvol
		\, ds
		\notag \\
		&  \ \
		+ 
		C
		\sum_{1 \leq |\vec{I}| \leq M}
		\int_{s=0}^t
			\int_{\Sigma_s}
				\scale(s) 
				\left|
					\CommutedLapseHighJunkInhom{\vec{I}} 
				\right|^2
			\, d \tvol
		\, ds
		\notag
			\\
	& \ \
			+ 
			C
			\sum_{1 \leq |\vec{I}| \leq M}
			\int_{s=0}^t
				\int_{\Sigma_s}
					\scale^4(s)
					\left|
						\nabla \SigmatLie_{\mathscr{Z}}^{\vec{I}} \newlapse
					\right|_{\newg}^2
				\, d \tvol	
			\, ds 
			\notag \\
	& \ \
			+ 
			C
			\sum_{1 \leq |\vec{I}| \leq M}
			\int_{s=0}^t
				\int_{\Sigma_s}
					\scale^{7/3}(s)
					\left|
						\mathscr{Z}^{\vec{I}} \newlapse
					\right|^2
				\, d \tvol	
			\, ds 
			\notag \\
	& \ \
		+ 
		C \varepsilon
		\int_{s=0}^t
			\scale^{-1}(s)
			\Sfenergy{M}(s)
		\, ds	
		\notag \\
	&  \ \
		+
		C 
		\sum_{1 \leq |\vec{I}| \leq M}
		\int_{s=0}^t
			\scale^{-1/3}(s)
			\Sfenergy{M}(s)
		\, ds,
		\notag 
	\end{align}
	where the terms $\CommutedTimeSfBorderInhom{\vec{I}},\cdots$ on
	RHS~\eqref{E:SFINTEGRALINEQUALITY}
	are the error terms from the $\SigmatLie_{\mathscr{Z}}^{\vec{I}}$-commuted equations of
	Prop.~\ref{P:ICOMMUTEDEQNS}.
	
\end{lemma}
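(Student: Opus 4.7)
The starting point is the divergence identity \eqref{E:DIVSFCURRENT} of Lemma~\ref{L:SFCURRENTDIV}, which already exhibits the special cancellations needed to produce the coercive spacetime integrals on LHS~\eqref{E:SFINTEGRALINEQUALITY}. My plan is to sum \eqref{E:DIVSFCURRENT} over all multi-indices with $1 \le |\vec{I}| \le M$, integrate the resulting identity over $[0,t]\times\mathbb{S}^3$ with respect to the spacetime volume form of $\EnergyEstimatesMetric$, and invoke the divergence formula of Lemma~\ref{L:SIMPLEDIVFORMULA}. The purely spatial divergences — both $\Gdiv J_{(Sf)}$ and the auxiliary $\scale'\scale^3\Gdiv\{(\nabla^{\#}\mathscr{Z}^{\vec{I}}\newlapse)\mathscr{Z}^{\vec{I}}\newlapse\}$ from LHS~\eqref{E:DIVSFCURRENT} — vanish by the closedness of each $\Sigma_s$. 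The time-derivative part reconstructs $\Sfenergy{M}(t)-\Sfenergy{M}(0)$ up to a volume-form-evolution contribution whose bound $|\SigmatLie_{\partial_t}d\tvol|\lesssim\varepsilon\scale^{1/3-c\upsigma}d\tvol$ from Lemma~\ref{L:VOLUMEFORMCOMPARISON} produces a harmless $C\varepsilon\int\Sfenergy{M}(s)\,ds$ term, absorbable into the $\int\scale^{-1/3}\Sfenergy{M}$ term on RHS~\eqref{E:SFINTEGRALINEQUALITY} since $\scale^{-1/3}\geq 1$.

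Next, the three negative-semidefinite contributions on lines two and three of RHS~\eqref{E:DIVSFCURRENT} are transferred to LHS~\eqref{E:SFINTEGRALINEQUALITY}. For the coefficient in braces multiplying $|\mathscr{Z}^{\vec{I}}\newlapse|^2$, Friedmann's relation \eqref{E:FRIEDMANNFIRSTORDER} yields $(\scale')^3\scale^{5/3}+\tfrac{2}{3}\scale'\scale^3=\scale'\scale^{5/3}\{1-\tfrac{1}{3}\scale^{4/3}\}$, so the whole bracket factors as $\scale'\scale^{5/3}\{1-\tfrac{1}{3}\scale^{4/3}+|\FreeNewSec|_{\newg}^2+2\sqrt{\tfrac{2}{3}}\newtimescalar+\newtimescalar^2\}$; the strong sup-norm estimates \eqref{E:NOLOSSKSTRONGSUPNROM} and \eqref{E:SCALARFIELDTIMESTRONGSUPNROM} of Prop.~\ref{P:STRONGSUPNORMESTIMATES} force the curly bracket to lie in $[\tfrac{1}{2},\tfrac{3}{2}]$ for small $\varepsilon$, so the term dominates $\tfrac{1}{2}|\scale'|\scale^{5/3}|\mathscr{Z}^{\vec{I}}\newlapse|^2$, which in turn dominates the claimed $\tfrac{1}{2}|\scale'|^3\scale^{5/3}|\mathscr{Z}^{\vec{I}}\newlapse|^2$ since $|\scale'|\le 1$ by \eqref{E:FRIEDMANNFIRSTORDER}. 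The single indefinite term on the fourth line of RHS~\eqref{E:DIVSFCURRENT}, namely $2\sqrt{\tfrac{2}{3}}\scale^{5/3}(\SigmatLie_{\mathscr{Z}}^{\vec{I}}\newspacescalar)^{\#}\cdot\nabla\mathscr{Z}^{\vec{I}}\newlapse$, is absorbed via weighted Young's inequality
\[
2\sqrt{\tfrac{2}{3}}\scale^{5/3}\bigl|(\SigmatLie_{\mathscr{Z}}^{\vec{I}}\newspacescalar)^{\#}\cdot\nabla\mathscr{Z}^{\vec{I}}\newlapse\bigr| \le \tfrac{1}{6}|\scale'|\scale^{1/3}|\SigmatLie_{\mathscr{Z}}^{\vec{I}}\newspacescalar|_{\newg}^2 + C|\scale'|^{-1}\scale^3|\nabla\mathscr{Z}^{\vec{I}}\newlapse|_{\newg}^2,
\]
with the first piece absorbed into the corresponding line-two good integral on LHS~\eqref{E:SFINTEGRALINEQUALITY}. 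Cor.~\ref{C:MONOTONICITYTIME} gives $|\scale'|^{-1}\le 7/6$ in the near-singularity regimes $[\TBang,\TBang+\uptau]\cup[\TCrunch-\uptau,\TCrunch]$, which lets the second piece be absorbed into the line-three good integral, while on the compact middle regime \eqref{E:SCALEFACTORBOUNDEDFROMBELOWFORSHORTIMES} reduces it to a standard energy control.

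The remaining pieces $\SfCurrentBorder{\vec{I}}$ and $\SfCurrentJunk{\vec{I}}$ from \eqref{E:SFCURRENTBORDER}--\eqref{E:SFCURRENTJUNK} are handled term by term via Cauchy--Schwarz and weighted Young's inequality; in each case the weights are chosen so that one factor reproduces one of the weighted $L^2$-norms of inhomogeneities listed on RHS~\eqref{E:SFINTEGRALINEQUALITY} and the companion factor returns a multiple of either $\scale^{-1/3}(s)\Sfenergy{M}(s)$ or $\varepsilon\scale^{-1}(s)\Sfenergy{M}(s)$. The main obstacle is the structurally borderline term $\scale^{4/3}\SigmatLie_{\partial_t}\newg^{-1}\cdot\SigmatLie_{\mathscr{Z}}^{\vec{I}}\newspacescalar\otimes\SigmatLie_{\mathscr{Z}}^{\vec{I}}\newspacescalar$ in \eqref{E:SFCURRENTBORDER}; pairing Cauchy--Schwarz with the sup-norm bound $|\SigmatLie_{\partial_t}\newg^{-1}|_{\newg}\lesssim\varepsilon\scale^{-1}$ from \eqref{E:AGAINNODERIVATIVESPARTIALTGINVERSESTRONGSUPNORM} and the coercivity $\scale^{4/3}\|\SigmatLie_{\mathscr{Z}}^{\vec{I}}\newspacescalar\|_{L_{\newg}^2(\Sigma_s)}^2\le\Sfenergy{M}(s)$ of the energy produces precisely the $C\varepsilon\int\scale^{-1}\Sfenergy{M}$ term on the penultimate line of RHS~\eqref{E:SFINTEGRALINEQUALITY}. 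The junk cross terms such as $2\scale^{1/3}(\mathscr{Z}^{\vec{I}}\newtimescalar)\CommutedTimeSfJunkInhom{\vec{I}}$ are split via $\tfrac{1}{2}\scale^{-1/3}(\mathscr{Z}^{\vec{I}}\newtimescalar)^2+\tfrac{1}{2}\scale(\CommutedTimeSfJunkInhom{\vec{I}})^2$, generating the advertised $\int\scale^{-1/3}\Sfenergy{M}$ last line. Finally, the $1/\varepsilon$ coefficients attached to $\CommutedTimeSfBorderInhom{\vec{I}}$ on RHS~\eqref{E:SFINTEGRALINEQUALITY} arise from choosing the Young split of the pairing against $(\scale')^3\scale^{5/3}|\mathscr{Z}^{\vec{I}}\newlapse|^2$ asymmetrically, with weight $\varepsilon$ on the energy side (keeping the returning contribution proportional to $\varepsilon$ rather than merely $O(1)$), an allocation that is crucial in order that the eventual Gronwall estimate for $\SupTotalenergy{M}{\smallparameter_*}$ close under the bootstrap \eqref{E:HIGHNORMBOOTSTRAP}.
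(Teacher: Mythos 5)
Your plan follows the paper's proof essentially step for step: integrate the divergence identity \eqref{E:DIVSFCURRENT} over $[0,t]\times\mathbb{S}^3$, transfer the coercive terms to the left, split the cross term and error terms with weighted Young's inequalities, and push the residuals into either the coercive integrals or the Gronwall terms. Your explicit use of Friedmann's relation to rewrite the $|\mathscr{Z}^{\vec{I}}\newlapse|^2$-coefficient as $\scale'\scale^{5/3}\{1-\tfrac{1}{3}\scale^{4/3}+O(\varepsilon)\}$ is a genuine clarification that actually sharpens the paper's presentation: the paper's text discards the extra good contributions in the brace and keeps only $(\scale')^3\scale^{5/3}$, which is not on its own sufficient to absorb the $\tfrac{1}{2}|\scale'|\scale^{5/3}|\mathscr{Z}^{\vec{I}}\newlapse|^2$ that subsequently emerges from the border-term Young splits (since $|\scale'|\ge|\scale'|^3$ when $|\scale'|\le 1$); your Friedmann factoring makes the retained positivity transparent.

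Two inaccuracies should be corrected. First, the Young split you write for the cross term carries the weight $|\scale'|^{-1}\scale^3$, which blows up at $t=0$ where $\scale'(0)=0$, so it cannot be applied on $[0,\Tboot)$ as written. The paper avoids this by first inserting the indicator functions $\mathbf{1}_{[0,\TCrunch-\uptau]}$ and $\mathbf{1}_{(\TCrunch-\uptau,\TCrunch)}$ via \eqref{E:EARLYTIMESCALEFACTORESTIMATE}--\eqref{E:LATETIMESCALEFACTORESTIMATE}, so that the $|\scale'|$-factors only appear on the late-time piece where $|\scale'|\ge 6/7$; on the early-time piece the indicator is instead traded for a power of $\scale$. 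Your appeal to the compact middle regime is the right instinct, but the Young parameter must be chosen separately on each regime rather than taking a single global $|\scale'|^{-1}$-weighted split. Second, the $1/\varepsilon$ coefficient attached to $\CommutedTimeSfBorderInhom{\vec{I}}$ does not come from a pairing against the coercive $|\scale'|^3\scale^{5/3}|\mathscr{Z}^{\vec{I}}\newlapse|^2$ integral; it is the Young companion of $\varepsilon\scale^{-1}|\mathscr{Z}^{\vec{I}}\newtimescalar|^2$ in the split of the product $2\scale^{1/3}(\mathscr{Z}^{\vec{I}}\newtimescalar)\CommutedTimeSfBorderInhom{\vec{I}}$ from $\SfCurrentBorder{\vec{I}}$, and that $\varepsilon$-weighted energy half is absorbed into the $C\varepsilon\int\scale^{-1}\Sfenergy{M}$ Gronwall term on the penultimate line of RHS~\eqref{E:SFINTEGRALINEQUALITY}, not into the $\newlapse$-coercive integral. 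Your stated rationale — that the small weight on the returning energy piece is what lets the eventual Gronwall estimate close under the bootstrap — is correct, but the bookkeeping runs as just described.
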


\begin{proof}
Let 
$\Jfour_{(Sf)}[\cdots]$
be the scalar field energy current from Def.\ \ref{D:ENERGYCURRENTS}.
Integrating 
$
\Divfour_{\EnergyEstimatesMetric} 
			\Jfour_{(Sf)}[\mathscr{Z}^{\vec{I}} \newtimescalar, \SigmatLie_{\mathscr{Z}}^{\vec{I}}\newspacescalar]
$
over the spacetime domain 
$[0,t] \times \mathbb{S}^3$
with respect to the measure
$d \tvol \, ds$ of the metric $\EnergyEstimatesMetric$ from Def.\ \ref{D:ENERGYESTIMATESPACETIMEMETRIC},
summing over $1 \leq |\vec{I}| \leq M$,
using the divergence theorem, 
and appealing to 
definition \eqref{E:SFENERGY},
we deduce
\begin{align} \label{E:FIRSTDIVTHMSF}
\Sfenergy{M}(t)
& = 
	\Sfenergy{M}(0)
	+
	\sum_{1 \leq |\vec{I}| \leq M}
	\int_{s=0}^t
		\int_{\Sigma_s}
			\Divfour_{\EnergyEstimatesMetric} 
			\Jfour_{(Sf)}[\mathscr{Z}^{\vec{I}} \newtimescalar, \SigmatLie_{\mathscr{Z}}^{\vec{I}}\newspacescalar]
			\, d \tvol
	\, ds
		\\
& \ \
	+
	\sum_{1 \leq |\vec{I}| \leq M}
	\int_{s=0}^t
		\int_{\Sigma_s}
			\scale' \scale^3 \Gdiv 
			\left\lbrace 
				(\nabla^{\#} \mathscr{Z}^{\vec{I}} \newlapse) \mathscr{Z}^{\vec{I}} \newlapse
			\right\rbrace
		\, d \tvol
	\, ds
	\notag
\end{align}
(note that the last integral on RHS~\eqref{E:FIRSTDIVTHMSF} vanishes).
Next, we use the formula
\eqref{E:DIVSFCURRENT}	
to substitute for the integrands on RHS~\eqref{E:FIRSTDIVTHMSF}.
For convenience, in the remainder of the proof,
we carry out the analysis only for
a fixed multi-index $\vec{I}$ on RHS~\eqref{E:FIRSTDIVTHMSF};
the summations 
$\sum_{1 \leq |\vec{I}| \leq M} \cdots$
are easy to put in at the end of the proof,
and we omit these simple details.

In the rest of the proof, we silently use the 
fundamentally important facts that for $t \in [0,\TCrunch]$,
$\scale'(t) \leq 0$ and $|\scale'(t)|$ is uniformly bounded,
as was shown in Lemma~\ref{L:ANALYSISOFFRIEDMANN}.
To proceed, we bring the integral of the terms
$
\left\lbrace
				(\scale')^3 \scale^{5/3}
				+ 
				\cdots
\right\rbrace
\left|
	\mathscr{Z}^{\vec{I}} \newlapse
\right|^2
$
on the second line of RHS~\eqref{E:DIVSFCURRENT}	
over to LHS~\eqref{E:FIRSTDIVTHMSF},
where the terms 
$\cdots$ above are non-positive.
This yields the last positive definite integral
\begin{align} \label{E:FIRSTGOODSFINTEGRALS}
		\int_{s=0}^t
			\int_{\Sigma_s}
				|\scale'(s)|^3 
				\scale^{5/3}(s)
				\left|
					\mathscr{Z}^{\vec{I}} \newlapse
				\right|_{L_{\newg}^2(\Sigma_s)}^2
			\, d \tvol
		\, ds
\end{align}
on LHS~\eqref{E:SFINTEGRALINEQUALITY},
with the coefficient $1$ in place of the coefficient
$1/2$ written on LHS~\eqref{E:SFINTEGRALINEQUALITY}.
Observe that we have simply discarded the additional
good contribution made by the terms 
$\cdots$ noted above.

Next, we will treat 
the terms on the first and third lines of
RHS~\eqref{E:DIVSFCURRENT}.
Using Young's inequality, we bound the product
on the third line as follows, where $\uptau$
is the constant from Cor.~\ref{C:MONOTONICITYTIME}
and $\mathbf{1}_{I}$ denotes the characteristic function of the time interval $I$:
\begin{align} \label{E:KEYYOUNG}
	\left|
		2 \sqrt{\frac{2}{3}} \scale^{5/3} (\SigmatLie_{\mathscr{Z}}^{\vec{I}} \newspacescalar)^{\#} 
		\cdot 
		\nabla \mathscr{Z}^{\vec{I}} \newlapse
	\right|
	& \leq 
	\mathbf{1}_{[0,\TCrunch - \uptau]} \scale^{1/3} |\SigmatLie_{\mathscr{Z}}^{\vec{I}} \newspacescalar|_{\newg}^2
	+
	\frac{2}{3} \mathbf{1}_{[0,\TCrunch - \uptau]} \scale^3 |\nabla \mathscr{Z}^{\vec{I}} \newlapse|_{\newg}^2
		\\
	& 
	+
	\mathbf{1}_{(\TCrunch - \uptau,\TCrunch)} \scale^{1/3} |\SigmatLie_{\mathscr{Z}}^{\vec{I}} \newspacescalar|_{\newg}^2
	+
	\frac{2}{3} \mathbf{1}_{(\TCrunch - \uptau,\TCrunch)} \scale^3 |\nabla \mathscr{Z}^{\vec{I}} \newlapse|_{\newg}^2.
		\notag
\end{align}
Moreover, using \eqref{E:QUANTITATIVEMONOTONICITYATLATETIMES}-\eqref{E:CUBICQUANTITATIVEMONOTONICITYATLATETIMES} 
and \eqref{E:SCALEFACTORBOUNDEDFROMBELOWFORSHORTIMES},
we deduce that the following bounds hold, 
where $p$ is an arbitrary non-negative constant
and $C_p$ depends on $p$ and the (fixed) constant $\uptau$:
\begin{align} 
	\mathbf{1}_{[0,\TCrunch - \uptau]}
	& \leq
	C_p \scale^p,
		\label{E:EARLYTIMESCALEFACTORESTIMATE} \\
	\mathbf{1}_{(\TCrunch - \uptau,\TCrunch)}
	& \leq \frac{7}{6} |\scale'|.
	\label{E:LATETIMESCALEFACTORESTIMATE}
\end{align}
Thus, using \eqref{E:KEYYOUNG},
\eqref{E:EARLYTIMESCALEFACTORESTIMATE}, 
\eqref{E:LATETIMESCALEFACTORESTIMATE},
and the identities
$\frac{4}{3} - \frac{7}{6} = \frac{1}{6}$
and 
$1 - \frac{2}{3} \frac{7}{6} = \frac{4}{18} > \frac{1}{6}$,
we obtain the following key pointwise estimate for
the terms on the first and third lines of RHS~\eqref{E:DIVSFCURRENT}:
\begin{align} \label{E:KEYYOUNGAPPLICATION}
\frac{4}{3} \scale' \scale^{1/3} |\SigmatLie_{\mathscr{Z}}^{\vec{I}} \newspacescalar|_{\newg}^2 
& 
+ 
\scale' \scale^3  |\nabla \mathscr{Z}^{\vec{I}} \newlapse|_{\newg}^2
+ 
2 \sqrt{\frac{2}{3}} \scale^{5/3} 
\left|
	(\SigmatLie_{\mathscr{Z}}^{\vec{I}} \newspacescalar)^{\#} 
	\cdot 
	\nabla \mathscr{Z}^{\vec{I}} \newlapse
\right|
	\\
& 
\leq
	- 
	\frac{1}{6} |\scale'|
	 \scale^{1/3} |\SigmatLie_{\mathscr{Z}}^{\vec{I}} \newspacescalar|_{\newg}^2
	-
	\frac{1}{6} |\scale'|
	\scale^3 
	|\nabla \mathscr{Z}^{\vec{I}} \newlapse|_{\newg}^2
	\notag \\
& \ \ 
	+ C \scale^{4/3} |\SigmatLie_{\mathscr{Z}}^{\vec{I}} \newspacescalar|_{\newg}^2
	+ C \scale^4  |\nabla \mathscr{Z}^{\vec{I}} \newlapse|_{\newg}^2.
\notag 
\end{align}
We then bring the spacetime integrals of 
the two terms on the first line of
RHS~\eqref{E:KEYYOUNGAPPLICATION}
over to LHS~\eqref{E:FIRSTDIVTHMSF},
which yields the first and second
positive definite spacetime integrals on LHS~\eqref{E:SFINTEGRALINEQUALITY},
but with the coefficients $1/6$ in place of the coefficients
$1/12$ written on LHS~\eqref{E:SFINTEGRALINEQUALITY}.
We use \eqref{E:SFENERGYCOERCIVENESS} to bound the spacetime integral of
the term $C \scale^{4/3} |\SigmatLie_{\mathscr{Z}}^{\vec{I}} \newspacescalar|_{\newg}^2$ 
on the last line of RHS~\eqref{E:KEYYOUNGAPPLICATION} by
$\leq
\int_{s=0}^t
	\Sfenergy{M}(s)
\, ds
$,
which is 
$\leq \mbox{RHS~\eqref{E:SFINTEGRALINEQUALITY}}$
as desired.
We place the spacetime integral of 
the term $C \scale^4  |\nabla \mathscr{Z}^{\vec{I}} \newlapse|_{\newg}^2$
from the last line of RHS~\eqref{E:KEYYOUNGAPPLICATION}
on the fourth-to-last line of RHS~\eqref{E:SFINTEGRALINEQUALITY}.

We now address the spacetime
integrals generated by the borderline error term
$\SfCurrentBorder{\vec{I}}$ defined in
\eqref{E:SFCURRENTBORDER}.
Using 
\eqref{E:EARLYTIMESCALEFACTORESTIMATE},
\eqref{E:LATETIMESCALEFACTORESTIMATE},
and Young's inequality, 
and separately considering the time
intervals $[0,\TCrunch - \uptau]$ 
and $(\TCrunch - \uptau,\TCrunch)$
(as we did on RHS~\eqref{E:KEYYOUNG}),
we pointwise bound the magnitude of the terms on the first and second lines of 
RHS~\eqref{E:SFCURRENTBORDER} by
\begin{align} \label{E:YOUNGINEQUALITYINSFENERGYESTIMATE}
& \leq
\varepsilon \scale^{-1} |\mathscr{Z}^{\vec{I}} \newtimescalar|^2 
+ \frac{1}{\varepsilon} \scale^{5/3} |\CommutedTimeSfBorderInhom{\vec{I}}|^2
+
\frac{1}{13} |\scale'| \scale^{1/3} 
|\SigmatLie_{\mathscr{Z}}^{\vec{I}} \newspacescalar|_{\newg}^2
+ C \scale^3 |\CommutedSpaceSfBorderInhom{\vec{I}}|_{\newg}^2
	\\
& \ \
	+ \frac{1}{2} |\scale'| \scale^{5/3} |\mathscr{Z}^{\vec{I}} \newlapse|^2 
	+ C \scale^{-1} |\CommutedLapseHighBorderInhom{\vec{I}}|^2
	\notag \\
& \ \ 
+ C \scale^{4/3} |\SigmatLie_{\mathscr{Z}}^{\vec{I}} \newspacescalar|_{\newg}^2
+ C \scale^{8/3} |\mathscr{Z}^{\vec{I}} \newlapse|^2.
		\notag
\end{align}
Moreover, using \eqref{E:AGAINNODERIVATIVESPARTIALTGINVERSESTRONGSUPNORM},
\eqref{E:EARLYTIMESCALEFACTORESTIMATE},
and \eqref{E:LATETIMESCALEFACTORESTIMATE},
we bound the magnitude of the product on the third line of 
RHS~\eqref{E:SFCURRENTBORDER}
by 
\begin{align} \label{E:SFCURRENTPARTIALTGBORDERLINEERRORTERM}
& \leq 
C \varepsilon \mathbf{1}_{[0,\TCrunch - \uptau]} \scale^{1/3}  
|\SigmatLie_{\mathscr{Z}}^{\vec{I}} \newspacescalar|_{\newg}^2
+
C \varepsilon \mathbf{1}_{(\TCrunch - \uptau,\TCrunch)} \scale^{1/3}  
|\SigmatLie_{\mathscr{Z}}^{\vec{I}} \newspacescalar|_{\newg}^2
	\\
& \leq 
C \varepsilon |\scale'| \scale^{1/3} 
|\SigmatLie_{\mathscr{Z}}^{\vec{I}} \newspacescalar|_{\newg}^2
+
C \scale^{4/3} |\SigmatLie_{\mathscr{Z}}^{\vec{I}} \newspacescalar|_{\newg}^2.
\notag
\end{align}
We now note that for $\varepsilon$ sufficiently small, 
we can absorb the spacetime integrals of the terms
\begin{align*}
\frac{1}{13} |\scale'| \scale^{1/3} |\SigmatLie_{\mathscr{Z}}^{\vec{I}} \newspacescalar|_{\newg}^2,
	\qquad
\frac{1}{2} |\scale'| \scale^{5/3} |\mathscr{Z}^{\vec{I}} \newlapse|^2,
	\qquad
C \varepsilon |\scale'| \scale^{1/3} 
  |\SigmatLie_{\mathscr{Z}}^{\vec{I}} \newspacescalar|_{\newg}^2
\end{align*}
into the three positive definite integrals 
on LHS~\eqref{E:SFINTEGRALINEQUALITY} that were generated in the second and third paragraphs of the proof.
This procedure reduces the coefficients of the positive integrals
to no less than the values stated on LHS~\eqref{E:SFINTEGRALINEQUALITY}.
We place the spacetime integrals of the term
$ 
\frac{1}{\varepsilon} \scale^{5/3} |\CommutedTimeSfBorderInhom{\vec{I}}|^2
$
directly on RHS~\eqref{E:SFINTEGRALINEQUALITY}.
In view of 
\eqref{E:SFENERGYCOERCIVENESS},
we see that since $1 \leq |\vec{I}| \leq M$,
the spacetime integral of the term
$
\varepsilon \scale^{-1} |\mathscr{Z}^{\vec{I}} \newtimescalar|^2
$
is
$\leq
\varepsilon
\int_{s=0}^t
	\scale^{-1}(s) \Sfenergy{M}(s)
\, ds
$,
which is 
$\leq \mbox{RHS~\eqref{E:SFINTEGRALINEQUALITY}}$
as desired.
Similarly, 
with the exception of
$C \scale^{4/3} |\SigmatLie_{\mathscr{Z}}^{\vec{I}} \newspacescalar|_{\newg}^2$,
we place the spacetime integrals of the products on RHSs~\eqref{E:YOUNGINEQUALITYINSFENERGYESTIMATE} 
and \eqref{E:SFCURRENTPARTIALTGBORDERLINEERRORTERM}
featuring the large coefficient $C$ directly on RHS~\eqref{E:SFINTEGRALINEQUALITY}.
Finally, in view of 
\eqref{E:SFENERGYCOERCIVENESS},
we see that since $1 \leq |\vec{I}| \leq M$,
the spacetime integral of the remaining product
$C \scale^{4/3} |\SigmatLie_{\mathscr{Z}}^{\vec{I}} \newspacescalar|_{\newg}^2$
is
$\leq
\int_{s=0}^t
	\Sfenergy{M}(s)
\, ds
$,
which is 
$\leq \mbox{RHS~\eqref{E:SFINTEGRALINEQUALITY}}$
as desired.

To complete the proof of \eqref{E:SFINTEGRALINEQUALITY},
it remains for us to address the spacetime
integrals generated by the error term $\SfCurrentJunk{\vec{I}}$
defined in \eqref{E:SFCURRENTJUNK}. 
This term is easy to handle
because the products on RHS~\eqref{E:SFCURRENTJUNK} contain
sufficiently large powers of $\scale$, and we can afford to be non-optimal in our estimates.
We first use the bounds
$|\newlapse|, 
|\nabla \newlapse|
\lesssim \sqrt{\varepsilon} \scale^{- c \sqrt{\varepsilon}}
$
(which follow from \eqref{E:LAPSESTRONGSUPNROM} and \eqref{E:IMPROVEDOPERATORCOMPARISON})
and Young's inequality
to derive the (non-optimal) pointwise bound
\begin{align}
\left|
	\SfCurrentJunk{\vec{I}}
\right|
& \lesssim
		\scale^{-1/3} |\mathscr{Z}^{\vec{I}} \newtimescalar|^2 
		+
		\scale^{4/3} |\SigmatLie_{\mathscr{Z}}^{\vec{I}} \newspacescalar|_{\newg}^2
		+
		\scale^{7/3} |\mathscr{Z}^{\vec{I}} \newlapse|^2
			\label{E:POINTWISEBOUNDSFCURRENTJUNK} \\
	& \ \ 
		+
		\scale \left|\CommutedTimeSfJunkInhom{\vec{I}} \right|^2
		+
		\scale^2 \left|\CommutedSpaceSfJunkInhom{\vec{I}} \right|^2
		+
		\scale \left|\CommutedLapseHighJunkInhom{\vec{I}} \right|^2.
			 \notag 
\end{align}
Using \eqref{E:SFENERGYCOERCIVENESS},
we see that the spacetime integral of the first two terms on RHS~\eqref{E:POINTWISEBOUNDSFCURRENTJUNK} is
$\leq
\int_{s=0}^t
	\scale^{-1/3}(s)
	\Sfenergy{M}(s)
\, ds
$,
which is 
$\leq \mbox{RHS~\eqref{E:SFINTEGRALINEQUALITY}}$
as desired.
The spacetime integrals
of the last four terms on RHS~\eqref{E:POINTWISEBOUNDSFCURRENTJUNK}
are manifestly
$\leq \mbox{RHS~\eqref{E:SFINTEGRALINEQUALITY}}$.

We have therefore proved the lemma.
\end{proof}

\subsection{Energy integral inequalities for the metric}
In this subsection, we derive an analog of Lemma~\ref{L:SFINTEGRALINEQUALITY}
for the metric energies. The proof is easier in the sense that
we do not need to observe the same kind of delicate cancellations
that were at the heart of the proof of Lemma~\ref{L:SFINTEGRALINEQUALITY}.

\begin{lemma}[\textbf{Fundamental metric energy integral inequalities}]
	\label{L:METRICINTEGRALINEQUALITY}
	Let $1 \leq M \leq 16$.
	There exists a constant $C > 0$ such that
	the energy $\Metricenergy{M}(t)$
	defined by \eqref{E:METRICENERGY}
	obeys the following inequality for $t \in [0,\Tboot)$:
	\begin{align} \label{E:METRICINTEGRALINEQUALITY}
		\Metricenergy{M}(t)
		& +
		\frac{1}{9}
		\sum_{1 \leq |\vec{I}| \leq M}
		\int_{s=0}^t
			\int_{\Sigma_s}
				|\scale'(s)| 
				\scale^{1/3}(s)
				\left|
					\nabla \SigmatLie_{\mathscr{Z}}^{\vec{I}} \newg
				\right|_{\newg}^2
			\, d \tvol
		\, ds
			\\
		& \leq
			\Metricenergy{M}(0)
			\notag \\
		& \ \
		+ 
		C
		\sum_{1 \leq |\vec{I}| \leq M}
		\int_{s=0}^t
			\int_{\Sigma_s}
				\scale^{1/3}(s)
				\left\lbrace
				\left|
					\CommutedMomBorderInhomDown{\vec{I}}
				\right|_{\newg}^2
				+
				\left|
					\CommutedMomBorderInhomUp{\vec{I}}
				\right|_{\newg}^2
			\right\rbrace
			\, d \tvol
		\, ds
			\notag \\
	& \ \
		+ 
		C
		\sum_{1 \leq |\vec{I}| \leq M}
		\int_{s=0}^t
			\int_{\Sigma_s}
				\scale^{1/3}(s)
				\left|
					\CommutedGradMetBorderInhom{\vec{I}}
				\right|_{\newg}^2
			\, d \tvol
		\, ds
		\notag \\
	& \ \
		+ 
		\frac{C}{\varepsilon}
		\sum_{1 \leq |\vec{I}| \leq M}
		\int_{s=0}^t
			\int_{\Sigma_s}
				\scale^{5/3}(s)
				\left|
					\CommutedSecFunBorderInhom{\vec{I}}
				\right|_{\newg}^2
			\, d \tvol
		\, ds
		\notag \\
	&  \ \
		+ 
		C
		\sum_{1 \leq |\vec{I}| \leq M}
		\int_{s=0}^t
			\int_{\Sigma_s}
				\scale(s)
				\left|
					\CommutedSecFunJunkInhom{\vec{I}} 
				\right|_{\newg}^2
			\, d \tvol
		\, ds
		\notag \\
	&  \ \
		+ 
		C
		\sum_{1 \leq |\vec{I}| \leq M}
		\int_{s=0}^t
			\int_{\Sigma_s}
				\scale(s)
				\left|
					\RicErrorInhom{\vec{I}} 
				\right|_{\newg}^2
			\, d \tvol
		\, ds
		\notag \\
	&  \ \
		+ 
		C
		\sum_{1 \leq |\vec{I}| \leq M}
		\int_{s=0}^t
			\int_{\Sigma_s}
				\scale^2(s)
				\left|
					\CommutedGradMetJunkInhom{\vec{I}} 
				\right|_{\newg}^2
		\, ds
		\notag
			\\
	& \ \
			+ 
			C
			\sum_{1 \leq |\vec{I}| \leq M}
			\int_{s=0}^t
				\scale^{4/3}(s)
				\int_{\Sigma_s}
					\left|
						\SigmatLie_{\mathscr{Z}}^{\vec{I}} \newspacescalar
					\right|_{\newg}^2
				\, d \tvol	
			\, ds 
			\notag \\
	& \ \
			+ 
			C
			\sum_{1 \leq |\vec{I}| \leq M}
			\int_{s=0}^t
				|\scale'(s)|
				\scale^{1/3}(s)
				\int_{\Sigma_s}
					\left|
						\SigmatLie_{\mathscr{Z}}^{\vec{I}} \newspacescalar
					\right|_{\newg}^2
				\, d \tvol	
			\, ds 
			\notag \\
	& \ \
			+ 
			C
			\sum_{1 \leq |\vec{I}| \leq M}
			\int_{s=0}^t
				\scale^4(s)
				\int_{\Sigma_s}
					\left|
						\nabla \SigmatLie_{\mathscr{Z}}^{\vec{I}} \newlapse
					\right|_{\newg}^2
				\, d \tvol	
			\, ds 
			\notag \\
		& \ \
			+ 
			C
			\sum_{1 \leq |\vec{I}| \leq M}
			\int_{s=0}^t
				|\scale'(s)|
				\scale^3(s)
				\int_{\Sigma_s}
					\left|
						\nabla \SigmatLie_{\mathscr{Z}}^{\vec{I}} \newlapse
					\right|_{\newg}^2
				\, d \tvol	
			\, ds 
			\notag \\
	&  \ \
		+
		C \varepsilon
		\sum_{1 \leq |\vec{I}| \leq M}
		\int_{s=0}^t
			\scale^{-1}(s)
			\Metricenergy{M}(s)
		\, ds
		\notag \\
	&  \ \
		+
		C 
		\sum_{1 \leq |\vec{I}| \leq M}
		\int_{s=0}^t
			\scale^{-1/3}(s)
			\Metricenergy{M}(s)
		\, ds,
		\notag 
	\end{align}
	where the terms $\CommutedMomBorderInhomDown{\vec{I}},\cdots$ on
	RHS~\eqref{E:METRICINTEGRALINEQUALITY}
	are the error terms from the $\SigmatLie_{\mathscr{Z}}^{\vec{I}}$-commuted equations of
	Prop.~\ref{P:ICOMMUTEDEQNS}.
\end{lemma}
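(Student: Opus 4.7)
The plan is to mirror the structure of the proof of Lemma~\ref{L:SFINTEGRALINEQUALITY}, now starting from the divergence identity \eqref{E:DIVMETRICCURRENT} of Lemma~\ref{L:METRICCURRENTDIV}. Specifically, for each fixed $\vec{I}$ with $1 \leq |\vec{I}| \leq M$, I will integrate $\Divfour_{\EnergyEstimatesMetric} \Jfour_{(Metric)}[\SigmatLie_{\mathscr{Z}}^{\vec{I}} \FreeNewSec, \SigmatLie_{\mathscr{Z}}^{\vec{I}} \newg, \nabla \mathscr{Z}^{\vec{I}} \newlapse]$ over $[0,t] \times \mathbb{S}^3$ against $d \tvol \, ds$, apply the divergence theorem, invoke definition \eqref{E:METRICENERGY}, and use \eqref{E:DIVMETRICCURRENT} to replace the integrand by the explicit expression. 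Summing in $\vec{I}$ is then a formality and I will suppress it while doing the key manipulations.

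The only manifestly good term on RHS~\eqref{E:DIVMETRICCURRENT} is $\frac{1}{3} \scale' \scale^{1/3} |\nabla \SigmatLie_{\mathscr{Z}}^{\vec{I}} \newg|_{\newg}^2$; since $\scale' \leq 0$ on $[0,\TCrunch]$ by Lemma~\ref{L:ANALYSISOFFRIEDMANN}, moving it to the left produces the positive definite integral on LHS~\eqref{E:METRICINTEGRALINEQUALITY}, initially with coefficient $1/3$. Next I will treat the cross terms on lines two through five of RHS~\eqref{E:DIVMETRICCURRENT}, namely the products coupling $\nabla \mathscr{Z}^{\vec{I}} \newlapse$ and $\SigmatLie_{\mathscr{Z}}^{\vec{I}} \newspacescalar$ with either $\nabla \mathscr{Z}^{\vec{I}} \newlapse$ or $\nabla \SigmatLie_{\mathscr{Z}}^{\vec{I}} \newg$. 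For each of these, I will use Young's inequality after splitting the domain of integration as $[0,\TCrunch-\uptau] \cup (\TCrunch-\uptau,\TCrunch)$ and using \eqref{E:EARLYTIMESCALEFACTORESTIMATE}--\eqref{E:LATETIMESCALEFACTORESTIMATE} exactly as in the proof of \eqref{E:SFINTEGRALINEQUALITY}: on the late subinterval one gains a factor of $|\scale'|$, producing a small multiple of $|\scale'|\scale^{1/3}|\nabla \SigmatLie_{\mathscr{Z}}^{\vec{I}} \newg|_{\newg}^2$ that can be absorbed into the good integral on the left (this is what drops its coefficient from $1/3$ to $1/9$); on the early subinterval the absence of any $\scale$-degeneracy produces spacetime integrals of the form $\int \scale^p (\cdots) \, ds$ with $p$ arbitrary and large, which I place directly on the right (the terms with $\SigmatLie_{\mathscr{Z}}^{\vec{I}} \newspacescalar$ controlled via \eqref{E:SFENERGYCOERCIVENESS} contribute to the $\Metricenergy{M}$ Gronwall-type integrals, while those with $\nabla \mathscr{Z}^{\vec{I}} \newlapse$ appear as the $\scale^4 |\nabla \SigmatLie_{\mathscr{Z}}^{\vec{I}}\newlapse|_{\newg}^2$ integral on the RHS of \eqref{E:METRICINTEGRALINEQUALITY}).

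It remains to estimate $\MetricCurrentBorder{\vec{I}}$ and $\MetricCurrentJunk{\vec{I}}$ given by \eqref{E:METRICCURRENTBORDER}--\eqref{E:METRICCURRENTJUNK}. For the four terms on the first two lines of RHS~\eqref{E:METRICCURRENTBORDER} I will use the sup-norm estimates \eqref{E:NODERIVATIVESPARTIALTGSTRONGSUPNORM} for $\SigmatLie_{\partial_t} \newg$ and $\SigmatLie_{\partial_t}\newg^{-1}$ to gain a factor $\varepsilon \scale^{-1}$ in front of $|\SigmatLie_{\mathscr{Z}}^{\vec{I}} \FreeNewSec|_{\newg}^2 + \scale^{4/3}|\nabla \SigmatLie_{\mathscr{Z}}^{\vec{I}}\newg|_{\newg}^2$, which via \eqref{E:METRICENERGYCOERCIVENESS} produces exactly the $C \varepsilon \int \scale^{-1} \Metricenergy{M}(s) \, ds$ term. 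The remaining lines of \eqref{E:METRICCURRENTBORDER} contain products of controlled quantities against $\CommutedSecFunBorderInhom{\vec{I}}, \CommutedGradMetBorderInhom{\vec{I}}, \CommutedMomBorderInhomDown{\vec{I}}, \CommutedMomBorderInhomUp{\vec{I}}$; to each of these I apply Young's inequality, splitting the prefactor $\scale^{1/3}$ so that one side reproduces a small multiple of the good LHS integral (to be absorbed) and the other side yields exactly the weighted $L^2$ norms of the inhomogeneities appearing on RHS~\eqref{E:METRICINTEGRALINEQUALITY}, with the factor $1/\varepsilon$ in front of the $\CommutedSecFunBorderInhom{\vec{I}}$ integral arising from the particular splitting used to absorb its dual. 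The junk current \eqref{E:METRICCURRENTJUNK} is less delicate: the first two products carry the small factor $\newlapse$, which is $\mathcal{O}(\sqrt{\varepsilon}\scale^{-c\sqrt\varepsilon})$ by \eqref{E:LAPSESTRONGSUPNROM}, so spacetime integration against $|\FreeNewSec|_{\newg}^2$-type quantities gives a contribution bounded via \eqref{E:METRICENERGYCOERCIVENESS} by $\int \scale^{-1/3} \Metricenergy{M}(s)\, ds$; the lapse-gradient cross terms on the last two lines of \eqref{E:METRICCURRENTJUNK} are dispatched by Young's inequality combined with the sup-norm bound $|\nabla \newlapse|_{\newg} \lesssim \sqrt{\varepsilon} \scale^{- c \sqrt\varepsilon}$; and the remaining three $\langle \cdot, \CommutedSecFunJunkInhom{\vec{I}}\rangle_{\newg}$, $\langle \cdot, \RicErrorInhom{\vec{I}}\rangle_{\newg}$, $\langle \cdot, \CommutedGradMetJunkInhom{\vec{I}}\rangle_{\newg}$ inner products are handled by Cauchy--Schwarz and Young, producing the corresponding $\scale$-weighted $L^2$-norm integrals on RHS~\eqref{E:METRICINTEGRALINEQUALITY}. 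I expect the main technical obstacle to be bookkeeping: ensuring that every Young splitting leaves behind only an absorbably small constant times $|\scale'|\scale^{1/3}|\nabla\SigmatLie_{\mathscr{Z}}^{\vec{I}}\newg|_{\newg}^2$, so that the final coefficient on the left is no worse than $1/9$, while the dual halves reproduce precisely the weighted inhomogeneity norms stated on the right.
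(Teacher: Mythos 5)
Your proposal follows essentially the same route as the paper: integrate the divergence identity \eqref{E:DIVMETRICCURRENT}, move the single signed term to the left, then use Young's inequality with the early/late time split \eqref{E:EARLYTIMESCALEFACTORESTIMATE}--\eqref{E:LATETIMESCALEFACTORESTIMATE}, the sup-norm bounds for $\SigmatLie_{\partial_t}\newg$, $\SigmatLie_{\partial_t}\newg^{-1}$, and $\newlapse$, and the coercivity of $\Metricenergy{M}$ to dispatch the cross, border, and junk terms; the paper explicitly flags its own proof as ``similar to that of Lemma~\ref{L:SFINTEGRALINEQUALITY} but simpler.'' One small bookkeeping slip: you say the $\SigmatLie_{\mathscr{Z}}^{\vec{I}}\newspacescalar$ pieces generated by the cross terms ``contribute to the $\Metricenergy{M}$ Gronwall-type integrals'' via \eqref{E:SFENERGYCOERCIVENESS}, but $\newspacescalar$ is not controlled by $\Metricenergy{M}$ at all; in the paper these pieces are simply placed as their own weighted integrals $\int \scale^{4/3}\|\SigmatLie_{\mathscr{Z}}^{\vec{I}}\newspacescalar\|_{L^2_{\newg}}^2$ and $\int|\scale'|\scale^{1/3}\|\SigmatLie_{\mathscr{Z}}^{\vec{I}}\newspacescalar\|_{L^2_{\newg}}^2$ on the RHS of \eqref{E:METRICINTEGRALINEQUALITY} (to be absorbed only later, in Prop.~\ref{P:FUNDAMENTALENERGYINTEGRALINEQUALITY}, when the metric and scalar-field inequalities are added with a suitable small coefficient $\smallparameter_*$). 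Relatedly, you describe the drop from coefficient $1/3$ to $1/9$ as occurring at the cross-term step, while the paper spreads this over two steps, $1/3 \to 2/9$ from the cross terms and $2/9 \to 1/9$ from $\MetricCurrentBorder{\vec{I}}$; you do implicitly acknowledge this at the end, and the distinction is immaterial to the validity of the argument.
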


\begin{proof}
The proof is similar to that of Lemma~\ref{L:SFINTEGRALINEQUALITY} but simpler.
Let 
$\Jfour_{(Metric)}[\SigmatLie_{\mathscr{Z}}^{\vec{I}} \FreeNewSec,\SigmatLie_{\mathscr{Z}}^{\vec{I}} \newg,]$
be the metric energy current from Def.\ \ref{D:ENERGYCURRENTS}.
We integrate
$
\Divfour_{\EnergyEstimatesMetric} 
			\Jfour_{(Metric)}[\SigmatLie_{\mathscr{Z}}^{\vec{I}} \FreeNewSec,\SigmatLie_{\mathscr{Z}}^{\vec{I}} \newg]
$
over the spacetime domain 
$[0,t] \times \mathbb{S}^3$
with respect to the measure
$d \tvol \, ds$ of the metric $\EnergyEstimatesMetric$ from Def.\ \ref{D:ENERGYESTIMATESPACETIMEMETRIC},
sum over $1 \leq |\vec{I}| \leq M$,
use the divergence theorem, 
and appeal to 
definition \eqref{E:METRICENERGY},
thereby deducing that
\begin{align} \label{E:FIRSTDIVTHMMETRIC}
\Metricenergy{M}(t)
& = 
	\Metricenergy{M}(0)
	+
	\sum_{1 \leq |\vec{I}| \leq M}
	\int_{s=0}^t
		\int_{\Sigma_s}
			\Divfour_{\EnergyEstimatesMetric} 
			\Jfour_{(Metric)}[\SigmatLie_{\mathscr{Z}}^{\vec{I}} \FreeNewSec,\SigmatLie_{\mathscr{Z}}^{\vec{I}} \newg,\nabla \mathscr{Z}^{\vec{I}} 
				\newlapse]
		\, d \tvol
	\, ds.
\end{align}
Next, we use the formula
\eqref{E:DIVMETRICCURRENT}	
to substitute for the integrand on RHS~\eqref{E:FIRSTDIVTHMMETRIC}.
For convenience, in the remainder of the proof,
we carry out the analysis only for
a fixed multi-index $\vec{I}$ on RHS~\eqref{E:FIRSTDIVTHMMETRIC};
the summations 
$\sum_{1 \leq |\vec{I}| \leq M} \cdots$
are easy to put in at the end of the proof,
and we omit these simple details.

In the rest of the proof, we silently use the 
fundamentally important facts that for $t \in [0,\TCrunch]$,
$\scale'(t) \leq 0$ and $|\scale'(t)|$ is uniformly bounded,
as was shown in Lemma~\ref{L:ANALYSISOFFRIEDMANN}.
To proceed, we bring the spacetime integral of the first term
on RHS~\eqref{E:DIVMETRICCURRENT}
back to the left,
which yields the good term
\begin{align} \label{E:GOODMETRICINTEGRAL}
\frac{1}{3}
		\int_{s=0}^t
			\int_{\Sigma_s}
				|\scale'(s)| 
				\scale^{1/3}(s)
				\left|
					\nabla \SigmatLie_{\mathscr{Z}}^{\vec{I}} \newg
				\right|_{L_{\newg}^2(\Sigma_s)}^2
			\, d \tvol
		\, ds.
\end{align}
Note that \eqref{E:GOODMETRICINTEGRAL} differs from the second
term on LHS~\eqref{E:METRICINTEGRALINEQUALITY} only in that
the coefficient is $1/3$ in \eqref{E:GOODMETRICINTEGRAL} and $1/9$
in \eqref{E:METRICINTEGRALINEQUALITY}.

Next, we argue as in the proof of \eqref{E:KEYYOUNGAPPLICATION},
using in addition the estimate
$
|\newlapse|
\lesssim \sqrt{\varepsilon} \scale^{- c \sqrt{\varepsilon}}
$
(see \eqref{E:LAPSESTRONGSUPNROM}),
to bound the magnitude of the products on the second through fourth lines of RHS~\eqref{E:DIVMETRICCURRENT}
by
\begin{align} \label{E:METRICENERGYYOUNGAPPLICATION}
& 
\leq
	\frac{1}{9}
		|\scale'| 
		\scale^{1/3}
		\left|
			\nabla \SigmatLie_{\mathscr{Z}}^{\vec{I}} \newg
		\right|_{\newg}^2
		+	
	C |\scale'|
	 \scale^{1/3} |\SigmatLie_{\mathscr{Z}}^{\vec{I}} \newspacescalar|_{\newg}^2
	+
	C
	|\scale'|
	\scale^3 
	|\nabla \mathscr{Z}^{\vec{I}} \newlapse|_{\newg}^2
	\\
& \ \ 
	+ C \scale^{4/3} |\SigmatLie_{\mathscr{Z}}^{\vec{I}} \newspacescalar|_{\newg}^2
	+ C \scale^4  |\nabla \mathscr{Z}^{\vec{I}} \newlapse|_{\newg}^2.
\notag 
\end{align}
We can absorb the spacetime integral of the first 
term in \eqref{E:METRICENERGYYOUNGAPPLICATION}
into the positive definite term \eqref{E:GOODMETRICINTEGRAL},
which reduces the coefficient of 
$1/3$ in \eqref{E:GOODMETRICINTEGRAL} to 
$1/3 - 1/9 = 2/9$.
We place the spacetime integrals of the remaining
terms in \eqref{E:METRICENERGYYOUNGAPPLICATION}
directly on RHS~\eqref{E:METRICINTEGRALINEQUALITY}.

We now bound the borderline error terms 
$\MetricCurrentBorder{\vec{I}}$
defined in \eqref{E:METRICCURRENTBORDER}.
Using the same arguments that we used to prove
\eqref{E:YOUNGINEQUALITYINSFENERGYESTIMATE}
and
\eqref{E:SFCURRENTPARTIALTGBORDERLINEERRORTERM}
together with the estimate
$
|\newlapse|
\lesssim \sqrt{\varepsilon} \scale^{- c \sqrt{\varepsilon}}
$
mentioned above,
we deduce that 
\begin{align} \label{E:METRICCURRENTBORDERLINEERRORPOINTWISEBOUND}
	\left|
		\MetricCurrentBorder{\vec{I}}
	\right|
	& \leq
		\frac{1}{9} 
		|\scale'| 
		\scale^{1/3}
		\left|
			\nabla \SigmatLie_{\mathscr{Z}}^{\vec{I}} \newg
		\right|_{\newg}^2
		+ 
		C \varepsilon 
		\scale^{-1}
		\left|
			\SigmatLie_{\mathscr{Z}}^{\vec{I}} \FreeNewSec
		\right|_{\newg}^2
			\\
	& \ \
		+
		C \varepsilon^{-1} 
		\scale^{5/3}
		\left|
			\CommutedSecFunBorderInhom{\vec{I}}
		\right|_{\newg}^2
		+
		C 
		\scale^{1/3}
		\left|
			\CommutedGradMetBorderInhom{\vec{I}}
		\right|_{\newg}^2
			\notag \\
	& \ \
		+
		C 
		\scale^{1/3}
		\left|
			\CommutedMomBorderInhomDown{\vec{I}}
		\right|_{\newg}^2
		+
		C 
		\scale^{1/3}
		\left|
			\CommutedMomBorderInhomUp{\vec{I}}
		\right|_{\newg}^2
			\notag \\
	& \ \
		+
		C
		\scale^{4/3}
		\left|
			\nabla \SigmatLie_{\mathscr{Z}}^{\vec{I}} \newg
		\right|_{\newg}^2
		+
		C
		|\scale'|
		\scale^3
		\left|
			\nabla \SigmatLie_{\mathscr{Z}}^{\vec{I}} \newlapse
		\right|_{\newg}^2
		+
		C
		\scale^4
		\left|
			\nabla \SigmatLie_{\mathscr{Z}}^{\vec{I}} \newlapse
		\right|_{\newg}^2.
		\notag
\end{align}
We now absorb the spacetime
integral of the product
$
\frac{1}{9} 
		|\scale'| 
		\scale^{1/3}
		\left|
			\nabla \SigmatLie_{\mathscr{Z}}^{\vec{I}} \newg
		\right|_{\newg}^2
$
on RHS~\eqref{E:METRICCURRENTBORDERLINEERRORPOINTWISEBOUND}
into the positive definite term \eqref{E:GOODMETRICINTEGRAL},
which reduces the coefficient of 
$1/3$ in \eqref{E:GOODMETRICINTEGRAL} to its stated value of
$1/3 - 1/9 - 1/9 = 1/9$ on LHS~\eqref{E:METRICINTEGRALINEQUALITY}.
We place the spacetime integrals of the remaining
on RHS~\eqref{E:METRICCURRENTBORDERLINEERRORPOINTWISEBOUND}
directly on RHS~\eqref{E:METRICINTEGRALINEQUALITY}.

To complete the proof of the lemma, it remains for
us to bound the error term
$\MetricCurrentJunk{\vec{I}}$
defined in \eqref{E:METRICCURRENTJUNK}.
Using the same arguments that we used to prove \eqref{E:POINTWISEBOUNDSFCURRENTJUNK},
we derive the (non-optimal) pointwise bound
\begin{align}
\left|
	\MetricCurrentJunk{\vec{I}}
\right|
& \lesssim
			\scale^{-1/3}  
			\left|
				\SigmatLie_{\mathscr{Z}}^{\vec{I}} \FreeNewSec
			\right|_{\newg}^2
			+ 
			\scale^{4/3}
			\left|
				\nabla \SigmatLie_{\mathscr{Z}}^{\vec{I}} \newg
			\right|_{\newg}^2
			\label{E:POINTWISEBOUNDMETRICCURRENTJUNK} \\
		& \ \
				+
				\scale
				\left|
					\CommutedSecFunJunkInhom{\vec{I}} 
				\right|_{\newg}^2
				+
				\scale
				\left|
					\RicErrorInhom{\vec{I}} 
				\right|_{\newg}^2
				+
				\scale^2
				\left|
					\CommutedGradMetJunkInhom{\vec{I}} 
				\right|_{\newg}^2.
				\notag 
\end{align}
Using \eqref{E:METRICENERGYCOERCIVENESS},
we see that the spacetime integral of the first two terms on RHS~\eqref{E:POINTWISEBOUNDMETRICCURRENTJUNK} is
$\leq
C
\int_{s=0}^t
	\scale^{-1/3}(s)
	\Metricenergy{M}(s)
\, ds
$,
which is 
$\leq \mbox{RHS~\eqref{E:METRICINTEGRALINEQUALITY}}$
as desired.
The spacetime integrals
of the last three terms on RHS~\eqref{E:POINTWISEBOUNDMETRICCURRENTJUNK}
are manifestly
$\leq \mbox{RHS~\eqref{E:METRICINTEGRALINEQUALITY}}$.

We have therefore proved the lemma.

\end{proof}

\subsection{Proof of Prop.~\ref{P:FUNDAMENTALENERGYINTEGRALINEQUALITY}}
	\label{SS:PROOFOFUNDAMENTALENERGY}
Let $\smallparameter > 0$ be a parameter.
We add inequality \eqref{E:SFINTEGRALINEQUALITY} to $\smallparameter$
times inequality \eqref{E:METRICINTEGRALINEQUALITY}.
For $\smallparameter$ sufficiently small 
(we denote by $\smallparameter_*$ a fixed sufficiently small choice of $\smallparameter$),
we can absorb (with room to spare) the integrals
$C \smallparameter_*
			\sum_{1 \leq |\vec{I}| \leq M}
			\int_{s=0}^t
				|\scale'(s)|
				\scale^{1/3}(s)
				\int_{\Sigma_s}
					\left|
						\SigmatLie_{\mathscr{Z}}^{\vec{I}} \newspacescalar
					\right|_{\newg}^2
				\, d \tvol	
			\, ds 
$
and
$
C \smallparameter_*
			\sum_{1 \leq |\vec{I}| \leq M}
			\int_{s=0}^t
				|\scale'(s)| 
				\scale^3(s)
				\int_{\Sigma_s}
					\left|
						\nabla \SigmatLie_{\mathscr{Z}}^{\vec{I}} \newlapse
					\right|_{\newg}^2
				\, d \tvol	
			\, ds
$
generated by RHS~\eqref{E:METRICINTEGRALINEQUALITY}
into the first two positive definite spacetime integrals
on LHS~\eqref{E:SFINTEGRALINEQUALITY}.
Moreover,
using \eqref{E:SFENERGYCOERCIVENESS}
and definition~\eqref{E:TOTALENERGY},
we see that the term
$
C \smallparameter_*
			\sum_{1 \leq |\vec{I}| \leq M}
			\int_{s=0}^t
				\scale^{4/3}(s)
				\int_{\Sigma_s}
					\left|
						\SigmatLie_{\mathscr{Z}}^{\vec{I}} \newspacescalar
					\right|_{\newg}^2
				\, d \tvol	
			\, ds 
$
generated by the sixth-from-last term on RHS~\eqref{E:METRICINTEGRALINEQUALITY}
is
$
\lesssim
\int_{s=0}^t
	\Sfenergy{M}(s) 
\, ds
\leq
\int_{s=0}^t
	\scale^{-1/3}(s) \Totalenergy{M}{\smallparameter_*}(s) 
\, ds
$.
The desired inequality \eqref{E:FUNDAMENTALENERGYINTEGRALINEQUALITY}
now follows from the above considerations and the definition \eqref{E:TOTALENERGY} 
of $\Totalenergy{M}{\smallparameter_*}$.
\hfill $\qed$

\section{Pointwise estimates}
\label{S:POINTWISE} 

\subsection{Estimates for the Energy Estimate Error Terms}
\label{SS:POINTWISEFORENERGYESTIMATEERRORTERMS}
In view of the energy integral inequalities of Prop.~\ref{P:FUNDAMENTALENERGYINTEGRALINEQUALITY},
the primary remaining ingredient that we need to derive a priori energy estimates is:
$L^2$-bounds for the error terms in the $\SigmatLie_{\mathscr{Z}}^{\vec{I}}$-commuted equations.
That is, we need estimates that control error integrals on RHS~\eqref{E:FUNDAMENTALENERGYINTEGRALINEQUALITY},
such as the term
$
\scale^{1/3}(s)
\left\|
	\CommutedMomBorderInhomDown{\vec{I}}
\right\|_{L_{\newg}^2(\Sigma_s)}^2
$
from RHS~\eqref{E:ENERGYBORDER},
in terms of $\Totalenergy{M}{\smallparameter_*}(s)$.
In this section, we set up the forthcoming $L^2$ analysis by deriving
pointwise estimates for these error terms. The main point is to identify 
the borderline principal-order error terms and to make sure
that no degenerate coefficient of
$\scale^{-c\sqrt{\varepsilon}}$ 
(which blows up as $t \uparrow \TCrunch$)
appears in the pointwise estimates for these terms;
for such terms, the presence of a coefficient of $\scale^{-c\sqrt{\varepsilon}}$
would have spoiled the Gronwall estimates for the energies.
We provide the desired pointwise estimates in the following proposition.

\begin{remark}[\textbf{``Boxed'' error terms}]
	\label{R:BOXEDERROR}
	We place the borderline principal-order error terms in boxes on the RHSs
	of the estimates of the proposition.
\end{remark}

\begin{proposition}[\textbf{Pointwise estimates for the error terms in the $\SigmatLie_{\mathscr{Z}}^{\vec{I}}$-commuted equations}]
\label{P:POINTWISEESTIMATESFORERRORTERMS}
Let $\vec{I}$ be a $\mathscr{Z}$-multi-index with
$1 \leq |\vec{I}| \leq 16$.
The borderline inhomogeneous terms in the commuted equations
of Prop.~\ref{P:ICOMMUTEDEQNS} 
verify the following pointwise estimates,
where all products involving the operators
$\SigmatLie_{\mathscr{Z}}^{[1,|\vec{I}|-1]}$
or
$\mathscr{Z}^{[1,|\vec{I}|-1]}$
are absent if $|\vec{I}|=1$:
\begingroup
\allowdisplaybreaks
\begin{subequations}
\begin{align}
	\left|
		\CommutedMomBorderInhomUp{\vec{I}}
	\right|_{\newg},
		\,
	\left|
		\CommutedMomBorderInhomDown{\vec{I}}
	\right|_{\newg}
	& \lesssim 
		\sqrt{\varepsilon}
		\scale^{-c\sqrt{\varepsilon}}
		\left|
			\SigmatLie_{\mathscr{Z}}^{[1,|\vec{I}|]} \FreeNewSec
		\right|_{\newg}
		+
		\boxed{
		\varepsilon
		\left|
			\nabla \SigmatLie_{\mathscr{Z}}^{\vec{I}} \newg
		\right|_{\newg}
		}
		+
		\sqrt{\varepsilon}
		\scale^{-c\sqrt{\varepsilon}}
		\left|
			\SigmatLie_{\mathscr{Z}}^{[1,|\vec{I}|]} \newg
		\right|_{\newg}
			\label{E:POINTWISEMOMENTUMCONSTRAINTBORDERLINE} \\
	& \ \
		+
		\sqrt{\varepsilon}
		\scale^{-c\sqrt{\varepsilon}}
		\left|
			\mathscr{Z}^{[1,|\vec{I}|]} \newtimescalar
		\right|
		+
		\boxed{
		\varepsilon
		\left|
			\SigmatLie_{\mathscr{Z}}^{\vec{I}]} \newspacescalar
		\right|_{\newg}
		}
		+
		\sqrt{\varepsilon}
		\scale^{-c\sqrt{\varepsilon}}
		\left|
			\SigmatLie_{\mathscr{Z}}^{[1,|\vec{I}|-1]} \newspacescalar
		\right|_{\newg},
		\notag \\
	  \left|
			\CommutedMetBorderInhom{\vec{I}}
		\right|_{\newg},
			\,
		\left|
			\CommutedInvMetBorderInhom{\vec{I}}
		\right|_{\newg}
		& \lesssim
			\left|
				\SigmatLie_{\mathscr{Z}}^{\vec{I}} \FreeNewSec
			\right|_{\newg}
			+
			\sqrt{\varepsilon} \scale^{-c \sqrt{\varepsilon}}
			\left|
				\SigmatLie_{\mathscr{Z}}^{[1,|\vec{I}| - 1]} \FreeNewSec
			\right|
			+
			\boxed{
			\varepsilon
			\left|
				\SigmatLie_{\mathscr{Z}}^{\vec{I}} \newg
			\right|_{\newg}
			}
				+
			\sqrt{\varepsilon}
			\scale^{-c \sqrt{\varepsilon}}
			\left|
				\SigmatLie_{\mathscr{Z}}^{[1,|\vec{I}|-1]} \newg
			\right|_{\newg}
			\label{E:POINTWISEMETRICBORDERLINE} \\
	& \ \
		+ 
			\scale^{4/3}
			\left|
				\mathscr{Z}^{\vec{I}} \newlapse
			\right|,
			\notag \\
	\left|
		\CommutedGradMetBorderInhom{\vec{I}} 
	\right|_{\newg}
	&	
	\lesssim
	\boxed{
	\varepsilon
	\left|
		\nabla \SigmatLie_{\mathscr{Z}}^{\vec{I}} \newg
	\right|_{\newg}
	}
	+
		\sqrt{\varepsilon}
		\scale^{-c\sqrt{\varepsilon}}
		\left|
			\SigmatLie_{\mathscr{Z}}^{|\vec{I}|} \newg
		\right|_{\newg}
		\label{E:POINTWISEGRADMETRICBORDERLINE} \\
& \ \
	+ 
	\boxed{
	\varepsilon
	\scale^{4/3}
	\left|
		\nabla \mathscr{Z}^{\vec{I}} \newlapse
	\right|_{\newg}
	},
		\notag 
			\\
	\left|
		\CommutedSpaceSfBorderInhom{\vec{I}}	
	\right|_{\newg}
	& \lesssim 
		\boxed{
		\varepsilon
		\left|
		 	\nabla \mathscr{Z}^{\vec{I}} \newlapse
		\right|_{\newg}
		},
			\label{E:POINTWISESPACESFBORDERLINE} \\
	\left|	
		\CommutedSecFunBorderInhom{\vec{I}}		
	\right|_{\newg}
	& \lesssim
		\boxed{
		\varepsilon
		\left|
		 	 \mathscr{Z}^{\vec{I}} \newlapse
		\right|
		},
			\label{E:POINTWISESECONDFUNDBORDERLINE} \\
	\left|	
		\CommutedTimeSfBorderInhom{\vec{I}}
	\right|
	& \lesssim
		\boxed{
		\varepsilon
		\left|
		 	 \mathscr{Z}^{\vec{I}} \newlapse
		\right|
		},
			\label{E:POINTWISETIMESFBORDERLINE} \\
	\left|
		\CommutedLapseHighBorderInhom{\vec{I}}
	\right|
	& \lesssim 
		\boxed{
		\varepsilon
		\left|
			\SigmatLie_{\mathscr{Z}}^{\vec{I}} \FreeNewSec
		\right|_{\newg}
		}
		+
		\sqrt{\varepsilon}
		\scale^{-c \sqrt{\varepsilon}}
		\left|
			\SigmatLie_{\mathscr{Z}}^{[1,|\vec{I}|-1]} \FreeNewSec
		\right|_{\newg}
			\label{E:LAPSEHIGHBORDERINHOMPOINTWISE} \\
	& \ \
		+
		\boxed{
		\varepsilon
		\left|
			\mathscr{Z}^{\vec{I}} \newtimescalar
		\right|_{\newg}
		}
		+
		\sqrt{\varepsilon}
		\scale^{-c \sqrt{\varepsilon}}
		\left|
			\mathscr{Z}^{[1,|\vec{I}|-1]} \newtimescalar
		\right|_{\newg},
		\notag	\\
		\left|
			\CommutedLapseLowBorderInhom{\vec{I}}
		\right|
	& \lesssim 
		\scale^{-c \sqrt{\varepsilon}}
		\left|
			\nabla \SigmatLie_{\mathscr{Z}}^{[1,|\vec{I}|+2]} \newg
		\right|_{\newg}
		+ 
		\sqrt{\varepsilon}
		\scale^{-c \sqrt{\varepsilon}}
		\left|
			\SigmatLie_{\mathscr{Z}}^{[1,|\vec{I}|]} \newspacescalar
		\right|_{\newg}.
		\label{E:LAPSELOWBORDERINHOMPOINTWISE}
\end{align}	
\end{subequations}
\endgroup

Similarly, the ``junk'' inhomogeneous terms in the commuted equations
of Prop.~\ref{P:ICOMMUTEDEQNS} 
verify the following pointwise estimates:
\begingroup
\allowdisplaybreaks
\begin{subequations}
\begin{align}
	  \left|
			\CommutedMetJunkInhom{\vec{I}}
		\right|_{\newg},
			\,
		\left|
			\CommutedInvMetJunkInhom{\vec{I}}
		\right|_{\newg}
		& \lesssim
			\sqrt{\varepsilon}
			\scale^{-c \sqrt{\varepsilon}}
			\left|
				\SigmatLie_{\mathscr{Z}}^{[1,|\vec{I}|]} \FreeNewSec
			\right|_{\newg}
			+
			\sqrt{\varepsilon}
			\scale^{-c \sqrt{\varepsilon}}
			\left|
				\SigmatLie_{\mathscr{Z}}^{[1,|\vec{I}|]} \newg
			\right|_{\newg}
				\\
		& \ \
			+
			\sqrt{\varepsilon}
			\scale^{-c \sqrt{\varepsilon}}
			\left|
				\mathscr{Z}^{[1,|\vec{I}|-1]} \newlapse
			\right|,
			\notag \\
	\left|
		\CommutedGradMetJunkInhom{\vec{I}} 
	\right|_{\newg}	
	& \lesssim 
		\sqrt{\varepsilon}
		\scale^{-c \sqrt{\varepsilon}}
		\left|
			\SigmatLie_{\mathscr{Z}}^{[1,|\vec{I}|]} \FreeNewSec
		\right|_{\newg}
		+
		\sqrt{\varepsilon}
		\scale^{-c \sqrt{\varepsilon}}
		\left|
			\nabla \SigmatLie_{\mathscr{Z}}^{[1,|\vec{I}|]} \newg
		\right|_{\newg}
		+
		\sqrt{\varepsilon}
		\scale^{-c \sqrt{\varepsilon}}
		\left|
			\SigmatLie_{\mathscr{Z}}^{[1,|\vec{I}|]} \newg
		\right|_{\newg}	
			\\
	& \ \
		+
		\sqrt{\varepsilon}
		\scale^{-c \sqrt{\varepsilon}}
		\left|
			\mathscr{Z}^{[1,|\vec{I}|]} \newlapse
		\right|,
		\notag \\
	\left|
		\CommutedSpaceSfJunkInhom{\vec{I}}	
	\right|_{\newg}
	& \lesssim 
		\sqrt{\varepsilon}
		\scale^{-c \sqrt{\varepsilon}}
		\left|
			\mathscr{Z}^{[1,|\vec{I}|]} \newlapse
		\right|
		+
		\sqrt{\varepsilon}
		\scale^{-c \sqrt{\varepsilon}}
		\left|
			\mathscr{Z}^{[1,|\vec{I}|]} \newtimescalar
		\right|,
		\label{E:POINTWISESPACESFJUNK} \\	
	\left|
		\RicErrorInhom{\vec{I}}
	\right|_{\newg}	
	&
	\lesssim 
		\sqrt{\varepsilon} 
		\scale^{-c \sqrt{\varepsilon}}
		\left|
			\nabla \SigmatLie_{\mathscr{Z}}^{[1,|\vec{I}|]} \newg
		\right|_{\newg} 
		+
		\scale^{-c \sqrt{\varepsilon}}
		\left|
			\SigmatLie_{\mathscr{Z}}^{[1,|\vec{I}|]} \newg
		\right|_{\newg},
		\\
	\left|
		\CommutedSecFunJunkInhom{\vec{I}}
	\right|_{\newg}
	& \lesssim
		\sqrt{\varepsilon}
			\scale^{-c \sqrt{\varepsilon}}
			\left|
				\SigmatLie_{\mathscr{Z}}^{[1,|\vec{I}|]} \FreeNewSec
			\right|_{\newg}
		+	
		\sqrt{\varepsilon}
		\scale^{4/3-c \sqrt{\varepsilon}}
		\left|
			\nabla \SigmatLie_{\mathscr{Z}}^{[1,|\vec{I}|]} \newg
		\right|_{\newg}
		+
		\sqrt{\varepsilon}
		\scale^{-c \sqrt{\varepsilon}}
		\left|
			\SigmatLie_{\mathscr{Z}}^{[1,|\vec{I}|]} \newg
		\right|_{\newg}
		\label{E:SECONDFUNDJUNKINHOMPOINTWISE} \\
	& \ \
		+
		\sqrt{\varepsilon}
		\scale^{-c \sqrt{\varepsilon}}
		\left|
			\mathscr{Z}^{[1,|\vec{I}|-1]} \newlapse
		\right|
		+
		\scale^{4/3 - c \sqrt{\varepsilon}}
		\left|
			\nabla^{\leq 1} \mathscr{Z}^{[1,|\vec{I}|]} \newlapse
		\right|_{\newg}
		\notag
			\\
	& \ \
		+
		\sqrt{\varepsilon}
		\scale^{-c \sqrt{\varepsilon}}
		\left|
			\SigmatLie_{\mathscr{Z}}^{[1,|\vec{I}|]} \newspacescalar
		\right|_{\newg},
			\notag 
			\\
	\left|
		\CommutedTimeSfJunkInhom{\vec{I}}
	\right|
	& \lesssim
		\sqrt{\varepsilon}
		\scale^{-c \sqrt{\varepsilon}}
		\left|
			\nabla^{\leq 1} \SigmatLie_{\mathscr{Z}}^{[1,|\vec{I}|]} \newg
		\right|_{\newg}
		\label{E:SFTIMEJUNKINHOMPOINTWISE} \\
	& \ \
		+
		\sqrt{\varepsilon}
		\scale^{-c \sqrt{\varepsilon}}
		\left|
			\mathscr{Z}^{[1,|\vec{I}|-1]} \newlapse
		\right|_{\newg}
		+
		\sqrt{\varepsilon}
		\sum_{L=0}^1
		\scale^{4/3 -c \sqrt{\varepsilon}}
		\left|
			\nabla^L \mathscr{Z}^{[1,|\vec{I}|]} \newlapse
		\right|_{\newg}
		\notag
			\\
	& \ \
		+
		\sqrt{\varepsilon}
		\scale^{-c \sqrt{\varepsilon}}
		\left|
			\mathscr{Z}^{[1,|\vec{I}|]} \newtimescalar
		\right|
		+
		\sqrt{\varepsilon}
		\scale^{-c \sqrt{\varepsilon}}
		\left|
			\SigmatLie_{\mathscr{Z}}^{[1,|\vec{I}|]} \newspacescalar
		\right|_{\newg},
			\notag 
		\\
	\left|
		\CommutedLapseHighJunkInhom{\vec{I}}
	\right|
	& \lesssim
		\sqrt{\varepsilon}
		\scale^{-c \sqrt{\varepsilon}}
		\left|
			\SigmatLie_{\mathscr{Z}}^{[1,|\vec{I}|]} \FreeNewSec
		\right|_{\newg}
		+
		\sqrt{\varepsilon}
		\sum_{L=0}^1
		\scale^{-c \sqrt{\varepsilon} + (4/3)L}
		\left|
			\nabla^L \mathscr{Z}^{[1,|\vec{I}|]} \newg
		\right|_{\newg}
			\label{E:LAPSEHIGHJUNKINHOMPOINTWISE} \\
	& \ \
		+
		\sqrt{\varepsilon}
		\scale^{-c \sqrt{\varepsilon}}
		\left|
			\mathscr{Z}^{[1,|\vec{I}|-1]} \newlapse
		\right|_{\newg}
		+
		\sqrt{\varepsilon}
		\scale^{4/3 -c \sqrt{\varepsilon}}
		\left|
			\nabla^{\leq 1} \mathscr{Z}^{[1,|\vec{I}|]} \newlapse
		\right|_{\newg}
		\notag
			\\
	& \ \
		+
		\sqrt{\varepsilon}
		\scale^{-c \sqrt{\varepsilon}}
		\left|
			\mathscr{Z}^{[1,|\vec{I}|]} \newtimescalar
		\right|,
			\notag \\
	\left|
		\CommutedLapseLowJunkInhom{\vec{I}}
	\right|
	& \lesssim
		\sqrt{\varepsilon}
		\scale^{-c \sqrt{\varepsilon}}
		\left|
			\SigmatLie_{\mathscr{Z}}^{[1,|\vec{I}|+2]} \newg
		\right|_{\newg}
		+
		\sqrt{\varepsilon}
		\scale^{-c \sqrt{\varepsilon}}
		\left|
			\nabla \mathscr{Z}^{[1,|\vec{I}|]} \newlapse
		\right|_{\newg}
		\label{E:LAPSELOWJUNKINHOMPOINTWISE}
			\\
		& \ \
			+
		\sqrt{\varepsilon}
		\scale^{-c \sqrt{\varepsilon}}
		\left|
			\mathscr{Z}^{[1,|\vec{I}|]} \newspacescalar
		\right|_{\newg}.
		\notag
\end{align}	
\end{subequations}
\endgroup

\end{proposition}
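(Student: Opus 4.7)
The plan is to prove each pointwise inequality by direct inspection of the explicit schematic formulas for the inhomogeneous terms given in Proposition~\ref{P:ICOMMUTEDEQNS}, handling each product factor-by-factor. Every such formula is a sum of products of the schematic form $(\SigmatLie_{\mathscr{Z}}^{\vec{I}_1} X_1)\cdots(\SigmatLie_{\mathscr{Z}}^{\vec{I}_\ell} X_\ell)$ with $X_i$ one of $\FreeNewSec,\,\newg,\,\newg^{-1},\,\newlapse,\,\newtimescalar,\,\newspacescalar,\,\Ric^{\#}$, and $\vec{I}_1+\cdots+\vec{I}_\ell=\vec{I}$. Since $|\vec{I}|\leq 16$ and the strong sup-norm estimates of Proposition~\ref{P:STRONGSUPNORMESTIMATES} control up to roughly $10$ $\mathscr{Z}$-derivatives of every variable, the standard pigeonhole argument allows me to isolate a single ``top-order'' factor (with multi-index of order $|\vec{I}|$ or close to it) and bound all other ``low-order'' factors in $C^0_\newg$ via Proposition~\ref{P:STRONGSUPNORMESTIMATES}. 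Covariant derivatives appearing on low-order factors (for instance, $\nabla^{i_1}\SigmatLie_{\mathscr{Z}}^{\vec{I}_{L-1}}\newg$ in \eqref{E:RICINHOMERROR}) are converted to $\Sigma_t$-projected Lie derivatives via the operator comparison \eqref{E:IMPROVEDOPERATORCOMPARISON} at the price of $\scale^{-c\sqrt{\varepsilon}}$, after which the improved sup-norm bounds apply directly.

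The essential dichotomy between the boxed ``borderline'' bounds and the ``junk'' bounds comes down to which sup-norm estimate is used on the undifferentiated factor. The key no-loss estimates \eqref{E:NOLOSSKSTRONGSUPNROM} and \eqref{E:SCALARFIELDTIMESTRONGSUPNROM} yield the clean coefficient $\lesssim\varepsilon$ without any $\scale^{-c\sqrt{\varepsilon}}$ blowup, and I will invoke them precisely for the factors that multiply the principal top-order quantity, producing the boxed terms such as $\varepsilon|\nabla\SigmatLie_{\mathscr{Z}}^{\vec{I}}\newg|_\newg$ in \eqref{E:POINTWISEGRADMETRICBORDERLINE} or $\varepsilon|\SigmatLie_{\mathscr{Z}}^{\vec{I}}\FreeNewSec|_\newg$ in \eqref{E:LAPSEHIGHBORDERINHOMPOINTWISE}. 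All other factors, and every factor appearing in the non-boxed contributions and in the junk estimates, are bounded using \eqref{E:METRICSTRONGSUPNROM}--\eqref{E:RICCISTRONGSUPNROM}, which provide the weaker coefficient $\sqrt{\varepsilon}\scale^{-c\sqrt{\varepsilon}}$; this is harmless since these terms appear only in the subleading positions of the final inequalities. For example, the commutator term $[\SigmatLie_{\mathscr{Z}}^{\vec{I}},\Gdiv^\#]\FreeNewSec$ in \eqref{E:BORDERLINEALTERNATECOMMUTEDRENORMALIZEDMOMENTUM} is expanded via \eqref{E:GIDIVSHARPLIEZCOMMUTATOR}: when the top-order factor is $\nabla\SigmatLie_{\mathscr{Z}}^{\vec{I}}\newg$, the remaining undifferentiated $\FreeNewSec$ is bounded by $\varepsilon$ via \eqref{E:NOLOSSKSTRONGSUPNROM}, giving the boxed term in \eqref{E:POINTWISEMOMENTUMCONSTRAINTBORDERLINE}; when instead $\FreeNewSec$ is differentiated at order $|\vec{I}|$, the metric factors are placed in sup-norm using \eqref{E:METRICSTRONGSUPNROM} to produce the non-boxed contribution $\sqrt{\varepsilon}\scale^{-c\sqrt{\varepsilon}}|\SigmatLie_{\mathscr{Z}}^{[1,|\vec{I}|]}\FreeNewSec|_\newg$.

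The main obstacle is the Ricci-type term $\RicErrorInhom{\vec{I}}$ of \eqref{E:RICINHOMERROR}, which enters both the second-fundamental-form evolution \eqref{E:COMMUTEDEVOLUTIONSECONDFUNDRENORMALIZED} and the low-order lapse equation \eqref{E:LAPSELOWBORDERINHOMPOINTWISE}, and which carries two covariant derivatives on metric factors. The subtlety is to ensure that when the top-order factor is $\nabla^2\SigmatLie_{\mathscr{Z}}^{\vec{I}_L}\newg$ (reducing via the first variation formulas of Lemma~\ref{L:RICCILIEDERIVATIVESCHEMATIC} to $\nabla$ acting on a $\nabla\SigmatLie_{\mathscr{Z}}^{\vec{I}_L}\newg$-type object), no singular $\scale^{-c\sqrt{\varepsilon}}$ factor accumulates in front of the sharp top-order norm. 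I will handle this by keeping the two $\nabla$'s together on a single $\SigmatLie_{\mathscr{Z}}^{[1,|\vec{I}|]}\newg$, bounding all Lie-derivative metric factors in $C^0_\StMet$ via the strong estimates \eqref{E:METRICSTRONGSUPNROMSTMET}--\eqref{E:INVERSEMETRICSTRONGSUPNROMSTMET} (which are pure-$\sqrt{\varepsilon}$ up to the $\scale^{-c\sqrt{\varepsilon}}$ weight), and invoking \eqref{E:IMPROVEDGNORMSTMETRICCOMPARISON} only at the end to pass from $|\cdot|_\StMet$ to $|\cdot|_\newg$. The apparent loss of two derivatives in \eqref{E:LAPSELOWBORDERINHOMPOINTWISE}--\eqref{E:LAPSELOWJUNKINHOMPOINTWISE} (which features $\SigmatLie_{\mathscr{Z}}^{[1,|\vec{I}|+2]}\newg$) is then forced on us because the principal part of $\SigmatLie_{\mathscr{Z}}^{\vec{I}}\ScalarCur$ involves $\nabla^2\SigmatLie_{\mathscr{Z}}^{\vec{I}}\newg$, which we trade for $\SigmatLie_{\mathscr{Z}}^{\leq 2}\SigmatLie_{\mathscr{Z}}^{\vec{I}}\newg$ via \eqref{E:IMPROVEDOPERATORCOMPARISON}.

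The remaining pointwise bounds \eqref{E:POINTWISESPACESFBORDERLINE}--\eqref{E:POINTWISETIMESFBORDERLINE}, \eqref{E:LAPSEHIGHBORDERINHOMPOINTWISE}, and all junk estimates \eqref{E:POINTWISESPACESFJUNK}--\eqref{E:LAPSELOWJUNKINHOMPOINTWISE} follow by exactly the same template: isolate the top-order factor, apply the appropriate entry of Proposition~\ref{P:STRONGSUPNORMESTIMATES} to the undifferentiated (or low-order differentiated) factors, and use Corollary~\ref{C:IMPROVEMENTLEMMAOPERATORCOMPARISON} to exchange $\nabla$-derivatives for $\SigmatLie_{\mathscr{Z}}$-derivatives whenever needed. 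The bookkeeping is lengthy because of the number of distinct inhomogeneous terms, but every estimate reduces mechanically to one of these cases, with the boxed borderline bounds always arising from the ``no-loss'' sup-norm estimates and all other bounds absorbing $\sqrt{\varepsilon}\scale^{-c\sqrt{\varepsilon}}$ losses freely.
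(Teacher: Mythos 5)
Your proposal is correct and follows essentially the same route as the paper's proof: factor-by-factor inspection of the schematic formulas in Proposition~\ref{P:ICOMMUTEDEQNS}, pigeonholing to isolate a single top-order factor and controlling the remaining low-order factors via the strong sup-norm estimates of Proposition~\ref{P:STRONGSUPNORMESTIMATES}, converting $\nabla$-derivatives to $\SigmatLie_{\mathscr{Z}}$-derivatives via Corollary~\ref{C:IMPROVEMENTLEMMAOPERATORCOMPARISON} at the cost of $\scale^{-c\sqrt{\varepsilon}}$, invoking the commutation identities of Lemma~\ref{L:COMMUTATIONIDENTITIES} and the Ricci decomposition of Lemma~\ref{L:RICCILIEDERIVATIVESCHEMATIC} where needed, and crucially invoking the \emph{no-loss} estimates \eqref{E:NOLOSSKSTRONGSUPNROM} and \eqref{E:SCALARFIELDTIMESTRONGSUPNROM} only on the undifferentiated factor that multiplies the principal quantity, thereby producing the boxed terms with coefficient $\varepsilon$ and no $\scale^{-c\sqrt{\varepsilon}}$ degeneracy. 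One small imprecision: in your discussion of $\RicErrorInhom{\vec{I}}$ you describe "keeping the two $\nabla$'s together on a single $\SigmatLie_{\mathscr{Z}}^{[1,|\vec{I}|]}\newg$," but the stated bound carries only a single $\nabla$ on that factor — what actually happens is that exactly one $\nabla$ is traded for a Lie derivative via \eqref{E:IMPROVEDOPERATORCOMPARISON} while the other is retained, which is what produces $\left|\nabla\SigmatLie_{\mathscr{Z}}^{[1,|\vec{I}|]}\newg\right|_{\newg}$; this does not affect the soundness of your overall argument.
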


\begin{proof}
	The estimate \eqref{E:POINTWISEMOMENTUMCONSTRAINTBORDERLINE} can be proved by taking
	the norm $|\cdot|_{\newg}$ of RHSs \eqref{E:BORDERLINECOMMUTEDRENORMALIZEDMOMENTUM}
	and \eqref{E:BORDERLINEALTERNATECOMMUTEDRENORMALIZEDMOMENTUM}
	and bounding the terms one at a time
	with the help of the Leibniz rule for Lie derivatives, 
	the strong sup-norm estimates of Prop.~\ref{P:STRONGSUPNORMESTIMATES},
	the comparison estimates of Cor.~\ref{C:IMPROVEMENTLEMMAOPERATORCOMPARISON},
	and some commutator estimates that we explain. We treat in detail only the few
	delicate top-order terms that generate the boxed terms on RHS~\eqref{E:POINTWISEMOMENTUMCONSTRAINTBORDERLINE}.
	For these terms, 
	we must carefully avoid the degenerate factor $\scale^{-c \sqrt{\varepsilon}}$
	in the estimates.
	For the remaining terms, we simply allow the presence of the degenerate factor
	$\scale^{-c \sqrt{\varepsilon}}$, which comes as a consequence of the
	sup-norm estimates of
	Prop.~\ref{P:STRONGSUPNORMESTIMATES}.
	We mention that in some of the estimates, 
	allowing the loss of $\scale^{-c \sqrt{\varepsilon}}$ is non-optimal,
	and that we have allowed the loss only so that we can avoid having to carefully track the less delicate error terms.
	However, this non-optimality will not affect our a priori energy estimates.
	We now explain how to bound the delicate top-order terms
	on RHSs \eqref{E:BORDERLINECOMMUTEDRENORMALIZEDMOMENTUM}
	and \eqref{E:BORDERLINEALTERNATECOMMUTEDRENORMALIZEDMOMENTUM},
	which occur when all $|\vec{I}|$ derivatives fall on the factors of $\newspacescalar$
	or all $|\vec{I}|+1$ derivatives fall on factors of $\newg$.
	When all $|\vec{I}|$ derivatives fall on the factors of $\newspacescalar$,
	we use \eqref{E:SCALARFIELDTIMESTRONGSUPNROM} to bound 
	$|\cdot|_{\newg}$-norm of the remaining factors
	by $\lesssim \varepsilon$. Thus, the product's 	$|\cdot|_{\newg}$-norm
	is 
	$
	\lesssim
	\varepsilon
		\left|
			\SigmatLie_{\mathscr{Z}}^{\vec{I}} \newspacescalar
		\right|_{\newg}
	$
	as desired.
	The terms featuring the order $|\vec{I}|+1$ derivatives of $\newg$
	are generated by the commutator terms
	$[\SigmatLie_{\mathscr{Z}}^{\vec{I}},\Gdiv] \FreeNewSec$
	and $[\SigmatLie_{\mathscr{Z}}^{\vec{I}},\Gdiv^{\#}] \FreeNewSec$,
	which we express using the formulas \eqref{E:GIDIVLIEZCOMMUTATOR}-\eqref{E:GIDIVSHARPLIEZCOMMUTATOR}
	with $\xi := \FreeNewSec$.
	From these formulas, we see that up to harmless factors of $\newg^{-1}$ (which verify $|\newg^{-1}|_{\newg} \lesssim 1$),
	the top-order terms are of the schematic form 
	$\nabla \SigmatLie_{\mathscr{Z}}^{\vec{I}} \newg \cdot \FreeNewSec$.
	Thus, using \eqref{E:NOLOSSKSTRONGSUPNROM},
	we find that the product's $|\cdot|_{\newg}$-norm
	is 
	$
	\lesssim
	\varepsilon
		\left|
			\nabla \SigmatLie_{\mathscr{Z}}^{\vec{I}} \newg
		\right|_{\newg}
	$
	as desired.
	The remaining products 
	on RHSs \eqref{E:BORDERLINECOMMUTEDRENORMALIZEDMOMENTUM}
	and \eqref{E:BORDERLINEALTERNATECOMMUTEDRENORMALIZEDMOMENTUM}
	do not contribute to the boxed terms
	on RHS~\eqref{E:POINTWISEMOMENTUMCONSTRAINTBORDERLINE}
	and are easy to bound using the strategy described above,
	where, when necessary, we use Cor.~\ref{C:IMPROVEMENTLEMMAOPERATORCOMPARISON}
	to bound covariant derivatives $\nabla$ in terms of 
	$\SigmatLie_{\mathscr{Z}}$ derivatives
	and Lemma~\ref{L:COMMUTATIONIDENTITIES} to 
	treat products involving commutators; we omit the straightforward details.
	
	The proofs of the remaining estimates in the proposition are similar,
	so we provide only partial details.
	Specifically, we will identify the principal-order borderline
	error terms and show that no degenerate factor of $\scale^{-c \sqrt{\varepsilon}}$
	occurs in these terms; in the remaining error terms, which we do not discuss in detail, 
	we simply allow the loss of the degenerate factor of $\scale^{-c \sqrt{\varepsilon}}$
	and argue as in the previous paragraph. We clarify that 
	the identities of Lemma~\ref{L:RICCILIEDERIVATIVESCHEMATIC}
	are also needed to bound the terms 
	on RHS~\eqref{E:ICOMMUTEDSECONDFUNDJUNKTERMS}
	involving the Lie derivatives of $\Ric^{\# }$
	and the terms on RHS~\eqref{E:ICOMMUTEDLAPSELOWJUNKTERMS}
	involving the derivatives of the scalar curvature $\ScalarCur$.
	
	We now proceed with the analysis of the borderline terms.
	The borderline boxed terms 
	$
	\boxed{
			\varepsilon
			\left|
				\SigmatLie_{\mathscr{Z}}^{\vec{I}} \newg
			\right|_{\newg}
			}
	$
	on RHS~\eqref{E:POINTWISEMETRICBORDERLINE}
	are generated when all $|\vec{I}|$ derivatives fall on the factor $\newg$
	in the sum on the first line of RHS~\eqref{E:ICOMMUTEDMETRICBORDERTERMS}
	and when all $|\vec{I}|$ derivatives fall on the factor $\newg^{-1}$ 
	in the sum on the first line of RHS~\eqref{E:ICOMMUTEDIVERSEMETRICBORDERTERMS}.
	In the first case, this generates a product of the schematic form
	$\SigmatLie_{\mathscr{Z}}^{\vec{I}} \newg \cdot \FreeNewSec$
	while in the second case, by the Leibniz rule and the schematic
	identity $\SigmatLie_Z \newg^{-1} = \newg^{-1} \cdot \newg^{-1} \cdot \SigmatLie_Z \newg$,
	it generates a principal-order product
	of the schematic form 
	$\newg^{-1} \cdot \newg^{-1} \cdot \SigmatLie_{\mathscr{Z}}^{\vec{I}} \newg \cdot \FreeNewSec$
	plus lower-order products.
	In the first case, we use \eqref{E:NOLOSSKSTRONGSUPNROM}
	to bound the product's $|\cdot|_{\newg}$-norm
	by the borderline boxed terms written above as desired. In the second case,
	we bound the principal-order product by the borderline boxed terms written above, 
	and we also use \eqref{E:METRICSTRONGSUPNROM}
	to bound the lower-order products by
	$
	\lesssim 
	\sqrt{\varepsilon}
			\scale^{-c \sqrt{\varepsilon}}
			\left|
				\SigmatLie_{\mathscr{Z}}^{[1,|\vec{I}|-1]} \newg
			\right|_{\newg}
	$
	as desired.
	
	The borderline term 
	$
	\boxed{
	\varepsilon
	\left|
		\nabla \SigmatLie_{\mathscr{Z}}^{\vec{I}} \newg
	\right|_{\newg}
	}
	$
	on RHS~\eqref{E:POINTWISEGRADMETRICBORDERLINE}
	is generated when all $|\vec{I}|+1$ derivatives fall on $\newg$
	in the sum on the first line of RHS~\eqref{E:ICOMMUTEDGRADMETRICBORDERTERMS}.
	Specifically, the term is of the form 
	$
	\nabla \SigmatLie_{\mathscr{Z}}^{\vec{I}} \newg \cdot \FreeNewSec
	$,
	and we can obtain the desired bound for its $|\cdot|_{\newg}$-norm with the help of
	\eqref{E:NOLOSSKSTRONGSUPNROM}.
	The borderline term
	$
	\boxed{
	\varepsilon
	\scale^{4/3}
	\left|
		\nabla \mathscr{Z}^{\vec{I}} \newlapse
	\right|_{\newg}
	}
	$
	on RHS~\eqref{E:POINTWISEGRADMETRICBORDERLINE} is generated by 
	the first product on RHS~\eqref{E:ICOMMUTEDGRADMETRICBORDERTERMS},
	which is of the form 
	$\scale^{4/3} (\nabla \mathscr{Z}^{\vec{I}} \newlapse) \newg \cdot \FreeNewSec$.
	To obtain the desired bound, we again use \eqref{E:NOLOSSKSTRONGSUPNROM}.
	
	The borderline term 
	$
	\boxed{
		\varepsilon
		\left|
		 	\nabla \mathscr{Z}^{\vec{I}} \newlapse
		\right|_{\newg}
		}
	$
	on RHS~\eqref{E:POINTWISESPACESFBORDERLINE} is generated
	by the product
	$\newtimescalar \nabla \mathscr{Z}^{\vec{I}} \newlapse$
	on RHS~\eqref{E:ICOMMUTEDSPACESFBORDERLINETERMS}.
	We use \eqref{E:SCALARFIELDTIMESTRONGSUPNROM}
	to obtain the desired bound.
	
	The borderline term
	$
	\boxed{
		\varepsilon
		\left|
		 	 \mathscr{Z}^{\vec{I}} \newlapse
		\right|
		}
	$
	on RHS~\eqref{E:POINTWISESECONDFUNDBORDERLINE}
	is generated by the term
	$\scale' (\mathscr{Z}^{\vec{I}} \newlapse) \FreeNewSec$
	on RHS~\eqref{E:ICOMMUTEDSECONDFUNDBORDERTERMS}.
	We use \eqref{E:NOLOSSKSTRONGSUPNROM} to obtain the desired bound.

The borderline term
	$
	\boxed{
		\varepsilon
		\left|
		 	 \mathscr{Z}^{\vec{I}} \newlapse
		\right|
		}
 $	
on RHS~\eqref{E:POINTWISETIMESFBORDERLINE}
is generated by the term
$
\scale' (\mathscr{Z}^{\vec{I}} \newlapse) \newtimescalar
$
on RHS~\eqref{E:ICOMMUTEDTIMESFBORDERLINETERMS}.
We use \eqref{E:SCALARFIELDTIMESTRONGSUPNROM} to obtain the desired bound.
	
	The borderline term
	$
	\boxed{
		\varepsilon
		\left|
			\SigmatLie_{\mathscr{Z}}^{\vec{I}} \FreeNewSec
		\right|_{\newg}
		}
	$
	on RHS~\eqref{E:LAPSEHIGHBORDERINHOMPOINTWISE}
	is generated by the first sum
	on RHS~\eqref{E:ICOMMUTEDLAPSEHIGHBORDERTERMS}
	when all $|\vec{I}|$ derivatives fall on $\FreeNewSec$,
	which yields a product of the form
	$\FreeNewSec 
		  	\cdot 
	\SigmatLie_{\mathscr{Z}}^{\vec{I}} \FreeNewSec
	$. 	We use \eqref{E:NOLOSSKSTRONGSUPNROM} to obtain the desired bound.
	The borderline term 
	$\boxed{
		\varepsilon
		\left|
			\mathscr{Z}^{\vec{I}} \newtimescalar
		\right|_{\newg}
		}
	$
	on RHS~\eqref{E:LAPSEHIGHBORDERINHOMPOINTWISE}
	is generated by the second sum
	on RHS~\eqref{E:ICOMMUTEDLAPSEHIGHBORDERTERMS}
	when all $|\vec{I}|$ derivatives fall on $\newtimescalar$,
	which yields a product of the form
	$
	\newtimescalar \mathscr{Z}^{\vec{I}} \newtimescalar
	$.
	We use \eqref{E:SCALARFIELDTIMESTRONGSUPNROM} to obtain the desired bound.

	This completes our proof of the proposition.
\end{proof}

We will use the pointwise estimates provided by the following lemma
to derive $L^2$ estimates for the non-differentiated solution variables
(see Lemma~\ref{L:L2ESTIMATESFORNONDIFFERNTIATED}).

\begin{lemma}[\textbf{Pointwise estimates relevant for energy estimates at the lowest level}]
	\label{L:POINTWISEESTIMATESBASELEVEL}
	The following pointwise estimates hold:
	\begin{subequations}
	\begin{align} \label{E:GNORMPOINTWISEMETRICTIMEDERIVATIVE}
		\left|
			\SigmatLie_{\partial_t} (\newg - \StMet)
		\right|_{\newg},
		\,
		\left|
			\SigmatLie_{\partial_t} (\newg^{-1} - \StMet^{-1})
		\right|_{\newg}
		& \lesssim 
			\scale^{-1}
			\left|
				\FreeNewSec
			\right|_{\newg}
			+
			\scale^{1/3}
			\left|
				\newlapse
			\right|,
				\\
		\left|
			\SigmatLie_{\partial_t} \FreeNewSec
		\right|_{\newg}
		& \lesssim 
			\scale^{1/3 - c \sqrt{\varepsilon}}
			\left|
				\SigmatLie_{\mathscr{Z}}^{\leq 2} (\newg - \StMet)
			\right|_{\newg}
			+
			\scale^{1/3 - c \sqrt{\varepsilon}}
			\left|
				\newg^{-1} - \StMet^{-1}
			\right|_{\newg}
			+
			\sqrt{\varepsilon}
			\scale^{1/3 - c \sqrt{\varepsilon}}
			\left|
				\newspacescalar
			\right|_{\newg}
			\label{E:GNORMPOINTWISESECONDFUNDTIMEDERIVATIVE}
				\\
		& \ \
		  +
			\scale^{5/3 - c \sqrt{\varepsilon}}
			\left|
				\SigmatLie_{\mathscr{Z}}^{\leq 2} \newlapse
			\right|
			+
			\scale^{1/3}
			\left|
				\newlapse
			\right|,
				\notag
					\\
		\left|
			\partial_t \newtimescalar
		\right|
		& \lesssim 
			\scale^{1/3 - c \sqrt{\varepsilon}}
			\left|
				\SigmatLie_{\mathscr{Z}}^{\leq 1} \newspacescalar
			\right|_{\newg}
			+
			\scale^{1/3}
			\left|
				\newlapse
			\right|,
			\label{E:GNORMPOINTWISESFTIMEDERIVATIVETIMEDERIVATIVE}
				\\
		\left|
			\partial_t \newspacescalar
		\right|_{\newg}
		& \lesssim 
			\scale^{- 1 - c \sqrt{\varepsilon}}
			\left|
				\mathscr{Z} \newtimescalar
			\right|
			+
			\scale^{1/3 - c \sqrt{\varepsilon}}
			\left|
				\mathscr{Z} \newlapse
			\right|,
			\label{E:GNORMPOINTWISESFSPACEDERIVATIVETIMEDERIVATIVE}
				\\
		\left|
			\widetilde{\mathscr{L}} \newlapse
		\right|
		& \lesssim
			\scale^{- c \sqrt{\varepsilon}}
			\left|
				\SigmatLie_{\mathscr{Z}}^{\leq 2} (\newg - \StMet)
			\right|_{\newg}
			+
			\scale^{- c \sqrt{\varepsilon}}
			\left|
				\newg^{-1} - \StMet^{-1}
			\right|_{\newg}
			+
			\sqrt{\varepsilon}
			\scale^{- c \sqrt{\varepsilon}}
			\left|
				\newspacescalar
			\right|_{\newg}.
			\label{E:LOWESTORDERELLIPTICLAPSEINHOMOGENEOUSTERMPOINTWISE}
		\end{align}
		\end{subequations}
\end{lemma}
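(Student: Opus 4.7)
The plan is to derive each of the five pointwise estimates by taking the relevant evolution or elliptic equation from Proposition~\ref{P:RESCALEDVARIABLES}, applying the norm $|\cdot|_{\newg}$ termwise, and using the strong sup-norm estimates of Proposition~\ref{P:STRONGSUPNORMESTIMATES} to absorb the lower-order coefficients while tracking the precise powers of $\scale$.

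First, for \eqref{E:GNORMPOINTWISEMETRICTIMEDERIVATIVE}, I would use \eqref{E:EVOLUTIONMETRICRENORMALIZED} together with $\SigmatLie_{\partial_t} \StMet = 0$, the identity $|\newg|_{\newg} = \sqrt{3}$, and the fact that $|\scale'|$ is uniformly bounded (Lemma~\ref{L:ANALYSISOFFRIEDMANN}); the middle cross term $\scale^{1/3}\newlapse \newg \cdot \FreeNewSec$ is controlled by $\scale^{-1}|\FreeNewSec|_{\newg}$ using the strong sup-norm bound $|\newlapse| \lesssim \sqrt{\varepsilon}\scale^{-c\sqrt{\varepsilon}}$ from \eqref{E:LAPSESTRONGSUPNROM} and $\scale \leq 1$. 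The argument for $\SigmatLie_{\partial_t}(\newg^{-1} - \StMet^{-1})$ is completely analogous via \eqref{E:EVOLUTIONINVERSEMETRICRENORMALIZED}.

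For \eqref{E:GNORMPOINTWISESECONDFUNDTIMEDERIVATIVE}, I would work from \eqref{E:EVOLUTIONSECONDFUNDRENORMALIZED}. The delicate ingredient is the Ricci term $\scale^{1/3}(1 + \scale^{4/3}\newlapse)\{\Ric^{\#} - \frac{2}{9}\ID\}$: here I invoke the decomposition \eqref{E:RICCIESTONEUPONEDOWN}--\eqref{E:ERRORRICONEUPONEDOWN} of Lemma~\ref{L:RIEMANNCURVATUREAPPROXALLINDICESDOWN} to express it as a polynomial in $\SigmatLie_{\mathscr{Z}}^{\leq 2}(\newg - \StMet)$ and $\newg^{-1} - \StMet^{-1}$, then apply Cor.~\ref{C:IMPROVEMENTLEMMAOPERATORCOMPARISON} to convert the $|\cdot|_{\StMet}$ control into $|\cdot|_{\newg}$ control at the cost of $\scale^{-c\sqrt{\varepsilon}}$. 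The $\nabla^{\#}\nabla\newlapse$ term is handled by the second estimate in \eqref{E:IMPROVEDOPERATORCOMPARISON}, producing the factor $\SigmatLie_{\mathscr{Z}}^{\leq 2}\newlapse$. The terms linear in $\newlapse$ aggregate into $\scale^{1/3}|\newlapse|$ after using $|\scale'| \lesssim 1$, and the quadratic scalar-field terms $\scale^{1/3}\newspacescalar^{\#} \otimes \newspacescalar$ are treated by peeling off one factor of $\newspacescalar$ with the strong bound \eqref{E:SCALARFIELDSPACESTRONGSUPNROM} to yield the $\sqrt{\varepsilon}\,\scale^{1/3 - c\sqrt{\varepsilon}}|\newspacescalar|_{\newg}$ piece.

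The scalar-field estimates \eqref{E:GNORMPOINTWISESFTIMEDERIVATIVETIMEDERIVATIVE} and \eqref{E:GNORMPOINTWISESFSPACEDERIVATIVETIMEDERIVATIVE} follow directly from \eqref{E:WAVEEQUATIONRENORMALIZED} and \eqref{E:EVOLUTIONSPACESCALARRENORMALIZED} by taking norms termwise, bounding $|\newspacescalar|_{\newg}$ and $|\newtimescalar|$ by their strong sup-norm estimates \eqref{E:SCALARFIELDSPACESTRONGSUPNROM} and \eqref{E:SCALARFIELDTIMESTRONGSUPNROM} when they appear as coefficients, and converting $|\nabla \cdot|_{\newg}$ into $|\mathscr{Z}\cdot|$ at the cost of $\scale^{-c\sqrt{\varepsilon}}$ via Cor.~\ref{C:IMPROVEMENTLEMMAOPERATORCOMPARISON}. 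Finally, \eqref{E:LOWESTORDERELLIPTICLAPSEINHOMOGENEOUSTERMPOINTWISE} comes from reading off $\widetilde{\mathscr{L}}\newlapse$ from the elliptic equation \eqref{E:LAPSEPDERENORMALIZEDLOWERDERIVATIVES}: the scalar curvature term $\ScalarCur - 2/3$ is bounded via the trace of the Ricci decomposition (or equivalently \eqref{E:SCALARCURLIEDERIVATIVESCHEMATIC} applied schematically to $\SigmatLie_{\mathscr{Z}}^{\leq 2}\newg$), producing the metric derivative contributions, while the quadratic term $|\newspacescalar|_{\newg}^2$ is treated by pulling out one factor with the strong sup-norm estimate \eqref{E:SCALARFIELDSPACESTRONGSUPNROM}. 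The only mild obstacle is bookkeeping: one must consistently accept the (affordable) loss of $\scale^{-c\sqrt{\varepsilon}}$ arising from the comparisons of Cor.~\ref{C:IMPROVEMENTLEMMAOPERATORCOMPARISON} while ensuring the undifferentiated $\newlapse$ contribution on the right-hand sides retains the clean prefactor $\scale^{1/3}$ (coming from the $\scale'\scale^{1/3}$ structure in the evolution equations), since this is the form used in the ensuing $L^2$ argument of Lemma~\ref{L:L2ESTIMATESFORNONDIFFERNTIATED}.
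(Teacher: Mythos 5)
Your proposal is correct and follows essentially the same route as the paper: identify for each estimate the relevant equation from Prop.~\ref{P:RESCALEDVARIABLES}, bound products termwise using the strong sup-norm estimates of Prop.~\ref{P:STRONGSUPNORMESTIMATES} and the comparison estimates of Cor.~\ref{C:IMPROVEMENTLEMMAOPERATORCOMPARISON}, and — for \eqref{E:GNORMPOINTWISESECONDFUNDTIMEDERIVATIVE} and \eqref{E:LOWESTORDERELLIPTICLAPSEINHOMOGENEOUSTERMPOINTWISE} — invoke the Ricci/scalar-curvature decomposition \eqref{E:RICCIESTONEUPONEDOWN}--\eqref{E:ERRORRICONEUPONEDOWN} together with the fact that $\ScalarCur$ is the pure trace of $\Ric^{\#}$. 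The paper's own proof is precisely this sketch, worked out in detail only for \eqref{E:GNORMPOINTWISESFTIMEDERIVATIVETIMEDERIVATIVE}.
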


\begin{proof}
	The proof is similar to the proof of Prop.~\ref{P:POINTWISEESTIMATESFORERRORTERMS}, 
	so we only sketch it.
	The main idea is to bound the products in the equations of Prop.~\ref{P:RESCALEDVARIABLES}
	with the help of the strong sup-norm estimates of Prop.~\ref{P:STRONGSUPNORMESTIMATES}
	and the comparison estimates of Cor.~\ref{C:IMPROVEMENTLEMMAOPERATORCOMPARISON}.
	For example, to prove \eqref{E:GNORMPOINTWISESFTIMEDERIVATIVETIMEDERIVATIVE},
	we first use equation \eqref{E:WAVEEQUATIONRENORMALIZED}
	to deduce
	\begin{align} \label{E:ISOLATEDPARTIALTNEWTIMESCALAR}
		|\partial_t \newtimescalar|
		& \lesssim \scale^{1/3} |\nabla \newspacescalar|_{\newg}
			+
			\scale^{1/3} |\newlapse|
			+ \scale^{1/3} |\newlapse| |\nabla \newspacescalar|_{\newg}
			+ \scale^{5/3} |\nabla \newlapse|_{\newg} |\newspacescalar|_{\newg}.
	\end{align}
	From \eqref{E:ISOLATEDPARTIALTNEWTIMESCALAR},
	Prop.~\ref{P:STRONGSUPNORMESTIMATES},
	and Cor.~\ref{C:IMPROVEMENTLEMMAOPERATORCOMPARISON},
	we obtain the desired bound \eqref{E:GNORMPOINTWISESFTIMEDERIVATIVETIMEDERIVATIVE}.
	
	The proof of \eqref{E:LOWESTORDERELLIPTICLAPSEINHOMOGENEOUSTERMPOINTWISE} is similar and is
	based on equation \eqref{E:LAPSEPDERENORMALIZEDLOWERDERIVATIVES},
	but it also relies on \eqref{E:RICCIESTONEUPONEDOWN}, \eqref{E:ERRORRICONEUPONEDOWN} and the fact that
	$\ScalarCur$ is the pure trace of $\Ric^{\#}$.
	The proof of \eqref{E:GNORMPOINTWISEMETRICTIMEDERIVATIVE} is similar and is based 
	on equations \eqref{E:EVOLUTIONMETRICRENORMALIZED}-\eqref{E:EVOLUTIONINVERSEMETRICRENORMALIZED}
	and the simple identities $\SigmatLie_{\partial_t} (\newg - \StMet) = \SigmatLie_{\partial_t} \newg$
	and $\SigmatLie_{\partial_t} (\newg^{-1} - \StMet^{-1}) = \SigmatLie_{\partial_t} \newg^{-1}$.
	The proof of \eqref{E:GNORMPOINTWISESECONDFUNDTIMEDERIVATIVE} 
	is similar and is based on equation \eqref{E:EVOLUTIONSECONDFUNDRENORMALIZED},
	\eqref{E:RICCIESTONEUPONEDOWN}, and \eqref{E:ERRORRICONEUPONEDOWN}.
	The proof of \eqref{E:GNORMPOINTWISESFSPACEDERIVATIVETIMEDERIVATIVE} is similar 
	and is based on equation \eqref{E:EVOLUTIONSPACESCALARRENORMALIZED}.
\end{proof}

\section{Preliminary $L^2$ estimates for the lapse and some below-top-order derivatives}
\label{S:ELLIPTICLAPSEESTIMATES}
In this section, we derive $L^2$ estimates for the lapse
and show that it is controlled by the energies. It turns out that
to obtain these bounds, we must simultaneously
derive similar bounds for the below-top-order derivatives of 
$\newg$ and $\newspacescalar$. The bounds that we derive in this section
yield better estimates (i.e., less singular with respect to $t$)
for $\newg$ and $\newspacescalar$ than the energy estimate \eqref{E:MAINAPRIORIENERGYESTIMATES}
derived below in Cor.\ \ref{C:MAINAPRIORIENERGYESTIMATES}.
However, unlike the estimate \eqref{E:IMPROVEDOPERATORCOMPARISON},
our estimates for $\newg$ and $\newspacescalar$ in this section lose one derivative
(which is permissible below top order).
The main result is provided by the following proposition,
which we prove in 
Subsect.\ \ref{SS:PROOFOFPROPBOUNDFORLAPSEANDBELOWTOPMETRICINTERMSOFENERGIES}.
Before proving the proposition, we will establish a series of preliminary estimates.

\begin{remark}
	Prop.~\ref{P:BOUNDFORLAPSEANDBELOWTOPMETRICINTERMSOFENERGIES} provides
	estimates in the case that the solution variables have been differentiated 
	at least one time.
	In Lemma~\ref{L:L2ESTIMATESFORNONDIFFERNTIATED}, we will
	derive similar estimates for the non-differentiated variables
	using a separate argument.
\end{remark}

\begin{proposition}[\textbf{Control of the lapse and the below-top-order derivatives of}
$\newg$ and $\newspacescalar$ \textbf{in terms of the energies}]
	\label{P:BOUNDFORLAPSEANDBELOWTOPMETRICINTERMSOFENERGIES}
	For $1 \leq M \leq 16$, the following estimates hold:
	\begin{subequations}
	\begin{align}
		\sum_{L=0}^2
		\scale^{4/3 + (2/3)L}
		\left\| 
			\nabla^L \mathscr{Z}^{[1,M]} \newlapse
		\right\|_{L_{\newg}^2(\Sigma_t)}
		& \lesssim 
			\varepsilon^2
		 	\scale^{4/3-c\sqrt{\varepsilon}}(t)
		 	+
			\SupTotalenergy{\smallparameter_*}{M}^{1/2}(t)
			+
			\underbrace{
			\sqrt{\varepsilon} \scale^{-c\sqrt{\varepsilon}}
			\SupTotalenergy{M-1}{\smallparameter_*}^{1/2}(t).
			}_{\mbox{\upshape absent if $M=1$}}
				\label{E:L2HIGHORDERLAPSEINTERMSOFENERGIES} 
\end{align}
Moreover, for $1 \leq M \leq 14$, we have
\begin{align}
		\sum_{L=0}^2
		\scale^{(2/3)L}
		\left\| 
			\nabla^L \mathscr{Z}^{[1,M]} \newlapse
		\right\|_{L_{\newg}^2(\Sigma_t)}
		& \lesssim 
			\varepsilon^2
		 	\scale^{-c\sqrt{\varepsilon}}(t)
		 	+
			\frac{1}{\sqrt{\varepsilon}}
			\scale^{-c\sqrt{\varepsilon}}(t)
			\SupTotalenergy{M+2}{\smallparameter_*}^{1/2}(t).
			\label{E:L2LOWESTORDERLAPSEINTERMSOFENERGIES}
	\end{align}
	\end{subequations}
	
	In addition, for $1 \leq M \leq 16$, we have
	\begin{align} \label{E:COMMUTEDMETRICL2BOUNDSINTERMSOFENERGY}
		\left\|
			\SigmatLie_{\mathscr{Z}}^{[1,M]} \newg
		\right\|_{L_{\newg}^2(\Sigma_t)},
			\,
		\left\|
			\SigmatLie_{\mathscr{Z}}^{[1,M]} \newg^{-1}
		\right\|_{L_{\newg}^2(\Sigma_t)}
		& \lesssim
			\varepsilon^2 \scale^{-c \sqrt{\varepsilon}}(t)
			+	
			\frac{1}{\sqrt{\varepsilon}}
			\scale^{-c \sqrt{\varepsilon}}(t)
			\SupTotalenergy{M}{\smallparameter_*}^{1/2}(t).
\end{align}

Finally, for $1 \leq M \leq 15$, we have
\begin{align} \label{E:BELOWTOPORDERDERIVATIVESOFSPATIALSACALARFIELD}
		\left\|
			\SigmatLie_{\mathscr{Z}}^{[1,M]} \newspacescalar
		\right\|_{L_{\newg}^2(\Sigma_t)}
		\lesssim
		\varepsilon^2 \scale^{-c \sqrt{\varepsilon}}(t)
		+
		\frac{1}{\sqrt{\varepsilon}}
		\scale^{-c \sqrt{\varepsilon}}(t)
		\SupTotalenergy{M+1}{\smallparameter_*}^{1/2}(t).
\end{align}
	
\end{proposition}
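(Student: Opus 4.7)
The plan is to establish the four families of bounds in the order stated: first the two elliptic lapse estimates \eqref{E:L2HIGHORDERLAPSEINTERMSOFENERGIES}--\eqref{E:L2LOWESTORDERLAPSEINTERMSOFENERGIES}, then the metric bound \eqref{E:COMMUTEDMETRICL2BOUNDSINTERMSOFENERGY} via integration of the commuted evolution equation \eqref{E:COMMUTEDEVOLUTIONMETRICRENORMALIZED}, and finally the scalar-field bound \eqref{E:BELOWTOPORDERDERIVATIVESOFSPATIALSACALARFIELD} via algebraic inversion of the commuted momentum constraint \eqref{E:COMMUTEDRENORMALIZEDMOMENTUM}. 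The four estimates are mildly coupled, but the coupling is acyclic in this order once one arranges the induction on $M$ appropriately, so no internal bootstrap is required within the proposition. The key inputs are the pointwise error-term estimates of Prop.~\ref{P:POINTWISEESTIMATESFORERRORTERMS}, the energy coerciveness of Lemma~\ref{L:ENERGYCOERCIVENESS}, the sharp scale-factor integrals of Cor.~\ref{C:SCALEFACTORTIMEINTEGRALS}, and the operator-comparison estimates of Cor.~\ref{C:IMPROVEMENTLEMMAOPERATORCOMPARISON}.

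For \eqref{E:L2HIGHORDERLAPSEINTERMSOFENERGIES}, I would apply a weighted elliptic energy identity to the commuted equation \eqref{E:COMMUTEDLAPSEPDERENORMALIZEDHIGHERDERIVATIVES}. Pairing $\mathscr{L}\mathscr{Z}^{\vec{I}}\newlapse$ with $\mathscr{Z}^{\vec{I}}\newlapse$ in $L_{\newg}^2(\Sigma_t)$ and integrating by parts, together with the uniform positivity $f \geq c > 0$ of the potential in \eqref{E:ERRORTERMHIGHORDERELLIPTICOPERATOR} (a consequence of \eqref{E:FRIEDMANNFIRSTORDER} and Prop.~\ref{P:STRONGSUPNORMESTIMATES}), yields the coercive bound
\begin{align*}
 \scale^{4/3}\|\mathscr{Z}^{\vec{I}}\newlapse\|_{L_{\newg}^2(\Sigma_t)} + \scale^{2}\|\nabla\mathscr{Z}^{\vec{I}}\newlapse\|_{L_{\newg}^2(\Sigma_t)} + \scale^{8/3}\|\nabla^2\mathscr{Z}^{\vec{I}}\newlapse\|_{L_{\newg}^2(\Sigma_t)} \lesssim \|\mathscr{L}\mathscr{Z}^{\vec{I}}\newlapse\|_{L_{\newg}^2(\Sigma_t)},
\end{align*}
whose LHS matches that of \eqref{E:L2HIGHORDERLAPSEINTERMSOFENERGIES}. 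One then estimates the RHS via the commuted equation together with the pointwise bounds \eqref{E:LAPSEHIGHBORDERINHOMPOINTWISE}--\eqref{E:LAPSEHIGHJUNKINHOMPOINTWISE}: the source term $\mathscr{Z}^{\vec{I}}\newtimescalar$ and the boxed borderline contributions are directly controlled by $\SupTotalenergy{M}{\smallparameter_*}^{1/2}$---crucially, the boxed terms avoid a $\scale^{-c\sqrt{\varepsilon}}$ loss, so their $\varepsilon$-smallness can be extracted---while the non-boxed lower-order contributions yield the $\sqrt{\varepsilon}\scale^{-c\sqrt{\varepsilon}}\SupTotalenergy{M-1}{\smallparameter_*}^{1/2}$ piece. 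The analogous argument for $\widetilde{\mathscr{L}}$, using $\widetilde{f} \geq c > 0$, produces $\sum_{L=0}^2\scale^{(2/3)L}\|\nabla^L\mathscr{Z}^{\vec{I}}\newlapse\|_{L_{\newg}^2(\Sigma_t)}\lesssim\|\widetilde{\mathscr{L}}\mathscr{Z}^{\vec{I}}\newlapse\|_{L_{\newg}^2(\Sigma_t)}$, whose RHS is controlled via \eqref{E:LAPSELOWBORDERINHOMPOINTWISE}--\eqref{E:LAPSELOWJUNKINHOMPOINTWISE} in terms of $\|\nabla\SigmatLie_{\mathscr{Z}}^{[1,|\vec{I}|+2]}\newg\|_{L_{\newg}^2(\Sigma_t)}$ (read off $\Metricenergy{M+2}^{1/2}$ via \eqref{E:METRICENERGYCOERCIVENESS}) and $\|\SigmatLie_{\mathscr{Z}}^{[1,|\vec{I}|]}\newspacescalar\|_{L_{\newg}^2(\Sigma_t)}$, producing \eqref{E:L2LOWESTORDERLAPSEINTERMSOFENERGIES} after passing from $\Metricenergy$ to $\SupTotalenergy$ via $\Metricenergy \leq \Totalenergy/\smallparameter_*$.

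For \eqref{E:COMMUTEDMETRICL2BOUNDSINTERMSOFENERGY}, I would apply the Gronwall-ready inequality \eqref{E:BASICGRONWALLESTIMATEFORATENSORFIELD} of Lemma~\ref{L:PRELIMINARYL2ESTIMATES} to $\xi = \SigmatLie_{\mathscr{Z}}^{\vec{I}}\newg$. Using \eqref{E:COMMUTEDEVOLUTIONMETRICRENORMALIZED} and the pointwise bound \eqref{E:POINTWISEMETRICBORDERLINE}, the time derivative splits into: $\scale^{-1}\|\SigmatLie_{\mathscr{Z}}^{\vec{I}}\FreeNewSec\|_{L_{\newg}^2(\Sigma_t)}$, directly controlled by $\scale^{-1}\SupTotalenergy{M}{\smallparameter_*}^{1/2}$; the absorbable Gronwall coefficient $\scale^{-1}\varepsilon\|\SigmatLie_{\mathscr{Z}}^{\vec{I}}\newg\|_{L_{\newg}^2(\Sigma_t)}$ from the boxed borderline; and $\scale^{1/3}\|\mathscr{Z}^{\vec{I}}\newlapse\|_{L_{\newg}^2(\Sigma_t)}$, bounded via the already-derived \eqref{E:L2HIGHORDERLAPSEINTERMSOFENERGIES}. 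Time-integration against \eqref{E:SCALEFACTORTIMEINTEGRALS}, the small-data assumption \eqref{E:SMALLDATA}, and Gronwall (which produces the $\scale^{-c\sqrt{\varepsilon}}$ loss) yield the stated bound; the corresponding estimate for $\SigmatLie_{\mathscr{Z}}^{\vec{I}}\newg^{-1}$ is derived analogously from \eqref{E:COMMUTEDEVOLUTIONINVERSEMETRICRENORMALIZED}. For \eqref{E:BELOWTOPORDERDERIVATIVESOFSPATIALSACALARFIELD}, I would simply rearrange \eqref{E:COMMUTEDRENORMALIZEDMOMENTUM} as $\sqrt{2/3}\,\SigmatLie_{\mathscr{Z}}^{\vec{I}}\newspacescalar = -\Gdiv\SigmatLie_{\mathscr{Z}}^{\vec{I}}\FreeNewSec + \CommutedMomBorderInhomDown{\vec{I}}$ and take $L_{\newg}^2(\Sigma_t)$-norms; Cor.~\ref{C:IMPROVEMENTLEMMAOPERATORCOMPARISON} bounds the divergence by $\scale^{-c\sqrt{\varepsilon}}\|\SigmatLie_{\mathscr{Z}}^{[1,M+1]}\FreeNewSec\|_{L_{\newg}^2(\Sigma_t)}$ (controlled by $\SupTotalenergy{M+1}{\smallparameter_*}^{1/2}$), and \eqref{E:POINTWISEMOMENTUMCONSTRAINTBORDERLINE} handles the commutator.

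The main technical obstacle is a careful accounting of $\scale$-weights to ensure that every estimate closes with only a $\scale^{-c\sqrt{\varepsilon}}$ loss rather than a more singular power of $\scale$. In particular, the low-order lapse estimate requires tracking how the second-derivative source $\GLap\SigmatLie_{\mathscr{Z}}^{\vec{I}}\newg$ in \eqref{E:ICOMMUTEDLAPSELOWBORDERTERMS} interacts with the $\scale^{4/3}$-weighted principal part of $\widetilde{\mathscr{L}}$, and the split of borderline terms into boxed and non-boxed contributions in Prop.~\ref{P:POINTWISEESTIMATESFORERRORTERMS} is essential so that top-order pieces, which cannot be bounded by $\SupTotalenergy{M-1}{\smallparameter_*}$, appear only with an explicit $\varepsilon$ prefactor. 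This prevents any vicious feedback between the elliptic lapse estimates and the Gronwall-based metric estimate, and is exactly what makes the stated $1/\sqrt{\varepsilon}$ weighting in \eqref{E:L2LOWESTORDERLAPSEINTERMSOFENERGIES} and the combined chain of four estimates consistent.
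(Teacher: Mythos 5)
There are two genuine gaps in your argument, both stemming from an incomplete accounting of the $\scale$-weights that appear in the energies.

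\textbf{Gap 1: circularity between the lapse and metric estimates.}
You claim the four families of estimates can be derived in an acyclic order, with the lapse estimate \eqref{E:L2HIGHORDERLAPSEINTERMSOFENERGIES} obtained first by a purely elliptic argument. This is not the case. The junk inhomogeneity $\CommutedLapseHighJunkInhom{\vec{I}}$ in the commuted lapse equation \eqref{E:COMMUTEDLAPSEPDERENORMALIZEDHIGHERDERIVATIVES} involves, via the commutator $\scale^{4/3}[\GLap,\mathscr{Z}^{\vec{I}}]\newlapse$ and the pointwise bound \eqref{E:LAPSEHIGHJUNKINHOMPOINTWISE}, the \emph{undifferentiated} top-order metric term $\scale^{-c\sqrt{\varepsilon}}\bigl|\mathscr{Z}^{[1,M]}\newg\bigr|_{\newg}$ (the $L=0$ contribution in $\sum_{L=0}^1 \scale^{-c\sqrt{\varepsilon}+(4/3)L}|\nabla^L\mathscr{Z}^{[1,M]}\newg|_{\newg}$). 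By \eqref{E:METRICENERGYCOERCIVENESS} the energies control only the $\scale^{4/3}$-weighted gradient $\scale^{4/3}\|\nabla\SigmatLie_{\mathscr{Z}}^{\vec{I}}\newg\|_{L^2_{\newg}}^2$, not $\|\SigmatLie_{\mathscr{Z}}^{\vec{I}}\newg\|_{L^2_{\newg}}$ itself; so your proposed elliptic estimate does not close. The paper breaks the cycle by first proving the preliminary Lemma~\ref{L:COMMUTEDMETRICL2BOUNDSINTERMSOFENERGY} (in particular \eqref{E:COMMUTEDMETRICL2BOUNDSINTERMSOFLAPSEANDENERGY}), which controls $\|\SigmatLie_{\mathscr{Z}}^{[1,M]}\newg\|_{L^2_{\newg}}$ by the energies \emph{plus a time integral of the lapse}. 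Substituting this into the elliptic bound produces the Gronwall-ready inequality \eqref{E:LAPSEHIGHFIRSTSOBOLEVGRONWALLREADY}, which is then closed by Gronwall together with an induction in $M$. The temporal coupling (evolution equation for $\newg$ feeding the elliptic equation for $\newlapse$) is essential, and a purely instantaneous elliptic argument cannot supply it.

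\textbf{Gap 2: a $\scale^{-2/3}$ loss in both the constraint-based route for $\newspacescalar$ and your reading of the low-order lapse estimate.}
You propose to prove \eqref{E:BELOWTOPORDERDERIVATIVESOFSPATIALSACALARFIELD} by algebraically inverting the commuted momentum constraint. But the borderline inhomogeneity $\CommutedMomBorderInhomDown{\vec{I}}$ contains, per \eqref{E:POINTWISEMOMENTUMCONSTRAINTBORDERLINE}, the boxed top-order term $\varepsilon|\nabla\SigmatLie_{\mathscr{Z}}^{\vec{I}}\newg|_{\newg}$. By \eqref{E:METRICENERGYCOERCIVENESS}, $\|\nabla\SigmatLie_{\mathscr{Z}}^{\vec{I}}\newg\|_{L^2_{\newg}}\lesssim\scale^{-2/3}\Metricenergy{M}^{1/2}$, so this contributes $\varepsilon\scale^{-2/3}\SupTotalenergy{M}{\smallparameter_*}^{1/2}$. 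Since $2/3 > c\sqrt{\varepsilon}$ for $\varepsilon$ small, this is strictly more singular as $t\uparrow\TCrunch$ than the stated $\frac{1}{\sqrt{\varepsilon}}\scale^{-c\sqrt{\varepsilon}}\SupTotalenergy{M+1}{\smallparameter_*}^{1/2}$, and no amount of $\varepsilon$-smallness cures a divergent power of $\scale$. The paper instead time-integrates the commuted evolution equation \eqref{E:COMMUTEDSPACEDERIVATIVESWAVEEQUATIONRENORMALIZED}: the principal source $\scale^{-1}\nabla\mathscr{Z}^{\vec{I}}\newtimescalar$ has $L^2$-size $\lesssim\scale^{-1-c\sqrt{\varepsilon}}\SupTotalenergy{M+1}{\smallparameter_*}^{1/2}$, and integrating in $t$ via \eqref{E:SCALEFACTORTIMEINTEGRALS} with $p=-1-c\sqrt{\varepsilon}$ produces exactly the mild $\frac{1}{\sqrt{\varepsilon}}\scale^{-c\sqrt{\varepsilon}}$ weight. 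The same defect affects your account of \eqref{E:L2LOWESTORDERLAPSEINTERMSOFENERGIES}: you would "read off" $\|\nabla\SigmatLie_{\mathscr{Z}}^{[1,M+2]}\newg\|_{L^2_{\newg}}$ from $\Metricenergy{M+2}^{1/2}$ via \eqref{E:METRICENERGYCOERCIVENESS}, incurring the same $\scale^{-2/3}$ loss; the paper instead estimates the undifferentiated $\|\SigmatLie_{\mathscr{Z}}^{[1,M+2]}\newg\|_{L^2_{\newg}}$ through \eqref{E:COMMUTEDMETRICL2BOUNDSINTERMSOFLAPSEANDENERGY} (only a $\scale^{-c\sqrt{\varepsilon}}$ loss) plus another Gronwall step (see \eqref{E:LAPSELOWFIRSTSOBOLEVGRONWALLREADY}).
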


\subsection{Preliminary $L^2$ bounds}
\label{SS:PRELININARYL2BOUNDSFORSOMEINHOMOGNEOUSTERMS}

\begin{lemma}[Preliminary $L^2$ \textbf{bounds for some inhomogeneous terms}]
	\label{L:FIRSTL2BOUNDSINHOMOGENEOUS}
	Let $\vec{I}$ be a $\mathscr{Z}$-multi-index with
	$1 \leq |\vec{I}| \leq 16$,
	except in
	\eqref{E:LAPSELOWBORDERINHOMPRELIMINARYL2}
	and 
	\eqref{E:LAPSELOWJUNKINHOMPRELIMINARYL2} 
	where we instead assume that $1 \leq |\vec{I}| \leq 14$.
	The inhomogeneous terms
	from equations 
	\eqref{E:COMMUTEDEVOLUTIONMETRICRENORMALIZED},
	\eqref{E:COMMUTEDEVOLUTIONINVERSEMETRICRENORMALIZED},
	\eqref{E:COMMUTEDWAVEEQUATIONRENORMALIZED},
	\eqref{E:COMMUTEDSPACEDERIVATIVESWAVEEQUATIONRENORMALIZED},
	\eqref{E:COMMUTEDLAPSEPDERENORMALIZEDHIGHERDERIVATIVES},
	and \eqref{E:COMMUTEDLAPSEPDERENORMALIZEDLOWERDERIVATIVES}
	verify the following estimates:
	\begin{subequations}
	\begin{align}
	\left\| 
		\CommutedMetBorderInhom{\vec{I}}
	\right\|_{L_{\newg}^2(\Sigma_t)},
		\,
	\left\| 
		\CommutedInvMetBorderInhom{\vec{I}}
	\right\|_{L_{\newg}^2(\Sigma_t)}
		& \lesssim
		\varepsilon
		\left\| 
			\SigmatLie_{\mathscr{Z}}^{\vec{I}} \newg 
		\right\|_{L_{\newg}^2(\Sigma_t)}
		+
		\underbrace{
		\sqrt{\varepsilon}
		\scale^{-c \sqrt{\varepsilon}}(t)
		\left\| 
			\SigmatLie_{\mathscr{Z}}^{[1,|\vec{I}|-1]} \newg 
		\right\|_{L_{\newg}^2(\Sigma_t)}}_{\mbox{\upshape absent if $|\vec{I}|=1$}}
			\label{E:METRICBORDERINHOMPRELIMINARYL2} \\
	& \ \
		+
		\scale^{4/3}
		\left\|
			\mathscr{Z}^{\vec{I}} \newlapse
		\right\|_{L_{\newg}^2(\Sigma_t)}
			\notag \\
	& \ \
		+
		\SupTotalenergy{|\vec{I}|}{\smallparameter_*}^{1/2}(t)
		+
		\underbrace{
		\sqrt{\varepsilon}
		\scale^{-c \sqrt{\varepsilon}}(t)
		\SupTotalenergy{|\vec{I}|-1}{\smallparameter_*}^{1/2}(t)}_{\mbox{\upshape absent if $|\vec{I}|=1$}},
			\notag \\
		\left\| 
			\CommutedSpaceSfBorderInhom{\vec{I}}
		\right\|_{L_{\newg}^2(\Sigma_t)}
	& \lesssim
		\varepsilon
		\left\| 
			\nabla \mathscr{Z}^{\vec{I}} \newlapse 
		\right\|_{L_{\newg}^2(\Sigma_t)},
			\label{E:SFSPACEBORDERINHOMPRELIMINARYL2} 
				\\
		\left\| 
			\CommutedLapseHighBorderInhom{\vec{I}}
		\right\|_{L_{\newg}^2(\Sigma_t)}
	& \lesssim
		\varepsilon
		\SupTotalenergy{|\vec{I}|}{\smallparameter_*}^{1/2}(t)
		+
		\underbrace{
		\sqrt{\varepsilon}
		\scale^{-c \sqrt{\varepsilon}}(t)
		\SupTotalenergy{|\vec{I}|-1}{\smallparameter_*}^{1/2}(t)}_{\mbox{\upshape absent if $|\vec{I}|=1$}},
			\label{E:LAPSEHIGHBORDERINHOMPRELIMINARYL2} 
			\\
	\left\| 
		\CommutedLapseLowBorderInhom{\vec{I}}
	\right\|_{L_{\newg}^2(\Sigma_t)}
	& \lesssim
		+
		\scale^{-c \sqrt{\varepsilon}}(t)
		\left\| 
			\SigmatLie_{\mathscr{Z}}^{[1,|\vec{I}|+2]} \newg 
		\right\|_{L_{\newg}^2(\Sigma_t)}
		+
		\sqrt{\varepsilon}
		\scale^{-c \sqrt{\varepsilon}}(t)
		\left\| 
			\SigmatLie_{\mathscr{Z}}^{[1,|\vec{I}|]} \newspacescalar
		\right\|_{L_{\newg}^2(\Sigma_t)},
			\label{E:LAPSELOWBORDERINHOMPRELIMINARYL2} 
	\end{align}
	\end{subequations}	
	
	\begin{subequations}
	\begin{align}
	\left\| 
		\CommutedMetJunkInhom{\vec{I}}
	\right\|_{L_{\newg}^2(\Sigma_t)},
		\,
	\left\| 
		\CommutedInvMetJunkInhom{\vec{I}}
	\right\|_{L_{\newg}^2(\Sigma_t)}
		& \lesssim
		\sqrt{\varepsilon}
		\scale^{-c \sqrt{\varepsilon}}(t)
		\left\| 
			\SigmatLie_{\mathscr{Z}}^{[1,|\vec{I}|]} \newg 
		\right\|_{L_{\newg}^2(\Sigma_t)}
			\label{E:METRICJUNKINHOMPRELIMINARYL2} \\
	& \ \
		+
		\underbrace{
		\sqrt{\varepsilon}
		\scale^{-c \sqrt{\varepsilon}}(t)
		\left\|
			\mathscr{Z}^{[1,|\vec{I}|-1]} \newlapse
		\right\|_{L_{\newg}^2(\Sigma_t)}
		}_{\mbox{\upshape absent if $|\vec{I}|=1$}}
			\notag \\
	& \ \
		+
		\sqrt{\varepsilon}
		\scale^{-c \sqrt{\varepsilon}}(t)
		\SupTotalenergy{|\vec{I}|}{\smallparameter_*}^{1/2}(t),
			\notag
				\\
		\left\| 
			\CommutedSpaceSfJunkInhom{\vec{I}}
		\right\|_{L_{\newg}^2(\Sigma_t)}
	& \lesssim
		\sqrt{\varepsilon}
		\scale^{-c \sqrt{\varepsilon}}(t)
		\left\| 
			\mathscr{Z}^{[1,|\vec{I}|]} \newlapse 
		\right\|_{L_{\newg}^2(\Sigma_t)}
		+
		\underbrace{
		\sqrt{\varepsilon}
		\scale^{-c \sqrt{\varepsilon}}(t)
		\SupTotalenergy{|\vec{I}|-1}{\smallparameter_*}^{1/2}(t)}_{\mbox{\upshape absent if $|\vec{I}|=1$}},
			\label{E:SFSPACEJUNKINHOMPRELIMINARYL2} 
				\\
	\left\| 
		\CommutedLapseHighJunkInhom{\vec{I}}
	\right\|_{L_{\newg}^2(\Sigma_t)}
	& \lesssim
		\sqrt{\varepsilon}
		\scale^{- c \sqrt{\varepsilon}}(t)
		\SupTotalenergy{|\vec{I}|}{\smallparameter_*}^{1/2}(t)
		+
		\sqrt{\varepsilon}
		\scale^{-c \sqrt{\varepsilon}}(t)
		\left\|
			\mathscr{Z}^{[1,|\vec{I}|]} \newg
		\right\|_{L_{\newg}^2(\Sigma_t)}
		\label{E:LAPSEHIGHJUNKINHOMPRELIMINARYL2}  \\
	& \ \
		+
		\sqrt{\varepsilon}
		\scale^{-c \sqrt{\varepsilon} + (4/3)}(t)
		\left\|
			\nabla^{\leq 1} \mathscr{Z}^{[1,|\vec{I}|]} \newlapse
		\right\|_{L_{\newg}^2(\Sigma_t)}
			\notag \\
	& \ \
		+
		\underbrace{
		\sqrt{\varepsilon}
		\scale^{-c \sqrt{\varepsilon}}(t)
		\left\|
			\mathscr{Z}^{[1,|\vec{I}|-1]} \newlapse
		\right\|_{L_{\newg}^2(\Sigma_t)}}_{\mbox{\upshape absent if $|\vec{I}|=1$}},
			\notag \\ 
	\left\| 
		\CommutedLapseLowJunkInhom{\vec{I}}
	\right\|_{L_{\newg}^2(\Sigma_t)}
	& \lesssim
			\sqrt{\varepsilon}
			\scale^{-c \sqrt{\varepsilon}}(t)
			\left\| 
				\SigmatLie_{\mathscr{Z}}^{[1,|\vec{I}|+2]} \newg 
			\right\|_{L_{\newg}^2(\Sigma_t)}
			\label{E:LAPSELOWJUNKINHOMPRELIMINARYL2} 	\\
		& \ \
			+
			\sqrt{\varepsilon}
			\scale^{-c \sqrt{\varepsilon}}(t)
			\left\| 
				\nabla \mathscr{Z}^{[1,|\vec{I}|]} \newlapse 
			\right\|_{L_{\newg}^2(\Sigma_t)}
			\notag
				\\
		& \ \
		+
		\sqrt{\varepsilon}
		\scale^{-c \sqrt{\varepsilon}}(t)
		\left\| 
			\SigmatLie_{\mathscr{Z}}^{[1,|\vec{I}|]} \newspacescalar
		\right\|_{L_{\newg}^2(\Sigma_t)}.
		\notag
	\end{align}
	\end{subequations}
\end{lemma}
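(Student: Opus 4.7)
The proof is essentially a translation of the pointwise estimates of
Prop.~\ref{P:POINTWISEESTIMATESFORERRORTERMS} into $L^2$-type estimates,
combined with the coerciveness of the energies from Lemma~\ref{L:ENERGYCOERCIVENESS}.
For each inhomogeneous quantity on the LHS of the lemma, the plan is to start
from the corresponding pointwise bound in Prop.~\ref{P:POINTWISEESTIMATESFORERRORTERMS},
take the $L_{\newg}^2(\Sigma_t)$ norm of both sides via the triangle inequality,
and then recognize that certain $L_{\newg}^2$ norms of derivatives of the solution
variables are majorized by the energies.

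Concretely, the key coerciveness bounds implied by
\eqref{E:METRICENERGYCOERCIVENESS}--\eqref{E:SFENERGYCOERCIVENESS} and Def.\ \ref{D:ENERGIES} are
\begin{align*}
\|\SigmatLie_{\mathscr{Z}}^{[1,M]} \FreeNewSec\|_{L_{\newg}^2(\Sigma_t)}^2
  &\leq \Metricenergy{M}(t),
  &
\|\mathscr{Z}^{[1,M]} \newtimescalar\|_{L_{\newg}^2(\Sigma_t)}^2
  &\leq \Sfenergy{M}(t),
\end{align*}
together with the relation
$\Totalenergy{M}{\smallparameter_*} = \smallparameter_* \Metricenergy{M} + \Sfenergy{M}$
and the monotonicity
$\Totalenergy{M}{\smallparameter_*}(t) \leq \SupTotalenergy{M}{\smallparameter_*}(t)$.
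These convert the pointwise factors
$|\SigmatLie_{\mathscr{Z}}^{[1,|\vec{I}|]} \FreeNewSec|_{\newg}$ and
$|\mathscr{Z}^{[1,|\vec{I}|]} \newtimescalar|$ appearing in
Prop.~\ref{P:POINTWISEESTIMATESFORERRORTERMS} into contributions of the form
$\SupTotalenergy{|\vec{I}|}{\smallparameter_*}^{1/2}$ on the RHSs of the lemma.
The remaining pointwise factors involving
$|\SigmatLie_{\mathscr{Z}}^{\vec{I}} \newg|_{\newg}$,
$|\SigmatLie_{\mathscr{Z}}^{\vec{I}} \newspacescalar|_{\newg}$,
$|\mathscr{Z}^{\vec{I}} \newlapse|$, and
$|\nabla \mathscr{Z}^{\vec{I}} \newlapse|_{\newg}$
are not directly controlled by the energies at this stage; they are simply left as the
corresponding $L_{\newg}^2(\Sigma_t)$ norms on the RHSs of the lemma, to be treated
subsequently by the elliptic and below-top-order $L^2$ analysis of
Prop.~\ref{P:BOUNDFORLAPSEANDBELOWTOPMETRICINTERMSOFENERGIES}.

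The bookkeeping of coefficients is what distinguishes the ``border'' bounds
\eqref{E:METRICBORDERINHOMPRELIMINARYL2}--\eqref{E:LAPSELOWBORDERINHOMPRELIMINARYL2}
from the ``junk'' bounds \eqref{E:METRICJUNKINHOMPRELIMINARYL2}--\eqref{E:LAPSELOWJUNKINHOMPRELIMINARYL2}:
the boxed principal-order terms in Prop.~\ref{P:POINTWISEESTIMATESFORERRORTERMS} carry the
small factor $\varepsilon$ with no $\scale^{-c\sqrt{\varepsilon}}$ degeneration,
whereas the sub-borderline and junk factors always come with the mildly degenerate
prefactor $\sqrt{\varepsilon}\scale^{-c\sqrt{\varepsilon}}$. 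Propagating this distinction
through the above translation reproduces precisely the coefficient structure visible on the
RHSs of the lemma, with the $\varepsilon$-weighted $L^2$ norms of undifferentiated
borderline factors kept intact and all other contributions absorbing the
$\sqrt{\varepsilon}\scale^{-c\sqrt{\varepsilon}}$ weight. There is no significant analytic
obstacle here; the only care required is to respect the distinction between top-order and
sub-top-order factors (so as not to artificially lose a derivative at the sub-top-order level)
and to correctly sort which pointwise factors are already controllable by
$\SupTotalenergy{|\vec{I}|}{\smallparameter_*}^{1/2}$ versus which must remain as explicit
$L^2$ norms on the RHS.
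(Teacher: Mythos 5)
Your proposal is correct and follows essentially the same route as the paper: take the $L_{\newg}^2(\Sigma_t)$ norm of the pointwise estimates from Prop.~\ref{P:POINTWISEESTIMATESFORERRORTERMS}, then use Lemma~\ref{L:ENERGYCOERCIVENESS} (with $\Metricenergy{M}, \Sfenergy{M} \lesssim \SupTotalenergy{M}{\smallparameter_*}$) to replace the $L^2$ norms of $\FreeNewSec$ and $\newtimescalar$ by energies, while leaving the norms of $\newg$, $\newspacescalar$, and $\newlapse$ explicit. The one subtlety the paper flags — that the $\newspacescalar$ contributions in \eqref{E:LAPSELOWBORDERINHOMPRELIMINARYL2}--\eqref{E:LAPSELOWJUNKINHOMPRELIMINARYL2} are deliberately left as $L^2$ norms rather than converted to $\scale^{-2/3-c\sqrt{\varepsilon}}\SupTotalenergy{|\vec{I}|}{\smallparameter_*}^{1/2}$ via the $\scale^{4/3}$-weighted coerciveness — is implicitly captured by your final remark about deciding which factors to absorb into the energy versus keep explicit.
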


\begin{proof}
	To prove \eqref{E:METRICBORDERINHOMPRELIMINARYL2},
	we need only to take the norm $\| \cdot \|_{L_{\newg}^2(\Sigma_t)}$ of 
	inequality \eqref{E:POINTWISEMETRICBORDERLINE}
	and to use Lemma~\ref{L:ENERGYCOERCIVENESS} to control the
	norms of the terms on the RHS in terms of $\SupTotalenergy{\cdot}{\smallparameter_*}$,
	the Lie derivatives of $\newg$, and the derivatives of $\newlapse$.
	The proofs of the remaining estimates follow similarly 
	with the help of the pointwise estimates of Prop.~\ref{P:POINTWISEESTIMATESFORERRORTERMS}.
	We omit the details, noting only that we have explicitly placed
	the terms
	$
	\sqrt{\varepsilon}
		\scale^{-c \sqrt{\varepsilon}}(t)
		\left\| 
			\SigmatLie_{\mathscr{Z}}^{[1,|\vec{I}|]} \newspacescalar
		\right\|_{L_{\newg}^2(\Sigma_t)}
	$
	on RHSs \eqref{E:LAPSELOWBORDERINHOMPRELIMINARYL2} and \eqref{E:LAPSELOWJUNKINHOMPRELIMINARYL2}
	instead of using
	Lemma~\ref{L:ENERGYCOERCIVENESS} to bound them by 
	$\lesssim \scale^{-2/3 -c \sqrt{\varepsilon}}(t) \SupTotalenergy{|\vec{I}|}{\smallparameter_*}(t)$.
	
\end{proof}

\begin{lemma}[\textbf{Preliminary $L^2$ bounds for the below-top-order derivatives of $\newg$ and 
$\newspacescalar$}]
\label{L:COMMUTEDMETRICL2BOUNDSINTERMSOFENERGY}
	For $1 \leq |\vec{I}| \leq 16$,
	the following estimates hold on $[0,\Tboot)$:
	\begin{align} \label{E:COMMUTEDMETRICL2BOUNDSINTERMSOFLAPSEANDENERGY}
		&
		\left\|
			\SigmatLie_{\mathscr{Z}}^{\vec{I}} \newg
		\right\|_{L_{\newg}^2(\Sigma_t)},
			\,
		\left\|
			\SigmatLie_{\mathscr{Z}}^{\vec{I}} \newg^{-1}
		\right\|_{L_{\newg}^2(\Sigma_t)}
				\\
		& \lesssim
			\varepsilon^2 \scale^{-c \sqrt{\varepsilon}}(t)
			+	
			\scale^{-c \sqrt{\varepsilon}}(t)
			\int_{s=0}^t
				\scale^{1/3}(s)
				\left\|
					\mathscr{Z}^{[1,|\vec{I}|]} \newlapse
				\right\|_{L_{\newg}^2(\Sigma_s)}
			\, ds
			+	
			\frac{1}{\sqrt{\varepsilon}}
			\scale^{-c \sqrt{\varepsilon}}(t)
			\SupTotalenergy{|\vec{I}|}{\smallparameter_*}^{1/2}(t).
			\notag
\end{align}

Moreover, for $1 \leq |\vec{I}| \leq 15$, we have
\begin{align} \label{E:COMMUTEDSFSPATIALINTERMSOFLAPSEANDENERGY}
		\left\|
			\SigmatLie_{\mathscr{Z}}^{\vec{I}} \newspacescalar
		\right\|_{L_{\newg}^2(\Sigma_t)}
		& \lesssim
			\varepsilon^2 \scale^{-c \sqrt{\varepsilon}}(t)
			+
			\scale^{-c \sqrt{\varepsilon}}(t)
			\int_{s=0}^t
				\scale^{1/3}(s)
				\left\|
					\nabla^{\leq 1} \mathscr{Z}^{[1,|\vec{I}|]} \newlapse
				\right\|_{L_{\newg}^2(\Sigma_s)}
			\, ds
				\\
		& \ \
			+	
			\frac{1}{\sqrt{\varepsilon}}
			\scale^{-c \sqrt{\varepsilon}}(t)
			\SupTotalenergy{|\vec{I}|+1}{\smallparameter_*}^{1/2}(t).
			\notag
\end{align}
\end{lemma}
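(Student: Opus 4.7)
The plan is to treat the commuted evolution equations \eqref{E:COMMUTEDEVOLUTIONMETRICRENORMALIZED}, \eqref{E:COMMUTEDEVOLUTIONINVERSEMETRICRENORMALIZED}, and \eqref{E:COMMUTEDSPACEDERIVATIVESWAVEEQUATIONRENORMALIZED} as transport equations for $\SigmatLie_{\mathscr{Z}}^{\vec{I}} \newg$, $\SigmatLie_{\mathscr{Z}}^{\vec{I}} \newg^{-1}$, and $\SigmatLie_{\mathscr{Z}}^{\vec{I}} \newspacescalar$, feed the structure into the Gronwall-ready inequality \eqref{E:BASICGRONWALLESTIMATEFORATENSORFIELD} of Lemma~\ref{L:PRELIMINARYL2ESTIMATES}, control the inhomogeneities via the $L^2$ bounds of Lemma~\ref{L:FIRSTL2BOUNDSINHOMOGENEOUS}, and close the system by induction on $|\vec{I}|$. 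The data term $\varepsilon^2 \scale^{-c\sqrt{\varepsilon}}(t)$ will come from the smallness assumption \eqref{E:SMALLDATA} propagated through the Gronwall exponential, while the explicit $\SupTotalenergy{|\vec{I}|}{\smallparameter_*}^{1/2}(t)$ terms will come from integrating the border/junk inhomogeneities against appropriate powers of $\scale$.

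More concretely, for \eqref{E:COMMUTEDMETRICL2BOUNDSINTERMSOFLAPSEANDENERGY} I would apply \eqref{E:BASICGRONWALLESTIMATEFORATENSORFIELD} with $\xi := \SigmatLie_{\mathscr{Z}}^{\vec{I}} \newg$, and use \eqref{E:COMMUTEDEVOLUTIONMETRICRENORMALIZED} to rewrite $\| \SigmatLie_{\partial_t} \xi \|_{L_{\newg}^2(\Sigma_s)} \lesssim \scale^{-1}(s) \|\CommutedMetBorderInhom{\vec{I}}\|_{L_{\newg}^2(\Sigma_s)} + \scale^{1/3}(s) \|\CommutedMetJunkInhom{\vec{I}}\|_{L_{\newg}^2(\Sigma_s)}$. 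Substituting \eqref{E:METRICBORDERINHOMPRELIMINARYL2} and \eqref{E:METRICJUNKINHOMPRELIMINARYL2} produces five kinds of integrands after multiplication by $\scale^{-1}$ or $\scale^{1/3}$: a Gronwall-type term $\varepsilon \scale^{-1}(s) \|\SigmatLie_{\mathscr{Z}}^{\vec{I}}\newg\|_{L^2_{\newg}(\Sigma_s)}$ which combines with the preexisting Gronwall term in \eqref{E:BASICGRONWALLESTIMATEFORATENSORFIELD}, a lapse integrand $\scale^{1/3}(s) \|\mathscr{Z}^{[1,|\vec{I}|]}\newlapse\|_{L^2_{\newg}(\Sigma_s)}$ that persists unchanged on the RHS of the stated estimate, energy integrands of the form $\scale^{-1}(s) \SupTotalenergy{|\vec{I}|}{\smallparameter_*}^{1/2}(s)$ (with the sub-index possibly lowered by one, which is handled inductively), and lower-order metric derivative terms absorbable inductively with a factor $\sqrt{\varepsilon}$. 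Pulling the monotone $\SupTotalenergy{\cdot}{\smallparameter_*}^{1/2}(t)$ outside the integral and bounding $\int_0^t \scale^{-1-c\sqrt{\varepsilon}}(s)\, ds \lesssim \frac{1}{\sqrt{\varepsilon}} \scale^{-c\sqrt{\varepsilon}}(t)$ via \eqref{E:SCALEFACTORTIMEINTEGRALS} yields precisely the $\frac{1}{\sqrt{\varepsilon}} \scale^{-c\sqrt{\varepsilon}}(t) \SupTotalenergy{|\vec{I}|}{\smallparameter_*}^{1/2}(t)$ contribution. Gronwall's inequality applied with weight $c\varepsilon \scale^{-1}(s)$ contributes a factor $\exp(c\varepsilon |\ln \scale(t)|) \lesssim \scale^{-c\varepsilon}(t) \lesssim \scale^{-c\sqrt{\varepsilon}}(t)$ in view of \eqref{E:EXPONENTIATEDSCALEFACTORTIMEINTEGRALS}. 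The estimate for $\SigmatLie_{\mathscr{Z}}^{\vec{I}} \newg^{-1}$ is identical once we use \eqref{E:COMMUTEDEVOLUTIONINVERSEMETRICRENORMALIZED} in place of \eqref{E:COMMUTEDEVOLUTIONMETRICRENORMALIZED}.

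For \eqref{E:COMMUTEDSFSPATIALINTERMSOFLAPSEANDENERGY} the same scheme applies to $\xi := \SigmatLie_{\mathscr{Z}}^{\vec{I}} \newspacescalar$, but now \eqref{E:COMMUTEDSPACEDERIVATIVESWAVEEQUATIONRENORMALIZED} produces the leading source $\scale^{-1}(1+\scale^{4/3}\newlapse) \nabla \mathscr{Z}^{\vec{I}}\newtimescalar$, whose $L^2$ norm is directly controlled by $\scale^{-1}(s) \SupTotalenergy{|\vec{I}|+1}{\smallparameter_*}^{1/2}(s)$ thanks to \eqref{E:SFENERGYCOERCIVENESS} (since $\nabla \mathscr{Z}^{\vec{I}}\newtimescalar$ can be exchanged with $\SigmatLie_{\mathscr{Z}}^{[1,|\vec{I}|+1]}\newtimescalar$ via \eqref{E:IMPROVEDOPERATORCOMPARISON} at the cost of a harmless $\scale^{-c\sqrt{\varepsilon}}$). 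The remaining lapse gradient $\sqrt{\tfrac{2}{3}} \scale^{1/3} \nabla \mathscr{Z}^{\vec{I}} \newlapse$ together with the bound \eqref{E:SFSPACEBORDERINHOMPRELIMINARYL2} on $\CommutedSpaceSfBorderInhom{\vec{I}}$ and \eqref{E:SFSPACEJUNKINHOMPRELIMINARYL2} on $\CommutedSpaceSfJunkInhom{\vec{I}}$ give rise precisely to the time-integrated lapse term $\int_0^t \scale^{1/3}(s) \|\nabla^{\leq 1}\mathscr{Z}^{[1,|\vec{I}|]}\newlapse\|_{L^2_{\newg}(\Sigma_s)}\, ds$ featured in \eqref{E:COMMUTEDSFSPATIALINTERMSOFLAPSEANDENERGY}. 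The induction is initialized at $|\vec{I}|=1$, where the underbraced ``absent if $|\vec{I}|=1$'' terms vanish, and at each inductive step the factor $\sqrt{\varepsilon}$ multiplying lower-order contributions is used to absorb them into the universal constant.

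The main obstacle I anticipate is the calibration of the singular powers of $\scale$ so that only the benign loss $\scale^{-c\sqrt{\varepsilon}}$ survives. The critical Gronwall weight $c\varepsilon \scale^{-1}(s)$ generates $\scale^{-c\varepsilon}(t)$, but the Lemma~\ref{L:FIRSTL2BOUNDSINHOMOGENEOUS} junk bounds already carry their own $\scale^{-c\sqrt{\varepsilon}}$ factors, and integrating $\scale^{-1-c\sqrt{\varepsilon}}$ through \eqref{E:SCALEFACTORTIMEINTEGRALS} costs an extra $1/\sqrt{\varepsilon}$; one must verify that no cascade of these losses produces worse than $\scale^{-c\sqrt{\varepsilon}}$ on the left-hand side and that the constant $c$ in the exponent remains independent of $|\vec{I}|$ throughout the induction. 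This will force the bootstrap constraint $\varepsilon^{1/2} \leq \upsigma$ from \eqref{E:PARAMETERCONSTRAINT} to be invoked carefully, particularly when passing from the inductive hypothesis at order $|\vec{I}|-1$ (which already carries $\frac{1}{\sqrt{\varepsilon}} \scale^{-c\sqrt{\varepsilon}}$) into the lower-order underbraced terms in \eqref{E:METRICBORDERINHOMPRELIMINARYL2}, where the explicit $\sqrt{\varepsilon}$ must exactly compensate to keep the result sharp.
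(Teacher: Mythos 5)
Your proposal follows the same strategy as the paper's proof: treat the commuted evolution equations as transport equations, substitute the $L^2$ bounds from Lemma~\ref{L:FIRSTL2BOUNDSINHOMOGENEOUS} for the inhomogeneous terms, integrate using Cor.~\ref{C:SCALEFACTORTIMEINTEGRALS}, and close by Gronwall and induction on $|\vec{I}|$; the only cosmetic difference is that the paper invokes the pre-Gronwall inequality \eqref{E:BASICGRONWALLREADYESTIMATEFORATENSORFIELD} for the metric so as to collect both self-referential integrals before applying Gronwall once, whereas you start from the post-Gronwall version \eqref{E:BASICGRONWALLESTIMATEFORATENSORFIELD} and handle the new self-referential term from $\CommutedMetBorderInhom{\vec{I}}$ by a second Gronwall pass, which leads to the same estimate.
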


\begin{proof}
	To prove \eqref{E:COMMUTEDSFSPATIALINTERMSOFLAPSEANDENERGY}, 
	we first use
	\eqref{E:COMMUTEDSPACEDERIVATIVESWAVEEQUATIONRENORMALIZED},
	\eqref{E:LAPSESTRONGSUPNROM},
	\eqref{E:SFSPACEBORDERINHOMPRELIMINARYL2},
	\eqref{E:SFSPACEJUNKINHOMPRELIMINARYL2},
	Lemma~\ref{L:ENERGYCOERCIVENESS},
	and Cor.~\ref{C:IMPROVEMENTLEMMAOPERATORCOMPARISON}
	to deduce that
	\begin{align} \label{E:L2BOUNDFORTIMEDERIVATIVEOFCOMMUTEDSFSPACE} 
	\left\|
		\SigmatLie_{\partial_t} \SigmatLie_{\mathscr{Z}}^{\vec{I}} \newspacescalar 
	\right\|_{L^2_{\newg}(\Sigma_t)}
	& \lesssim
	\scale^{1/3 - c \sqrt{\varepsilon}}(t)
			\left\|
				\nabla^{\leq 1} \mathscr{Z}^{[1,|\vec{I}|]} \newlapse
			\right\|_{L_{\newg}^2(\Sigma_t)}
	+
	\scale^{-1-c \sqrt{\varepsilon}}(t)
	\SupTotalenergy{|\vec{I}|}{\smallparameter_*}^{1/2}(t).
	\end{align}
	Inserting the estimate \eqref{E:L2BOUNDFORTIMEDERIVATIVEOFCOMMUTEDSFSPACE}
	into inequality \eqref{E:BASICGRONWALLESTIMATEFORATENSORFIELD}
	with $\xi := \SigmatLie_{\mathscr{Z}}^{\vec{I}} \newspacescalar$,
	using the small-data bound
	$
	\left\|
			\SigmatLie_{\mathscr{Z}}^{[1,|\vec{I}|]} \newspacescalar 
		\right\|_{L^2_{\newg}(\Sigma_0)}
		\lesssim
		\varepsilon^2
	$
	(which follows from \eqref{E:SMALLDATA} and Lemma~\ref{L:L2NORMSCOMPARISONESTIMATES}),
	using the fact that $\scale$ is decreasing on $[0,\TCrunch)$,
	and using \eqref{E:SCALEFACTORTIMEINTEGRALS},
	which in particular implies 
	$
\int_{s=0}^t
				\scale^{-1 - c \sqrt{\varepsilon}}(s)
				\SupTotalenergy{|\vec{I}|}{\smallparameter_*}^{1/2}(s)
			\, ds
\lesssim \frac{1}{\sqrt{\varepsilon}} \SupTotalenergy{|\vec{I}|}{\smallparameter_*}^{1/2}(t)
$,
	we obtain
	\begin{align}
	\left\|
		\SigmatLie_{\mathscr{Z}}^{[1,|\vec{I}|]} \newspacescalar
	\right\|_{L^2_{\newg}(\Sigma_t)}
	& 
	\leq
		C \varepsilon^2 \scale^{- c \sqrt{\varepsilon}}(t)
		+
		C
			\scale^{- c \sqrt{\varepsilon}}(t)
			\int_{s=0}^t
				\scale^{1/3}(s)
				\left\|
					\nabla^{\leq 1} \mathscr{Z}^{[1,|\vec{I}|]} \newlapse
				\right\|_{L_{\newg}^2(\Sigma_s)}
			\, ds
	\label{E:L2COMMUTEDCSPACESFGRONWALLREADY}	\\
	& \ \ 
			+
			C
			\frac{1}{\sqrt{\varepsilon}} \scale^{- c \sqrt{\varepsilon}}(t) \SupTotalenergy{|\vec{I}|}{\smallparameter_*}^{1/2}(t),
			\notag
\end{align}
which is the desired bound
\eqref{E:COMMUTEDSFSPATIALINTERMSOFLAPSEANDENERGY}.

%From \eqref{E:L2COMMUTEDCSPACESFGRONWALLREADY},
%Gronwall's inequality,
%and Cor.~\ref{C:SCALEFACTORTIMEINTEGRALS}, 
%we conclude the desired bound
%\eqref{E:COMMUTEDSFSPATIALINTERMSOFLAPSEANDENERGY}.
	
	To prove \eqref{E:COMMUTEDMETRICL2BOUNDSINTERMSOFLAPSEANDENERGY}, 
	we first use
	\eqref{E:COMMUTEDEVOLUTIONMETRICRENORMALIZED},
	\eqref{E:METRICBORDERINHOMPRELIMINARYL2},
	\eqref{E:METRICJUNKINHOMPRELIMINARYL2},
	inequality \eqref{E:BASICGRONWALLREADYESTIMATEFORATENSORFIELD} 
	with $\xi := \SigmatLie_{\mathscr{Z}}^{\vec{I}} \newg$,
	the small-data bound
	$
	\left\|
			\SigmatLie_{\mathscr{Z}}^{\vec{I}} \newg 
		\right\|_{L^2_{\newg}(\Sigma_0)}
		\lesssim
		\varepsilon^2
	$
	(which follows from \eqref{E:SMALLDATA} and Lemma~\ref{L:L2NORMSCOMPARISONESTIMATES}),
	and \eqref{E:SCALEFACTORTIMEINTEGRALS}
	to obtain
	\begin{align}
	\left\|
		\SigmatLie_{\mathscr{Z}}^{\vec{I}} \newg 
	\right\|_{L^2_{\newg}(\Sigma_t)}
	& 
	\leq
		C \varepsilon^2
		+
		c \varepsilon
		\int_{s=0}^t
				\scale^{-1}(s)
				\left\|
					\SigmatLie_{\mathscr{Z}}^{\vec{I}} \newg 
				\right\|_{L^2_{\newg}(\Sigma_s)}
		\, ds
		+
		C \sqrt{\varepsilon}
		\int_{s=0}^t
				\scale^{1/3 - c \sqrt{\varepsilon}}(s)
				\left\|
					\SigmatLie_{\mathscr{Z}}^{\vec{I}} \newg 
				\right\|_{L^2_{\newg}(\Sigma_s)}
		\, ds
	\label{E:L2COMMUTEDMETRICGRONWALLREADY}	\\
		& \ \
			+
		C \sqrt{\varepsilon}
		\int_{s=0}^t
				\scale^{-1 - c \sqrt{\varepsilon}}(s)
				\left\|
					\SigmatLie_{\mathscr{Z}}^{[1,|\vec{I}|-1]} \newg
				\right\|_{L^2_{\newg}(\Sigma_s)}
		\, ds
			+	
			C
			\int_{s=0}^t
				\scale^{1/3 - c \sqrt{\varepsilon}}(s)
				\left\|
					\mathscr{Z}^{[1,|\vec{I}|]} \newlapse
				\right\|_{L_{\newg}^2(\Sigma_s)}
			\, ds
			\notag \\
		& \ \ 
			+
			C
			\frac{1}{\sqrt{\varepsilon}}
			\scale^{- c \sqrt{\varepsilon}}(t)
			\SupTotalenergy{|\vec{I}|}{\smallparameter_*}^{1/2}(s),
			\notag
\end{align}
where the integral involving
$
\left\|
					\SigmatLie_{\mathscr{Z}}^{[1,|\vec{I}|-1]} \newg
				\right\|_{L^2_{\newg}(\Sigma_s)}
$ 
is
absent if $|\vec{I}|=1$
and to obtain the last term on RHS~\eqref{E:L2COMMUTEDMETRICGRONWALLREADY}, 
we have used the bound
$
\int_{s=0}^t
	\scale^{-1}(s)
	\SupTotalenergy{|\vec{I}|}{\smallparameter_*}^{1/2}(s)
\, ds
\leq
C
(1 + |\ln \scale(t)|)
\SupTotalenergy{|\vec{I}|}{\smallparameter_*}^{1/2}(t)
\leq 
\frac{C}{\sqrt{\varepsilon}}
\scale^{- c \sqrt{\varepsilon}}
\SupTotalenergy{|\vec{I}|}{\smallparameter_*}^{1/2}(t)
$,
which is a simple consequence of
Cor.~\ref{C:SCALEFACTORTIMEINTEGRALS} and \eqref{E:SCALEFACTORLOGPOWERBOUND}.
We now derive \eqref{E:COMMUTEDMETRICL2BOUNDSINTERMSOFLAPSEANDENERGY}
by using induction in $|\vec{I}|$. In the base case
$|\vec{I}| = 1$, 
we use inequality \eqref{E:L2COMMUTEDMETRICGRONWALLREADY}, 
Gronwall's inequality,
\eqref{E:EXPONENTIATEDSCALEFACTORTIMEINTEGRALS},
and the fact that $\scale$ is decreasing on $[0,\TCrunch)$
to conclude the bound for $\SigmatLie_{\mathscr{Z}}^{\vec{I}} \newg$ stated in
\eqref{E:COMMUTEDMETRICL2BOUNDSINTERMSOFLAPSEANDENERGY}.
To carry out the induction step, we assume that
\eqref{E:COMMUTEDMETRICL2BOUNDSINTERMSOFLAPSEANDENERGY} has been established for multi-indices of length
$|\vec{I}|-1$. To obtain \eqref{E:COMMUTEDMETRICL2BOUNDSINTERMSOFENERGY} for $\vec{I}$,
we use the induction hypothesis,
Cor.~\ref{C:SCALEFACTORTIMEINTEGRALS},
and the fact that $\scale$ is decreasing on $[0,\TCrunch)$
to bound the first integral on 
the second line of RHS~\eqref{E:L2COMMUTEDMETRICGRONWALLREADY} as follows:
\begin{align} \label{E:L2COMMUTEDMETRICUSINGINDUCTION}
		& 
		C \sqrt{\varepsilon}
		\int_{s=0}^t
				\scale^{-1 - c \sqrt{\varepsilon}}(s)
				\left\|
					\SigmatLie_{\mathscr{Z}}^{[1,|\vec{I}|-1]} \newg
				\right\|_{L^2_{\newg}(\Sigma_s)}
		\, ds
			\\
&   \lesssim
		\varepsilon^2 \scale^{-c \sqrt{\varepsilon}}(t)
			+	
			\scale^{-c \sqrt{\varepsilon}}(t)
			\int_{s=0}^t
				\scale^{1/3}(s)
				\left\|
					\mathscr{Z}^{[1,|\vec{I}|]} \newlapse
				\right\|_{L_{\newg}^2(\Sigma_s)}
			\, ds
			+	
			\int_{s = 0}^1
				\scale^{-1 - c \sqrt{\varepsilon}}(s)
				\SupTotalenergy{|\vec{I}|-1}{\smallparameter_*}^{1/2}(s)
			\, ds
			\notag
				\\
		&   \lesssim
		\varepsilon^2 \scale^{-c \sqrt{\varepsilon}}(t)
			+	
			\scale^{-c \sqrt{\varepsilon}}(t)
			\int_{s=0}^t
				\scale^{1/3}(s)
				\left\|
					\mathscr{Z}^{[1,|\vec{I}|]} \newlapse
				\right\|_{L_{\newg}^2(\Sigma_s)}
			\, ds
			+	
			\frac{1}{\sqrt{\varepsilon}} \scale^{-c \sqrt{\varepsilon}}(t) 
			\SupTotalenergy{|\vec{I}|}{\smallparameter_*}^{1/2}(t).
			\notag
\end{align}
We now substitute RHS~\eqref{E:L2COMMUTEDMETRICUSINGINDUCTION}
for the first integral on the second line of RHS~\eqref{E:L2COMMUTEDMETRICGRONWALLREADY} 
and apply Gronwall's inequality as before,
thereby concluding the bound for $\SigmatLie_{\mathscr{Z}}^{\vec{I}} \newg$
stated in
\eqref{E:COMMUTEDMETRICL2BOUNDSINTERMSOFLAPSEANDENERGY}.
We have therefore closed the induction and 
completed the proof of \eqref{E:COMMUTEDMETRICL2BOUNDSINTERMSOFLAPSEANDENERGY}
for 
$\left\|
	\SigmatLie_{\mathscr{Z}}^{\vec{I}} \newg
\right\|_{L_{\newg}^2(\Sigma_t)}
$.
To obtain the same bound for
$\left\|
			\SigmatLie_{\mathscr{Z}}^{\vec{I}} \newg^{-1} 
\right\|_{L_{\newg}^2(\Sigma_t)}
$,
we use a similar argument based on 
\eqref{E:COMMUTEDEVOLUTIONINVERSEMETRICRENORMALIZED},
\eqref{E:METRICBORDERINHOMPRELIMINARYL2},
\eqref{E:METRICJUNKINHOMPRELIMINARYL2},
and the already proven bounds for
$
\left\|
	\SigmatLie_{\mathscr{Z}}^{\vec{I}} \newg
\right\|_{L_{\newg}^2(\Sigma_t)}
$;
we omit the details.

\end{proof}

\subsection{Basic elliptic estimates}
\label{SS:BASICELLIPTIC}
In this subsection, we prove some simple elliptic estimates
that we will use to control the time-rescaled lapse variable $\newlapse$.

\begin{lemma}[\textbf{Basic elliptic estimates}]
	\label{L:BASICELLIPTICESTIMATES}
	Let $\mathscr{L}$ and $\widetilde{\mathscr{L}}$
	be the elliptic operators from Def.\ \ref{D:ELLIPTICOPS}.
	Solutions $u$ to the elliptic PDE
	\begin{align} \label{E:HIGHBASICELLIPTICPDE}
		\mathscr{L} u
		& = U
	\end{align}
	verify the estimate
	\begin{align} \label{E:HIGHBASICELLIPTICESTIMATE}
		\sum_{L=0}^2
		\scale^{4/3 +(2/3) L}(t)
		\left\|
			\nabla^L u
		\right\|_{L_{\newg}^2(\Sigma_t)}
		& \lesssim
			\left\|
				U
			\right\|_{L_{\newg}^2(\Sigma_t)}.
	\end{align}
	
	Moreover, solutions $u$ to the elliptic PDE
	\begin{align} \label{E:LOWBASICELLIPTICPDE}
		\widetilde{\mathscr{L}} u
		& = U
	\end{align}
	verify the estimate
	\begin{align} \label{E:LOWBASICELLIPTICESTIMATE}
		\sum_{L=0}^2
		\scale^{(2/3) L}(t)
		\left\|
			\nabla^L u
		\right\|_{L_{\newg}^2(\Sigma_t)}
		& \lesssim
			\left\|
				U
			\right\|_{L_{\newg}^2(\Sigma_t)}.
	\end{align}
\end{lemma}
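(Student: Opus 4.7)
The strategy is the standard one for uniformly elliptic scalar equations, and I will obtain all three derivative-level bounds from a single energy/Bochner pair of identities. The crucial preliminary step is to observe that both ``potential'' coefficients $f$ and $\widetilde{f}$ appearing in \eqref{E:ERRORTERMHIGHORDERELLIPTICOPERATOR} and \eqref{E:ERRORTERMLOWORDERELLIPTICOPERATOR} are uniformly comparable to $1$ on $[0,\Tboot)\times\mathbb{S}^3$. Indeed, Friedmann's identity \eqref{E:FRIEDMANNFIRSTORDER} yields
\begin{align*}
(\scale')^2 + \tfrac{2}{3}\scale^{4/3} \;=\; 1 - \tfrac{1}{3}\scale^{4/3} \;\in\; [\tfrac{2}{3},1],
\end{align*}
and the remaining terms in $f$ and $\widetilde{f}$ are small. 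For $f$, the estimates \eqref{E:NOLOSSKSTRONGSUPNROM} and \eqref{E:SCALARFIELDTIMESTRONGSUPNROM} of Prop.~\ref{P:STRONGSUPNORMESTIMATES} show that $|\FreeNewSec|_{\newg}^2 + 2\sqrt{2/3}\,\newtimescalar + \newtimescalar^2 = \mathcal{O}(\varepsilon)$. For $\widetilde{f}$, the factors of $\scale^{4/3}$ together with \eqref{E:RICCISTRONGSUPNROM} and \eqref{E:SCALARFIELDSPACESTRONGSUPNROM} show that $\scale^{4/3}\{\ScalarCur-\tfrac{2}{3}\} - \scale^{4/3}|\newspacescalar|_{\newg}^2 = \mathcal{O}(\sqrt{\varepsilon}\,\scale^{4/3-c\sqrt{\varepsilon}})$. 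Hence, for $\varepsilon$ sufficiently small,
\begin{align*}
\tfrac{1}{2} \;\leq\; f(t,x),\;\widetilde{f}(t,x) \;\leq\; \tfrac{3}{2}
\qquad \text{on } [0,\Tboot)\times\mathbb{S}^3.
\end{align*}

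Given this coercivity, the first two estimates (orders $L=0,1$) follow by the usual integration-by-parts argument. For \eqref{E:HIGHBASICELLIPTICPDE}, I multiply by $u$ and integrate over $\Sigma_t$ using $\int_{\Sigma_t} u \,\GLap u\, d\tvol = -\int_{\Sigma_t}|\nabla u|_{\newg}^2\, d\tvol$ (no boundary since $\Sigma_t\approx\mathbb{S}^3$) to obtain
\begin{align*}
\scale^{8/3}(t)\,\|\nabla u\|_{L_{\newg}^2(\Sigma_t)}^2 + \scale^{4/3}(t)\int_{\Sigma_t} f\, u^2\, d\tvol = -\int_{\Sigma_t} u U\, d\tvol.
\end{align*}
Cauchy--Schwarz, the coercivity $f\geq 1/2$, and Young's inequality (absorbing a $\tfrac{1}{4}\scale^{4/3}\|u\|_{L_{\newg}^2}^2$ term) yield $\scale^{4/3}(t)\|u\|_{L_{\newg}^2(\Sigma_t)}\lesssim\|U\|_{L_{\newg}^2(\Sigma_t)}$ and then $\scale^{2}(t)\|\nabla u\|_{L_{\newg}^2(\Sigma_t)}\lesssim\|U\|_{L_{\newg}^2(\Sigma_t)}$, which match the $L=0,1$ cases of \eqref{E:HIGHBASICELLIPTICESTIMATE}. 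The identical argument applied to \eqref{E:LOWBASICELLIPTICPDE} (with coefficient $\scale^{4/3}$ rather than $\scale^{8/3}$ in front of $\GLap$ and coefficient $1$ rather than $\scale^{4/3}$ in front of $\widetilde{f}$) yields $\|u\|_{L_{\newg}^2(\Sigma_t)}\lesssim\|U\|_{L_{\newg}^2(\Sigma_t)}$ and $\scale^{2/3}(t)\|\nabla u\|_{L_{\newg}^2(\Sigma_t)}\lesssim\|U\|_{L_{\newg}^2(\Sigma_t)}$, i.e.\ the $L=0,1$ cases of \eqref{E:LOWBASICELLIPTICESTIMATE}.

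For the second-derivative bounds ($L=2$), I use the Bochner--Weitzenb\"ock identity for scalar functions on $(\Sigma_t,\newg)$, which after integration over the closed hypersurface reads
\begin{align*}
\int_{\Sigma_t}|\nabla^2 u|_{\newg}^2\, d\tvol \;=\; \int_{\Sigma_t}(\GLap u)^2\, d\tvol \;-\; \int_{\Sigma_t}\Ric(\nabla u,\nabla u)\, d\tvol.
\end{align*}
The sup-norm estimate \eqref{E:RICCISTRONGSUPNROM} together with Cor.~\ref{C:IMPROVEMENTLEMMAOPERATORCOMPARISON} gives $|\Ric|_{\newg}\lesssim 1$, so $\|\nabla^2 u\|_{L_{\newg}^2(\Sigma_t)}^2\lesssim \|\GLap u\|_{L_{\newg}^2(\Sigma_t)}^2 + \|\nabla u\|_{L_{\newg}^2(\Sigma_t)}^2$. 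Solving the original PDE \eqref{E:HIGHBASICELLIPTICPDE} for $\GLap u$ and using the already-proven $L^2$ bound for $u$,
\begin{align*}
\scale^{8/3}(t)\,\|\GLap u\|_{L_{\newg}^2(\Sigma_t)} \;\leq\; \|U\|_{L_{\newg}^2(\Sigma_t)} + \scale^{4/3}(t)\|f u\|_{L_{\newg}^2(\Sigma_t)} \;\lesssim\; \|U\|_{L_{\newg}^2(\Sigma_t)}.
\end{align*}
Combining this with the $H^1$ estimate and noting $\scale\leq 1$ produces $\scale^{8/3}(t)\|\nabla^2 u\|_{L_{\newg}^2(\Sigma_t)}\lesssim\|U\|_{L_{\newg}^2(\Sigma_t)}$, which completes \eqref{E:HIGHBASICELLIPTICESTIMATE}. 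The analogous argument for \eqref{E:LOWBASICELLIPTICPDE} gives $\scale^{4/3}(t)\|\GLap u\|_{L_{\newg}^2(\Sigma_t)}\lesssim\|U\|_{L_{\newg}^2(\Sigma_t)}$ and then $\scale^{4/3}(t)\|\nabla^2 u\|_{L_{\newg}^2(\Sigma_t)}\lesssim\|U\|_{L_{\newg}^2(\Sigma_t)}$, completing \eqref{E:LOWBASICELLIPTICESTIMATE}. The only step with any subtlety is the coercivity of $f$ and $\widetilde{f}$, and this is really just careful bookkeeping that uses Friedmann's equation in an essential way to cancel the would-be singular $(\scale')^2$ term; everything else is textbook elliptic regularity, which is available here because $(\Sigma_t,\newg)$ is uniformly close to the smooth compact background $(\mathbb{S}^3,\StMet)$.
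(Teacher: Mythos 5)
Your proof is correct in substance and, for the second-derivative estimate, genuinely different from the paper's. For the $L=0,1$ cases your multiplication by $u$ (rather than the paper's $\scale^{4/3}u$) is cosmetically different but amounts to the same energy identity. The real divergence is at $L=2$: the paper multiplies \eqref{E:HIGHBASICELLIPTICPDE} by $\scale^{8/3}\GLap u$ and integrates by parts directly, which produces a $\scale^4\int u\,\nabla f\cdot\nabla u$ term and therefore requires the bound $|\nabla f|_{\newg}\lesssim\varepsilon\scale^{-c\sqrt{\varepsilon}}$. You instead first extract $\GLap u = \scale^{-8/3}\bigl(U+\scale^{4/3}fu\bigr)$ from the PDE and then apply the Bochner identity $\int|\nabla^2 u|_\newg^2 = \int(\GLap u)^2 - \int\Ric(\nabla u,\nabla u)$, which eliminates the need to differentiate the potential $f$ and replaces it with a Ricci bound. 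This is a clean simplification that each reader may prefer; the paper's route is fine but does a bit more bookkeeping.

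One imprecision you should correct: you claim ``$|\Ric|_{\newg}\lesssim 1$,'' but \eqref{E:RICCISTRONGSUPNROM} only gives $|\Ric^{\#} - \tfrac{2}{9}\ID|_{\newg}\lesssim\sqrt{\varepsilon}\,\scale^{-c\sqrt{\varepsilon}}$, hence $|\Ric^{\#}|_{\newg}\lesssim 1+\sqrt{\varepsilon}\,\scale^{-c\sqrt{\varepsilon}}$, which is not uniformly bounded as $\scale\downarrow 0$. The argument still goes through, but you must spend the extra power of $\scale^{4/3}$ that separates the $L=2$ and $L=1$ weights to absorb this degeneration: for instance in the $\mathscr{L}$ case
\begin{align*}
\scale^{16/3}\bigl(1+\sqrt{\varepsilon}\,\scale^{-c\sqrt{\varepsilon}}\bigr)\|\nabla u\|_{L_{\newg}^2(\Sigma_t)}^2
\;\lesssim\;
\scale^{4}\|\nabla u\|_{L_{\newg}^2(\Sigma_t)}^2
\;\lesssim\;
\|U\|_{L_{\newg}^2(\Sigma_t)}^2
\end{align*}
for $\varepsilon$ small enough that $c\sqrt{\varepsilon}<\tfrac{4}{3}$, and similarly with $\scale^{8/3}$ replacing $\scale^{16/3}$ for $\widetilde{\mathscr{L}}$. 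With that correction inserted, the proof is complete.
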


\begin{proof}
	We first prove \eqref{E:HIGHBASICELLIPTICESTIMATE}.
	First, using Prop.~\ref{P:STRONGSUPNORMESTIMATES} 
	and Cor.~\ref{C:IMPROVEMENTLEMMAOPERATORCOMPARISON},
	we deduce
	that the coefficient $f$ defined by
	\eqref{E:ERRORTERMHIGHORDERELLIPTICOPERATOR}
	verifies the bounds 
	$f 
	=
	(\scale')^2 
	+ 
	(2/3) \scale^{4/3} 
	+ 
	\mathcal{O}(\varepsilon)
	$
	and
	$
	|\nabla f|_{\newg}
	\lesssim \varepsilon \scale^{- c \sqrt{\varepsilon}}
	$.
	In particular, also using \eqref{E:FRIEDMANNFIRSTORDER}, we deduce that
	$f \geq 2/3 - C \varepsilon$.
	Next, we multiply equation \eqref{E:HIGHBASICELLIPTICPDE}
	by $\scale^{4/3} u$ and integrate by parts to obtain the estimate
	\begin{align} \label{E:JUSTBELOWTOPORDERLAPSEELLIPTICALMOSTDERIVED}
	\int_{\Sigma_t}
		\scale^4 |\nabla u|_{\newg}^2
	\, d \tvol
	+
	\int_{\Sigma_t}
		\scale^{8/3} f u^2
	\, d \tvol
	\leq
	\int_{\Sigma_t}
		\scale^{4/3} |u| |U|
	\, d \tvol
	\leq 
	\frac{1}{2}
	\int_{\Sigma_t}
		\scale^{8/3} u^2
	\, d \tvol
	+
	\frac{1}{2}
	\int_{\Sigma_t}
		U^2
	\, d \tvol.
	\end{align}
	The desired bounds \eqref{E:HIGHBASICELLIPTICESTIMATE}
	for $u$ and $\nabla u$ now follow easily from \eqref{E:JUSTBELOWTOPORDERLAPSEELLIPTICALMOSTDERIVED}
	the above lower bound for $f$.
	Similarly, to obtain the bound \eqref{E:HIGHBASICELLIPTICESTIMATE}
	for $\nabla^2 u$, 
	we multiply equation \eqref{E:HIGHBASICELLIPTICPDE}
	by $\scale^{8/3} \Delta_{\newg} u$ and integrate by parts
	to obtain
	\begin{align} \label{E:TOPORDERLAPSEELLIPTICALMOSTDERIVED}
	\int_{\Sigma_t}
		\scale^{16/3} |\nabla^2 u|_{\newg}^2
	\, d \tvol
	+
	\int_{\Sigma_t}
		\scale^4 f |\nabla u|_{\newg}
	\, d \tvol
	& \leq
	C
	\int_{\Sigma_t}
		\scale^4 |\nabla f|_{\newg} |u| |\nabla u|_{\newg}
	\, d \tvol
	+
	C
	\int_{\Sigma_t}
		\scale^{8/3} |\nabla^2 u|_{\newg} |U|_{\newg}
	\, d \tvol
		\\
& \leq
	C
	\int_{\Sigma_t}
		\scale^2 |\nabla f|_{\newg} |u|^2 
	\, d \tvol
	+
	\frac{1}{3}
	\int_{\Sigma_t}
		\scale^6 |\nabla f|_{\newg} |\nabla u|_{\newg}^2
	\, d \tvol
	\notag \\
& \ \
	+
	\frac{1}{2}
	\int_{\Sigma_t}
		\scale^{16/3} |\nabla^2 u|_{\newg}^2
	\, d \tvol
	+
	C
	\int_{\Sigma_t}
		|U|^2
	\, d \tvol.
	\notag
	\end{align}
	From \eqref{E:TOPORDERLAPSEELLIPTICALMOSTDERIVED},
	the lower bound on $f$ obtained above, the bound on $|\nabla f|_{\newg}$
	obtained above, and the already established bound
	$
	\scale^{4/3}
	\left\|
		u
	\right\|_{L_{\newg}^2(\Sigma_t)}
	\leq C
	\left\|
		U
	\right\|_{L_{\newg}^2(\Sigma_t)}
	$,
	we easily conclude the 
	desired bound \eqref{E:HIGHBASICELLIPTICESTIMATE}
	for $\nabla^2 u$.
	
	The proof of \eqref{E:LOWBASICELLIPTICESTIMATE}
	is similar 
	and relies on the bound 
	$
	\frac{2}{3} - C \varepsilon
	\leq
	\widetilde{f} \leq \frac{2}{3} + C \varepsilon$
	obtained in the proof of Lemma~\ref{L:MAXIMUMPRINCIPLEESTIMATE},
	where $\widetilde{f}$ is defined in 
	\eqref{E:ERRORTERMLOWORDERELLIPTICOPERATOR}
	and appears the definition \eqref{E:LOWORDERELLIPTICOPERATOR}
	of $\widetilde{\mathscr{L}}$; we omit the details.
\end{proof}

\subsection{Proof of Prop.~\ref{P:BOUNDFORLAPSEANDBELOWTOPMETRICINTERMSOFENERGIES}}
	\label{SS:PROOFOFPROPBOUNDFORLAPSEANDBELOWTOPMETRICINTERMSOFENERGIES}
We first prove \eqref{E:L2HIGHORDERLAPSEINTERMSOFENERGIES}.
Throughout we use silently use the fact that $\scale$ is decreasing on $[0,\TCrunch)$.
Using equation \eqref{E:COMMUTEDLAPSEPDERENORMALIZEDHIGHERDERIVATIVES}
and the elliptic estimate \eqref{E:HIGHBASICELLIPTICESTIMATE},
we deduce
\begin{align} \label{E:LAPSEHIGHFIRSTL2EST}
		&
		\sum_{L=0}^2
		\scale^{4/3 + (2/3)L}(t)
		\left\|
			\nabla^L \mathscr{Z}^{[1,M]} \newlapse
		\right\|_{L_{\newg}^2(\Sigma_t)}
			\\
	& \leq
		C
		\sum_{1 \leq |\vec{I}| \leq M}
				\left\lbrace
					2 \sqrt{\frac{2}{3}}
					\left\|
						\mathscr{Z}^{\vec{I}} \newtimescalar
					\right\|_{L_{\newg}^2(\Sigma_t)}
					+
					\left\|
						\CommutedLapseHighBorderInhom{\vec{I}}
					\right\|_{L_{\newg}^2(\Sigma_t)}
					+
					\scale^{4/3}(t)
					\left\|
						\CommutedLapseHighJunkInhom{\vec{I}}
					\right\|_{L_{\newg}^2(\Sigma_t)}
				\right\rbrace.
				\notag
\end{align}
From \eqref{E:LAPSEHIGHFIRSTL2EST},
\eqref{E:SFENERGYCOERCIVENESS},
\eqref{E:LAPSEHIGHBORDERINHOMPRELIMINARYL2},
\eqref{E:LAPSEHIGHJUNKINHOMPRELIMINARYL2},
\eqref{E:COMMUTEDMETRICL2BOUNDSINTERMSOFLAPSEANDENERGY},
and Cor.~\ref{C:SCALEFACTORTIMEINTEGRALS},
we deduce
\begin{align} \label{E:LAPSEHIGHFIRSTSOBOLEVGRONWALLREADY}
		&
		\sum_{L=0}^2
		\scale^{4/3 + (2/3)L}(t)
		\left\|
			\nabla^L \mathscr{Z}^{[1,M]} \newlapse
		\right\|_{L_{\newg}^2(\Sigma_t)}
			\\
	& \leq
		C \varepsilon^2 \scale^{4/3 - c \sqrt{\varepsilon}}(t) 
		+ 
		C 
		\SupTotalenergy{M}{\smallparameter_*}^{1/2}(t)
		+
		\underbrace{
		C \sqrt{\varepsilon} 
		\scale^{- c \varepsilon}(t) 
		\SupTotalenergy{M-1}{\smallparameter_*}^{1/2}(t)
		}_{\mbox{\upshape absent if $M=1$}}
		\notag		\\
	& \ \
		+
		C \sqrt{\varepsilon}
		\scale^{8/3 -c \sqrt{\varepsilon}}(t)
		\left\|
			\nabla^{\leq 1} \mathscr{Z}^{[1,M]} \newlapse
		\right\|_{L_{\newg}^2(\Sigma_t)}
		+
		\underbrace{
		C \sqrt{\varepsilon}
		\scale^{4/3 -c \sqrt{\varepsilon}}(t)
		\left\|
			\mathscr{Z}^{[1,M-1]} \newlapse
		\right\|_{L_{\newg}^2(\Sigma_t)}
		}_{\mbox{\upshape absent if $M=1$}}
		\notag
			\\
	& \ \
			+
			C \sqrt{\varepsilon}
			\int_{s=0}^t
				\scale^{5/3 -c \sqrt{\varepsilon}}(s)
				\left\|
					\mathscr{Z}^{[1,M]} \newlapse
				\right\|_{L_{\newg}^2(\Sigma_s)}
			\, ds.
			\notag
\end{align}
We now derive \eqref{E:L2HIGHORDERLAPSEINTERMSOFENERGIES}
using induction in $M$. 
Note that for $\varepsilon$ sufficiently small,
we can absorb the term
$
C \sqrt{\varepsilon}
		\scale^{8/3 -c \sqrt{\varepsilon}}(t)
		\left\|
			\nabla^{\leq 1} \mathscr{Z}^{[1,M]} \newlapse
		\right\|_{L_{\newg}^2(\Sigma_t)}
$
on RHS~\eqref{E:LAPSEHIGHFIRSTSOBOLEVGRONWALLREADY} back into the LHS.
Thus, in the base case $M=1$,
the estimate \eqref{E:L2HIGHORDERLAPSEINTERMSOFENERGIES}
follows from \eqref{E:LAPSEHIGHFIRSTSOBOLEVGRONWALLREADY},
Gronwall's inequality,
and Cor.~\ref{C:SCALEFACTORTIMEINTEGRALS}.
We now show how to obtain \eqref{E:L2HIGHORDERLAPSEINTERMSOFENERGIES}
in the case $M$ with the help of case $M-1$.
First, we use the induction hypothesis
to obtain the following bound for the next-to-last term on RHS~\eqref{E:LAPSEHIGHFIRSTSOBOLEVGRONWALLREADY}:
$
\sqrt{\varepsilon}
\scale^{4/3 -c \sqrt{\varepsilon}}(t)
		\left\|
			\mathscr{Z}^{[1,M-1]} \newlapse
		\right\|_{L_{\newg}^2(\Sigma_t)}
\lesssim 	
\varepsilon^2
\scale^{4/3 - c \sqrt{\varepsilon}}(t)
+	
\sqrt{\varepsilon} \scale^{- c \sqrt{\varepsilon}}
\SupTotalenergy{M-1}{\smallparameter_*}^{1/2}(t)
$.
%Moreover, using that $\scale$ is decreasing in time, 
%we bound the last term on RHS as follows:
%$
%C
%		\varepsilon
%		\scale^{4/3 - c \sqrt{\varepsilon}}(t)
%		\int_{s=0}^t
%		\scale^{1/3}(s)
%		\left\|
%			\mathscr{Z}^{\leq M} \newlapse
%		\right\|_{L_{\newg}^2(\Sigma_s)}
%		\, ds
%\leq
%C
%		\varepsilon
%		\int_{s=0}^t
%		\scale^{4/3}(s)
%		\left\|
%			\mathscr{Z}^{\leq M} \newlapse
%		\right\|_{L_{\newg}^2(\Sigma_s)}
%		\, ds
%$.
We now use this bound to control the 
next-to-last-term on RHS~\eqref{E:LAPSEHIGHFIRSTSOBOLEVGRONWALLREADY}
and then apply Gronwall's inequality,
thereby deducing, with the help of Cor.~\ref{C:SCALEFACTORTIMEINTEGRALS},
\eqref{E:L2HIGHORDERLAPSEINTERMSOFENERGIES} in the case $M$.	
We have therefore closed the induction, which proves \eqref{E:L2HIGHORDERLAPSEINTERMSOFENERGIES}.
	
	We now prove \eqref{E:L2LOWESTORDERLAPSEINTERMSOFENERGIES}.
	We start by using 
	equation \eqref{E:COMMUTEDLAPSEPDERENORMALIZEDLOWERDERIVATIVES}
	and the elliptic estimate \eqref{E:LOWBASICELLIPTICESTIMATE} 
	to deduce
	\begin{align} \label{E:LAPSELOWELLIPTICESTIMATESTILLHAVETOBOUNDINHOMOGENEOUS}
		\sum_{L=0}^2
		\scale^{(2/3) L}(t)
		\left\|
			\nabla^L \mathscr{Z}^{[1,M]} \newlapse
		\right\|_{L_{\newg}^2(\Sigma_t)}
		& \leq 
			C
			\sum_{1 \leq |\vec{I}| \leq M}
				\left\lbrace
					\left\|
						\CommutedLapseLowBorderInhom{\vec{I}}
					\right\|_{L_{\newg}^2(\Sigma_t)}
					+
					\scale^{4/3}(t)
					\left\|
						\CommutedLapseLowJunkInhom{\vec{I}}
					\right\|_{L_{\newg}^2(\Sigma_t)}
				\right\rbrace.
	\end{align}
	We then insert the estimates	
	\eqref{E:LAPSELOWBORDERINHOMPRELIMINARYL2}
	and
	\eqref{E:LAPSELOWJUNKINHOMPRELIMINARYL2}
	into
	RHS~\eqref{E:LAPSELOWELLIPTICESTIMATESTILLHAVETOBOUNDINHOMOGENEOUS}
	and use
	\eqref{E:COMMUTEDMETRICL2BOUNDSINTERMSOFLAPSEANDENERGY}-\eqref{E:COMMUTEDSFSPATIALINTERMSOFLAPSEANDENERGY}
	and Cor.~\ref{C:SCALEFACTORTIMEINTEGRALS},
	as well as the bound
	$
	\left\|
		\nabla^{\leq 1} \mathscr{Z}^{[1,M]} \newlapse
	\right\|_{L_{\newg}^2(\Sigma_s)}
	\lesssim
	\scale^{- c \sqrt{\varepsilon}}(s)
	\left\|
		\mathscr{Z}^{[1,M+1]} \newlapse
	\right\|_{L_{\newg}^2(\Sigma_s)}
	$
	(which follows from \eqref{E:IMPROVEDGNORMSTMETRICCOMPARISON})
	to deduce that for $c > 0$ chosen sufficiently large, we have
	\begin{align} \label{E:LAPSELOWFIRSTSOBOLEVGRONWALLREADY}
		&
		\scale^{c \sqrt{\varepsilon}}(t)
		\sum_{L=0}^2
		\scale^{(2/3)L}(t)
		\left\|
			\nabla^L \mathscr{Z}^{[1,M]} \newlapse
		\right\|_{L_{\newg}^2(\Sigma_t)}
			\\
	& \leq
			C
		 \varepsilon^2 \scale^{-c \sqrt{\varepsilon}}(t)
		+ 
		C
		\frac{1}{\sqrt{\varepsilon}} \SupTotalenergy{M+2}{\smallparameter_*}^{1/2}(t)
		+
		C
		\varepsilon
		\scale^{4/3 -c \sqrt{\varepsilon}}(t)
		\left\|
			\nabla^{\leq 1} \mathscr{Z}^{[1,M]} \newlapse
		\right\|_{L_{\newg}^2(\Sigma_t)}
			\notag \\
	& \ \
		+
		C
		\int_{s=0}^t
		\scale^{1/3 - c \sqrt{\varepsilon}}(s)
		\left\|
			\mathscr{Z}^{[1,M+2]} \newlapse
		\right\|_{L_{\newg}^2(\Sigma_s)}
		\, ds.
		\notag
\end{align}
Note that for $\varepsilon$ sufficiently small,
we can absorb the term 
$
C
		\varepsilon
		\scale^{4/3 -c \sqrt{\varepsilon}}(t)
		\left\|
			\nabla^{\leq 1} \mathscr{Z}^{[1,M]} \newlapse
		\right\|_{L_{\newg}^2(\Sigma_t)}
$
on RHS~\eqref{E:LAPSELOWFIRSTSOBOLEVGRONWALLREADY}
into the terms on LHS~\eqref{E:LAPSELOWFIRSTSOBOLEVGRONWALLREADY}.
Next, we use the already proven estimate \eqref{E:L2HIGHORDERLAPSEINTERMSOFENERGIES}
and Cor.~\ref{C:SCALEFACTORTIMEINTEGRALS}
to bound the last integral on RHS~\eqref{E:LAPSELOWFIRSTSOBOLEVGRONWALLREADY} as follows:
\begin{align} \label{E:LAPSEESIMATEUSINGALREADYPROVENHIGHORDERBOUNDS}
\int_{s=0}^t
		\scale^{1/3 - c \sqrt{\varepsilon}}(s)
		\left\|
			\mathscr{Z}^{[1,M+2]} \newlapse
		\right\|_{L_{\newg}^2(\Sigma_s)}
		\, ds
& \lesssim 
\int_{s=0}^t
		\left\lbrace
			\varepsilon^2
		 	\scale^{1/3-c\sqrt{\varepsilon}}(s)
		 	+
			\scale^{-1 - c \sqrt{\varepsilon}}(s)
			\SupTotalenergy{M+2}{\smallparameter_*}^{1/2}(s)
		\right\rbrace
\, ds
	\\
& \lesssim
		\varepsilon^2 
		+
		\frac{1}{\sqrt{\varepsilon}} \scale^{-c\sqrt{\varepsilon}}(t) \SupTotalenergy{M+2}{\smallparameter_*}^{1/2}(t).
	\notag
\end{align}
Substituting RHS~\eqref{E:LAPSEESIMATEUSINGALREADYPROVENHIGHORDERBOUNDS}
for the last term on RHS~\eqref{E:LAPSELOWFIRSTSOBOLEVGRONWALLREADY},
%and using Gronwall's inequality
%and \eqref{E:EXPONENTIATEDSCALEFACTORTIMEINTEGRALS},
we arrive at the desired estimate \eqref{E:L2LOWESTORDERLAPSEINTERMSOFENERGIES}.

	Next, we note that
	\eqref{E:COMMUTEDMETRICL2BOUNDSINTERMSOFENERGY} 
	follows from
	\eqref{E:COMMUTEDMETRICL2BOUNDSINTERMSOFLAPSEANDENERGY},
	\eqref{E:L2HIGHORDERLAPSEINTERMSOFENERGIES},
	\eqref{E:SCALEFACTORTIMEINTEGRALS}
	with $p = - 1$ and $p = - 1 - c \sqrt{\varepsilon}$,
	and \eqref{E:SCALEFACTORLOGPOWERBOUND}.
	Similarly,
	\eqref{E:BELOWTOPORDERDERIVATIVESOFSPATIALSACALARFIELD} 
	follows from
	\eqref{E:COMMUTEDSFSPATIALINTERMSOFLAPSEANDENERGY},
	\eqref{E:L2HIGHORDERLAPSEINTERMSOFENERGIES},
	\eqref{E:SCALEFACTORTIMEINTEGRALS}
	with $p = - 1$ and $p = - 1 - c \sqrt{\varepsilon}$,
	and \eqref{E:SCALEFACTORLOGPOWERBOUND}.
	
	We have therefore proved Prop.~\ref{P:BOUNDFORLAPSEANDBELOWTOPMETRICINTERMSOFENERGIES}.
	$\hfill \qed$

\subsection{Preliminary $L^2$ estimates for the non-differentiated solution variables}
\label{SS:L2ESTIMATESFORNONDIFFERNTIATED}
In this subsection, we derive preliminary $L^2$ estimates for the non-differentiated time-rescaled
solution variables, showing that they can be controlled by the energies. For convenience,
we allow these estimates to lose derivatives, which is permissible at the lowest order.

\begin{lemma}[\textbf{Preliminary $L^2$ estimates for the non-differentiated variables}]
\label{L:L2ESTIMATESFORNONDIFFERNTIATED}
	The following estimates hold:
	\begin{subequations}
		\begin{align}
		\left\|
			\newg - \StMet
		\right\|_{L_{\newg}^2(\Sigma_t)},
			\,
		\left\|
			\newg^{-1} - \StMet^{-1}
		\right\|_{L_{\newg}^2(\Sigma_t)}
		& \lesssim
			\varepsilon^2 \scale^{- c \sqrt{\varepsilon}}(t)
			+	
			\frac{1}{\sqrt{\varepsilon}}
			\scale^{-c \sqrt{\varepsilon}}(t)
			\SupTotalenergy{2}{\smallparameter_*}^{1/2}(t),
				\label{E:NONDIFFERENTIATEDMETRICL2} \\
		\left\|
			\FreeNewSec
		\right\|_{L_{\newg}^2(\Sigma_t)}
		& \lesssim
			\varepsilon^2 \scale^{- c \sqrt{\varepsilon}}(t)
			+
			\frac{1}{\sqrt{\varepsilon}}
			\scale^{- c \sqrt{\varepsilon}}(t) \SupTotalenergy{2}{\smallparameter_*}^{1/2}(t),
				\label{E:NONDIFFERENTIATEDSECONDFUNDL2}
					\\
		\left\|
			\newlapse
		\right\|_{L_{\newg}^2(\Sigma_t)}
		& \lesssim
			\varepsilon^2 \scale^{-c \sqrt{\varepsilon}}(t)
			+	
			\frac{1}{\sqrt{\varepsilon}}
			\scale^{-c \sqrt{\varepsilon}}(t)
			\SupTotalenergy{2}{\smallparameter_*}^{1/2}(t),
				\label{E:NONDIFFERENTIATEDLAPSEL2}
				\\
		\left\|
			\newtimescalar
		\right\|_{L_{\newg}^2(\Sigma_t)}
		& \lesssim
		\varepsilon^2 \scale^{- c \sqrt{\varepsilon}}(t)
		+
		\frac{1}{\sqrt{\varepsilon}}
		\scale^{-c \sqrt{\varepsilon}}(t)
		\SupTotalenergy{2}{\smallparameter_*}^{1/2}(t),
			\label{E:NONDIFFERENTIATEDTIMESCALARFIELDL2} \\
		\left\|
			\newspacescalar
		\right\|_{L_{\newg}^2(\Sigma_t)}
		& \lesssim
		\varepsilon^2 \scale^{-c \sqrt{\varepsilon}}(t)
		+
		\frac{1}{\sqrt{\varepsilon}}
		\scale^{-c \sqrt{\varepsilon}}(t)
		\SupTotalenergy{1}{\smallparameter_*}^{1/2}(t).
		\label{E:NONDIFFERENTIATEDSPACESCALARFIELDL2}
		\end{align}
	\end{subequations}
\end{lemma}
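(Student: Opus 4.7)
The plan is to derive each of the six estimates by combining the time-derivative pointwise bounds from Lemma~\ref{L:POINTWISEESTIMATESBASELEVEL}, the elliptic estimates of Lemma~\ref{L:BASICELLIPTICESTIMATES}, the Gronwall-type inequalities \eqref{E:BASICGRONWALLREADYESTIMATEFORATENSORFIELD}--\eqref{E:BASICGRONWALLESTIMATEFORATENSORFIELD}, and the already-established $L^2$ bounds of Prop.~\ref{P:BOUNDFORLAPSEANDBELOWTOPMETRICINTERMSOFENERGIES} for the \emph{differentiated} solution variables. Since the norm bound $\highnorm{16}(0) = \varepsilon^2$ controls $L^2_{\StMet}$ norms of all the relevant quantities (hence, by Lemma~\ref{L:L2NORMSCOMPARISONESTIMATES}, also $L^2_{\newg}$ norms) and $\scale(0)=1$, each non-differentiated variable starts with an $\mathcal{O}(\varepsilon^2)$ seed.

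First I would establish \eqref{E:NONDIFFERENTIATEDSPACESCALARFIELDL2} in isolation: the pointwise inequality \eqref{E:GNORMPOINTWISESFSPACEDERIVATIVETIMEDERIVATIVE} controls $\|\partial_t \newspacescalar\|_{L^2_{\newg}}$ by $\|\mathscr{Z}\newtimescalar\|_{L^2_{\newg}} \leq \Sfenergy{1}^{1/2}$ and $\|\mathscr{Z}\newlapse\|_{L^2_{\newg}}$ (the latter bounded via \eqref{E:L2HIGHORDERLAPSEINTERMSOFENERGIES} with $M=1$, yielding $\|\mathscr{Z}\newlapse\|_{L^2_{\newg}} \lesssim \varepsilon^2 \scale^{-c\sqrt{\varepsilon}} + \scale^{-4/3} \SupTotalenergy{1}{\smallparameter_*}^{1/2}$). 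Feeding this into \eqref{E:BASICGRONWALLESTIMATEFORATENSORFIELD} and using the identity $\int_0^t \scale^{-1-c\sqrt{\varepsilon}}(s)\, ds \lesssim \varepsilon^{-1/2}\scale^{-c\sqrt{\varepsilon}}(t)$ from Cor.~\ref{C:SCALEFACTORTIMEINTEGRALS} yields \eqref{E:NONDIFFERENTIATEDSPACESCALARFIELDL2}. Next I would bound $\newlapse$ purely algebraically: the elliptic estimate \eqref{E:LOWBASICELLIPTICESTIMATE} applied to equation \eqref{E:LAPSEPDERENORMALIZEDLOWERDERIVATIVES}, combined with the pointwise bound \eqref{E:LOWESTORDERELLIPTICLAPSEINHOMOGENEOUSTERMPOINTWISE}, expresses $\|\newlapse\|_{L^2_{\newg}}$ in terms of $\|\SigmatLie_{\mathscr{Z}}^{\leq 2}(\newg - \StMet)\|_{L^2_{\newg}}$, $\|\newg^{-1} - \StMet^{-1}\|_{L^2_{\newg}}$, and $\|\newspacescalar\|_{L^2_{\newg}}$. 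The differentiated pieces $\|\SigmatLie_{\mathscr{Z}}^{[1,2]}\newg\|_{L^2_{\newg}}$ are absorbed by \eqref{E:COMMUTEDMETRICL2BOUNDSINTERMSOFENERGY} with $M=2$, the $\newspacescalar$ piece (carrying a favorable $\sqrt{\varepsilon}$ prefactor) by the step just completed, while the residual undifferentiated couplings to $\|\newg - \StMet\|_{L^2_{\newg}}$ and $\|\newg^{-1} - \StMet^{-1}\|_{L^2_{\newg}}$ are postponed to the coupled Gronwall below.

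For the remaining four quantities $\newg - \StMet$, $\newg^{-1} - \StMet^{-1}$, $\FreeNewSec$, and $\newtimescalar$, I would run a simultaneous Gronwall argument. Defining $Q(t)$ to be the sum of their $L^2_{\newg}$ norms, applying the pointwise bounds \eqref{E:GNORMPOINTWISEMETRICTIMEDERIVATIVE}, \eqref{E:GNORMPOINTWISESECONDFUNDTIMEDERIVATIVE}, and \eqref{E:GNORMPOINTWISESFTIMEDERIVATIVETIMEDERIVATIVE} to the respective time derivatives, substituting the algebraic bound for $\|\newlapse\|_{L^2_{\newg}}$ from the previous step, controlling all differentiated terms via Prop.~\ref{P:BOUNDFORLAPSEANDBELOWTOPMETRICINTERMSOFENERGIES} and \eqref{E:NONDIFFERENTIATEDSPACESCALARFIELDL2}, and summing the Gronwall-ready versions of \eqref{E:BASICGRONWALLREADYESTIMATEFORATENSORFIELD}, I expect to reach an inequality of the schematic form
\[
Q(t) \leq C\varepsilon^2 + C\varepsilon\int_0^t \scale^{-1}(s)\,Q(s)\,ds + C\int_0^t \scale^{1/3-c\sqrt{\varepsilon}}(s)\,Q(s)\,ds + C\varepsilon^2\scale^{-c\sqrt{\varepsilon}}(t) + \frac{C}{\sqrt{\varepsilon}}\scale^{-c\sqrt{\varepsilon}}(t)\SupTotalenergy{2}{\smallparameter_*}^{1/2}(t).
\]
Gronwall's inequality together with \eqref{E:EXPONENTIATEDSCALEFACTORTIMEINTEGRALS} (which turns the $\varepsilon\int\scale^{-1}$ term into a factor of $\scale^{-c\varepsilon}$, harmless after re-tuning the constant $c$) then yields the desired bound for $Q(t)$, producing \eqref{E:NONDIFFERENTIATEDMETRICL2}, \eqref{E:NONDIFFERENTIATEDSECONDFUNDL2}, and \eqref{E:NONDIFFERENTIATEDTIMESCALARFIELDL2} simultaneously. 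Re-substituting $Q(t)$ into the algebraic expression for $\|\newlapse\|_{L^2_{\newg}}$ then yields \eqref{E:NONDIFFERENTIATEDLAPSEL2}.

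The main obstacle I anticipate is verifying that the coupled system closes cleanly: the $\scale^{-1}$ factor multiplying $\FreeNewSec$ in \eqref{E:GNORMPOINTWISEMETRICTIMEDERIVATIVE} is the most singular coefficient in the entire system, and the $\scale^{1/3-c\sqrt{\varepsilon}}|\SigmatLie_{\mathscr{Z}}^{\leq 2}(\newg - \StMet)|_{\newg}$ term in \eqref{E:GNORMPOINTWISESECONDFUNDTIMEDERIVATIVE} feeds the undifferentiated $\newg - \StMet$ back into the $\FreeNewSec$ evolution. The closure hinges on the observation that composing $\int_0^t\scale^{-1}(\cdot)$ with $\int_0^s \scale^{1/3-c\sqrt{\varepsilon}}(\cdot)$ produces an iterated kernel of order $|\ln \scale(t)|$, which by \eqref{E:SCALEFACTORLOGPOWERBOUND} is bounded by $\varepsilon^{-1/2}\scale^{-c\sqrt{\varepsilon}}(t)$; hence the bootstrap terminates with exactly the loss $\varepsilon^{-1/2}\scale^{-c\sqrt{\varepsilon}}(t)$ seen throughout Prop.~\ref{P:BOUNDFORLAPSEANDBELOWTOPMETRICINTERMSOFENERGIES}. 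One must also be mindful that the various appearances of $\scale^{-c\sqrt{\varepsilon}}$ generated by Lemma~\ref{L:PRELIMINARYCOMPARISON} and Cor.~\ref{C:IMPROVEMENTLEMMAOPERATORCOMPARISON} along the way be collected into a single exponent, which is permitted since $c$ is allowed to vary from line to line.
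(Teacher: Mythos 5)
Your high-level plan --- treat $\newspacescalar$ in isolation, express $\newlapse$ algebraically in terms of the other variables via the elliptic estimate \eqref{E:LOWBASICELLIPTICESTIMATE}, and then close the remaining quantities via Gronwall --- matches the paper's proof. However, the single combined Gronwall you propose for the four quantities $\newg - \StMet$, $\newg^{-1} - \StMet^{-1}$, $\FreeNewSec$, $\newtimescalar$ does not work as written, and your schematic inequality for $Q$ is not correct. If $Q$ bundles $\|\newg-\StMet\|_{L^2_\newg}$ together with $\|\FreeNewSec\|_{L^2_\newg}$, then the pointwise bound \eqref{E:GNORMPOINTWISEMETRICTIMEDERIVATIVE}, $|\SigmatLie_{\partial_t}(\newg - \StMet)|_{\newg} \lesssim \scale^{-1}|\FreeNewSec|_{\newg} + \scale^{1/3}|\newlapse|$, feeds a $C\int_0^t\scale^{-1}(s)Q(s)\,ds$ term into the $Q$ inequality with an $O(1)$ constant, not the $C\varepsilon$ prefactor you wrote. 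The $C\varepsilon\int\scale^{-1}Q$ in your schematic comes only from the metric-differentiation correction in \eqref{E:BASICGRONWALLREADYESTIMATEFORATENSORFIELD}; the nonlinear coupling has no small factor. Since $\int_0^t\scale^{-1} \sim |\ln\scale(t)| \to \infty$ and $\exp(|\ln\scale|) \sim \scale^{-1}$, a direct Gronwall on your $Q$ would produce a blowup rate of $\scale^{-1}(t)$, vastly worse than the claimed $\scale^{-c\sqrt{\varepsilon}}(t)$.

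Your ``iterated kernel'' remark at the end identifies exactly the correct fix, but it contradicts your stated schematic and is not actually carried out. The paper's resolution is a two-stage nested Gronwall that exploits the \emph{asymmetric} coupling: it defines $Q := \sup_{s\in[0,t]}\bigl(\|\FreeNewSec\|_{L^2_\newg(\Sigma_s)} + \|\newtimescalar\|_{L^2_\newg(\Sigma_s)}\bigr)$ only (not the metric), first Gronwalls the metric inequality \eqref{E:METRICBASELEVELFIRSTL2BOUND} treating $Q$ as a known source, which produces the bound \eqref{E:L2METRICLOWESTORDERGRONWALLED} with a $(1 + |\ln\scale(t)|)\scale^{-c\sqrt{\varepsilon}}(t)Q(t)$ dependence, and then substitutes this back into the $\FreeNewSec$ and $\newtimescalar$ inequalities. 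The resulting kernel for $Q$ is $\scale^{1/3-c\sqrt{\varepsilon}}(s)(1+|\ln\scale(s)|)$, which is \emph{bounded} on $[0,\TCrunch)$, so that Gronwall's inequality (in the weighted quantity $\scale^{c\sqrt{\varepsilon}}Q$) closes without any additional loss. Your intuition that composing $\int\scale^{-1}$ with $\int\scale^{1/3}$ tames the singularity is correct, but you must implement it by grouping the variables into the two asymmetric blocks and swapping the order of Gronwall steps; treating all four symmetrically is precisely what your hedging paragraph warns against and what the paper's block structure is designed to avoid.
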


\begin{proof}
	The proof is a combination of elliptic estimates for the lapse and Gronwall estimates, carried out
	in an appropriate order. We start with the elliptic estimates.
	From \eqref{E:LOWESTORDERELLIPTICLAPSEINHOMOGENEOUSTERMPOINTWISE}
	and \eqref{E:COMMUTEDMETRICL2BOUNDSINTERMSOFENERGY},
	we deduce
	$
	\left\|
		\widetilde{\mathscr{L}} \newlapse
	\right\|_{L_{\newg}^2(\Sigma_t)}
	\leq	
		C
		\varepsilon^2
		\scale^{- c \sqrt{\varepsilon}}(t)
		+
		C
		\scale^{- c \sqrt{\varepsilon}}(t)
		\left\|
			\newg - \StMet
		\right\|_{L_{\newg}^2(\Sigma_t)}
		+
		C
		\scale^{- c \sqrt{\varepsilon}}(t)
		\left\|
			\newg^{-1} - \StMet^{-1}
		\right\|_{L_{\newg}^2(\Sigma_t)} 
		+
			C \sqrt{\varepsilon}
			\scale^{- c \sqrt{\varepsilon}}(t)
			\left\|
				\newspacescalar
			\right\|_{L_{\newg}^2(\Sigma_t)}
		+
		\frac{C}{\sqrt{\varepsilon}}
		\scale^{- c \sqrt{\varepsilon}}(t)
		\SupTotalenergy{2}{\smallparameter_*}^{1/2}(t)
	$.
	From this bound and \eqref{E:LOWBASICELLIPTICESTIMATE},
	we obtain
	\begin{align} \label{E:LOWESTORDERLAPSEL2PRELIM}
	\left\|
		\newlapse
	\right\|_{L_{\newg}^2(\Sigma_t)}
	& \leq
		C \varepsilon^2 \scale^{- c \sqrt{\varepsilon}}(t) 
		+ 
		C
		\scale^{- c \sqrt{\varepsilon}}(t)
		\left\|
			\newg - \StMet
		\right\|_{L_{\newg}^2(\Sigma_t)}
		+
		C
		\scale^{- c \sqrt{\varepsilon}}(t)
		\left\|
			\newg^{-1} - \StMet^{-1}
		\right\|_{L_{\newg}^2(\Sigma_t)}
			\\
	& \ \ 
		+
		C \sqrt{\varepsilon} \scale^{- c \sqrt{\varepsilon}}(t)
		\left\|
			\newspacescalar
		\right\|_{L_{\newg}^2(\Sigma_t)}
		+
		\frac{C}{\sqrt{\varepsilon}}
		\scale^{- c \sqrt{\varepsilon}}(t)
		\SupTotalenergy{2}{\smallparameter_*}^{1/2}(t).
		\notag
	\end{align}
	
	Next, we use
	Lemma~\ref{L:POINTWISEESTIMATESBASELEVEL},
	\eqref{E:BASICGRONWALLESTIMATEFORATENSORFIELD},
	Prop.~\ref{P:BOUNDFORLAPSEANDBELOWTOPMETRICINTERMSOFENERGIES},
	the small-data bound \eqref{E:SMALLDATA},
	and Lemma~\ref{L:L2NORMSCOMPARISONESTIMATES}
	to deduce
	\begin{align} \label{E:SPACESFL2LOWESTORDERFIRSTEST}
	\left\|
		\newspacescalar
	\right\|_{L_{\newg}^2(\Sigma_t)}
	& \leq
			C \varepsilon^2
			\scale^{- c \sqrt{\varepsilon}}(t)
			+
			C
			\scale^{- c \sqrt{\varepsilon}}(t)
			\int_{s=0}^t
				\scale^{-1 - c \sqrt{\varepsilon}} \SupTotalenergy{1}{\smallparameter_*}^{1/2}(s)
			\, ds.
\end{align}
From \eqref{E:SPACESFL2LOWESTORDERFIRSTEST}
and \eqref{E:SCALEFACTORTIMEINTEGRALS} with 
$p = - 1 - c \sqrt{\varepsilon}$,
we conclude the desired bound
\eqref{E:NONDIFFERENTIATEDSPACESCALARFIELDL2}.
		
	Next, we use
	Lemma~\ref{L:POINTWISEESTIMATESBASELEVEL},
	\eqref{E:BASICGRONWALLESTIMATEFORATENSORFIELD},
	Prop.~\ref{P:BOUNDFORLAPSEANDBELOWTOPMETRICINTERMSOFENERGIES},
	the small-data bound \eqref{E:SMALLDATA},
	Lemma~\ref{L:L2NORMSCOMPARISONESTIMATES}
	and the already proven bound \eqref{E:NONDIFFERENTIATEDSPACESCALARFIELDL2}
	to deduce
	\begin{align}
	\left\|
		\newg - \StMet
	\right\|_{L_{\newg}^2(\Sigma_t)},
		\,
	\left\|
		\newg^{-1} - \StMet^{-1}
	\right\|_{L_{\newg}^2(\Sigma_t)}
	& \leq
			C \varepsilon^2
			\scale^{- c \sqrt{\varepsilon}}(t)
			+
			C
			\scale^{- c \sqrt{\varepsilon}}(t)
			\int_{s=0}^t
			\scale^{-1}(s)
				\left\|
					\FreeNewSec
				\right\|_{L_{\newg}^2(\Sigma_s)}
			\, ds
				\label{E:METRICBASELEVELFIRSTL2BOUND} \\
		& \ \
			+
			C
			\scale^{- c \sqrt{\varepsilon}}(t)
			\int_{s=0}^t
			\scale^{1/3}(s)
				\left\|
					\newlapse
				\right\|_{L_{\newg}^2(\Sigma_s)}
			\, ds,
		   \notag \\
	\left\|
		\FreeNewSec
	\right\|_{L_{\newg}^2(\Sigma_t)}
	& \leq
			C \varepsilon^2
			\scale^{- c \sqrt{\varepsilon}}(t)
			+
			C
			\scale^{- c \sqrt{\varepsilon}}(t)
			\int_{s=0}^t
				\scale^{1/3 - c \sqrt{\varepsilon}}(s)
				\left\|
					\newg - \StMet
				\right\|_{L_{\newg}^2(\Sigma_s)}
			\, ds
				\label{E:SECONDFUNDBASELEVELFIRSTL2BOUND} \\
	& \ \
			+
			C
			\scale^{- c \sqrt{\varepsilon}}(t)
			\int_{s=0}^t
				\scale^{1/3 - c \sqrt{\varepsilon}}(s)
				\left\|
					\newg^{-1} - \StMet^{-1}
				\right\|_{L_{\newg}^2(\Sigma_s)}
			\, ds
				\notag \\
			& \ \
			+
			C
			\scale^{- c \sqrt{\varepsilon}}(t)
			\int_{s=0}^t
			\scale^{1/3}(s)
				\left\|
					\newlapse
				\right\|_{L_{\newg}^2(\Sigma_s)}
			\, ds
				\notag \\
		& \ \
			+
			C
			\scale^{- c \sqrt{\varepsilon}}(t)
			\int_{s=0}^t
				\scale^{1/3 - c \sqrt{\varepsilon}}(s)
				\SupTotalenergy{1}{\smallparameter_*}^{1/2}(s)
			\, ds,
			\notag
				\\
	\left\|
		\newtimescalar
	\right\|_{L_{\newg}^2(\Sigma_t)}
	& \leq
			C \varepsilon^2 \scale^{- c \sqrt{\varepsilon}}(t)
			+
			C
			\scale^{- c \sqrt{\varepsilon}}(t)
			\int_{s=0}^t
			\scale^{1/3}(s)
				\left\|
					\newlapse
				\right\|_{L_{\newg}^2(\Sigma_s)}
			\, ds
			\label{E:TIMESFL2LOWESTORDERFIRSTEST}		\\
	& \ \
	   +
			\frac{C}{\sqrt{\varepsilon}}
			\scale^{- c \sqrt{\varepsilon}}(t)
			\int_{s=0}^t
				\scale^{1/3 - c \sqrt{\varepsilon}}(s)
				\SupTotalenergy{2}{\smallparameter_*}^{1/2}(s)
			\, ds.
				\notag	
\end{align}

We now set
\begin{align} \label{E:BASELEVELNONMETRICQUANTITIES}
Q(t) 
:= 
\sup_{s \in [0,t]}
\left\lbrace
	\left\|
		\FreeNewSec
	\right\|_{L_{\newg}^2(\Sigma_s)}
	+
	\left\|
		\newtimescalar
	\right\|_{L_{\newg}^2(\Sigma_s)}
\right\rbrace.
\end{align}

From 
\eqref{E:LOWESTORDERLAPSEL2PRELIM},
\eqref{E:METRICBASELEVELFIRSTL2BOUND},
the already proven bound \eqref{E:NONDIFFERENTIATEDSPACESCALARFIELDL2},
definition \eqref{E:BASELEVELNONMETRICQUANTITIES}, 
Cor.~\ref{C:SCALEFACTORTIMEINTEGRALS},
we obtain
\begin{align} \label{E:METRICBASELEVELL2SECONDBOUND}
	&
	\left\|
		\newg - \StMet
	\right\|_{L_{\newg}^2(\Sigma_t)}
	+
	\left\|
		\newg^{-1} - \StMet^{-1}
	\right\|_{L_{\newg}^2(\Sigma_t)}
		\\
	& \leq
			C \varepsilon^2 \scale^{- c \sqrt{\varepsilon}}(t)
			+ 
			C
			\scale^{- c \sqrt{\varepsilon}}(t)
			\int_{s=0}^t
			\scale^{-1}(s)
				Q(s)
			\, ds
			\notag \\
	& \ \
			+
			\frac{C}{\sqrt{\varepsilon}}
			\scale^{- c \sqrt{\varepsilon}}(t)
			\int_{s=0}^t
				\scale^{1/3 - c \sqrt{\varepsilon}}(s)
				\SupTotalenergy{2}{\smallparameter_*}^{1/2}(s)
			\, ds
			\notag	\\
	& \ \
			+
			C
			\scale^{- c \sqrt{\varepsilon}}(t)
			\int_{s=0}^t
				\scale^{1/3 - c \sqrt{\varepsilon}}(s)
				\left\lbrace
					\left\|
						\newg - \StMet
					\right\|_{L_{\newg}^2(\Sigma_s)}
					+
					\left\|
						\newg^{-1} - \StMet^{-1}
					\right\|_{L_{\newg}^2(\Sigma_s)}
				\right\rbrace
			\, ds
			\notag	\\
		& \leq
			C \varepsilon^2 \scale^{- c \sqrt{\varepsilon}}(t)
			+ 
			C (1 + |\ln \scale(t)|)\scale^{- c \sqrt{\varepsilon}}(t) Q(t)
			+
			\frac{C}{\sqrt{\varepsilon}} \scale^{- c \sqrt{\varepsilon}}(t) \SupTotalenergy{2}{\smallparameter_*}^{1/2}(t)
				\notag \\
		& \ \
			+
			C
			\scale^{- c \sqrt{\varepsilon}}(t)
			\int_{s=0}^t
				\scale^{1/3 - c \sqrt{\varepsilon}}(s)
				\left\lbrace
					\left\|
						\newg - \StMet
					\right\|_{L_{\newg}^2(\Sigma_s)}
					+
					\left\|
						\newg^{-1} - \StMet^{-1}
					\right\|_{L_{\newg}^2(\Sigma_s)}
				\right\rbrace
			\, ds.
			\notag
	\end{align}
	From \eqref{E:METRICBASELEVELL2SECONDBOUND},
	Gronwall's inequality in the quantity
	$
	\scale^{c \sqrt{\varepsilon}}(t)
	\left\lbrace
		\left\|
		\newg - \StMet
	\right\|_{L_{\newg}^2(\Sigma_t)}
	+
	\left\|
		\newg^{-1} - \StMet^{-1}
	\right\|_{L_{\newg}^2(\Sigma_t)}
	\right\rbrace
	$,
	Lemma~\ref{L:ANALYSISOFFRIEDMANN},
	and Cor.~\ref{C:SCALEFACTORTIMEINTEGRALS}, we deduce
	\begin{align} \label{E:L2METRICLOWESTORDERGRONWALLED}
	\left\|
		\newg - \StMet
	\right\|_{L_{\newg}^2(\Sigma_t)}
	+
	\left\|
		\newg^{-1} - \StMet^{-1}
	\right\|_{L_{\newg}^2(\Sigma_t)}
	& \leq
			C \varepsilon^2 \scale^{- c \sqrt{\varepsilon}}(t)
			+ 
			C (1 + |\ln \scale(t)|)\scale^{- c \sqrt{\varepsilon}}(t) Q(t)
				\\
	& \ \
			+
			\frac{C}{\sqrt{\varepsilon}} \scale^{- c \sqrt{\varepsilon}}(t) \SupTotalenergy{2}{\smallparameter_*}^{1/2}(t).
			\notag
	\end{align}
	
	Next, from \eqref{E:SECONDFUNDBASELEVELFIRSTL2BOUND}, 
	\eqref{E:TIMESFL2LOWESTORDERFIRSTEST},
	definition \eqref{E:BASELEVELNONMETRICQUANTITIES},
	\eqref{E:LOWESTORDERLAPSEL2PRELIM},
	\eqref{E:L2METRICLOWESTORDERGRONWALLED},
	the already proven bound \eqref{E:NONDIFFERENTIATEDSPACESCALARFIELDL2},
	\eqref{E:SCALEFACTORLOGPOWERBOUND},
	and Cor.~\ref{C:SCALEFACTORTIMEINTEGRALS},
	we deduce
	\begin{align} \label{E:BASEQUANTITYGRONWALLREADY}
		Q(t)
		& \leq C \varepsilon^2 \scale^{- c \sqrt{\varepsilon}}(t)
			+
			\frac{1}{\sqrt{\varepsilon}}
			\scale^{- c \sqrt{\varepsilon}}(t)
			\SupTotalenergy{2}{\smallparameter_*}^{1/2}(t)
			+
			C \scale^{- c \sqrt{\varepsilon}}(t)
			\int_{s=0}^t
				\scale^{1/3 - c \sqrt{\varepsilon}}(s) (1 + |\ln \scale(s)|) Q(s)
			\, ds.
	\end{align}
	From Lemma~\ref{L:ANALYSISOFFRIEDMANN},
	we see that the following bound holds
	for the factor in the integral on RHS~\eqref{E:BASEQUANTITYGRONWALLREADY}
	for $s \in [0,\TCrunch)$:
	$\scale^{1/3 - c \sqrt{\varepsilon}}(s) (1 + |\ln \scale(s)|) \leq C$.
	Hence, from
	\eqref{E:BASEQUANTITYGRONWALLREADY}
	and Gronwall's inequality in the quantity $\scale^{c \sqrt{\varepsilon}}(t) Q(t)$, 
	we deduce
	\begin{align}  \label{E:BASEFACTORGRONWALLED}
		Q(t)
		& \leq C \varepsilon^2 \scale^{- c \sqrt{\varepsilon}}(t)
			+
			C
			\frac{1}{\sqrt{\varepsilon}}
			\scale^{- c \sqrt{\varepsilon}}(t)
			\SupTotalenergy{2}{\smallparameter_*}^{1/2}(t).
	\end{align}
	
	Next, from 
	\eqref{E:L2METRICLOWESTORDERGRONWALLED},
	\eqref{E:BASEFACTORGRONWALLED},
	and \eqref{E:SCALEFACTORLOGPOWERBOUND},
	we obtain
	\begin{align} \label{E:L2METRICLOWESTORDERFINALEST}
	\left\|
		\newg - \StMet
	\right\|_{L_{\newg}^2(\Sigma_t)}
	+
	\left\|
		\newg^{-1} - \StMet^{-1}
	\right\|_{L_{\newg}^2(\Sigma_t)}
	& \leq
			C \varepsilon^2 \scale^{- c \sqrt{\varepsilon}}(t)
			+
			C 
			\frac{1}{\sqrt{\varepsilon}}
			\scale^{- c \sqrt{\varepsilon}}(t)
			\SupTotalenergy{2}{\smallparameter_*}^{1/2}(t).
	\end{align}
	Next, from
	\eqref{E:LOWESTORDERLAPSEL2PRELIM},
	\eqref{E:BASEFACTORGRONWALLED},
	\eqref{E:L2METRICLOWESTORDERFINALEST},
	and the already proven bound \eqref{E:NONDIFFERENTIATEDSPACESCALARFIELDL2},
	we conclude
	\begin{align} \label{E:LOWESTORDERLAPSEL2FINALEST}
	\left\|
		\newlapse
	\right\|_{L_{\newg}^2(\Sigma_t)}
	& \lesssim
		\varepsilon^2 \scale^{- c \sqrt{\varepsilon}}(t)
		+
		\frac{1}{\sqrt{\varepsilon}}
		\scale^{- c \sqrt{\varepsilon}}(t)
		\SupTotalenergy{2}{\smallparameter_*}^{1/2}(t).
	\end{align}
	
	In view of definition \eqref{E:BASELEVELNONMETRICQUANTITIES},
	we see that
	\eqref{E:BASEFACTORGRONWALLED},
	\eqref{E:L2METRICLOWESTORDERFINALEST},
	and
	\eqref{E:LOWESTORDERLAPSEL2FINALEST}
	yield the remaining four desired estimates \eqref{E:NONDIFFERENTIATEDMETRICL2}-\eqref{E:NONDIFFERENTIATEDTIMESCALARFIELDL2},
	which completes the proof of the lemma.
	
\end{proof}

\section{$L^2$ estimates for the error terms in terms of the energies}
\label{S:L2BOUNDSFORTHEERRORTERMS}

In the next proposition, we control all of the error terms
in the $\Lie_{\mathscr{Z}}^{\vec{I}}$-commuted equations
in terms of the energies.

\begin{proposition}[$L^2$ \textbf{Estimates for the error terms in terms of the energies}]
\label{P:L2BOUNDSFORTHEERRORTERMS}
Let $\vec{I}$ be a $\mathscr{Z}$-multi-index with
$1 \leq |\vec{I}| \leq 16$
and let $\SupTotalenergy{M}{\smallparameter_*}(t)$
be the energy defined in \eqref{E:SUPTOTALENERGY},
where $\smallparameter_* > 0$ is the constant
from the statement of Prop.~\ref{P:FUNDAMENTALENERGYINTEGRALINEQUALITY}.
The inhomogeneous terms in the commuted equations
of Sect.\ \ref{S:COMMUTEDEQUATIONS} verify the following $L^2$ estimates:
\begingroup
\allowdisplaybreaks
\begin{subequations}
\begin{align}
	\left\| 
		\CommutedMomBorderInhomUp{\vec{I}}
	\right\|_{L_{\newg}^2(\Sigma_t)}^2,
	\left\| 
		\CommutedMomBorderInhomDown{\vec{I}}
	\right\|_{L_{\newg}^2(\Sigma_t)}^2
	& \lesssim
		\varepsilon^4 \scale^{-4/3 - c \sqrt{\varepsilon}}(t)
			\\
	& \ \
		+
		\varepsilon
		\scale^{-4/3}(t)
		\SupTotalenergy{|\vec{I}|}{\smallparameter_*}(t)
		+
		\scale^{- c \sqrt{\varepsilon}}(t)
		\SupTotalenergy{|\vec{I}|}{\smallparameter_*}(t),
			\notag \\
	%& \ \
	%	+
	%	\varepsilon
	%	\scale^{- c \sqrt{\varepsilon}}
	%	\left\|
	%		\SigmatLie_{\mathscr{Z}}^{[1,|\vec{I}|]} \newg
	%	\right\|_{L_{\newg}^2(\Sigma_t)}^2,
	%	\notag	\\
	\left\| 
		 \CommutedSecFunBorderInhom{\vec{I}} 
	\right\|_{L_{\newg}^2(\Sigma_t)}^2
	& \lesssim
		\varepsilon^4 \scale^{-8/3 - c \sqrt{\varepsilon}}(t)
			\\
	& \ \
		+
		\varepsilon^2
		\scale^{-8/3}(t)
		\SupTotalenergy{|\vec{I}|}{\smallparameter_*}(t)
		+
		\underbrace{
		\varepsilon^2
		\scale^{-8/3 - c \sqrt{\varepsilon}}(t)
		\SupTotalenergy{|\vec{I}|-1}{\smallparameter_*}(t)}_{\mbox{\upshape absent if $|\vec{I}|=1$}},
		\notag	\\
	\left\| 
		\CommutedGradMetBorderInhom{\vec{I}} 
	\right\|_{L_{\newg}^2(\Sigma_t)}^2
	& \lesssim
		\varepsilon^4 \scale^{- c \sqrt{\varepsilon}}(t)
			\\
	& \ \
		+
		\varepsilon
		\scale^{-4/3}(t)
		\SupTotalenergy{|\vec{I}|}{\smallparameter_*}(t)
		+
		\scale^{- c \sqrt{\varepsilon}}(t)
		\SupTotalenergy{|\vec{I}|}{\smallparameter_*}(t)
			\notag \\
	& \ \
		+
		\underbrace{
		\varepsilon
		\scale^{-4/3 - c \sqrt{\varepsilon}}(t)
		\SupTotalenergy{|\vec{I}|-1}{\smallparameter_*}(t)}_{\mbox{\upshape absent if $|\vec{I}|=1$}},
		\notag	\\
	%& \ \
	%	+
	%	\varepsilon
	%	\scale^{- c \sqrt{\varepsilon}}
	%	\left\|
	%		\SigmatLie_{\mathscr{Z}}^{[1,|\vec{I}|]} \newg
	%	\right\|_{L_{\newg}^2(\Sigma_t)}^2,
	%	\notag	\\
	\left\| 
		 \CommutedTimeSfBorderInhom{\vec{I}} 
	\right\|_{L_{\newg}^2(\Sigma_t)}^2
	& \lesssim
		\varepsilon^4 \scale^{-8/3 - c \sqrt{\varepsilon}}(t)
			\\
	& \ \
		+
		\varepsilon^2
		\scale^{-8/3}(t)
		\SupTotalenergy{|\vec{I}|}{\smallparameter_*}(t)
		+
		\underbrace{
		\varepsilon^2
		\scale^{-8/3 - c \sqrt{\varepsilon}}(t)
		\SupTotalenergy{|\vec{I}|-1}{\smallparameter_*}(t)}_{\mbox{\upshape absent if $|\vec{I}|=1$}},
		\notag	\\
	\left\| 
		\CommutedSpaceSfBorderInhom{\vec{I}}
	\right\|_{L_{\newg}^2(\Sigma_t)}^2
	& \lesssim
		\varepsilon^4 \scale^{- c \sqrt{\varepsilon}}(t)
		\\
	& \ \
		+
		\varepsilon
		\scale^{-4}(t)
		\SupTotalenergy{|\vec{I}|}{\smallparameter_*}(t)
		+
		\underbrace{
		\varepsilon
		\scale^{-4-c \sqrt{\varepsilon}}(t)
		\SupTotalenergy{|\vec{I}|-1}{\smallparameter_*}(t)}_{\mbox{\upshape absent if $|\vec{I}|=1$}},
			\notag \\
	\left\| 
		\CommutedLapseHighBorderInhom{\vec{I}}
	\right\|_{L_{\newg}^2(\Sigma_t)}^2
	& \lesssim
		\varepsilon^4 \scale^{- c \sqrt{\varepsilon}}(t)
			\\
	& \ \
		+
		\varepsilon
		\SupTotalenergy{|\vec{I}|}{\smallparameter_*}(t)
		+
		\underbrace{
		\varepsilon
		\scale^{-c \sqrt{\varepsilon}}(t)
		\SupTotalenergy{|\vec{I}|-1}{\smallparameter_*}(t)}_{\mbox{\upshape absent if $|\vec{I}|=1$}},
		\notag	\\
	%& \ \
	%	+
	%	\varepsilon
	%	\scale^{- c \sqrt{\varepsilon}}
	%	\left\|
	%		\SigmatLie_{\mathscr{Z}}^{[1,|\vec{I}|]} \newg
	%	\right\|_{L_{\newg}^2(\Sigma_t)}^2,
	%	\notag	\\
	\left\| 
		\CommutedLapseLowBorderInhom{\vec{I}}
	\right\|_{L_{\newg}^2(\Sigma_t)}^2
	& \lesssim
		\varepsilon^4 \scale^{- c \sqrt{\varepsilon}}(t)
		+
		\frac{1}{\varepsilon}
		\scale^{- c \sqrt{\varepsilon}}(t)
		\SupTotalenergy{|\vec{I}|+2}{\smallparameter_*}(t),
	%\left\| 
	%	\CommutedMetBorderInhom{\vec{I}}
	%\right\|_{L_{\newg}^2(\Sigma_t)}^2,
	%\left\| 
	%	\CommutedInvMetBorderInhom{\vec{I}}
	%\right\|_{L_{\newg}^2(\Sigma_t)}^2
	%& \lesssim
	%	\SupTotalenergy{|\vec{I}|}{\smallparameter_*}(t)
	%	+
	%	\varepsilon
	%	\scale^{- c \sqrt{\varepsilon}}
	%	\SupTotalenergy{|\vec{I}|-1}{\smallparameter_*}(t)
	%		\\
	%	& \ \
	%	+ \mathring{\upepsilon}^2 \scale^{- c \sqrt{\varepsilon}}.
	%		\notag 
%	%	\\
	%& \ \
	%	+
	%	\varepsilon
	%	\scale^{- c \sqrt{\varepsilon}}
	%	\left\|
	%		\SigmatLie_{\mathscr{Z}}^{[1,|\vec{I}|]} \newg
	%	\right\|_{L_{\newg}^2(\Sigma_t)}^2,
	%	\notag	\\	
\end{align}
\end{subequations}
\endgroup

\begingroup
\allowdisplaybreaks
\begin{subequations}
\begin{align}
	\left\| 
		 \RicErrorInhom{\vec{I}} 
	\right\|_{L_{\newg}^2(\Sigma_t)}^2
	& \lesssim
		\varepsilon^4 \scale^{- c \sqrt{\varepsilon}}(t)
		+
		\varepsilon
		\scale^{-4/3 - c \sqrt{\varepsilon}}(t)
		\SupTotalenergy{|\vec{I}|}{\smallparameter_*}(t),
			\\
	\left\| 
		 \CommutedSecFunJunkInhom{\vec{I}} 
	\right\|_{L_{\newg}^2(\Sigma_t)}^2
	& \lesssim
		\varepsilon^4 \scale^{4/3 - c \sqrt{\varepsilon}}(t)
		+
		\scale^{-4/3 - c \sqrt{\varepsilon}}(t)
		\SupTotalenergy{|\vec{I}|}{\smallparameter_*}(t),
			\\
	\left\| 
		\CommutedGradMetJunkInhom{\vec{I}} 
	\right\|_{L_{\newg}^2(\Sigma_t)}^2
	& \lesssim
		\varepsilon^4 \scale^{- c \sqrt{\varepsilon}}(t)
		+
		\varepsilon
		\scale^{-8/3 - c \sqrt{\varepsilon}}(t)
		\SupTotalenergy{|\vec{I}|}{\smallparameter_*}(t),
			\\
	%& \ \
	%	+
	%	\varepsilon
	%	\scale^{- c \sqrt{\varepsilon}}
	%	\left\|
	%		\SigmatLie_{\mathscr{Z}}^{[1,|\vec{I}|]} \newg
	%	\right\|_{L_{\newg}^2(\Sigma_t)}^2,
	%	\notag	\\
	\left\| 
		 \CommutedTimeSfJunkInhom{\vec{I}} 
	\right\|_{L_{\newg}^2(\Sigma_t)}^2
	& \lesssim
		\varepsilon^4 \scale^{- c \sqrt{\varepsilon}}(t)
		+
		\varepsilon
		\scale^{-4/3 - c \sqrt{\varepsilon}}(t)
		\SupTotalenergy{|\vec{I}|}{\smallparameter_*}(t),	
		\\
	\left\| 
		\CommutedSpaceSfJunkInhom{\vec{I}}
	\right\|_{L_{\newg}^2(\Sigma_t)}^2
	& \lesssim
		\varepsilon^4 \scale^{- c \sqrt{\varepsilon}}(t)
		+
		\varepsilon
		\scale^{-4/3 - c \sqrt{\varepsilon}}(t)
		\SupTotalenergy{|\vec{I}|}{\smallparameter_*}(t),
		\\
	\left\| 
		\CommutedLapseHighJunkInhom{\vec{I}}
	\right\|_{L_{\newg}^2(\Sigma_t)}^2
	& \lesssim
		\varepsilon^4 \scale^{- c \sqrt{\varepsilon}}(t)
		+
		\scale^{-c \sqrt{\varepsilon}}(t)
		\SupTotalenergy{|\vec{I}|}{\smallparameter_*}(t),
		\\
	%& \ \
	%	+
	%	\varepsilon
	%	\scale^{- c \sqrt{\varepsilon}}
	%	\left\|
	%		\SigmatLie_{\mathscr{Z}}^{[1,|\vec{I}|]} \newg
	%	\right\|_{L_{\newg}^2(\Sigma_t)}^2,
	%	\notag	\\
	\left\| 
		\CommutedLapseLowJunkInhom{\vec{I}}
	\right\|_{L_{\newg}^2(\Sigma_t)}^2
	& \lesssim
		\varepsilon^4 \scale^{- c \sqrt{\varepsilon}}(t)
		+
		\scale^{-4/3 - c \sqrt{\varepsilon}}(t)
		\SupTotalenergy{|\vec{I}|+2}{\smallparameter_*}(t).
	%\left\| 
	%	\CommutedMetBorderInhom{\vec{I}}
	%\right\|_{L_{\newg}^2(\Sigma_t)}^2,
	%\left\| 
	%	\CommutedInvMetBorderInhom{\vec{I}}
	%\right\|_{L_{\newg}^2(\Sigma_t)}^2
	%& \lesssim
	%	\SupTotalenergy{|\vec{I}|}{\smallparameter_*}(t)
	%	+
	%	\varepsilon
	%	\scale^{- c \sqrt{\varepsilon}}
	%	\SupTotalenergy{|\vec{I}|-1}{\smallparameter_*}(t)
	%		\\
	%	& \ \
	%	+ \mathring{\upepsilon}^2 \scale^{- c \sqrt{\varepsilon}}.
	%		\notag 
%	%	\\
	%& \ \
	%	+
	%	\varepsilon
	%	\scale^{- c \sqrt{\varepsilon}}
	%	\left\|
	%		\SigmatLie_{\mathscr{Z}}^{[1,|\vec{I}|]} \newg
	%	\right\|_{L_{\newg}^2(\Sigma_t)}^2,
	%	\notag	\\	
\end{align}
\end{subequations}
\endgroup
\end{proposition}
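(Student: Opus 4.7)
The plan is to reduce each $L^2$ bound in the statement to the pointwise estimates already established in Proposition~\ref{P:POINTWISEESTIMATESFORERRORTERMS}, then convert the resulting $L^2$ norms of solution variables into the energies $\SupTotalenergy{M}{\smallparameter_*}$ by means of energy coerciveness (Lemma~\ref{L:ENERGYCOERCIVENESS}), the lapse/metric/scalar-field $L^2$ bounds of Proposition~\ref{P:BOUNDFORLAPSEANDBELOWTOPMETRICINTERMSOFENERGIES}, and the base-level bounds of Lemma~\ref{L:L2ESTIMATESFORNONDIFFERNTIATED}. Concretely, for each error tensor $\mathfrak{X}^{\vec{I}}$ in the list, I will square the corresponding pointwise inequality of Proposition~\ref{P:POINTWISEESTIMATESFORERRORTERMS}, integrate against $d\tvol$, and then bound each $L^2$-norm appearing on the right by the appropriate energy, paying attention to the scale-factor weights needed to recover the coefficients stated in the target inequalities.

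The key bookkeeping rules are the following. From Lemma~\ref{L:ENERGYCOERCIVENESS} one has $\|\SigmatLie_{\mathscr{Z}}^{[1,M]}\FreeNewSec\|_{L^2_{\newg}}^2 + \|\mathscr{Z}^{[1,M]}\newtimescalar\|_{L^2_{\newg}}^2 \lesssim \SupTotalenergy{M}{\smallparameter_*}$, while the $\scale^{4/3}$-weighted terms give $\|\nabla \SigmatLie_{\mathscr{Z}}^{[1,M]}\newg\|_{L^2_{\newg}}^2 + \|\SigmatLie_{\mathscr{Z}}^{[1,M]}\newspacescalar\|_{L^2_{\newg}}^2 \lesssim \scale^{-4/3}\SupTotalenergy{M}{\smallparameter_*}$; this is precisely what produces the boxed borderline contributions of the form $\varepsilon\,\scale^{-4/3}\SupTotalenergy{|\vec{I}|}{\smallparameter_*}$ and $\varepsilon\,\scale^{-4}\SupTotalenergy{|\vec{I}|}{\smallparameter_*}$ in, e.g., the momentum-constraint and space-scalar-field estimates. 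For the unboxed pieces carrying a factor $\sqrt{\varepsilon}\,\scale^{-c\sqrt{\varepsilon}}$, squaring yields $\varepsilon\,\scale^{-c\sqrt{\varepsilon}}$, which when combined with the bound \eqref{E:COMMUTEDMETRICL2BOUNDSINTERMSOFENERGY} (respectively \eqref{E:BELOWTOPORDERDERIVATIVESOFSPATIALSACALARFIELD}) of the form $\|\SigmatLie_{\mathscr{Z}}^{[1,M]}\newg\|_{L^2_{\newg}} \lesssim \varepsilon^2\scale^{-c\sqrt{\varepsilon}} + \varepsilon^{-1/2}\scale^{-c\sqrt{\varepsilon}}\SupTotalenergy{M}{\smallparameter_*}^{1/2}$ produces, after squaring, the data term $\varepsilon^4\scale^{-c\sqrt{\varepsilon}}$ and the energy term $\scale^{-c\sqrt{\varepsilon}}\SupTotalenergy{|\vec{I}|}{\smallparameter_*}$ appearing on the right-hand sides.

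The lapse error terms $\CommutedLapseLowBorderInhom{\vec{I}}$ and $\CommutedLapseLowJunkInhom{\vec{I}}$ are handled analogously, but now the crucial input is \eqref{E:COMMUTEDMETRICL2BOUNDSINTERMSOFENERGY} applied at the level $|\vec{I}|+2$ (to absorb $\|\nabla\SigmatLie_{\mathscr{Z}}^{[1,|\vec{I}|+2]}\newg\|_{L^2_{\newg}}$ arising from Ricci derivatives) together with the lapse estimate \eqref{E:L2LOWESTORDERLAPSEINTERMSOFENERGIES}, which gives the factor $\varepsilon^{-1/2}$ that explains the $\varepsilon^{-1}$ in front of $\SupTotalenergy{|\vec{I}|+2}{\smallparameter_*}$ after squaring. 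The Ricci error $\RicErrorInhom{\vec{I}}$ and the second-fundamental-form junk term $\CommutedSecFunJunkInhom{\vec{I}}$ are treated by combining the appropriate pointwise bound from Proposition~\ref{P:POINTWISEESTIMATESFORERRORTERMS} with both \eqref{E:COMMUTEDMETRICL2BOUNDSINTERMSOFENERGY} and the lapse bound \eqref{E:L2HIGHORDERLAPSEINTERMSOFENERGIES}; the weights $\scale^{4/3}$ multiplying the top-order lapse derivatives in the pointwise bound for $\CommutedSecFunJunkInhom{\vec{I}}$ cancel cleanly against the $\scale^{-4/3}$ produced by squaring \eqref{E:L2HIGHORDERLAPSEINTERMSOFENERGIES}, yielding the stated $\scale^{-4/3-c\sqrt{\varepsilon}}$ weight.

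The estimates are otherwise routine, with no genuine analytic obstacle—the entire content is bookkeeping of scale-factor weights against $\varepsilon$-powers. The one point that requires care, and the main place where one must be vigilant, is to ensure that whenever a pointwise estimate involves $\sqrt{\varepsilon}\,\scale^{-c\sqrt{\varepsilon}}\,|\SigmatLie_{\mathscr{Z}}^{[1,|\vec{I}|]}\newg|_{\newg}$ (unboxed), one does \emph{not} attempt to use Lemma~\ref{L:ENERGYCOERCIVENESS} directly (which would lose a derivative and an extra $\scale^{-4/3}$), but instead appeals to \eqref{E:COMMUTEDMETRICL2BOUNDSINTERMSOFENERGY}; symmetrically, for the boxed top-order term $\varepsilon|\nabla\SigmatLie_{\mathscr{Z}}^{\vec{I}}\newg|_{\newg}$ one must use energy coerciveness rather than \eqref{E:COMMUTEDMETRICL2BOUNDSINTERMSOFENERGY}, so as not to pay an extra $\varepsilon^{-1}$ factor. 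Making this choice correctly term by term, and summing the contributions via $(a+b)^2 \leq 2a^2+2b^2$, yields all the listed inequalities.
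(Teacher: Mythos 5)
Your proposal is correct and takes essentially the same route as the paper's (very terse) proof: square the pointwise bounds of Prop.~\ref{P:POINTWISEESTIMATESFORERRORTERMS}, integrate against $d\tvol$, bound by energies using Lemma~\ref{L:ENERGYCOERCIVENESS} for the terms directly controlled by the currents, and use Prop.~\ref{P:BOUNDFORLAPSEANDBELOWTOPMETRICINTERMSOFENERGIES} for the integrals involving $\newlapse$ and $\SigmatLie_{\mathscr{Z}}^{[1,|\vec{I}|]}\newg$. Your explicit dichotomy — use energy coerciveness (i.e.~$\scale^{4/3}\|\nabla\SigmatLie_{\mathscr{Z}}^{\vec{I}}\newg\|^2 \lesssim \Metricenergy{|\vec{I}|}$) for the boxed top-order gradients with small coefficient $\varepsilon$, but use \eqref{E:COMMUTEDMETRICL2BOUNDSINTERMSOFENERGY} for the unboxed lower-order Lie derivatives with coefficient $\sqrt{\varepsilon}\,\scale^{-c\sqrt{\varepsilon}}$ — is precisely the distinction that the paper leaves implicit, and getting it backwards would indeed lose either a derivative or a damaging $\varepsilon^{-1}$ factor. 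The only slight excess is the citation of Lemma~\ref{L:L2ESTIMATESFORNONDIFFERNTIATED}: since every term on the right of Prop.~\ref{P:POINTWISEESTIMATESFORERRORTERMS} involves at least one $\mathscr{Z}$-derivative, the base-level lemma is not actually needed here, though invoking it is harmless.
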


\begin{proof}
	We square the pointwise estimates of Prop.~\ref{P:POINTWISEESTIMATESFORERRORTERMS}
	and integrate the resulting inequalities over $\Sigma_t$ with respect to the volume form 
	$d \tvol$ of Def.\ \ref{D:VOLUMEFORM}.
	Using Lemma~\ref{L:ENERGYCOERCIVENESS}, we can directly bound all integrals
	by the energies $\SupTotalenergy{\cdot}{\smallparameter_*}(t)$
	of Def.\ \ref{D:ENERGIES} except for the integrals that depend on $\newlapse$,
	$
		\left|
			\SigmatLie_{\mathscr{Z}}^{[1,|\vec{I}|]} \newg
		\right|_{\newg}
	$,
	or
	$
		\left|
			\SigmatLie_{\mathscr{Z}}^{[1,|\vec{I}|]} \newg
		\right|_{\newg}
$.
	To bound these remaining integrals in terms of the energies,
	we use Prop.~\ref{P:BOUNDFORLAPSEANDBELOWTOPMETRICINTERMSOFENERGIES}.
\end{proof}

\section{Energy estimates and improvement of the bootstrap assumptions}
\label{S:ENERGYESTIMATES}
In this section, we derive the main estimates of the paper: a priori estimates for the energies.
The main result is Cor.~\ref{C:MAINAPRIORIENERGYESTIMATES}.
We start with the following simple lemma.

\begin{lemma}[\textbf{The energy is initially small}]
	\label{L:INITIALENERGYISSMALL}
	Let $\SupTotalenergy{16}{\smallparameter}$
	be the energy defined in \eqref{E:SUPTOTALENERGY}.
	There exists a constant $C > 0$ such that if
	$0 \leq \smallparameter \leq 1$, then the
	following bound holds:
	\begin{align} \label{E:INITIALENERGYISSMALL}
		\SupTotalenergy{16}{\smallparameter}(0)
		\leq C \varepsilon^4.
	\end{align}
\end{lemma}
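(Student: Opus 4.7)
The plan is to observe that $\SupTotalenergy{16}{\smallparameter}(0) = \Totalenergy{16}{\smallparameter}(0) = \smallparameter \Metricenergy{16}(0) + \Sfenergy{16}(0)$, and since $0 \leq \smallparameter \leq 1$, it suffices to show the bounds
\begin{align*}
	\Metricenergy{16}(0) \leq C \varepsilon^4,
	\qquad
	\Sfenergy{16}(0) \leq C \varepsilon^4.
\end{align*}
Each energy is a sum of $L_{\newg}^2(\Sigma_0)$-type norms of Lie derivatives of the time-rescaled variables (cf.\ Lemma~\ref{L:ENERGYCOERCIVENESS}), whereas the data hypothesis \eqref{E:SMALLDATA} controls $H_{\StMet}^{16}(\Sigma_0)$-type norms. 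The plan therefore reduces to a norm-comparison argument at $t=0$.

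First I would use Corollary~\ref{C:IMPROVEMENTLEMMAOPERATORCOMPARISON} and Lemma~\ref{L:VOLUMEFORMCOMPARISON} evaluated at $t=0$ (where $\scale(0) = 1$ so all factors of $\scale^{-c\sqrt{\varepsilon}}(t)$ collapse to constants) to conclude that for any $\Sigma_0$-tangent tensorfield $\xi$ and any $\mathscr{Z}$-multi-index $\vec{I}$,
\begin{align*}
	\left\| \SigmatLie_{\mathscr{Z}}^{\vec{I}} \xi \right\|_{L_{\newg}^2(\Sigma_0)}
	\lesssim
	\left\| \SigmatLie_{\mathscr{Z}}^{\vec{I}} \xi \right\|_{L_{\StMet}^2(\Sigma_0)},
\end{align*}
and similarly for the $\nabla$-involving norms appearing in $\Metricenergy{16}(0)$, where I would additionally invoke the second estimate in \eqref{E:IMPROVEDOPERATORCOMPARISON} to trade covariant derivatives for $\mathscr{Z}$-Lie derivatives. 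The same comparison lemmas allow me to replace the pointwise norm $|\cdot|_{\newg}$ by $|\cdot|_{\StMet}$ inside the integrand. Since $\SigmatLie_{Z} \StMet = 0$ for $Z \in \mathscr{Z}$, for $|\vec{I}| \geq 1$ we can freely replace $\SigmatLie_{\mathscr{Z}}^{\vec{I}} \newg$ by $\SigmatLie_{\mathscr{Z}}^{\vec{I}}(\newg - \StMet)$, which is directly controlled by the data assumption.

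Term-by-term, this yields: the $\FreeNewSec$-contribution to $\Metricenergy{16}(0)$ is bounded by $C \| \FreeNewSec \|_{H_{\StMet}^{16}(\Sigma_0)}^2 \leq C \varepsilon^4$; the $\scale^{4/3}\|\nabla \SigmatLie_{\mathscr{Z}}^{\vec{I}} \newg\|^2$ contribution (which at top order $|\vec{I}|=16$ requires control up to $17$ $\mathscr{Z}$-derivatives of $\newg$) is bounded by $C \| \newg \|_{\dot{H}_{\StMet}^{17}(\Sigma_0)}^2 + C \| \newg - \StMet \|_{H_{\StMet}^{16}(\Sigma_0)}^2 \leq C \varepsilon^4$ using the two corresponding terms in the definition \eqref{E:HIGHNORM} of $\highnorm{16}$ at $t=0$ (where $\scale^{2/3}(0) = 1$); the $\newtimescalar$-contribution to $\Sfenergy{16}(0)$ is bounded by $C \| \newtimescalar \|_{H_{\StMet}^{16}(\Sigma_0)}^2 \leq C \varepsilon^4$; and the $\scale^{4/3}\|\SigmatLie_{\mathscr{Z}}^{\vec{I}} \newspacescalar\|^2$ contribution at $t=0$ reduces to $\| \newspacescalar \|_{H_{\StMet}^{16}(\Sigma_0)}^2 \leq \varepsilon^4$. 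Summing and using $\smallparameter \leq 1$ delivers \eqref{E:INITIALENERGYISSMALL}.

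There is no substantial obstacle here: the proof is essentially a bookkeeping exercise converting between the geometric $L_{\newg}^2$ norm (in which the energies are defined) and the background $L_{\StMet}^2$ norm (in which the data smallness is measured), with the conversion constants all trivial at $t=0$ because $\scale(0)=1$. The only mild subtlety is making sure the top-order term $\scale^{4/3}\|\nabla \SigmatLie_{\mathscr{Z}}^{\vec{I}} \newg\|_{L_{\newg}^2(\Sigma_0)}^2$ with $|\vec{I}|=16$ is absorbed by the $\scale^{2/3}\|\newg\|_{\dot{H}_{\StMet}^{17}}$ piece of $\highnorm{16}$, which was included in \eqref{E:HIGHNORM} precisely for this purpose.
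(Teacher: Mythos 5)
Your proof is correct and takes essentially the same route as the paper's: both reduce the claim to the coerciveness formula of Lemma~\ref{L:ENERGYCOERCIVENESS}, a norm comparison between the $\newg$- and $\StMet$-based $L^2$ norms evaluated at $t=0$ (the paper cites Lemma~\ref{L:L2NORMSCOMPARISONESTIMATES} directly, which is itself proved from the ingredients you invoke, namely Lemma~\ref{L:VOLUMEFORMCOMPARISON} and Cor.~\ref{C:IMPROVEMENTLEMMAOPERATORCOMPARISON}), and the data smallness assumption \eqref{E:SMALLDATA}. One tiny slip: to bound $|\nabla \SigmatLie_{\mathscr{Z}}^{\vec{I}} \newg|_{\newg}$ in terms of Lie derivatives you need the \emph{first} inequality in \eqref{E:IMPROVEDOPERATORCOMPARISON} (covariant by Lie), not the second; the intent is clear and the argument is otherwise sound.
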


\begin{proof}
	The lemma is a 
	straightforward consequence of Lemma~\ref{L:ENERGYCOERCIVENESS},
	Lemma~\ref{L:L2NORMSCOMPARISONESTIMATES} at $t=0$,
	and the assumption \eqref{E:SMALLDATA}.
\end{proof}

We now derive a hierarchy of integral inequalities verified by the energies.

\begin{proposition}[\textbf{Integral inequalities verified by the energies}]
\label{P:ENERGYINTEGRALINEQUALITIES}
	Let $1 \leq M \leq 16$ and let
	$\SupTotalenergy{\smallparameter_*}{M}(t)$
	be the energy defined by
	\eqref{E:SUPTOTALENERGY},
	where $\smallparameter_* > 0$ 
	is the small parameter from Prop.~\ref{P:FUNDAMENTALENERGYINTEGRALINEQUALITY}.
	Then there exist constants $C>0$ and $c>0$ such that
	the following system of inequalities holds on $[0,\Tboot)$:
	\begin{align} \label{E:ENERGYINTEGRALINEQUALITIES}
		\SupTotalenergy{\smallparameter_*}{M}(t)
		& \leq 
			C \varepsilon^4 \scale^{-c \sqrt{\varepsilon}}(t)
			\\
		& \ \
			+ C
				\int_{s=0}^t
					\scale^{-1/3 - c \sqrt{\varepsilon}}(s) \SupTotalenergy{\smallparameter_*}{M}(s)			
			\, ds
			+ c \varepsilon
				\int_{s=0}^t
					\scale^{-1}(s) \SupTotalenergy{\smallparameter_*}{M}(s)			
				\, ds 
				\notag \\
		& \ \ 
				+ 
				\underbrace{
				C \varepsilon
				\int_{s=0}^t
					\scale^{-1-c \sqrt{\varepsilon}}(s) \SupTotalenergy{\smallparameter_*}{M-1}(s)			
				\, ds
				}_{\mbox{absent if $M=1$}}.
				\notag
	\end{align}
\end{proposition}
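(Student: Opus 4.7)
The plan is to combine the fundamental energy integral inequality of Proposition~\ref{P:FUNDAMENTALENERGYINTEGRALINEQUALITY} with the $L^2$ error-term bounds of Proposition~\ref{P:L2BOUNDSFORTHEERRORTERMS} and the initial-data bound of Lemma~\ref{L:INITIALENERGYISSMALL}, and then carefully track the powers of $\scale$ so that the borderline structure on the RHS of \eqref{E:ENERGYINTEGRALINEQUALITIES} is maintained.

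First, I would use Lemma~\ref{L:INITIALENERGYISSMALL} to replace the initial-data term $C\Totalenergy{M}{\smallparameter_*}(0)$ on RHS~\eqref{E:FUNDAMENTALENERGYINTEGRALINEQUALITY} by $C\varepsilon^4$, which fits into the first term on RHS~\eqref{E:ENERGYINTEGRALINEQUALITIES}. The two ``free'' integrals $C\varepsilon \int_0^t \scale^{-1}(s)\Totalenergy{M}{\smallparameter_*}(s)\,ds$ and $C\int_0^t \scale^{-1/3}(s)\Totalenergy{M}{\smallparameter_*}(s)\,ds$ already have exactly the form needed for the second line of \eqref{E:ENERGYINTEGRALINEQUALITIES}, after replacing $\Totalenergy{M}{\smallparameter_*}(s)$ by the larger quantity $\SupTotalenergy{M}{\smallparameter_*}(s)$.

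Next, the heart of the proof is to show that $\int_0^t\{\Energyborder{\vec{I}}(s)+\Energyjunk{\vec{I}}(s)\}\,ds$ is bounded by the RHS of \eqref{E:ENERGYINTEGRALINEQUALITIES}. I would substitute the $L^2$ bounds of Proposition~\ref{P:L2BOUNDSFORTHEERRORTERMS} into every term of the definitions \eqref{E:ENERGYBORDER} and \eqref{E:ENERGYJUNK}. For each term, the weight $\scale^{p}(s)$ appearing in \eqref{E:ENERGYBORDER}--\eqref{E:ENERGYJUNK} is precisely tuned so that after multiplying by the matching $L^2$-bound the result is a constant-in-$\varepsilon$ constant-data contribution $\lesssim \varepsilon^4 \scale^{-c\sqrt{\varepsilon}}$, an energy contribution $\lesssim \varepsilon \scale^{-1}\SupTotalenergy{M}{\smallparameter_*}$, an energy contribution $\lesssim \scale^{-1/3-c\sqrt{\varepsilon}}\SupTotalenergy{M}{\smallparameter_*}$, and (only for borderline terms) a lower-order contribution $\lesssim \varepsilon \scale^{-1-c\sqrt{\varepsilon}}\SupTotalenergy{M-1}{\smallparameter_*}$. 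For example, the $\frac{1}{\varepsilon}\scale^{5/3}\|\CommutedSecFunBorderInhom{\vec{I}}\|_{L^2_{\newg}}^2$ term of \eqref{E:ENERGYBORDER}, after inserting the $L^2$-bound from Proposition~\ref{P:L2BOUNDSFORTHEERRORTERMS}, produces (up to constants) $\varepsilon^3\scale^{-1-c\sqrt{\varepsilon}}+\varepsilon\scale^{-1}\SupTotalenergy{M}{\smallparameter_*}+\varepsilon\scale^{-1-c\sqrt{\varepsilon}}\SupTotalenergy{M-1}{\smallparameter_*}$, which is exactly the pattern of the three integrals on the RHS of \eqref{E:ENERGYINTEGRALINEQUALITIES}. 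One checks analogously for the momentum-constraint, gradient-metric, time- and space-scalar-field, high-order lapse, Ricci, second-fundamental-form, and low-order lapse contributions, noting that the junk contributions always produce a factor of $\sqrt{\varepsilon}$ and thus can be absorbed into the $C\scale^{-1/3-c\sqrt{\varepsilon}}\SupTotalenergy{M}{\smallparameter_*}$ integral. Integration of the constant-in-energy contributions $\varepsilon^4 \scale^{-1-c\sqrt{\varepsilon}}$ in time is handled by \eqref{E:SCALEFACTORTIMEINTEGRALS}, which gives $\lesssim \varepsilon^4\scale^{-c\sqrt{\varepsilon}}(t)$ and matches the first term on RHS~\eqref{E:ENERGYINTEGRALINEQUALITIES}.

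The main obstacle, and the reason the constant $c$ on the $\varepsilon\scale^{-1}$ term cannot be absorbed into the larger $C$, is the presence of genuinely borderline contributions coming from the terms $\CommutedSecFunBorderInhom{\vec{I}}$ and $\CommutedTimeSfBorderInhom{\vec{I}}$: these force the coefficient of $\varepsilon\scale^{-1}\SupTotalenergy{M}{\smallparameter_*}$ on the RHS to be a definite, not-further-improvable constant, which in turn is the reason the eventual Gronwall estimates for the top-order energy degenerate like $\scale^{-c\varepsilon}$ at the singularity. Maintaining the separation between the borderline coefficient $c\varepsilon$ on $\SupTotalenergy{M}{\smallparameter_*}$ and the larger $C$ appearing on the integrals involving $\SupTotalenergy{M-1}{\smallparameter_*}$ or the $\scale^{-1/3-c\sqrt{\varepsilon}}$-weight (which are Gronwall-integrable in view of Corollary~\ref{C:SCALEFACTORTIMEINTEGRALS}) is the delicate bookkeeping step; this is what the careful choice of the weights in \eqref{E:ENERGYBORDER}--\eqref{E:ENERGYJUNK} and the $1/\varepsilon$ factors on the $\CommutedSecFunBorderInhom{\vec{I}}$ and $\CommutedTimeSfBorderInhom{\vec{I}}$ terms are engineered to achieve. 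Once all such terms are collected, taking the supremum over $s\in[0,t]$ on the LHS and observing that every integrand on the RHS is monotone in $t$ yields \eqref{E:ENERGYINTEGRALINEQUALITIES}.
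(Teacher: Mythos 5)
Your proposal is correct and follows essentially the same route as the paper: insert Lemma~\ref{L:INITIALENERGYISSMALL} into the initial-data term, substitute the $L^2$ bounds of Prop.~\ref{P:L2BOUNDSFORTHEERRORTERMS} into $\Energyborder{\vec{I}}$ and $\Energyjunk{\vec{I}}$, integrate with Cor.~\ref{C:SCALEFACTORTIMEINTEGRALS}, and take the sup. Two small repairs are needed, however.

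First, the last two terms on RHS~\eqref{E:ENERGYJUNK}, namely $\scale^4\|\nabla\mathscr{Z}^{\vec{I}}\newlapse\|_{L_{\newg}^2(\Sigma_s)}^2$ and $\scale^{7/3}\|\mathscr{Z}^{\vec{I}}\newlapse\|_{L_{\newg}^2(\Sigma_s)}^2$, are not among the error-term quantities $\CommutedMomBorderInhomDown{\vec{I}},\dots$ bounded in Prop.~\ref{P:L2BOUNDSFORTHEERRORTERMS}; they are the raw lapse-derivative norms. These must be controlled by the elliptic estimate \eqref{E:L2HIGHORDERLAPSEINTERMSOFENERGIES} from Prop.~\ref{P:BOUNDFORLAPSEANDBELOWTOPMETRICINTERMSOFENERGIES}, which the paper explicitly invokes for exactly this purpose. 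Your list (``momentum-constraint, gradient-metric, \dots, high-order lapse'') refers only to the commuted error terms, so this ingredient is missing from your argument; without it the two pure-lapse junk terms are not dominated by the RHS of \eqref{E:ENERGYINTEGRALINEQUALITIES}.

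Second, there is a minor arithmetic slip in your worked example: for $\CommutedSecFunBorderInhom{\vec{I}}$ the constant-in-energy contribution after multiplying by the weight $\frac{1}{\varepsilon}\scale^{5/3}$ is $\varepsilon^3\scale^{-1-c\sqrt{\varepsilon}}$ (as you first write), not $\varepsilon^4\scale^{-1-c\sqrt{\varepsilon}}$ (as you then write in the integration step). Moreover, integrating $\scale^{-1-c\sqrt{\varepsilon}}$ via \eqref{E:SCALEFACTORTIMEINTEGRALS} produces an additional factor of roughly $\frac{1}{\sqrt{\varepsilon}}$, so the contribution to the constant term is of size $\varepsilon^{5/2}\scale^{-c\sqrt{\varepsilon}}$ rather than $\varepsilon^4\scale^{-c\sqrt{\varepsilon}}$. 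This is harmless for the downstream Gronwall argument (any power strictly larger than $\varepsilon^2$ suffices to close the bootstrap in Cor.~\ref{C:MAINAPRIORIENERGYESTIMATES}), but the bookkeeping as you wrote it does not actually land exactly on $C\varepsilon^4$; one should either relabel constants so the power is whatever the $\frac{1}{\sqrt{\varepsilon}}$ loss permits, or, if one insists on the stated $\varepsilon^4$, track which error terms are truly responsible for the dominant contribution.
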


\begin{proof}
	All of the key estimates have already been proved;
	here, we just assemble them.
	To prove 
	\eqref{E:ENERGYINTEGRALINEQUALITIES}, 
	we first show that the terms
	$\Energyborder{\vec{I}}(s)$
	and 
	$\Energyjunk{\vec{I}}(s)$
	on RHS~\eqref{E:FUNDAMENTALENERGYINTEGRALINEQUALITY}
	are $\leq \mbox{RHS~\eqref{E:ENERGYINTEGRALINEQUALITIES}}$.
	We start by bounding
	the terms $\Energyborder{\vec{I}}(s)$
	defined in \eqref{E:ENERGYBORDER}.
	All time integrals on RHS~\eqref{E:FUNDAMENTALENERGYINTEGRALINEQUALITY} that are generated by the terms
	on RHS~\eqref{E:ENERGYBORDER}
	are easily seen to be $\leq \mbox{RHS~\eqref{E:ENERGYINTEGRALINEQUALITIES}}$
	with the help of Prop.~\ref{P:L2BOUNDSFORTHEERRORTERMS} and Lemma~\ref{L:ENERGYCOERCIVENESS}.
	Similarly to bound
	the terms $\Energyjunk{\vec{I}}(s)$
	defined in \eqref{E:ENERGYJUNK}
	by $\leq \mbox{RHS~\eqref{E:ENERGYINTEGRALINEQUALITIES}}$,
	we use Prop.~\ref{P:L2BOUNDSFORTHEERRORTERMS} and \eqref{E:L2HIGHORDERLAPSEINTERMSOFENERGIES}.
	We now take the sup over time intervals
	of both sides of inequality \eqref{E:FUNDAMENTALENERGYINTEGRALINEQUALITY}
	and use Lemma~\ref{L:INITIALENERGYISSMALL} and the estimates noted above,
	thereby arriving (in view of definition \eqref{E:SUPTOTALENERGY})
	at the desired inequality \eqref{E:ENERGYINTEGRALINEQUALITIES}.
\end{proof}

With the help of Prop.~\ref{P:ENERGYINTEGRALINEQUALITIES}, we now derive 
the desired a priori energy estimates.

\begin{corollary}[\textbf{The main a priori energy estimates}]
	\label{C:MAINAPRIORIENERGYESTIMATES}
	Let $\SupTotalenergy{\smallparameter_*}{16}$ be the total
	energy defined by \eqref{E:SUPTOTALENERGY}
	and let $\highnorm{16}$ be the solution norm defined in
	\eqref{E:HIGHNORM}. Assume that
	$\highnorm{16}(0) := \varepsilon^2$,
	as in \eqref{E:SMALLDATA}.
	There exist constants $C > 0$ and $c > 0$ such that if
	$\varepsilon$ is sufficiently small, 
	then the following estimate holds for $t \in [0,\Tboot)$:
	\begin{align} \label{E:MAINAPRIORIENERGYESTIMATES}
		\SupTotalenergy{\smallparameter_*}{16}^{1/2}(t)
		& \leq 
			C \varepsilon^2 \scale^{-c \sqrt{\varepsilon}}(t).
	\end{align}
	
	Furthermore, the following estimate holds for $t \in [0,\Tboot)$:
	\begin{align} \label{E:MAINHIGHNORMENERGYESTIMATES}
		\highnorm{16}(t)
		& \leq C \varepsilon^{3/2} \scale^{-c \sqrt{\varepsilon}}(t).
	\end{align}
	In particular, if $\varepsilon$ is sufficiently small, then
	the estimate \eqref{E:MAINHIGHNORMENERGYESTIMATES}
	is a strict improvement of
	the bootstrap assumption \eqref{E:HIGHNORMBOOTSTRAP}.
\end{corollary}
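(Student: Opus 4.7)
The plan is to derive \eqref{E:MAINAPRIORIENERGYESTIMATES} by induction on $M$ from the hierarchy of integral inequalities \eqref{E:ENERGYINTEGRALINEQUALITIES}, closing each level via Gronwall's inequality and the scale-factor estimates of Corollary \ref{C:SCALEFACTORTIMEINTEGRALS}. The second bound \eqref{E:MAINHIGHNORMENERGYESTIMATES} on $\highnorm{16}$ will then follow by assembling its constituent seminorms from the energies using the various $L^2$ control results already established.

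For the base case $M = 1$, the last integral on RHS \eqref{E:ENERGYINTEGRALINEQUALITIES} is absent, leaving a standard Gronwall-ready inequality for $\SupTotalenergy{\smallparameter_*}{1}(t)$. The Gronwall factor coming from the integral of $C\scale^{-1/3 - c\sqrt{\varepsilon}}(s)$ is bounded by \eqref{E:SCALEFACTORTIMEINTEGRALS} (since $-1/3 - c\sqrt{\varepsilon} > -1$ for small $\varepsilon$), while the factor coming from $c\varepsilon\scale^{-1}(s)$ yields $\exp\bigl(c\varepsilon \int_0^t \scale^{-1}(s)\,ds\bigr) \lesssim \scale^{-c\varepsilon}(t)$ by \eqref{E:EXPONENTIATEDSCALEFACTORTIMEINTEGRALS}. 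Since $c\varepsilon < c\sqrt{\varepsilon}$, we conclude $\SupTotalenergy{\smallparameter_*}{1}(t) \lesssim \varepsilon^4 \scale^{-c\sqrt{\varepsilon}}(t)$ after enlarging $c$. For the inductive step, assuming the bound at level $M-1$, one substitutes the estimate into the last term on RHS \eqref{E:ENERGYINTEGRALINEQUALITIES} and uses \eqref{E:SCALEFACTORTIMEINTEGRALS} with $p = -1 - c'\sqrt{\varepsilon}$ to obtain
\[
C\varepsilon \int_{s=0}^t \scale^{-1 - c\sqrt{\varepsilon}}(s)\,\SupTotalenergy{\smallparameter_*}{M-1}(s)\,ds \lesssim \frac{\varepsilon^5}{c'\sqrt{\varepsilon}}\,\scale^{-c'\sqrt{\varepsilon}}(t) \lesssim \varepsilon^{9/2}\,\scale^{-c'\sqrt{\varepsilon}}(t),
\]
which is absorbed into the inhomogeneous term $C\varepsilon^4 \scale^{-c'\sqrt{\varepsilon}}(t)$ for sufficiently small $\varepsilon$. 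The resulting inequality then has the same form as the base case, so Gronwall closes the induction; after sixteen iterations the constants $C$ and $c$ have grown by a finite amount, yielding \eqref{E:MAINAPRIORIENERGYESTIMATES} upon taking square roots.

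To establish \eqref{E:MAINHIGHNORMENERGYESTIMATES}, one bounds each piece of $\highnorm{16}$ from \eqref{E:HIGHNORM} separately. The top-order weighted $L^2$ seminorms of $\FreeNewSec$, $\nabla \newg$, $\newtimescalar$, and $\newspacescalar$ are directly coercive for $\SupTotalenergy{\smallparameter_*}{16}^{1/2}$ by Lemma \ref{L:ENERGYCOERCIVENESS}; the below-top-order $L^2$ seminorms of $\newg$, $\newg^{-1}$, $\newspacescalar$ and all the weighted lapse seminorms are controlled in terms of the energies by Proposition \ref{P:BOUNDFORLAPSEANDBELOWTOPMETRICINTERMSOFENERGIES}; and the undifferentiated $L^2$ norms come from Lemma \ref{L:L2ESTIMATESFORNONDIFFERNTIATED}. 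Converting these $L^2_{\newg}$ bounds to $L^2_{\StMet}$ costs only an extra $\scale^{-c\sqrt{\varepsilon}}$ factor via Lemma \ref{L:L2NORMSCOMPARISONESTIMATES} and Corollary \ref{C:IMPROVEMENTLEMMAOPERATORCOMPARISON}. The $C^k$ sup-norm pieces (with $k \leq 14$) are then recovered from the corresponding $H^{k+2}$ bounds via the Sobolev embedding of Lemma \ref{L:SOBOLEV}, absorbing a further $\scale^{-c\sqrt{\varepsilon}}$. Summing all contributions and invoking \eqref{E:MAINAPRIORIENERGYESTIMATES} gives $\highnorm{16}(t) \lesssim \varepsilon^2 \scale^{-c\sqrt{\varepsilon}}(t) \leq C\varepsilon^{3/2}\scale^{-c\sqrt{\varepsilon}}(t)$. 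Since $\upsigma \geq \sqrt{\varepsilon}$, the weight $\scale^{-c\sqrt{\varepsilon}}$ is weaker than $\scale^{-\upsigma}$, and $C\varepsilon^{3/2} < \varepsilon$ for $\varepsilon$ small, giving the strict improvement of \eqref{E:HIGHNORMBOOTSTRAP}.

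The main subtlety is not analytical but bookkeeping: one must verify that the constants $C$ and $c$ accumulated through the sixteen-step induction remain uniformly bounded, and in particular that the final exponent of degeneration in $\scale$ is still of the form $c\sqrt{\varepsilon}$ with $c$ independent of $\varepsilon$. This is straightforward because only finitely many levels are involved and the growth at each step is controlled quantitatively by \eqref{E:SCALEFACTORTIMEINTEGRALS}.
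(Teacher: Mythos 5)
Your proposal follows the paper's proof step by step: the same induction on $M$ via Gronwall and Corollary~\ref{C:SCALEFACTORTIMEINTEGRALS} for \eqref{E:MAINAPRIORIENERGYESTIMATES}, and the same assembly of $\highnorm{16}$ from Lemma~\ref{L:ENERGYCOERCIVENESS}, Proposition~\ref{P:BOUNDFORLAPSEANDBELOWTOPMETRICINTERMSOFENERGIES}, Lemma~\ref{L:L2ESTIMATESFORNONDIFFERNTIATED}, and Lemma~\ref{L:SOBOLEV} for \eqref{E:MAINHIGHNORMENERGYESTIMATES}. Two accounting slips are worth flagging. First, you write that the assembly gives $\highnorm{16}(t) \lesssim \varepsilon^2\scale^{-c\sqrt{\varepsilon}}(t)$ and then you round up to $\varepsilon^{3/2}$; that intermediate $\varepsilon^2$ claim is not correct. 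The bounds \eqref{E:L2LOWESTORDERLAPSEINTERMSOFENERGIES}, \eqref{E:COMMUTEDMETRICL2BOUNDSINTERMSOFENERGY}, \eqref{E:BELOWTOPORDERDERIVATIVESOFSPATIALSACALARFIELD} and Lemma~\ref{L:L2ESTIMATESFORNONDIFFERNTIATED} each carry an explicit $\frac{1}{\sqrt{\varepsilon}}$ loss in front of $\SupTotalenergy{\cdot}{\smallparameter_*}^{1/2}$, so substituting \eqref{E:MAINAPRIORIENERGYESTIMATES} already yields $\frac{1}{\sqrt{\varepsilon}}\cdot\varepsilon^2 = \varepsilon^{3/2}$; the exponent $3/2$ is forced, not a weakening you are free to invoke. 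Second, your remark that $\upsigma \geq \sqrt{\varepsilon}$ makes $\scale^{-c\sqrt{\varepsilon}}$ automatically weaker than $\scale^{-\upsigma}$ tacitly assumes $c \leq 1$; in general one needs $c\sqrt{\varepsilon} \leq \upsigma$, which the paper secures in the proof of Theorem~\ref{T:MAINTHM} by fixing $\upsigma = \varepsilon^{1/4}$, so that $c\sqrt{\varepsilon} < \varepsilon^{1/4}$ for $\varepsilon$ small. Neither slip changes the final estimate, but a careful write-up should track the $\frac{1}{\sqrt{\varepsilon}}$ losses and state the choice of $\upsigma$ explicitly.
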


\begin{proof}
	First, 
	for $1 \leq M \leq 16$,
	we will inductively derive the following bound:
	\begin{align} \label{E:PROOFMAINAPRIORIENERGYESTIMATES}
		\SupTotalenergy{\smallparameter_*}{M}(t)
		& \leq 
			C \varepsilon^4 \scale^{-c \sqrt{\varepsilon}}(t).
	\end{align}
	To prove \eqref{E:PROOFMAINAPRIORIENERGYESTIMATES} 
	in the base case $M=1$, we apply
	Gronwall's inequality to \eqref{E:ENERGYINTEGRALINEQUALITIES}
	and use \eqref{E:EXPONENTIATEDSCALEFACTORTIMEINTEGRALS}
	with $p=-1/3 - c \sqrt{\varepsilon}$ and $p=-1$,
	which yields the desired bound
	$
	\SupTotalenergy{\smallparameter_*}{1}(t)
	\leq 
	C \varepsilon^4 \scale^{-c \sqrt{\varepsilon}}(t)
	$.
	We now make the following induction hypothesis: the estimates have been proved in the case $M-1$.
	To derive the inequality in the case $M$, 
	we first insert the estimates obtained in the case $M-1$ 
	into the last integral on RHS~\eqref{E:ENERGYINTEGRALINEQUALITIES}.
	Also using \eqref{E:SCALEFACTORTIMEINTEGRALS} with 
	$p = - 1 - c \sqrt{\varepsilon}$, we bound the integral as follows:
	\begin{align} \label{E:INDUCTIVELYBOUNDEDINTEGRAL}
	C \varepsilon
		\int_{s=0}^t
			\scale^{-1-c \sqrt{\varepsilon}}(s) \SupTotalenergy{\smallparameter_*}{M-1}(s)			
		\, ds
	& \leq
	C \varepsilon^5
	\int_{s=0}^t
		\scale^{-1-c \sqrt{\varepsilon}}(s)		
	\, ds
		\\
	& 
	\leq
	C \varepsilon^4 \scale^{-c \sqrt{\varepsilon}}(t).
	\notag
\end{align}
With the estimate \eqref{E:INDUCTIVELYBOUNDEDINTEGRAL} in hand, we can then
apply Gronwall's inequality to \eqref{E:ENERGYINTEGRALINEQUALITIES},
again using \eqref{E:EXPONENTIATEDSCALEFACTORTIMEINTEGRALS}
with $p=-1/3$ and $p = - 1 - c \sqrt{\varepsilon}$,
which yields the desired bound
	$
	\SupTotalenergy{\smallparameter_*}{M}(t)
	\leq 
	C \varepsilon^4 \scale^{-c \sqrt{\varepsilon}}(t)
	$.
	We have therefore closed the induction and 
	shown that \eqref{E:PROOFMAINAPRIORIENERGYESTIMATES}
	holds for $M=1,\cdots,16$.
	In particular, we have proved \eqref{E:MAINAPRIORIENERGYESTIMATES}.
	
	We now prove \eqref{E:MAINHIGHNORMENERGYESTIMATES}.
	We first show that the terms on the first two lines of
	RHS~\eqref{E:HIGHNORM} are $\leq \mbox{RHS~\eqref{E:MAINHIGHNORMENERGYESTIMATES}}$.
	By Lemma~\ref{L:L2NORMSCOMPARISONESTIMATES}, it suffices to prove the same bound
	with the norms $\| \cdot \|_{L_{\StMet}(\Sigma_t)}$ on RHS~\eqref{E:HIGHNORM}
	replaced by the norms 	
	$\| \cdot \|_{L_{\newg}(\Sigma_t)}$.
	To this end, we first use Lemma~\ref{L:ENERGYCOERCIVENESS},
	\eqref{E:IMPROVEDOPERATORCOMPARISON},
	\eqref{E:PROOFMAINAPRIORIENERGYESTIMATES} in the case $M=16$,
	and the fact that $\SigmatLie_Z \StMet = 0$ for $Z \in \mathscr{Z}$
	to deduce
	\begin{align} \label{E:NORMBOUNDSTHATFOLLOWFROMENERGY}
			&
			\left\|
				\SigmatLie_{\mathscr{Z}}^{[1,16]} \FreeNewSec
			\right\|_{L_{\newg}^2(\Sigma_t)},
				\,
			\scale^{2/3}(t)
			\left\|
				\SigmatLie_{\mathscr{Z}}^{[1,17]} (\newg - \StMet)
			\right\|_{L_{\newg}^2(\Sigma_t)},
			\,
			\left\|
				\mathscr{Z}^{[1,17]} \newtimescalar
			\right\|_{L_{\newg}^2(\Sigma_t)},
				\,
			\scale^{2/3}(t)
			\left\|
				\SigmatLie_{\mathscr{Z}}^{[1,16]} \newspacescalar
			\right\|_{L_{\newg}^2(\Sigma_t)}
				\\
			& 
			\lesssim \varepsilon^2 \scale^{-c \sqrt{\varepsilon}}(t).
			\notag
	\end{align}
	Next, we use Prop.~\ref{P:BOUNDFORLAPSEANDBELOWTOPMETRICINTERMSOFENERGIES},
	\eqref{E:IMPROVEDOPERATORCOMPARISON},
	\eqref{E:PROOFMAINAPRIORIENERGYESTIMATES} in the case $M=16$,
	and the fact that $\SigmatLie_Z \StMet = \SigmatLie_Z \StMet^{-1} = 0$ for $Z \in \mathscr{Z}$
	to deduce
	\begin{align}
		&
		\left\|
			\mathscr{Z}^{[1,14]} \newlapse
		\right\|_{L_{\newg}^2(\Sigma_t)},
			\,
		\sum_{L=1}^4
		\scale^{(2/3)L}
		\left\|
			\newlapse
		\right\|_{\dot{H}_{\newg}^{14+L}(\Sigma_t)},
			\\
		&
		\left\|
			\SigmatLie_{\mathscr{Z}}^{[1,16]}(\newg - \StMet)
		\right\|_{L_{\newg}^2(\Sigma_t)},
		\,
		\left\|
			\SigmatLie_{\mathscr{Z}}^{[1,16]}(\newg^{-1} - \StMet^{-1})
		\right\|_{L_{\newg}^2(\Sigma_t)},
		\,
		\left\|
			\SigmatLie_{\mathscr{Z}}^{[1,15]} \newspacescalar
		\right\|_{L_{\newg}^2(\Sigma_t)}	
		\notag \\
	& \lesssim 
		\varepsilon^{3/2}
		\scale^{-c \sqrt{\varepsilon}}(t).
		\notag
	\end{align}
	Next, we use Lemma~\ref{L:L2ESTIMATESFORNONDIFFERNTIATED}
	and \eqref{E:PROOFMAINAPRIORIENERGYESTIMATES} in the case $M=2$
	to deduce
	\begin{align}
		\left\|
			\newg - \StMet
		\right\|_{L_{\newg}^2(\Sigma_t)},
			\,
		\left\|
			\newg^{-1} - \StMet^{-1}
		\right\|_{L_{\newg}^2(\Sigma_t)}
			\,
		\left\|
			\FreeNewSec
		\right\|_{L_{\newg}^2(\Sigma_t)},
			\,
		\left\|
			\newlapse
		\right\|_{L_{\newg}^2(\Sigma_t)},
			\,
		\left\|
			\newtimescalar
		\right\|_{L_{\newg}^2(\Sigma_t)},
			\,
		\left\|
			\newspacescalar
		\right\|_{L_{\newg}^2(\Sigma_t)}
			 & \lesssim
			\varepsilon^{3/2} \scale^{-c \sqrt{\varepsilon}}(t).
			\label{E:BASEVARIABLESL2ESTIMATEMAINTHM}
		\end{align}
	Combining \eqref{E:NORMBOUNDSTHATFOLLOWFROMENERGY}-\eqref{E:BASEVARIABLESL2ESTIMATEMAINTHM},
	we conclude that the terms on the first two lines of
	RHS~\eqref{E:HIGHNORM} are $\leq \mbox{RHS~\eqref{E:MAINHIGHNORMENERGYESTIMATES}}$,
	as desired.
	To bound the sup-norm terms
	on the last two lines of RHS~\eqref{E:HIGHNORM},
	we use the Sobolev
	embedding result \eqref{E:STSOBOLEV} 
	and the already obtained bounds for the terms on the first two lines
	of RHS~\eqref{E:HIGHNORM}. We have thus proved the desired bound 
	\eqref{E:MAINHIGHNORMENERGYESTIMATES}, which completes the proof of the corollary.

\end{proof}

\section{The main theorem}
\label{S:MAINTHM}
We now state and prove our main stable blowup result. 
We have already derived all of the difficult estimates.

\begin{theorem}[\textbf{Stable curvature blowup for solutions with near-FLRW data}]
	\label{T:MAINTHM}
	Consider geometric initial data (as described in Subsect.\ \ref{SS:IVP})
	for the Einstein-scalar field system \eqref{E:EINSTEINSF}-\eqref{E:WAVEMODEL}
	that verify the CMC condition \eqref{E:INITIALCMC}
	(see, however, Remark~\ref{R:NONEEDFORCMC}).
	Assume that the geometric data 
	induce data for the rescaled variables of Def.\ \ref{D:RESCALEDVARIABLES}
	such that\footnote{See Remark~\ref{R:NUMBEROFDERIVATIVES} regarding the number of derivatives that we use to close the proof.} 
	$\highnorm{16}(0) = \varepsilon^2$, 
	where the norm $\highnorm{16}(t)$ is defined in \eqref{E:HIGHNORM}.
	Note that when $\varepsilon = 0$, the solution
	is exactly the FLRW solution from Subsect.\ \ref{SS:FLRWANDSCALEFACTOR},
	which by Lemma~\ref{L:ANALYSISOFFRIEDMANN}
	exists on the time interval $(\TBang,\TCrunch)$
	and which exhibits curvature blowup at times $\TBang$ and $\TCrunch$
	(recall that $- \TBang = \TCrunch > 0$).
	Then if $\varepsilon > 0$ is sufficiently small,
	the corresponding perturbed solution 
	to the equations of Prop.~\ref{P:EINSTEININCMC}
	(that is, to the Einstein-scalar field equations 
	in CMC-transported spatial coordinates gauge)
	also exists on
	$(\TBang,\TCrunch) \times \mathbb{S}^3$,
	and the time-rescaled variables verify the norm estimate\footnote{In stating estimates, we have aimed for a clean presentation rather than
	for optimizing powers of $\varepsilon$. For this reason, some of the estimates stated in the theorem are non-optimal
	with respect to powers of $\varepsilon$.\label{FN:NONOPTIMALPOWERSOFEPSILON}} 
	\begin{align} \label{E:MAINTHMNORMEST}
		\highnorm{16}(t) \leq C \varepsilon^{3/2} \scale^{-c \sqrt{\varepsilon}}(t).
	\end{align}
	Moreover, the solution verifies the curvature estimate
	\eqref{E:SPACETIMERICCIINVARIANTBLOWUP},
	which, when combined with \eqref{E:PLIMITCLOSETOFLRW}, shows that
	$\Ricfour^{\alpha \beta} \Ricfour_{\alpha \beta}$
	blows up as $t \uparrow \TCrunch$, since $\scale(\TCrunch)=0$.
	A similar blowup result holds as $t \downarrow \TBang$.
	Thus, with $\gfour, \partial_t \phi, \nabla \phi$ denoting the non-time-rescaled solution
	variables (see Remark~\ref{R:WEDONOTESTIMATEPHI}), we have that
	$\left((-\TCrunch,\TCrunch) \times \mathbb{S}^3, \gfour, \partial_t \phi, \nabla \phi \right)$
	is the maximal globally hyperbolic development of the data.
	
	\medskip
	
	\noindent{\textbf{Convergence results} (Here we consider only the limit $t \uparrow \TCrunch$; analogous statements hold
	as $t \downarrow \TBang$ and we omit those details):}
	For integers $M \geq 0$, let $C_{\StMet}^M(\mathbb{S}^3)$
	denote the Banach space\footnote{Actually, we are loosely using the notation
	$C_{\StMet}^M(\mathbb{S}^3)$ to denote a family of Banach spaces depending on the order of the tensorfields.} 
	of $M$-times continuously differentiable tensorfields
	on $\mathbb{S}^3$ with square norm
	$
	\| \xi \|_{C_{\StMet}^M(\mathbb{S}^3)}^2
		:= \sum_{|\vec{I}| \leq M} \sup_{p \in \mathbb{S}^3} 
		\left| \SigmatLie_{\mathscr{Z}}^{\vec{I}} 
		\xi(p) \right|_{\StMet}^2
	$.
	Let $d \varpi_g$ denote the volume form of $g$.
	There exist a function $\Psi_{Crunch} \in C^{10}(\mathbb{S}^3)$,
	a type $\binom{1}{1}$ tensorfield $K_{Crunch} \in C_{\StMet}^{10}(\mathbb{S}^3)$,
	and a (type $\binom{0}{3}$) volume form $d \varpi_{Crunch} \in C_{\StMet}^8(\mathbb{S}^3)$,
	which we view as tensorfields on $(\TBang,\TCrunch) \times \mathbb{S}^3$
	that are independent of $t$,
	such that the following convergence estimates hold
	for $t \in [0,\TCrunch)$:
\begin{subequations}
\begin{align}
	\left\| n - 1  \right\|_{C^8(\Sigma_t)} 
	& \lesssim \varepsilon \scale^{4/3 - c \sqrt{\varepsilon}}(t),
		\label{E:LAPSELIMIT} \\
	\left \| \scale^{-1} d \varpi_g - d \varpi_{Crunch} \right \|_{C_{\StMet}^8(\Sigma_t)} 
	& \lesssim \varepsilon \scale^{4/3 - c \sqrt{\varepsilon}}(t),
		\label{E:VOLFORMLIMIT} \\
	\left \| \scale \SecondFund - K_{Crunch} \right \|_{C_{\StMet}^{10}(\Sigma_t)} 
	& \lesssim \varepsilon \scale^{4/3 - c \sqrt{\varepsilon}}(t),
		\label{E:KLIMIT} \\
	\left \| \scale \partial_t \phi - \Psi_{Crunch} \right \|_{C^{10}(\Sigma_t)} 
	& \lesssim \varepsilon \scale^{4/3 - c \sqrt{\varepsilon}}(t),
		\label{E:TIMESFLIMIT}  \\
	\left\| \phi -  \left(\int_{s=0}^t \scale^{-1}(s) \, ds \right) \Psi_{Crunch} \right\|_{\dot{C}^M(\Sigma_t)} 
	& \lesssim \varepsilon,
		\label{E:SPACESFLIMIT}
	&& (1 \leq M \leq 10),
\end{align}
\end{subequations}
where in \eqref{E:KLIMIT}, we are viewing $\SecondFund$ to be a type $\binom{1}{1}$ tensorfield.
Furthermore, the limiting fields are close to the corresponding time-rescaled FLRW fields in the following sense:
\begin{subequations}
\begin{align}
\left \| d \varpi_{Crunch} - d \Sttvol \right \|_{C_{\StMet}^8(\mathbb{S}^3)} & \lesssim \varepsilon, 
	\label{E:VOLFORMLIMITCLOSETOSTMETRICVOLUMEFORM} \\
\left \| K_{Crunch} - \frac{1}{3} \ID  \right\|_{C_{\StMet}^{10}(\mathbb{S}^3)} & \lesssim \varepsilon, 
	\label{E:KLIMITCLOSETOFLRW} \\
\left \| \Psi_{Crunch} - \sqrt{\frac{2}{3}} \right\|_{C^{10}(\mathbb{S}^3)} & \lesssim \varepsilon,
	\label{E:PLIMITCLOSETOFLRW}
\end{align}
\end{subequations}
where $\ID$ denotes the identity transformation.

In addition, the limiting fields verify the following relations:
\begin{subequations}
\begin{align} 
	(K_{Crunch})_{\ a}^a & = - 1, 
		\label{E:LIMITINGKTRACE} \\
	\Psi_{Crunch}^2 + K_{Crunch} \cdot K_{Crunch} & = 1.
		\label{E:LIMITINGFIELDCONSTRAINT}
\end{align}
\end{subequations}

In addition, there exists a type $\binom{0}{2}$ tensorfield $M^{Bang} \in C_{\StMet}^{10}(\mathbb{S}^3)$
such that
\begin{align} \label{E:METRICENDSTATE}
	\left \| M^{Bang} - \StMet \right \|_{C_{\StMet}^{10}(\mathbb{S}^3)} 
	& \lesssim \varepsilon 
\end{align}
and such that the following convergence estimates hold for $t \in [0,\TCrunch)$:
\begin{align} \label{E:LIMITINGMETRICBEHAVIOR}
	\left\| g \cdot \mbox{\upshape exp} \left\lbrace 2 \left(\int_{s=0}^t \scale^{-1}(s) \, ds \right) K_{Crunch} \right\rbrace 
		- 
		M^{Bang} \right\|_{C_{\StMet}^{10}(\mathbb{S}^3)} 
	& \lesssim \varepsilon t^{4/3 - c \sqrt{\varepsilon}},
\end{align}
where relative to arbitrary local coordinates on 
$\mathbb{S}^3$, 
$
(\mbox{\upshape exp} \left\lbrace 2 \left(\int_{s=0}^t \scale^{-1}(s) \, ds \right) K_{Crunch} \right\rbrace)_{\ j}^i
$
denotes the $(i,j)$ component of the type $\binom{1}{1}$ tensorfield
whose components are given by the exponential of the matrix of components 
of the type $\binom{1}{1}$ tensorfield
$
2 \left(\int_{s=0}^t \scale^{-1}(s) \, ds \right) K_{Crunch}
$.
%and
%$g * \cdot$ denotes left multiplication of the matrix  
%$\cdot$ of type $\binom{1}{1}$ components
%by the matrix $g_{ij}$ of type $\binom{0}{2}$ components.

\medskip

\noindent{\textbf{Quantities that blow up:}} The norm 
$|\SecondFund|_g = |\SecondFund_{\ b}^a \SecondFund_{\ a}^b|^{1/2}$
of the second fundamental form $\SecondFund$ of $\Sigma_t$ verifies the estimate
\begin{align} \label{E:SECFUNDFORMBLOWUP}
	\left \| \scale |\SecondFund_{\ b}^a \SecondFund_{\ a}^b|^{1/2} - |K_{Crunch} \cdot K_{Crunch}|^{1/2} \right \|_{C^0(\Sigma_t)} 
	& \lesssim \varepsilon \scale^{4/3 - c \sqrt{\epsilon}}(t),
\end{align}
which shows that $|\SecondFund|_g$ blows up like $\scale^{-1}(t)$ as $t \uparrow \TCrunch$. 

The spacetime Ricci curvature invariant $\Ricfour^{\alpha \beta} \Ricfour_{\alpha \beta}$
verifies the estimate
\begin{align} \label{E:SPACETIMERICCIINVARIANTBLOWUP}
	\left\| 
		\scale^4 \Ricfour^{\alpha \beta} \Ricfour_{\alpha \beta}
		- \Psi_{Crunch}^4 \right 
		\|_{C^0(\Sigma_t)} 
		& \lesssim C \varepsilon \scale^{4/3 - c \sqrt{\epsilon}}(t),
\end{align}
which shows that $\Ricfour^{\alpha \beta} \Ricfour_{\alpha \beta}$
blows up like $\scale^{-4}(t)$ as $t \uparrow \TCrunch$.

\ \\
	
\noindent{\textbf{Geodesic incompleteness:}}
Every future-directed causal geodesic $\pmb{\zeta}$ that emanates from 
$\Sigma_0$ crashes into the singular hypersurface $\Sigma_{\TCrunch}$ in finite affine parameter time $a(\TCrunch)$,
where
	\begin{align} \label{E:AFFINTEBLOWUPTIME}
	a(\TCrunch) 
	& \leq 
		a'(0)
		\int_{\tau=0}^{\TCrunch}
			\exp \left\lbrace \left(\frac{1}{3} + C \varepsilon \right) \int_{s=0}^{\tau} \scale^{-1}(s) \, ds \right\rbrace
		\, d \tau
		< \infty,
\end{align}	
and $a = a(t)$ is the affine parameter along $\pmb{\zeta}$ viewed as a function of $t$ along $\pmb{\zeta}$ 
(normalized by $a(0) = 0$).
The finiteness of the double time integral in \eqref{E:AFFINTEBLOWUPTIME}
follows from Cor.~\ref{C:SCALEFACTORTIMEINTEGRALS}.
Similarly, every past-directed causal geodesic $\pmb{\zeta}$ that emanates from 
$\Sigma_0$ crashes into the singular hypersurface $\Sigma_{\TBang}$ in finite affine parameter time.

\end{theorem}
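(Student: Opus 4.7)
\noindent \textbf{Proof proposal for Theorem~\ref{T:MAINTHM}.}

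The plan is to first promote the a priori bound \eqref{E:MAINHIGHNORMENERGYESTIMATES} to global existence on $[0,\TCrunch) \times \mathbb{S}^3$ (with the past-directed interval handled by the reflection symmetry $t \mapsto -t$ of the equations), then to extract the limiting fields by showing that appropriately time-rescaled solution variables have integrable-in-$t$ time derivatives up to the singular hypersurfaces, and finally to read off the curvature blowup and geodesic incompleteness from the sharp asymptotic profile. By Cor.~\ref{C:MAINAPRIORIENERGYESTIMATES}, the bound \eqref{E:MAINHIGHNORMENERGYESTIMATES} strictly improves the bootstrap assumption \eqref{E:HIGHNORMBOOTSTRAP} on $[0,\Tboot) \times \mathbb{S}^3$. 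Combined with a standard local well-posedness result for the Einstein-scalar field equations in CMC-transported spatial coordinates gauge (applied at times $t$ close to $\Tboot$ using the uniform control on the rescaled variables and the elliptic lapse equation), a continuity argument shows that the supremum of the set of bootstrap times equals $\TCrunch$. This yields the global existence assertion and \eqref{E:MAINTHMNORMEST}.

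\noindent Next I would extract the limiting fields using the fundamental theorem of calculus. For $\Psi := n^{-1}\scale \partial_t \phi$, the strong sup-norm estimate \eqref{E:TIMEDERIVATIVEOFSCALARFIELDTIMESTRONGSUPNROM} gives $\|\partial_t \mathscr{Z}^{\leq 10} \newtimescalar\|_{C^0(\Sigma_t)} \lesssim \varepsilon \scale^{1/3-c\upsigma}(t)$; since $\scale(t) \sim \TCrunch - t$ near $\TCrunch$ by Lemma~\ref{L:ANALYSISOFFRIEDMANN}, Cor.~\ref{C:SCALEFACTORTIMEINTEGRALS} gives $\int_{t}^{\TCrunch} \scale^{1/3 - c\upsigma}(s)\,ds \lesssim \scale^{4/3-c\upsigma}(t)$. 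Thus $\mathscr{Z}^{\leq 10} \newtimescalar$ is Cauchy in $C^0(\Sigma_t)$ as $t \uparrow \TCrunch$ and its limit defines $\Psi_{Crunch}$, with rate of convergence \eqref{E:TIMESFLIMIT}. The same strategy, applied via \eqref{E:NOLOSSTIMEDERIVATIVESKSTRONGSUPNROMSTMETRIC} and \eqref{E:VOLUMEFORMCOMPARISON}, produces the limits $K_{Crunch}$ and $d\varpi_{Crunch}$ and their quantitative convergence \eqref{E:KLIMIT}-\eqref{E:VOLFORMLIMIT}. The lapse estimate \eqref{E:LAPSELIMIT} is an immediate consequence of \eqref{E:LAPSESTRONGSUPNROM} and the definition $\newlapse = \scale^{-4/3}(n-1)$; the scalar field asymptotic \eqref{E:SPACESFLIMIT} follows by integrating $\partial_t \phi = n\scale^{-1}(\Psi_{Crunch} + O(\scale^{4/3-c\sqrt{\varepsilon}}))$ in $t$. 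The closeness estimates \eqref{E:VOLFORMLIMITCLOSETOSTMETRICVOLUMEFORM}-\eqref{E:PLIMITCLOSETOFLRW} then follow from the $\varepsilon$-smallness at $t=0$ plus the convergence. The constraint relation \eqref{E:LIMITINGKTRACE} is obtained by passing to the limit in the CMC condition \eqref{E:CMCNORMALIZATIONCHOICE} (using $\scale \Hubble \to -1$); \eqref{E:LIMITINGFIELDCONSTRAINT} by passing to the limit in the rescaled Hamiltonian constraint \eqref{E:RENORMALIZEDHAMILTONIAN} after multiplying by appropriate powers of $\scale$.

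\noindent For the metric asymptotic \eqref{E:LIMITINGMETRICBEHAVIOR}, the key is to note that the non-rescaled evolution \eqref{E:PARTIALTGCMC} for $g$ in mixed form is $\partial_t g = -2n g \cdot \SecondFund \approx -2\scale^{-1} g \cdot K_{Crunch}$ to leading order. Defining $\Phi(t) := g(t) \cdot \exp\{2 (\int_0^t \scale^{-1}) K_{Crunch}\}$ (interpreted via the matrix exponential at each point) and differentiating, one finds that $\partial_t \Phi$ decomposes into a principal cancellation and error terms of size $O(\varepsilon \scale^{1/3-c\sqrt{\varepsilon}})$ (using the convergence rate from step 2 for $\scale \FreeSecondFund \to K_{Crunch}$), which is integrable as $t \uparrow \TCrunch$, giving the limit $M^{Bang}$ and \eqref{E:LIMITINGMETRICBEHAVIOR}. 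For \eqref{E:SECFUNDFORMBLOWUP}, one expresses $|\SecondFund|_g^2 = \scale^{-2}|K_{Crunch} \cdot K_{Crunch}|_g + O(\scale^{-2+4/3-c\sqrt{\varepsilon}})$ using $\SecondFund = \scale^{-1}\FreeSecondFund - (1/3)\Hubble \ID$ and the bounds on $\FreeNewSec$; \eqref{E:SPACETIMERICCIINVARIANTBLOWUP} follows from the identity $\Ricfour_{\mu\nu} = \Dfour_\mu \phi \Dfour_\nu \phi$ (which one obtains directly from \eqref{E:EINSTEINSF} and \eqref{E:EMTSCALARFIELD} by taking the trace) so that $\Ricfour^{\alpha\beta}\Ricfour_{\alpha\beta} = (\gfour^{\alpha\beta}\Dfour_\alpha\phi\Dfour_\beta\phi)^2$, whose leading behavior is $(n^{-1}\partial_t\phi)^4 = \scale^{-4}\Psi_{Crunch}^4 + O(\scale^{-4+4/3-c\sqrt{\varepsilon}})$. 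Finally, for \eqref{E:AFFINTEBLOWUPTIME}, I parameterize a future-directed timelike geodesic $\pmb{\zeta}$ by $t$ and write $da/dt = 1/(n \dot{t})$ where $\dot{t}$ denotes the $t$-component of the affine-parameterized velocity; the geodesic equation combined with $\gfour(\pmb{\zeta}',\pmb{\zeta}') \leq 0$ and the CMC condition leads to a differential inequality for $\dot{t}$ whose integration, together with $n = 1 + O(\scale^{4/3-c\sqrt{\varepsilon}})$ and $\scale^{-1}$-type asymptotics, yields the bound \eqref{E:AFFINTEBLOWUPTIME}, which is finite by Cor.~\ref{C:SCALEFACTORTIMEINTEGRALS}.

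\noindent The main obstacle will be the metric asymptotic \eqref{E:LIMITINGMETRICBEHAVIOR}: the matrix $K_{Crunch}(x)$ is not proportional to the identity in general, so $g(t,x)$ and the exponentiated integrating factor $\exp\{2(\int_0^t \scale^{-1})K_{Crunch}(x)\}$ need not commute, and one must carefully handle the non-commutativity of $K_{Crunch}(x)$ with $g(s,x) \cdot \FreeSecondFund(s,x)$ for $s \neq t$ when differentiating $\Phi$. The approach is to work in mixed tensor form throughout, to use the trace-free part bound $\FreeNewSec \to K_{Crunch} + (1/3)\ID$ (with the $(1/3)\ID$ piece from the $-\Hubble/3$ trace contribution being exactly compensated by the $n \approx 1$ prefactor), and to absorb the matrix commutator errors into the $O(\scale^{4/3-c\sqrt{\varepsilon}})$ remainder via the already-established sharp sup-norm bounds.
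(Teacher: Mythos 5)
Your proposal is substantively correct and follows the same route the paper takes: after the a priori estimate of Cor.~\ref{C:MAINAPRIORIENERGYESTIMATES}, one combines a standard local existence result with a continuation criterion to obtain global existence on $[0,\TCrunch)\times\mathbb{S}^3$ (and the reflection symmetry for the past half), extracts the limiting fields by integrating the time-derivative bounds furnished by the strong sup-norm estimates, and reads off the blowup and geodesic incompleteness from the asymptotic profile. The identity $\Ricfour_{\mu\nu} = \Dfour_\mu\phi\,\Dfour_\nu\phi$ obtained by tracing \eqref{E:EINSTEINSF} is indeed the key to \eqref{E:SPACETIMERICCIINVARIANTBLOWUP}, and your discussion of the non-commutativity in the integrating factor for \eqref{E:LIMITINGMETRICBEHAVIOR} correctly identifies the main technical point there. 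The paper itself only writes out the argument for \eqref{E:KLIMIT}--\eqref{E:KLIMITCLOSETOFLRW} in detail and defers the remaining convergence and geodesic estimates to the earlier work cited as \cite{iRjS2014b}, so your fleshed-out sketch is more explicit than the paper's proof.

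One small point you should be more careful about: you quote the strong sup-norm estimate \eqref{E:TIMEDERIVATIVEOFSCALARFIELDTIMESTRONGSUPNROM} with the exponent $\scale^{1/3 - c\upsigma}$, and that would integrate to $\scale^{4/3-c\upsigma}$. Since the proof of the theorem fixes $\upsigma = \varepsilon^{1/4} > \sqrt{\varepsilon}$, this is slightly weaker than the $\scale^{4/3-c\sqrt{\varepsilon}}$ rate claimed in \eqref{E:TIMESFLIMIT}. To recover the stated exponent, you should first close the bootstrap via Cor.~\ref{C:MAINAPRIORIENERGYESTIMATES} and then \emph{re-run} the proofs of the relevant strong sup-norm estimates of Prop.~\ref{P:STRONGSUPNORMESTIMATES} on $[0,\TCrunch)$ using \eqref{E:MAINHIGHNORMENERGYESTIMATES} in place of the original bootstrap assumption \eqref{E:HIGHNORMBOOTSTRAP}; this systematically replaces every $\scale^{-c\upsigma}$ loss with $\scale^{-c\sqrt{\varepsilon}}$. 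The paper does exactly this in its partial proof of \eqref{E:KLIMIT}. With this one refinement your argument matches the paper's.
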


\begin{remark}[\textbf{AVTD behavior}]
	\label{R:AVTD}
	The convergence estimates \eqref{E:LAPSELIMIT}-\eqref{E:SPACESFLIMIT}
	capture the AVTD behavior described in Theorem~\ref{T:VERYROUGH}.
	More precisely, consider the fields
	$(\widetilde{n}:= 1, 
	\widetilde{g} 
	:= M^{Bang} \cdot \mbox{\upshape exp} \left\lbrace - 2 \left(\int_{s=0}^t \scale^{-1}(s) \, ds \right) K_{Crunch} \right\rbrace,
	\widetilde{\SecondFund} := \scale^{-1} K_{Crunch},
	\partial_t \widetilde{\phi}:= \scale^{-1} \Psi_{Crunch})$
	that are formally obtained by setting RHSs~\eqref{E:LAPSELIMIT}-\eqref{E:SPACESFLIMIT}
	equal to $0$. It is easy to see that these fields are solutions
	to the ``VTD equations,'' 
	which by definition are obtained by setting
	the spatial derivative terms  
	in the equations of Prop.~\ref{P:EINSTEININCMC}
	equal to $0$. It is in this sense that
	the nonlinear solution converges towards a solution of the VTD equations.
\end{remark}

\begin{remark}
	Note that our smallness assumption on $\highnorm{16}(0)$ is not
	a smallness assumption on the geometric initial data
	since it also entails a smallness assumption on $n-1$ along the initial Cauchy hypersurface
	$\Sigma_0$ (the lapse $n$ is not one of the geometric data). However, we could have formulated a near-FLRW 
	assumption on the geometric data in such a way that the smallness of $\highnorm{16}(0)$ 
	would follow as a consequence; one could derive the desired initial smallness of $n-1$
	as a consequence of a near-FLRW geometric data assumption
	via the elliptic PDE \eqref{E:LAPSEPDERENORMALIZEDHIGHERDERIVATIVES}.
	For convenience, we have avoided doing this.
\end{remark}

\begin{remark}
	It is possible to derive additional information about the
	solution, for example that the Kretschmann scalar
	$\Riemfour^{\alpha \beta \gamma \delta} \Riemfour_{\alpha \beta \gamma \delta}$
	blows up like $\scale^{-4}$ and that
	product of $\scale^2$ and the Weyl curvature tensor of $\gfour$
	remains $\mathcal{O}(\varepsilon)$ throughout the evolution.
	Readers can consult \cite{iRjS2014b}*{Theorem~2} for
	information about the kinds of additional estimates that hold and for the main ideas behind
	how to prove them using the estimates we have already derived in this paper.
\end{remark}

\begin{proof}[\textbf{Proof of Theorem~\ref{T:MAINTHM}}]
	We will prove the results of the theorem only for the half-space $[0,\TCrunch) \times \mathbb{S}^3$ 
	since the complementary half-space $(\TBang,0] \times \mathbb{S}^3$ can be treated
	using the same ideas.
	To proceed, we note that the solution variables $(n,g,k,\partial_t \phi,\nabla \phi)$
	featured in the equations of Prop.~\ref{P:EINSTEININCMC}
	are uniquely determined by the rescaled variables of Def.\ \ref{D:RESCALEDVARIABLES}
	and vice versa for $t \in [0,\TCrunch)$,
	the reason being that $\scale(t)$ is positive on this time interval.
	Thus, for $t \in [0,\TCrunch)$,
	equations \eqref{E:HAMILTONIAN}-\eqref{E:LAPSE}
	are equivalent to
	the equations verified by the rescaled variables,
	that is, equations
	\eqref{E:RENORMALIZEDHAMILTONIAN}-\eqref{E:ALTERNATERENORMALIZEDMOMENTUM},
	\eqref{E:EVOLUTIONMETRICRENORMALIZED}-\eqref{E:EVOLUTIONSECONDFUNDRENORMALIZED},
	\eqref{E:WAVEEQUATIONRENORMALIZED}-\eqref{E:EVOLUTIONSPACESCALARRENORMALIZED},
	\eqref{E:LAPSEPDERENORMALIZEDHIGHERDERIVATIVES},
	and \eqref{E:LAPSEPDERENORMALIZEDLOWERDERIVATIVES}.
	Next, we note the following standard local well-posedness result (see \cite[Theorem 6.2]{aS2010}):
	since the data verify the CMC condition \eqref{E:INITIALCMC}
	and the constraints \eqref{E:GAUSSINTRO}-\eqref{E:CODAZZIINTRO},
	there exists a time $\Tboot$ with $0 < \Tboot < \TCrunch$
	such that if $\varepsilon$ is sufficiently small, then
	\textbf{i)} the rescaled variables are classical solutions to the equations mentioned above
	on $[0,\Tboot) \times \mathbb{S}^3$
	and \textbf{ii)} the bootstrap assumption \eqref{E:HIGHNORMBOOTSTRAP}
	holds on $[0,\Tboot)$
	with $\upsigma = \varepsilon^{1/4}$ (consistent with \eqref{E:PARAMETERCONSTRAINT}).
	Let $T_{Max}$ be the sup over all such times $\Tboot$.
	We will show that $T_{Max} = \TCrunch$.
	To this end, 
	we note the following standard continuation principle
	(see, for example, \cite{lAvM2003}):
	if $\Tboot < \TCrunch$, if the solution exists classically on $[0,\Tboot) \times \mathbb{S}^3$,
	and if
	$\sup_{t \in [0,\Tboot)} \highnorm{16}(t) < A$ for some real number $A < \infty$,
	then there exists a $\delta > 0$ such that 
	$\Tboot + \delta < \TCrunch$,
	such that the solution
	exists (classically) on $[0,\Tboot + \delta] \times \mathbb{S}^3$,
	and such that $\sup_{t \in [0,\Tboot + \delta]} \highnorm{16}(t) < A$.
	Thus, to show that $T_{Max} = T_{Crunch}$
	(and in particular that the solution exists classically on 
	$[0,\TCrunch) \times \mathbb{S}^3$), 
	we need only to have a priori estimates guaranteeing that
	the bootstrap assumption \eqref{E:HIGHNORMBOOTSTRAP}
	is never saturated on $[0,T_{Max})$.
	Such estimates follow from \eqref{E:MAINHIGHNORMENERGYESTIMATES}
	whenever $\varepsilon$ is sufficiently small.
	
	%The estimate \eqref{E:MAINTHEOREMRICCIBLOWUP}
	%was proved as \eqref{E:RICCIBLOWUPESTIMATE}.
	
	Based on the estimate
	\eqref{E:MAINHIGHNORMENERGYESTIMATES}
	and the strong sup-norm estimates provided by Prop.~\ref{P:STRONGSUPNORMESTIMATES},
	the remaining aspects of the theorem can be proved
	by using the same arguments given in the proof
	of \cite{iRjS2014b}*{Theorem~2} and thus we refer
	the reader there for details. Here we 
	prove only the estimates \eqref{E:KLIMIT} and \eqref{E:KLIMITCLOSETOFLRW}
	to give the reader a feel for the arguments.
	To proceed, we first revisit the proof of \eqref{E:NOLOSSTIMEDERIVATIVESKSTRONGSUPNROMSTMETRIC},
	using the estimate \eqref{E:MAINHIGHNORMENERGYESTIMATES} in place of the original bootstrap assumption
	\eqref{E:HIGHNORMBOOTSTRAP}, which yields the following pointwise for $t \in [0,\TCrunch)$:
	$
	\left\|
		\SigmatLie_{\partial_t} \FreeNewSec
	\right\|_{C_{\StMet}^{10}(\Sigma_t)}
	\lesssim \varepsilon^{3/2} \scale^{1/3 -c \sqrt{\varepsilon}}(t)
	$.
	Integrating in time and using this bound and Cor.~\ref{C:SCALEFACTORTIMEINTEGRALS},
	we deduce that the following bound holds
	for $0 < s \leq t < \TCrunch$:
	$
	\left\|
		\FreeNewSec(t,\cdot)
		-
		\FreeNewSec(s,\cdot)
	\right\|_{C_{\StMet}^{10}(\mathbb{S}^3)}
	\lesssim
	\varepsilon^{3/2}
	\scale^{4/3 -c \sqrt{\varepsilon}}(s)
	$.
	From this bound, 
	Lemma~\ref{L:ANALYSISOFFRIEDMANN},
	and the completeness of the space
	$C_{\StMet}^{10}(\mathbb{S}^3)$,
	it follows that
	$
	\lim_{t \uparrow \TCrunch}
	\FreeNewSec(t,\cdot)
	$
	exists as an element of
	$C_{\StMet}^{10}(\mathbb{S}^3)$.
	Since 
	$\scale \SecondFund 
	= \FreeNewSec
		-
		\frac{1}{3} \scale' \ID
	$, with $\ID$ the identity transformation,
	and since
	$
	\lim_{t \uparrow \TCrunch}
	\scale'(t)
	=
	-1
	$
	(see Lemma~\ref{L:ANALYSISOFFRIEDMANN}),
	it also follows that
	$
	\lim_{t \uparrow \TCrunch}
	\scale(t) \SecondFund(t,\cdot)
	$
	exists as an element of
	$C_{\StMet}^{10}(\mathbb{S}^3)$,
	and we denote the limit
	by $K_{Crunch}$.
	From the above facts 
	and the small-data bound
	$
	\left\|
		\FreeNewSec
	\right\|_{C^{10}(\Sigma_0)}
	\lesssim
	\varepsilon^2
	$
	(see \eqref{E:SMALLDATA}),
	we arrive at the desired estimates \eqref{E:KLIMIT} and \eqref{E:KLIMITCLOSETOFLRW}
	(which are non-optimal with respect to powers of $\varepsilon$).
	
\end{proof}

\bibliographystyle{amsalpha}
\bibliography{JBib}

\end{document}